\setlist[enumerate]{label=(\arabic*), itemsep=0.7ex}
\DeclareMathOperator{\pcom}{\mathsf{pow}-\mathsf{com}}
\DeclareMathOperator{\per}{\mathsf{per}}
\DeclareMathOperator{\mroot}{\mathsf{root}}
\DeclareMathOperator{\First}{\mathfrak{L}}
\DeclareMathOperator{\bsigma}{\boldsymbol{\sigma}}
\DeclareMathOperator{\btau}{\boldsymbol{\tau}}
\DeclareMathOperator{\RS}{\mathsf{RS}}
\DeclareMathOperator{\Fac}{\mathbf{F}}
\newcommand{\A}[2]{\mathcal{#1}_{\mathsf{#2}}}
\newcommand{\rQ}[1]{\mathrm{Q}_{\mathsf{#1}}}
\newcommand{\phii}[1]{\phi_{\mathsf{#1}}}
\newcommand{\psii}[1]{\psi_{\mathsf{#1}}}
\newtheorem{theo}{Theorem}
\newtheorem{prop}{Proposition}
\newtheorem{lem}{Lemma}
\newtheorem{defi}{Definition}
\newtheorem{rema}{Remark}
\newtheorem{ques}{Question}
\edef\csname c\cs\endcsname{\noexpand\mathcal{\cs}}
\edef\csname \cs\endcsname{\noexpand\mathbb{\cs}}
\def\@fnsymbol#1{\ensuremath{\ifcase#1\or \dagger\or \ddagger\or 
   \mathsection\or \mathparagraph\or \|\or **\or \dagger\dagger
   \or \ddagger\ddagger \else\@ctrerr\fi}}
\author[1,2]{Bastián Espinoza}
\affil[1]{Universidad de Chile}
\affil[2]{Université de Picardie Jules Verne}
\title{The structure of low complexity subshifts}
\begin{document}
\maketitle

\begin{abstract}
An idea that became unavoidable to study zero entropy symbolic dynamics is that the dynamical properties of a system induce in it a combinatorial structure.
An old problem addressing this intuition is finding a structure theorem for linear-growth complexity subshifts using the $\cS$-adic formalism.
It is known as the $\cS$-adic conjecture and motivated several influential results in the theory.
In this article, we completely solve the conjecture by providing an $\cS$-adic structure for this class.
Our methods extend to nonsuperlinear-complexity subshifts.
An important consequence of our main results is that these complexity classes gain access to the $\cS$-adic machinery.
We show how this provides a unified framework and simplified proofs of several known results, including the pioneering 1996 Cassaigne's Theorem.
\end{abstract}

% \tableofcontents

\section{Introduction}

\subsubsection*{Structure theorems}
An idea that became unavoidable to study zero entropy symbolic dynamics is that the dynamical properties of a system induce in it a combinatorial structure.
The first use of this approach was in the works of Morse and Hedlund \cite{MH38, MH40}, where Sturmian sequences were studied based on a structure given by what the authors called {\em derivative sequences}.
As the theory developed, more examples like this one emerged.
Relevant ones include substitutive and linearly recurrent subshifts \cite{DHS99}, Toeplitz systems \cite{toeplitz_are_bratteli}, (natural codings of) interval exchange transformations \cite{iet_are_bratteli}, dendric sequences \cite{GL22}  and general minimal subshifts \cite{HPS92}.
% Structure theorems have had a major impact on the development of the theory.%, see \cite{BKMS13, eigenvalues_jems, BST20}.

Let us briefly describe the modern standard for stating these theorems.
A substitution is a map $\tau\colon \cA^+\to\cB^+$ that substitutes the letters $a_i$ of a word $w = a_1 \dots a_\ell$ by $\tau(a_i)$.
Then, a sequence of substitutions $\btau$ of the form $(\tau_n\colon\cA_{n+1}\to\cA_n^+)_{n\geq0}$ is called an $\cS$-adic sequence and generates a subshift $X_{\btau} \subseteq \cA_0^\Z$ defined by requiring that $x \in X_{\btau}$ if and only if, for all $\ell \geq 0$, $x_{[-\ell, \ell)}$ occurs in $\tau_0\tau_1\dots\tau_{n-1}(a)$ for some $n \geq 1$ and $a \in \cA_n$.
In this setting, the structure theorems for the above-mentioned classes are an $\cS$-adic characterization, namely, of the form: a subshift $X$ belongs to the class $\cC$ if and only if $X$ is generated by an $\cS$-adic sequence satisfying certain property $\cP$.
The structure then appears as an infinite desubstitution process for the points of $X$.

In the context of structure theorems, an interesting intuition is that a subshift of low enough complexity should be very restricted, and thus hide a strong structure. 
Here, {\em low complexity} is a vague term referring to a slow growth of the complexity function $p_X(n)$, defined as the number of words of length $n$ that occur in some point of $X$.
This intuition dates back to the 70s, and matured in the 80s and 90s until it was finally established as the following more concrete question:
\begin{ques} \label{main_question}
Consider the class (L) of linear-growth complexity subshifts, defined by requiring that $p_X(n) \leq dn$ for some $d > 0$.
Is there an $\cS$-adic characterization of the class (L)?
\end{ques}
% The choice of linear-growth complexity subshifts is due to the fact that most of the known classes of subshifts have linear-growth complexity, and thus Question \ref{main_question} defines a relevant research horizon.

Question \ref{main_question} is known as the $\cS$-adic conjecture.
The first time it was explicitly stated was in \cite{ferenczi96}, where the author attributes the idea to B. Host, who, in turn, attributes the idea to the whole Marseille community.

The attempts to solve this conjecture have identified two major difficulties.
% There have been identified two major difficulties
% A non-exhaustive list of the attempts to solve the conjecture includes \cite{ferenczi96, DLR13_towards, Ler_1°diff, CFPZ18, Ler12_improvements}. 
The first one is that, in contrast to what happens with other structure theorems, there is no clear structure induced by the complexity.
For example, in the substitutive case, it was always clear that the substitution itself should produce a self-similar structure; the main obstruction was technical and referred to whether the desubstitution process was properly defined \cite{Mosse96}.
Similarly, in the Sturmian and IET cases, the known structure came from the geometric counterpart (more precisely, from the Rauzy induction).
The second challenge is that the condition $\cP$ we are looking for in Question \ref{main_question} is ill-defined.
To exemplify this point, observe that a corollary of \cite{Cassaigne_example} is the following $\cS$-adic characterization of (L): a subshift is in (L) if and only if there exist $\btau$ generating it and such that $X_{\btau}$ is in (L).
This tautological answer to Question \ref{main_question} does not provide information.
Certain restrictions on Question \ref{main_question} have been proposed to avoid this type of trivial answer, but none of them is considered satisfactory; we refer the reader to \cite{DLR13_towards} for a full discussion.

In this article, we completely solve the $\cS$-adic conjecture for minimal subshifts by proving the following theorem.
\begin{theo} \label{intro:main_sublinear}
A minimal subshift $X$ has linear-growth complexity, {\em i.e.}, $X$ satisfies
$$ \limsup_{n\to+\infty} p_X(n) / n < +\infty, $$
if and only if there exist $d > 0$ and an $\cS$-adic sequence $\bsigma = (\sigma_n\colon\cA_{n+1}\to\cA_n^+)_{n\geq0}$ generating $X$ such that, for every $n \geq 0$, the following holds:
\begin{enumerate}[label=$(\cP_{\arabic*})$]
    \item $\#(\mroot\sigma_{[0,n)}(\cA_n)) \leq d$
    \footnote{For a word $u$, $\mroot u$ denotes the shortest prefix $v$ of $u$ such that $u = v^k$ for some $k$; for a set of words $\cW$, $\mroot \cW = \{\mroot w : w \in \cW\}$.}.
    \item $|\sigma_{[0,n)}(a)| \leq d\cdot|\sigma_{[0,n)}(b)|$ for every $a, b \in \cA_n$.
    \item $|\sigma_{n-1}(a)| \leq d$ for every $a \in \cA_n$.
\end{enumerate}
\end{theo}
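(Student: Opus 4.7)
The sufficiency direction is relatively routine. Given $\bsigma$ satisfying $(\cP_1)$--$(\cP_3)$ and a length $\ell$, conditions $(\cP_2)$ and $(\cP_3)$ let me pick a level $n$ at which every $|\sigma_{[0,n)}(a)|$ is comparable to $\ell$. Any factor of $X$ of length $\ell$ is then determined, up to a bounded shift, by its position inside some two-letter concatenation $\sigma_{[0,n)}(ab)$ with $a,b\in\cA_n$, yielding the naive bound $p_X(\ell)=O(\ell\cdot(\#\cA_n)^2)$. This would fail if $\#\cA_n$ were unbounded, but $(\cP_1)$ together with $(\cP_2)$ forces many letters to share a primitive root and hence contribute overlapping factor sets, producing the desired linear growth.

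For the forward direction, I would use return words. Choose a nested sequence of factors $w_0 \prec w_1 \prec \cdots$ of $X$ with $|w_n| \to \infty$ and let $\rR_n = \rR_{w_n}(X)$. A classical argument, using minimality and the linear-complexity hypothesis, ensures that $\#\rR_n$ is uniformly bounded by a constant depending only on $d$ provided the $w_n$ are chosen generically. Coding occurrences of $w_n$ as concatenations of return words to $w_{n-1}$ then yields substitutions $\boldsymbol{\tau} = (\tau_n\colon \rR_n \to \rR_{n-1}^+)$ giving an $\cS$-adic representation of $X$ with bounded alphabets, but a priori without the quantitative conditions.

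To turn $\boldsymbol{\tau}$ into $\bsigma$ I plan two successive refinements. First, the balance condition $(\cP_2)$ is obtained by choosing each $w_n$ so that all its return words have comparable lengths, which can be arranged by picking $w_n$ deep inside the Kakutani--Rokhlin towers naturally associated with $X$. Second, $(\cP_3)$ is obtained by a telescoping argument: each $\tau_n$ with long letter-images is broken as a composition of substitutions whose letter-images have length at most $d$, by inserting intermediate alphabets that label prefix-suffix pieces of these images.

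The main obstacle is $(\cP_1)$, the uniform bound on the number of distinct primitive roots, which is the genuinely new condition and lacks a classical analogue. My approach would be combinatorial: whenever two letters $a,b\in\cA_n$ have images $\sigma_{[0,n)}(a), \sigma_{[0,n)}(b)$ sharing a sufficiently long common factor, the Fine--Wilf theorem forces their primitive roots to coincide, so the size of $\mroot(\sigma_{[0,n)}(\cA_n))$ is controlled by how many long, pairwise ``aperiodic'' factors can coexist in $X$---which the linear complexity assumption severely limits. Upgrading this heuristic to a genuinely uniform bound in $n$ will likely require a final modification of $\bsigma$ that segregates the periodic portions of long images into dedicated letters, so that the surviving primitive roots are pairwise distinct, together with a careful bookkeeping ensuring that this surgery does not destroy $(\cP_2)$ or $(\cP_3)$.
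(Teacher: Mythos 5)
The sufficiency direction of your plan is essentially the paper's: choose $n$ maximal with $\langle\sigma_{[0,n)}\rangle\leq\ell$, note that every factor of length $\ell$ occurs in a product of two powers of roots of images, so $p_X(\ell)\leq |\sigma_{[0,n)}|\cdot\#(\mroot\sigma_{[0,n)}(\cA_n))^2\leq d^2|\sigma_{[0,n)}|$, and $(\cP_2)$ together with $(\cP_3)$ give $|\sigma_{[0,n)}|\leq d^2\ell$. Your phrasing via ``overlapping factor sets'' is vague but the counting through roots makes it work, with no dependence on $\#\cA_n$.

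The forward direction, however, has genuine gaps. First, the return-word scheme cannot deliver $(\cP_2)$: linear complexity does not imply linear recurrence, so for a general $X$ in (L) the return words to $w_n$ (however ``generically'' $w_n$ is chosen, and whatever Kakutani--Rokhlin levels you pick) have length ratios that are unbounded in $n$; the tower heights are exactly the quantities that fail to be comparable. This is why the paper does not code by return words to a single word but instead builds, for each prescribed scale $\ell_n$, a coding from right-special words and then performs surgery on the images, splitting them into ``aperiodic'' pieces (few in number, because long aperiodic factors are expensive in complexity) and ``periodic'' pieces that are recut to length comparable to $\ell_n$. Second, your telescoping step for $(\cP_3)$ creates intermediate levels whose composed images are arbitrary chunks of the original ones, so at those levels neither $(\cP_1)$ nor $(\cP_2)$ is controlled and the whole problem reappears; the paper sidesteps this by constructing the levels directly at scales $\ell_n$ with $\ell_{n+1}/\ell_n$ bounded (via Cassaigne-type lemmas), which makes $(\cP_3)$ automatic. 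Third, the Fine--Wilf mechanism you invoke for $(\cP_1)$ is wrong as stated: two aperiodic images can share arbitrarily long common factors while their roots (which are the words themselves) remain distinct; Fine--Wilf only identifies roots of strongly periodic words. More importantly, $(\cP_1)$ must hold for every composition $\sigma_{[0,n)}$ simultaneously, which forces the codings at consecutive scales to be coherently aligned -- this is precisely the content of the connecting morphisms and the root-propagation argument (the $\psi$-invariant and Lemma on $\mroot\sigma_{[0,n)}$) that occupy Sections 5--9 of the paper, and it is the part your ``final modification plus careful bookkeeping'' leaves entirely unaddressed. Note also that the paper's Theorem on bounded alphabets shows that a sequence satisfying $(\cP_1)$--$(\cP_3)$ cannot in general be taken with uniformly bounded $\#\cA_n$ (the blow-up is governed by the power complexity), so any strategy that tries to stay within a bounded-alphabet return-word representation and only patch it at the end must at some point abandon that framework; quantifying that blow-up is another missing ingredient in the sketch.
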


Our techniques extend to nonsuperlinear complexity subshifts (NSL).
\begin{theo} \label{intro:main_nonsuperlinear}
A minimal subshift $X$ has nonsuperlinear-growth complexity, {\em i.e.}, $X$ satisfies 
$$ \liminf_{n\to+\infty} p_X(n) / n < +\infty, $$
if and only if there exist $d > 0$ and an $\cS$-adic sequence $\bsigma = (\sigma_n\colon\cA_{n+1}\to\cA_n^+)_{n\geq0}$ generating $X$ such that, for every $n \geq 0$, the following holds:
\begin{enumerate}[label=$(\cP_{\arabic*})$]
    \item $\#(\mroot\sigma_{[0,n)}(\cA_n)) \leq d$.
    \item $|\sigma_{[0,n)}(a)| \leq d\cdot|\sigma_{[0,n)}(b)|$ for every $a, b \in \cA_n$.
\end{enumerate}
\end{theo}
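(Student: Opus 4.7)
The plan is to extract Theorem~\ref{intro:main_nonsuperlinear} from the technology developed for Theorem~\ref{intro:main_sublinear}, applied along an infinite subsequence of scales at which the complexity of $X$ grows linearly.

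For the forward direction, suppose $\liminf_n p_X(n)/n \leq c$ and fix an increasing sequence $(n_k)_{k \geq 0}$ with $p_X(n_k) \leq c\,n_k$. I would build the $\cS$-adic sequence $\bsigma$ inductively so that each composition $\sigma_{[0,k)}$ records the combinatorics of $X$ at a scale comparable to $n_k$. Having fixed $\sigma_0, \ldots, \sigma_{k-1}$ so that (P1) and (P2) hold at level $k$ with a uniform constant $d = d(c)$, I would produce $\sigma_k$ by running the desubstitution machinery from the proof of Theorem~\ref{intro:main_sublinear} at scale $n_{k+1}$ and then factoring the outcome through the alphabet $\cA_k$ already fixed. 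The uniform bounds (P1) and (P2) survive because the only ingredient needed to enforce them is the linear estimate $p_X(n_k) \leq c\,n_k$, which holds at every chosen scale by hypothesis. However, a single $\sigma_k$ now has to absorb all of the combinatorics occurring between scales $n_k$ and $n_{k+1}$, so the individual images $|\sigma_k(a)|$ may grow arbitrarily and no analogue of (P3) is available.

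For the backward direction, assume $\bsigma$ generates $X$ and satisfies (P1), (P2), and put $M_n = \min_{a \in \cA_n} |\sigma_{[0,n)}(a)|$. By (P2), every image length lies in $[M_n, d M_n]$, so any factor $w$ of $X$ of length $M_n$ is a subword of $\sigma_{[0,n)}(aa')$ for some pair $aa'$ admissible at level $n$, and spans at most one boundary between consecutive images. If $w$ lies inside a single image $r^k$, then $w$ is one of the $|r|$ factors of length $M_n$ of $r^\infty$; summing over the at most $d$ roots provided by (P1), and using $|r| \leq d M_n$, these contribute at most $d^2 M_n$ factors. If instead $w$ spans the boundary between images with roots $r, r'$, then $w$ is completely determined by the pair $(r,r')$ (at most $d^2$ choices) and the position of the boundary inside $w$ (at most $M_n$ values), the adjacent suffix of $r^\infty$ and prefix of $r'^\infty$ being automatically fixed by these data; this yields at most $d^2 M_n$ further factors. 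Altogether $p_X(M_n) \leq 2 d^2\, M_n$, and since $M_n \to +\infty$ we conclude $\liminf_n p_X(n)/n \leq 2 d^2 < +\infty$.

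The main obstacle is the forward direction. Even granting that the proof of Theorem~\ref{intro:main_sublinear} supplies the right toolkit for producing one level of $\bsigma$ from linear complexity at a single scale, one must arrange the induction so that the substitutions $\sigma_k$, extracted only at the sparse scales $n_k$, fit together coherently and preserve (P1), (P2) uniformly across all levels. The delicate point is controlling the glue across the possibly erratic behavior of $p_X$ at scales strictly between $n_k$ and $n_{k+1}$: this glue must introduce no new roots (which would threaten (P1)) and must not destroy the length comparability between images (which would threaten (P2)). By contrast, the backward direction reduces to the direct counting argument above.
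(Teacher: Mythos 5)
Your backward direction is correct and is essentially the paper's own argument: it is the counting behind Lemma \ref{MT:pX_bound} (every factor of length $M_n=\langle\sigma_{[0,n)}\rangle$ sits in a concatenation of two images, hence is determined by the two roots and an offset), applied with $(\cP_1)$ and $(\cP_2)$ to get $p_X(M_n)=O(d^2)M_n$ along the unbounded sequence $M_n$. That half is fine.

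The forward direction, however, is not a proof but a plan whose crucial step you yourself flag as unresolved, and that step is exactly where the content of the theorem lies. Two concrete gaps. First, your claim that ``the only ingredient needed to enforce $(\cP_1)$ and $(\cP_2)$ is the linear estimate $p_X(n_k)\leq c\,n_k$'' is not substantiated and is inaccurate as stated: the construction also needs a bound on $p_X(\ell+1)-p_X(\ell)$ (equivalently on the number of right-special words) at the chosen scales, which does not follow from a single linear bound at that scale and is recovered in the paper by an averaging argument (Lemma \ref{preli:find_good_s_nonsuperlinear}) that selects nearby scales where both $p_X(m)\leq 3dm$ and $p_X(m+1)-p_X(m)\leq 2d$ hold; one also needs the scales to grow fast enough (the ratio condition in Theorem \ref{theo:main_abstract}) for consecutive levels to be comparable. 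Second, and more importantly, $(\cP_1)$ is a statement about the roots of the \emph{composed} morphisms $\sigma_{[0,n)}$, and nothing in your sketch controls these. The paper does not obtain $(\cP_1)$ by ``factoring a new desubstitution through the previous alphabet''; it builds, at each scale $\ell_n$, a recognizable coding $(Z_n,\tau_n)$ of $X$ directly, engineered so that each image is either strongly aperiodic or strongly periodic with root of length at most $\varepsilon_n$ (the dichotomous-periods property of Sections \ref{sec:Z1}--\ref{sec:Z4}), bounds separately the number of aperiodic images and the number of roots at each level, constructs connecting morphisms between consecutive codings (Subsection \ref{subsec:Z4:two_levels}), and then proves the key Lemma \ref{MT:tech_core_roots}: the root of $\sigma_{[0,n)}(a)$ depends only on a bounded amount of local data (the triple $\psi$ of roots/images of the letter and its neighbours), tracked through all levels via Lemmas \ref{MT:claim:ap}, \ref{MT:claim:p&big} and \ref{MT:claim:p&small}. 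This is what yields a uniform bound on $\#\mroot\sigma_{[0,n)}(\cA_n)$ even though $\#\cA_n$ itself need not be bounded (and indeed, by Theorem \ref{theo:negative_result}, cannot in general be bounded). Your proposal neither supplies this mechanism nor offers a substitute for it, so the ``if'' half of the equivalence remains unproved.
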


The case of non-minimal subshifts does not pose additional intrinsic difficulties and follows from methods similar to those given here.
However, we decided not to include it in order to avoid over saturating an already technical presentation.

An important consequence of our main results is that the classes (L) and (NSL) gain access to the $\cS$-adic machinery.
We show in Section \ref{sec:apps} how this provides a unified framework and simplified proofs of several known results on (L) and (NSL), including the pioneering Cassaigne's Theorem \cite{Cassaigne95}.
Further applications of our main results, which include a new proof of partial rigidity for (NSL) \cite{creutz22} using the technique in \cite[Theorem 7.2]{BKMS13}, will be presented in a future paper.

We prove, in the more specialized Theorems \ref{theo:main_sublinear} and \ref{theo:main_nonsuperlinear}, that when $X$ is in (L) or in (NSL), then $\btau$ can be assumed to be recognizable.
Observe that the conditions $(\cP_i)$ in Theorems \ref{intro:main_sublinear} and \ref{intro:main_nonsuperlinear} are optimal in the sense that if we remove any of them then the corresponding theorem is false.
Conditions $(\cP_2)$ and $(\cP_3)$ also occur in the positive substitutive case 
\footnote{A substitution $\sigma\colon\cA\to\cB^+$ is {\em positive} if for all $a \in \cA$ and $b \in \cB$, $b$ occurs in $\sigma(a)$} 
and in linearly recurrent subshifts, but the behavior in our theorems is very different since we do not impose positiveness.

% Regarding $(\cP_1)$ and $(\cP_3)$, these were designed using as a reference two conditions that are present in most works that use $\cS$-adic sequences.
With regard to $(\cP_1)$ and $(\cP_3)$, these were designed on the basis of two conditions that are present in most works that involve $\cS$-adic sequences.
The first is having bounded alphabets (BA), which requires that $\#\cA_n$ is uniformly bounded, and the second is finitariness, which asks for the set $\{\tau_n : n\geq0\}$ to be finite.
Note that finitariness implies both (BA) and Conditions $(\cP_1)$ and $(\cP_3)$, that (BA) implies $(\cP_1)$, and that, under $(\cP_3)$, finitariness and (BA) are equivalent.
There are several papers in which a finitary $\cS$-adic sequence is looked for a subshift in (L) (see \cite{Ler_1°diff} and the references therein), and $\cS$-adic sequences with (BA) have shown to be closely connected with (L) and (NSL) \cite{ferenczi96, DDPM20_interplay}.
It is then natural to ask if we can replace, in Theorem \ref{intro:main_sublinear}, Conditions $(\cP_1)$ and $(\cP_3)$ by finitariness.
We show in Theorem \ref{theo:negative_result} that this is not possible.
More precisely, we build a minimal subshift with linear-growth complexity such that any $\btau$ generating it and satisfying $(\cP_1)$, $(\cP_2)$ and $(\cP_3)$ is not finitary (equivalently, (BA) does not hold).
However, in Theorems \ref{theo:main_sublinear} and \ref{theo:main_nonsuperlinear} we give a sufficient condition for $\btau$ being finitary.
Subshifts satisfying this sufficient condition include substitutive subshifts, codings of IETs and dendric subshifts.
% This opens new directions into negative versions of the $\cS$-adic conjecture.
% In particular, the analog of Theorem REF for nonsuperlinear complexity subshifts will be treated in a future paper.

\subsubsection*{Strategy of the main proofs}

The hard part of the proofs of Theorems \ref{intro:main_sublinear} and \ref{intro:main_nonsuperlinear} is constructing an $\cS$-adic sequence satisfying properties $(\cP_i)$ from the complexity hypothesis.
We detail our strategy in the case of Theorem \ref{intro:main_sublinear}; the proof of Theorem \ref{intro:main_nonsuperlinear} is similar.
%**

It is convenient to introduce the following terminology: a {\em coding} of a subshift $X \subseteq \cA^\Z$ is a pair $(Z,\sigma)$, where $Z \subseteq \cC^\Z$ is a subshift and $\sigma\colon\cC\to\cA^+$ a substitution such that $X = \bigcup_{k\in\Z} S^k \sigma(Z)$.
It is a standard fact  that if $\btau$ is an $\cS$-adic sequence, then there are subshifts $X_{\btau}^{(n)}$, with $X_{\btau}^{(0)} = X$, such that $(X_{\btau}^{(m)}, \tau_{[n,m)})$ is a coding of $X_{\btau}^{(n)}$ for any $n < m$, where $\tau_{[n,m)} = \tau_n \tau_{n+1} \dots \tau_{m-1}$.

Let $X$ be a linear-growth complexity subshift and $d = \sup_{n\geq1} p_X(n) / n$.
The typical method for building an $\cS$-adic sequence for a subshift $X$ is an inductive process: First, $X_0 \coloneqq X$; then, a coding $(X_{i+1}, \sigma_{i+1})$ of $X_i$ is defined.
In this way, $\bsigma \coloneqq (\sigma_n)_{n\geq0}$ is an $\cS$-adic sequence that, under mild conditions, generates $X$.
We, instead, take a more direct approach, similar to that in \cite[Theorem 4.3]{DDPM20_interplay} and \cite[Corollary 1.4]{E22}, but with additional technical details.
We consider an increasing sequence of positive integers $(\ell_n)_{n\geq0}$ with adequate growth and build codings $(X_n \subseteq \cC_n^\Z, \sigma_n\colon\cC_n\to\cA^+)$ of $X \subseteq \cA^\Z$ satisfying $(\cP_1)$, $\frac{1}{d'} \ell_n \leq |\sigma(a)| \leq d'\ell_n$ for all letters $a$ and with $d'$ depending on $d$, and such that certain technical properties hold. 
These technical properties allow us to define {\em connecting} substitutions $\tau_n\colon\cC_{n+1}\to\cC_n^+$ in such a way that $\sigma_n\tau_n(x)$ is, up to a shift, equal to $\sigma_{n+1}(x)$, for all $x \in X_{n+1}$.
Then, we can  prove that $\btau = (\sigma_0, \tau_0, \tau_1, \tau_2, \dots)$ generates $X$ and satisfies all the properties in Theorem \ref{intro:main_sublinear}.

The main idea for constructing the codings  $(X_n, \sigma_n)$ is that, thanks to a modification of the technique from \cite[Proposition 5]{ferenczi96}, we can build a coding $(X'_n, \sigma'_n)$ of $X$ (which is described in Proposition \ref{Z2:main_prop}) in such a way that the words $\sigma'(a)$ are either strongly aperiodic or strongly periodic.
The aperiodic words greatly contribute to the complexity, so we can efficiently control them using $d$.
For controlling the periodic words, we rely on tricks from combinatorics on words.
These two ideas are used to obtain, in Sections \ref{sec:Z3} to \ref{sec:Z4}, two variations of $(X'_n, \sigma'_n)$, with increasingly better properties, and where the last one is $(X_n, \sigma_n)$.

\subsubsection*{About the organization of the paper}
The paper has three parts.
The first one consists of Sections \ref{sec:preliminaries} to \ref{sec:Z0} and presents the preliminary theory and several lemmas for handling periodic words.
Then, in Sections \ref{sec:Z1} to \ref{sec:MT}, we carry out the proofs of Theorems \ref{intro:main_sublinear} and \ref{intro:main_nonsuperlinear}.
Finally, we prove Theorem \ref{theo:negative_result} and present applications of our main results in Sections \ref{sec:alph_rank} and \ref{sec:apps}.

\section{Preliminaries}
\label{sec:preliminaries}

In this paper, we will only use discrete intervals, that is, $[a,b) = \{k \in \Z : a \leq k < b\}$ for $a,b\in\R$.

%%%%%%%%%%%%%%%%%%%%%%%%%%%%%%%%%%%%%%%%%%%%%%%%%%%%%%%%%%%%%%%%%%%%%%%%%%%
%%%%%%%%%%%%%%%%%%%%%%%%%%%%%%%%%%%%%%%%%%%%%%%%%%%%%%%%%%%%%%%%%%%%%%%%%%%
\subsection{Words and subshifts} 
\label{subsec:words&subshifts}

Let $\cA$ be an {\em alphabet}, {\em i.e.}, a finite set. Elements in $\cA$ are called {\em letters} and concatenations $w = a_1\dots a_\ell$ of them are called {\em words}. 
The number $\ell$ is the length of $w$ and it is denoted by $|w|$, the set of all words in $\cA$ of length $\ell$ is $\cA^\ell$, and $\cA^+ = \bigcup_{\ell\geq1}\cA^\ell$.
It is convenient to introduce the empty word $1$, which has length zero. We set $\cA^* = \cA^+ \cup \{1\}$.
Given $w \in \cA^+$ and $n \geq 1$, $w^n$ denotes the concatenation of $n$ copies of $w$.
If $\cW \subseteq \cA^+$, we set
\begin{equation}
    \text{$\langle\cW\rangle = \min\{|w| : w \in \cW\}$ and $|\cW| = \max\{|w| : w \in \cW\}$.}
\end{equation}
The word $w\in \cA^+$ is $|u|$-{\em periodic}, with $u \in \cA^+$, if $w$ occurs in $u^n$ for some $n \geq 1$.
We denote by $\per(w)$ the least $p$ for which $w$ is $p$-periodic.
We say that $u, v \in \cA^+$ are {\em conjugate} if $u r = r v$ for some $r \in \cA^*$.

In order to describe more precisely the periodicity properties of $w$, we use the notion of {\em root}, which will play a key role throughout the paper.
\begin{defi}
The {\em minimal root}, or just {\em root} for short, of $w \in \cA^*$ is the shortest prefix $u$ of $w$ for which $w = u^k$ for some $k \geq 1$, and it is denoted by $\mroot w$.
\end{defi}
We remark that $\per(w)$ is an integer but that $\mroot w$ is a word, and that $\per(w)$ is in general different from $|\mroot w|$.
% Note also that if $u$ and $v$ are powers of a common word $r$, then $\mroot u = \mroot v = \mroot r$.
% This basic relation will be freely used through the paper.

We endow $\cA^\Z$ with the product topology of the discrete topology in $\cA$. 
Then, $\cA^\Z$ is a Cantor ({\em i.e.}, compact and totally disconnected) space and we call it the {\em full shift over $\cA$}.
Elements $x$ of $\cA^\Z$ are sometimes described using the notation $x = \dots x_{-2} x_{-1}. x_0 x_1\dots$, where the dot indicates the zero coordinate of $x$.
The {\em shift map} $S \colon \cA^\Z \to \cA^\Z$ is defined by $S ((x_n)_{n\in \mathbb{Z}}) = (x_{n+1})_{n\in \mathbb{Z}}$, and is a homeomorphism of $\cA^\Z$ onto itself.
For $x \in \cA^\Z$ and integers $i < j$, we denote by $x_{[i,j)}$ the word $x_i x_{i+1} \dots x_{j-1}$.
Analogous notation will be used when dealing with intervals of the form $[i,\infty)$, $(i,\infty)$, $(-\infty,i]$ and $(-\infty,i)$. 
A {\em subshift} is a closed subset $X$ of $\cA^\Z$ such that $S(X) = X$.
The {\em language} of a subshift $X\subseteq\cA^\Z$ is the set $\cL(X)$ of all words $w\in\cA^+$ that occur in at least one $x \in X$.

Let $X \subseteq \cA^\Z$ be a subshift and $v \in \cA^+$.
We will use the notation $v^\infty = vvv\dots \in \cA^\N$ and $b^\Z = \dots vv.vv\dots \in \cA^\Z$.
We denote by $\mathrm{Pow}_X(v)$ the set of words $v^k$, where $k \geq 1$, for which there exist $u,w \in \cA^+\setminus\{v\}$ of length $|v|$ such that $u v^k w \in \cL(X)$.
The {\em power complexity} of $X$ is the number $\pcom(X) = \sup_{v \in \cA^+} \#\mathrm{Pow}_X(v)$.
Remark that $\pcom(X)$ may be infinite.
Examples with finite power complexity include linearly recurrent subshifts and subshifts in which the extension graph of every long enough word is acyclic (in particular, Sturmian subshifts and codings of minimal interval exchange transformations).

%%%%%%%%%%%%%%%%%%%%%%%%%%%%%%%%%%%%%%%%%%%%%%%%%%%%%%%%%%%%%%%%%%%%%%%%%%%
%%%%%%%%%%%%%%%%%%%%%%%%%%%%%%%%%%%%%%%%%%%%%%%%%%%%%%%%%%%%%%%%%%%%%%%%%%%
\subsection{Morphisms and codings}
\label{subsec:morphisms&codings}
The set $\cA^*$ equipped with the operation of concatenation can be viewed as the free monoid on $\cA$. 
In particular, $\cA^+$ is the free semigroup on $\cA$.

Let $\cA$ and $\cB$ be finite alphabets and $\tau \colon \cA^+ \to \cB^+$ be a morphism between the free semigroups that they define. 
Then, $\tau$ extends naturally to maps from $\cA^\mathbb{N}$ into itself and from $\cA^\mathbb{Z}$ into itself by concatenation (in the case of a two-sided sequence we apply $\tau$ to positive and negative coordinates separately and we concatenate the results at coordinate zero).
We say that $\tau$ is {\em positive} if for every $a\in\cA$, all letters $b \in \cB$ occur in $\tau(a)$, that $\tau$ is {\em proper} if there exist letters $a,b \in \cB$ such that $\tau(c)$ starts with $a$ and ends with $b$ for any $c \in \cA$, and that $\tau$ is {\em injective on letters} if for all $a, b \in \cB$, $\tau(a) = \tau(b)$ implies $a = b$.
The minimum and maximum length of $\tau$ are, respectively, the numbers $\langle\tau\rangle \coloneqq \langle\tau(\cA)\rangle = \min_{a\in\cA}|\tau(a)|$ and $|\tau| \coloneqq |\tau(\cA)| = \max_{a\in\cA}|\tau(a)|$.

We observe that any map $\tau\colon \cA\to \cB^+$ can be naturally extended to a morphism (that we also denote by $\tau$) from $\cA^+$ to $\cB^+$ by concatenation, and that any morphism $\tau\colon \cA^+\to\cB^+$ is uniquely determined by its restriction to $\cA$. 
From now on, we will use the same notation for denoting a map $\tau\colon\cA\to\cB^+$ and its extension to a morphism $\tau\colon\cA^+\to\cB^+$.

\subsubsection{Factorizations and recognizability}

We now introduce {\em factorizations}, the {\em recognizability property} and the associated notation.

\begin{defi}
\label{defi:factorizations}
Let $Y\subseteq \cB^\Z$ be a subshift and $\tau\colon \cB^+\to \cA^+$ be a morphism. 
We say that $(k,y) \in \Z\times Y$ is a {\em $\tau$-factorization of $x \in \cA^\Z$ in $Y$} if $x = S^k\tau(y)$ and $0 \leq k < |\tau(y_0)|$.

The pair $(Y,\tau)$ is {\em recognizable} if every point $x \in \cA^\Z $ has at most one $\tau$-factorization in $Y$.
We say that $(Y,\tau)$ is {\em $d$-recognizable}, with $d \geq 1$, if whenever $(k,y)$ and $(\tilde{k},\tilde{y})$ are $\tau$-factorizations of $x, \tilde{x} \in \cA^\Z$ in $Y$, respectively, and $x_{[-d,d)} = \tilde{x}_{[-d,d)}$, we have that $k = \tilde{k}$ and $y_0 = \tilde{y}_0$.

The {\em cut function} $c\colon\Z\to\Z$ of the $\tau$-factorization $(k,y)$ of $x$ in $Y$ is defined by
\begin{equation}\label{eq:defi_cuts}
c_j = \begin{cases}
    -k + |\tau(y_{[0,j)})| &\text{if } j \geq 0, \\
    -k - |\tau(y_{[j,0)})| &\text{if } j < 0. \end{cases}
\end{equation}
When $(Y, \tau)$ is recognizable, we write $(c, y) = \Fac_{(Y,\tau)}(x)$ and $(c_0, y_0) = \Fac^0_{(Y,\tau)}(x)$.
\end{defi}

\begin{rema}
\label{rem:factorizations}
In the context of the previous definition:
\begin{enumerate}
\item\label{rem:factorizations:image_subshift} 
The point $x \in \cA^\Z$ has a $\tau$-factorization in $Y$ if and only if $x$ belongs to the subshift $X \coloneqq \bigcup_{n\in\Z} S^n\tau(Y)$. 
Hence, $(Y,\tau)$ is recognizable if and only if every $x \in X$ has a exactly {\em one} $\tau$-factorization in $Y$.

\item\label{rem:factorizations:centered} 
If $(k,y)$ is a $\tau$-factorization of $x \in \cA^\Z$ in $Y$ and $c$ is its cut function, then $x = S^{-c_j} \tau(S^j y)$ for any $j \in \Z$. 
However, we have $0 \leq -c_j < |\tau((S^jy)_0)|$ only if $j = 0$.
Therefore, $(-c_j, S^jy)$ is a $\tau$-factorization of $x$ if and only if $j = 0$.

\item If $(Y,\tau)$ is recognizable, then a compacity argument shows that it is $d$-recognizable for some $d \geq 1$.

\item \label{rem:factorizations:cover_symbol}
Suppose that  $(Y,\tau)$ is recognizable.
Let $x \in X$, $(c,y) = \Fac_{(Y,\tau)}(x)$ and $i \in \Z$.
Then, there exists a unique $j \in \Z$ such that $i \in [c_j, c_{j+1})$.
Note that the last condition is equivalent to $\Fac^0_{(Y,\tau)}(S^i x) = (c_j - i, y_j)$.
\end{enumerate}
\end{rema}

The behavior of the recognizability property under composition of morphisms is given by the following two lemmas.
\begin{lem}[\cite{BSTY19}, Lemma 3.5] \label{lem:recognizability_composition}
Let $\sigma\colon \cC\to \cB^+$ and $\tau\colon\cB\to \cC^+$ be morphisms, $Z\subseteq \cC^\Z$ be a subshift and $Y = \bigcup_{k\in\Z} S^k\sigma(Z)$. 
Then, $(Z, \tau\sigma)$ is recognizable if and only if $(Z,\sigma)$ and $(Y,\tau)$ are recognizable.
\end{lem}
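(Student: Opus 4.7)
The plan is to construct a bijection between $\tau\sigma$-factorizations of a point $x$ in $Z$ and pairs consisting of a $\tau$-factorization $(j, y)$ of $x$ in $Y$ together with a $\sigma$-factorization $(\ell, z)$ of $y$ in $Z$, and then read off both implications from uniqueness on each side of this bijection. The forward map sends compatible $(j, y)$ and $(\ell, z)$ to the $\tau\sigma$-factorization $(K, z)$ with $K \coloneqq j + |\tau(\sigma(z)_{[0,\ell)})|$, using the identity $\tau(S^\ell \sigma(z)) = S^{|\tau(\sigma(z)_{[0,\ell)})|}\, \tau\sigma(z)$. The backward map sends $(K, z)$ with $0 \leq K < |\tau\sigma(z_0)|$ to the pair obtained by splitting $K$ uniquely as $K = |\tau(\sigma(z_0)_{[0,m)})| + K_2$ with $m \in [0, |\sigma(z_0)|)$ and $K_2 \in [0, |\tau(\sigma(z_0)_m)|)$, and then setting $y \coloneqq S^m \sigma(z) \in Y$, which makes $(K_2, y)$ a $\tau$-factorization of $x$ in $Y$ and $(m, z)$ a $\sigma$-factorization of $y$ in $Z$. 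A direct computation confirms these two operations are mutually inverse.

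For the $(\Leftarrow)$ implication, given $x \in X = \bigcup_{k\in\Z} S^k\tau\sigma(Z) = \bigcup_{k\in\Z} S^k\tau(Y)$, recognizability of $(Y, \tau)$ selects a unique $\tau$-factorization $(j, y)$ of $x$, and recognizability of $(Z, \sigma)$ applied to $y \in Y$ selects a unique $\sigma$-factorization $(\ell, z)$; the forward map produces a $\tau\sigma$-factorization $(K, z)$ of $x$. For uniqueness, any other $\tau\sigma$-factorization $(K', z')$ of $x$ yields, via the backward map, a $\tau$-factorization of $x$ in $Y$ and a $\sigma$-factorization of that point in $Z$; both must coincide with $(j, y)$ and $(\ell, z)$ by the two recognizability hypotheses, and reassembling via the forward map forces $(K', z') = (K, z)$.

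For the $(\Rightarrow)$ implication, assume $(Z, \tau\sigma)$ is recognizable. Two distinct $\sigma$-factorizations $(k_1, z_1)$ and $(k_2, z_2)$ of some $y \in Y$, paired with the trivial $\tau$-factorization $(0, y)$ of $\tau(y)$, produce via the forward map two distinct $\tau\sigma$-factorizations $\bigl(|\tau(\sigma(z_i)_{[0, k_i)})|, z_i\bigr)$ of $\tau(y) \in X$, contradicting the hypothesis; hence $(Z, \sigma)$ is recognizable. Similarly, two distinct $\tau$-factorizations $(j_1, y_1)$ and $(j_2, y_2)$ of some $x \in X$, combined with arbitrary $\sigma$-factorizations of the $y_i$ in $Z$ (which exist by the definition of $Y$), produce two $\tau\sigma$-factorizations of $x$; equality of these, together with the uniqueness of the decomposition $K = |\tau(\sigma(z_0)_{[0,m)})| + K_2$ from the first paragraph, forces the two original $\tau$-factorizations to coincide. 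The main obstacle I anticipate is routine length bookkeeping: checking $0 \leq K < |\tau\sigma(z_0)|$ in each direction, which reduces to the telescoping identity $|\tau\sigma(z_0)| = \sum_{i=0}^{|\sigma(z_0)|-1} |\tau(\sigma(z_0)_i)|$ together with $|\tau(a)|, |\sigma(a)| \geq 1$ for every letter $a$; the single genuinely combinatorial point is the uniqueness of the additive decomposition of $K$, which follows from the strict monotonicity of the partial sums $m \mapsto |\tau(\sigma(z_0)_{[0, m)})|$.
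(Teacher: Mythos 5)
Your proof is correct. Note that the paper does not prove this lemma at all — it imports it from [BSTY19, Lemma 3.5] — so there is no in-paper argument to compare against; the closest internal relative is Lemma \ref{lem:factorizations_and_compositions}, whose proof performs exactly your splitting $K = |\tau(\sigma(z_0)_{[0,m)})| + K_2$ but only after assuming recognizability, whereas you establish the factorization correspondence unconditionally and then read off both implications from it. The bookkeeping you flag does go through: the forward map lands in $[0,|\tau\sigma(z_0)|)$ because $K = j + |\tau(\sigma(z_0)_{[0,\ell)})| < |\tau(\sigma(z_0)_{[0,\ell]})| \leq |\tau\sigma(z_0)|$, the backward decomposition is unique by strict monotonicity of the partial sums (each $|\tau(b)| \geq 1$), and in the $(\Rightarrow)$ direction the two images $\bigl(|\tau(\sigma(z_i)_{[0,k_i)})|, z_i\bigr)$ are indeed distinct when $(k_1,z_1) \neq (k_2,z_2)$ for the same monotonicity reason. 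The only point worth making explicit is that recognizability of $(Z,\sigma)$ quantifies over all of $\cB^\Z$, not just over $Y$, but points outside $Y$ admit no $\sigma$-factorization in $Z$ (Remark \ref{rem:factorizations}\eqref{rem:factorizations:image_subshift}), so restricting attention to $y \in Y$ loses nothing. This is a clean, self-contained proof of the cited result.
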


\begin{lem} \label{lem:factorizations_and_compositions}
Let $\sigma\colon \cC\to \cB^+$ and $\tau\colon\cB\to \cC^+$ be morphisms and $Z\subseteq \cC^\Z$ be a subshift.
We set $Y = \bigcup_{k\in\Z} S^k \sigma(Z)$ and $X = \bigcup_{k\in\Z} S^k \tau(Y)$.
Suppose that $(Z, \tau \sigma)$ is recognizable.
Let $x \in X$, $(k, Y)$ be a $\tau$-factorization of $x$ in $Y$ and $(\ell, z)$ be a $\tau\sigma$-factorization of $x$ in $Z$.
Then, there exists $m \in [0, |\sigma(z_0)|)$ such that $y = S^m \sigma(z)$ and $k = |\sigma(z_{[-m, 0)})| + \ell$.
\end{lem}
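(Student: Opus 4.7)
The plan is to use the recognizability of $(Y, \tau)$---which follows from that of $(Z, \tau\sigma)$ by Lemma \ref{lem:recognizability_composition}---to identify the unique $\tau$-factorization of $x$ in $Y$, starting from the obvious candidate provided by $z$.

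The first step is to note that $\sigma(z) \in Y$ (this is the $k = 0$ instance in the definition of $Y$) and that $\tau(\sigma(z)) = \tau\sigma(z)$, so $x = S^{\ell}\tau(\sigma(z))$. This exhibits $(\ell, \sigma(z))$ as an ``almost'' $\tau$-factorization of $x$ in $Y$: the identity $x = S^{\ell}\tau(\sigma(z))$ holds, but the cut condition $0 \le \ell < |\tau(\sigma(z)_0)|$ may fail. We do know that $0 \le \ell < |\tau\sigma(z_0)|$, which guarantees the existence of a unique $m \in \{0, \dots, |\sigma(z_0)| - 1\}$ satisfying
\[
|\tau(\sigma(z)_{[0, m)})| \;\le\; \ell \;<\; |\tau(\sigma(z)_{[0, m+1)})|,
\]
and this $m$ will be the shift prescribed by the lemma.

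Setting $y' := S^m \sigma(z) \in Y$ and $k' := \ell - |\tau(\sigma(z)_{[0, m)})|$, one checks using the elementary identity $\tau(S^m w) = S^{|\tau(w_{[0, m)})|}\tau(w)$ (valid for $m \ge 0$) that $S^{k'}\tau(y') = S^{\ell}\tau\sigma(z) = x$; and the inequalities defining $m$ give $0 \le k' < |\tau(\sigma(z)_m)| = |\tau(y'_0)|$. Therefore $(k', y')$ is a bona fide $\tau$-factorization of $x$ in $Y$. Since $(Y, \tau)$ is recognizable, this factorization is unique, so $(k', y') = (k, y)$. Unpacking the definitions of $m$, $y'$ and $k'$ in terms of the cumulative lengths attached to $\sigma(z_0)$ and $z_{[-m,0)}$ then yields the announced formulas.

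The argument is essentially bookkeeping on cut functions, and I do not foresee any conceptual obstacle. The only subtle point is keeping the sign and shift conventions of Definition \ref{defi:factorizations} consistent so that the shift $S^m$ acting on $\sigma(z)$ translates to the correct offset inside $\tau\sigma(z_0)$; once that is pinned down, the proof is mechanical.
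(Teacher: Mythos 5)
Your proposal is correct and follows essentially the same route as the paper: choose $m$ so that $|\tau(\sigma(z)_{[0,m)})| \le \ell < |\tau(\sigma(z)_{[0,m]})|$, verify that $(\ell - |\tau(\sigma(z)_{[0,m)})|,\, S^m\sigma(z))$ is a $\tau$-factorization of $x$ in $Y$, and conclude by the recognizability of $(Y,\tau)$ obtained from Lemma \ref{lem:recognizability_composition}. No gap to report.
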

\begin{proof}
Being $(\ell,z)$ a $\tau\sigma$-factorization of $x$, we have that $\ell \in [0, |\tau(\sigma(z_0))|)$.
Hence, there exists $m \in [0, |\sigma(z_0)|)$ such that
\begin{equation} \label{eq:lem:factorizations_and_compositions:1}
    |\sigma(z)_{[0, m)}| \leq \ell < |\sigma(z)_{[0,m]}|.
\end{equation}
Therefore, as $(\ell,z)$ is a $\tau\sigma$-factorization of $x$, we can write
\begin{equation*}
    S^{\ell- |\tau(\sigma(z)_{[0,m)})|} \tau(S^m \sigma(z)) = S^{\ell} \tau\sigma(z) = x.
\end{equation*}
This and \eqref{eq:lem:factorizations_and_compositions:1} ensure that $(\ell- |\tau(\sigma(z)_{[0,m)})|, S^m\sigma(z))$ is a $\tau$-factorization of $x$ in $Y$.
We conclude, using that $(Y, \tau)$ is recognizable by Lemma \ref{lem:recognizability_composition}, that $\ell- |\tau(\sigma(z)_{[0,m)})| = k$ and $S^m \sigma(z) = y$.
\end{proof}

\subsubsection{Codings of a subshift}

We fix a subshift $X \subseteq \cA^\Z$.
A {\em coding} of $X$ is a pair $(Y, \tau)$, where $Y \subseteq \cB^\Z$ is a subshift and $\tau \colon \cB^+\to\cA^+$ a morphism satisfying $X = \bigcup_{k\in\Z} S^k\tau(Y)$.
We present in Proposition \ref{prop:from_clopen_to_coding} a general method for building recognizable codings of a subshift $X$.
This idea occurs commonly in the literature under several different names and with different degrees of generality.
Our Proposition \ref{prop:from_clopen_to_coding} is inspired by the coding based on return words from \cite{durand98}.

Let $U \subseteq X$ be a clopen ({\em i.e.}, open and closed) set. 
We say that $U$ is
\begin{enumerate}
    \item {\em $\ell$-syndetic} if for all $x \in X$ there exists $k \in [0, \ell)$ such that $S^k x \in U$;
    \item of {\em radius $r$} if $U$ is an union of sets of the form $\{x \in X : x_{[-|u|,|v|)} = uv\}$, where $u,v \in \cA^r$.
\end{enumerate}
Remark that, since $X$ is minimal, any nonempty clopen set $U$ is $\ell$-syndetic and of radius $r$ for some $\ell$ and $r$.
% A {\em return word to $U$ in $X$} is a word of the form $u = x_{[i,j)}$, where $x \in X$, $S^j y \in U$, $S^i x \in U$ and $S^k x \not\in U$ for $k\in(i,j)$.

\begin{prop} \label{prop:from_clopen_to_coding}
Let $U \subseteq X$ be a nonempty clopen set. 
There exists a recognizable coding $(Y \subseteq \cB^\Z, \sigma\colon\cB\to\cA^+)$ of $X$, with $\sigma$ injective on letters, such that if $x \in X$, $(c, y) = \Fac_{(Y,\sigma)}(x)$ and $i \in \Z$, then $S^i x \in U$ if and only if $i = c_j$ for some $j \in \Z$.

If $U$ is $\ell$-syndetic and of radius $r$, then $(Y, \sigma)$ additionally satisfies that:
\begin{enumerate}
    \item $|\sigma(a)| \leq \ell$ for all $a \in \cB$.
    \item $(Y, \sigma)$ is $(\ell+r)$-recognizable.
\end{enumerate}
\end{prop}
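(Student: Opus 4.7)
My plan is to implement the classical return-word coding in the spirit of Durand. First I define the alphabet $\cB$ and substitution $\sigma$. Since $X$ is minimal and $U$ is a nonempty clopen set, $U$ is $\ell$-syndetic for some $\ell$, so the first-return time $r_U(x) = \min\{j > 0 : S^j x \in U\}$ is bounded by $\ell$ on $U$. Because $U$ is clopen, $r_U$ is locally constant, and consequently the set $\cR_U = \{x_{[0, r_U(x))} : x \in U\}$ of return words is finite. I let $\cB$ be an alphabet in bijection with $\cR_U$ via a map $\sigma \colon \cB \to \cR_U \subseteq \cA^+$, which is injective on letters by construction.

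Next I construct $Y$. For each $x \in X$, the visit set $T(x) = \{t \in \Z : S^t x \in U\}$ is enumerated by $(t_j^x)_{j \in \Z}$ with $t_0^x \leq 0 < t_1^x$, and I define $y^x \in \cB^\Z$ by $y_j^x = \sigma^{-1}(x_{[t_j^x, t_{j+1}^x)})$. Since $U$ is clopen and $\ell$-syndetic, each $t_j^x$ is locally constant in $x$, so $x \mapsto y^x$ is continuous; consequently $Y \coloneqq \{y^x : x \in X\}$ is a compact shift-invariant subset of $\cB^\Z$, hence a subshift. The identity $\sigma(y^x) = S^{t_0^x}x$ yields the canonical factorization $x = S^{-t_0^x}\sigma(y^x)$ with $-t_0^x \in [0, |\sigma(y_0^x)|)$, which in particular gives $X = \bigcup_k S^k\sigma(Y)$.

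The heart of the argument, which I expect to be the main step, is proving the identity $\{c_j : j \in \Z\} = T(x)$ for any $\sigma$-factorization $(k, y)$ of any $x \in X$ in $Y$, where $c$ is the associated cut function. Writing $y = y^{x'}$ for some $x' \in X$, one computes $S^{c_j}x = \sigma(S^j y) = \sigma(y^{x''})$ where $x'' = S^{t_j^{x'}}x' \in U$; since $x'' \in U$ gives $\sigma(y^{x''}) = x''$, I obtain $S^{c_j}x \in U$ and thus $\{c_j\} \subseteq T(x)$. Equality follows because the gaps $c_{j+1} - c_j = |\sigma(y_j)|$ equal return times of $x'$, forbidding intermediate visits to $U$. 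Once this cut-equals-visit identity is established, recognizability is immediate: $\{c_j\}$ depends only on $x$, so $c_0 = \max\{i \leq 0 : S^i x \in U\}$ determines $k = -c_0$, and each $y_j$ is then recovered as $\sigma^{-1}(x_{[c_j, c_{j+1})})$ using that $\sigma$ is injective on letters.

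The quantitative statements are then direct consequences. The bound $|\sigma(b)| \leq \ell$ is simply the fact that each return word has length at most $\ell$ by $\ell$-syndeticity. For $(\ell+r)$-recognizability, observe that $c_0 \in [1-\ell, 0]$ and $c_1 \in (0, \ell]$, so recovering $k$ and $y_0$ reduces to checking membership of $S^i x$ in $U$ at positions $i \in [1-\ell, \ell]$; since $U$ has radius $r$, each such check depends only on the window $x_{[i - r, i + r)}$, and all these windows lie inside $x_{[-\ell - r, \ell + r)}$, concluding the argument.
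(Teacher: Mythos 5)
Your proof is correct and coincides with the approach the paper intends: the paper states this proposition without an explicit proof, citing the return-word coding of \cite{durand98}, and your construction --- return words to $U$, the coded sequences $y^x$, the cut-equals-visit identity yielding recognizability, and the window argument giving $|\sigma(a)|\leq\ell$ and $(\ell+r)$-recognizability --- is exactly that standard argument. The only steps you leave implicit (shift-invariance of $Y$ via $Sy^x = y^{S^{t_1^x}x}$, and that recovering $y_0$ also uses the letters $x_{[c_0,c_1)}$, which likewise lie inside the window $x_{[-(\ell+r),\ell+r)}$) are routine verifications, not gaps.
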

% add proof; this uses defi {defi:return_words_to_clopen_set}

%%%%%%%%%%%%%%%%%%%%%%%%%%%%%%%%%%%%%%%%%%%%%%%%%%%%%%%%%%%%%%%%%%%%%%%%%%%
%%%%%%%%%%%%%%%%%%%%%%%%%%%%%%%%%%%%%%%%%%%%%%%%%%%%%%%%%%%%%%%%%%%%%%%%%%%
\subsection{\texorpdfstring{$\cS$}--adic subshifts}
\label{subsec:Sadic}

We recall the definition of an {\em $\cS$-adic subshift} as stated in \cite{BSTY19}.
A {\em directive sequence} $\btau = (\tau_n \colon \cA_{n+1}^+\to \cA_n^+)_{n \geq 0}$ is a sequence of morphisms. 
For $0 \leq n < m$, we denote by $\tau_{[n,m)}$ the morphism $\tau_n \circ \tau_{n+1} \circ \dots \circ \tau_{m-1}$. 
We say that $\btau$ is {\em everywhere growing} if
\begin{equation}\label{eq:defi_everywhere_growing}
\lim_{n\to +\infty}\langle\tau_{[0,n)}\rangle = +\infty,
\end{equation}
and {\em primitive} if for any $n \geq 0$ there exists $m  > n$ such that $\tau_{[n,m)}$ is positive. 
Observe that $\btau$ is everywhere growing if $\btau$ is primitive.
We remark that this notion is slightly different from the usual one used in the context of substitutional dynamical systems. 

For $n \geq 0$, we define
\begin{equation*}
X^{(n)}_{\btau} = \big\{x \in \cA_n^\Z :\
\mbox{$\forall \ell\in\N$, $x_{[-\ell,\ell]}$ occurs in $\tau_{[n,m)}(a)$ for some $m>n,a\in\cA_m$}\big\}.
\end{equation*}
This set clearly defines a subshift.
We set $X_{\btau} = X^{(0)}_{\btau}$ and call this set the {\em $\cS$-adic subshift generated by $\btau$}. 
If $\btau$ is everywhere growing, then every $X^{(n)}_{\btau}$, $n\geq0$, is nonempty; if $\btau$ is primitive, then $X^{(n)}_{\btau}$ is minimal for every $n\in\N$. 
There are non-everywhere growing directive sequences that generate minimal subshifts.

The relation between the $X_{\btau}^{(n)}$ is given by the following lemma.
\begin{lem}[\cite{BSTY19}, Lemma 4.2]\label{lem:desubstitution}
Let $\btau = (\tau_n \colon \mathcal{A}_{n+1}^+\to \mathcal{A}_n^+)_{n \in \N}$ be a directive sequence of morphisms.
For any $0 \leq n < m$, $(X_{\btau}^{(m)}, \tau_{[n,m)})$ is a coding of $X_{\btau}^{(n)}$.
\end{lem}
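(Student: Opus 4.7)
I would prove the set equality $X_{\btau}^{(n)} = \bigcup_{k\in\Z} S^{k}\tau_{[n,m)}\bigl(X_{\btau}^{(m)}\bigr)$ by double inclusion; this is exactly the defining condition for $(X_{\btau}^{(m)}, \tau_{[n,m)})$ to be a coding of $X_{\btau}^{(n)}$.

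For the inclusion ``$\supseteq$'' the plan is to unfold the definitions directly. Given $y \in X_{\btau}^{(m)}$ and $x = S^{k}\tau_{[n,m)}(y)$, I would associate to each window $x_{[-\ell,\ell]}$ an $L$ large enough that $x_{[-\ell,\ell]}$ sits inside $\tau_{[n,m)}\bigl(y_{[-L,L]}\bigr)$. Since $y \in X_{\btau}^{(m)}$, the factor $y_{[-L,L]}$ occurs in some $\tau_{[m,m')}(b)$ with $m' > m$ and $b \in \cA_{m'}$, so applying $\tau_{[n,m)}$ realises $x_{[-\ell,\ell]}$ as a subword of $\tau_{[n,m')}(b)$; hence $x \in X_{\btau}^{(n)}$.

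For ``$\subseteq$'' I plan a compactness argument. Starting from $x \in X_{\btau}^{(n)}$, the defining property yields, for each $\ell \geq 1$, integers $m_\ell > n$ and letters $a_\ell \in \cA_{m_\ell}$ with $x_{[-\ell,\ell]}$ occurring in $\tau_{[n,m_\ell)}(a_\ell)$; I may assume $m_\ell > m$ (since the lengths $|\tau_{[n,m')}(a)|$ are bounded for $m' \leq m$ while $|x_{[-\ell,\ell]}| \to \infty$), and factor $\tau_{[n,m_\ell)}(a_\ell) = \tau_{[n,m)}(w_\ell)$ with $w_\ell := \tau_{[m,m_\ell)}(a_\ell) \in \cA_m^+$. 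Let $p_\ell$ be the position in $\tau_{[n,m)}(w_\ell)$ aligned with $x_0$, so that $p_\ell \geq \ell$ and $|\tau_{[n,m)}(w_\ell)| - p_\ell > \ell$; let $i_\ell$ be the unique letter-index of $w_\ell$ whose image covers position $p_\ell$, and $k_\ell \in [0, |\tau_{[n,m)}|)$ the offset inside that image. I would build $y^{(\ell)} \in \cA_m^\Z$ by placing $w_\ell$ around coordinate $0$ so that $(w_\ell)_{i_\ell}$ sits at position $0$ and extending arbitrarily; by construction $S^{k_\ell}\tau_{[n,m)}\bigl(y^{(\ell)}\bigr)$ coincides with $x$ on an expanding window about $0$. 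Since $k_\ell$ lives in a finite set and $\cA_m^\Z$ is compact, I extract a convergent subsequence $(k_{\ell_j}, y^{(\ell_j)}) \to (k,y)$, and continuity of the shift and of $\tau_{[n,m)}$ yields $x = S^{k}\tau_{[n,m)}(y)$.

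The step I expect to require the most care is confirming $y \in X_{\btau}^{(m)}$. Given $L \geq 0$, I pick $\ell$ large enough that $y_{[-L,L]} = y^{(\ell)}_{[-L,L]}$. The estimates $p_\ell \geq \ell$ and $|\tau_{[n,m)}(w_\ell)| - p_\ell > \ell$, combined with $|\tau_{[n,m)}| < \infty$, force the prefix $(w_\ell)_{[0, i_\ell)}$ and the suffix $(w_\ell)_{(i_\ell, |w_\ell|)}$ to have length at least $\ell/|\tau_{[n,m)}| - 1$, which exceeds $L$ once $\ell$ is large. Consequently $y^{(\ell)}_{[-L,L]}$ is a genuine subword of $w_\ell = \tau_{[m,m_\ell)}(a_\ell)$, certifying the defining property of $X_{\btau}^{(m)}$ at scale $L$ and closing the argument.
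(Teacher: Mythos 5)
Your argument is correct. Note that the paper itself gives no proof of this statement — it is quoted from \cite{BSTY19} (Lemma 4.2) — so there is nothing internal to compare against; your double-inclusion argument is essentially the standard one. The two places where care is needed are exactly the ones you flag and handle correctly: for ``$\subseteq$'' you may indeed assume $m_\ell > m$ for all large $\ell$ because the finitely many words $\tau_{[n,m')}(a)$ with $n < m' \leq m$ have bounded length, and the limit point $y$ lies in $X_{\btau}^{(m)}$ because your bounds $i_\ell \geq \ell/|\tau_{[n,m)}| - 1$ and $|w_\ell| - i_\ell - 1 > \ell/|\tau_{[n,m)}| - 1$ force $y_{[-L,L]} = y^{(\ell)}_{[-L,L]}$ to sit inside $w_\ell = \tau_{[m,m_\ell)}(a_\ell)$ once $\ell$ is large. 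One cosmetic remark: since the paper's definition of a coding only asks for the set equality $X_{\btau}^{(n)} = \bigcup_{k\in\Z} S^k\tau_{[n,m)}(X_{\btau}^{(m)})$, you do not need to worry about closedness of the right-hand side, and the degenerate case where these sets are empty is covered automatically by your two inclusions.
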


% We define the {\em alphabet rank} of a directive sequence $\btau$ as 
% \begin{equation*}
%     \mathrm{AR}(\btau) = \liminf_{n\to +\infty} \#\cA_n.
% \end{equation*}

% A {\em contraction} of $\btau$ is a sequence $\tilde{\btau} = (\tau_{[n_k,n_{k+1})}\colon \cA_{n_{k+1}}^+\to \cA_{n_k}^+)_{k\in\N}$, where $0 = n_0 < n_1 < n_2 < \dots$. 
% Observe that any contraction of $\btau$ generates the same $\cS$-adic subshift $X_{\btau}$. 
% If $\btau$ has finite alphabet rank, then there exists a contraction $\tilde{\btau} = (\tau_{[n_k,n_{k+1})}\colon \cA_{n_{k+1}}^+\to \cA_{n_k}^+)_{k\in\N}$ of $\btau$ in which $\cA_{n_k}$ has cardinality $\mathrm{AR}(\btau)$ for every $k\geq1$.

%%%%%%%%%%%%%%%%%%%%%%%%%%%%%%%%%%%%%%%%%%%%%%%%%%%%%%%%%%%%%%%%%%%%%%%%%%%
%%%%%%%%%%%%%%%%%%%%%%%%%%%%%%%%%%%%%%%%%%%%%%%%%%%%%%%%%%%%%%%%%%%%%%%%%%%
\subsection{The complexity function}
\label{subsec:complexity}

The {\em complexity function} $p_X\colon\Z_{\geq1} \to \Z_{\geq1}$ of a subshift $X$ is defined by $p_X(n) = \#\cL(X) \cap \cA^n$.
Equivalently, $p_X(n)$ counts the number of words of length $n$ that occur in at least one $x \in X$.

\begin{defi} \label{defi:complexity_classes} 
We say that $X$ has 
\begin{enumerate}
    \item {\em linear-growth complexity} if there exists $d > 0$ such that $p_X(n) \leq dn$ for all $n \geq 1$;
    \item {\em nonsuperlinear-growth complexity} if there exists $d > 0$ such that $p_X(n) \leq dn$ for infinitely many $n \geq 1$.
\end{enumerate}
\end{defi}

\begin{rema}
When $X$ is infinite, then a classic theorem of Morse and Hedlund \cite{MH38} ensures that $p_X(n) \geq n + 1$ for all $n \geq 0$.
Thus, an infinite subshift of linear-growth complexity satisfies $n \leq p_X(n) \leq dn$, and so $p_X$ grows linearly.
\end{rema}

The following theorem is classic.
\begin{theo}[\cite{Cassaigne95}] \label{theo:cassaigne95}
Let $X$ be a transitive linear-growth complexity subshift. 
Then, $p_X(n+1)-p_X(n)$ is uniformly bounded.
\end{theo}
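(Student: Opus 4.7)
The plan is to prove this via the classical bispecial-factor formula combined with a Rauzy-graph argument that leverages transitivity.

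Set $s(n) = p_X(n+1) - p_X(n)$. For $w \in \cL(X) \cap \cA^n$, let $r(w)$, $\ell(w)$, and $b(w)$ denote respectively the number of right, left, and bi-extensions of $w$ by single letters of $\cA$, and call $w$ bispecial if $r(w), \ell(w) \geq 2$; write $BS_n$ and $RS_n$ for the sets of bispecial and right-special factors of length $n$. A double counting gives $s(n) = \sum_{w \in \cL(X) \cap \cA^n}(r(w) - 1)$, and, using that any non-bispecial $w$ satisfies $b(w) = r(w)\ell(w)$ and therefore contributes zero, a comparison between consecutive lengths yields
$$ s(n+1) - s(n) = \sum_{w \in BS_n} m(w), \qquad m(w) := b(w) - r(w) - \ell(w) + 1. $$
The inequalities $\max(r(w), \ell(w)) \leq b(w) \leq r(w)\ell(w)$ (the left one because every factor of a subshift is bi-extendable) give $-1 \leq m(w) \leq (|\cA|-1)^2$ for bispecial $w$, so the jumps of $s$ are controlled by $|BS_n|$.

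The linear-growth hypothesis gives $\sum_{k<n} s(k) = p_X(n) - p_X(0) \leq dn$, so $s$ is bounded in Ces\`aro average, while transitivity makes the Rauzy graph $G_n$ (vertices $\cL(X) \cap \cA^n$, edges $\cL(X) \cap \cA^{n+1}$) strongly connected for every sufficiently large $n$, with $s(n) = |E(G_n)| - |V(G_n)|$ equal to the cyclomatic number of $G_n$ minus one.

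The core step is to bound $|RS_n|$ uniformly, which then bounds $s(n) \leq (|\cA|-1)|RS_n|$. The map $\phi_n\colon RS_{n+1} \to RS_n$ sending $w'$ to its length-$n$ suffix is well-defined, since $r(w') \geq 2$ forces $r(\phi_n(w')) \geq 2$; iterating, every right-special factor sits atop a chain of right-special ancestors of all smaller lengths. In a strongly connected Rauzy graph these chains must repeatedly merge on the left, which forces them to cross bispecial factors regularly, and the positive multiplicities accumulated along the way, by the telescoped identity, would produce a super-linear contribution to $p_X$ if $|RS_n|$ were unbounded along a subsequence, contradicting the Ces\`aro bound.

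The main obstacle, I expect, is controlling the cancellation between positive and negative values of $m(w)$ at different scales: the Ces\`aro bound on $s$ and the bound $|m(w)| \leq (|\cA|-1)^2$ alone leave open the possibility that $s$ oscillates, with positive jumps balanced by later negative ones, allowing $s$ to grow like $O(\sqrt{n})$ rather than remaining bounded. The quantitative role of transitivity is precisely to preclude such oscillation: it guarantees that enough right-special factors descend to common bispecial ancestors via the maps $\phi_n$, so that large $|RS_n|$ produces a sustained sequence of same-sign bispecial contributions rather than cancelling ones. Making this rigorous, likely through a careful inductive analysis of $G_n$ and its line-graph-style transition to $G_{n+1}$, is the crux of the argument.
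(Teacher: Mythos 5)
Your proposal is not a complete proof: the step you yourself flag as the crux --- that strong connectivity of the Rauzy graphs forces the chains of right-special ancestors to ``merge and cross bispecial factors regularly'' in a way that bounds $\#\RS_n(X)$ uniformly --- is precisely the hard content of Cassaigne's theorem, and it is only gestured at. Everything preceding it (the identity $s(n+1)-s(n)=\sum_{w \text{ bispecial}} m(w)$, the bound on $|m(w)|$, the Ces\`aro bound $\sum_{k<n}s(k)\leq dn$, strong connectivity of the Rauzy graphs under transitivity) is standard and, as you correctly observe, is compatible with $s(n)$ oscillating and growing like $\sqrt{n}$. Telescoping the second-difference identity cannot by itself convert ``many right-special factors at one length'' into a superlinear contribution to $p_X$, because negative multiplicities at later bispecial factors can cancel earlier positive ones; the genuine argument requires a quantitative propagation statement (roughly: a large value of $s(n)$ cannot decay before lengths comparable to a multiple of $n$, which then contradicts $p_X(m)\leq dm$), and nothing in your outline supplies it. There is also a small slip: $m(w)\geq -1$ is false once $\#\cA\geq 3$; one only gets $m(w)\geq 1-\min(r(w),\ell(w))$, though this does not affect the outline since only an upper bound on $|m(w)|$ is used.

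For comparison, the proof given in this paper (Theorem \ref{apps:theo:cassaigne}, stated for minimal $X$) avoids Rauzy-graph combinatorics entirely: it invokes the $\cS$-adic structure of Theorem \ref{theo:main_sublinear}, so that $X\subseteq\bigcup_{k\in\Z}S^k\sigma_{[0,n)}(\cA_n^\Z)$ with a bounded number of roots and balanced lengths, and then applies Proposition \ref{apps:pX_bound_from_coding}, which bounds $p_X(\ell+1)-p_X(\ell)$ by $256\,\#\cA\cdot\#(\mroot\cW)^2|\cW|^2/\ell^2$ for any covering word set $\cW$ with $\langle\cW\rangle>\ell$; the key tool there is the Fine--Wilf-type synchronization Lemma \ref{apps:tech_pre_cassaigne}, which shows that too many right-special words of length $\ell$ would force periodicity and hence coincident right extensions. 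So the paper's route and yours are genuinely different, but yours, as written, has a gap at its central step: either supply Cassaigne's propagation analysis in full, or switch to an argument of the synchronization type used here.
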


For the proof of Theorems \ref{theo:main_sublinear} and \ref{theo:main_nonsuperlinear} in Section \ref{sec:MT}, we will need only the following two weaker versions of Cassaigne's Theorem.
\begin{lem} \label{preli:find_good_s_sublinear}
Let $X$  be a subshift and $d \geq 1$ be such that $p_X(n) \leq dn$ for all $n \geq 1$. 
Then, for every $n \geq 1$ there exists $m \in [n, 2n)$ such that $p_X(m+1) - p_X(m) \leq 2d$.
\end{lem}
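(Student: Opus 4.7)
The plan is a simple averaging (pigeonhole) argument on the first-difference function of the complexity. Set $s(k) := p_X(k+1) - p_X(k)$. Before anything else, I would observe that $s(k) \geq 0$: restricting a word of length $k+1$ in $\cL(X)$ to its length-$k$ prefix is a surjection onto $\cL(X) \cap \cA^k$, because any word in $\cL(X)$ occurs in some $x \in X$ and therefore extends on the right. Hence $p_X(k+1) \geq p_X(k)$.

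The core step is telescoping over the window $[n, 2n)$:
$$ \sum_{m=n}^{2n-1} s(m) \;=\; p_X(2n) - p_X(n) \;\leq\; p_X(2n) \;\leq\; 2dn, $$
where the last inequality is the hypothesis applied at $2n$. Since this sum consists of exactly $n$ nonnegative terms and is bounded by $2dn$, the minimum value among $\{s(m) : m \in [n,2n)\}$ is at most the average, $2dn/n = 2d$. Picking an index $m$ achieving this minimum yields $m \in [n, 2n)$ with $p_X(m+1) - p_X(m) \leq 2d$, which is the conclusion.

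There is no real obstacle here: the argument is a two-line telescoping estimate followed by averaging, and the only fact used beyond the hypothesis is the monotonicity $p_X(k+1) \geq p_X(k)$, which is immediate from the extendability of factors in a subshift.
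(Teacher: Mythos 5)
Your proof is correct and follows essentially the same route as the paper: telescope $\sum_{m=n}^{2n-1}\bigl(p_X(m+1)-p_X(m)\bigr)=p_X(2n)-p_X(n)\leq 2dn$ and take the index realizing the minimum, which is at most the average $2d$. The remark on monotonicity of $p_X$ is harmless but not needed, since the minimum of finitely many reals is bounded by their average regardless of sign.
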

\begin{proof}
Let $n \geq 1$. We observe that the average of $p_X(m+1) - p_X(m)$ for $m \in [n, 2n)$ can be bounded as follows by using that $p_X(2n) \leq 2dn$:
\begin{equation*}
\frac{1}{n}\sum_{m=n}^{2n-1} p_X(m+1) - p_X(m) =
\frac{1}{n}(p_X(2n) - p(n)) \leq 2d.
\end{equation*}
Thus, there exists $m \in [n, 2n)$ satisfying $p_X(m+1) - p_X(m) \leq 2d$.
\end{proof}

\begin{lem} \label{preli:find_good_s_nonsuperlinear}
Let $X \subseteq \cA^\Z$  be a subshift and $d \geq 1$ be such that $p_X(n) \leq dn$ for infinitely many $n \geq 1$. 
Then, there are infinitely many $m$ such that $p_X(m) \leq 3dm$ and $p_X(m+1) - p_X(m) \leq 2d$.
\end{lem}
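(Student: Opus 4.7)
The plan is to emulate the averaging argument from Lemma \ref{preli:find_good_s_sublinear}, with the window chosen so that both endpoints of the interval have controlled complexity. Because $p_X(n) \leq dn$ is only assumed at infinitely many $n$, anchoring a window of the form $[n, 2n)$ at a good $n$ (as in the linear case) leaves the right endpoint $2n$ uncontrolled, which is precisely what breaks in the nonsuperlinear setting. The natural fix is to anchor the window at the right endpoint: for each $n$ in the infinite set of indices satisfying $p_X(n) \leq dn$, I would work with the window $m \in [\lfloor n/2 \rfloor, n)$.

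First, I would use the telescoping identity
\begin{equation*}
\sum_{m = \lfloor n/2 \rfloor}^{n-1} (p_X(m+1) - p_X(m)) = p_X(n) - p_X(\lfloor n/2 \rfloor) \leq dn,
\end{equation*}
and, dividing by the $\lceil n/2 \rceil$ terms in the window, conclude that the average of the consecutive first differences is at most $2d$. This produces some $m$ in the window with $p_X(m+1) - p_X(m) \leq 2d$. Second, I would verify the complexity bound at the same $m$: since $m \geq \lfloor n/2 \rfloor \geq (n-1)/2$, one has $n \leq 2m + 1 \leq 3m$ (for $m \geq 1$), so $p_X(m) \leq p_X(n) \leq dn \leq 3dm$. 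Finally, letting $n$ range through the infinite set of good indices yields infinitely many distinct such $m$ because $\lfloor n/2 \rfloor \to \infty$.

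No serious obstacle is anticipated; the only mild subtlety is a parity check when verifying $dn/\lceil n/2 \rceil \leq 2d$, which is an equality for even $n$ and strictly less than $2d$ for odd $n$. The essential conceptual point is simply recognizing that the averaging window must be anchored on the right rather than on the left, so that its right endpoint inherits the hypothesis $p_X(n) \leq dn$ and thereby bounds both the telescoped sum \emph{and} the complexity of the resulting $m$.
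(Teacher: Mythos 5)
Your proposal is correct and is essentially the paper's own argument: anchor the averaging window at a right endpoint $n$ where $p_X(n)\leq dn$ holds, telescope the first differences over $[\lfloor n/2\rfloor, n)$ to get some $m$ with $p_X(m+1)-p_X(m)\leq 2d$, and use $n\leq 2m+1\leq 3m$ together with monotonicity of $p_X$ to get $p_X(m)\leq 3dm$. The only cosmetic difference is that the paper additionally picks $k\geq 2n$ to make explicit that $m\geq n$, whereas you obtain infinitely many $m$ by noting $\lfloor n/2\rfloor\to\infty$; these are interchangeable.
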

\begin{proof}
Let $n \geq 1$ be arbitrary.
The hypothesis permits to find $k \geq 2n$ such that $p_X(k) \leq dk$.
We now observe that 
\begin{equation*}
    \frac{1}{\lceil k/2\rceil}\sum_{m = \lfloor k/2\rfloor}^k p_X(m+1) - p_X(m) \leq
    \frac{1}{\lceil k/2\rceil} p_X(k) \leq 2d.
\end{equation*}
Therefore, there exists $m$ such that $\lfloor k/2\rfloor \leq m \leq k$ and $p_X(m+1) - p_X(m) \leq 2d$.
The first condition ensures that $m \geq n$ and $p_X(m) \leq p_X(k) \leq dk \leq 3dm$.
\end{proof}
\section{Some combinatorial lemmas}
\label{sec:CL}

In order to prove our main results, we will need to extensively deal with strongly periodic words.
The objective of this section is to give the necessary tools for doing so.

A basic result on periodicity of words is the {\em Fine and Wilf Theorem}, which we state below.
\begin{theo}\label{theo:fine&wilf}
Let $u,v,w \in \cA^+$ and suppose that $w$ is a prefix of $u^\infty$ and $v^\infty$.
If $|w| \geq |u| + |v| - 1$, then there exists $t \in \cA^+$ such that $u$ and $v$ are powers of $t$.
\end{theo}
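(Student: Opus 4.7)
My plan is to proceed by strong induction on $|u|+|v|$, assuming without loss of generality that $|u| \geq |v|$. If $|u|=|v|$, then since $|w| \geq |u|+|v|-1 \geq |u|$, both $u$ and $v$ equal the prefix of $w$ of length $|u|$; hence $u=v$, and $t=u$ works.

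For the inductive step ($|u|>|v|$), I would focus on the suffix $w' \coloneqq w_{[|v|,|w|)}$, which has length $|w'| = |w|-|v| \geq |u|-1 = (|u|-|v|)+|v|-1$. The key observation is that $w'$ admits both $|v|$ and $|u|-|v|$ as periods. The $|v|$-period is inherited directly from the fact that $w$ is a prefix of $v^{\infty}$. The $(|u|-|v|)$-period comes from combining, for $i$ in a suitable interval, the equalities $w_i = w_{i+|v|}$ and $w_i = w_{i+|u|}$ inherited from $w$ being a prefix of $u^{\infty}$ and $v^{\infty}$; this yields $w_{i+|v|} = w_{i+|u|}$, which, rewritten with $j=i+|v|$, says that $w_j = w_{j+(|u|-|v|)}$ on the appropriate subinterval of $[|v|,|w|)$.

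Setting $u^{\ast} \coloneqq w'_{[0,|u|-|v|)}$ and $v^{\ast} \coloneqq w'_{[0,|v|)}$, the two periodicities of $w'$ translate exactly into the statement that $w'$ is a prefix of both $(u^{\ast})^{\infty}$ and $(v^{\ast})^{\infty}$. Since $|u^{\ast}|+|v^{\ast}| = |u| < |u|+|v|$, the inductive hypothesis applied to $(u^{\ast}, v^{\ast}, w')$ produces a word $t$ with $u^{\ast}=t^{i}$ and $v^{\ast}=t^{j}$ for some $i,j\geq 1$.

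Finally, I would transfer this back to $u$ and $v$. Because $|u|\geq|v|+1$ gives $|w|\geq 2|v|$, the $|v|$-periodicity of $w$ yields $w_{[|v|,2|v|)} = w_{[0,|v|)} = v$, so $v^{\ast}=v=t^{j}$. Similarly $u^{\ast} = w_{[|v|,|u|)}$ is the tail of $u$ past its initial copy of $v$, so $u = v\cdot u^{\ast} = t^{j}\cdot t^{i} = t^{i+j}$, closing the induction. The main obstacle, and where I would be most careful, is the bookkeeping of the index ranges in the derivation of the two periods of $w'$: the hypothesis $|w|\geq|u|+|v|-1$ is tight, leaving no slack, and a careless range could either invalidate one of the periods or fall short of the $(|u^{\ast}|+|v^{\ast}|-1)$-length required by the inductive hypothesis.
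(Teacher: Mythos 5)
Your proof is correct, but it is worth noting that the paper does not actually prove this statement: it only cites \cite[Chapter 6, Theorem 6.1]{handbook_words}, so there is no internal argument to compare against. What you give is the classical elementary induction for the weak form of Fine and Wilf (period bound $|u|+|v|-1$ rather than $|u|+|v|-\gcd$), and the tight bookkeeping you were worried about does check out: writing $n=|w|$, the period $|u|-|v|$ of $w'=w_{[|v|,n)}$ is needed exactly for indices $j$ with $j+|u|\leq n-1$, and that is precisely the range on which both $w_j=w_{j+|v|}$ and $w_j=w_{j+|u|}$ are available, so the hypothesis $|w|\geq|u|+|v|-1$ is used with no slack and the shifted word $w'$ has length exactly $\geq |u^{\ast}|+|v^{\ast}|-1$ as required by the inductive hypothesis; the transfer back ($v^{\ast}=v$ via $2|v|\leq|w|$, and $u=v\,u^{\ast}=t^{i+j}$) is likewise sound, as is the base case $|u|=|v|$. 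Compared with the paper's route, your argument buys genuine self-containedness (in the spirit the authors themselves invoke when they prove Propositions \ref{CL:crit_fact_like} and \ref{CL:localize_per_aper} from Fine and Wilf alone), at the cost of proving only the $|u|+|v|-1$ bound — which is all the paper ever uses.
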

A proof of Theorem \ref{theo:fine&wilf} can be found in \cite[Chapter 6, Theorem 6.1]{handbook_words}.

\begin{lem} \label{CL:root=per}
Let $u$ be a word such that $|u| \geq 2 |\mroot u|$.
Then, $|\mroot u| = \per(u)$.
\end{lem}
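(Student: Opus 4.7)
The plan is to let $r = \mroot u$, so that $u = r^k$ for some $k \geq 1$, and note that the hypothesis $|u| \geq 2|r|$ forces $k \geq 2$. Since $|r|$ is a period of $u$, the inequality $\per(u) \leq |r|$ is immediate, and the whole task is to rule out the existence of a strictly smaller period.

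Assume for contradiction that $p := \per(u) < |r|$, and let $v$ be the prefix of $u$ of length $p$. Then $u$ is a prefix of both $r^\infty$ and $v^\infty$. The length hypothesis gives
\[
|u| \;\geq\; 2|r| \;\geq\; |r| + |v| \;>\; |r| + |v| - 1,
\]
so Fine and Wilf (Theorem \ref{theo:fine&wilf}) yields some $t \in \cA^+$ of which both $r$ and $v$ are powers. Write $r = t^j$.

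If $j \geq 2$, then $u = r^k = t^{jk}$ with $|t| < |r|$, so $t$ is a prefix of $u$ strictly shorter than $r$ and satisfying $u = t^{jk}$, contradicting the minimality of $\mroot u = r$. If $j = 1$, then $r = t$, and writing $v = t^i = r^i$ for some $i \geq 1$ forces $|v| \geq |r|$, contradicting $|v| = p < |r|$. Either case is impossible, so $\per(u) = |r|$.

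The proof is essentially a one-shot application of Fine and Wilf, so there is no real obstacle; the only point requiring a moment of care is verifying the length inequality $|u| \geq |r| + |v| - 1$ needed to invoke the theorem, which is where the assumption $|u| \geq 2|\mroot u|$ is used.
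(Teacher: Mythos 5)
Your proof is correct and follows essentially the same route as the paper: both establish $\per(u)\leq|\mroot u|$ trivially, then apply Fine and Wilf to the root and the prefix of length $\per(u)$ (the length hypothesis supplying the needed bound), and conclude via minimality of the root. The only difference is presentational — you argue by contradiction with a case split on whether $r$ is a proper power of the common word, while the paper concludes directly that $\mroot u$ equals the common word, giving $|\mroot u|\leq\per(u)$.
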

\begin{proof}
Note that $u$ is a prefix of $(\mroot u)^\Z$ and thus that $\per(u) \leq |\mroot u|$.
It rests to prove the other inequality.

Let $t$ be the prefix of $u$ of length $\per(u)$.
Then $u$ is a prefix of both $t^\infty$ and $(\mroot u)^\infty$.
We deduce, as $|u| \geq 2\per(u) \geq |t| + |\mroot u|$, that the hypothesis of Lemma \ref{theo:fine&wilf} is complied.
Hence, $t$ and $\mroot u$ are powers of a common word $r$.
In particular, $u$ is a power of $r$, so we must have that $\mroot u = r$.
Therefore, $|\mroot u| = |r| \leq |t| = \per(u)$.
\end{proof}

\begin{rema}
The previous lemma ensures that if $u$ is a word and $k \geq 1$, then $\mroot u^k = \mroot u$.
In particular, if $u$ and $v$ are powers of a common word, then they have the same root.
These basic relations will be freely used throughout the paper.
\end{rema}

The next proposition will allow us to synchronize occurrences of strongly periodic words. 
\begin{prop} \label{CL:on_sZ&tZ}
Let $t, s \in \cA^+$.
\begin{enumerate}
    \item \label{item:CL:on_sZ&tZ:equal}
    Suppose that $\ell \geq |s| + |t| - 1$ and $i, j \in \Z$ are such that $t^\Z_{[i, i+\ell)} = s^\Z_{[j, j+\ell)}$
    Then, $S^i t = S^j s$.

    \item \label{item:CL:on_sZ&tZ:equal_mod}
    An integer $i$ satisfies $S^i t^\Z = t^\Z$ if and only if $i = 0 \pmod{|\mroot t|}$.
\end{enumerate}
\end{prop}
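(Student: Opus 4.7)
The plan is to reduce both parts to the Fine and Wilf Theorem (Theorem~\ref{theo:fine&wilf}).

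For part~\eqref{item:CL:on_sZ&tZ:equal}, I would set $x = S^i t^\Z$ and $y = S^j s^\Z$ and denote by $t'$ (respectively $s'$) the length-$|t|$ (respectively length-$|s|$) prefix of $x$ (respectively $y$). Then $t'$ is a cyclic conjugate of $t$ with $x = {t'}^\Z$, and analogously $y = {s'}^\Z$. The hypothesis then reads $x_{[0,\ell)} = y_{[0,\ell)}$, so this common factor of length $\ell \geq |t'| + |s'| - 1$ is a prefix of both ${t'}^\infty$ and ${s'}^\infty$. Theorem~\ref{theo:fine&wilf} then forces $t'$ and $s'$ to be powers of a common word $r$, whence ${t'}^\Z = r^\Z = {s'}^\Z$, i.e., $S^i t^\Z = S^j s^\Z$.

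For part~\eqref{item:CL:on_sZ&tZ:equal_mod}, the direction ``$i \equiv 0 \pmod{|\mroot t|}$ implies $S^i t^\Z = t^\Z$'' is immediate from $t^\Z = (\mroot t)^\Z$. For the converse, write $r = \mroot t$ and first observe that $r$ is primitive: if $r = u^\ell$ for some $\ell \geq 2$ and $u \in \cA^+$, then $t$ would be a power of the shorter prefix $u$ of $t$, contradicting the minimality of $\mroot t$. Assuming $S^i t^\Z = t^\Z = r^\Z$ and reducing $i$ modulo $|r|$ as $i = q|r| + r_0$ with $0 \leq r_0 < |r|$, reading the first $|r|$ coordinates of $S^{r_0} r^\Z = r^\Z$ yields $vu = uv$, where $u = r_{[0,r_0)}$ and $v = r_{[r_0,|r|)}$. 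The classical commutation lemma for words (itself a direct consequence of Theorem~\ref{theo:fine&wilf} applied to $uv$ and $vu$ as prefixes of $(uv)^\infty$ and $(vu)^\infty$) then implies that $u$ and $v$ are powers of a common word, so $r$ is a proper power --- contradicting primitivity of $r$ unless $r_0 = 0$.

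The main difficulty is essentially bookkeeping: carefully tracking the cyclic conjugates and the index shifts in order to reduce the two-sided statement to a one-sided prefix comparison to which Theorem~\ref{theo:fine&wilf} can be applied. Once this reduction is in place, both parts follow without further substantial work.
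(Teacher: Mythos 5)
Your proposal is correct and follows essentially the same route as the paper: part (1) is the paper's argument verbatim (the conjugates $t',s'$ are its $t_0,s_0$, compared via Theorem~\ref{theo:fine&wilf}), and part (2) is the same Fine--Wilf-based contradiction, merely streamlined — you read off $uv=vu$ directly from $S^{r_0}r^\Z=r^\Z$ and invoke primitivity of $\mroot t$ plus the commutation lemma, where the paper reaches $ss'=s's$ through an extra intermediate application of Fine and Wilf before running the same commutation argument inline. No gaps; the differences are organizational, not substantive.
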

\begin{proof}
We first prove Item (1).
Let $t_0 = t^\Z_{[i, i +|t|)}$, $s_0 = s^\Z_{[j, j + |s|)}$ and $w = t^\Z_{[i, i+\ell)} = s^\Z_{[j, j+\ell)}$.
Then, $w$ is a prefix of both $t_0^\infty$ and $s_0^\infty$.
Since $|w| = \ell \geq |s| + |t| - 1 = |s_0| + |t_0| - 1$, we can use Theorem \ref{theo:fine&wilf} to deduce that $s_0$ and $t_0$ are powers of a common word $r$.
We then have $S^i s^\Z = s_0^\Z = r^\Z = t_0^\Z = S^j t^\Z$.

We now prove Item (2).
It is clear that if $i = 0 \pmod{|\mroot t|}$ then $S^i t^\Z = t^\Z$.
Let us suppose that $S^i t^\Z = t^\Z$.
We argue by contradiction and assume that $i \not= 0 \pmod{|\mroot t|}$.
We write $\mroot t = s s'$, where $|s| = i \pmod{|\mroot t|}$.
Then, $(s's)^\Z = S^i t^\Z = t^\Z = (ss')^\Z$, so Theorem \ref{theo:fine&wilf} implies that $s' s$ and $s s'$ are powers of a common word $r$.
In particular, $\mroot(s's) = \mroot(s s') = \mroot r$.
This implies that 
\begin{equation*}
    |\mroot(s's)| = |\mroot(s s')| =
    |\mroot\mroot t| = |\mroot t| = |s s'| = |s' s|,
\end{equation*}
so $\mroot(s' s) = s' s$.
Hence, $s' s = s s'$.
Now, since $i \not= 0 \pmod{|\mroot t|}$, $s$ and $s'$ are not the empty word.
This and the condition $s' s = s s'$ imply that $s's$ is a prefix of $s^\infty$ and of ${s'}^\infty$.
We can then use Theorem \ref{theo:fine&wilf} to deduce that $s$ and $s'$ are powers of a common word $r'$.
Therefore, as $\mroot = s s'$, $s s' = \mroot t = \mroot s = \mroot s'$.
This is possible only if $s = 1$ or $s' = 1$.
Consequently, $|s| \in \{0, |\mroot t|\}$ and $i = |s| = 0 \pmod{|\mroot t|}$, contradicting our assumptions.
\end{proof}

The rest of the section is devoted to prove Propositions \ref{CL:crit_fact_like} and \ref{CL:localize_per_aper}.
These results describe situations in which information about the global period of a word can be retrieved from small subwords of it.
We remark that Propositions \ref{CL:crit_fact_like} and \ref{CL:localize_per_aper} can be obtained as a direct consequence of the Critical Factorization Theorem, a fundamental result in combinatorics on words; here we give proofs that depend only on Theorem \ref{theo:fine&wilf} in order to maintain our presentation as self-contained as possible.

\begin{lem} \label{CL:various_sync}
Let $u,v,w,s$ and $t$ be words in $\cA$.
\begin{enumerate}
    \item \label{item:CL:various_sync:uv_t_vw_s=>uvw_t_s}
    Suppose that $uv$ occurs in $t^\infty$ and that $vw$ occurs in $s^\infty$.
    If, $|v| \geq |t| + |s| - 1$, then $uvw$ occurs both in $t^\infty$ and in $s^\infty$.
    \item \label{item:CL:various_sync:uv_t_vw_s=>uvw_t_s:bis} 
    Suppose that $uv$ is a prefix of $t^\infty$ and that $vw$ is a suffix of $t^\infty$.
    If $|v| \geq 2|t|$, then $uvw$ is a power of $\mroot t$.
    \item \label{item:CL:various_sync:F&W_per}
    If $|v| \geq \per(uv) + \per(vw)$, then $\per(uvw) = \per(uv) = \per(vw)$.
\end{enumerate}
\end{lem}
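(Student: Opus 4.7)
My plan is to derive all three items from the synchronisation principle encoded in Proposition \ref{CL:on_sZ&tZ}, applied to the common factor $v$ appearing in two periodic backgrounds.

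For Item (\ref{item:CL:various_sync:uv_t_vw_s=>uvw_t_s}), I would fix positions $i,j$ with $t^\Z_{[i,i+|uv|)} = uv$ and $s^\Z_{[j,j+|vw|)} = vw$, so that $v$ sits in $t^\Z$ at position $i+|u|$ and in $s^\Z$ at position $j$. Since $|v| \geq |t|+|s|-1$, Proposition \ref{CL:on_sZ&tZ}(\ref{item:CL:on_sZ&tZ:equal}) gives $S^{i+|u|} t^\Z = S^{j} s^\Z$. The letters that $w$ occupies inside $s^\Z$ then match the corresponding letters of $t^\Z$, so $uvw = t^\Z_{[i,i+|uvw|)}$ occurs in $t^\infty$; a symmetric argument handles $s^\infty$.

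For Item (\ref{item:CL:various_sync:uv_t_vw_s=>uvw_t_s:bis}), I read ``suffix of $t^\infty$'' as ``suffix of $t^N$ for some $N \geq 1$'', so that $uv = t^\Z_{[0,|uv|)}$ and $vw = t^\Z_{[N|t|-|vw|, N|t|)}$. The factor $v$ thus appears in $t^\Z$ at positions $|u|$ and $N|t|-|vw|$. Since $|v| \geq 2|t| \geq 2|t|-1$, combining Proposition \ref{CL:on_sZ&tZ}(\ref{item:CL:on_sZ&tZ:equal}) (with $s=t$) and Proposition \ref{CL:on_sZ&tZ}(\ref{item:CL:on_sZ&tZ:equal_mod}) yields $|u| - (N|t|-|vw|) \equiv 0 \pmod{|\mroot t|}$. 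Because $|t|$ is a multiple of $|\mroot t|$, this forces $|uvw|=|u|+|vw|$ to be divisible by $|\mroot t|$. The two compatible local descriptions of $uvw$ (one inherited from $uv$, one from $vw$) then paste together along the overlap $v$ to show that $uvw$ is a prefix of $(\mroot t)^\infty$ of length a multiple of $|\mroot t|$, i.e., a power of $\mroot t$.

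For Item (\ref{item:CL:various_sync:F&W_per}), set $p = \per(uv)$ and $q = \per(vw)$, and let $t$ (resp.\ $s$) be the length-$p$ prefix of $uv$ (resp.\ length-$q$ prefix of $vw$). Minimality of the period forces $\mroot t = t$ and $\mroot s = s$, and $uv$, $vw$ are prefixes of $t^\infty$, $s^\infty$ respectively. Since $v$ occurs in $t^\Z$ at position $|u|$ and in $s^\Z$ at position $0$, and $|v| \geq p+q = |t|+|s| \geq |t|+|s|-1$, Item (\ref{item:CL:various_sync:uv_t_vw_s=>uvw_t_s}) (or directly Proposition \ref{CL:on_sZ&tZ}(\ref{item:CL:on_sZ&tZ:equal})) gives $S^{|u|} t^\Z = s^\Z$. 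The two bi-infinite sequences therefore share a minimal period, namely $|\mroot t|=p$ on one side and $|\mroot s|=q$ on the other, forcing $p=q$. Reading $uvw$ off $t^\Z$ shows it is $p$-periodic, giving $\per(uvw) \leq p$; the reverse inequality $\per(uv) \leq \per(uvw)$ is immediate because any period of $uvw$ restricts to a period of its factor $uv$.

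The main obstacle I anticipate is Item (\ref{item:CL:various_sync:uv_t_vw_s=>uvw_t_s:bis}): pinning down the intended meaning of ``suffix of $t^\infty$'' and keeping the modular bookkeeping clean requires some care, whereas the other two items reduce almost mechanically to Proposition \ref{CL:on_sZ&tZ}.
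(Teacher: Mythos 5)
Your proposal is correct and follows essentially the same route as the paper: all three items are obtained by synchronizing the two periodic backgrounds along the overlap $v$ via Proposition \ref{CL:on_sZ&tZ} (Fine and Wilf), with the same modular argument for Item \ref{item:CL:various_sync:uv_t_vw_s=>uvw_t_s:bis}. The only (harmless) difference is in Item \ref{item:CL:various_sync:F&W_per}, where the paper simply applies Item \ref{item:CL:various_sync:uv_t_vw_s=>uvw_t_s} to get $\per(uvw)\leq\per(uv)$ and $\per(uvw)\leq\per(vw)$ directly, whereas you take a slight detour through primitivity of the period prefixes and minimal periods of bi-infinite sequences to first establish $\per(uv)=\per(vw)$.
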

\begin{proof}
Assume that the hypothesis of Item (1) holds.
Then, $uv = t^\Z_{[i, i + |uv|)}$ and $vw = s^\Z_{[j, j + |vw|)}$ for some $i, j \in \Z$.
Hence, $t^\Z_{[i+|u|, i+|uv|)} = s^\Z_{[j, j+|v|)}$.
This and the inequality $|v| \geq |t| + |s| - 1$ allows us to use Item \ref{item:CL:on_sZ&tZ:equal} in Proposition \ref{CL:on_sZ&tZ} to get that $S^{i+|u|} t^\Z = S^j s^\Z$.
We conclude that 
\begin{equation*}
    uvw = t^\Z_{[i,i+|uv|)} s^\Z_{[j+|v|,j+|vw|)} = t^\Z_{[i,i+|uv|)} t^\Z_{[i+|u|+|v|,i+|u|+|vw|)} = t^\Z_{[i, i+|uvw|)}, 
\end{equation*}
and that $uvw$ occurs in $t^\infty$.
Similarly, $uvw$ occurs in $s^\infty$.

We now assume that the hypothesis of Item (2) holds.
Let $t_0 = \mroot t$
Then, $uv = (t_0^\Z)_{[0, |uv|)}$ and $vw = (t_0^\Z)_{[-|vw|, 0)}$.
This implies that $(t_0^\Z)_{[|u|, |uv|)} = (t_0^\Z)_{[-|vw|, -|w|)}$, and then, since $|v| \geq 2|t| \geq 2|t_0|$, Item \ref{item:CL:on_sZ&tZ:equal} in Proposition \ref{CL:on_sZ&tZ} ensures that 
\begin{equation} \label{eq:CL:various_sync:1}
    S^{|u|} t_0^\Z = S^{-|vw|} t_0^\Z
    \enspace\text{and}\enspace
    uvw = (t_0^\Z)_{[0, |uvw|)}.
\end{equation}
Now, from the first equation in \eqref{eq:CL:various_sync:1} and Item \ref{item:CL:on_sZ&tZ:equal_mod} in Proposition \ref{CL:on_sZ&tZ} we get that $|u| = -|vw| \pmod{|t_0|}$, that is, $|uvw| = 0 \pmod{|t_0|}$.
This and the second equation in \eqref{eq:CL:various_sync:1} give that $uvw = (t_0^\Z)_{[0, |uvw|)}$ is a power of $t_0 = \mroot t$.

We finally prove Item (3).
Clearly, $\per(uv) \leq \per(uvw)$ and $\per(vw) \leq \per(uvw)$.
Let $t_0$ be the prefix of $uv$ of length $\per(uv)$ and $s_0$ be the prefix of $vw$ of length $\per(vw)$.
Then, $uv$ occurs in $t_0$ and $vw$ occurs in $s_0$.
This and the inequality $|v| \geq |u| + |v| \geq |t_0| + |s_0|$ allow us to use Item (1) of this lemma to deduce that $uvw$ occurs in $t_0^\Z$ and $s_0^\Z$.
We deduce that $\per(uvw) \leq |t_0| = \per(uv)$ and $\per(uvw) \leq |s_0| = \per(vw)$.
Therefore, $\per(uvw) = \per(uv) = \per(vw)$.
\end{proof}

\begin{prop}\label{CL:crit_fact_like}
Let $\cV \subseteq \cA^+$ and $u \in \cA^+$ be such that $|u| \geq 2|\cV|$.
Suppose that for any subword $v$ of $u$ with length $|v| = 2|\cV|$ there exists $w_v \in \cV$ such that $v$ occurs in $w_v^\Z$.
Then, for any such word $v$, $u$ occurs in $w_v^\Z$.
In particular, $\per(u) \leq |\cV|$.
\end{prop}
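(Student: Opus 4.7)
The plan is to fix an arbitrary length-$2|\cV|$ subword of $u$, say $v_0 = u_{[i_0, i_0 + 2|\cV|)}$, together with its prescribed $w_0 \in \cV$ such that $v_0$ occurs in $w_0^\infty$, and to prove that the whole $u$ occurs in $w_0^\infty$ by extending this occurrence one symbol at a time, to the right and then to the left. Once this is established, we automatically obtain $\per(u) \leq |w_0| \leq |\cV|$, which is the last assertion of the proposition.

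The workhorse is Item \ref{item:CL:various_sync:uv_t_vw_s=>uvw_t_s} of Lemma \ref{CL:various_sync}: if $xm$ occurs in $t^\infty$ and $my$ occurs in $s^\infty$ with $|m| \geq |t| + |s| - 1$, then $xmy$ occurs in both $t^\infty$ and $s^\infty$. For the right extension I would argue by induction on $j \in [i_0 + 2|\cV|,\, |u|]$ that $u_{[i_0, j)}$ occurs in $w_0^\infty$. The base case $j = i_0 + 2|\cV|$ is the hypothesis on $v_0$. For the inductive step, whenever $j < |u|$ I apply the hypothesis to the length-$2|\cV|$ subword $v' = u_{[j+1-2|\cV|,\, j+1)}$, obtaining some $w' \in \cV$ such that $v'$ occurs in $w'^\infty$. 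Writing $m = u_{[j+1-2|\cV|,\, j)}$, which is a common suffix of $u_{[i_0, j)}$ and a prefix of $v'$, the overlap has length $2|\cV| - 1 \geq |w_0| + |w'| - 1$ since $|w_0|, |w'| \leq |\cV|$. The cited lemma then yields that $u_{[i_0, j+1)}$ occurs in $w_0^\infty$, closing the induction. The left extension is entirely symmetric, and combining both we conclude $u = u_{[0, |u|)}$ occurs in $w_0^\infty$.

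I do not anticipate any serious obstacle here: the Fine and Wilf-style synchronization is entirely packaged inside Lemma \ref{CL:various_sync}, and the rest is the routine sliding-window bookkeeping where sliding by a single symbol automatically guarantees an overlap of $2|\cV| - 1$ symbols, which is precisely what the lemma demands. The only details to watch are that the indices remain in $[0, |u|)$ during both extensions (which is immediate from $|u| \geq 2|\cV|$) and that the bound $|w| \leq |\cV|$ for every $w \in \cV$ is consistently used on both periods involved.
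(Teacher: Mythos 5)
Your proof is correct: the bookkeeping checks out, the overlap of length $2|\cV|-1 \geq |w_0|+|w'|-1$ is exactly what Item \eqref{item:CL:various_sync:uv_t_vw_s=>uvw_t_s} of Lemma \ref{CL:various_sync} requires, and since the initial window $v_0$ (with its prescribed $w_0 = w_{v_0}$) is arbitrary, you get the conclusion for every such $v$. This is essentially the paper's argument — the paper organizes it as an induction on $|u|$, peeling one letter off each end and merging the occurrences of $u'$ and $u''$ via the same lemma, while you propagate a single occurrence letter by letter in both directions; the synchronization step is identical.
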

\begin{proof}
The case $|u| = 2|\cV|$ follows directly from the hypothesis.
Suppose the lemma is true for words $u'$ of length $2|\cV| \leq |u'| < |u|$.
Let $v$ be a subword of $u$ with length $|v| = 2|\cV|$.
We have to prove that $w$ occurs in $w_v^\Z$.
Let us write $u = au' = u''b$ for certain letters $a,b$ and words $u',u''$.
There is no loss of generality in assuming that $v$ occurs in $u'$.
Since $|u| > 2|\cV|$, we can take a subword $v'$ of $u''$ with length $|v'| = 2|\cV|$.
Then, the inductive hypothesis can be used to deduce that $u'$ occurs in $w_v^\Z$ and that $u''$ occurs in $w_{v'}^\Z$.
Now, $u'$ and $u''$ have a common subword of length $|u|-2 \geq 2|\cW|-1 \geq |w_v| + |w_{v'}|-1$.
Therefore, Item \ref{item:CL:various_sync:uv_t_vw_s=>uvw_t_s} of Lemma \ref{CL:various_sync} can be applied and we deduce that $w$ occurs in $w_v^\Z$.
This proves the inductive step and thereby the proposition.
\end{proof}

\begin{prop} \label{CL:localize_per_aper}
Let $u$ be a word.
\begin{enumerate}
    \item \label{item:CL:localize_per_aper:loc_per}
    If $t$ is a word occurring in $u$ and $|t| \geq 2 \per(u)$, then $\per(t) = \per(u)$.
    \item \label{item:CL:localize_per_aper:loc_aper}
    Let $k \geq 1$. If $|u| \geq 2k$ and $\per(u) > k$, then there exists $t$ occurring in $u$ with $|t| = 2k$ and $\per(t) > k$.
\end{enumerate}
\end{prop}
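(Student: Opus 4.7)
My plan is to handle the two items separately: Item (1) by a direct application of the Fine and Wilf Theorem, and Item (2) by contrapositive from Proposition \ref{CL:crit_fact_like}.

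For Item (1), I set $p = \per(u)$ and $q = \per(t)$, and take $s$ and $r$ to be the prefixes of $u$ and $t$ of length $p$ and $q$ respectively, so that $u$ is a prefix of $s^\infty$ and $t$ of $r^\infty$. Since $t$ occurs in $u$ and hence in $s^\Z$, I can write $t = s^\Z_{[j,j+|t|)}$ for some $j$, which already gives $q \leq p$. Setting $s' = s^\Z_{[j,j+p)}$ (a conjugate of $s$), the word $t$ is a prefix of both $(s')^\infty$ and $r^\infty$; since $|t| \geq 2p \geq p+q$, Theorem \ref{theo:fine&wilf} produces a word $w$ such that $s'$ and $r$ are both powers of $w$. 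Because $t$ is a prefix of $r^\infty = w^\infty$, one obtains $q = \per(t) \leq |w|$, and combined with the fact that $|w|$ divides $|r|=q$ this forces $|w| = q$ and $r = w$. Hence $(s')^\Z = r^\Z$, so $s^\Z$ is a shift of $r^\Z$ and has the same set of factors; since $u$ is a factor of $s^\Z$, it is a factor of $r^\Z$, yielding $p \leq q$, and therefore $p = q$.

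For Item (2), I argue by contrapositive: assuming every subword $t$ of $u$ with $|t| = 2k$ satisfies $\per(t) \leq k$, I show that $\per(u) \leq k$. For each such $t$, let $w_t$ be the prefix of $t$ of length $\per(t)$, so $|w_t| \leq k$ and $t$ occurs in $w_t^\Z$. I then define $\cV = \{w_t : t \text{ a subword of } u,\ |t| = 2k\} \cup \{w^*\}$, where $w^*$ is an arbitrary fixed word of length exactly $k$, added purely to guarantee that $|\cV| = k$. The hypotheses of Proposition \ref{CL:crit_fact_like} are then met for subwords of length $2|\cV| = 2k$, and that proposition yields $\per(u) \leq |\cV| = k$, contradicting $\per(u) > k$.

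The main obstacle is in Item (1): one must translate the occurrence of $t$ inside $s^\Z$ into a Fine and Wilf situation by passing to the appropriate conjugate $s'$ of $s$, and then verify that the common word $w$ produced has length exactly $q$ rather than a proper divisor of $q$; once this is done, the equality of periods follows from the invariance of factor sets under shifts. For Item (2), the only mild point of care is padding $\cV$ so that its maximum length equals $k$ exactly, ensuring that the prescribed subword length $2|\cV|$ in Proposition \ref{CL:crit_fact_like} matches the hypothesized length $2k$.
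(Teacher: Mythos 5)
Your proof is correct and follows essentially the same route as the paper: Item (1) is the same Fine--and--Wilf synchronization argument (the paper just packages it as Item \ref{item:CL:on_sZ&tZ:equal} of Proposition \ref{CL:on_sZ&tZ}, whereas you invoke Theorem \ref{theo:fine&wilf} directly on a conjugate of $s$), and Item (2) is the same reduction to Proposition \ref{CL:crit_fact_like} via the period prefixes of the length-$2k$ subwords. Your padding of $\cV$ with a word of length exactly $k$ is a harmless extra touch that makes $2|\cV|=2k$ literally match the hypothesis of Proposition \ref{CL:crit_fact_like}, a point the paper leaves implicit.
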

\begin{proof}
We start with Item (1).
Note that $\per(t) \leq \per(u)$, so we only have to prove the other inequality.
Let $s$ (resp.\ $s'$) be the prefix of $t$ of length $\per(t)$ (resp.\ $\per(u)$).
Then, $t$ occurs in $s^\Z$ and ${s'}^\Z$.
Being $|t| \geq 2\per(u) \geq |s| + |s'|$, we can use Item \ref{item:CL:on_sZ&tZ:equal} in Proposition \ref{CL:on_sZ&tZ} to deduce that $s^\Z = S^\ell {s'}^\Z$ for some $\ell \in \Z$.
This implies, as $u$ occurs in ${s'}^\Z$, that $u$ occurs in $s^\Z$.
In particular, $\per(u) \leq |s| = \per(t)$.
\medskip

Next, we prove Item (2) by contradiction.
Assume that $k \geq 1$ and $u$ are such that $|u| \geq 2k$ and $\per(u) > k$, but that for all word $t$ occurring in $u$ of length $|t| = 2k$ we have that $\per(t) \leq k$.
We define, for all such $t$, $s_t$ as the prefix of $t$ of length $\per(t)$, and note that $t$ occurs in $s_t^\Z$ and that $|s_t| \leq k$.
Therefore, the set $\cV$ consisting of the words $s_t$ and the word $u$ comply with the hypothesis of Proposition \ref{CL:crit_fact_like}.
We conclude that $\per(u) \leq |\cV| \leq k$.
\end{proof}

%%%%%%%%%%%%%%%%%%%%%%%%%%%%%%%%%%%%%%%%%%%%%%%%%%%%%%%%%%%%%%%%%%%%%%%%%%%
%%%%%%%%%%%%%%%%%%%%%%%%%%%%%%%%%%%%%%%%%%%%%%%%%%%%%%%%%%%%%%%%%%%%%%%%%%%
\section{The classic coding based on special words}
\label{sec:Z0}

The notion of {\em right-special word} is an important concept for studying linear-growth complexity subshifts.
In this section, we present basic results on right-special words and the coding associated to them.
Most of these ideas are common to many works on the $\cS$-adic conjecture and related problems.
One of the new ingredients of this paper is Proposition \ref{Z0:manyshorts=>period}.

\begin{defi}
Let $X$ be a subshift. A word $w \in \cL(X)$ is called {\em right-special} if there exist two different letters $a$ and $b$ such that $wa, wb \in \cL(X)$.
We denote by $\RS_n(X)$ the set of all right special words of $X$ having length $n$.
\end{defi}
\begin{rema}
We can also define {\em left-special} words, which together with right-special words form the set of {\em special words} of $X$.
In this article, we will only use right-special words.
\end{rema}

The next proposition summarizes the facts about $\RS_n(X)$ and its relation to the complexity of $X$ that are important for us.
A {\em return word} to a clopen set $U$ is an element $w \in \cA^+$ such that there exists $x \in X$ satisfying $S^k x \in U$ if $k \in \{0, |w|\}$ and $S^k x \not\in U$ if $k \in (0,|w|)$.
\begin{prop} \label{prop:props_special_words}
Let $X \subseteq \cA^\Z$ be an aperiodic subshift and $U$ the clopen set $U = \{x \in X : x_{[0,n)} \in \RS_n(X)\}$.
\begin{enumerate}
    \item \label{item:prop:props_special_words:RS&pX}
    We have the following bounds on the number of right-special words:
    \begin{equation*}
        \frac{1}{\#\cA}(p_X(n+1) - p_X(n)) \leq \#\RS_n(X) \leq p_X(n+1) - p_X(n).
    \end{equation*}
    
    \item \label{item:prop:props_special_words:RS_is_recurrent}
    The set $U$ is $(p_X(n)+n)$-recurrent in $X$.
    
    \item The number of return words to $U$ is at most $\#\cA\cdot\#\RS_n(X)$.
\end{enumerate}
\end{prop}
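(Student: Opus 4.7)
I treat the three items separately.

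For Item (1), consider the projection $\phi \colon \cL(X) \cap \cA^{n+1} \to \cL(X) \cap \cA^n$ removing the last letter. The fibre $\phi^{-1}(w)$ has cardinality equal to the number of right extensions of $w$ in $\cL(X)$: at least $1$ in general, at least $2$ precisely when $w$ is right-special, and at most $\#\cA$. Summing over the base yields
\begin{equation*}
    p_X(n+1) - p_X(n) = \sum_{w \in \RS_n(X)} \bigl(\#\{a \in \cA : wa \in \cL(X)\} - 1\bigr),
\end{equation*}
and since each summand lies in $\{1, \dots, \#\cA - 1\}$, both bounds in (1) follow immediately.

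For Item (2), the crucial observation is that every non right-special $w \in \cL(X) \cap \cA^n$ admits a unique extension $\sigma(w) \in \cL(X) \cap \cA^n$ (shift $w$ one position and append the forced continuation). Fix $x \in X$ and set $w_k = x_{[k, k+n)}$. Arguing by contradiction, suppose no $w_k$ with $k \in [0, p_X(n)+n)$ is right-special. Then $w_{k+1} = \sigma(w_k)$ throughout this range, so the words $w_0, w_1, \ldots, w_{p_X(n)+n}$ are obtained from $w_0$ by iterating $\sigma$. Since $\#(\cL(X) \cap \cA^n) = p_X(n)$, pigeonhole among $w_0, \ldots, w_{p_X(n)}$ produces $0 \leq i < j \leq p_X(n)$ with $w_i = w_j$, and setting $p = j - i$, the words $w_i, w_{i+1}, \ldots, w_{i+p-1}$ all have indices in $[0, p_X(n)+n)$ and are thus non right-special. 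Because $\sigma^p(w_i) = w_{i+p} = w_i$, the orbit of $w_i$ under $\sigma$ is periodic with period dividing $p$, so every further iterate remains in the finite cycle $\{w_i, \ldots, w_{i+p-1}\}$ of non right-special words; by induction the recursion $w_{k+1} = \sigma(w_k)$ extends to all $k \geq 0$. Consequently $x$ is eventually periodic, and by closedness of $X$ its shifts accumulate on a periodic point of $X$, contradicting aperiodicity.

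For Item (3), I would exhibit an injection from return words to $U$ into $\RS_n(X) \times \cA$ by sending a return word $w$, witnessed by $x \in X$, to the pair $(x_{[0,n)}, x_n)$. By definition of return word, $x_{[k,k+n)}$ is non right-special for every $k \in (0, |w|)$, so each such word admits a unique right continuation; starting from the given pair and repeatedly applying this forced continuation, the letters $x_0, x_1, \ldots$ are determined, and $|w|$ is recovered as the first positive index $k$ at which $x_{[k,k+n)}$ is right-special. Injectivity, and hence the bound $\#\cA \cdot \#\RS_n(X)$, follows.

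The main obstacle is Item (2). The delicate point is not merely to obtain eventual periodicity within the assumed window, but to argue that the orbit of $w_0$ under $\sigma$ remains trapped in a cycle of non right-special words \emph{forever}; the slack $+n$ in $p_X(n)+n$ is exactly what guarantees that the cycle is entered strictly inside the region where non right-specialness is assumed, so that every element of the cycle inherits this property from the hypothesis.
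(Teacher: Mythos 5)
Your three arguments are all correct. Note first that the paper does not prove this proposition itself: its "proof" consists of the single sentence deferring Items (1)--(3) to the cited reference \cite{comb_proof_s_adicity}, so your write-up is a self-contained replacement rather than a variant of an argument in the text. For Item (1), the fibre-counting identity $p_X(n+1)-p_X(n)=\sum_{w\in\RS_n(X)}(\#\{a: wa\in\cL(X)\}-1)$ is exactly the standard computation, and it even yields the sharper lower bound with $\#\cA-1$ in place of $\#\cA$. For Item (2), your reading of "$(p_X(n)+n)$-recurrent" as the paper's notion of $\ell$-syndetic is the right one (it matches how the proposition is invoked later, where it is converted into $(d+1)n$-syndeticity), and the forced-extension map $\sigma$, the pigeonhole among $w_0,\dots,w_{p_X(n)}$, the trapping of the orbit in a cycle of non-right-special words, and the passage to a genuinely periodic limit point of $X$ (needed because eventual periodicity of a single one-sided tail is not by itself a contradiction) are all sound. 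Your closing remark slightly misattributes the role of the $+n$: the pigeonhole already places the cycle indices inside $[0,p_X(n))$, so your argument in fact establishes syndeticity with the smaller window $p_X(n)+1$, and the $+n$ in the statement is mere slack; this only makes your conclusion stronger than required. For Item (3), the injection $w\mapsto(x_{[0,n)},x_n)$ works under the (implicit, but clearly intended) convention that the return word is $w=x_{[0,|w|)}$ for the witnessing point $x$; the deterministic reconstruction of the forced letters together with the recovery of $|w|$ as the first positive right-special index gives injectivity, hence the bound $\#\cA\cdot\#\RS_n(X)$.
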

\begin{proof}
A proof of Items (1), (2) and (3) can be found, with a different notation, in \cite{comb_proof_s_adicity}.
\end{proof}

We can combine Propositions \ref{prop:props_special_words} and \ref{prop:from_clopen_to_coding} to obtain the following proposition.

\begin{prop} \label{Z0:main_prop}
Suppose that $X \subseteq \cA^\Z$ is an aperiodic subshift and let $d$ be the maximum of $\lceil p_X(n)/n \rceil$, $p_X(n+1) - p_X(n)$ and $\#\cA$.
Let $(Z\subseteq \cC^\Z, \tau\colon\cC^+\to\cA^+)$ be the coding obtained from Proposition \ref{prop:from_clopen_to_coding} with $U = \{x \in X : x_{[0,n)} \in \RS_n(X)\}$.
Then:
\begin{enumerate}
    \item \label{item:Z0:main_prop:cardinalities}
    $\#\cC \leq d^3$.
    \item \label{item:Z0:main_prop:lengths}
    $|\tau(a)| \leq (d+1)n$ for all $a \in \cC$.
    \item \label{item:Z0:main_prop:reco}
    $(Z, \tau)$ is $(d+2)n$-recognizable.
    \item \label{item:Z0:main_prop:structure}
    If $x \in X$, $(c, z) = \Fac_{(Z,\tau)}(x)$ and $i \in \Z$, then $i = c_j$ for some $j \in \Z$ if and only if $x_{[i, i+n)} \in \RS(X)$.
\end{enumerate}
\end{prop}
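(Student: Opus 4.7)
The plan is to apply Proposition \ref{prop:from_clopen_to_coding} to $U$ and then translate each output parameter into a bound in $d$ and $n$ via Proposition \ref{prop:props_special_words}.

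First I record the geometric data of $U$. By construction, $U = \bigcup_{w \in \RS_n(X)} \{x \in X : x_{[0,n)} = w\}$, so $U$ has radius $n$. Item (2) of Proposition \ref{prop:props_special_words} states that $U$ is $\ell$-syndetic with $\ell = p_X(n) + n$, and the hypothesis $d \geq \lceil p_X(n)/n \rceil$ gives $\ell \leq (d+1)n$. Plugging these values into Proposition \ref{prop:from_clopen_to_coding} proves items (2) and (3) of the statement at once: $|\tau(a)| \leq \ell \leq (d+1)n$, and $(Z,\tau)$ is $(\ell + n)$-recognizable, hence $(d+2)n$-recognizable.

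Item (4) is a direct rewriting: Proposition \ref{prop:from_clopen_to_coding} asserts that $i = c_j$ for some $j$ if and only if $S^i x \in U$, and by the definition of $U$ the latter is equivalent to $x_{[i,i+n)} \in \RS_n(X)$.

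For item (1), I count $\#\cC$ using the natural parameterization of the letters in the construction of Proposition \ref{prop:from_clopen_to_coding}: each letter is determined by the cylinder of $U$ that is entered at a given cut, together with the return word to $U$ that starts there. There are $\#\RS_n(X)$ such cylinders, which by item (1) of Proposition \ref{prop:props_special_words} is at most $p_X(n+1) - p_X(n) \leq d$. By item (3) of the same proposition, the number of return words to $U$ is at most $\#\cA \cdot \#\RS_n(X) \leq d \cdot d = d^2$. Multiplying gives $\#\cC \leq d \cdot d^2 = d^3$. The only non-routine step of the argument is identifying precisely this parameterization of $\cC$ inside Proposition \ref{prop:from_clopen_to_coding}; once accepted, all four items of the statement become short arithmetic rearrangements of the bounds recorded in Propositions \ref{prop:from_clopen_to_coding} and \ref{prop:props_special_words}.
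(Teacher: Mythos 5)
Your proposal is correct and takes essentially the same route as the paper, whose proof of this proposition is precisely the combination of Propositions \ref{prop:props_special_words} and \ref{prop:from_clopen_to_coding} that you spell out. For item (1) you need not appeal to the internal parameterization of the construction in Proposition \ref{prop:from_clopen_to_coding}: since its statement already gives that $\tau$ is injective on letters and each word $\tau(a)$ read between consecutive cuts is a return word to $U$, Items (1) and (3) of Proposition \ref{prop:props_special_words} yield $\#\cC \leq \#\cA\cdot\#\RS_n(X) \leq d^2 \leq d^3$ directly.
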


Proposition \ref{Z0:main_prop} is the starting point of other works on the $\cS$-adic conjecture; see for example \cite{ferenczi96, Ler12_improvements}.

\begin{prop} \label{Z0:manyshorts=>period}
Let $(Z, \tau)$ be the coding in Proposition \ref{Z0:main_prop}.
Let $x \in X$ and $(c, z) = \Fac_{(Z,\tau)}(x)$ and suppose that $i,j \in \Z$ satisfy $i + d < j$ and $\ell \coloneqq \max\{ |\tau(z_k)| : k \in [i, j) \} \leq n/6d$.
Then, $\per(x_{[c_i - n/3, c_{j-d})}) \leq d\ell$.
\end{prop}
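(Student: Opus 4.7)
The plan is to combine pigeonhole on right-special words with the Fine-Wilf-type lemmas of Section \ref{sec:CL}, and then use Proposition \ref{CL:crit_fact_like} to promote local periodicity to periodicity on the full interval.

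First I would apply the basic pigeonhole: by Proposition \ref{prop:props_special_words}(1) we have $\#\RS_n(X) \leq p_X(n+1) - p_X(n) \leq d$, and by Proposition \ref{Z0:main_prop}(4) the word $\rho(k) := x_{[c_k, c_k+n)}$ lies in $\RS_n(X)$ for every $k \in \Z$. So for any $m \in [i, j-d]$, pigeonholing the $d+1$ right-special words $\rho(m), \rho(m+1), \ldots, \rho(m+d)$ yields indices $m \leq k_1(m) < k_2(m) \leq m+d$ with $\rho(k_1(m)) = \rho(k_2(m))$, producing a periodic interval $I_m := [c_{k_1(m)}, c_{k_2(m)} + n)$ of length at least $n+1$ and period $p_m := c_{k_2(m)} - c_{k_1(m)} \leq (k_2(m) - k_1(m))\ell \leq d\ell$. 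The hypothesis $\ell \leq n/(6d)$ then makes consecutive intervals $I_m, I_{m+1}$ overlap on at least $n - d\ell \geq 5n/6$ positions, which exceeds $p_m + p_{m+1} \leq 2 d\ell \leq n/3$; Lemma \ref{CL:various_sync}(3) forces $p_m = p_{m+1}$, and chaining inductively shows that $J := \bigcup_{m \in [i, j-d]} I_m$ is a single $p$-periodic interval for some common $p \leq d\ell$.

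Next I would apply Proposition \ref{CL:crit_fact_like} to $u := x_{[c_i - n/3, c_{j-d})}$ with $|\cV| = d\ell$: the length condition $|u| \geq n/3 \geq 2 d\ell$ is immediate, so it suffices to check that every length-$(2d\ell)$ subword $v = x_{[q, q+2d\ell)}$ of $u$ satisfies $\per(v) \leq d\ell$. For $q \geq c_i + d\ell$, I take $m_0$ as the largest index with $c_{m_0} \leq q$ (so that $m_0 \geq i + d$ and $[m_0 - d, m_0] \subseteq [i, j-d]$), pigeonhole $\rho(m_0 - d), \ldots, \rho(m_0)$ to obtain $k_1' < k_2'$ in $[m_0-d, m_0]$ with $\rho(k_1') = \rho(k_2')$, and observe that the resulting periodic interval starts at $c_{k_1'} \leq c_{m_0 - 1} < q$ and ends past $q + 2d\ell$; hence it covers $v$ entirely, and $\per(v) \leq d\ell$.

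For $q$ near or below $c_i$, the subword $v$ overlaps the $p$-periodic chunk $J$ substantially, and the $p$-periodic right-special word $\rho(k_1(i))$ of length $n \geq 6 d\ell$ sitting inside $J$ allows one to transfer the period $p$ to $v$ via Proposition \ref{CL:localize_per_aper}(1). The main obstacle will be the boundary case $q \in [c_i - n/3, c_i + d\ell)$, where no cuts with controlled length are available to the left of $c_i$ and the overlap with $J$ is only marginal: here one must carefully exploit the fact that the $n$-long $p$-periodic right-special word inside $J$ extends the period across all distances of order $d\ell$ to conclude $\per(v) \leq d\ell$, completing the verification of the hypothesis of Proposition \ref{CL:crit_fact_like} and thereby the proof.
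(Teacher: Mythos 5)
Your overall skeleton (pigeonhole on the at most $d$ right-special words to get local periods $\leq d\ell$, then a local-to-global step via Proposition \ref{CL:crit_fact_like} / Proposition \ref{CL:localize_per_aper}) is the same as the paper's, and your treatment of windows lying to the right of $c_i + d\ell$ is fine. The genuine gap is exactly the ``boundary case'' you flag and then wave at: with your choice of anchoring, $\rho(k) = x_{[c_k, c_k+n)}$, every periodic interval you produce by pigeonhole is of the form $[c_{k_1}, c_{k_2}+n)$ with $k_1 \geq i$, so it lives entirely to the right of $c_i$. For a window $v = x_{[q, q+2d\ell)}$ with $q \leq c_i - 2d\ell$ (such $q$ exist, since $n/3 \geq 2d\ell$), $v$ is disjoint from $J$ and from every interval your argument controls; the equalities $\rho(k_1)=\rho(k_2)$ say nothing whatsoever about the letters of $x$ on $[c_i - n/3, c_i)$, so no amount of ``extending the period across distances of order $d\ell$'' inside $J$ can reach it. The fix is not a refinement of your estimate but a change of anchor: one must use the right-special words \emph{ending} at the cuts, $x_{[c_m - n, c_m)} \in \RS_n(X)$, as the paper's proof does. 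Then the pigeonhole for the block starting at $k=i$ yields $p,q \in [i, i+d)$ with $x_{[c_p - n, c_p)} = x_{[c_q - n, c_q)}$, hence $\per(x_{[c_p - n, c_q)}) \leq c_q - c_p \leq d\ell$, and since $c_p - n \leq c_i + d\ell - n \leq c_i - 2n/3$ this periodic interval reaches about $2n/3$ to the \emph{left} of $c_i$ and in particular covers the initial segment $[c_i - n/3, c_i)$ of the target word; the same left-anchored intervals $[c_k - 2n/3, c_{k+1})$ then cover every short window, and the paper closes by contradiction with Item \ref{item:CL:localize_per_aper:loc_aper} of Proposition \ref{CL:localize_per_aper} (equivalent in spirit to your use of Proposition \ref{CL:crit_fact_like}).

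In your defence, the literal statement of Item \ref{item:Z0:main_prop:structure} in Proposition \ref{Z0:main_prop} (and the clopen set $U = \{x : x_{[0,n)} \in \RS_n(X)\}$) supports your right-anchored reading; but the conclusion of Proposition \ref{Z0:manyshorts=>period}, with its left overhang $c_i - n/3$, genuinely needs the left-anchored convention $x_{[c_m-n, c_m)} \in \RS_n(X)$, which is the one used in the paper's proof and consistently later (e.g.\ $w_j(x) = x_{[c_j-n, c_j)}$ in Section \ref{sec:Z1}). As written, your proposal does not prove the statement: the interval $[c_i - n/3, c_i)$ is simply out of reach of your data.
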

\begin{proof}
We start by noticing that, since $x_{[c_m - n, c_m)} \in \RS_n(X)$ for all $m \in \Z$ and since $\#\RS_n(X) \leq d$, we can use the Pigeonhole principle to obtain, for each $k \in [i, j-d)$, integers $p_k, q_k \in [k,k+d)$ such that $p_k < q_k$ and $x_{[c_{p_k} - n, c_{p_k})} = x_{[c_{q_k} - n, c_{q_k})}$.
These conditions imply that $\per(x_{[c_{p_k} - n, c_{q_k})}) \leq c_{q_k} - c_{p_k} \leq d\ell$.
Therefore, as $c_{p_k} - n \leq c_k + d\ell - n \leq c_k - 2n/3$ and $c_{q_k} \geq c_{k+1}$,
\begin{equation} \label{eq:Z0:manyshorts=>period:1}
    \text{$x_{[c_k - 2n/3, c_{k+1})}$ for all $k \in [i, j-d)$.}
\end{equation}
We will use \eqref{eq:Z0:manyshorts=>period:1} to prove the lemma by contradiction.
Assume that $\per(x_{[c_i - n/3, c_{j-d})}) > d\ell$.
Then, by Item \ref{item:CL:localize_per_aper:loc_aper} in Lemma \ref{CL:localize_per_aper}, there exists $m \in [c_i - n/3 + 2d\ell, c_{j-d})$ such that $\per(x_{[m - 2d\ell, m)}) > d\ell$.
Now, the condition $m \in [c_i - n/3 + 2d\ell, c_{j-d})$ allows us to find $k \in [i, j-d)$ such that $m \in [c_k-n/3, c_{k+1})$.
Hence, as $2d\ell \leq n/3$, $x_{[m-2d\ell, m)}$ occurs in $x_{[c_k-2n/3, c_{k+1})}$, which yields $\per(x_{[c_k-n/3, c_{k+1})}) \geq \per(x_{[m-2d\ell, m)}) > \varepsilon$.
This contradicts \eqref{eq:Z0:manyshorts=>period:1} and completes the proof.
\end{proof}

\section{The first coding}
\label{sec:Z1}

In this section, we begin the proof of the main results: Theorems \ref{theo:main_sublinear} and \ref{theo:main_nonsuperlinear}.
We start by constructing the codings described in Proposition \ref{Z1:main_prop}.
Then, in Sections \ref{sec:Z2}, \ref{sec:Z3}, and \ref{sec:Z4}, we will modify these codings to obtain new versions of them, each with better properties than the previous one.
We will show in Subsection \ref{subsec:Z4:two_levels} that the final codings can be connected with morphisms, and we will use this fact in Section \ref{sec:MT} to complete the proof of the main results.

\begin{prop} \label{Z1:main_prop}
Let $X$ be a minimal infinite subshift, $n \geq 1$ and let $d$ be the maximum of $\lceil p_X(n)/n \rceil$, $p_X(n+1) - p_X(n)$, $\#\cA$ and $10^4$.
Then, there exist a coding $(Z \subseteq \cC^\Z, \tau\colon\cC\to\cA^+)$ of $X$ and $\varepsilon \in [n/d^{2d^3+4}, n/d)$ satisfying the following conditions:
\begin{enumerate}
    \item \label{item:Z1:main_prop:cardinalities}
    $\cC$ has at most $d^3$ elements. % OK
    \item \label{item:Z1:main_prop:lengths}
    $|\tau(a)| \leq 3dn$ for all $a \in \cC$. % OK
    \item \label{item:Z1:main_prop:reco}
    $(Z, \tau)$ is $3dn$-recognizable. % OK
    \item \label{item:Z1:main_prop:per}
    The periodicity properties in Proposition \ref{Z1:per} are satisfied.
\end{enumerate}
\end{prop}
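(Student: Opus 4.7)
The plan is to start from the coding produced by Proposition \ref{Z0:main_prop} and locate, by a pigeonhole argument on length scales, a threshold $\varepsilon$ at which the long/short dichotomy of the $\tau$-images is stable. Concretely, let $(Z,\tau)$ be the coding given by Proposition \ref{Z0:main_prop}. Items \ref{item:Z0:main_prop:cardinalities}--\ref{item:Z0:main_prop:reco} there give $\#\cC \leq d^3$, $|\tau(a)| \leq (d+1)n \leq 3dn$, and $(d+2)n$-recognizability, which is stronger than $3dn$-recognizability. Hence Items (1)--(3) of Proposition \ref{Z1:main_prop} are automatic, and all the work reduces to selecting $\varepsilon$ for Item (4).

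To choose $\varepsilon$, I would consider the geometric sequence of thresholds $\varepsilon_i = n/d^{i+1}$ for $i = 0, 1, \dots, 2d^3 + 3$, and partition $\cC$ at each scale into $\cC_i^{\mathrm{short}} = \{a \in \cC : |\tau(a)| < \varepsilon_i\}$ and $\cC_i^{\mathrm{long}} = \cC \setminus \cC_i^{\mathrm{short}}$. As $i$ grows, $\cC_i^{\mathrm{short}}$ can only lose elements, and since $\#\cC \leq d^3$ it can change at most $d^3$ times. Out of $2d^3 + 4$ consecutive indices there must then be some index $i$ such that $\cC_i^{\mathrm{short}} = \cC_{i+1}^{\mathrm{short}}$, meaning no letter has length in the window $[\varepsilon_{i+1}, \varepsilon_i)$. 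Setting $\varepsilon = \varepsilon_i$ gives $\varepsilon \in [n/d^{2d^3+4}, n/d)$ as required, together with a multiplicative gap of factor $d$ in the length spectrum at this scale: every short letter satisfies $|\tau(a)| \leq \varepsilon/d$ while every long letter satisfies $|\tau(a)| \geq \varepsilon$.

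This gap is precisely what Proposition \ref{Z0:manyshorts=>period} is designed to exploit. Any sufficiently long run of consecutive short letters in a point $z \in Z$ translates, through $\tau$, into a window of $x \in X$ whose period is at most $d$ times the maximum short length, hence at most $\varepsilon$. On the long side, any $\tau$-image of a long letter has length at least $\varepsilon$, so the periodicity can be localized from windows of size proportional to $\varepsilon$ via Proposition \ref{CL:localize_per_aper}. Combining these two observations with the synchronization tools of Section \ref{sec:CL}, in particular Item \ref{item:CL:various_sync:F&W_per} of Lemma \ref{CL:various_sync} and Proposition \ref{CL:crit_fact_like}, one gets a coherent periodic structure on the short-letter regions and a clean separation from the long-letter segments, which should be what Proposition \ref{Z1:per} formalizes.

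The main obstacle will be packaging the dichotomy into exactly the shape demanded by Proposition \ref{Z1:per}: verifying that the same period governs adjacent short runs, controlling boundary effects where short and long letters meet, and ensuring that the quantitative constants (the numerical thresholds $n/6d$ in Proposition \ref{Z0:manyshorts=>period}, the factor $3dn$ in Item (2), and the window $[n/d^{2d^3+4}, n/d)$) align. A secondary subtlety is that the pigeonhole step only guarantees a one-step gap $\varepsilon_i \to \varepsilon_{i+1}$, which is why the bound $2d^3 + 4$ appears rather than $d^3 + 1$: a larger buffer is needed if the periodicity property of Proposition \ref{Z1:per} requires the gap to hold across more than one scale simultaneously.
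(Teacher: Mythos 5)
There is a genuine gap: your plan keeps the coding of Proposition \ref{Z0:main_prop} unchanged and only selects $\varepsilon$, but the paper's $(Z,\tau)$ is a \emph{different} coding, and the properties of Proposition \ref{Z1:per} genuinely fail for the unmodified coding. After choosing $\varepsilon$ by the pigeonhole argument (your scale argument is essentially Lemma \ref{Z1:find_a_gap}, except the dichotomy is $|\sigma(a)| > 10^4\varepsilon$ or $|\sigma(a)| \leq \varepsilon/d$), the paper fixes a canonical set $\cW_\varepsilon$ of rotation-class representatives of primitive words of length $\leq \varepsilon$, decomposes each right-special word as $w = v_w u_w u'_w v'_w$ subject to $(\cP_a)$ or $(\cP_b)$ with $|v_w u_w|$ minimal (Definition \ref{Z1:defi:descs&Ztau}), and defines $(Z,\tau)$ from the clopen set of points seeing such a $w$ around the new, repositioned cut. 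This ``snapping'' of the cuts to windows $s^\Z_{[-99\varepsilon,99\varepsilon)}$, $s \in \cW_\varepsilon$, when they are present near the cut, and the guaranteed absence of any such window within $\pm 500\varepsilon$ otherwise, is exactly what drives Lemma \ref{Z1:rQp<=>good_root} and hence Proposition \ref{Z1:per}. With the cuts of Proposition \ref{Z0:main_prop} (at the right end of the special words), Item \ref{item:Z1:per:0ap&long=>per_int} can fail: a block whose interior $x_{[c_0+97\varepsilon, c_1-97\varepsilon)}$ is $\varepsilon$-periodic but whose periodicity breaks just before $c_1$ has $0 \notin \rQ{p}(z)$ yet small interior period. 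Item \ref{item:Z1:per:conj_roots} can also fail, since an exact-power block of the old coding has as root an arbitrary rotation rather than the representative in $\cW_\varepsilon$, so two blocks can have conjugate but unequal roots. Your appeal to ``synchronization tools of Section \ref{sec:CL}'' does not repair this, because no choice of $\varepsilon$ alone can force periodicity of a block to extend $99\varepsilon$ beyond cut points that were placed without reference to the periodic structure.

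A secondary consequence of the missing construction is that your claim that Items (1)--(3) of Proposition \ref{Z1:main_prop} are ``automatic'' only applies to the unmodified coding; for the actual $(Z,\tau)$ they must be re-proved. In particular $\#\cC \leq d^3$ is the nontrivial Lemma \ref{Z1:bound_cC} (it uses that between consecutive right-special occurrences the orbit is determined by the preceding special word and one letter, via Lemmas \ref{Z1:old_cuts&new_cuts} and \ref{Z1:middle_cuts}), while the $3dn$ bounds on lengths and recognizability come from Proposition \ref{prop:from_clopen_to_coding} applied to the new clopen set, whose radius and syndeticity must be estimated. What you did correctly identify is the role of Proposition \ref{Z0:manyshorts=>period} (it is used, through Lemma \ref{Z1:basic_bounds_lengths}, to prove Item \ref{item:Z1:per:many_short=>p} of Proposition \ref{Z1:per}) and the general shape of the $\varepsilon$-selection; the missing idea is the redefinition of the coding around the decompositions $(\cP_a)/(\cP_b)$ and the canonical roots $\cW_\varepsilon$.
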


\begin{prop} \label{Z1:per}
Consider the coding described in Proposition \ref{Z3:main_prop}.
Let $z \in Z$, $x = \tau(z)$ and $(c,z) = \Fac_{(Z,\tau)}(x)$.
We define $\rQ{p}(z)$ as the set of integers $j \in \Z$ such that $|\mroot\tau(z_j)| \leq \varepsilon$ and $x_{[c_j-99\varepsilon, c_{j+1}+99\varepsilon)} = (\mroot\tau(z_j))^\Z_{[-99\varepsilon, |\tau(z_j)|+99\varepsilon)}$.
\begin{enumerate}
    \item \label{item:Z1:per:0ap&long=>per_int}
    $0 \not\in \rQ{p}(z)$ and $|\tau(z_0)| > 401\varepsilon$ implies that $\per(x_{[c_0+97\varepsilon, c_1-97\varepsilon)}) > \varepsilon$.
    \item\label{item:Z1:per:0ap&short&edge_p=>per_int}
    Suppose that $0 \not\in \rQ{p}(z)$ and $|\tau(z_0)| \leq 401\varepsilon$. If $-1 \in \rQ{p}(z)$ or $1 \in \rQ{p}(x)$, then $\per(x_{[c_0+97\varepsilon, c_1-97\varepsilon)}) > \varepsilon$.
    \item \label{item:Z1:per:many_short=>p}
    If $k > d$ and $|\tau(z_j)| \leq 401\varepsilon$ for all $j \in [0,k)$, then $[0,k) \subseteq \rQ{p}(z)$.
    \item \label{item:Z1:per:conj_roots}
    Let $z' \in Z$ and assume that $0 \in \rQ{p}(z)$, $0 \in \rQ{p}(z')$ and that $\mroot \tau(z_0)$ is conjugate to $\mroot \tau(z'_0)$. Then $\mroot \tau(z_0) = \mroot \tau(z'_0)$.
\end{enumerate}
\end{prop}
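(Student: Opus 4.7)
The plan is to first construct $(Z,\tau)$ as a refinement of the base coding $(Z_0,\tau_0)$ of Proposition \ref{Z0:main_prop} at scale $n$, and then verify the four items in order. A pigeonhole argument over the at most $d^3$ values $|\mroot\tau_0(a)|$ lets me pick $\varepsilon$ in the interval $[n/d^{2d^3+4}, n/d)$ outside a bounded family of forbidden bands, producing a clean dichotomy between letters of $\cC_0$ whose root has length at most $\varepsilon$ and those whose root is well above $\varepsilon$. I then form $\cC$ by merging into a single letter any maximal run of consecutive base blocks that is entirely contained in a strongly periodic region of $x$ of period $\leq \varepsilon$, and by relabelling each remaining letter of $\cC_0$ with its local context over a window of length proportional to $\varepsilon$. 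This ensures that whether $j \in \rQ{p}(z)$ becomes a function of the single letter $z_j$ together with its immediate neighbors.

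For items (1) and (2), I argue by contradiction. Suppose $0 \notin \rQ{p}(z)$ yet $\per(x_{[c_0+97\varepsilon, c_1-97\varepsilon)}) \leq \varepsilon$. In case (1), the interior has length at least $207\varepsilon \geq 2\varepsilon$, so by Lemma \ref{CL:root=per} its root $u$ satisfies $|u| \leq \varepsilon$; using the alignment of $u^\infty$ with the block together with Item \ref{item:CL:various_sync:uv_t_vw_s=>uvw_t_s:bis} of Lemma \ref{CL:various_sync}, the $u^\infty$-pattern extends to the full window $[c_0-99\varepsilon, c_1+99\varepsilon)$; by construction the merge step would then have absorbed position $0$ into a periodic letter, contradicting $0 \notin \rQ{p}(z)$. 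In case (2), the strongly periodic neighbor's root $u'$ shares an overlap of length $\geq |u|+|u'|$ with the central interior, so Item \ref{item:CL:various_sync:F&W_per} of Lemma \ref{CL:various_sync} identifies the two roots and extends the periodicity across position $0$, again reaching a contradiction.

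Item (3) is a direct consequence of Proposition \ref{Z0:manyshorts=>period}: a run of $k > d$ consecutive blocks of length at most $401\varepsilon$ in the base factorization yields a long region of $x$ whose period is at most $d\cdot 401\varepsilon$, and the pigeonhole choice of $\varepsilon$ together with Lemma \ref{CL:root=per} forces this period to lie below $\varepsilon$. By construction, the entire run is then absorbed into a single periodic letter, placing every $j \in [0,k)$ into $\rQ{p}(z)$.

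Item (4) is the subtlest step and the one I expect to be the main obstacle. Given $0 \in \rQ{p}(z)\cap\rQ{p}(z')$ with $u = \mroot\tau(z_0)$ and $u' = \mroot\tau(z'_0)$ conjugate, both blocks sit inside periodic windows whose aligned copies of $u^\Z$ and $u'^\Z$ overlap on a segment of length at least $|u|+|u'|$; Item \ref{item:CL:on_sZ&tZ:equal} of Proposition \ref{CL:on_sZ&tZ} then identifies the two bi-infinite periodic sequences up to shift, and Item \ref{item:CL:on_sZ&tZ:equal_mod} forces the identifying shift to be a multiple of $|u|$, whence $u = u'$. The real technical difficulty is to arrange the refinement so that the cut positions relative to the root are determined canonically from the local data of $z_0$, and to check that the relabelling step encodes enough information to read off this canonical representative; this is exactly where the narrow pigeonhole range for $\varepsilon$ is crucial, since it rules out intermediate root lengths that would otherwise spoil the rigidity argument.
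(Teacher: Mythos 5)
Your proposal replaces the paper's construction (re-cutting inside right-special words at positions aligned with a fixed set $\cW_\varepsilon$ of rotation-class representatives, Definition~\ref{Z1:defi:descs&Ztau}) by an unspecified ``merge maximal periodic runs and relabel by local context'' coding, and the key claims are then justified by ``by construction''. The decisive gap is Item~(4), which you yourself flag as unresolved: conjugacy of $\mroot\tau(z_0)$ and $\mroot\tau(z'_0)$ only gives $(\mroot\tau(z_0))^\Z = S^k(\mroot\tau(z'_0))^\Z$ for some shift $k$, and since $z$ and $z'$ are different points there is no common frame in which Proposition~\ref{CL:on_sZ&tZ} could force $k\equiv 0$; conjugate primitive words (e.g.\ $ab$ and $ba$) are simply not equal. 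Equality can only come from a canonical phase choice built into the coding itself — in the paper this is exactly the role of $\cW_\varepsilon$ (one representative per rotation class) together with the placement of the cuts so that a periodic block starts at the canonical phase, whence $\mroot\tau(z_0)\in\cW_\varepsilon$ (Lemma~\ref{Z1:rQp<=>good_root}) and Item~(4) is immediate. Your appeal to ``the narrow pigeonhole range for $\varepsilon$'' cannot supply this: in the paper that pigeonhole (Lemma~\ref{Z1:find_a_gap}) produces a gap in the \emph{lengths} of the base blocks, not any canonicalization of root phases.

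The remaining items also lean on unproved properties of the unspecified coding. For (1)–(2), periodicity of the interior $x_{[c_0+97\varepsilon,c_1-97\varepsilon)}$ does not by itself extend to $[c_0-99\varepsilon,c_1+99\varepsilon)$ (Item~\ref{item:CL:various_sync:uv_t_vw_s=>uvw_t_s:bis} of Lemma~\ref{CL:various_sync} needs prefix/suffix alignment on both sides, which is precisely what must be proved), and membership in $\rQ{p}(z)$ additionally requires $\tau(z_0)$ to be an exact, phase-aligned power of its root; your ``the merge step would have absorbed position $0$'' assumes the construction already guarantees all of this. For (3), Proposition~\ref{Z0:manyshorts=>period} only yields period at most $d\ell$ with $\ell$ the maximal \emph{base}-block length, so one needs the base blocks under a short new block to have length at most $\varepsilon/d$; the paper gets this from the length-gap dichotomy plus the comparison Lemma~\ref{Z1:basic_bounds_lengths}, whereas your pigeonhole is over root lengths and does not deliver it, and even granted a period $\le\varepsilon$ you still must verify the $99\varepsilon$-overhang and alignment conditions defining $\rQ{p}$, which the sketch does not address. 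As it stands the proposal is a plausible outline of a different route (closer in spirit to the paper's second coding), but the central rigidity statement (4) and the construction it depends on are missing.
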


We fix, for the rest of the section, the following notation.
Let $X \subseteq \cA^\Z$ be a minimal infinite subshift, $n \geq 0$ and $d$ be the maximum of $p_X(n)/n$, $p_X(n+1) - p_X(n)$, $\#\cA$ and $10^4$.
We denote by $(Y \subseteq \cB^\Z, \sigma\colon\cB \to \cA^+)$ the coding given by Proposition \ref{Z0:main_prop} when it is used with $X$ and $n$.

\subsection{Construction of the first coding}

%%%%                                                                 %%%% LEMMA %%%%
\begin{lem} \label{Z1:find_a_gap}
Let $\cW$ be a finite set of words.
Then, there exists $\varepsilon \in [|\cW|/d^{2\#\cW+4}, |\cW|/d)$ such that for all $w \in \cW$, either $|w| > 10^4\varepsilon$ or $|w| \leq \varepsilon/d$.
\end{lem}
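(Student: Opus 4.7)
The plan is a straightforward pigeonhole argument on a geometrically spaced family of candidates. Recall that in the setting of Section~\ref{sec:Z1} the constant $d$ is defined in Proposition~\ref{Z1:main_prop}, hence $d\geq 10^4$. The statement asks for some $\varepsilon\in[|\cW|/d^{2\#\cW+4},|\cW|/d)$ whose ``forbidden interval'' $I(\varepsilon):=(\varepsilon/d,10^4\varepsilon]$ meets no length $|w|$ with $w\in\cW$.

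To find such $\varepsilon$, I would consider the $\#\cW+1$ candidates
\[
\varepsilon_k := |\cW|/d^{2k+1}, \qquad k = 1, 2, \ldots, \#\cW+1.
\]
A quick check shows each $\varepsilon_k$ lies in the prescribed range: $\varepsilon_k\leq |\cW|/d^3<|\cW|/d$ since $k\geq 1$, and $\varepsilon_k\geq |\cW|/d^{2\#\cW+3}\geq |\cW|/d^{2\#\cW+4}$ since $k\leq \#\cW+1$.

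The key computation is that the intervals $I(\varepsilon_k)$ are pairwise disjoint. Using $10^4\leq d$ one sees
\[
I(\varepsilon_k) = (\varepsilon_k/d,\,10^4\varepsilon_k] \subseteq \bigl(|\cW|/d^{2k+2},\; |\cW|/d^{2k}\bigr],
\]
so for $k<k'$ the right endpoint of the enclosing slot for $I(\varepsilon_{k'})$ is at most $|\cW|/d^{2k'}\leq |\cW|/d^{2k+2}$, while $I(\varepsilon_k)$ lies strictly above $|\cW|/d^{2k+2}$. Hence the $\#\cW+1$ sets $I(\varepsilon_1),\ldots,I(\varepsilon_{\#\cW+1})$ are pairwise disjoint. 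Since $\{|w|:w\in\cW\}$ has cardinality at most $\#\cW$, by pigeonhole some $I(\varepsilon_k)$ contains none of these lengths, and this $\varepsilon_k$ satisfies the conclusion.

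There is no real obstacle here; the content is purely combinatorial. The only thing to get right is the exponent book-keeping: the ratio $\varepsilon_k/\varepsilon_{k+1}=d^2$ combined with $d\geq 10^4$ is exactly what is needed so that consecutive forbidden intervals fit inside disjoint dyadic-type slots $(|\cW|/d^{2k+2},|\cW|/d^{2k}]$, while still leaving enough candidates (namely $\#\cW+1$ of them) to beat the number of distinct lengths by one.
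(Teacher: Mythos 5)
Your proof is correct and is essentially the paper's argument in dual form: the paper partitions the length axis into $\#\cW+1$ geometric bands of ratio $10^4d$ and pigeonholes the word lengths to find an empty band, whereas you fix $\#\cW+1$ geometrically spaced candidates $\varepsilon_k$ and pigeonhole to find one whose forbidden interval $(\varepsilon_k/d,10^4\varepsilon_k]$ avoids all lengths -- the same counting either way. The only cosmetic difference is that the paper's proof takes a floor so that $\varepsilon$ is an integer (which is how it is used later in the construction), but the statement as written does not require integrality, so this is not a gap.
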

\begin{proof}
Let $d_0 = 10^4d$ and, for $\ell \in [1,\#\cW+1]$, $\cW_\ell = \{w \in \cW : |\cW|/d_0^{\ell+1} < |w| \leq |\cW|/d_0^\ell\}$.
The Pigeonhole principle ensures that $\cW_\ell$ is empty for some $\ell \in [1, \#\cW+1]$.
We set $\varepsilon = \lfloor d|\cW|/d_0^{\ell+1} \rfloor$ and note that for any $w \in \cW$, either $w \in \cup_{\ell' < \ell} \cW_{\ell'}$ and $|w| > 10^4\varepsilon$, or $w \in \cup_{\ell' > \ell} \cW_{\ell'}$ and $|w| \leq \varepsilon/d$.
Also, since $\ell \in [1, \#\cW+1]$, we have that $\varepsilon \in [|\cW|/d^{2\#\cW+4}, |\cW|/d)$.
\end{proof}

%%%%                                                                 %%%% DEFINITION %%%%
We use Lemma \ref{Z1:find_a_gap} with the set $\sigma(\cB)$ to obtain $\varepsilon \in [n/d^{2\#\cW+4}, n/d)$ such that
\begin{equation} \label{eq:Z1:gap}
    \text{for all $a \in \cB$, either $|\sigma(a)| > 10^4\varepsilon$ or $|\sigma(a)| \leq \varepsilon/d$.}
\end{equation}
Note that $\varepsilon \in [n/d^{2d^3+4}, n/d)$  as $d^3 \geq \#\sigma(\cB)$ by Item \ref{item:Z0:main_prop:cardinalities} in Proposition \ref{Z0:main_prop}.

We now define a set $\cW_\varepsilon \subseteq \cA^+$ that will be important for controlling the periodicity properties in Proposition \ref{Z1:main_prop}.
We start by introducing classic notions related to periodicity of words.
Recall that two words $u, v \in \cA^+$ are conjugate if $u r = r v$ for some $r \in \cA^*$.
The relation $u \sim_R v$ iff $u$ and $v$ are conjugate is an equivalence relation, and a $\sim_R$-equivalence class is called a {\em rotation class}.
A word $u \in \cA^+$ is {\em primitive} if $u = \mroot u $ \footnote{We recall the reader that $\mroot u$ is the shortest prefix $v$ of $u$ such that $u = v^k$ for some $k \geq 1$}.
We fix a set $\cW_{\varepsilon} \subseteq \cA^+$ consisting of one element of the rotation class of each primitive word $w \in \cA^+$ such that $|w| \leq \varepsilon$.

%%%%                                                                 %%%% LEMMA %%%%
\begin{lem} \label{Z1:loc_per=>exists_w_in_Weps}
Let $t \in \cA^+$ be such that $\per(t) \leq \varepsilon$ and $|t| \geq 198\varepsilon + \per(t)$.
Then, for some $s \in \cW_{\varepsilon}$, $s^\Z_{[-99\varepsilon, 99\varepsilon)}$ occurs in $t$.
\end{lem}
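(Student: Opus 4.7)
The plan is to extract the primitive root of $t$, find its representative in $\cW_\varepsilon$, and then observe that a sufficiently long prefix of a purely periodic word must contain every shift of one period.

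First, I would combine the two hypotheses to get $|t| \ge 198\varepsilon + \per(t) \ge 199\per(t) \ge 2\per(t)$, so Lemma \ref{CL:root=per} applies and yields $|\mroot t| = \per(t) \le \varepsilon$. Set $r_0 \coloneqq \mroot t$. By the very definition of the root, $r_0$ is primitive (a proper power decomposition $r_0 = u^k$ with $k \ge 2$ would give $t = u^{km}$ where $t = r_0^m$, contradicting the minimality built into the root). Since $r_0$ is primitive and $|r_0|\le \varepsilon$, the definition of $\cW_\varepsilon$ produces an $s \in \cW_\varepsilon$ that is conjugate to $r_0$.

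Next, I would note that $t = (r_0^\Z)_{[0,|t|)}$, since $t$ is a prefix of $r_0^\infty$. Because $r_0^\Z$ is $|r_0|$-periodic, every length-$198\varepsilon$ subword of $r_0^\Z$ has the form $(r_0^\Z)_{[i,\,i+198\varepsilon)}$ for some $i \in [0,|r_0|)$, and the length bound $|t| - 198\varepsilon \ge |r_0|$ (which is exactly the hypothesis $|t| \ge 198\varepsilon + \per(t)$) guarantees that each such window fits inside $t$. Hence every length-$198\varepsilon$ subword of $r_0^\Z$ occurs in $t$.

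Finally, since $s$ is conjugate to $r_0$ with $|s| = |r_0|$, writing $sr = r\, r_0$ and concatenating shows $s^\Z = S^{|r|} r_0^\Z$. Consequently $s^\Z_{[-99\varepsilon,\,99\varepsilon)}$ is a length-$198\varepsilon$ subword of $r_0^\Z$, and by the previous paragraph it occurs in $t$, as required.

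Mechanically, this argument is a short chain of definitions plus one application of Lemma \ref{CL:root=per}; I do not anticipate any real obstacle. The only point needing a line of justification is the passage from ``$s$ and $r_0$ conjugate'' to ``$s^\Z$ is a shift of $r_0^\Z$'', which is immediate either by direct computation or by quoting Item \ref{item:CL:on_sZ&tZ:equal_mod} of Proposition \ref{CL:on_sZ&tZ}.
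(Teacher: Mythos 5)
There is a genuine gap at the very first step. You write that $|t| \ge 2\per(t)$ lets you apply Lemma \ref{CL:root=per} to conclude $|\mroot t| = \per(t) \le \varepsilon$, but that lemma requires $|t| \ge 2|\mroot t|$, and since $\per(t) \le |\mroot t|$ in general, your inequality does not give this. Worse, the conclusion itself is false under the hypotheses of the lemma you are proving: take a primitive word $u$ with $|u| = \per(t) \le \varepsilon$ and let $t$ be a prefix of $u^\infty$ whose length is at least $198\varepsilon + |u|$ but is \emph{not} a multiple of $|u|$. A Fine--Wilf argument shows such a $t$ admits no proper power decomposition, so $\mroot t = t$, whose length vastly exceeds $\varepsilon$. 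Thus your $r_0 = \mroot t$ need not lie in any rotation class represented in $\cW_\varepsilon$, and the later step ``$|t| - 198\varepsilon \ge |r_0|$ is exactly the hypothesis'' silently uses the same false identification of $|\mroot t|$ with $\per(t)$. The point is that $\per$ controls the periodic \emph{extension} of $t$, while $\mroot$ only sees exact power structure; the hypothesis $|t| \ge 198\varepsilon + \per(t)$ says nothing about $\mroot t$.

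The fix is the one the paper uses: work with $u$, the prefix of $t$ of length $\per(t)$, rather than with $\mroot t$. Then $t$ is a prefix of $u^\infty$, and $u$ is primitive (otherwise $t$ would occur in $(\mroot u)^n$, giving $\per(t) \le |\mroot u| < |u| = \per(t)$). With this substitution the rest of your argument is sound and actually a bit cleaner than the paper's: choosing $s \in \cW_\varepsilon$ conjugate to $u$, observing $s^\Z$ is a shift of $u^\Z$, and noting that every window of length $198\varepsilon$ of $u^\Z$ already occurs in $t$ because $|t| \ge 198\varepsilon + |u|$, replaces the paper's explicit computation with the exponent $k = \lceil (99\varepsilon - |u'|)/|s|\rceil$. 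But as written, the proof does not stand.
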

\begin{proof}
Let $u$ be the prefix of $t$ of length $\per(t)$.
Note that $u$ is primitive as otherwise $\per(t) \leq |\mroot u| < |u| = \per(t)$, which is a contradiction.
The primitiveness of $u$ and the inequality $|u| = \per(t) \leq \varepsilon$ imply that there exist $s \in \cW_\varepsilon$ and a suffix $u'$ of $s$ such that $|s| = |u|$ and $u's$ is a prefix of $uu$.
Being $\per(t) = |u| = |s|$, we then have that
\begin{equation} \label{eq:Z1:loc_per=>exists_w_in_Weps:1}
    \text{$t$ is a prefix of $u's^\infty$.}
\end{equation}
We set $k = \left\lceil \frac{99\varepsilon - |u'|} {|s|} \right\rceil$.
Observe that, since $|s| \leq \varepsilon$.
\begin{equation*}
    |u's^k| = |u'| + k|s| \leq
    \per(t) + \left\lceil \frac{99\varepsilon} {\varepsilon} \right\rceil \varepsilon =
    99\varepsilon + \per(t).
\end{equation*}
Hence, $|u's^k| + 99\varepsilon \leq |t|$.
From this and Equation \eqref{eq:Z1:loc_per=>exists_w_in_Weps:1} we deduce that if $v$ is the prefix of $s^\infty$ of length $99\varepsilon$, then $u' s^k v$ is a prefix of $t$.
Now, we have the bound
\begin{equation*}
    |u' s^k| =
    |u'| + \left\lceil \frac{99\varepsilon - |u'|} {|s|} \right\rceil |s| \geq 99\varepsilon.
\end{equation*}
Hence, $s^\Z_{[-99\varepsilon, 99\varepsilon)}$ is a suffix of $u' s^k v$.
We conclude that $s^\Z_{[-99\varepsilon, 99\varepsilon)}$ occurs in $t$.
\end{proof}

%%%%                                                                 %%%% LEMMA %%%%
\begin{lem} \label{Z1:find_pq_deco_for_w}
Let $w$ be a word of length $n$.
Then, there exists a decomposition $w = v u u' v'$ satisfying one of the following sets of conditions.
\begin{enumerate}[label=$(\alph*)$]
    \item $|u| = |u'| = 99\varepsilon$, $|v|, |v'| \geq n/2-500\varepsilon$, and $uu' = s^\Z_{[-99\varepsilon, 99\varepsilon)}$ for some $s \in \cW_\varepsilon$.
    \item $|u| = |u'| = 500\varepsilon$, $|v| = \lfloor n/2-500\varepsilon\rfloor$, $|v'| \geq n/2-500\varepsilon$, and $s^\Z_{[-99\varepsilon, 99\varepsilon)}$ does not occur in $u u'$ for all $s \in \cW_\varepsilon$.
\end{enumerate}
\end{lem}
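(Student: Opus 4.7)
My strategy is to examine a central region of $w$ of length about $1000\varepsilon$ and apply a dichotomy based on whether some ``periodic marker'' of the form $s^\Z_{[-99\varepsilon, 99\varepsilon)}$ with $s \in \cW_\varepsilon$ occurs inside it. Because $\varepsilon < n/d \leq n/10^4$ (using the hypothesis $d \geq 10^4$ from Proposition \ref{Z1:main_prop}), we have $1000\varepsilon \ll n$, so such a central region fits comfortably inside $w$ and leaves substantial slack on either side; this slack is exactly what will allow us to satisfy the length constraints $|v|, |v'| \geq n/2 - 500\varepsilon$ in both cases.

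Concretely, I will set $m = \lfloor n/2 - 500\varepsilon \rfloor$ and consider the central factor $t = w_{[m,\, m + 1000\varepsilon)}$; this is well-defined since $\varepsilon \in \Z$ by the construction in Lemma \ref{Z1:find_a_gap}. If some $s \in \cW_\varepsilon$ produces an occurrence of the marker at a position $i \in [m, m + 802\varepsilon]$ in $w$, I will take $v = w_{[0,i)}$, split the length-$198\varepsilon$ marker into two halves $u = w_{[i, i + 99\varepsilon)}$ and $u' = w_{[i + 99\varepsilon, i + 198\varepsilon)}$, and set $v' = w_{[i + 198\varepsilon, n)}$; this produces decomposition (a). Otherwise, no marker occurs in $t$, and I will use $t$ itself as $uu'$ split into halves of length $500\varepsilon$, giving $v = w_{[0,m)}$ of length $\lfloor n/2 - 500\varepsilon \rfloor$ and $v' = w_{[m + 1000\varepsilon, n)}$ of length $n - m - 1000\varepsilon \geq n/2 - 500\varepsilon$, yielding decomposition (b).

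The main bookkeeping concern is verifying the length bounds in case (a) for \emph{every} admissible occurrence position $i$, since $n/2 - 500\varepsilon$ need not be an integer. When $n$ is odd, this may require restricting the admissible range of $i$ to $[\lceil n/2 - 500\varepsilon \rceil, \lfloor n/2 + 302\varepsilon \rfloor]$, which excludes at most one boundary position of $t$; a marker occurring only at that excluded position can be absorbed into case (b) by shifting $t$ one step to the right, since the $\ll n$ comparison ensures we still have room on the other side. Aside from this minor integer-rounding bookkeeping, the verification of conditions (a) and (b) follows directly from the construction.
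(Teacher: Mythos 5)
Your construction is the paper's own proof: the paper likewise cuts out a central block $t$ of length $2\cdot 500\varepsilon$ starting at position $\lfloor n/2-500\varepsilon\rfloor$, and splits into the same two cases according to whether some $s^\Z_{[-99\varepsilon,99\varepsilon)}$ with $s\in\cW_\varepsilon$ occurs in $t$ — splitting that occurrence into two halves of length $99\varepsilon$ for case $(a)$, and splitting $t$ itself into two halves of length $500\varepsilon$ for case $(b)$. So in substance the proposal is correct and matches the paper. The one place where you go beyond the paper, the odd-$n$ rounding patch, is both unnecessary in spirit (the paper's proof has the identical half-unit slack at the left end of case $(a)$ and simply ignores it, since all later uses of the decomposition only need the bounds up to $O(\varepsilon)$) and not correct as stated: if a marker occurs only at the excluded leftmost position and you shift the window one step to the right, then in case $(b)$ you get $|v|=\lfloor n/2-500\varepsilon\rfloor+1$, violating the \emph{exact} equality $|v|=\lfloor n/2-500\varepsilon\rfloor$ demanded by condition $(b)$, and moreover the shifted window admits a possible new marker occurrence starting at the newly included rightmost position, which your case hypothesis does not rule out. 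Dropping the patch (or reading the inequality in $(a)$ as $|v|\geq\lfloor n/2-500\varepsilon\rfloor$, as the paper implicitly does) leaves a clean proof identical to the paper's.
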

\begin{proof}
Since $|w| \geq 2\cdot500\varepsilon$, there is a decomposition $w = v_0 t v'_0$, where $|v_0| = \lfloor n/2-500\varepsilon \rfloor$, $|v'_0| \geq n/2-500\varepsilon$ and $|t| = 2\cdot500\varepsilon$.
There are two cases:
\begin{enumerate}[label=(\roman*)]
    \item $s^\Z_{[-99\varepsilon, 99\varepsilon)}$ occurs in $t$ for some $s \in \cW_\varepsilon$.
    \item $s^\Z_{[-99\varepsilon, 99\varepsilon)}$ does not occur in $t$ for all $s \in \cW_\varepsilon$.
\end{enumerate}
Suppose first that case (i) occurs.
It is then possible to write $t = v_1 u u' v'_1$, where $uu' = s^\Z_{[-99\varepsilon, 99\varepsilon)}$ and $|u| = |u'| = 99\varepsilon$.
We set $v = v_0v_1$ and $v' = v'_1v'_0$ and note that $w = vuu'v'$ satisfies Condition $(a)$.

We now assume that (ii) holds.
Being the length of $t$ equal to $2\cdot500\varepsilon$, we can write $w_0 = u u'$, where $|u| = |u'| = 500\varepsilon$.
Then, the decomposition $w = v_0 u u v'_0$ satisfies Condition $(b)$.
\end{proof}

We now can define $(Z,\tau)$.
%%%%                                                                 %%%% DEFINITION %%%%
\begin{defi} \label{Z1:defi:descs&Ztau}
For $w \in \RS_n(X)$, we use Lemma \ref{Z1:find_pq_deco_for_w} to fix a decomposition $w = v_w u_w u'_w v'_w$ satisfying one of the following conditions:
\begin{enumerate} [label=$(\cP_\alph*)$]
    \item $|u_w| = |u'_w| = 99\varepsilon$, $|v_w|, |v'_w| \geq n/2-500\varepsilon$, and $u_w u'_w = s^\Z_{[-99\varepsilon, 99\varepsilon)}$ for some $s \in \cW_\varepsilon$.
    \item $|u_w| = |u'_w| = 500\varepsilon$, $|v_w| = \lfloor n/2-500\varepsilon\rfloor$, $|v'_w| \geq n/2-500\varepsilon$, and $s^\Z_{[-99\varepsilon, 99\varepsilon)}$ does not occur in $u_w u'_w$ for all $s \in \cW_\varepsilon$.
\end{enumerate}
Moreover, we choose this decomposition so that $|v_w u_w|$ is as small as possible.
We define $(Z \subseteq \cC^\Z, \tau\colon\cC\to\cA^+)$ as the coding of $X$ obtained from Proposition \ref{prop:from_clopen_to_coding} and the clopen set $U = \{x \in X : \exists w \in \RS_n(X), x_{[-|v_w u_w|, |u'_w v'_w|)} = w\}$.
\end{defi}

\subsection{Basic properties of the first coding}

%%%%                                                                 %%%% LEMMA %%%%
\begin{lem} \label{Z1:old_cuts&new_cuts} % NEW CUTS ALMOST PRESERVE THE ORDER
Let $x \in X$ and $i, j \in \Z$ with $i < j$.
Suppose that $x_{[i-|v_wu_w|, i+|u'_wv'_w|)} = w$ and $x_{[j-|v_{\tilde{w}}u_{\tilde{w}}|, j+|u'_{\tilde{w}}v'_{\tilde{w}}|)} = \tilde{w}$ for some $w, \tilde{w} \in \RS_n(X)$.
Then, $i + |u'_w v'_w| < j + |u'_{\tilde{w}} v'_{\tilde{w}}|$.
\end{lem}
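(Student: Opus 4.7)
The plan is to first rewrite the target inequality in a more tractable form. Since $|w| = |\tilde w| = n$, we have $|u'_w v'_w| = n - |v_w u_w|$ and similarly for $\tilde w$, so the conclusion $i + |u'_w v'_w| < j + |u'_{\tilde w} v'_{\tilde w}|$ is equivalent to
\[ |v_{\tilde w} u_{\tilde w}| - |v_w u_w| < j - i, \]
i.e., to the fact that the starting position of the occurrence of $w$ in $x$ strictly precedes the starting position of $\tilde w$.

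Next, I will observe that by the minimality clause in Definition \ref{Z1:defi:descs&Ztau}, the value $|v_w u_w|$ depends only on which of $(\cP_a)$, $(\cP_b)$ the decomposition of $w$ satisfies---not on $w$ itself---since in both cases the constraints on $|v_w|$ involve only $n$ and $\varepsilon$. Writing $L_a, L_b$ for these two common values, a direct computation yields $L_a \leq L_b$ and $L_b - L_a \leq 401\varepsilon$ (the bound $\varepsilon < n/d \leq n/10^4$ keeps $n/2 - 500\varepsilon > 0$, so neither constraint is vacuous). I then case-split on which of $(\cP_a), (\cP_b)$ applies to each of $w$ and $\tilde w$. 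In three of the four subcases---$(\cP_a,\cP_a)$, $(\cP_b,\cP_b)$, and $(\cP_b,\cP_a)$---the difference $|v_{\tilde w} u_{\tilde w}| - |v_w u_w|$ is at most $0$, and the desired inequality follows immediately from $i < j$.

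The only nontrivial subcase, and the heart of the argument, is $(\cP_a, \cP_b)$: here $|v_{\tilde w} u_{\tilde w}| - |v_w u_w| = L_b - L_a \leq 401\varepsilon$, so it suffices to prove $j - i > 401\varepsilon$. I will do this by contradiction: if $j - i \leq 401\varepsilon$, then, combined with $i < j$, one obtains the inclusion
\[ [i - 99\varepsilon, i + 99\varepsilon) \subseteq [j - 500\varepsilon, j + 500\varepsilon). \]
But $(\cP_a)$ applied to $w$ gives $x_{[i-99\varepsilon, i+99\varepsilon)} = s^{\Z}_{[-99\varepsilon, 99\varepsilon)}$ for some $s \in \cW_\varepsilon$, so this very word occurs inside $x_{[j-500\varepsilon, j+500\varepsilon)} = u_{\tilde w} u'_{\tilde w}$, contradicting the prohibition in $(\cP_b)$ for $\tilde w$. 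The $401\varepsilon = 500\varepsilon - 99\varepsilon$ gap between the central radii in the two cases is precisely what makes this accounting close up, and this balance was the main design feature of Lemma \ref{Z1:find_pq_deco_for_w}.
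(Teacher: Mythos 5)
There is a genuine gap, and it sits in the step you rely on for three of your four subcases: the claim that, by minimality, $|v_w u_w|$ ``depends only on which of $(\cP_a)$, $(\cP_b)$ the decomposition of $w$ satisfies, not on $w$ itself.'' This is false for $(\cP_a)$. In Definition \ref{Z1:defi:descs&Ztau} the minimization of $|v_w u_w|$ runs only over decompositions that are admissible for the \emph{specific} word $w$, and admissibility in case $(\cP_a)$ carries a content constraint: $u_w u'_w$ must equal $s^\Z_{[-99\varepsilon,99\varepsilon)}$ for some $s \in \cW_\varepsilon$, so the center $|v_w u_w|$ must sit at a position where such a pattern actually occurs in $w$. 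Consequently $|v_w u_w|$ can be anything in $[n/2-401\varepsilon,\, n/2+401\varepsilon]$ depending on $w$ (e.g.\ when the only pattern occurrence in the central window is far to the right, in which case $(\cP_b)$ also fails and no smaller center is available). There is no common value $L_a$. As a result, in your subcases $(\cP_a,\cP_a)$ and $(\cP_b,\cP_a)$ the difference $|v_{\tilde w}u_{\tilde w}| - |v_w u_w|$ can be strictly positive (up to about $802\varepsilon$, resp.\ $401\varepsilon$), while $j-i$ can be as small as $1$; so ``follows immediately from $i<j$'' proves nothing there, and these are not degenerate configurations — they are exactly the ones the lemma has to exclude.

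What is missing is the way the paper actually uses the minimality clause: it is invoked \emph{dynamically}, not to pin down a numerical value. Assuming the conclusion fails, the paper (case (i) of its proof) transports the block $u_w u'_w$ from the occurrence of $w$ into $\tilde w$, checks that $\tilde w = v\,u_w u'_w\,v'$ is again an admissible decomposition of $\tilde w$ (the length bounds on $v,v'$ follow from the assumed crossing of the two occurrences), and observes that its center is strictly smaller than $|v_{\tilde w}u_{\tilde w}|$, contradicting the minimality in Definition \ref{Z1:defi:descs&Ztau}; the remaining configuration (case (ii)) is the pattern-containment contradiction, which is the one mechanism your $(\cP_a,\cP_b)$ subcase does capture correctly. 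So your argument reproduces the paper's case (ii) but has no substitute for case (i), and the subcases you dismiss as immediate are precisely the ones that need it.
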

\begin{proof}
We assume, with the aim of obtaining a contradiction, that $i + |u'_w v'_w| \geq j + |u'_{\tilde{w}} v'_{\tilde{w}}|$.

First, we consider the case $i + |u'_w v'_w| = j + |u'_{\tilde{w}} v'_{\tilde{w}}|$.
Then,
\begin{equation*}
    w = x_{[i+|u'_w v'_w|-n, i+|u'_w v'_w|)} =
    x_{[j+|u'_{\tilde{w}} v'_{\tilde{w}}|-n, j+|u'_{\tilde{w}} v'_{\tilde{w}}|)} = \tilde{w}.
\end{equation*}
Hence, $u'_wv'_w = u'_{\tilde{w}}v'_{\tilde{w}}$, and therefore
\begin{equation*}
    i = (i+|u'_wv'_w|) - |u'_wv'_w| =
    (j+|u'_{\tilde{w}}v'_{\tilde{w}}|) - |u'_{\tilde{w}}v'_{\tilde{w}}| = j.
\end{equation*}
This contradicts that $i < j$.
\medskip

Next, we assume that
\begin{equation} \label{eq:Z0:old_cuts&new_cuts:1}
    i + |u'_w v'_w| > j + |u'_{\tilde{w}} v'_{\tilde{w}}|.
\end{equation}
Note that this is equivalent to $i - |v_w u_w| > j - |v_{\tilde{w}} u_{\tilde{w}}|$.
This fact will be freely used through the proof.

We consider the following two cases:
\begin{enumerate}[label=(\roman*)]
    \item[(i)] $i + |u'_w| < j + |u'_{\tilde{w}}|$.
    \item[(ii)] $i + |u'_w| \geq j + |u'_{\tilde{w}}|$.
\end{enumerate}
Suppose first that case (i) occurs.
We are going to define a decomposition $\tilde{w} = v u u' v'$ as the one in Definition \ref{Z1:defi:descs&Ztau} and such that $|v u_w| < |v_{\tilde{w}} u_{\tilde{w}}|$.
This would contradict the minimality of $|v_{\tilde{w}} u_{\tilde{w}}|$.

We start by noting that, thanks to \eqref{eq:Z0:old_cuts&new_cuts:1}, if we set $v = x_{[j-|v_{\tilde{w}} u_{\tilde{w}}|, i-|u_w|)}$ and $v' = x_{[i+|u'_w|, j+|u'_{\tilde{w}} v'_{\tilde{w}}|)}$, then $\tilde{w} = v u_w u'_w v'$.
Note that from \eqref{eq:Z0:old_cuts&new_cuts:1} we have that
\begin{equation*}
    |v| = |x_{[j-|v_{\tilde{w}} u_{\tilde{w}}|, i-|v_w u_w|)}| + |v_w| \geq n/2 - 500\varepsilon.
\end{equation*}
Also, (i) implies that
\begin{equation*}
    |v'| = |x_{[i+|u'_w|, j+|u'_{\tilde{w}}v'_{\tilde{w}}|)}| \geq
    |x_{[j+|u'_{\tilde{w}}|, j+|u'_{\tilde{w}}v'_{\tilde{w}}|)}| = 
    |v'_{\tilde{w}}| \geq n/2 - 500\varepsilon.
\end{equation*}
We conclude, as $w = v_w u_w u'_w v'_w$ satisfies Condition $(\cP_a)$ or $(\cP_b)$ in Definition \ref{Z1:defi:descs&Ztau}, that $w = v u_w u'_w v'$ satisfies $(\cP_a)$ or $(\cP_b)$.
Moreover, since $|v u_w| = |x_{[j-|u_{\tilde{w}} v_{\tilde{w}}|, i)}|$ and $|v_{\tilde{w}} u_{\tilde{w}}| = |x_{[j-|v_{\tilde{w}} u_{\tilde{w}}|, j)}|$, we have that
\begin{equation*}
    |v_{\tilde{w}} u_{\tilde{w}}| = |v u_w| + j - i > |v u_w|.
\end{equation*}
Thus, $w = v u_w u'_w v'$ satisfies $(\cP_a)$ or $(\cP_b)$, and $|v u_w|$ is strictly smaller than $|v_{\tilde{w}} u_{\tilde{w}}|$.
This contradicts the minimality of $|v_{\tilde{w}} u_{\tilde{w}}|$.
\medskip

Next, we assume that $i + |u'_w| \geq j + |u'_{\tilde{w}}|$.
Then, as $i < j$, we have that $[j, j+|u'_{\tilde{w}}|) \subsetneq [i, i+|u'_w|)$.
This implies two things.
First, since $|u_{\tilde{w}}| = |u'_{\tilde{w}}|$ and $|u_w| = |u'_w|$, that
\begin{equation} \label{eq:Z0:old_cuts&new_cuts:2}
    [j-|u_{\tilde{w}}|, j+|u'_{\tilde{w}}|) \subsetneq [i-|u_w|, i+|u'_w|).
\end{equation}
Second, that $|u'_{\tilde{w}}| < |u'_w|$.
Being $|u'_w|, |u'_{\tilde{w}}| \in \{99\varepsilon, 500\varepsilon\}$, the last relation is possible only if
\begin{equation}
    |u'_{\tilde{w}}| = 99\varepsilon
    \enspace\text{and}\enspace
    |u'_w| = 500\varepsilon.
\end{equation}
Therefore, Condition $(\cP_a)$ holds for $\tilde{w} = v_{\tilde{w}} u_{\tilde{w}} u'_{\tilde{w}} v'_{\tilde{w}}$ and Condition $(\cP_b)$ holds for $w = v_w u_w u'_w v'_w$.
In particular, we can find $s \in \cV_{\varepsilon}$ such that $u_{\tilde{w}} u'_{\tilde{w}} = s^\Z_{[-99\varepsilon, 99\varepsilon)}$.
This implies, by \eqref{eq:Z0:old_cuts&new_cuts:2}, that $s^\Z_{[-99\varepsilon, 99\varepsilon)} = u_{\tilde{w}} u'_{\tilde{w}}$ occurs in $u_w u'_w$.
But then Condition $(\cP_b)$ cannot hold for $w = v_w u_w u'_w v'_w$, contradicting our assumptions.
\end{proof}

It is convenient to introduce some notation.
Let $x \in X$, $(c,y) = \Fac_{(Y, \sigma)}(x)$ and $(f, z) = \Fac_{(Z,\tau)}(x)$.
For $j \in \Z$, we define $w_j(x) = x_{[c_j-n, c_j)} \in \RS_n(X)$, $v_j(x) = v_{w_j(x)}$, $u_j(x) = u_{w_j(x)}$, $u'_j(x) = u'_{w_j(x)}$ and $v'_j(x) = v'_{w_j(x)}$.
Then,
\begin{equation*}
    x_{[c_j-n, c_j)} = w_j(x) = v_j(x) u_j(x) u'_j(x) v'_j(x).
\end{equation*}
Observe that if $j \in \Z$ then $x_{[f_j - |v_w u_w|, f_j + |u'_w v'_w|)} = w$ for some $w \in \RS_n(X)$, so there exists $i \in \Z$ such that $f_j + |u'_i(x) v'_i(x)| = c_i$.
We define $\phi_x(j)$ as the smallest integer such that
\begin{equation} \label{eq:Z0:def_phi}
    f_j + |u'_{\phi_x(j)}(x) v'_{\phi_x(j)}(x)| = c_{\phi_x(j)}.
\end{equation}
Then, by Lemma \ref{Z1:old_cuts&new_cuts},
\begin{equation} \label{eq:Z0:almost_preserve_order}
    \phi_x(i) < \phi_x(j)
    \enspace\text{for all $x \in X$ and $i < j$.}
\end{equation}

%%%%                                                                 %%%% LEMMA %%%%
\begin{lem} \label{Z1:middle_cuts}
Let $x \in X$, $(c,y) = \Fac_{(Y, \sigma)}(x)$ and $(f, z) = \Fac_{(Z,\tau)}(x)$.
If $i \in \Z$ and $k \in [\phi_x(i), \phi_x(i+1))$, then $f_i + |u'_k(x) v'_k(x)| = c_k$.
\end{lem}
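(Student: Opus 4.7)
The plan is to show that, for each $k \in [\phi_x(i), \phi_x(i+1))$, the position $m := c_k - |u'_k(x) v'_k(x)|$ (informally, the ``center'' of the decomposition of the right-special word $w_k(x)$ in $x$) coincides with the cut $f_i$. First, I would observe that the identity $x_{[c_k-n, c_k)} = w_k(x) = v_k(x) u_k(x) u'_k(x) v'_k(x)$ rewrites as $x_{[m-|v_k(x) u_k(x)|, m+|u'_k(x) v'_k(x)|)} = w_k(x)$, so by construction of $U$ in Definition \ref{Z1:defi:descs&Ztau} we have $S^m x \in U$. The characterization of the cuts given by Proposition \ref{prop:from_clopen_to_coding} then yields a unique $j' \in \Z$ with $f_{j'} = m$, and the lemma reduces to proving $j' = i$.

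Next, I would sandwich $j'$ between $i$ and $i+1$ by two applications of Lemma \ref{Z1:old_cuts&new_cuts}. On the one hand, $k \geq \phi_x(i)$ gives $c_{\phi_x(i)} \leq c_k$; applying Lemma \ref{Z1:old_cuts&new_cuts} to the centers $f_i$ and $f_{j'} = m$, which host the right-special words $w_{\phi_x(i)}(x)$ and $w_k(x)$ respectively, we see that $f_{j'} < f_i$ would force $c_k < c_{\phi_x(i)}$, a contradiction; hence $j' \geq i$. On the other hand, $k < \phi_x(i+1)$ gives $c_k < c_{\phi_x(i+1)}$, and the symmetric application of Lemma \ref{Z1:old_cuts&new_cuts} to the centers $f_{j'}$ and $f_{i+1}$ excludes $f_{j'} > f_{i+1}$, so $j' \leq i+1$.

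The only remaining subtlety is ruling out $j' = i+1$, which would say $c_k = f_{i+1} + |u'_k(x) v'_k(x)|$; but then $k$ would itself be a candidate for the minimum in the definition of $\phi_x(i+1)$ given in \eqref{eq:Z0:def_phi}, forcing $\phi_x(i+1) \leq k$ and contradicting $k < \phi_x(i+1)$. Therefore $j' = i$, i.e.\ $f_i = m = c_k - |u'_k(x) v'_k(x)|$, which is the desired identity. No real obstacle is expected here: the whole argument is a bookkeeping consequence of Lemma \ref{Z1:old_cuts&new_cuts} combined with the defining property of the cuts $f_j$ as the positions where $S^{f_j} x \in U$, and the minimality condition built into $\phi_x$.
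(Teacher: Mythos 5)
Your proposal is correct and follows essentially the same route as the paper's proof: identify $c_k - |u'_k(x)v'_k(x)|$ as a cut $f_j$ (via the defining clopen set $U$), sandwich $j$ between $i$ and $i+1$ by two applications of Lemma \ref{Z1:old_cuts&new_cuts} to the ordered ends $c_{\phi_x(i)} \leq c_k < c_{\phi_x(i+1)}$, and exclude $j = i+1$ using the minimality built into the definition of $\phi_x(i+1)$ in \eqref{eq:Z0:def_phi}. No gaps.
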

\begin{proof}
Observe that, since $x_{[c_k-n, c_k)} = w_k(x)$, there exists $j \in \Z$ such that $f_j = c_k - |u'_k(x) v'_k(x)|$.
We are going to prove that $j = i$.

First, we note that, since $k \in [\phi_x(i), \phi_x(i+1))$ and $c_k = f_j + |u'_k(x) v'_k(x)|$,
\begin{multline} \label{eq:Z1:middle_cuts:1}
    f_i + |u'_{\phi_x(i)}(x) v'_{\phi_x(i)}(x)| =
    c_{\phi_x(i)} \leq f_j + |u'_k(x) v'_k(x)| \\ <
    c_{\phi_x(i+1)} = f_{i+1} + |u'_{\phi_x(i+1)}(x) v'_{\phi_x(i+1)}(x)|.
\end{multline}
This implies, by Lemma \ref{Z1:old_cuts&new_cuts}, that $i \leq j \leq i+1$.
Now, if $j = i+1$, then Equation \eqref{eq:Z1:middle_cuts:1} ensures that $f_{i+1} + |u'_k(x) v'_k(x)|$ is strictly smaller than $f_{i+1} + |u'_{\phi_x(i+1)}(x) v'_{\phi_x(i+1)}(x)|$, which contradicts the minimality of $\phi_x(i+1)$.
We conclude that $j = i$.
\end{proof}

%%%%                                                                 %%%% LEMMA %%%%
\begin{lem} \label{Z1:bound_cC}
    The set $\cC$ has at most $d^3$ elements.
\end{lem}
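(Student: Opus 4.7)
The plan is to bound $\#\cC$ by the number of return words of $X$ to the clopen set $U$ from Definition \ref{Z1:defi:descs&Ztau}. By Proposition \ref{prop:from_clopen_to_coding}, the morphism $\tau$ is injective on letters, so $\#\cC$ equals the number of distinct values of $\tau(a)$ for $a \in \cC$; and for each $a$, picking any witness $(x,j)$ with $z_j = a$ gives $\tau(a) = x_{[f_j, f_{j+1})}$, which is a return word to $U$ by the characterisation of the $f_j$'s as the $U$-visit positions of $x$.

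For each such return word $\alpha$, I would associate a triple $(w, w', b) \in \RS_n(X)^2 \times \cA$, where $w$ and $w'$ are the canonical right-special words witnessing the $U$-visits at the start and end of $\alpha$, and $b$ is a disambiguating letter. Canonicity of $w$ and $w'$ would come from Lemma \ref{Z1:old_cuts&new_cuts}, which forbids improperly-nested occurrences of such decompositions and therefore pins down a unique choice out of the possibly several $w \in \RS_n(X)$ matching the cylinder description of $U$ at a given position. If the triple $(w,w',b)$ determines $\alpha$, then, using $\#\RS_n(X) \leq p_X(n+1) - p_X(n) \leq d$ (by Proposition \ref{prop:props_special_words}\eqref{item:prop:props_special_words:RS&pX}) and $\#\cA \leq d$, the count yields $\#\cC \leq d^2 \cdot d = d^3$.

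The main obstacle is showing the triple $(w, w', b)$ determines $\alpha$, since $\alpha$ may contain right-special words in its interior (those not properly positioned to witness a $U$-visit) that act as additional branching points. The argument would follow the pattern behind Proposition \ref{prop:props_special_words}\eqref{item:prop:props_special_words:RS_is_recurrent}: starting from $w$, the prefix $u'_w v'_w$ of $\alpha$ is forced and the letter $b$ (chosen, say, as the letter of $\alpha$ immediately past $u'_w v'_w$) fixes the first branch out of $w$; then at each subsequent interior right-special word, the constraint that no $U$-visit occurs strictly between $f_j$ and $f_{j+1}$ (from the return-word property) combined with the two possible forms $(\cP_a), (\cP_b)$ of the decomposition in Lemma \ref{Z1:find_pq_deco_for_w} singles out a unique continuation, until the trajectory reaches $w'$ at position $|\alpha|$ and $\alpha$ terminates. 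The case analysis on $(\cP_a)$ versus $(\cP_b)$ at each interior right-special word is where the bulk of the technical work would lie.
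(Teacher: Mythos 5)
Your reduction to counting return words to $U$ and the target count $\#\RS_n(X)^2\cdot\#\cA \leq d^3$ are fine, but the heart of your argument --- that at each interior right-special word ``the constraint that no $U$-visit occurs strictly between $f_j$ and $f_{j+1}$, combined with the forms $(\cP_a),(\cP_b)$, singles out a unique continuation'' --- does not hold, and this is exactly the step that carries all the difficulty. What the no-interior-visit constraint actually forces is only a statement about \emph{positions}: every right-special word ending inside the return word must be anchored exactly at $f_j$, i.e.\ it must end at distance $|u'_w v'_w|$ from $f_j$ (this is the content of Lemma \ref{Z1:middle_cuts}, proved via Lemma \ref{Z1:old_cuts&new_cuts}). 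It says nothing about the \emph{letter} that follows such an occurrence: after an interior right-special word both admissible extension letters can be continued without ever producing a $U$-visit strictly inside, so two distinct return words can share your triple $(w,w',b)$ and diverge at the second interior branch point. Your data (first right-special word, the letter just past its $u'v'$-suffix, last right-special word) only determines the return word up to the second cut of the $(Y,\sigma)$-factorization; the middle of the word is not pinned down by any forward propagation of the return-word property, and knowing $w'$ at the far end does not bridge the gap.

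The paper's proof sidesteps forward propagation altogether and records different data: the pair $\zeta = (w_{\phi(j+1)-1}, x_{c_{\phi(j+1)-1}})$, i.e.\ the \emph{last} right-special word ending strictly before $f_{j+1}$ together with the single letter following it. By Lemma \ref{Z1:middle_cuts}, the whole initial segment $x_{[f_j, c_{\phi(j+1)-1})}$ equals $u'_w v'_w$, a suffix of that last right-special word, so no branching analysis is needed there; and on the final segment $[c_{\phi(j+1)-1}, c_{\phi(j+1)})$ there are no right-special endpoints, so it is determined by $w$ and the recorded letter, after which $w_{\phi(j+1)}$ and hence the trimming point $f_{j+1}$ are forced. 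This yields $\#\tau(\cC) \leq \#\RS_n(X)\cdot\#\cA \leq d^3$ (and injectivity on letters of $\tau$, from Proposition \ref{prop:from_clopen_to_coding}, finishes as in your proposal). To repair your argument you would essentially have to prove Lemma \ref{Z1:middle_cuts} anyway and then switch the recorded right-special word from the first one to the last one; as written, the claimed local determinism at interior branch points is a genuine gap.
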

\begin{proof}
Let $x \in X$, $(c,y) = \Fac_{(Y,\sigma)}(x)$ and $(f, z) = \Fac_{(Z,\tau)}(x)$.
We drop the dependency on $x$ in $\phi_x$ and just write $\phi$.
The lemma follows from the following claim.
\begin{enumerate}
    \item[$(\bullet)$] For $j \in \Z$, let $\zeta(j) = (w_{{\phi(j+1)-1}}, x_{c_{\phi(j+1)-1}}) \in \RS_n(X) \times \cA$.
    Then, $\zeta(i) = \zeta(j)$ implies that $x_{[f_i, f_{i+1})} = x_{[f_j, f_{j+1})}$.
\end{enumerate}
Indeed, being $Z$ minimal (as $X$ is minimal and $(Z,\tau)$ is recognizable), $(\bullet)$ implies that
\begin{equation*}
    \#\tau(\cC) = \#\{x_{[f_j, f_{j+1})} : j \in \Z\} \leq
    \#\RS_n(X) \cdot \#\cA \leq d^3,
\end{equation*}
where we used that $\#\RS_n(X) \leq \#\cA\cdot(p_X(n+1)-p_X(n))$ by Item \ref{item:prop:props_special_words:RS&pX} in Proposition \ref{prop:props_special_words}.
This implies, as $\tau$ is injective on letters by Proposition \ref{prop:from_clopen_to_coding}, that $\#\cC = \#\tau(\cC) \leq d^3$.

Let us prove the claim.
Suppose that $i, j \in \Z$ satisfy $\zeta(i) = \zeta(j) = (w, a)$.
We start with some observations.
First, the condition $\zeta(i) = \zeta(j) = (w,a)$ implies that
\begin{enumerate}[label=$(\roman*)$]
    \item $w = x_{[c_{\phi(j+1)-1}-n, c_{\phi(j+1)-1})} = x_{[c_{\phi(i+1)-1}-n, c_{\phi(i+1)-1})}$; and
    \item $a = x_{c_{\phi(j+1)-1}} = x_{c_{\phi(i+1)-1}}$.
\end{enumerate}
Also, Equation \eqref{eq:Z0:almost_preserve_order} ensures that $\phi(i) < \phi(i+1)$ and $\phi(j) < \phi(j+1)$, so
\begin{equation} \label{eq:Z1:bound_cC:1}
    \phi(i) \leq \phi(i+1)-1 < \phi(i+1)
    \enspace\text{and}\enspace
    \phi(j) \leq \phi(j+1)-1 < \phi(j+1).
\end{equation}

We now prove the claim $(\bullet)$.
The definition of $c_{\phi(j+1)-1}$ and $c_{\phi(j+1)}$ guarantees that the words $x_{[k-n, k)}$, $k \in (c_{\phi(j+1)-1}, c_{\phi(j+1)})$, are not right-special.
Thus, $x_{[c_{\phi(j+1)-1}, c_{\phi(j+1)})}$ is determined by $x_{[c_{\phi(j+1)-1}-n, c_{\phi(j+1)-1})}$ and $x_{c_{\phi(j+1)-1}}$.
A similar observation holds for $x_{[c_{\phi(i+1)-1}, c_{\phi(i+1)})}$.
Combining these two things with (i) and (ii) yields that
\begin{equation} \label{eq:Z1:bound_cC:2}
    x_{[c_{\phi(j+1)-1}, c_{\phi(j+1)})} =
    x_{[c_{\phi(i+1)-1}, c_{\phi(i+1)})}.
\end{equation}
Then, by (i),
\begin{equation*}
    w_{\phi(j+1)}(x) = 
    x_{[c_{\phi(j+1)}-n, c_{\phi(j+1)})} = 
    x_{[c_{\phi(i+1)}-n, c_{\phi(i+1)})} = w_{\phi(i+1)}(x).
\end{equation*}
Let us write $\tilde{w} = w_{\phi(j+1)}(x) = w_{\phi(i+1)}(x)$.
With this notation, we have, by \eqref{eq:Z0:def_phi}, that
\begin{equation} \label{eq:Z1:bound_cC:3}
    x_{[f_{j+1}, c_{\phi(j+1)})} = 
    x_{[f_{i+1}, c_{\phi(i+1)})} = 
    u'_{\tilde{w}} v'_{\tilde{w}}.
\end{equation}
Now, Equation \eqref{eq:Z1:bound_cC:1} allows us to use Lemma \ref{Z1:middle_cuts} with $\phi(i+1)-1$ and $\phi(j+1)-1$; we deduce, as $w = w_{\phi(j+1)-1} = w_{\phi(i+1)-1}$, that
\begin{equation*}
    f_j + |u'_w v'_w| = c_{\phi(j+1)-1}
    \enspace\text{and}\enspace
    f_i + |u'_w v'_w| = c_{\phi(i+1)-1}.
\end{equation*}
In particular,
\begin{equation*}
    x_{[f_j, c_{\phi(j+1)-1})} =
    x_{[f_i, c_{\phi(i+1)-1})} = u'_w v'_w.
\end{equation*}
This and Equation \eqref{eq:Z1:bound_cC:2} then give that
\begin{equation*}
    x_{[f_j, c_{\phi(j+1)})} = 
    x_{[f_i, c_{\phi(i+1)})}.
\end{equation*}
We conclude using \eqref{eq:Z1:bound_cC:3} that $x_{[f_j, f_{j+1})} = x_{[f_i, f_{i+1})}$.
This completes the proof of the claim and thereby the proof of the lemma.
\end{proof}

%%%%                                                                 %%%% LEMMA %%%%
\begin{lem} \label{Z1:basic_bounds_lengths}
Let $x \in X$, $(c,y) = \Fac_{(Y,\sigma)}(x)$ and $(f, z) = \Fac_{(Z,\tau)}(x)$.
Then:
\begin{enumerate}
    \item $|\sigma(y_k)| \leq |\tau(z_0)| + 2\cdot401\varepsilon$ for any $k \in [\phi_x(0), \phi_x(1))$.
    \item $|\tau(z_0)| \leq |\sigma(y_{\phi(1)-1})| + 2\cdot401\varepsilon$.
\end{enumerate}
\end{lem}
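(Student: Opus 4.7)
The plan is to express each cut involved in terms of the new cuts $f_0, f_1$, and then read off both inequalities as arithmetic comparisons. The two tools are Lemma \ref{Z1:middle_cuts}, which gives $c_k = f_0 + |u'_k(x) v'_k(x)|$ for every $k \in [\phi_x(0), \phi_x(1))$, and the defining equality \eqref{eq:Z0:def_phi}, which gives $c_{\phi_x(1)} = f_1 + |u'_{\phi_x(1)}(x) v'_{\phi_x(1)}(x)|$. Recall also that $f_1 - f_0 = |\tau(z_0)|$.

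The preparatory step is to check that, regardless of whether $w \in \RS_n(X)$ has decomposition of type $(\cP_a)$ or $(\cP_b)$ in Definition \ref{Z1:defi:descs&Ztau}, one has $|u'_w v'_w| \in [n/2 - 401\varepsilon,\, n/2 + 401\varepsilon]$. This is a direct case check from the explicit lengths prescribed by $(\cP_a)$ and $(\cP_b)$. In particular, $\bigl||u'_w v'_w| - |u'_{\tilde{w}} v'_{\tilde{w}}|\bigr| \leq 2\cdot 401\varepsilon$ for any two special words $w, \tilde{w}$.

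For (1), I would fix $k \in [\phi_x(0), \phi_x(1))$ and split into two subcases. If $k+1 < \phi_x(1)$, Lemma \ref{Z1:middle_cuts} applies at both $k$ and $k+1$, so $|\sigma(y_k)| = c_{k+1} - c_k$ becomes a difference of two $|u'v'|$ quantities, bounded by $2\cdot 401\varepsilon \leq |\tau(z_0)| + 2\cdot 401\varepsilon$. If instead $k+1 = \phi_x(1)$, then $c_{k+1}$ is given by \eqref{eq:Z0:def_phi}, and subtracting $c_k$ and using $f_1 - f_0 = |\tau(z_0)|$ yields $|\sigma(y_k)| = |\tau(z_0)| + |u'_{\phi_x(1)}(x) v'_{\phi_x(1)}(x)| - |u'_k(x) v'_k(x)|$, which by the preparatory step is at most $|\tau(z_0)| + 2\cdot 401\varepsilon$.

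Statement (2) is precisely the identity of the second subcase above, read in the opposite direction: applying it to $k = \phi_x(1) - 1$, which lies in $[\phi_x(0), \phi_x(1))$ by \eqref{eq:Z0:almost_preserve_order}, gives $|\tau(z_0)| - |\sigma(y_{\phi_x(1)-1})| = |u'_{\phi_x(1)-1}(x) v'_{\phi_x(1)-1}(x)| - |u'_{\phi_x(1)}(x) v'_{\phi_x(1)}(x)|$, again bounded in absolute value by $2\cdot 401\varepsilon$. There is no real obstacle here: both inequalities collapse as soon as the three cut identities are on the table, and the only actual content is the uniform length bound on $|u'_w v'_w|$ extracted from Definition \ref{Z1:defi:descs&Ztau}.
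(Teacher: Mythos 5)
Your proof is correct and follows essentially the same route as the paper: both rest on Lemma \ref{Z1:middle_cuts}, the defining equality \eqref{eq:Z0:def_phi}, and the uniform bound $n/2-401\varepsilon \leq |u'_w v'_w| \leq n/2+401\varepsilon$ extracted from $(\cP_a)$ and $(\cP_b)$. The only cosmetic difference is that the paper computes with the whole block $\sigma(y_{[k,\phi(1))})$ in one identity and reads off both items from it, whereas you split into the cases $k+1 < \phi_x(1)$ and $k+1 = \phi_x(1)$; the ingredients and the arithmetic are the same.
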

\begin{proof}
We write, for simplicity, $\phi = \phi_x$.
Let $k \in [\phi(0), \phi(1))$.
Then, by Lemma \ref{Z1:middle_cuts}, $f_0 + |u'_k(x) v'_k(x)| = c_k$.
Hence,
\begin{multline*} 
    \tau(z_0) \cdot u'_{\phi(1)}(x) v'_{\phi(1)}(x) =
    x_{[f_0, f_1)} \cdot x_{[f_1, c_{\phi(1)})} \\ = 
    x_{[f_0, c_k)} \cdot x_{[c_k, c_{\phi(1)})} =
    u'_k(x) v'_k(x) \cdot \sigma(y_{[k, \phi(1))}).
\end{multline*} 
In particular,
\begin{equation} \label{eq:Z1:basic_bounds_lengths:0} 
    \big| |\tau(z_0)| - |\sigma(y_{[k,\phi(1))})| \big| =
    \big| |u'_{\phi(1)}(x) v'_{\phi(1)}(x)| - |u'_k(x) v'_k(x)| \big|.
\end{equation} 
Now, Conditions $(\cP_a)$ and $(\cP_b)$ in Definition \ref{Z1:defi:descs&Ztau} ensure that for any $w \in \RS_n(X)$ the inequalities $n/2-401\varepsilon \leq |u'_w v'_w| \leq n/2 + 401\varepsilon$ hold.
Putting this in \eqref{eq:Z1:basic_bounds_lengths:0} produces
\begin{equation} \label{eq:Z1:basic_bounds_lengths:1} 
    \big| |\tau(z_0)| - |\sigma(y_{[k,\phi(1))})| \big| \leq
    2\cdot401\varepsilon
    \enspace\text{for all $k \in [\phi(0), \phi(1))$.}
\end{equation} 
Item (1) of this lemma follows.
Moreover, since $y_{\phi(1)-1} = y_{[\phi(1)-1,\phi(1))}$, Item (2) is also a consequence of \eqref{eq:Z1:basic_bounds_lengths:1}.
\end{proof}

\subsection{Proof of Propositions \ref{Z1:main_prop} and \ref{Z1:per}}

%%%%                                                                 %%%% MAIN PROOF %%%%
We now prove Proposition \ref{Z1:main_prop}.
\begin{proof}[Proof of Proposition \ref{Z1:main_prop}]
Item (1) follows directly from Lemma \ref{Z1:bound_cC}.

Let us prove Items (2) and (3).
We define $U' = \{x \in X: x_{[0,n)} \in \RS_n(X)\}$ and $U = \{x \in X : \exists w \in \RS_n(X), x_{[-|u_wv_w|, |u'_wv'_w|)} = w\}$.
First, we recall that $(Z,\tau)$ is defined as the coding of $X$ obtained from $U$ as in Proposition \ref{prop:from_clopen_to_coding}.
Observe that, since $|v_wu_w| \leq |w| = n$ and $|u'_wv'_w| \leq |w| = n$ for all $w \in \RS_n(X)$, $U$ has radius $n$.
Also, Item \ref{item:prop:props_special_words:RS_is_recurrent} in Proposition \ref{prop:props_special_words} ensures that $U'$ is $(d+1)n$-syndetic, and thus that $U$ is $(d+3)n$-syndetic.
Therefore, Proposition \ref{prop:from_clopen_to_coding} ensures that $|\tau(a)| \leq (d+3)n$ for all $a \in \cC$ and that $(Z,\tau)$ is $(d+4)n$-recognizable.
Since $d \geq \#\cA \geq 2$, Items (2) and (3) follow.
\end{proof}

The rest of the section is devoted to prove Proposition \ref{Z1:per}.

%%%%                                                                 %%%% LEMMA %%%%
\begin{lem} \label{Z1:rQp<=>good_root}
Let $x \in X$ and $(c,z) = \Fac_{(Z,\tau)}(x)$.
We use the notation $w = w_{\phi_x(0)}$ and $\tilde{w} = w_{\phi_x(1)}$.
Then, the following are equivalent:
\begin{enumerate}
    \item $|\mroot \tau(z_0) | \leq \varepsilon$ and $x_{[c_0-99\varepsilon, c_1+99\varepsilon)} = (\mroot \tau(z_0))^\Z_{[-99\varepsilon, |\tau(z_0)| + 99\varepsilon)}$.
    \item The decompositions $w = v_w u_w u'_w v'_w$ and $\tilde{w} = v_{\tilde{w}} u_{\tilde{w}} u'_{\tilde{w}} v'_{\tilde{w}}$ satisfy Condition $(\cP_a)$ in Definition \ref{Z1:defi:descs&Ztau} and $\per(x_{[c_0+97\varepsilon, c_1-97\varepsilon)}) \leq \varepsilon$.
\end{enumerate}
Moreover, if any of the previous condition holds, then $\mroot \tau(z_0) \in \cW_\varepsilon$.
\end{lem}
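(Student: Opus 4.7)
I will prove the two implications separately, with the moreover assertion $\mroot\tau(z_0)\in\cW_\varepsilon$ dropping out as a byproduct. The main tools are the periodicity lemmas of Section~\ref{sec:CL}---chiefly Proposition~\ref{CL:on_sZ&tZ} and Lemma~\ref{CL:various_sync}---together with the construction of the decompositions of right-special words via Lemma~\ref{Z1:find_pq_deco_for_w}.

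For $(2)\Rightarrow(1)$, property $(\cP_a)$ applied to $w$ and $\tilde w$ directly gives
\[
x_{[c_0-99\varepsilon,\, c_0+99\varepsilon)} = s^\Z_{[-99\varepsilon,99\varepsilon)}, \qquad
x_{[c_1-99\varepsilon,\, c_1+99\varepsilon)} = \tilde s^\Z_{[-99\varepsilon,99\varepsilon)},
\]
with $s,\tilde s\in\cW_\varepsilon$. The strategy is to splice these, together with the middle periodic segment, into a single period-$|s|$ region on $[c_0-99\varepsilon,\, c_1+99\varepsilon)$. When $|\tau(z_0)|\leq 196\varepsilon$ the left and right windows already overlap on $\geq 2\varepsilon\geq |s|+|\tilde s|-1$ letters, and Proposition~\ref{CL:on_sZ&tZ}(1) identifies $\tilde s^\Z=S^{|\tau(z_0)|}s^\Z$; when $|\tau(z_0)|\geq 196\varepsilon$ the middle segment has length $\geq 2\varepsilon$ and overlaps each $(\cP_a)$-window on exactly $2\varepsilon\geq |s|+|\tilde s|$, so two applications of Lemma~\ref{CL:various_sync}(3) achieve the gluing. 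In both regimes $s$ and $\tilde s$ are conjugate primitive words in $\cW_\varepsilon$, hence equal, and Proposition~\ref{CL:on_sZ&tZ}(2) yields $|\tau(z_0)|\equiv 0\pmod{|s|}$. Consequently $\tau(z_0)=s^k$, so $\mroot\tau(z_0)=s\in\cW_\varepsilon$ and (1) holds.

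For $(1)\Rightarrow(2)$, set $r=\mroot\tau(z_0)$. The period bound on the middle segment is immediate since that segment is a factor of $r^\Z$. To obtain $(\cP_a)$ for $w$ I rule out $(\cP_b)$: pick $s\in\cW_\varepsilon$ in the conjugacy class of $r$, so $s^\Z=S^p r^\Z$ for some $p\in[0,|r|)$. The window $x_{[c_0-99\varepsilon+p,\, c_0+99\varepsilon+p)}$ equals $s^\Z_{[-99\varepsilon,99\varepsilon)}$, lies inside the periodic region from (1) (because $|\tau(z_0)|$ is a multiple of $|r|$, hence $\geq |r|>p$), and also lies inside the central $1000\varepsilon$-factor of $w$ (because $p<|r|\leq\varepsilon$). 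This contradicts Case~(ii) of Lemma~\ref{Z1:find_pq_deco_for_w} and forces $(\cP_a)$. The same reasoning works for $\tilde w$: the relation $|\tau(z_0)|\equiv 0\pmod{|r|}$ gives $x_{[c_1-99\varepsilon,\, c_1+99\varepsilon)}=r^\Z_{[-99\varepsilon,99\varepsilon)}$, and a mirrored shift of size at most $|r|$ to the left of $c_1$ produces the required $s^\Z$-window inside the central factor of $\tilde w$. Once $(\cP_a)$ holds, $u_w u'_w$ admits the two descriptions $s^\Z_{[-99\varepsilon,99\varepsilon)}$ and $r^\Z_{[-99\varepsilon,99\varepsilon)}$, and Proposition~\ref{CL:on_sZ&tZ}(1) identifies $s^\Z=r^\Z$; primitivity then forces $s=r$, giving the moreover statement.

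The main obstacle is the bookkeeping in $(1)\Rightarrow(2)$: even when $|\tau(z_0)|$ is substantially smaller than $500\varepsilon$, the shifted $s^\Z$-window must fit simultaneously inside the (potentially short) periodic region $[c_0-99\varepsilon,\, c_1+99\varepsilon)$ and inside the $1000\varepsilon$-wide central factor of $\tilde w$. Choosing the shift of absolute value at most $|r|\leq\varepsilon$ is exactly what keeps the window close enough to each cut to satisfy both constraints.
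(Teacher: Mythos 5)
Your proposal is correct and follows essentially the same route as the paper: for $(1)\Rightarrow(2)$ you rule out $(\cP_b)$ by exhibiting an occurrence of $s^\Z_{[-99\varepsilon,99\varepsilon)}$ near the cut for the $\cW_\varepsilon$-representative $s$ conjugate to $\mroot\tau(z_0)$ (exactly what Lemma \ref{Z1:loc_per=>exists_w_in_Weps} packages), and for $(2)\Rightarrow(1)$ you glue the two $(\cP_a)$-windows with the middle segment into a single period, then use uniqueness of rotation-class representatives in $\cW_\varepsilon$ and Proposition \ref{CL:on_sZ&tZ} to get $s=\tilde s$ and $|s|$ dividing $|\tau(z_0)|$, as the paper does. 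The only divergence is bookkeeping: the paper obtains $\per(x_{[c_0-99\varepsilon,c_1+99\varepsilon)})\le\varepsilon$ by contradiction via Proposition \ref{CL:localize_per_aper}, whereas you glue directly with a case split on $|\tau(z_0)|$ using Lemma \ref{CL:various_sync}; both work.
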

\begin{proof}
We assume that Item (1) holds.
Let $s = \mroot \tau(z_0)$ and note that Item (1) ensures that 
\begin{equation} \label{eq:Z1:rQp&good_root:4}
    |s| \leq \varepsilon \enspace \text{and} \enspace
    x_{[c_0 - 99\varepsilon, c_1 + 99\varepsilon)} = 
    s^\Z_{[-99\varepsilon, |\tau(z_0)| + 99\varepsilon)}.
\end{equation}
This allows us to use Lemma \ref{Z1:loc_per=>exists_w_in_Weps} with $x_{[c_0-99\varepsilon, c_0+|s|+99\varepsilon)}$ and find $t \in \cW_\varepsilon$ such that $t^\Z_{[-99\varepsilon, 99\varepsilon)}$ occurs in $x_{[c_0-99\varepsilon, c_0+|s|+99\varepsilon)}$.
Since $|s| \leq \varepsilon$, we have in particular that $t^\Z_{[-99\varepsilon, 99\varepsilon)}$ occurs in $x_{[c_0-500\varepsilon, c_0+9\varepsilon)}$.
This is incompatible with the decomposition $w = v_w u_w u'_w v'_w$ satisfying Condition $(\cP_b)$ in Definition \ref{Z1:defi:descs&Ztau}; therefore, $w = v_w u_w u'_w v'_w$ satisfies Condition $(\cP_a)$.
A similar argument shows that $\tilde{w} = v_{\tilde{w}} u_{\tilde{w}} u'_{\tilde{w}} v'_{\tilde{w}}$ also satisfies Condition $(\cP_a)$.
Finally, it follows from \eqref{eq:Z1:rQp&good_root:4} that $\per(x_{[c_0 + 97\varepsilon, c_1 - 97\varepsilon)}) \leq |s| \leq \varepsilon$.
\medskip

We assume that Item (2) holds.
Then, by Definition \ref{Z1:defi:descs&Ztau}, there exist $s, \tilde{s} \in \cW_\varepsilon$ such that
\begin{equation} \label{eq:Z1:rQp&good_root:1}
    s^\Z_{[-99\varepsilon, 99\varepsilon)} = u_w u'_w = 
    x_{[c_0 - 99\varepsilon, c_0 + 99\varepsilon)}
    \enspace\text{and}\enspace
    \tilde{s}^\Z_{[-99\varepsilon, 99\varepsilon)} = u_{\tilde{w}} u'_{\tilde{w}} =
    x_{[c_1 - 99\varepsilon, c_1 + 99\varepsilon)}.
\end{equation}

We claim that
\begin{equation} \label{eq:Z1:rQp&good_root:2}
    \per(x_{[c_0 - 99\varepsilon, c_1 + 99\varepsilon)}) \leq \varepsilon.
\end{equation}
Assume, with the objective of obtaining a contradiction, that \eqref{eq:Z1:rQp&good_root:2} is not satisfied.
Then, Item \ref{item:CL:localize_per_aper:loc_aper} in Proposition \ref{CL:localize_per_aper} gives $i \in [c_0 - 98\varepsilon, c_1 + 98\varepsilon)$ such that $\per(x_{[i-\varepsilon, i+\varepsilon)}) > \varepsilon$.
We consider three cases.
If $i \in [c_0 - 98\varepsilon, c_0 + 98\varepsilon)$, then $x_{[i-\varepsilon, i+\varepsilon)}$ occurs in $u_w u'_w$.
Thus, by \eqref{eq:Z1:rQp&good_root:1} and since $s \in \cW_{\varepsilon}$ implies that $|s| \leq \varepsilon$, $\per(x_{[i-\varepsilon, i+\varepsilon)}) \leq |s| \leq \varepsilon$.
This contradicts our assumptions.
In the case $i \in [c_1 - 98\varepsilon, c_1 + 98\varepsilon)$, a similar argument gives a contradiction.
Finally, if $i \in [c_0 + 98\varepsilon, c_1 - 98\varepsilon)$, then $x_{[i-\varepsilon, i+\varepsilon)}$ occurs in $x_{[c_0 + 97\varepsilon, c_1 - 97\varepsilon)}$ and thus, by the hypothesis, $\per(x_{[i-\varepsilon, i+\varepsilon)}) \leq \varepsilon$.
This proves \eqref{eq:Z1:rQp&good_root:2}.

Our next objective is to use the claim for proving that 
\begin{equation} \label{eq:Z1:rQp&good_root:2.25}
    |s| = |\tilde{s}| = \per(x_{[c_0 - 99\varepsilon, c_1 + 99\varepsilon)}).
\end{equation}
We note that $|s| \leq \varepsilon$ as $s \in \cW_\varepsilon$.
Hence, Equations \eqref{eq:Z1:rQp&good_root:1} and \eqref{eq:Z1:rQp&good_root:2} allows us to use Item \ref{item:CL:localize_per_aper:loc_per} of Proposition \ref{CL:localize_per_aper} and deduce that $\per(x_{[c_0 - 99\varepsilon, c_1 + 99\varepsilon)})$ is equal to $\per(x_{[c_0 - 99\varepsilon, c_0 + 99\varepsilon)})$.
Also, \eqref{eq:Z1:rQp&good_root:2} ensures that $\per(x_{[c_0 - 99\varepsilon, c_0 + 99\varepsilon)}) \leq |s|$, so by Item \ref{item:CL:localize_per_aper:loc_per} in Proposition \ref{CL:localize_per_aper} we have that $\per(x_{[c_0 - 99\varepsilon, c_0 + 99\varepsilon)})$ is equal to $\per(s^2)$.
Moreover, by Lemma \ref{CL:root=per}, $\per(s^2) = |\mroot s^2| = |s|$.
Combining all these relations produces
\begin{equation*}
    \per(x_{[c_0 - 99\varepsilon, c_1 + 99\varepsilon)}) =
    \per(x_{[c_0 - 99\varepsilon, c_0 + 99\varepsilon)}) = 
    \per(s^2) = |s|.
\end{equation*}
Similarly, $\per(x_{[c_0 - 99\varepsilon, c_1 + 99\varepsilon)}) = |\tilde{s}|$.
Equation \eqref{eq:Z1:rQp&good_root:2.25} follows.
\medskip

We combine \eqref{eq:Z1:rQp&good_root:2.25} with \eqref{eq:Z1:rQp&good_root:1} to obtain that
\begin{equation} \label{eq:Z1:rQp&good_root:2.5}
    s^\Z_{[-99\varepsilon, |\tau(z_0)| + 99\varepsilon)} = 
    \tilde{s}^\Z_{[-|\tau(z_0)| - 99\varepsilon, 99\varepsilon)} = 
    x_{[c_0 - 99\varepsilon, c_1 + 99\varepsilon)}.
\end{equation}
Being $|s|$ equal to $|\tilde{s}|$, we get that $s$ and $\tilde{s}$ are conjugate.
Moreover, since $s, \tilde{s} \in \cW_\varepsilon$ and since $\cW_\varepsilon$ contains at most one element of a given rotational class, we have that
\begin{equation} \label{eq:Z1:rQp&good_root:3}
    s = \tilde{s}.
\end{equation}
We use \eqref{eq:Z1:rQp&good_root:3} to prove that Item (1) of the lemma holds.
Observe that Equation \eqref{eq:Z1:rQp&good_root:1} and \eqref{eq:Z1:rQp&good_root:3} imply that $s^\Z_{[-99\varepsilon, 99\varepsilon)} = (S^{|\tau(z_0)|} s)_{[-99\varepsilon, 99\varepsilon)}$.
This and the fact that $|s| \leq \varepsilon$ (as $s \in \cW_\varepsilon$) allow us to use Item \ref{item:CL:on_sZ&tZ:equal} in Proposition \ref{CL:on_sZ&tZ} and deduce that $s^\Z = S^{|\tau(z_0)|} s^\Z$.
Item \ref{item:CL:on_sZ&tZ:equal_mod} of Proposition \ref{CL:on_sZ&tZ} then gives that $|\tau(z_0)| = 0 \pmod{|s|}$.
We conclude that $x_{[c_0, c_1)}$ is a power of $s$ and that $\mroot\tau(z_0) = \mroot s = s$.
Item (1) of this lemma is a consequence of the last relation and \eqref{eq:Z1:rQp&good_root:2.5}.
This also shows that if Item (2) of the lemma holds, then $\mroot \tau(z_0) \in \cW_\varepsilon$.
\end{proof}

%%%%                                                                 %%%% LEMMA %%%%
\begin{lem} \label{Z1:many_shorts=>nice_period}
Let $x \in X$, $(f, z) = \Fac_{(Z,\tau)}(x)$ and $i, j \in \Z$ with $j > i + d$.
Suppose that $|\tau(z_k)| \leq 401\varepsilon$ for all $k \in [i, j)$.
Then:
\begin{enumerate}
    \item $\mroot\tau(z_k) = \mroot\tau(z_i)$ for all $k \in [i, j)$ and $|\mroot\tau(z_i)| \leq \varepsilon$.
    \item $x_{[f_i - 99\varepsilon, f_j + 99\varepsilon)} = (\mroot\tau(z_i))^\Z_{[-99\varepsilon, |\tau(z_{[i, j)})| + 99\varepsilon)}$.
\end{enumerate}
\end{lem}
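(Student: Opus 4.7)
I fix $x \in X$ and write $(c,y) = \Fac_{(Y,\sigma)}(x)$, keeping $(f,z) = \Fac_{(Z,\tau)}(x)$. The plan is to push the shortness hypothesis down to the $\sigma$-coding, invoke Proposition \ref{Z0:manyshorts=>period} to manufacture a long periodic interval of $x$, and bootstrap to the structural conclusion via Lemma \ref{Z1:rQp<=>good_root}.

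First, Lemma \ref{Z1:basic_bounds_lengths}(1) converts the hypothesis $|\tau(z_k)| \leq 401\varepsilon$ into $|\sigma(y_m)| \leq 3\cdot 401\varepsilon$ for every $m \in [\phi_x(i), \phi_x(j))$; since $3\cdot 401\varepsilon < 10^4\varepsilon$, the gap property \eqref{eq:Z1:gap} collapses this further to $|\sigma(y_m)| \leq \varepsilon/d$. Since $\phi_x$ is strictly increasing by \eqref{eq:Z0:almost_preserve_order}, we have $\phi_x(j) > \phi_x(i) + d$, so the intermediate step inside the proof of Proposition \ref{Z0:manyshorts=>period} yields $\per(x_{[c_m - 2n/3, c_{m+1})}) \leq \varepsilon$ for every $m \in [\phi_x(i), \phi_x(j)-d)$. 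Chaining these overlapping pieces via Lemma \ref{CL:various_sync}(3) (the overlap has length $2n/3 \geq 2\varepsilon$) gives $\per(x_{[c_{\phi_x(i)} - 2n/3, c_{\phi_x(j)-d})}) \leq \varepsilon$. A routine check using $|u'_w v'_w| \in [n/2 - 401\varepsilon, n/2 + 401\varepsilon]$ together with $c_{\phi_x(j)} - c_{\phi_x(j)-d} \leq \varepsilon$ (from $d$ consecutive $\sigma$-blocks of length $\leq \varepsilon/d$) then verifies that $[f_i - 99\varepsilon, f_j + 99\varepsilon) \subseteq [c_{\phi_x(i)} - 2n/3, c_{\phi_x(j)-d})$, so $\per(x_{[f_i - 99\varepsilon, f_j + 99\varepsilon)}) \leq \varepsilon$.

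Next I promote this periodicity to decomposition-type information for each $w_{\phi_x(m)}$ with $m \in [i, j]$. The central $1000\varepsilon$-block $t$ of $w_{\phi_x(m)}$ appearing in the proof of Lemma \ref{Z1:find_pq_deco_for_w} sits at $x$-positions around $c_{\phi_x(m)} - n/2 \in [f_m - 401\varepsilon, f_m + 401\varepsilon]$ and is therefore contained in the periodic interval above. Hence $\per(t) \leq \varepsilon$, and Lemma \ref{Z1:loc_per=>exists_w_in_Weps} produces $s \in \cW_\varepsilon$ with $s^\Z_{[-99\varepsilon, 99\varepsilon)}$ occurring in $t$. The $\leq\varepsilon$-periodicity of $t$ makes such occurrences recur every $\leq\varepsilon$ positions, so one of them occurs at a $w_{\phi_x(m)}$-position $m^* \in [n/2 - 500\varepsilon, \lfloor n/2 - 500\varepsilon\rfloor + 401\varepsilon)$. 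The induced $(\cP_a)$-decomposition has $|v_w u_w| = m^* + 99\varepsilon < \lfloor n/2 - 500\varepsilon\rfloor + 500\varepsilon$, which is exactly the $|v_w u_w|$ of the rigid $(\cP_b)$-decomposition. The length-minimization in Definition \ref{Z1:defi:descs&Ztau} therefore selects a type-$(\cP_a)$ representative for $w_{\phi_x(m)}$.

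With type $(\cP_a)$ for both $w_{\phi_x(k)}$ and $w_{\phi_x(k+1)}$ (for every $k \in [i, j)$) and the inherited periodicity $\per(x_{[f_k + 97\varepsilon, f_{k+1} - 97\varepsilon)}) \leq \varepsilon$, Item (2) of Lemma \ref{Z1:rQp<=>good_root} holds at each $k$. The equivalence yields $|\mroot\tau(z_k)| \leq \varepsilon$, the identification $x_{[f_k - 99\varepsilon, f_{k+1} + 99\varepsilon)} = (\mroot\tau(z_k))^\Z_{[-99\varepsilon, |\tau(z_k)| + 99\varepsilon)}$, and $\mroot\tau(z_k) \in \cW_\varepsilon$. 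Comparing the $k$-th and $(k{+}1)$-th identifications on the shared window $[f_{k+1} - 99\varepsilon, f_{k+1} + 99\varepsilon)$, and using that $|\tau(z_k)|$ is a multiple of $|\mroot\tau(z_k)|$, Proposition \ref{CL:on_sZ&tZ}(1) forces $(\mroot\tau(z_k))^\Z = (\mroot\tau(z_{k+1}))^\Z$; uniqueness of representatives in $\cW_\varepsilon$ then gives $\mroot\tau(z_k) = \mroot\tau(z_{k+1})$. Iterating delivers conclusion (1), and concatenating the individual identifications along the junctions yields conclusion (2).

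The main obstacle is the third paragraph above: certifying that some type-$(\cP_a)$ decomposition strictly undercuts the rigid type-$(\cP_b)$ one under the length-minimization of Definition \ref{Z1:defi:descs&Ztau}. This is where the interplay between the $401\varepsilon$-wide parameter freedom of $(\cP_a)$-decompositions and the $\leq \varepsilon$-periodicity inside $t$ produced by the first paragraph must be exploited. Once this geometric step is in place, the remainder reduces to index-by-index bookkeeping guided by Lemma \ref{Z1:rQp<=>good_root} and the rotation-class uniqueness of $\cW_\varepsilon$.
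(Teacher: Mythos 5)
Your overall skeleton matches the paper's proof (push the shortness to the $\sigma$-level via Lemma \ref{Z1:basic_bounds_lengths} and \eqref{eq:Z1:gap}, invoke Proposition \ref{Z0:manyshorts=>period}, upgrade through Lemma \ref{Z1:rQp<=>good_root}, then glue the roots), and your first and last paragraphs are essentially sound. The problem is exactly the step you flag: your treatment of the decomposition type is both incomplete and built on a window that is too narrow. From step 1 you only carry periodicity on $[f_i-99\varepsilon, f_j+99\varepsilon)$, but the central $1000\varepsilon$-block of $w_{\phi_x(m)}$ is centered at $c_{\phi_x(m)}-n/2$, which can be off from $f_m$ by up to $401\varepsilon$, so that block occupies roughly $[f_m-901\varepsilon, f_m+901\varepsilon)$ in $x$-coordinates. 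For $m=i$ (and $m=j$, and indices near the ends when the blocks $\tau(z_k)$ are short) this is \emph{not} contained in your periodic interval, so the sentence ``and is therefore contained in the periodic interval above'' is false precisely at the indices you cannot discard: Lemma \ref{Z1:rQp<=>good_root} applied at $k\in[i,j)$ needs the $(\cP_a)$ type for both $w_{\phi_x(k)}$ and $w_{\phi_x(k+1)}$, hence for every $m\in[i,j]$ including the endpoints. On top of that, your mechanism for forcing type $(\cP_a)$ — exhibiting a competing $(\cP_a)$-decomposition whose $|v_wu_w|$ strictly undercuts the $(\cP_b)$ value and appealing to the minimization in Definition \ref{Z1:defi:descs&Ztau} — is left unverified (you say so yourself), and the placement of the occurrence in the required $401\varepsilon$-wide window of positions again needs periodicity far beyond what step 1 gives.

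The fix is simpler and is what the paper does: run the same application of Proposition \ref{Z0:manyshorts=>period} but record the periodicity on the wider window $[f_i-500\varepsilon, f_j+500\varepsilon)$ (this is available, since the periodic region reaches about $2n/3$ to the left of $c_{\phi_x(i)}$ and to within $\varepsilon$ of $c_{\phi_x(j)}$ on the right, and $c_{\phi_x(k)}-f_k\in[n/2-401\varepsilon, n/2+401\varepsilon)$ with $\varepsilon\le n/10^4$). Then no competing decomposition or length-minimization is needed: if some $w_{\phi_x(k)}$, $k\in[i,j]$, had a $(\cP_b)$-decomposition as its chosen one, then $u_{w}u'_{w}=x_{[f_k-500\varepsilon, f_k+500\varepsilon)}$ would have period at most $\varepsilon$, so Lemma \ref{Z1:loc_per=>exists_w_in_Weps} would place some $s^\Z_{[-99\varepsilon,99\varepsilon)}$, $s\in\cW_\varepsilon$, inside $u_wu'_w$, directly contradicting the defining property of $(\cP_b)$. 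With $(\cP_a)$ secured for all $m\in[i,j]$ and the inner periodicity $\per(x_{[f_k+97\varepsilon,f_{k+1}-97\varepsilon)})\le\varepsilon$, Lemma \ref{Z1:rQp<=>good_root} applies at every $k\in[i,j)$, and your concluding gluing (via Fine--Wilf on the shared $99\varepsilon$-window, or equivalently conjugacy plus uniqueness of representatives in $\cW_\varepsilon$) then finishes the proof as in the paper.
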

\begin{proof}
Let $(c, y) = \Fac_{(Y,\sigma)}(x)$.
We will use Lemma \ref{Z0:manyshorts=>period} with $y$ and $[\phi_x(i), \phi_x(j))$ to prove the following:
\begin{equation} \label{eq:Z1:many_shorts=>nice_period:bullet}
    \text{$\per(x_{[f_i-500\varepsilon, f_j+500\varepsilon)})$ is at most $\varepsilon$.}
\end{equation}
Let us check the hypothesis of Lemma \ref{Z0:manyshorts=>period}.
Let $k \in [\phi_x(i), \phi_x(j))$ be arbitrary.
There exists $\ell \in [i, j)$ such that $k \in [\phi_x(\ell), \phi_x(\ell+1))$.
Putting the hypothesis $|\tau(z_k)| \leq 401\varepsilon$ in the inequality of Lemma \ref{Z1:basic_bounds_lengths} produces the bound $|\sigma(y_k)| \leq |\tau(z_\ell)| + 2\cdot401\varepsilon \leq 10^4 \varepsilon$.
Hence, by \eqref{eq:Z1:gap},
\begin{equation} \label{eq:Z1:many_shorts=>nice_period:0}
    |\sigma(y_k)| < \varepsilon/d
    \enspace\text{for all $k \in [\phi_x(i), \phi_x(j))$.}
\end{equation}
Since $\varepsilon \leq n/10^4$, Equation \eqref{eq:Z1:many_shorts=>nice_period:0} and the inequalities $\phi_x(j) \geq (j-i) + \phi_x(i) > d + \phi_x(i)$ allow us to use Lemma \ref{Z0:manyshorts=>period} and deduce that $\per(x_{[c_{\phi_x(i)}-n/3, c_{\phi_x(j)-d})}) \leq \varepsilon$.
Now, observe that, for any $k \in \Z$,
\begin{equation*}
    c_{\phi_x(k)} - f_k  = |u'_{\phi_x(k)} v'_{\phi_x(k)}| \in [n/2-401\varepsilon, n/2+401\varepsilon)
    % \enspace\text{for any $k \in \Z$.}
\end{equation*}
Hence, as \eqref{eq:Z1:many_shorts=>nice_period:0} ensures that $c_{\phi_x(j)-d} \geq c_{\phi(j)} - \varepsilon$ and since $\varepsilon \leq n/10^4$, we have that $x_{[f_i-500\varepsilon, f_j+500\varepsilon)}$ occurs in $x_{[c_{\phi_x(i)}-n/3, c_{\phi_x(j)-d})}$.
Therefore, \eqref{eq:Z1:many_shorts=>nice_period:bullet} holds.
\medskip

Next, we use \eqref{eq:Z1:many_shorts=>nice_period:bullet} to prove the following:
\begin{multline} \label{eq:Z1:many_shorts=>nice_period:bullet_p}
    \text{$\forall k \in [i, j]$, the decomposition $w_{\phi_x(k)} = v_{\phi_x(k)} u_{\phi_x(k)} u'_{\phi_x(k)} v'_{\phi_x(k)}$ }\\\text{ satisfies $(\cP_a)$ in Definition \ref{Z1:defi:descs&Ztau}.}
\end{multline}
Let $k \in [i, j]$.
We note that \eqref{eq:Z1:many_shorts=>nice_period:bullet} implies that $\per(x_{[c_k - 500\varepsilon, c_k + 500\varepsilon)}) \leq \varepsilon$.
Thus, by Lemma \ref{Z1:loc_per=>exists_w_in_Weps}, there exists $s \in \cW_\varepsilon$ such that $s^\Z_{[-99\varepsilon, 99\varepsilon)}$ occurs in $x_{[c_k - 500\varepsilon, c_k + 500\varepsilon)}$.
This implies that if $(\cP_b)$ in Definition \ref{Z1:defi:descs&Ztau} holds for the decomposition $w_{\phi_x(k)} = v_{\phi_x(k)} u_{\phi_x(k)} u'_{\phi_x(k)} v'_{\phi_x(k)}$, then $s^\Z_{[-99\varepsilon, 99\varepsilon)}$ occurs in $u_{\phi_x(k)} u'_{\phi_x(k)} = x_{[c_k - 500\varepsilon, c_k + 500\varepsilon)}$, contradicting $(\cP_b)$.
Therefore, $w_{\phi_x(k)} = v_{\phi_x(k)} u_{\phi_x(k)} u'_{\phi_x(k)} v'_{\phi_x(k)}$ satisfies $(\cP_a)$ and \eqref{eq:Z1:many_shorts=>nice_period:bullet_p} is proved.
\medskip 

We now prove the properties in the statement of the lemma.
Let $k \in [i, j)$.
Then, Equations \eqref{eq:Z1:many_shorts=>nice_period:bullet} and \eqref{eq:Z1:many_shorts=>nice_period:bullet_p} imply that Item (1) in Lemma \ref{Z1:rQp<=>good_root} is satisfied.
Hence, for all $ k \in [i,j)$, 
\begin{equation} \label{eq:Z1:many_shorts=>nice_period:1}
    |\mroot\tau(z_k)| \leq \varepsilon
    \enspace\text{and}\enspace
    x_{[c_k-99\varepsilon, c_{k+1}+99\varepsilon)} = (\mroot\tau(z_k))^\Z_{[-99\varepsilon, |\tau(z_k)| + 99\varepsilon)}.
\end{equation}
In particular, we have for every $k \in [i, j-1)$ that 
\begin{equation*}
    (\mroot \tau(z_k))^\Z_{[0, 99\varepsilon)} = 
    x_{[c_k, c_k + 99\varepsilon)} =
    x_{[c_{k+1}, c_{k+1} + 99\varepsilon)} =
    (\mroot \tau(z_{k+1}))^\Z_{[0, 99\varepsilon)}.
\end{equation*}
This and the inequalities $|\mroot \tau(z_k)| \leq \varepsilon$ and $|\mroot \tau(z_{k+1})| \leq \varepsilon$ allow us to use Theorem \ref{theo:fine&wilf} to deduce that $\mroot \tau(z_k)$ and $\mroot \tau(z_{k+1})$ are powers of a common word, and thus that $\mroot \tau(z_k) = \mroot \tau(z_{k+1})$.
And inductive argument then yields Item (1) of this lemma, and therefore, by \eqref{eq:Z1:many_shorts=>nice_period:1}, that Item (2) holds as well.
\end{proof}

We have all the necessary elements to prove Proposition \ref{Z1:per}.
%%%%                                                                 %%%% MAIN PROOF %%%%
\begin{proof}[Proof of Proposition \ref{Z1:per}]
We prove Item (1) by contradiction,
Suppose that $0 \not\in \rQ{p}(z)$, $|\tau(z_0)| > 401\varepsilon$ and that $\per(x_{[c_0+97\varepsilon, c_1-97\varepsilon)})$ is at most $\varepsilon$.
Let us write $w = w_{\phi_x(0)}$ and $\tilde{w} = w_{\phi_x(1)}$.
Then, the condition  $0 \not\in \rQ{p}(z)$ ensures that Item (1) in Lemma \ref{Z1:rQp<=>good_root} does not hold.
Hence, Item (2) does not hold either.
This implies, as $\per(x_{[c_0+97\varepsilon, c_1-97\varepsilon)})$ at most $\varepsilon$, that one of the decompositions $w = v_w u_w u'_w v'_w$ or $\tilde{w} = v_{\tilde{w}} u_{\tilde{w}} u'_{\tilde{w}} v'_{\tilde{w}}$ satisfies $(\cP_b)$ in Definition \ref{Z1:defi:descs&Ztau}.
We assume, without loss of generality, that  $w = v_w u_w u'_w v'_w$ satisfies $(\cP_b)$.
Then, for any $s \in \cW_\varepsilon$, $s^\Z_{[-99\varepsilon, 99\varepsilon)}$ does not occur in $u_w u'_w = x_{[c_0 - 401\varepsilon, c_0 + 401\varepsilon)}$.
In particular, $s^\Z_{[-99\varepsilon, 99\varepsilon)}$ does not occur in $x_{[c_0 + 97\varepsilon, c_0 + 304\varepsilon)}$.
Being this valid for all $s \in \cW_\varepsilon$ and since $x_{[c_0 + 97\varepsilon, c_0 + 304\varepsilon)}$ has length at least $2\varepsilon$, we deduce from Lemma \ref{Z1:loc_per=>exists_w_in_Weps} that $\per(x_{[c_0 + 97\varepsilon, c_0 + 304\varepsilon)}) > \varepsilon$.
But $c_1 - c_0 = |\tau(z_0)| > 401\varepsilon$ so $x_{[c_0 + 97\varepsilon, c_0 + 304\varepsilon)}$ occurs in $x_{[c_0 + 97\varepsilon, c_1 - 97\varepsilon)}$ and thus $\per(x_{[c_0 + 97\varepsilon, c_1 - 97\varepsilon)}) > \varepsilon$.
This contradicts our assumptions and thereby proves Item (1).
\medskip

We continue with Item (2).
The proof is by contradiction.
We assume that the hypothesis of Item (2) holds and that $\per(x_{[c_0+97\varepsilon, c_1-97\varepsilon)}) \leq \varepsilon$.
Let us further assume, without losing generality, that $1 \in \rQ{p}(z)$.
We will use the notation $w = w_{\phi_x(0)}$ and $\tilde{w} = w_{\phi_x(1)}$.
Then, the condition $1 \in \rQ{p}(z)$ is equivalent to Item (1) of Lemma \ref{Z1:rQp<=>good_root} being satisfied by $Sz$; hence, Item (2) of that lemma holds with $Sz$.
In particular, $\tilde{w} = v_{\tilde{w}} u_{\tilde{w}} u'_{\tilde{w}} v'_{\tilde{w}}$ satisfies $(\cP_a)$ in Definition \ref{Z1:defi:descs&Ztau}, that is,
\begin{equation*}
    \text{$x_{[c_1-99\varepsilon, c_1+99\varepsilon)} = u_{\tilde{w}} u'_{\tilde{w}} = s^\Z_{[-99\varepsilon, 99\varepsilon)}$ for some $s \in \cW_\varepsilon$.}
\end{equation*}
Now, the condition $0 \not\in \rQ{p}(z)$ implies, by Lemma \ref{Z1:rQp<=>good_root}, that Item (2) of that lemma is not satisfied by $z$.
This implies, since $\tilde{w} = v_{\tilde{w}} u_{\tilde{w}} u'_{\tilde{w}} v'_{\tilde{w}}$ satisfies $(\cP_a)$ and since we assumed that $\per(x_{[c_0+97\varepsilon, c_1-97\varepsilon)}) \leq \varepsilon$, that $w = v_w u_w u'_w v'_w$ satisfies $(\cP_b)$.
Therefore,
\begin{equation} \label{eq:Z1:per:1}
    \text{$x_{[c_1-99\varepsilon, c_1+99\varepsilon)} = s^\Z_{[-99\varepsilon, 99\varepsilon)}$ does not occur in $x_{[c_0-500\varepsilon, c_0+500\varepsilon)}$.}
\end{equation}
But, since $c_1 - c_0 = |\tau(z_0)| \leq 401\varepsilon$, we have that $[c_1-99\varepsilon, c_1+99\varepsilon)$ is contained in $[c_0-500\varepsilon, c_1+500\varepsilon)$, and thus that $x_{[c_1-99\varepsilon, c_1+99\varepsilon)}$ occurs in $x_{[c_0-500\varepsilon, c_1+500\varepsilon)}$.
This contradicts \eqref{eq:Z1:per:1}, finishing the proof of Item (2).
\medskip

Next, we consider Item (3).
Assume that $k > d$ and that $|\tau(z_j)| \leq 401\varepsilon$ for all $j \in [0,k)$.
Then, we can use Lemma \ref{Z1:many_shorts=>nice_period} to deduce that 
\begin{enumerate}
    \item $\mroot\tau(z_j) = \mroot\tau(z_0)$ for all $j \in [0, k)$ and $|\mroot\tau(z_0)| \leq \varepsilon$;
    \item $x_{[f_0 - 99\varepsilon, f_k + 99\varepsilon)} = (\mroot\tau(z_0))^\Z_{[-99\varepsilon, |\tau(z_{[0, k)})| + 99\varepsilon)}$.
\end{enumerate}
In particular, $x_{[c_j - 99\varepsilon, c_{j+1} + 99\varepsilon)} = (\mroot\tau(z_j))^\Z_{[-99\varepsilon, |\tau(z_j)| + 99\varepsilon)}$ and $|\mroot\tau(z_j)| \leq \varepsilon$ for all $j \in [0, k)$.
We conclude that $[0,k) \subseteq \rQ{p}(z)$.

Finally, we prove Item (4).
Let $z' \in Z$ and assume that $0 \in \rQ{p}(z) \cap \rQ{p}(z')$ and that $\mroot \tau(z_0)$ is conjugate to $\mroot \tau(z'_0)$.
The condition $0 \in \rQ{p}(z) \cap \rQ{p}(z')$ permits to use Lemma \ref{Z1:rQp<=>good_root} to deduce that $\mroot \tau(z_0)$ and $\mroot \tau(z'_0)$ belong to $\cW_\varepsilon$.
Since $\mroot \tau(z_0)$ conjugate to $\mroot \tau(z'_0)$, the definition of $\cW_\varepsilon$ ensures that $\mroot \tau(z_0) = \mroot \tau(z'_0)$.
\end{proof}

\section{The second coding}
\label{sec:Z2}

We continue the proof of the main theorems.
The main result of this section is Proposition \ref{Z2:main_prop}, which describes a modification of the coding in Proposition \ref{sec:Z1}.
The principal new element in Proposition \ref{Z2:main_prop} is a period dichotomy for the words $\tau(a)$. 
This property is shared by the codings constructed in Sections \ref{sec:Z3} and \ref{sec:Z4}, so we introduce it as a definition. 

\begin{defi} \label{defi:per}
Let $(Z \subseteq \cC^\Z, \tau\colon\cC \to \cA^+)$ be a recognizable coding of the subshift $X \subseteq \cA^\Z$, $\A{C}{ap} \cup \A{C}{p}$ be a partition of $\cC$, and $\varepsilon \geq 1$.
We say that $(Z,\tau)$ has {\em dichotomous periods} w.r.t.\ $(\A{C}{ap},\A{C}{p})$ and $\varepsilon$ if for $x \in X$ and $(c, z) = \Fac_{(Z,\tau)}(x)$ the following holds:
\begin{enumerate}
    \item \label{item:defi:per:ap}
    $z_0 \in \A{C}{ap}$ implies that $\per(x_{[c_0+\varepsilon, c_1-\varepsilon)}) > \varepsilon$.
    \item \label{item:defi:per:p}
    $z_0 \in \A{C}{p}$ implies that $|\mroot\tau(z_0)| \leq \varepsilon$ and that $x_{[c_0-\varepsilon, c_1+\varepsilon)}$ is equal to $(\mroot\tau(z_0))^\Z_{[-\varepsilon, |\tau(z_0)|+\varepsilon)}$.
    \item \label{item:defi:per:conj_roots}
    If $a \in \A{C}{p}$ and $\mroot \tau(z_0)$ is conjugate to $\mroot \tau(a)$, then $\mroot \tau(z_0) = \mroot \tau(a)$.
\end{enumerate}
\end{defi}

%%%%                                                                 %%%% PROPOSITION %%%%
\begin{prop} \label{Z2:main_prop}
Let $X$ be a minimal infinite subshift, $n \geq 0$ and let $d$ be the maximum of $\lceil p_X(n)/n \rceil$, $p_X(n+1) - p_X(n)$, $\#\cA$ and $10^4$.
There exist a recognizable coding $(Z \subseteq \cC^\Z, \tau\colon\cC\to\cA^+)$ of $X$, a partition $\cC = \A{C}{ap} \cup \A{C}{p}$, and $\varepsilon \in [n/d^{2d^3+4}, n/d)$ such that:
\begin{enumerate}
    \item \label{item:Z2:cardinalities}
    $\#\A{C}{ap} \leq 2d^{3d+6}$, $\#\A{C}{p} \leq 2d^{3d+9}\pcom(X)$ and $\#\mroot\tau(\cC) \leq 3d^{3d+9}$. 
    \item \label{item:Z2:lengths}
    $|\tau(a)| \leq 10d^2n$ for $a \in \A{C}{ap}$ and $|\tau(a)| \geq 80\varepsilon$ for $a \in \cC$.
    \item \label{item:Z2:reco}
    $(Z, \tau)$ satisfies the recognizability property in Proposition \ref{Z2:special_reco}.
    \item \label{item:Z2:per}
    $(Z, \tau)$ has dichotomous periods w.r.t.\ $(\A{C}{ap},\A{C}{p})$ and $8\varepsilon$.
    \item \label{item:Z2:structure}
    The set $\A{C}{p}$ satisfies the following: if $z \in Z$, then $z_0$ and $z_1$ does not simultaneously belong to $\A{C}{p}$.
\end{enumerate}
\end{prop}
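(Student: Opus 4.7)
The plan is to construct $(Z, \tau)$ as a coarsening of the coding $(Z_1, \tau_1)$ provided by Proposition \ref{Z1:main_prop}, guided by the periodic-position set $\rQ{p}(z)$ of Proposition \ref{Z1:per}. Let $\varepsilon$ be the value from Proposition \ref{Z1:main_prop}, possibly divided by a small constant so that $8\varepsilon$ stays below the margin $97\varepsilon_1$ appearing in Proposition \ref{Z1:per} (this gap is what makes the monotonicity argument below deliver period $>8\varepsilon$ rather than merely $>\varepsilon$). For each $x \in X$ with factorization $(c', z') = \Fac_{(Z_1,\tau_1)}(x)$, declare a cut $c'_j$ \textbf{kept} iff $\{j-1, j\} \not\subseteq \rQ{p}(z')$, i.e., iff it is not strictly interior to a maximal $\rQ{p}$-run; this collapses each maximal $\rQ{p}$-run into a single block. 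In a second pass, further remove kept cuts to absorb any block of word-length $< 80\varepsilon$ into an adjacent block, iterating until every block has length at least $80\varepsilon$. The clopen set $U \subseteq X$ is then $\{x \in X : 0 \text{ is a kept cut of } x\}$, clopen because the membership condition depends only on a bounded window of $x$ (by recognizability of $(Z_1,\tau_1)$ and the uniform bound on $|\tau_1|$), and $(Z,\tau)$ arises via Proposition \ref{prop:from_clopen_to_coding}. A letter $a \in \cC$ is placed in $\A{C}{p}$ if its block arises from a single unmerged maximal $\rQ{p}$-run, and in $\A{C}{ap}$ otherwise --- that is, whenever the block contains at least one non-$\rQ{p}$ position (originally or through absorption).

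Most of the stated properties then follow directly. The structural condition (Item 5) is immediate: two adjacent $\rQ{p}$-runs are merged in the first pass, and the second pass only introduces aperiodic letters between periodic ones. For the dichotomous period property (Item 4), the periodic case is immediate from the definition of $\rQ{p}$ (which supplies $|\mroot\tau(a)| \leq \varepsilon < 8\varepsilon$ together with a $99\varepsilon$-margin of extra copies of the root), while the aperiodic case follows from Proposition \ref{Z1:per} Items (1)--(2) applied to the aperiodic sub-portion of the merged block, combined with the fact that $\per(u) \leq \per(w)$ whenever $u$ is a subword of $w$. Item (3) of Definition \ref{defi:per} follows from Proposition \ref{Z1:per} Item (4). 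The cardinality bound $\#\A{C}{ap} \leq 2d^{3d+6}$ is obtained by counting distinct concatenations of at most $O(d)$ letters of $\cC_1$ (Proposition \ref{Z1:per} Item (3) bounds consecutive non-$\rQ{p}$ positions, and the absorption rule bounds how many extra letters can be attached), with $\#\cC_1 \leq d^3$. The bound $\#\A{C}{p} \leq 2d^{3d+9}\pcom(X)$ combines a similar count on the primitive roots that can occur with the fact that, for each fixed root $s$, distinct exponents $k$ giving $\tau(a) = s^k$ correspond to distinct maximal powers of $s$ in $X$, a number controlled by $\pcom(X)$. The length upper bound $|\tau(a)| \leq 10d^2 n$ for $a \in \A{C}{ap}$ follows from $|\tau_1| \leq 3dn$ times the bounded number of $\cC_1$-letters per block, and the lower bound $|\tau(a)| \geq 80\varepsilon$ holds by construction.

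The main obstacle will be calibrating the second-pass absorption so that the length lower bound is achieved without violating Item 5 while also preserving the aperiodic-interior property of merged aperiodic blocks. Absorbing a short aperiodic block sitting between two periodic blocks risks joining the two periodic pieces, so the rule must be: whenever any aperiodic content is present in the merged block, classify it as aperiodic. Verifying that Item 4 then holds for this merged block reduces to observing that the absorbed aperiodic sub-portion already has aperiodic interior (Proposition \ref{Z1:per} Items (1)--(2)) and that $\per$ is monotone nondecreasing when passing to a larger word, so the enlarged interior inherits period $> \varepsilon_1 \geq 8\varepsilon$ provided $\varepsilon$ was chosen small enough relative to $\varepsilon_1$. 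A secondary technical point is matching the exact stated constants for the cardinality bounds and the $3d^{3d+9}$ bound on $\#\mroot\tau(\cC)$, which requires carefully enumerating the absorption cases and the primitive roots of length $\leq \varepsilon$ that can occur as $\mroot\tau(a)$.
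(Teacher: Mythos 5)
Your first pass keeps every cut lying between two aperiodic positions, and this is where the argument breaks. Consider a letter $z_j$ of the Proposition \ref{Z1:main_prop} coding with $80\varepsilon \leq |\tau_1(z_j)| \leq 401\varepsilon$, $j \notin \rQ{p}$, and $j-1, j+1 \notin \rQ{p}$ (for instance a short letter sandwiched between two long aperiodic letters). In your scheme it survives as a singleton block, is never absorbed, and is classified in $\A{C}{ap}$; but Proposition \ref{Z1:per} gives \emph{no} lower bound on $\per$ of its trimmed interior: Items (1)--(2) apply only to long letters or to short letters adjacent to a $\rQ{p}$-position. Membership in $\rQ{p}$ requires both that $\tau_1(z_j)$ be an exact power of a root of length $\leq \varepsilon$ and that the periodicity extend $99\varepsilon$ beyond both cuts, so a word like $u^k u'$ (with $|u| \leq \varepsilon$ and $u'$ a proper prefix), or an exact power whose periodicity stops near the cuts, fails to be in $\rQ{p}$ while every subword of its interior has period $\leq \varepsilon$. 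For such a block Item \ref{item:Z2:per} fails under the aperiodic classification, and the periodic classification would violate Item \ref{item:defi:per:p} of Definition \ref{defi:per}. Your justification ("apply Proposition \ref{Z1:per} Items (1)--(2) to the aperiodic sub-portion") silently assumes every aperiodic block contains a sub-portion to which those items apply, which is exactly what fails here. This is the missing idea that the paper's stable intervals supply: consecutive aperiodic positions are grouped so that each aperiodic block \emph{starts} either with a long letter or with a short letter immediately preceded by a $\rQ{p}$-position, and the interior aperiodicity of the whole block is inherited from that first letter; at the same time each block is capped at one long letter so that it has at most $2d+1$ letters, which is what makes the $\#\A{C}{ap}$ count work. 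Grouping nothing (your scheme) loses the period dichotomy; grouping whole maximal aperiodic runs would restore it but destroy the cardinality bound, so the fix is genuinely the paper's intermediate notion, not a local patch.

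A second, related problem is your treatment of short periodic runs via absorption instead of the paper's cut-moving normalization. A maximal $\rQ{p}$-run of total length $<80\varepsilon$ can consist of up to order $80\varepsilon$ letters of $\cC_1$ (its root may have length $1$), so the merged block is \emph{not} a concatenation of $O(d)$ letters of $\cC_1$, and its word is (aperiodic word)$\cdot s^k$ where the exponent $k$ that actually occurs is governed by $\pcom(X)$, not by $d$; hence your counting sketch does not give $\#\A{C}{ap}\leq 2d^{3d+6}$, a bound which carries no $\pcom$ factor. The paper avoids this by shifting the left cut of every periodic block leftward by exactly $|s^{\lceil 80\varepsilon/|s|\rceil}|$ (possible because the $\rQ{p}$-definition guarantees a $99\varepsilon$ periodic margin), so periodic letters automatically have length $\geq 80\varepsilon$ and an aperiodic word has only two possible right endpoints given the adjacent data, which is what the $2\cdot\#\cB^{d+2}$ count uses. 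Finally, you never address Item \ref{item:Z2:reco}, i.e.\ the recognizability statement of Proposition \ref{Z2:special_reco}, which in the paper requires a separate argument showing that the adjusted cuts and block types are determined by a window of radius $7d^2n$ around a locally aperiodic coordinate.
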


%%%%                                                                 %%%% PROPOSITION %%%%
\begin{prop} \label{Z2:special_reco}
Consider the coding described in Proposition \ref{Z3:main_prop}.
Let $x, \tilde{x} \in X$ be such that $\per(x_{[-\varepsilon, \varepsilon)}) > \varepsilon$ and $x_{[-7d^2n, 7d^2n)} = \tilde{x}_{[-7d^2n, 7d^2n)}$.
Then, $\Fac^0_{(Z,\tau)}(x)$ is equal to $\Fac^0_{(Z,\tau)}(\tilde{x})$.
\end{prop}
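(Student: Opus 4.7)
The plan is to reduce this special recognizability statement to the global $3dn$-recognizability of the first coding $(Z_1,\tau_1)$ from Proposition~\ref{Z1:main_prop}, exploiting the aperiodicity hypothesis to pin down the distinguished cut at position~$0$. I expect the construction of $(Z,\tau)$ (to be given below) to keep each aperiodic first-coding letter as a letter of $\A{C}{ap}$ and to collapse every maximal run of consecutive periodic first-coding letters sharing a common root into a single letter of $\A{C}{p}$, so that the second-coding cut function is a subsequence of the first-coding one and Item~5 of Proposition~\ref{Z2:main_prop} holds automatically.

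First, I would apply the $3dn$-recognizability of $(Z_1,\tau_1)$. Because $x$ and $\tilde x$ coincide on the much larger window $[-7d^2n,7d^2n)$, their first-coding factorizations $\Fac_{(Z_1,\tau_1)}(x)$ and $\Fac_{(Z_1,\tau_1)}(\tilde x)$ agree at every position $i$ with $|i|+3dn<7d^2n$. In particular, all first-coding cuts, and the letters attached to them, are identical for $x$ and $\tilde x$ throughout a wide neighborhood of $0$.

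Next, I would analyze position~$0$ in the second coding. Writing $(c_0,z_0)=\Fac^0_{(Z,\tau)}(x)$, there are two cases. If $z_0\in\A{C}{ap}$, then $c_0$ and $c_1$ are first-coding cuts and $\tau(z_0)$ coincides with a first-coding aperiodic letter of length at most $10d^2n$ by Item~2 of Proposition~\ref{Z2:main_prop}; the alignment from the previous step transfers directly, giving $\Fac^0_{(Z,\tau)}(\tilde x)=(c_0,z_0)$. If $z_0\in\A{C}{p}$, then $[c_0,c_1)$ is a maximal periodic run of the first coding, with aperiodic letters on both sides by Item~5. The hypothesis $\per(x_{[-\varepsilon,\varepsilon)})>\varepsilon$, combined with the dichotomous-periods property at level $8\varepsilon$, constrains the position of $0$ inside the run and forces $|\mroot\tau(z_0)|>\varepsilon$. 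I would then argue that the two bracketing aperiodic letters lie inside the window where first-coding data already agrees, so their identities and positions determine $c_0$ and $z_0$ uniquely; the same reasoning applied to $\tilde x$ yields the same pair.

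The hard part will be the length bound on a maximal periodic run: I need both bracketing aperiodic letters, each of length up to $10d^2n$, to fit inside the agreement window $[-7d^2n,7d^2n)$. This bound must come from the construction, combining the bound $|\tau_1|\le 3dn$ from Proposition~\ref{Z1:main_prop} with a combinatorial limit on how many consecutive periodic first-coding letters of a common root can accumulate. The constant $7d^2$ in the statement is presumably calibrated precisely to accommodate this length bound together with the $3dn$ recognizability radius of the first coding.
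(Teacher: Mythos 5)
Your opening move — invoking the $3dn$-recognizability of the first coding over the agreement window $[-7d^2n,7d^2n)$ and then transferring the data to the second coding — is indeed how the paper proceeds. The gap is in your case analysis at position $0$. If $z_0\in\A{C}{p}$, then (Lemma \ref{Z2:tech_ext_per}) $|\mroot\tau(z_0)|\le\varepsilon$ and $x_{[f_0-8\varepsilon,f_1+8\varepsilon)}$ is a block of $(\mroot\tau(z_0))^\Z$; since $[-\varepsilon,\varepsilon)\subseteq[f_0-8\varepsilon,f_1+8\varepsilon)$, this gives $\per(x_{[-\varepsilon,\varepsilon)})\le\varepsilon$, contradicting the hypothesis. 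So the periodic case is \emph{vacuous}; your claim that the hypothesis ``forces $|\mroot\tau(z_0)|>\varepsilon$'' contradicts the very dichotomy you invoke. Worse, the strategy you sketch for that case — locating the two bracketing aperiodic letters inside the agreement window — cannot be repaired: a maximal periodic run has no length bound in terms of $d$ and $n$; its length is controlled only by the longest word of $\cL(X)$ with period at most $\varepsilon$ (cf.\ Lemma \ref{Z2:maxLen_inter}), which is exactly why Proposition \ref{Z2:main_prop} bounds $|\tau(a)|\le 10d^2n$ only for $a\in\A{C}{ap}$ and bounds $\#\A{C}{p}$ through $\pcom(X)$. The ``combinatorial limit on how many consecutive periodic first-coding letters can accumulate'' that you hope for does not exist in the construction; the aperiodicity hypothesis is there precisely so that this case never needs to be handled.

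Even your aperiodic case is too quick. In the construction, $\tau(z_0)$ is not a single first-coding letter but the image of a whole maximal aperiodic stable interval (up to $2d+1$ first-coding letters, whence the $10d^2n$ bound), and the right edge $f_1$ is a first-coding cut only when the \emph{next} stable interval is also aperiodic; if that interval is periodic, then $f_1=c_{k(1)}-|s^\ell|$ is shifted by a multiple of the root $s$ of the adjacent periodic run, so equality of the shifted cuts for $x$ and $\tilde x$ requires showing that both points produce the same root $s$ (the paper compares the length-$2\varepsilon$ prefixes at $c_{k(1)}$ and applies Fine and Wilf). Your ``transfers directly'' thus skips two needed steps: (i) that the types of the stable intervals $[k(i),k(i+1))$, $i=0,1$, computed from $x$ and from $\tilde x$ agree — which does follow, since membership in $\rQ{p}$ is a local condition readable inside the window — and (ii) the root-matching argument that equates the possibly shifted cut $f_1$. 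With these two ingredients added, and the periodic case at $0$ discarded rather than ``handled'', your outline becomes the paper's proof.
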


The strategy for proving Proposition \ref{Z2:main_prop} is as follows.
We consider the coding $(Y, \sigma)$ given by Proposition \ref{Z1:main_prop} and, for a point $y \in Y$, we glue together letters $y_i$ to form words $y_I$, where $I$ is an interval, in such a way that $y_I$ corresponds either to a maximal periodic part of $\sigma(y)$ or to an aperiodic part of $\sigma(y)$ of controlled length.
This will produce a new coding where the letters are in correspondence with the words $y_I$ and that satisfies all the properties in Proposition \ref{Z2:main_prop} except for the lower bound in Item \ref{item:Z2:lengths} for the letters associated to periodic parts $y_I$.
We solve this by slightly moving the edges of the words $\sigma(y_I)$.

We start, in Subsection \ref{subsec:Z2:eps_intervals}, by defining stable intervals, which correspond to the intervals $I$ described in the last paragraph.
The definition of the coding of Proposition \ref{Z2:main_prop} is given in Subsection \ref{subsec:Z2:defi_Z2}, together with the proof of its basic properties.
In the final subsection, we prove Propositions \ref{Z2:main_prop} and \ref{Z2:special_reco}.
\medskip

We fix the following notation for the rest of the section.
Let $X \subseteq \cA^\Z$ be a minimal infinite subshift, $n \geq 0$ and let $d$ be the maximum of $\lceil p_X(n)/n \rceil$, $p_X(n+1) - p_X(n)$, $\#\cA$ and $10^4$.
Then, Proposition \ref{Z1:main_prop} applied to $X$ and $n$ gives a recognizable coding $(Y \subseteq \cB^\Z, \sigma\colon\cB\to\cA^+)$ of $X$ and an integer $\varepsilon \in [n/d^{2d^3+4}, n/d)$.

\subsection{Stable intervals}
\label{subsec:Z2:eps_intervals}

Let $y \in Y$, $x = \sigma(y)$ and $(c, y) = \Fac_{(Y,\sigma)}(x)$.
We define
\begin{equation*}
    \rQ{short}(y) = \{i \in \Z : |\sigma(y_i)| \leq 401\varepsilon\}
    \enspace\text{and}\enspace
    \rQ{long}(y) = \Z \setminus \rQ{long}.
\end{equation*}
Let $\rQ{p}(y)$ be the set of integers $i \in \Z$ such that 
\begin{equation}
    \text{$|\mroot\sigma(y_i)| \leq \varepsilon$ and $x_{[c_i - 99\varepsilon, c_{i+1} + 99\varepsilon)} = (\mroot\sigma(y_i)^\Z)_{[-99\varepsilon, |\tau(y_i)| + 99\varepsilon)}$.}
\end{equation}
We set $\rQ{ap}(y) = \Z \setminus \rQ{p}(y)$.
Remark that the definition of $\rQ{p}(y)$ coincides with the one in Proposition \ref{Z1:per}.

%%%%                                                                 %%%% DEFINITION %%%%
\begin{defi} \label{defi:eps_interval}
A {\em stable interval} for $y$ is a finite interval $I = [i, j) \subseteq \Z$ satisfying one of the following conditions.
\begin{enumerate}
    \item $I \subseteq \rQ{p}$.
    \item $I \subseteq \rQ{ap}$, $\# (I \cap \rQ{long}) \leq 1$, and if $i \in \rQ{short}$ then $i-1 \in \rQ{p}$.
\end{enumerate}
The interval $I$ is of {\em periodic type} if it satisfies Item (1) of this definition, and of {\em aperiodic type} if it satisfies Item (2).
We say that $I$ is a {\em maximal stable interval} if for all stable interval $I' \supseteq I$ we have that $I' = I$.
\end{defi}

\begin{rema}
We stress on the fact that the previous definition does not depend just on $y$, but also on $\sigma$ and $\varepsilon$.
\end{rema}

%%%%                                                                 %%%% LEMMA %%%%
\begin{lem} \label{Z2:per_inter_props}
Let $I = [i, j)$ be a stable interval set for $y$ of periodic type.
Then:
\begin{enumerate}
    \item $\mroot\sigma(y_{[i, j)}) = \mroot\sigma(y_k)$ for all $k \in [i, j)$ and $x_{[c_i - 99\varepsilon, c_j + 99\varepsilon)}$ is equal to $(\mroot\sigma(y_i))^\Z_{[-99\varepsilon, |\sigma(y_{[i,j)})|+99\varepsilon)}$.
    \item If $I$ is maximal, $I' = [i', j')$ is a stable interval and either $i' = j$ or $j' = i$, then $I'$ is of aperiodic type.
    \item If $y' \in Y$, $0 \in \rQ{p}(y')$ and $\mroot \sigma(y_i)$ is conjugate to $\mroot \sigma(y'_0)$, then $\mroot \sigma(y_i) = \mroot \sigma(y'_0)$.
\end{enumerate}
\end{lem}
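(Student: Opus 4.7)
The plan is to derive the three items in order. Items (1) and (3) reduce to applications of the Fine and Wilf Theorem (Theorem \ref{theo:fine&wilf}) and of Item (4) of Proposition \ref{Z1:per}, respectively, while Item (2) is an immediate consequence of the definition of a maximal stable interval.

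I would prove Item (1) by induction on $|I| = j - i$. The base case $|I| = 1$ is immediate from the definition of $\rQ{p}(y)$. For the inductive step, I assume the claim for $[i, j)$ (so, in particular, $x_{[c_i, c_j)} = \sigma(y_{[i, j)})$ is a power of $t \coloneqq \mroot \sigma(y_i)$) and take $[i, j+1) \subseteq \rQ{p}(y)$. Setting $s \coloneqq \mroot \sigma(y_j)$, the inductive hypothesis implies that $x_{[c_j, c_j + 99\varepsilon)}$ is a prefix of $t^\infty$, while the condition $j \in \rQ{p}(y)$ says it is also a prefix of $s^\infty$. Since $|s|, |t| \leq \varepsilon$ and $99\varepsilon \geq |s| + |t| - 1$, Theorem \ref{theo:fine&wilf} yields that $s$ and $t$ are powers of a common word; the primitivity of minimal roots then forces $s = t$. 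The periodic patterns on $[c_i - 99\varepsilon, c_j + 99\varepsilon)$ and on $[c_j - 99\varepsilon, c_{j+1} + 99\varepsilon)$ combine to yield the claim on $[c_i - 99\varepsilon, c_{j+1} + 99\varepsilon)$. This is the main obstacle, but it presents no conceptual difficulty since it mirrors the argument used in Lemma \ref{Z1:many_shorts=>nice_period}.

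Item (2) follows by a short contradiction argument: if the adjacent interval $I'$ were of periodic type, then $I \cup I' \subseteq \rQ{p}(y)$ is contiguous, so it would be a stable interval of periodic type strictly containing $I$, contradicting the maximality of $I$. For Item (3), I invoke Item (4) of Proposition \ref{Z1:per} applied to the coding $(Y, \sigma)$ with $z \coloneqq S^i y$ and $z' \coloneqq y'$. The set $\rQ{p}$ is shift-equivariant ($j \in \rQ{p}(S^i y)$ iff $i+j \in \rQ{p}(y)$, directly from its local definition), so the condition $i \in I \subseteq \rQ{p}(y)$ translates to $0 \in \rQ{p}(S^i y)$; together with $(S^i y)_0 = y_i$, this verifies the hypotheses of that item, and its conclusion gives $\mroot \sigma(y_i) = \mroot \sigma(y'_0)$.
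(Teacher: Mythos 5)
Your proposal is correct and follows essentially the same route as the paper: Fine--Wilf applied to consecutive roots (the paper compares $s_k$ and $s_{k+1}$ directly rather than framing it as induction, but the computation is identical), the maximality-of-$I$ contradiction for Item (2), and Item \ref{item:Z1:per:conj_roots} of Proposition \ref{Z1:per} applied to $S^i y$ and $y'$ for Item (3).
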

\begin{proof}
We first prove Item (1).
Let $s_k = \mroot\sigma(y_k)$. 
Being $I$ of periodic type, we have by Definition \ref{defi:eps_interval} that
\begin{equation} \label{eq:Z2:per_inter_props:1}
    \text{$|s_k| \leq \varepsilon$ and $x_{[c_k-99\varepsilon, c_{k+1}+99\varepsilon)} = (s_k^\Z)_{[-99\varepsilon, |\sigma(y_k)|+99\varepsilon)}$ for all $k \in [i, j)$.}
\end{equation}
Then, for any $k \in [i, j-1)$,
\begin{equation*}
    (s_k^\Z)_{[0, 99\varepsilon)} = 
    x_{[c_k, c_k + 99\varepsilon)} =
    x_{[c_{k+1}, c_{k+1} + 99\varepsilon)} =
    (s_{k+1}^\Z)_{[0, 99\varepsilon)}.
\end{equation*}
Combining this with the inequalities $|s_k|, |s_{k+1}| \leq \varepsilon$ and Theorem \ref{theo:fine&wilf} produces a word $t$ such that $s_k$ and $s_{k+1}$ are powers of $t$. 
Hence, as $s_k$ and $s_{k+1}$ are roots of a word, $s_k = s_{k+1} = t$ for any $k \in [i, j-1)$.
Item (1) of the lemma follows from this and \eqref{eq:Z2:per_inter_props:1}.

For Item (2), we note that if $I'$ is of periodic type then $I \cup I'$ is an interval contained in $\rQ{p}(y)$, and so $I \cup I'$ is a stable interval for $y$.
This would contradict the maximality of $I$; therefore, $I'$ is of aperiodic type.

Let us now assume that the hypothesis of Item (3) holds.
Then, since $i \in I \subseteq \rQ{p}(y)$, $0 \in \rQ{p}(y')$ and $\mroot \sigma(y_i)$ is conjugate to $\mroot \sigma(y'_0)$, the points $S^i y$ and $y'$ comply with the hypothesis of Item \ref{item:Z1:per:conj_roots} of Proposition \ref{Z1:per}.
We conclude tat $\mroot \sigma(y_i) = \mroot \sigma(y'_0)$.
\end{proof}

%%%%                                                                 %%%% LEMMA %%%%
\begin{lem} \label{Z2:aper_inter_props}
Let $y \in Y$ and $I = [i, j)$ be a stable interval for $y$ of aperiodic type.
Then, 
\begin{enumerate}
    \item $\per(x_{[c_i+97\varepsilon, c_j-97\varepsilon)}) > \varepsilon$; % OK
    \item $I$ has length at most $2d + 1$; % OK
    \item $195\varepsilon \leq |\sigma(y_I)| \leq 9d^2n$. % OK
\end{enumerate}
\end{lem}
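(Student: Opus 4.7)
My plan is to prove the three items in the order (2), (1), (3), since the length bound on $|I|$ feeds into the upper bound on $|\sigma(y_I)|$, and the period bound in (1) directly forces the lower bound in (3).

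\textbf{Step 1 (Item 2).} I argue by contradiction. If $|I| \geq 2d+2$, then since $\#(I \cap \rQ{long}(y)) \leq 1$, removing the (at most one) long index splits $I \cap \rQ{short}(y)$ into at most two blocks of consecutive integers whose union has size at least $2d+1$. By pigeonhole one of these blocks contains an interval of length $d+1$, so there exists $m$ with $[m, m+d+1) \subseteq I \cap \rQ{short}(y)$. Applying Item (3) of Proposition \ref{Z1:per} to $S^m y$ with $k = d+1 > d$ gives $[m, m+d+1) \subseteq \rQ{p}(y)$, contradicting $I \subseteq \rQ{ap}(y)$.

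\textbf{Step 2 (Item 1).} I split according to whether $I$ meets $\rQ{long}(y)$. If some $k \in I \cap \rQ{long}(y)$, then $k \in \rQ{ap}(y)$ and $|\sigma(y_k)| > 401\varepsilon$, so Item (1) of Proposition \ref{Z1:per} applied to $S^k y$ yields $\per(x_{[c_k+97\varepsilon, c_{k+1}-97\varepsilon)}) > \varepsilon$. Otherwise $I \subseteq \rQ{short}(y)$; in particular $i \in \rQ{short}(y)$, and the third clause of the aperiodic-type definition gives $i-1 \in \rQ{p}(y)$. Then Item (2) of Proposition \ref{Z1:per} applied to $S^i y$ (with $-1 \in \rQ{p}(S^i y)$) yields $\per(x_{[c_i+97\varepsilon, c_{i+1}-97\varepsilon)}) > \varepsilon$. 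In both cases the interval carrying the period bound is a sub-interval of $[c_i+97\varepsilon, c_j-97\varepsilon)$, and since $\per(u) \geq \per(v)$ whenever $v$ occurs in $u$, Item (1) follows.

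\textbf{Step 3 (Item 3).} The upper bound uses Step 1: $|I| \leq 2d+1$, at most one index contributes a long letter of length $\leq 3dn$ by Proposition \ref{Z1:main_prop}, and the remaining at most $2d$ indices contribute short letters of length $\leq 401\varepsilon$. Combining with $\varepsilon < n/d$ and $d \geq 10^4$ yields $|\sigma(y_I)| \leq 3dn + 802 d \varepsilon \leq (3d+802)n \leq 9 d^2 n$. For the lower bound, the inequality $\per(u) \leq |u|$ applied to the conclusion of Step 2 yields $|\sigma(y_I)| = c_j - c_i > 195\varepsilon$.

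The only slightly delicate point is the pigeonhole in Step 1, where the constraint $\#(I \cap \rQ{long}(y)) \leq 1$ from the aperiodic-type definition is used in an essential way; the rest is careful bookkeeping of the shift conventions when invoking Proposition \ref{Z1:per} at $S^k y$ or $S^i y$.
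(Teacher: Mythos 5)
Your proof is correct and follows essentially the same route as the paper: Item (2) via Item (3) of Proposition \ref{Z1:per} applied to a block of more than $d$ consecutive short indices, Item (1) via Items (1)--(2) of Proposition \ref{Z1:per} together with monotonicity of $\per$ under subwords, and Item (3) by combining the length bounds of Proposition \ref{Z1:main_prop} with the period bound. The only differences are cosmetic (you case on whether $I$ meets $\rQ{long}$ rather than on the status of $i$, and you get the lower bound $195\varepsilon$ in one uniform step from Item (1) instead of the paper's two cases).
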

\begin{proof}
We prove Item (1) by considering two cases.
Suppose that $i \in \rQ{long}(y)$.
Then, $i \in \rQ{long}(y) \cap \rQ{ap}(y)$ and we can use Item \ref{item:Z1:per:0ap&long=>per_int} of Proposition \ref{Z1:per} to obtain that $\per(x_{[c_i+97\varepsilon, c_{i+1}-97\varepsilon)}) > \varepsilon$. 
Assume now that $i \in \rQ{short}(y)$.
Then, Definition \ref{defi:eps_interval} ensures that $i-1 \in \rQ{p}(y)$.
Hence, Item \ref{item:Z1:per:0ap&short&edge_p=>per_int} of Proposition \ref{Z1:per} applies and so $\per(x_{[c_i+97\varepsilon, c_{i+1}-97\varepsilon)}) > \varepsilon$. 
In particular, Item (1) holds.

We prove Item (2) by contradiction.
Assume that $\#I > 2d +1$.
Then, it follows from Definition \ref{defi:eps_interval} that there exists $I' \subseteq I$ such that $\#I' > d$ and $I' \subseteq \rQ{short}$.
These conditions allow us to use Item \ref{item:Z1:per:many_short=>p} in Proposition \ref{Z1:per} and deduce that $I' \subseteq \rQ{p}(y)$, contradicting the fact that $I$ is of aperiodic type.

Finally, we consider Item (3).
Item (2) of this lemma and Item \ref{item:Z1:main_prop:lengths} produce that $|\sigma(y_I)| \leq (2d+1)\cdot3dn$, from which the upper bound in Item (3) follows.
To prove the lower bound, we consider two cases.
If there exists $k \in I \cap \rQ{long}(y)$, then $|\sigma(y_I)| \geq |\sigma(y_k)| \geq 401\varepsilon$.
Assume now that $I \cap \rQ{long}(y)$ is empty.
Then, $i \in \rQ{short}(y)$, and so Definition \ref{defi:eps_interval} indicates that $i-1 \in \rQ{p}(y)$.
This allows us to use Item \ref{item:Z1:per:0ap&short&edge_p=>per_int} of Proposition \ref{Z1:per} to obtain that $\per(x_{[c_i+97\varepsilon, c_{i+1}-97\varepsilon)}) > \varepsilon$.
In particular, $|x_{[c_i+97\varepsilon, c_{i+1}-97\varepsilon)}| > \varepsilon$; hence,
\begin{equation*}
    |\sigma(y_I)|  \geq |\sigma(y_i)| =
    |x_{[c_i+97\varepsilon, c_{i+1}-97\varepsilon)}| + 2\cdot97\varepsilon >
    195\varepsilon.
\end{equation*}
\end{proof}

%%%%                                                                 %%%% LEMMA %%%%
\begin{lem} \label{Z2:maxLen_inter}
There exists a constant $C$ depending only on $X$ such that for any $y \in Y$ and stable interval $I$ for $Y$, we have that $\#I \leq C$.
In particular, any stable interval is contained in a maximal stable interval.
\end{lem}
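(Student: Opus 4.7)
My plan is to split the argument according to the type of the stable interval $I$. The aperiodic case is essentially already done: Lemma \ref{Z2:aper_inter_props}(2) gives $\#I \leq 2d+1$, so the entire content of the lemma lies in bounding periodic-type intervals.

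So let $I = [i,j)$ be of periodic type. By Lemma \ref{Z2:per_inter_props}(1) there is a common primitive root $s = \mroot \sigma(y_k)$, $k \in [i,j)$, with $|s| \leq \varepsilon$, such that each $\sigma(y_k) = s^{r_k}$ for some $r_k \geq 1$. Writing $x = \sigma(y)$ and $(c,y) = \Fac_{(Y,\sigma)}(x)$, the concatenation equals $x_{[c_i, c_j)} = s^m$ where $m = r_i + \dots + r_{j-1} \geq \#I$, so it suffices to bound $m$. Since $s^m$ is a factor of a point of $X$, it suffices in turn to bound $L(s) := \sup\{\ell \geq 1 : s^\ell \in \cL(X)\}$. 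The one step that requires real work is showing $L(s) < +\infty$ for every primitive $s$. I will obtain this from a standard compactness-plus-minimality argument: if arbitrarily long powers $s^\ell$ belong to $\cL(X)$, extracting a convergent subsequence yields a point $x^\ast \in X$ whose right tail is $s^\infty$, and then any accumulation point of $(S^{-N} x^\ast)_{N \geq 1}$ equals $s^\Z$, forcing $X$ to be the finite orbit of $s^\Z$ and contradicting that $X$ is minimal and infinite.

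With $L(s) < +\infty$ for each individual $s$ and only finitely many primitive words of length at most $\varepsilon$, the quantity $M := \max\{L(s) : s \in \cA^+, |s| \leq \varepsilon\}$ is finite and depends only on $X$ (and on $n$, which we regard as fixed together with $X$). Setting $C := \max(2d+1, M)$ therefore gives the desired uniform bound $\#I \leq C$ over all stable intervals. The ``in particular'' clause is then immediate: the family of stable intervals containing a given stable interval $I$ is nonempty and its cardinalities lie in the finite set $\{1, \ldots, C\}$, so one of maximum cardinality exists in this family, and any such is manifestly maximal under inclusion. Worth noting is that the argument does not require the power-complexity bound $\pcom(X)$ appearing in Proposition \ref{Z2:main_prop}(1); the mere finiteness of $L(s)$ for each individual short $s$, combined with the finiteness of the set of candidate roots, is all that is needed here.
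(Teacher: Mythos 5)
Your argument is correct and is essentially the paper's proof: the paper makes the same split by type, handles aperiodic intervals with Lemma \ref{Z2:aper_inter_props}, and for periodic intervals uses Lemma \ref{Z2:per_inter_props} to reduce to bounding the longest factor of $X$ with period at most $\varepsilon$, whose finiteness (which you prove via compactness and minimality) the paper simply remarks follows from $X$ being minimal and infinite. One cosmetic slip: since the right tail of $x^\ast$ is $s^\infty$, you should take forward shifts $S^{N} x^\ast$ rather than $S^{-N} x^\ast$, and the accumulation points are shifts of $s^\Z$; this does not affect the conclusion.
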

\begin{proof}
Let $C_0$ be the length of the longest word $w$ that occurs in some $x \in X$ such that $\per(w) \leq \varepsilon$.
We remark that $C_0$ is finite as $X$ is assumed to be minimal and infinite.
Let $C = \max\{C_0, 2d+1\}$.
We claim that for any $y \in Y$, any stable interval $I$ for $y$ has length at most $C$.
Indeed, if $I$ is of aperiodic type, then Item (3) of Lemma \ref{Z2:aper_inter_props} implies that $\#I \leq 2d+1 \leq C$, and if $I$ is of periodic type, then Item (1) of Lemma \ref{Z2:per_inter_props} ensures that $\per(\sigma(y_I)) \leq \varepsilon$, and thus that $\#I \leq |\sigma(y_I)| \leq C_0 \leq C$. 
\end{proof}

%%%%                                                                 %%%% LEMMA %%%%
\begin{lem} \label{Z2:cI_is_partition}
Let $y \in Y$.
Then, the set of all maximal stable intervals for $y$ is a partition of $\Z$.
\end{lem}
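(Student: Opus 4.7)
The plan is to show that $\{J(i) : i \in \Z\}$ is a partition of $\Z$ by maximal stable intervals, where $J(i)$ denotes the union of all stable intervals for $y$ containing $i$.

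\emph{Union lemma.} The main technical step is to prove that if $I_1, I_2$ are stable intervals with $I_1 \cap I_2 \neq \emptyset$, then $I_1 \cup I_2$ is a stable interval. Since $\rQ{p}(y) \cap \rQ{ap}(y) = \emptyset$, both intervals must be of the same type. In the periodic case, $I_1 \cup I_2 \subseteq \rQ{p}(y)$ is a finite interval, hence stable. In the aperiodic case, writing $I_k = [a_k, b_k)$ with $a_1 \leq a_2$, the hypothesis gives $a_2 \in I_1 \subseteq \rQ{ap}(y)$; if $a_1 < a_2$, then $a_2 - 1 \in I_1 \subseteq \rQ{ap}(y)$ rules out $a_2 - 1 \in \rQ{p}(y)$, so the left-endpoint condition of $I_2$ forces $a_2 \in \rQ{long}(y)$, and then the at-most-one-$\rQ{long}$-element conditions on $I_1$ and $I_2$ both identify $a_2$ as their unique $\rQ{long}$-element. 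Hence $(I_1 \cup I_2) \cap \rQ{long}(y) = \{a_2\}$, and the left-endpoint condition of $I_1 \cup I_2$ is inherited from $I_1$. The case $a_1 = a_2$ reduces to $I_1 \subseteq I_2$ or vice versa.

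\emph{Every integer is covered.} For $i \in \rQ{p}(y)$, the singleton $\{i\}$ is stable of periodic type. For $i \in \rQ{ap}(y)$, I would examine the window $W = [i-d-1, i]$ of length $d+2$. If $W \cap \rQ{p}(y) \neq \emptyset$, let $i'$ be the largest element of this intersection (necessarily $i' < i$): then $[i'+1, i] \subseteq \rQ{ap}(y)$, and taking $j$ to be either the largest $\rQ{long}(y)$-element of $[i'+1, i]$, or $i'+1$ if no such element exists (whose left-endpoint condition holds via $i' \in \rQ{p}(y)$), produces a stable interval $[j, i+1)$. If $W \subseteq \rQ{ap}(y)$, Proposition \ref{Z1:per}(3) applied to $W$ forbids $W \subseteq \rQ{short}(y)$, so $W$ contains some $\rQ{long}(y)$-element; taking $j$ to be the largest such yields a stable interval $[j, i+1)$ in which the left-endpoint condition is vacuous since $j \in \rQ{long}(y)$.

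\emph{Conclusion.} By Lemma \ref{Z2:maxLen_inter}, every stable interval containing $i$ lies in $[i-C, i+C]$, so only finitely many stable intervals contain $i$; iterating the union lemma across this finite family shows $J(i)$ is itself stable, and maximal by construction. If $J(i) \cap J(i') \neq \emptyset$, then $J(i) \cup J(i')$ is stable (by the union lemma) and contains both $i$ and $i'$; by definition of $J$ this union lies inside both $J(i)$ and $J(i')$, whence $J(i) = J(i')$. Combined with the covering step, the family $\{J(i) : i \in \Z\}$ partitions $\Z$, and since any maximal stable interval $I$ satisfies $I = J(i)$ for every $i \in I$, this family is exactly the collection of maximal stable intervals. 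The hardest ingredient will be the union lemma in the aperiodic case, where the left-endpoint condition must be exploited delicately to prevent two distinct $\rQ{long}$-elements from surviving in the union.
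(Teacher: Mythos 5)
Your proof is correct and follows essentially the same route as the paper: establish that every integer lies in some stable interval, and that two overlapping stable intervals merge into a stable one, using the left-endpoint condition to force the relevant endpoint into $\rQ{long}$ and the at-most-one-long-element condition to control the union. The differences are only organizational — you prove a general union lemma (casing on the left endpoints) and realize each maximal interval as $J(i)$, whereas the paper argues directly on maximal intervals with a case analysis on $j-1$; your covering step also makes explicit the appeal to Proposition \ref{Z1:per}(3) that the paper leaves implicit.
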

\begin{proof}
We first prove that any $k \in \Z$ is contained in a stable interval.
This would imply that any $k$ is contained in a maximal stable interval by Lemma \ref{Z2:maxLen_inter}.

Let $k \in \Z$ be arbitrary.
We consider two cases.
If $k \in \rQ{p}(y)$ or $k \in \rQ{ap}(y) \cap \rQ{long}(y)$, then $\{k\}$ is stable interval and we are finished.
Suppose now that $k \in \rQ{ap}(y) \cap \rQ{short}(y)$.
Let $i < k$ be the biggest integer such that $i \not \in \rQ{ap}(y) \cap \rQ{short}(y)$.
Note that $[i+1, k] \subseteq \rQ{ap}(y) \cap \rQ{short}(y)$.
Hence, if $i \in \rQ{ap}(y) \cap \rQ{long}(y)$, then $[i,k]$ is stable interval of aperiodic type, and if $i \in \rQ{p}$, then $[i+1,k]$ is stable interval of aperiodic type.
These are the only cases as $i \not \in \rQ{ap}(y) \cap \rQ{short}(y)$, and so we conclude that $i$ belongs to a stable interval.
\medskip

Next, we prove that for any maximal stable intervals $I, I'$, either $I = I'$ or $I \cap I' = \emptyset$.
The lemma follows from this and the fact that any $k$ is contained in a maximal stable interval.

Let $I = [i,j)$ and $I' = [i', j')$ be maximal stable intervals with nonempty intersection.
There is no loss of generality in assuming that $i \leq i' < j \leq j'$.
Note that if $i = i'$ or $j = j'$, then $I \cup I' \in \{I, I'\}$, so $I = I' = I \cup I'$ by maximality.
Hence, we may assume that $i < i' < j < j'$.
Remark that this implies that $j - 1 \in I \cap I'$.

In order to continue, we consider three cases.
\begin{enumerate}
    \item If $j - 1 \in \rQ{p}(y)$, then, as $j-1 \in I \cap I'$, Definition \ref{defi:eps_interval} implies that $I$ and $I'$ are of periodic type.
    It then follows from Definition \ref{defi:eps_interval} that $I \cup I'$ is stable interval of periodic type, and so $I = I' = I \cup I'$ by maximality.

    \item If $j -1 \in \rQ{ap}(y) \cap \rQ{long}$. Then, since $j - 1 \in I \cap I'$, Definition \ref{defi:eps_interval} ensures that $[i, j-1) \cup [j+1 , j') \subseteq \rQ{ap}(y) \cap \rQ{short}(y)$.
    Hence, $[i, j') = I \cup I'$ is a stable interval of aperiodic type, which implies that $I = I' = I \cup I'$ by maximality.
    
    \item If $j - 1 \in \rQ{ap}(y) \cap \rQ{short}(y)$. 
    Then, as $j-1 \in I \cap I'$, Definition \ref{defi:eps_interval} guarantees that $I$ and $I'$ are of aperiodic type.
    In particular, as $i'-1 \in I$, $i'-1 \in \rQ{ap}(y)$ and therefore, by Definition \ref{defi:eps_interval}, $i' \in \rQ{long}(y)$.
    We conclude, using Definition \ref{defi:eps_interval}, that $[i, i') \subseteq \rQ{ap}(y) \cap \rQ{short}(y)$, $i' \in \rQ{ap}(y) \cap \rQ{long}(y)$ and that $[i'+1,j') \subseteq \rQ{ap}(y) \cap \rQ{short}(y)$.
    Hence, $[i, j') = I \cup I'$ is a stable interval of aperiodic type and $I = I' = I\cup I'$ by maximality.
\end{enumerate}
\end{proof}

%%%%                                                                 %%%% SUBSECTION %%%%
\subsection{Construction of the second coding}
\label{subsec:Z2:defi_Z2}

The coding $(Z, \tau)$ is obtained by modifying the cut function $c$ in $\Fac_{(Y,\sigma)}(x)$ of the points $x \in X$.
We give the construction of the modified cut function as the proof of the following lemma, and we define $(Z, \tau)$ right after.

%%%%                                                                 %%%% LEMMA %%%%
\begin{lem} \label{Z2:defi_r}
Let $x \in X$ and set $(c, y) = \Fac_{(Y,\sigma)}(x)$.
There exist unique increasing sequences of integers satisfying $(k_x(j))_{j\in\Z}$ and $(r_x(j))_{j\in\Z}$ satisfying the following conditions.
\begin{enumerate}
    \item $\{[k_x(j), k_x(j+1)) : j \in \Z\}$ is the set of all maximal stable intervals of $y$.
    \item \label{item:Z2:defi_r:defi_r}
    For any $j \in \Z$,
    \begin{enumerate}
    \item if $[k_x(j), k_x(j+1))$ is of aperiodic type, then $r_x(j) = c_{k_x(j)}$.    
    \item if $[k_x(j), k_x(j+1))$ is of periodic type, then $r_x(j) = c_{k_x(j)} - |s^\ell|$, where $s = \mroot\sigma(y_{[k_x(j), k_x(j+1))})$ and $\ell = \left\lceil 80\varepsilon / |s| \right\rceil$
    \end{enumerate}
    \item \label{item:Z2:defi_r:0}
    $0$ belongs to $[r_x(0), r_x(1))$.
\end{enumerate}
Moreover, in this case, $r_x(j+1) \geq r_x(j) + 80\varepsilon$ for all $j \in \Z$.
\end{lem}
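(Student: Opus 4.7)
The plan is to first fix the partition of $\Z$ into maximal stable intervals, then define the candidate sequences from it, verify the spacing inequality, and finally use that inequality to pin down the indexing. By Lemma \ref{Z2:cI_is_partition} the maximal stable intervals for $y$ partition $\Z$, and by Lemma \ref{Z2:maxLen_inter} each such interval is finite; hence they can be enumerated as $([k_x(j), k_x(j+1)))_{j \in \Z}$ with $k_x$ strictly increasing, and the enumeration is unique up to a global shift of the index. I fix any such enumeration and note that, since $\rQ{ap}(y)$ and $\rQ{p}(y)$ partition $\Z$, each nonempty stable interval has a single type, so the formula in Item (2) defines $r_x(j)$ unambiguously.

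The core of the argument is the spacing inequality $r_x(j+1) \geq r_x(j) + 80\varepsilon$. By Item (2) of Lemma \ref{Z2:per_inter_props}, two consecutive maximal stable intervals cannot both be of periodic type, which leaves three cases. When both are aperiodic, $r_x(j+1) - r_x(j) = |\sigma(y_{[k_x(j), k_x(j+1))})| \geq 195\varepsilon$ by Item (3) of Lemma \ref{Z2:aper_inter_props}. When the first is aperiodic and the second periodic, the difference equals $|\sigma(y_{[k_x(j), k_x(j+1))})| - |s^\ell|$; the estimate $|s^\ell| = \lceil 80\varepsilon/|s|\rceil \cdot |s| \leq 81\varepsilon$ (using $|s| \leq \varepsilon$, which holds for periodic-type intervals) together with the same $195\varepsilon$ lower bound gives at least $114\varepsilon$. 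When the first is periodic and the second aperiodic, the difference is $|\sigma(y_{[k_x(j), k_x(j+1))})| + |s^\ell| \geq |s^\ell| \geq 80\varepsilon$. In each case the desired bound holds.

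Once the spacing inequality is established, $r_x$ is strictly increasing and tends to $\pm \infty$, so a unique shift of the index places $0$ inside $[r_x(0), r_x(1))$; applying this shift to both $k_x$ and $r_x$ produces sequences satisfying all three conditions, while uniqueness follows because any pair of such sequences must enumerate the same partition and be pinned down by the normalization in Item (3). The main obstacle is the three-case verification for the spacing bound: once the alternation property of Lemma \ref{Z2:per_inter_props}(2) rules out the periodic--periodic adjacency, everything reduces to length bounds already supplied by Lemmas \ref{Z2:per_inter_props} and \ref{Z2:aper_inter_props}.
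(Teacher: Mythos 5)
Your proposal is correct and follows essentially the same route as the paper: enumerate the maximal stable intervals via Lemma \ref{Z2:cI_is_partition}, define $r_x$ by the two-case formula, establish the spacing bound $r_x(j+1) \geq r_x(j) + 80\varepsilon$ using the alternation property of Lemma \ref{Z2:per_inter_props} together with the length bounds of Lemma \ref{Z2:aper_inter_props}, and then fix the indexing by the normalization $0 \in [r_x(0), r_x(1))$. The only cosmetic difference is that you split the verification into three cases according to the types of both adjacent intervals, while the paper argues in two cases according to the type of $[k_x(j), k_x(j+1))$ using the uniform bound $c_{k_x(j)} - 81\varepsilon < r_x(j) \leq c_{k_x(j)}$; the estimates are the same.
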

\begin{proof}
Lemma \ref{Z2:cI_is_partition} ensures that the set of all maximal stable intervals of $y$ can be described as $\{ [k(j), k(j+1)) : j \in \Z\}$ for some increasing sequence of integers $(k_j)_{j\in\Z}$.
The sequence $(k_j)_{j\in\Z}$ is unique up to an index shift.

We define $r(j)$ as follows:
\begin{enumerate}[label=(\roman*)]
    \item if $[k(j), k(j+1))$ is of aperiodic type, then $r(j) = c_{k(j)}$.    
    \item if $[k(j), k(j+1))$ is of periodic type, then $r(j) = c_{k(j)} - |s^\ell|$, where $s = \mroot\sigma(y_{[k(j), k(j+1))})$ and $\ell = \left\lceil 80\varepsilon / |s| \right\rceil$
\end{enumerate}
It is important to remark that, in case (ii), Lemma \ref{Z2:per_inter_props} ensures that $|s| \leq \varepsilon$.

We claim that
\begin{equation} \label{eq:Z2:defi_r:1}
    r(j+1) \geq r(j) + 80\varepsilon
    \enskip\text{for all $j \in \Z$.}
\end{equation}
First, we note that the definition of $r(j)$ and $r(j+1)$ guarantees that
\begin{equation} \label{eq:Z2:defi_r:2}
    k(j) - 81\varepsilon < r(j) \leq k(j)
    \enspace\text{and}\enspace
    k(j+1) - 81\varepsilon < r(j+1) \leq k(j+1).
\end{equation}
We now consider two cases.
If $[k(j), k(j+1))$ is of aperiodic type, then, by Lemma \ref{Z2:aper_inter_props}, $|\sigma(y_{[ k(j), k(j+1) )})|$ is at least $195\varepsilon$.
Combining this with \eqref{eq:Z2:defi_r:2} yields
\begin{equation*}
    |x_{[ r(j), r(j+1) )}| \geq 
    |x_{[ c_{k(j)}, c_{k(j+1)} )}| - 81\varepsilon = 
    |\sigma(y_{[ k(j), k(j+1) )})| - 81\varepsilon \geq
    80\varepsilon
\end{equation*}
Assume now that $[k(j), k(j+1))$ is of periodic type.
Then, $[k(j+1), k(j+2))$ is of aperiodic type by Lemma \ref{Z2:per_inter_props}.
In particular, $r(j+1) = c_{k(j+1)}$ by (i).
Also, since $[k(j), k(j+1))$ is of periodic type, (ii) ensures that $r(j) \leq k(j) - 80\varepsilon$.
These two things imply that $|x_{[r(j), r(j+1))}| \geq |x_{[c_{k(j)} - 80\varepsilon, c_{k(j+1)})}| \geq 80\varepsilon$, completing the proof of the claim.
\medskip

Equation \eqref{eq:Z2:defi_r:1} implies that $(r(j))_{j\in\Z}$ is increasing.
Thus, there exists a unique $\ell \in \Z$ such that $0 \in [r(\ell), r(\ell+1))$.
We define $k_x(j) = k(j + \ell)$ and $r_x(j) = r(j + \ell)$.
Then, $(k_x(j))_{j\in\Z}$ and $(r_x(j))_{j\in\Z}$ satisfy Items (1), (2) and (3) of the lemma.
Moreover, being $(r(j))_{j\in\Z}$ increasing, it is clear $\ell$ (and then also $(k_x(j))_{j\in\Z}$ and $(r_x(j))_{j\in\Z}$) is unique.
\end{proof}

We now define $(Z, \tau)$.
It follows from the recognizability property of $(Y, \sigma)$ and Lemma \ref{Z2:maxLen_inter} that the map $x \mapsto r_x(0)$ is continuous.
In particular, $U = \{x \in X : r_x(0) = 0\}$ is clopen (and nonempty).
We define $(Z \subseteq \cC^\Z, \tau\colon\cC \to \cA^+)$ as the recognizable coding of $X$ obtained from $U$ as in Proposition \ref{prop:from_clopen_to_coding}.

%%%%                                                                 %%%% SEUBSECTION %%%%
\subsection{Basic properties of the second coding}

We fix, for the rest of the section, the following notation.
Let $x$ denote an element of $X$, $(c,y) = \Fac_{(Y,\sigma)}(x)$ and $(f,z) = \Fac_{(Z,\tau)}(x)$.
We also define $(k_x(j))_{j\in\Z}$ and $(r_x(j))_{j\in\Z}$ as the sequences given by Lemma \ref{Z2:defi_r}.

%%%%                                                                 %%%% LEMMA %%%%
\begin{lem} \label{Z2:Zcuts=r}
We have that $r_x(j) = f_j$ for all $j \in \Z$.
\end{lem}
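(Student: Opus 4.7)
The plan is to identify the cuts $f_j$ of the $(Z,\tau)$-factorization of $x$ with the integers $r_x(j)$ by using the characterization of cuts in Proposition \ref{prop:from_clopen_to_coding}. Recall that $(Z,\tau)$ was defined as the coding associated to the clopen set $U = \{x' \in X : r_{x'}(0) = 0\}$. Hence Proposition \ref{prop:from_clopen_to_coding} gives $S^i x \in U$ if and only if $i = f_j$ for some $j \in \Z$. So the lemma reduces to showing that $S^i x \in U$ if and only if $i = r_x(j)$ for some $j \in \Z$, together with matching up the indexing via the condition $0 \in [f_0,f_1)$.

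The main step is therefore a shift-equivariance statement: for every $i \in \Z$, the set $\{r_{S^i x}(j) : j \in \Z\}$ equals $\{r_x(j) - i : j \in \Z\}$. To prove this, I would fix $m$ to be the unique integer with $i \in [c_m, c_{m+1})$, so that $\Fac_{(Y,\sigma)}(S^i x) = (c', S^m y)$ with $c'_j = c_{j+m} - i$. The notions of $\rQ{p}$, $\rQ{ap}$, $\rQ{short}$, $\rQ{long}$, and hence of stable interval and of aperiodic/periodic type in Definition \ref{defi:eps_interval}, all depend only on the morphism $\sigma$, the constant $\varepsilon$, and the letter sequence, so they are shift-equivariant: the maximal stable intervals of $S^m y$ are precisely the translates by $-m$ of the maximal stable intervals of $y$. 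Applying rules (a) and (b) of Item \ref{item:Z2:defi_r:defi_r} in Lemma \ref{Z2:defi_r} (and using $c'_{k} = c_{k+m} - i$, together with the fact that the root $s$ and exponent $\ell$ in case (b) depend only on the letters of the interval) gives $R_{S^i x}(j) = R_{x}(j') - i$, where $R_{x'}$ denotes the pre-normalized enumeration obtained before shifting indices so that $0$ lies in the $0$-th interval.

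It follows that $0 = r_{S^i x}(0)$ if and only if $0 \in \{R_{S^i x}(j) : j\}$, if and only if $i \in \{R_x(j) : j\} = \{r_x(j) : j\}$. Combining with the Proposition \ref{prop:from_clopen_to_coding} description of $f$, we conclude that $\{f_j : j \in \Z\} = \{r_x(j) : j \in \Z\}$ as subsets of $\Z$.

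Finally, both sequences are strictly increasing enumerations of this common set (the sequence $(r_x(j))_j$ is strictly increasing by the bound $r_x(j+1) \geq r_x(j) + 80\varepsilon$ from Lemma \ref{Z2:defi_r}, and $(f_j)_j$ is strictly increasing by definition of a cut function), and both have the $0$-th term satisfying $0 \in [r_x(0), r_x(1))$ (by Item \ref{item:Z2:defi_r:0} of Lemma \ref{Z2:defi_r}) and $0 \in [f_0, f_1)$ (from the definition of $\Fac_{(Z,\tau)}$ in Definition \ref{defi:factorizations}). Therefore the indexings coincide and $r_x(j) = f_j$ for every $j \in \Z$. The only delicate point is verifying the shift-equivariance of the construction carefully, but this is essentially bookkeeping since every ingredient in Definition \ref{defi:eps_interval} and in Lemma \ref{Z2:defi_r} is intrinsic to the local data.
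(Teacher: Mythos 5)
Your proof is correct and follows essentially the same route as the paper: identify $\{k : S^k x \in U\}$ with $\{r_x(j) : j \in \Z\}$, invoke Proposition \ref{prop:from_clopen_to_coding} to see the $f_j$ enumerate the same set, and then use strict monotonicity of $(r_x(j))_j$ together with the normalizations $0 \in [r_x(0), r_x(1))$ and $0 \in [f_0, f_1)$ to pin down the index shift. The only difference is that you spell out the shift-equivariance of the stable-interval construction, which the paper's proof treats as immediate; this is a harmless (indeed welcome) elaboration, not a change of method.
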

\begin{proof}
Note that $\{k \in \Z : S^k x \in U\}$ is equal to $\{r_x(j) : j \in \Z\}$.
Then, by Item (1) in Proposition \ref{prop:from_clopen_to_coding}, there exists a bijective map $g\colon \Z \to \Z$ such that $r_j(x) = f_{g(j)}$ for all $j \in \Z$.
Now, since Lemma \ref{Z2:defi_r} states that $r_x(j) < r_x(j+1)$ for all $j \in \Z$, the map $g$ is increasing.
As it is also bijective, we conclude that there exists $\ell \in \Z$ satisfying  $g(j) = j + \ell$ for all $j \in \Z$.
Finally, by Item (3) in Lemma \ref{Z2:defi_r} and the definition of $f$, we have that $0 \in [f_0, f_1)$ and $0 \in [r_x(0), r_x(1)) = [f_{g^{-1}(0)}, f_{g^{-1}(0)+1})$.
Hence, $g^{-1}(0) = 0$, $\ell = 0$ and the lemma follows.
\end{proof}

The last lemma allows us to drop the notation $r_x(j)$ and use only $f_j$.
In particular, Items (2) and (3) of Lemma \ref{Z2:defi_r} hold with $f_j$.

We define a partition $\A{C}{a} \cup \A{C}{ap}$ of $\cC$ as follows:
\begin{align*}
    \A{C}{ap} = \{a \in \cC : \per(\tau(a)) > \varepsilon\}
    \enspace\text{and}\enspace
    \A{C}{p} = \{a \in \cC : \per(\tau(a)) \leq \varepsilon\}.
\end{align*}

%%%%                                                                 %%%% LEMMA %%%%
\begin{lem} \label{Z2:tech_ext_per}
Let $j \in \Z$.
The following are equivalent:
\begin{enumerate}[label=(1.\alph*)]
    \item $z_j \in \A{C}{p}$.
    \item $[k_x(j), k_x(j+1))$ is of periodic type for $y$.
    \item Let $s = \mroot \tau(z_j)$. Then, $|s| \leq \varepsilon$, $x_{[f_j - 8\varepsilon, f_{j+1} + 8\varepsilon)} = s^\Z_{[-8\varepsilon, |\tau(z_j)| + 8\varepsilon)}$ and $s = \mroot\sigma(y_{ [k_x(j), k_x(j+1)) })$.
\end{enumerate}
The following are also equivalent:
\begin{enumerate}[label=(2.\alph*)]
    \item $z_j \in \A{C}{ap}$.
    \item $[k_x(j), k_x(j+1))$ is of aperiodic type for $y$.
    \item \label{item:Z2:tech_ext_per:inner_per>eps}
    $\per(x_{[f_j + 8\varepsilon, f_{j+1} - 8\varepsilon)}) > \varepsilon$.
\end{enumerate}
\end{lem}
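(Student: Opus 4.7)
The plan is to prove the two cycles (1.b)$\Rightarrow$(1.c)$\Rightarrow$(1.a) and (2.b)$\Rightarrow$(2.c)$\Rightarrow$(2.a), and then observe that the partition structure closes all remaining implications. Indeed, $\A{C}{ap}$ and $\A{C}{p}$ partition $\cC$ by definition, while Lemma \ref{Z2:per_inter_props} (together with the fact that every stable interval is either of periodic or of aperiodic type) ensures that the two conditions (1.b) and (2.b) also partition the cases for a fixed $j$. So once the forward chains are established, the contrapositive immediately yields (1.a)$\Rightarrow$(1.b) and (2.a)$\Rightarrow$(2.b).

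For the periodic chain, assume (1.b) and let $s = \mroot\sigma(y_{[k_x(j),k_x(j+1))})$; Lemma \ref{Z2:per_inter_props}(1) gives $|s|\leq\varepsilon$ and identifies $x_{[c_{k_x(j)}-99\varepsilon,\,c_{k_x(j+1)}+99\varepsilon)}$ with $s^\Z_{[-99\varepsilon,|\sigma(y_{[k_x(j),k_x(j+1))})|+99\varepsilon)}$. By Lemma \ref{Z2:defi_r}(2), $f_j = c_{k_x(j)} - |s^\ell|$ with $\ell=\lceil80\varepsilon/|s|\rceil$, so $80\varepsilon\leq|s^\ell|<81\varepsilon$; by the maximality part of Lemma \ref{Z2:per_inter_props}(2), the next interval $[k_x(j+1),k_x(j+2))$ is aperiodic, hence $f_{j+1}=c_{k_x(j+1)}$. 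Both shifts $f_j$ and $f_{j+1}$ therefore land inside the periodic window described above and are offset by multiples of $|s|$, so $\tau(z_j)=x_{[f_j,f_{j+1})}$ is a power of $s$, $\mroot\tau(z_j)=s$, and the enlarged word $x_{[f_j-8\varepsilon,f_{j+1}+8\varepsilon)}$ equals $s^\Z_{[-8\varepsilon,|\tau(z_j)|+8\varepsilon)}$ (all boundary terms stay within $99\varepsilon$ of the $c_{k_x(\cdot)}$'s). This is (1.c), and (1.c)$\Rightarrow$(1.a) is immediate since $\per(\tau(z_j))\leq|s|\leq\varepsilon$.

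For the aperiodic chain, assume (2.b). Then $f_j=c_{k_x(j)}$ by Lemma \ref{Z2:defi_r}(2)(a), and $f_{j+1}$ is either $c_{k_x(j+1)}$ (if $[k_x(j+1),k_x(j+2))$ is aperiodic) or $c_{k_x(j+1)}-|s'^{\ell'}|$ for some root $s'$ and $|s'^{\ell'}|<81\varepsilon$; in both cases the interval $[f_j+8\varepsilon,f_{j+1}-8\varepsilon)$ contains $[c_{k_x(j)}+97\varepsilon,c_{k_x(j+1)}-97\varepsilon)$. Lemma \ref{Z2:aper_inter_props}(1) gives $\per(x_{[c_{k_x(j)}+97\varepsilon,c_{k_x(j+1)}-97\varepsilon)})>\varepsilon$, and since the period of a subword is bounded above by the period of the enclosing word, the larger word $x_{[f_j+8\varepsilon,f_{j+1}-8\varepsilon)}$ also has period strictly greater than $\varepsilon$, proving (2.c). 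The implication (2.c)$\Rightarrow$(2.a) is again immediate, since $x_{[f_j+8\varepsilon,f_{j+1}-8\varepsilon)}$ is a subword of $\tau(z_j)$, so $\per(\tau(z_j))\geq\per(x_{[f_j+8\varepsilon,f_{j+1}-8\varepsilon)})>\varepsilon$.

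The only subtle point in this argument is the interval arithmetic around the shifted cuts: one must verify that the modifications $|s^\ell|<81\varepsilon$ introduced in Lemma \ref{Z2:defi_r} are small enough that, in the periodic case, the window $[f_j-8\varepsilon,f_{j+1}+8\varepsilon)$ stays inside the $99\varepsilon$-neighbourhood provided by Lemma \ref{Z2:per_inter_props}, and, in the aperiodic case, the window $[f_j+8\varepsilon,f_{j+1}-8\varepsilon)$ still contains the $97\varepsilon$-shrunk interior given by Lemma \ref{Z2:aper_inter_props}. Both reduce to routine checks using $80\varepsilon \leq |s^\ell| < 81\varepsilon$ and $8+81<97<99$, so no serious obstacle arises.
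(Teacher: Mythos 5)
Your proof is correct and follows essentially the same route as the paper's: the same interval arithmetic coming from Lemma \ref{Z2:defi_r} (with $80\varepsilon \leq |s^\ell| < 81\varepsilon$) combined with Lemmas \ref{Z2:per_inter_props} and \ref{Z2:aper_inter_props} to establish (1.b)$\Rightarrow$(1.c)$\Rightarrow$(1.a) and (2.b)$\Rightarrow$(2.c)$\Rightarrow$(2.a). The only difference is organizational: the paper proves (1.a)$\Rightarrow$(1.b) and (2.a)$\Rightarrow$(2.b) directly from period bounds on $x_{[f_j,f_{j+1})}$, whereas you close these implications by contraposition using the two dichotomies (periodic versus aperiodic type, and $\A{C}{p}$ versus $\A{C}{ap}$), which is equally valid since both are genuine partitions.
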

\begin{proof}
We start with a general observation.
Let us write $k(j) = k_x(j)$.
Then, Item \ref{item:Z2:defi_r:defi_r} in Lemma \ref{Z2:defi_r} ensures that 
\begin{equation*}
    \text{$c_{k(j)} - 81\varepsilon < f_j \leq c_{k(j)}$ and 
    $c_{k(j+1)} - 81\varepsilon < f_{j+1} \leq c_{k(j+1)}$.}
\end{equation*}
Hence, 
\begin{equation} \label{eq:Z2:tech_ext_per:1}
    \emptyset \not= [c_{k(j)}, c_{k(j+1)} - 81\varepsilon) \subseteq
    [f_j, f_{j+1}) \subseteq 
    [c_{k(j)} - 81\varepsilon, c_{k(j+1)} + 81\varepsilon)
\end{equation}

We now prove the lemma.
Let us assume that (1.a) holds.
Then, \eqref{eq:Z2:tech_ext_per:1} implies that $\per(x_{[c_{k(j)} + 97\varepsilon, c_{k(j+1)} - 97\varepsilon)})  \leq \per(x_{[f_j, f_{j-1})}) \leq \varepsilon$.
Hence, by Lemma \ref{Z2:aper_inter_props}, $[k(j), k(j+1))$ is not of aperiodic type, that is, $[k(j), k(j+1))$ is of periodic type.

Assume next (1.b).
Then, Lemma \ref{Z2:per_inter_props} states that $s = \mroot \sigma(y_{[k(j), k(j+1))})$ satisfies $|s| \leq \varepsilon$, $s = \mroot \sigma(y_k)$ for all $k \in [k(j), k(j+1))$ and
\begin{equation} \label{eq:Z2:tech_ext_per:2}
    x_{[c_{k(j)} - 99\varepsilon, c_{k(j+1)} + 99\varepsilon)} = s^\Z_{[-99\varepsilon, c_{k(j+1)} - c_{k(j)} + 99\varepsilon)}.
\end{equation}
Moreover, Item (2) in Lemma \ref{Z2:per_inter_props} guarantees that $[k(j+1), k(j+2))$ is of aperiodic type.
Hence, by Item \ref{item:Z2:defi_r:defi_r} in Lemma \ref{Z2:defi_r}, $f_j = c_{k(j)} - |s^\ell|$ and $f_{j+1} = c_{k(j+1)}$, where $\ell = \lceil 80\varepsilon / |s| \rceil$.
We can then compute, thanks to \eqref{eq:Z2:tech_ext_per:2} and \eqref{eq:Z2:tech_ext_per:1},
\begin{multline*}
    x_{[f_j - 8\varepsilon, f_{j+1} + 8\varepsilon)} =
    s^\Z_{[-|s^\ell|-8\varepsilon, c_{k(j+1)} - c_{k(j+1)} + 8\varepsilon)} \\ =
    s^\Z_{[-8\varepsilon, |s^\ell| + c_{k(j+1)} - c_{k(j+1)} + 8\varepsilon)} = 
    s^\Z_{[-8\varepsilon, f_{j+1} - f_j + 8\varepsilon)}.
\end{multline*}
Note that the last computation also shows that $\tau(z_j) = x_{[f_j, f_{j+1})}$ is equal to $s^\Z_{[-|s^\ell|, c_{k(j+1)} - c_{k(j+1)})} = s^\Z_{[-|s^\ell|, 0)} \sigma(y_{[k(j), k(j+1))})$.
Hence, $\mroot \tau(z_j) = \mroot s = s$ and we have proved (1.c).

Observe that if (1.c) holds, then $\per(\tau(z_j)) \leq |s| \leq \varepsilon$ and $z_j \in \A{C}{p}$ by the definition of $\A{C}{p}$.

We now assume (2.a).
Then, Equation \eqref{eq:Z2:tech_ext_per:1} implies that $\per(x_{[f_j, f_{j+1})}) \geq \per(x_{[c_{k(j)}, c_{k(j+1)} - 81\varepsilon)}) > \varepsilon$.
Hence, by Lemma \ref{Z2:per_inter_props}, $[k(j), k(j+1))$ is not of periodic type, that is, $[k(j), k(j+1))$ is of aperiodic type.

Let us suppose that (2.b) holds.
In this case, Lemma \ref{Z2:aper_inter_props} and \eqref{eq:Z2:tech_ext_per:1} allows us to compute
\begin{equation*}
    \per(x_{[f_j + 8\varepsilon, f_{j+1} - 8\varepsilon)}) \geq 
    \per(x_{[c_{k(j)} + 97\varepsilon, c_{k(j+1)} - 97\varepsilon)}) > \varepsilon.
\end{equation*}

Finally, if (2.c) is satisfied, then $\per(\tau(z_j)) \geq \per(x_{[f_j + 8\varepsilon, f_{j+1} - 8\varepsilon)}) > \varepsilon$.
\end{proof}

%%%%                                                                 %%%% LEMMA %%%%
\begin{lem} \label{Z2:tech_pcom}
Suppose that $z_{-1} z_0 z_1 \in \A{C}{ap}\A{C}{p}\A{C}{ap}$.
Then, there exists a decomposition $\tau(z_{-1} z_0 z_1) = u s^m u'$ such that:
\begin{enumerate}
    \item $\varepsilon \leq |u| \leq |\tau(z_{-1})| - 2\varepsilon$ and $\varepsilon \leq |u'| \leq |\tau(z_1)| - 2\varepsilon$.
    \item $s \in \mroot \sigma(\cB)$.
    \item $s$ is not a suffix of $u$ and is not a prefix of $u'$.
\end{enumerate}
\end{lem}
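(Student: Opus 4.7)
The plan is to extend the periodic block $\tau(z_0)$ maximally on both sides within $\tau(z_{-1}z_0z_1)$, then truncate to period boundaries so that the middle becomes an integer power of $s$ while leaving enough room for the length constraints on $u$ and $u'$.

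Fix $x \in X$ with $(f,z) = \Fac_{(Z,\tau)}(x)$ such that $z_{[-1,2)} = z_{-1}z_0z_1$. Item~(1.c) of Lemma~\ref{Z2:tech_ext_per} at $j=0$ yields $s := \mroot\tau(z_0)$ with $|s| \leq \varepsilon$, the crucial identity $x_{[f_0-8\varepsilon, f_1+8\varepsilon)} = s^\Z_{[-8\varepsilon, |\tau(z_0)|+8\varepsilon)}$, and $s = \mroot\sigma(y_k)$ for any $k$ in the corresponding maximal periodic stable interval (using Lemma~\ref{Z2:per_inter_props}(1)). The latter identity places $s \in \mroot\sigma(\cB)$, settling condition~(2).

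Define $p$ (resp. $q$) as the largest integer with $x_{[f_0-p, f_1)} = s^\Z_{[-p, |\tau(z_0)|)}$ (resp. $x_{[f_0, f_1+q)} = s^\Z_{[0, |\tau(z_0)|+q)}$). The display above gives $p, q \geq 8\varepsilon$. I claim $p \leq |\tau(z_{-1})| - 2\varepsilon$, with the symmetric bound for $q$. If not, then since $|\tau(z_{-1})| \geq 80\varepsilon$ (by Lemma~\ref{Z2:defi_r} combined with Lemma~\ref{Z2:Zcuts=r}), the nonempty interval $[f_{-1}+8\varepsilon, f_0-8\varepsilon)$ would sit inside $[f_0-p, f_0)$, forcing $\per(x_{[f_{-1}+8\varepsilon, f_0-8\varepsilon)}) \leq |s| \leq \varepsilon$; this contradicts Item~(2.c) of Lemma~\ref{Z2:tech_ext_per} at $j = -1$, as $z_{-1} \in \A{C}{ap}$.

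Now set $p' = |s|\lfloor p/|s|\rfloor$ and $q' = |s|\lfloor q/|s|\rfloor$, so $p', q' \in [7\varepsilon+1,\, |\tau(z_{\pm 1})|-2\varepsilon]$ since $|s| \leq \varepsilon$. Define $u = x_{[f_{-1}, f_0-p')}$, $u' = x_{[f_1+q', f_2)}$, and $m = (p' + |\tau(z_0)| + q')/|s|$. Because $|\tau(z_0)|$, $p'$ and $q'$ are all multiples of $|s|$, the middle block $x_{[f_0-p', f_1+q')} = s^\Z_{[-p', |\tau(z_0)|+q')}$ equals $s^m$ exactly, giving the decomposition $\tau(z_{-1}z_0z_1) = u s^m u'$ and the bounds in~(1). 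For~(3), if $s$ were a suffix of $u$, then $x_{[f_0-p'-|s|, f_0-p')} = s$; combined with the aligned periodic extension at position $-p' \equiv 0 \pmod{|s|}$, this would yield $x_{[f_0-p'-|s|, f_1)} = s^\Z_{[-p'-|s|, |\tau(z_0)|)}$, forcing $p \geq p'+|s|$ and contradicting the maximality of $p'$ among multiples of $|s|$ bounded by $p$. The symmetric argument at the right excludes $s$ as a prefix of $u'$. The only real subtlety is the bookkeeping needed for the $8\varepsilon$ extension buffers, the $2\varepsilon$ target margins, and the up-to-$|s|$ rounding loss at truncation to coexist; this is precisely why the uniform lower bound $|\tau(z_j)| \geq 80\varepsilon$ from the construction is essential.
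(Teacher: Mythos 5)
Your proposal is correct and follows essentially the same route as the paper: extend the periodic block of $\tau(z_0)$ maximally (your $p,q$ play the role of the paper's maximal interval $I$), rule out over-extension into $\tau(z_{\pm1})$ via Item (2.c) of Lemma \ref{Z2:tech_ext_per} together with the lower bound $|\tau(a)|\geq 80\varepsilon$, truncate to multiples of $|s|$, and use maximality to get condition (3). The bookkeeping (your $2\varepsilon$ versus the paper's $7\varepsilon$ margins) differs only cosmetically and both meet the stated bounds.
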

\begin{proof}
Let us denote $k(j) = k_x(j)$.
We define $s = \mroot \tau(z_0)$.
Then, as $z_0 \in \A{C}{p}$, Lemma \ref{Z2:tech_ext_per} ensures that $|s| \leq \varepsilon$, $x_{[f_0 - 8\varepsilon, f_1 + 8\varepsilon)} = s^\Z_{[-8\varepsilon, f_1-f_0 + 8\varepsilon)}$, $s = \mroot \sigma(y_{[k(0), k(1))})$ and that $[k(0), k(1))$ is of periodic type in $y$.
Thus, by Lemma \ref{Z2:per_inter_props}, $s = \mroot \sigma(y_{k(0)}) \in \sigma(\cB)$.
In particular, $s$ satisfies Item (2) of this lemma.

Now, we can find an interval $I = [i, j)$ containing $[f_0 - 8\varepsilon, f_1 + 8\varepsilon)$ such that $x_I = s^\Z_{[i - f_0, j - f_0)}$ and that no other interval strictly containing $I$ satisfies the same properties.
We observe that $i \geq f_{-1} + 8\varepsilon$ as, otherwise, $\per(x_{[f_{-1} + 8\varepsilon, f_0 - 8\varepsilon)}) \leq \varepsilon$, contradicting the fact that, since $z_{-1} \in \A{C}{ap}$, Item \ref{item:Z2:tech_ext_per:inner_per>eps} Lemma \ref{Z2:tech_ext_per} holds.
Similarly, $j \leq f_2 - 8\varepsilon$.
From these two things and the fact that $I$ contains $[f_0 - 8\varepsilon, f_1 + 8\varepsilon)$ we obtain that 
\begin{equation} \label{eq:Z2:tech_pcom:1}
    \text{$f_{-1} + 8\varepsilon \leq i \leq f_0 - 8\varepsilon$ and $f_1 + 8\varepsilon \leq j \leq f_2 - 8\varepsilon$.}
\end{equation}
This allows us to write $x_{[f_{-1}, f_1)} = u s^m v$, where $|u| \in [i - f_{-1}, i - f_{-1} + |s|)$, $m \geq 0$ and $|v| \in [f_2 - j, f_2 - j + |s|)$.

We have from \eqref{eq:Z2:tech_pcom:1} that $|u| \geq i - f_{-1} \geq 8\varepsilon$ and $|v| \geq f_2 - j \geq 8\varepsilon$.
Moreover, as $|s| \leq \varepsilon$, $|u| \leq f_0-f_{-1} - 7\varepsilon = |\tau(a)| - 7\varepsilon$ and $|v| \leq f_2-f_1-7\varepsilon = |\tau(a')| - 7\varepsilon$.
This proves that Item (1) of the lemma holds.
Item (3) follows from the fact that $|s| \leq \varepsilon$, \eqref{eq:Z2:tech_pcom:1} and the maximality of $I$.
\end{proof}

%%%%                                                                 %%%% SUBSECTION %%%%
\subsection{Proof of Propositions \ref{Z2:main_prop} and \ref{Z2:special_reco}}

We are ready to prove the main results of this section.

%%%%                                                                 %%%% PROOF PROPOSITION %%%%
\begin{proof}[Proof of Proposition \ref{Z2:special_reco}]
Let $x, \tilde{x} \in X$ be such that $\per(x_{[-\varepsilon, \varepsilon)}) > \varepsilon$ and $x_{[-7d^2n, 7d^2n)} = \tilde{x}_{[-7d^2n, 7d^2n)}$.
We define $(c,y) = \Fac_{(Y,\sigma)}(x)$, $(\tilde{c}, \tilde{y}) = \Fac_{(Y,\sigma)}(\tilde{x})$, $(f,z) = \Fac_{(Z,\tau)}(x)$ and $(\tilde{f}, \tilde{z}) = \Fac_{(Z,\tau)}(\tilde{x})$.
Let $k(j) = k_x(j)$ and $\tilde{k}(j) = k_{\tilde{x}}(j)$ be the sequences from Lemma \ref{Z2:defi_r}.
With this notation, we have to prove that $f_0 = \tilde{f}_0$ and $z_0 = \tilde{z}_0$.

Note that $\per(x_{[f_0 - 8\varepsilon, f_1 + 8\varepsilon)}) \geq \per(x_{[-\varepsilon, \varepsilon)}) > \varepsilon$.
Thus, by Lemma \ref{Z2:tech_ext_per}, $[k(0), k(1))$ is of aperiodic type in $y$.

We claim that 
\begin{equation} \label{eq:Z2:special_reco:1}
    \text{$[c_{k(0)-1} - 3dn, c_{k(1)+1} + 3dn)$ is contained in $[-7d^2n, 7d^2n)$}
\end{equation}
Note that, by Items \ref{item:Z2:defi_r:defi_r} and \ref{item:Z2:defi_r:0} in Lemma \ref{Z2:defi_r}, $c_{k(0)} \leq f_0 + 81\varepsilon \leq 81\varepsilon$ and $c_{k(1)} \geq f_1 \geq 0$.
Hence, $c_{k(1)+1} \leq c_{k(0)} + (k(1)-k(0)+1)|\sigma| \leq 81\varepsilon + (k(1)-k(0)+1)|\sigma|$ and $c_{k(0)-1} \geq c_{k(1)} - (k(1)-k(0)+1)|\sigma| \geq -(k(1)-k(0)+1)|\sigma|$.
Since, by Lemma \ref{Z2:aper_inter_props}, $[k(0), k(1))$ has at most $2d+1$ elements, and since $|\sigma| \leq 3dn$ by Item \ref{item:Z1:main_prop:lengths} in Proposition \ref{Z1:main_prop}, we obtain that $c_{k(1)+1} +3dn \leq 7d^2n$ and $c_{k(0)-1} - 3dn \geq -7d^2n$.
This shows \eqref{eq:Z2:special_reco:1}.

Thanks to \eqref{eq:Z2:special_reco:1}, we can use the fact that $(Y, \sigma)$ is $3dn$-recognizable (Item \ref{item:Z1:main_prop:reco} of Proposition \ref{Z1:main_prop}) to deduce that
\begin{equation} \label{eq:Z2:special_reco:2}
    \text{$c_k = \tilde{c}_k$ and $y_k = \tilde{y}_k$ for all $k \in [k(0)-1 , k(1)+1)$.}
\end{equation}
 
We now observe that  \eqref{eq:Z2:special_reco:1} and the hypothesis guarantees that
\begin{equation} \label{eq:Z2:special_reco:2.5}
    \text{$x_{[ c_{k}-99\varepsilon, c_{k+1}+99\varepsilon )} = \tilde{x}_{[ \tilde{c}_{k}-99\varepsilon, \tilde{c}_{k+1}+99\varepsilon )}$ for every $k \in [k(0)-1, k(1)+1)$.}
\end{equation}
Thus, for any such $k$, $k \in \rQ{p}(y)$ if and only if $\rQ{p}(\tilde{y})$.
It is then not difficult to verify, using the definition of stable interval, that if $i \in \{0, 1\}$ then
\begin{enumerate}[label=(\Roman*)]
    \item $k(i) = \tilde{k}(i)$;
    \item the type of $[k(i), k(i+1))$ in $y$ and the type of $[\tilde{k}(i), \tilde{k}(i+1))$ in $\tilde{y}$ are equal.
\end{enumerate}
Then, (I) and \eqref{eq:Z2:special_reco:2.5} imply that 
\begin{equation} \label{eq:Z2:special_reco:2.75}
    x_{[k(0) - 99\varepsilon, k(1) + 99\varepsilon)} =
    \tilde{x}_{[\tilde{k}(0) - 99\varepsilon, \tilde{k}(1) + 99\varepsilon)}.
\end{equation}

We claim that
\begin{equation} \label{eq:Z2:special_reco:3}
    \text{$f_0 = \tilde{f}_0$  and $f_1 = \tilde{f}_1$.}
\end{equation}
Let $i \in \{0,1\}$.
We consider two cases.
First, we  assume that  $[k(i), k(i+1))$ is of aperiodic type in $y$.
Then, then by (II), $[\tilde{k}(i), \tilde{k}(i+1))$ is of aperiodic type in $\tilde{y}$.
Hence, by Item \ref{item:Z2:defi_r:defi_r} in Lemma \ref{Z2:defi_r}, $f_i = c_{k(i)}$ and $\tilde{f}_i = \tilde{c}_{\tilde{k}(i)}$.
This gives $f_i = \tilde{f}_i$ by (I) and \eqref{eq:Z2:special_reco:2}.

Next, we assume that $[k(i), k(i+1))$ is of periodic type in $y$.
Then, $[\tilde{k}(i), \tilde{k}(i+1))$ is of periodic type in $\tilde{y}$ by (II).
Hence, by Lemma \ref{Z2:per_inter_props}, $s = \mroot \sigma(y_{[k(i), k(i+1))})$ and $\tilde{s} = \mroot \sigma(\tilde{y}_{[\tilde{k}(i), \tilde{k}(i+1))})$ satisfy $|s|, |\tilde{s}| \leq \varepsilon$, $x_{[c_{k(i)}, c_{k(i)} + 2\varepsilon)}$ is equal to $s^\Z_{[0, 2\varepsilon)}$ and $\tilde{x}_{[\tilde{c}_{\tilde{k}(i)}, \tilde{c}_{\tilde{k}(i)} + 2\varepsilon)}$ is equal to $\tilde{s}^\Z_{[0, 2\varepsilon)}$.
In this situation, (I) and \eqref{eq:Z2:special_reco:2.5} ensures that 
\begin{equation*}
    s^\Z_{[0, 2\varepsilon)} = x_{[c_{k(i)}, c_{k(i)} + 2\varepsilon)} =
    \tilde{x}_{[\tilde{c}_{\tilde{k}(i)}, \tilde{c}_{\tilde{k}(i)} + 2\varepsilon)} =
    \tilde{s}^\Z_{[0, 2\varepsilon)}.
\end{equation*}
Since $|s|, |\tilde{s}| \leq \varepsilon$, this allows us the use of Theorem \ref{theo:fine&wilf} and deduce that $s = \tilde{s}$.
Putting this and the fact that $[k(i), k(i+1))$ and $[\tilde{k}(i), \tilde{k}(i+1))$ are of periodic type in Item \ref{item:Z2:defi_r:defi_r} of Lemma \ref{Z2:defi_r} produces $f_i = c_{k(i)} - |s^\ell|$ and $\tilde{f}_i = \tilde{c}_{\tilde{k}(i)} - |s^\ell|$, where $\ell = \left\lceil 80\varepsilon/|s|\right\rceil$.
Therefore, as $c_{k(i)} = \tilde{c}_{\tilde{k}(i)}$ by (I) and \eqref{eq:Z2:special_reco:2}, $f_i = \tilde{f}_i$.
This completes the proof of \eqref{eq:Z2:special_reco:3}.

Finally, we show that $z_0 = \tilde{z}_0$.
Item (2) in Lemma \ref{Z2:defi_r} gives that $|f_i - c_{k(i)}| \leq 81\varepsilon$.
Hence, by \eqref{eq:Z2:special_reco:2.75} and \eqref{eq:Z2:special_reco:3}, $x_{[f_0, f_1)} = \tilde{x}_{[f_0, f_1)} = \tilde{x}_{[\tilde{f}_0, \tilde{f}_1)}$.
We conclude that $\tau(z_0) = \tau(\tilde{z}_0)$, and therefore that $z_0 = \tilde{z}_0$ as $\tau$ is injective on letters by Proposition \ref{prop:from_clopen_to_coding}.
\end{proof}

We end this section with the proof of Proposition \ref{Z2:main_prop}.
%%%%                                                                 %%%% PROOF PROPOSITION %%%%
\begin{proof}[Proof of Proposition \ref{Z2:main_prop}]
Let $x \in X$, $(c, y) = \Fac_{(Y,\sigma)}(x)$ and $(f, z) = \Fac_{(Z,\tau)}(x)$.
Let $k(j) = k_x(j)$ be the sequence from Lemma \ref{Z2:defi_r}.
\medskip 

%%%%                                                                 %%%% ITEM %%%%
We start with Item (ii).
Let $a \in \A{C}{ap}$.
By minimality, there exists $j \in \Z$ such that $z_j = a$.
We compute as follows:
\begin{equation} \label{eq:Z2:main_prop:ii:1}
    |\tau(z_j)| = |f_{j+1} - f_j| \leq 
    |f_{j+1} - c_{k(j+1)}| + (k(j+1) - k(j)) |\sigma| + |f_j - c_{k(j)}|.
\end{equation}
On one hand, we have by Item (2) in Lemma \ref{Z2:defi_r} that $|f_{j+1} - c_{k(j+1)}|$ and $|f_j - c_{k(j)}|$ are at most $81\varepsilon$.
On the other hand, since $z_j \in \A{C}{ap}$, Lemma \ref{Z2:tech_ext_per} ensures that $[k(j), k(j+1))$ is of aperiodic type in $y$.
Hence, by Lemma \ref{Z2:aper_inter_props}, $\#[k(j), k(j+1)) \leq 2d + 1$.
Putting these two things in \eqref{eq:Z2:main_prop:ii:1} yields $|\tau(z_j)| \leq 2\cdot81\varepsilon + (2d+1)|\sigma| \leq 10d^2n$.

Let now $a \in \A{C}{}$ and $j \in \Z$ be such that $z_j = a$.
Then, by Lemma \ref{Z2:defi_r}, $|\tau(a)| = f_{j+1} - f_j \geq 80\varepsilon$.
\medskip

%%%%                                                                 %%%% ITEM %%%%
Next, we consider Item (i) and the inequality $\#\A{C}{ap} \leq 2d^{3d+6}$.
Observe that Lemmas \ref{Z2:per_inter_props} and \ref{Z2:aper_inter_props} ensure that
\begin{equation} \label{eq:Z2:main_prop:i:1}
    \text{$|\sigma(y_{[k(j), k(j+1))})| \geq p\varepsilon$ for all $j \in \Z$.}
\end{equation}
This allows us to define $u_j$ as the prefix of $\sigma(y_{[k(j), k(j+1))})$ of length $2\varepsilon$.

We claim that
\begin{multline} \label{eq:Z2:main_prop:i:1.5}
    \text{if $[k(j), k(j+1))$ is of periodic type, then 
    }\\\text{
    $\mroot \sigma(y_{[k(j), k(j+1))})$ is the prefix of $u_j$ of length $\per(u_j)$.}
\end{multline}
Let us suppose that $[k(j), k(j+1))$ is of periodic type.
Then, Lemma \ref{Z2:per_inter_props} states that $s = \mroot \sigma(y_{[k(j), k(j+1))})$ satisfies $|s| \leq \varepsilon$ and $x_{[c_{k(j)} - 99\varepsilon, c_{k(j+1)} + 99\varepsilon)} =  s^\Z_{[-99\varepsilon, |\sigma(y_{[k(j), k(j+1))})| + 99\varepsilon)}$.
In particular, since $u_j = x_{[c_{k(j)}, c_{k(j)} + 2\varepsilon)}$, $s^\Z_{[0, 2\varepsilon)} = u_j$.
Being $|s| \leq \varepsilon$, we obtain that $s^2$ is a prefix of $u_j$ and that $\per(u_j) \leq |s|$.
This permits to use Item \ref{item:CL:localize_per_aper:loc_per} in Proposition \ref{CL:localize_per_aper} to obtain that $\per(s^2) = \per(u_j)$.
Moreover, $|s| = \per(s^2)$ by Lemma \ref{CL:root=per}; therefore, $|s| = \per(s^2) = \per(u_j)$.
Since $s^\Z_{[0,2\varepsilon)} = u_j$, this shows that $s$ is the prefix of $u_j$ of length $\per(u_j)$, completing the proof of the claim.
\medskip

We now use \eqref{eq:Z2:main_prop:i:1.5} to prove the following:
\begin{multline} \label{eq:Z2:main_prop:i:3}
    \text{if $z_j \in \A{C}{ap}$, then $\tau(z_j)$ is uniquely determined by 
    }\\\text{
    $\sigma(y_{[k(j), k(j+1))})$ and wheter $z_{j+1}$ belongs to $\A{C}{ap}$.}
\end{multline}
Suppose that $z_j \in \A{C}{ap}$.
We consider two cases.
If $z_{j+1} \in \A{C}{ap}$, then Lemma \ref{Z2:tech_ext_per} ensures that $[k(j), k(j+1))$ and $[k(j+1), k(j+2))$ are of aperiodic type in $y$, and so, by Item \ref{item:Z2:defi_r:defi_r} in Lemma \ref{Z2:defi_r}, that $\tau(z_j) = \sigma(y_{[k(j), k(j+1))})$.
If $z_{j+1} \in \A{C}{ap}$, then Lemma  \ref{Z2:tech_ext_per}  ensures that $[k(j), k(j+1))$ is of aperiodic type and that $[k(j+1), k(j+2))$ is of periodic type.
Hence, by \ref{item:Z2:defi_r:defi_r} in Lemma \ref{Z2:defi_r}, $\tau(z_j) = x_{[c_{k(j)}, c_{k(j+1)} - |s^\ell|)}$, where $s = \mroot \sigma(y_{[k(j+1), k(j+2))})$ and $\ell = \left\lceil 80\varepsilon/|s| \right\rceil$.
Now, \eqref{eq:Z2:main_prop:i:1.5} says that $s$ is determined by $\sigma(y_{[k(j), k(j+1))})$, and the definition of $\ell$ depends only on $s$.
Therefore, $\tau(z_j) = x_{[c_{k(j)}, c_{k(j+1)} - |s^\ell|)}$ is determined $\sigma(y_{[k(j), k(j+1))})$.
The proof of \eqref{eq:Z2:main_prop:i:3} is complete.

Finally, we bound $\A{C}{ap}$.
Condition \eqref{eq:Z2:main_prop:i:3} implies that $\#\tau(\A{C}{ap})$ is at most $2$ times the number of words of the form $\sigma(y_{[k(j), k(j+1))})$, where $j \in \Z$ is such that $z_j \in \A{C}{ap}$.
Note that if $z_j \in \A{C}{ap}$ then Lemma \ref{Z2:tech_ext_per} gives that $[k(j), k(j+1))$ is of aperiodic type, and thus, by Lemma \ref{Z2:aper_inter_props}, we have that the length of $[k(j), k(j+1))$ is at most $2d+1$.
Hence, there are at most $\#\cB^{d+2}$ words $\sigma(y_{[k(j), k(j+1))})$ such that $z_j \in \A{C}{ap}$.
We conclude that $\#\tau(\A{C}{ap}) \leq 2\cdot \#\cB^{d+2}$, and therefore that $\#\A{C}{ap} \leq 2d^{3d+6}$ by Item \ref{item:Z1:main_prop:lengths} in Proposition \ref{Z1:main_prop} and the fact that $\tau$ is injective on letters

Next, we prove that $\#\mroot \tau(\cC) \leq 3d^{3d+6}$.
Since $\# \mroot \tau(\A{C}{ap}) \leq \#\A{C}{ap} \leq 2d^{3d+6}$ by what we just proved and since $\#\mroot\sigma(\cB) \leq \#\cB \leq d^3$ by Item \ref{item:Z1:main_prop:lengths} in Proposition \ref{Z1:main_prop}, it is enough to show that
\begin{equation} \label{eq:Z2:main_prop:i:claim_roots}
    \mroot \tau(\A{C}{ap}) \subseteq \{\mroot \sigma(y_0) : y \in Y, 0 \in \rQ{p}(y)\}.
\end{equation}
Let $a \in \A{C}{p}$ and $j \in \Z$ be such that $z_j = a$.
Thanks to Lemma \ref{Z2:tech_ext_per}, we have that $\mroot \tau(z_j) = \mroot \sigma(y_{[k(j), k(j+1))})$ and that $[k(j), k(j+1))$ is of periodic type in $y$.
Hence, by Lemma \ref{Z2:per_inter_props}, $\mroot \tau(z_j) = \mroot \sigma(y_{k(j)})$.
This proves \eqref{eq:Z2:main_prop:i:claim_roots} and thereby that $\#\mroot \tau(\cC) \leq 3d^{3d+6}$.

We now prove that $\#\A{C}{p} \leq 2d^{3d+9}\pcom(X)$ using Lemma \ref{Z2:tech_pcom}.
Let $\cU = \{ z_{j-1} z_j z_{j+1} : j \in \Z, z_j \in \A{C}{p}\}$.
We define the map $\pi\colon\cU \to \cup_{s \in \mroot \sigma(\cB)} \mathsf{Pow}_X(s)$ as follows.
For $aba' \in \cU$, Lemma \ref{Z2:tech_pcom} gives a decomposition $\tau(aba') = u s^m u'$.
We set $\pi(aba') = s^m$.
Observe that Item (3) in Lemma \ref{Z2:tech_pcom} ensures that $s^m \in \mathsf{Pow}_X(s)$, and, by Item (2) of the same lemma, $s \in \mroot \sigma(\cB)$.

We claim that 
\begin{equation} \label{eq:Z2:main_prop:i:4}
    \text{   if $aba', a\tilde{b}a' \in \cU$ and $\pi(aba') = \pi(a\tilde{b}a')$, then $b = \tilde{b}$.   }
\end{equation}
Let $\tau(aba') = u s^m u'$ and $\tau(a\tilde{b}a') = \tilde{u} \tilde{s}^{\tilde{m}} \tilde{u}'$ be the decompositions from the definition of $\pi$.
With this notation, the hypothesis $\pi(aba') = \pi(a\tilde{b}a')$ is equivalent to $s^m = \tilde{s}^{\tilde{m}}$.
Then, as $s = \mroot s$ and $\tilde{s} = \mroot \tilde{s}$, $s = \tilde{s}$ and $m = \tilde{m}$.

We now prove that $u = \tilde{u}$.
First, we assume without loss of generality that $|\tilde{u}| \leq |u|$.
Then, Lemma \ref{Z2:tech_pcom} ensures that
\begin{equation} \label{eq:Z2:main_prop:i:5}
    \text{$\tau(a)$ is prefix of both $u s^m$ and $\tilde{u} s^m$ and that $|\tilde{u}| \leq |u| \leq |\tau(a)| - 2\varepsilon$.}
\end{equation}
This implies that $s^\Z_{[0, |\tau(a)| - |u|)} = (S^{|u| - |\tilde{u}|} s^\Z)_{[0, |\tau(a)| - |u|)}$.
Combining this with the bound $|\tau(a)| - |u|\geq 2\varepsilon \geq 2|s|$ given by \eqref{eq:Z2:main_prop:i:5} allow us to use Item \ref{item:CL:on_sZ&tZ:equal} in Proposition \ref{CL:on_sZ&tZ} and conclude that $s^\Z = S^{|u| - |\tilde{u}|} s^\Z$.
Then, by Item \ref{item:CL:on_sZ&tZ:equal_mod} of the same proposition, $|u| = |\tilde{u}| \pmod{|s|}$.
From this and \eqref{eq:Z2:main_prop:i:5} we deduce that $u = \tilde{u} s^\ell$ for some $\ell \geq 0$.
But since, by Item (3) in Lemma \ref{Z2:tech_pcom}, $s$ is not a suffix of $u$, we must have that $\ell = 0$.
Therefore, $u = \tilde{u}$.

We can show, in a similar fashion, that $u' = \tilde{u}'$.
This allows us to conclude that $\tau(aba') = \tau(a\tilde{b}a') = u s^m u'$, and thus that $\tau(b) = \tau(\tilde{b})$.
Being $\tau$ injective on letters by Proposition \ref{prop:from_clopen_to_coding}, $b = b'$ and the claim is proved.

Condition \eqref{eq:Z2:main_prop:i:4} implies that $\#\A{C}{p} \leq \#\A{C}{ap}^2 \cdot \#(\cup_{s \in \mroot \sigma(\cB)} \mathsf{Pow}_X(s))$.
Hence, $\#\A{C}{p} \leq \#\A{C}{ap}^2\cdot \#\cB\cdot \pcom(X)$.
Since $\#\A{C}{ap} \leq 2d^{3d+6}$ and since $\#\cB \leq d^3$ by Item \ref{item:Z1:main_prop:lengths} in Proposition \ref{Z1:main_prop}, it follows that $\A{C}{p} \leq 2d^{3d+9}\pcom(X)$.
\medskip 

%%%%                                                                 %%%% ITEM %%%%
Item \ref{item:Z2:reco} is a direct consequence of Proposition \ref{Z2:special_reco}.

Let us prove Item \ref{item:Z2:per}.
Lemma \ref{Z2:tech_ext_per} ensures that $(Z,\tau)$ satisfies Items \ref{item:defi:per:ap} and \ref{item:defi:per:p} of Definition \ref{defi:per}.
Let now $x \in X$, $(c, z) = \Fac_{(Z,\tau)}(x)$ and $a \in \A{C}{p}$ be such that $\mroot \tau(z_0)$ is conjugate to $\mroot \tau(a)$.
We note that, by Lemma \ref{Z2:tech_ext_per}, $|\mroot \tau(z_0)| = |\mroot \tau(a)| \leq \varepsilon$.
Hence, $\per(x_{[c_0 + 8\varepsilon, c_1 - 8\varepsilon)}) \leq |\mroot \tau(z_0)| \leq \varepsilon$.
This implies, by Lemma \ref{Z2:tech_ext_per}, that $z_0 \in \A{C}{ap}$.
We can then use \eqref{eq:Z2:main_prop:i:claim_roots} to get that $\mroot \tau(z_0) = \mroot \sigma(y_0)$ and $\mroot \tau(a) = \mroot \sigma(y'_0)$ for certain $y, y' \in Y$ such that $0 \in \rQ{p}(y) \cap \rQ{p}(y')$.
We remark that, since $\mroot \tau(z_0)$ is conjugate to $\mroot \tau(a)$, the words $\mroot \sigma(y_0)$ and $\mroot \sigma(y'_0)$ are conjugate.
Therefore, $y$ and $y'$ satisfy the hypothesis of Item (3) of Lemma \ref{Z2:per_inter_props}.
We conclude that $\mroot \tau(z_0) = \mroot \sigma(y_0) = \mroot \sigma(y'_0)  \mroot \tau(a)$.

It is left to prove Item \ref{item:Z2:structure}.
Let $j \in \Z$.
We have, from Lemma \ref{Z2:per_inter_props}, that $[k(j), k(j+1))$ or $[k(j+1), k(j+2))$ is of aperiodic type.
Hence, by Lemma \ref{Z2:tech_ext_per}, $z_j$ or $z_{j+1}$ belongs to $\A{C}{ap}$.
\end{proof}

\section{The third coding}
\label{sec:Z3}

We continue refining the codings.
The main addition to this version is that the words $\tau(a)$ have controlled lengths.
The properties of the new coding are summarized in Proposition \ref{Z3:main_prop}.

%%%%                                                                 %%%% PROPOSITION %%%%
\begin{prop} \label{Z3:main_prop}
Let $X \subseteq \cA^\Z$ be a minimal infinite subshift, $n \geq 1$ and let $d$ be the maximum of $\lceil p_X(n)/n \rceil$, $p_X(n+1) - p_X(n)$, $\#\cA$ and $10^4$.
There exist a recognizable coding $(Z \subseteq \cC^\Z, \tau\colon \cC \to \cA^+)$ of $X$, a partition $\A{C}{ap} \cup \A{C}{sp} \cup \A{C}{wp}$ of $\cC$ and $\varepsilon \in [n/d^{2d^3+4}, n/d)$ such that:
\begin{enumerate}
    \item \label{item:Z3:main_prop:cardinalities}
    $\#\A{C}{ap} \leq 2d^{3d+6}$, $\#\A{C}{sp} \leq 3d^{3d+6}$, $\#\A{C}{wp} \leq 2d^{3d+9}\pcom(X)^4$ and $\#\mroot\tau(\cC) \leq 5d^{3d+6}$.
    \item \label{item:Z3:main_prop:lengths} 
    $20\varepsilon \leq |\tau(a)| \leq 10d^2n$ for all $a \in \cC$.
    \item \label{item:Z3:main_prop:reco}
    $(Z, \tau)$ satisfies the recognizability property described in Proposition \ref{Z3:special_reco}.
    \item \label{item:Z3:main_prop:per}
    $(Z, \tau)$ has dichotomous periods  w.r.t.\ $(\A{C}{ap},\A{C}{sp}\cup\A{C}{wp})$ and $8\varepsilon$.
    \item \label{item:Z3:main_prop:structure}
    The set $\A{C}{sp}$ satisfies the property described in Proposition \ref{Z3:structure_Csp}.
\end{enumerate}
\end{prop}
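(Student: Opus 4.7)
\emph{Plan of proof.} Let $(\hat Z \subseteq \hat\cC^\Z, \hat\tau\colon \hat\cC \to \cA^+)$ be the coding provided by Proposition~\ref{Z2:main_prop}, with partition $\hat\cC = \hat\cC_{\mathsf{ap}} \cup \hat\cC_{\mathsf{p}}$ and parameter $\varepsilon$. The Z3 statement differs from Z2 in two essential ways: it imposes an upper bound on $|\tau(a)|$ uniformly over $\cC$ (and not only over the aperiodic letters, which is all Z2 gives), and it refines the periodic class into a ``strongly periodic'' part $\A{C}{sp}$ whose cardinality does not depend on $\pcom(X)$ and a ``weakly periodic'' part $\A{C}{wp}$ absorbing the $\pcom(X)$-dependence. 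The plan is to subdivide each purely periodic word $\hat\tau(a) = s^m$, $a \in \hat\cC_{\mathsf{p}}$, $s = \mroot \hat\tau(a)$, into a sequence of powers of $s$ of length in $[20\varepsilon, 10d^2n]$, and then classify the resulting chunks as ``bulk'' or ``boundary'' pieces.

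For each primitive word $s \in \mroot\hat\tau(\hat\cC_{\mathsf{p}})$ I would fix a canonical integer $k^*(s)$ so that $|s^{k^*(s)}|$ lies in, say, $[40\varepsilon, 80\varepsilon]$; this is possible because $|s| \leq \varepsilon$ by Proposition~\ref{Z2:main_prop}. For $a \in \hat\cC_{\mathsf{p}}$ I would write $m = k_1 + q k^*(s) + k_2$ with $q \geq 0$ and $k_1, k_2$ in a fixed range determined deterministically from $m$ and $k^*(s)$ so that each chunk $s^{k_i}$ has length in $[20\varepsilon, 10d^2n]$ (keeping the whole block as a single chunk when $m$ is small). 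This produces a subdivision of $\hat\tau(a)$ into $s^{k_1} \cdot s^{k^*(s)} \cdots s^{k^*(s)} \cdot s^{k_2}$. By Item~\ref{item:Z2:structure} of Proposition~\ref{Z2:main_prop}, consecutive periodic letters never coexist in $\hat Z$, so each block to be subdivided is sandwiched between two letters of $\hat\cC_{\mathsf{ap}}$ and subdivisions never collide. Combining these cuts with those of $\hat\tau$ yields a deterministic, continuous cut function on $X$; Proposition~\ref{prop:from_clopen_to_coding} then produces a recognizable coding $(Z \subseteq \cC^\Z, \tau\colon \cC \to \cA^+)$ of $X$. I set $\A{C}{ap}$ in bijection with $\hat\cC_{\mathsf{ap}}$, $\A{C}{sp}$ to be the set of bulk chunks $s^{k^*(s)}$, and $\A{C}{wp}$ to consist of the boundary chunks and undivided short periodic blocks.

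Items~\ref{item:Z3:main_prop:lengths}--\ref{item:Z3:main_prop:structure} and the easy cardinality bounds are then straightforward. The count $\#\A{C}{ap} \leq 2d^{3d+6}$ is inherited from Z2; the bound $\#\A{C}{sp} \leq 3d^{3d+6}$ follows from the one-chunk-per-root rule together with $\#\mroot\hat\tau(\hat\cC) \leq 3d^{3d+6}$; and $\#\mroot\tau(\cC) \leq 5d^{3d+6}$ holds since the new roots are exactly $\mroot \hat\tau(\hat\cC)$. The length bounds are by construction. Dichotomous periodicity at scale $8\varepsilon$ is inherited from Z2 because every chunk remains a power of the same primitive root $s$ bracketed inside the same periodic window $(s^\Z)_{[-8\varepsilon,\,|\hat\tau(a)|+8\varepsilon)}$; aperiodic letters are unaffected. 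Recognizability combines that of $\hat\tau$ with determinism of the subdivision, and the refined recognizability of Proposition~\ref{Z3:special_reco} descends from Proposition~\ref{Z2:special_reco}.

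The hard part will be the bound $\#\A{C}{wp} \leq 2d^{3d+9}\pcom(X)^4$. This requires an enhancement of Lemma~\ref{Z2:tech_pcom}: each weakly periodic letter should be encoded by a bounded amount of contextual data (its root $s$, its two aperiodic neighbors in $\hat\cC_{\mathsf{ap}}$, and positional data describing its placement inside the surrounding $s^m$-run) so that the total information content sits inside a product of cardinality at most $\#\mroot\hat\tau(\hat\cC)\cdot\#\hat\cC_{\mathsf{ap}}^2 \cdot \pcom(X)^4$. The two extra $\pcom(X)$ factors beyond the Z2 count come from having to specify, independently, the left and right boundary exponents generating the chunk: each exponent ranges over at most $\pcom(X)$ values via an injection into $\mathrm{Pow}_X(s)$, obtained by a maximal-period argument analogous to Items~(1) and~(3) of Lemma~\ref{Z2:tech_pcom}. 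The technical subtlety is showing that these boundary exponents are not absorbed into the adjacent aperiodic letters, which is precisely where the $\pcom(X)^4$ (rather than $\pcom(X)^2$) bookkeeping becomes unavoidable.
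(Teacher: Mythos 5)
Your overall idea — split each periodic word $\sigma(a)=s^{m}$ ($s=\mroot\sigma(a)$) into chunks of length comparable to $\varepsilon$, keep the aperiodic letters untouched, and let the bulk chunks form $\A{C}{sp}$ — is exactly the paper's strategy, but two of your design choices break the statement you are proving. The paper cuts $\sigma(a)$ \emph{one-sidedly} as $\big(s^{\zeta(s)}\big)^{p_a-1}\,s^{\zeta(s)+q_a}$, where $\zeta(s)$ is the unique \emph{power of two} with $\zeta(s)|s|\in[20\varepsilon,40\varepsilon)$ and $0<q_a\le\zeta(s)$, so that every maximal run of non-aperiodic letters in $Z$ has the form $b^{p_a-1}c$ with $b\in\A{C}{sp}$ and a single weakly periodic letter $c$ at its end. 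Your symmetric cut $s^{k_1}(s^{k^*})^{q}s^{k_2}$ puts a weakly periodic boundary chunk at \emph{both} ends of a run, so Item \ref{item:Z3:structure_Csp:window} of Proposition \ref{Z3:structure_Csp} fails for your coding (the first letter of a run is not in $\A{C}{sp}$ and does not equal the following ones), i.e.\ Item \ref{item:Z3:main_prop:structure} of the proposition is not satisfied. Likewise Item \ref{item:Z3:structure_Csp:power_of_2} requires $\tau(a)=(\mroot\tau(a))^{2^{r}}$ with $2^{r}|\mroot\tau(a)|\in[20\varepsilon,40\varepsilon)$; your $k^*(s)$, with length in $[40\varepsilon,80\varepsilon]$ and not required to be a power of two, does not give this, and the dyadic choice is not cosmetic: it is what later forces $|\tau(z_0)|$ to divide $|\tau'(z'_0)|$ across two levels (used in Lemma \ref{Z4:z'0_sp=>same_Fac_edges}), and it enters, together with $q_a>0$ and the ``weakly periodic only at the end of a run'' structure, into the proof of Item (2) of Proposition \ref{Z3:special_reco}.

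You also misplace the difficulty. The bound on $\#\A{C}{wp}$ costs nothing new: in the paper's construction each weakly periodic letter is the image $\psii{wp}(a)$ of a letter $a\in\A{C}{p}$ of the previous coding, so $\#\A{C}{wp}\le\#\A{C}{p}\le 2d^{3d+9}\pcom(X)$ directly from Item \ref{item:Z2:cardinalities} of Proposition \ref{Z2:main_prop} (the exponent $4$ in the statement is slack, needed only for the later higher-block step). Even in your own construction the exponents $k_1,k_2$ are deterministic functions of $(s,m)$, hence of the Z2-letter being refined, so the trivial count suffices; no ``enhancement of Lemma \ref{Z2:tech_pcom}'', no injection into $\mathrm{Pow}_X(s)$, and no extra $\pcom(X)$ factors are needed — all the $\pcom$-dependence was already paid for in Section \ref{sec:Z2}. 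The genuinely delicate part of this proposition is the recognizability statement, Proposition \ref{Z3:special_reco}, in particular its Item (2), whose proof rules out ambiguous parsings at the junction between $\A{C}{sp}$- and $\A{C}{wp}$-letters using precisely the asymmetric decomposition, the inequality $0<q_a\le\zeta(s)$ and Proposition \ref{Z3:structure_Csp}; your proposal dismisses this as ``descends from Proposition \ref{Z2:special_reco}'', which is where the real work (and, with your two-sided cut, the real obstruction) lies.
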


\begin{rema} \label{rema:Z3:special_per&root=per}
Assume the notation of Proposition \ref{Z3:main_prop}.
Then, $a \in \cC \setminus \A{C}{ap}$ implies that $|\mroot \tau(a)| = \per(\tau(a))$.
Indeed, Item \ref{item:Z3:main_prop:per} ensures that $|\mroot \tau(a)| \leq \varepsilon$, and Item \ref{item:Z3:main_prop:lengths} that $|\tau(a)| \geq 2\varepsilon$, therefore, by Lemma \ref{CL:root=per},  $|\mroot \tau(a)| = \per(\tau(a))$.
\end{rema}

%%%%                                                                 %%%% PROPOSITION %%%%
\begin{prop} \label{Z3:special_reco}
Consider the coding described in Proposition \ref{Z3:main_prop}.
For any $x, \tilde{x} \in X$, we have that:
\begin{enumerate}
    \item If $\per(x_{[-\varepsilon, \varepsilon)}) > \varepsilon$ and $x_{[-7d^2n, 7d^2n)} = \tilde{x}_{[-7d^2n, 7d^2n)}$, then $\Fac^0_{(Z,\tau)}(x)$ is equal to $\Fac_{(Z,\tau)}(\tilde{x})$.
    \item If $k \geq 0$, $x_{[-50d^2n, k+50d^2n)} = \tilde{x}_{[-50d^2n, k+50d^2n)}$ and $\Fac^0_{(Z,\tau)}(x)$ is equal to $\Fac_{(Z,\tau)}(\tilde{x})$, then $\Fac^0_{(Z,\tau)}(S^k x)$ is equal to $\Fac_{(Z,\tau)}(S^k \tilde{x})$.
\end{enumerate}
\end{prop}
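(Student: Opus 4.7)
The strategy is to derive both items by reducing to the analogous recognizability statement for the second coding (Proposition \ref{Z2:special_reco}), together with a local analysis of the construction of $(Z,\tau)$ from $(Y,\sigma)$ carried out in Section \ref{sec:Z3}. Heuristically, the third coding is obtained from the second by locally modifying cuts (re-cutting within periodic blocks in order to enforce the uniform lower bound $|\tau(a)|\ge 20\varepsilon$, and possibly grouping some neighboring letters). These modifications will be determined by a bounded window of $x$ around each cut, so they are compatible with the recognizability of $(Y,\sigma)$.

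For Item (1), the hypothesis $\per(x_{[-\varepsilon,\varepsilon)})>\varepsilon$ forces $0$ to lie in the aperiodic interior of a letter of the second coding. Proposition \ref{Z2:special_reco} applied to the common window $x_{[-7d^2n,7d^2n)}=\tilde x_{[-7d^2n,7d^2n)}$ gives $\Fac^0_{(Y,\sigma)}(x)=\Fac^0_{(Y,\sigma)}(\tilde x)$. It then suffices to check, directly from the construction of $(Z,\tau)$, that in this aperiodic situation the third-coding factorization at $0$ is determined by the second-coding factorization at $0$ together with the word $x_{[-7d^2n,7d^2n)}$. The re-cuttings happen inside periodic blocks, so when $0$ is aperiodic the cut $f_0$ of the third factorization coincides with the corresponding cut of the second factorization.

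For Item (2), I propose a forward-propagation induction. Writing $(c,z)=\Fac_{(Z,\tau)}(x)$ and $(\tilde c,\tilde z)=\Fac_{(Z,\tau)}(\tilde x)$, the hypothesis gives $c_0=\tilde c_0$ and $z_0=\tilde z_0$, whence $c_1=\tilde c_1$. Item \ref{item:Z3:main_prop:lengths} yields $|\tau(a)|\le 10d^2n$, so every cut $c_j$ that meets $[0,k]$ satisfies $c_j\in[0,k+10d^2n)$; in particular the window $[c_j-7d^2n,\,c_j+7d^2n)$ needed to apply Item (1) at $c_j$ is contained in $[-50d^2n,\,k+50d^2n)$, where $x$ and $\tilde x$ agree by hypothesis. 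Inductively, one invokes Item (1) after shifting to position $c_j$ in order to promote the equality of cuts at index $j$ into the equality $z_j=\tilde z_j$, hence $c_{j+1}=\tilde c_{j+1}$, and continues until the cut covering position $k$ is reached.

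The main obstacle is verifying the aperiodicity hypothesis $\per(x_{[c_j-\varepsilon,\,c_j+\varepsilon)})>\varepsilon$ of Item (1) at every step of the induction. When either $z_{j-1}$ or $z_j$ lies in $\A{C}{ap}$, a suitable aperiodic window near $c_j$ can be produced from the dichotomous periods property (Item \ref{item:Z3:main_prop:per}): the aperiodic letter forces period larger than $8\varepsilon$ on a window of length $\ge 2\varepsilon$ touching $c_j$. The delicate case is when both $z_{j-1},z_j$ belong to $\A{C}{sp}\cup\A{C}{wp}$, so that $c_j$ sits inside a long periodic block and no aperiodic window is available; in this configuration Item (1) does not apply, and one must instead use the structural property of $\A{C}{sp}$ (Proposition \ref{Z3:structure_Csp}) together with the conjugacy normalization Item \ref{item:defi:per:conj_roots} of Definition \ref{defi:per}. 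The periodic structure, once the root representative is canonical, determines the cut placement from the window of agreement alone, yielding $c_j=\tilde c_j$ and $z_j=\tilde z_j$ also in this case, which completes the inductive step and closes the argument.
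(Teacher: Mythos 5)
Your overall architecture is the same as the paper's: Item (1) is obtained from the recognizability of the second coding (Proposition \ref{Z2:special_reco} via Item \ref{item:Z2:reco} of Proposition \ref{Z2:main_prop}) plus the observation that aperiodic letters are not split when passing to the third coding, and Item (2) is a forward-propagation induction in which aperiodic letters let you invoke Item (1) at an interior position (this is exactly the paper's auxiliary claim that $z_0\in\A{C}{ap}$ together with agreement on a window forces $\Fac^0_{(Z,\tau)}(x)=\Fac^0_{(Z,\tau)}(\tilde{x})$). A small inaccuracy there: the dichotomous-periods property gives aperiodicity only on the interior $x_{[c_j+8\varepsilon,\,c_{j+1}-8\varepsilon)}$, not on a window touching the cut, so you must apply Item (1) at an interior position found via Proposition \ref{CL:localize_per_aper} and then transport the conclusion back to the cut; this is fixable bookkeeping.

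The genuine gap is in the case where both letters following the common cut are periodic. Your closing claim, that once the root representative is canonical ``the periodic structure determines the cut placement from the window of agreement alone,'' is not a valid mechanism: all candidate letters in $\A{C}{sp}\cup\A{C}{wp}$ with root $s$ have images $s^{\zeta(s)}$ or $s^{\zeta(s)+q}$, all of which are prefixes of $s^\infty$, so no local inspection of the agreeing window can distinguish them, and the periodic block may extend far beyond any window of length $O(d^2n)$. The paper closes this case with a genuinely structural argument that your sketch does not contain: first Lemma \ref{Z3:prefix_and_per=>same_root} forces all the $\tilde{z}_i$ covered by $\tau(z_1)$ to be periodic with the same root $s$; then the explicit formulas $\tau(a)=s^{\zeta(s)}$ (for $\A{C}{sp}$) and $\tau(a)=s^{\zeta(s)+q_a}$ with $0<q_a\le\zeta(s)$ (for $\A{C}{wp}$) show that equal lengths force $z_1=\tilde{z}_1$; and in the unequal-length subcase one gets $z_1\in\A{C}{wp}$, $\tilde{z}_1\in\A{C}{sp}$, so by Proposition \ref{Z3:structure_Csp} the next letter $z_2$ is aperiodic, the aperiodic-recognizability claim can be applied at $f_2$ (using the $50d^2n$ window), and a counting of lengths using $q_{z_1}\le\zeta(s)$ and $q_{\tilde{z}_{\ell-1}}>0$ yields a contradiction. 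Without some version of this argument (or a substitute), the inductive step is not established in the all-periodic configuration, so as written the proof of Item (2) is incomplete.
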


%%%%                                                                 %%%% PROPOSITION %%%%
\begin{prop} \label{Z3:structure_Csp}
The coding of Proposition \ref{Z3:main_prop} satisfies the following.
\begin{enumerate}
    \item \label{item:Z3:structure_Csp:window}
    If $z \in Z$ and $i < j$ are integers such that $z_k \in \cC \setminus \A{C}{ap}$ for all $k \in [i, j+1)$, then $z_k = z_i \in \A{C}{sp}$ for all $k \in [i, j)$ and $\mroot \tau(z_k) = \mroot \tau(z_i)$ for all $k \in [i,j+1)$.
    
    \item \label{item:Z3:structure_Csp:power_of_2}
    If $a \in \A{C}{sp}$, then $\tau(a) = (\mroot\tau(a))^{2^r}$, where $r$ is the unique integer for which $2^r|\mroot\tau(a)|$ belongs to $[20\varepsilon, 40\varepsilon)$.
\end{enumerate}
\end{prop}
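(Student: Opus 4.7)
The plan is to construct $(Z, \tau)$ as a refinement of the coding $(Y \subseteq \cB^\Z, \sigma\colon \cB \to \cA^+)$ given by Proposition \ref{Z2:main_prop}, with its partition $\cB = \A{B}{ap} \cup \A{B}{p}$. By Item \ref{item:Z2:per} of that proposition together with Lemma \ref{CL:root=per}, every $a \in \A{B}{p}$ satisfies $\sigma(a) = s_a^{k_a}$ with $s_a = \mroot \sigma(a)$, $|s_a| \leq 8\varepsilon$, and $|\sigma(a)| \geq 80\varepsilon$. The idea is to subdivide every such periodic block into canonical $2^r$-power pieces of length in $[20\varepsilon, 40\varepsilon)$, and to collect any residual material into a single weakly-periodic letter placed at the right end of the block; these pieces become the letters of $\A{C}{sp}$ and $\A{C}{wp}$, while the $\A{B}{ap}$-letters are kept unchanged as $\A{C}{ap}$.

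Concretely, for each $a \in \A{B}{p}$ let $r_a$ be the unique integer with $2^{r_a}|s_a| \in [20\varepsilon, 40\varepsilon)$ and write $k_a = q_a 2^{r_a} + \ell_a$ with $0 \leq \ell_a < 2^{r_a}$. Since $k_a|s_a| = |\sigma(a)| \geq 80\varepsilon > 2 \cdot 2^{r_a}|s_a|$, one has $q_a \geq 2$. If $\ell_a = 0$, replace the occurrence of $a$ by $q_a$ consecutive copies of a new letter $b_a^{\mathrm{sp}}$ with $\tau(b_a^{\mathrm{sp}}) = s_a^{2^{r_a}}$; otherwise replace it by $q_a - 1$ copies of $b_a^{\mathrm{sp}}$ followed by a single letter $b_a^{\mathrm{wp}}$ with $\tau(b_a^{\mathrm{wp}}) = s_a^{2^{r_a}+\ell_a}$, whose length lies in $[20\varepsilon, 80\varepsilon)$. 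After identifying new letters whose images coincide, set $\A{C}{sp} = \{b_a^{\mathrm{sp}} : a \in \A{B}{p}\}$ and $\A{C}{wp} = \{b_a^{\mathrm{wp}} : a \in \A{B}{p},\ \ell_a > 0\}$, and define the cut function of $(Z,\tau)$ to be that of $(Y,\sigma)$ with extra cuts inserted at every $2^{r_a}|s_a|$-boundary inside a periodic block.

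With this setup, Item \ref{item:Z3:structure_Csp:power_of_2} of the proposition is immediate from the definition of $\A{C}{sp}$. For Item \ref{item:Z3:structure_Csp:window}, suppose $z \in Z$ satisfies $z_k \in \cC \setminus \A{C}{ap}$ for all $k \in [i, j+1)$. The $\A{B}{ap}$-letters pass through the construction as $\A{C}{ap}$-letters, and Item \ref{item:Z2:structure} of Proposition \ref{Z2:main_prop} guarantees that two $\A{B}{p}$-letters of $\cB$ are always separated by at least one $\A{B}{ap}$-letter; hence the entire run $z_i, \ldots, z_j$ must be produced by the chopping of a \emph{single} parent $a \in \A{B}{p}$. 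Since $s_a$ is primitive, the root of the image of every chopped piece equals $s_a$, giving $\mroot \tau(z_k) = s_a = \mroot \tau(z_i)$ for all $k \in [i, j+1)$; and by design every piece except possibly the last is a copy of $b_a^{\mathrm{sp}} \in \A{C}{sp}$, so $z_i = \cdots = z_{j-1} = b_a^{\mathrm{sp}}$. If a more robust root-equality argument is preferred, one can overlap the period-$|\mroot \tau(z_k)|$ and period-$|\mroot \tau(z_{k+1})|$ extensions provided by Item \ref{item:defi:per:p} of Definition \ref{defi:per} at the cut $c_{k+1}$, use Item \ref{item:CL:on_sZ&tZ:equal} of Proposition \ref{CL:on_sZ&tZ} to obtain conjugacy between the two roots, and then close with Item \ref{item:defi:per:conj_roots}.

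The main obstacle is not Proposition \ref{Z3:structure_Csp} per se, which becomes mostly bookkeeping once the chopping rule is fixed, but ensuring that the same construction delivers the remaining items of Proposition \ref{Z3:main_prop}: the length bounds of Item \ref{item:Z3:main_prop:lengths} follow from the choice of interval $[20\varepsilon, 40\varepsilon)$, but the recognizability statement of Proposition \ref{Z3:special_reco} and the cardinality bounds of Item \ref{item:Z3:main_prop:cardinalities} require one to control how many distinct new letters the chopping can create; the bound on $\#\A{C}{wp}$ in terms of $\pcom(X)$ is then inherited from Item \ref{item:Z2:cardinalities} of Proposition \ref{Z2:main_prop} via the pair $(s_a, \ell_a)$.
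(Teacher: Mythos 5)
Your proposal is correct and takes essentially the paper's route: the same chopping of each periodic $\sigma$-image into canonical $2^r$-power pieces of length in $[20\varepsilon,40\varepsilon)$ with the residue collected at the right end, Item \ref{item:Z3:structure_Csp:power_of_2} read off from the definition of $\tau$, and Item \ref{item:Z3:structure_Csp:window} deduced from the fact that Item \ref{item:Z2:structure} of Proposition \ref{Z2:main_prop} isolates the periodic parent letters, so any run of non-$\A{C}{ap}$ letters lies inside the chopping of a single parent (the paper packages exactly this as Lemma \ref{Z3:precise_structure}). The only divergence is that the paper enforces $0<q_a\le\zeta(\mroot\sigma(a))$, so every block ends with exactly one $\A{C}{wp}$ letter and your $\ell_a=0$ all-$\A{C}{sp}$ case never occurs; this does not affect the present proposition, but it is the feature the recognizability proof of Proposition \ref{Z3:special_reco} later exploits, as you anticipate.
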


We now introduce the notation that will be used in this section.
Let $X \subseteq \cA^\Z$ be a minimal infinite subshift, $n \geq 1$ and let $d$ be the maximum of $\lceil p_X(n)/n \rceil$, $p_X(n+1) - p_X(n)$, $\#\cA$ and $10^4$.
Then, Proposition \ref{Z2:main_prop} applied to $X$ and $n$ gives a recognizable coding $(Y \subseteq \cC_Y^\Z, \sigma\colon\cC\to\cA^+)$ of $X$, a partition $\cC_Y = \A{C}{p} \cup \A{C}{ap}$, and an integer $\varepsilon \in [n/d^{2d^3+4}, n/d)$ satisfying the properties described in Proposition \ref{Z2:main_prop}.
\medskip

The strategy to prove the main proposition of this section is the following. 
The coding $(Z, \tau)$ will be obtained from $(Y, \sigma)$ by splitting the words in $\sigma(\A{C}{p})$ into subwords of carefully chosen lengths.
This will maintain most of the properties of $(Y,\sigma)$ at the same time that we gain control on the lengths of all the words $\tau(a)$.
A delicate part involves defining the splittings of the words in $\sigma(\A{C}{p})$ in such a way that $(Z,\tau)$ has the recognizability properties in Proposition \ref{Z3:special_reco}.

%%%%                                                                 %%%% SUBSECTION %%%%
\subsection{Construction of the third coding}
\label{subsec:Z3:defi_Z3}

For $s \in \mroot\sigma(\A{C}{p})$, we define $\zeta(s)$ as the unique power of two such that $\zeta(s)\cdot|s|$ lies in $[20\varepsilon, 40\varepsilon)$.
Note that, by Item \ref{item:Z2:per} of Proposition \ref{Z2:main_prop}, we have that $\zeta(s) \geq 1$.
Then, for $a \in \A{C}{p}$, we can define $p_a$ and $q_a$ as the unique integers satisfying
\begin{equation} \label{eq:Z3:defi_pa_qa}
    p_a\cdot \zeta(\mroot\sigma(a)) + q_a = \frac{|\sigma(a)|}{|\mroot\sigma(a)|}
    \enspace\text{and}\enspace
    0 < q_a \leq \zeta(\mroot\sigma(a)).
\end{equation}
It is important to remark that Item \ref{item:Z2:lengths} in Proposition \ref{Z2:main_prop} ensures that $p_a \geq 2$.

For $a \in \A{C}{p}$, let
\begin{equation*}
    \psii{sp}(a) = (\mroot\sigma(a))^{\zeta(\mroot\sigma(a))}
    \enspace\text{and}\enspace
    \psii{wp}(a) = (\mroot\sigma(a))^{\zeta(\mroot\sigma(a)) + q_a}.
\end{equation*}
Note that
\begin{equation*}
    \sigma(a) = \psii{sp}(a)^{p_a-1} \psii{wp}(a)
    \enspace\text{for all $a \in \A{C}{p}$.}
\end{equation*}
We also choose bijections 
\begin{equation*}
    \phii{sp} \colon \A{C}{sp} \to \psii{sp}(\A{C}{p})
    \enspace\text{and}\enspace
    \phii{wp} \colon \A{C}{wp} \to \psii{wp}(\A{C}{p}),
\end{equation*}
where $\A{C}{sp}$, $\A{C}{wp}$ and $\A{C}{ap}$ are pairwise disjoint.
Then, we define for $a \in \cC_Y$, 
\begin{equation} \label{eq:Z3:defi_eta}
    \eta(a) = \begin{cases}
        a & \text{if $a \in \A{C}{ap}$} \\
        \phii{sp}^{-1}(\psii{sp}(a))^{p_a-1} \phii{wp}^{-1}(\psii{wp}(a)) & \text{if $a \in \A{C}{wp}$}
    \end{cases}
\end{equation}
Let $\cC_Z = \A{C}{ap} \cup \A{C}{sp} \cup \A{C}{wp}$ and, for $a \in \cC_Z$, we set
\begin{equation} \label{eq:Z3:defi_tau}
    \tau(a) = \begin{cases}
        \sigma(a)     & \text{if $a \in \A{C}{ap}$} \\
        \phii{sp}(a)  & \text{if $a \in \A{C}{sp}$} \\
        \phii{wp}(a)  & \text{if $a \in \A{C}{wp}$}
    \end{cases}
\end{equation}
It then follows that
\begin{equation} \label{eq:Z3:sigma=tau_eta}
    \sigma = \tau \eta.
\end{equation}
Finally, we set $Z = \bigcup_{k\in\Z} S^k \eta(Y)$.

Let us comment on the definition of $\tau$.
Equation \eqref{eq:Z3:sigma=tau_eta} says that $\sigma(a) = \tau(a)$ if $a \in \A{C}{ap}$ and that $\sigma(a) = \tau(b)^{p_a -1} \tau(c)$ if $a \in \A{C}{p}$, $b = \phii{sp}^{-1}(\psii{sp}(a))$ and $c = \phii{wp}^{-1}(\psii{wp}(c))$.
In other words, $\tau$ is obtained from $\sigma$ by slicing the words $\sigma(a)$.

\subsection{Proof of Propositions \ref{Z3:main_prop}, \ref{Z3:special_reco} and \ref{Z3:structure_Csp}}

%%%%                                                                 %%%% PROOF %%%%
\begin{proof}[Proof of Proposition \ref{Z3:main_prop}]
We start with Item (1).
Item \ref{item:Z2:cardinalities} in Proposition \ref{Z2:main_prop} gives the bound $\#\A{C}{ap} \leq 2d^{3d+6}$.
Also, it follows from the definitions and Item \ref{item:Z2:cardinalities} in Proposition \ref{Z2:main_prop} that 
\begin{align*}
    \#\A{C}{sp} &= \#\psii{sp}(\A{C}{p}) = \#\mroot\sigma(\A{C}{p}) \leq 3d^{3d+6}
    \enspace\text{and} \\
    \#\A{C}{wp} &= \#\psii{wp}(\A{C}{p}) \leq \#\A{C}{p} \leq 2d^{3d+9}\cdot\pcom(X).
\end{align*}
Now, using \ref{eq:Z3:defi_tau} yields $\mroot \tau(\cC_Z \setminus \A{C}{ap}) = \mroot \sigma(\A{C}{p})$.
Therefore, $\#\mroot\tau(\cC_Z) \leq \#\A{C}{ap} + \#\mroot\sigma(\A{C}{p})$, which gives $\#\mroot\tau(\cC_Z) \leq 4 d^{3d+6}$ if we use the bounds $\#\A{C}{ap} \leq 2d^{3d+6}$ and $\#\mroot\sigma(\cC_Y) \leq 3d^{3d+6}$, where the last bound is given by Item \ref{item:Z2:cardinalities} in Proposition \ref{Z2:main_prop}.

We now consider Item (2).
Let $a \in \cC_Z$.
If $a \in \A{C}{ap}$, then by Item \ref{item:Z2:lengths} in Proposition \ref{Z2:main_prop} we have that $20\varepsilon \leq |\tau(a)| = |\sigma(a)| \leq 10d^2n$.
If $a \in \cC_Z \setminus \A{C}{ap}$ and $s = \mroot\tau(a)$, then \eqref{eq:Z3:defi_tau} implies that 
\begin{equation*}
    |\tau(a)| = \begin{cases}
        \zeta(s) |s| & \text{if $a \in \A{C}{sp}$} \\
        (\zeta(s) + q_a) |s| & \text{if $a \in \A{C}{wp}$}
    \end{cases}
\end{equation*}
Since $20\varepsilon \leq \zeta(s) |s| < 40\varepsilon$ and $0 < q_a \leq |\zeta(s)|$, we obtain that $20\varepsilon \leq |\tau(a)| \leq 80\varepsilon \leq 10d^2n$.

Next, we prove Item (3).
Let $x \in X$, $(c,y) = \Fac_{(Y,\sigma)}$ and $(f,z) = \Fac_{(Z,\tau)}(x)$.
We use Lemma \ref{lem:factorizations_and_compositions} to obtain $m \in [0, |\eta(y_0)|)$ such that 
\begin{equation} \label{eq:Z3:main_prop:1}
    \eta(y_0) = z_{[-m, -m + |\eta(z_0)|)}
    \enspace\text{and}\enspace
    c_0 = f_0 - |\tau(z_{[-m, 0)})|.
\end{equation}
We first assume that $z_0 \in \A{C}{ap}$.
Then, Equation \eqref{eq:Z3:main_prop:1} implies that $z_0$ occurs in $\eta(y_0)$, and thus, since $z_0 \in \A{C}{ap}$, Equation \eqref{eq:Z3:defi_eta} ensures that $\eta(y_0) = y_0 \in \A{C}{ap}$.
Hence, $m = 0$, $z_0 = y_0$, $c_0 = f_0$ and $c_1 = c_0 + |\sigma(y_0)| = f_0 + |\tau(z_0)| = f_1$.
Using this and Item \ref{item:Z2:per} of Proposition \ref{Z2:main_prop} with $x$ and $(c,y)$ produces
\begin{equation*}
    \per(x_{[f_0 + 8\varepsilon, f_1 - 8\varepsilon)}) =
    \per(x_{[c_0 + 8\varepsilon, c_1 - 8\varepsilon)}) > \varepsilon.
\end{equation*}
Let us now assume that $z_0 \in \cC_Z \setminus \A{C}{ap}$.
This condition and Equation \eqref{eq:Z3:defi_eta} imply that $y_0 \in \A{C}{p}$.
Hence, we can use Item \ref{item:Z2:per} in Proposition \ref{Z2:main_prop} to obtain that $\per(x_{[c_0 - 8\varepsilon, c_1 + 8\varepsilon)}) \leq \varepsilon$.
We conclude, since \eqref{eq:Z3:defi_eta} guarantees that $[f_0, f_1)$ is contained in $[c_0, c_1)$, that $\per(x_{[f_0 - 8\varepsilon, f_1 + 8\varepsilon)}) \leq \per(x_{[c_0 - 8\varepsilon, c_1 + 8\varepsilon)}) \leq \varepsilon$.

It rests to prove that $(Z,\tau)$ satisfies Item \ref{item:defi:per:conj_roots} of Definition \ref{defi:per}.
Let $a \in \cC_Z \setminus \A{C}{ap}$ be such that $\mroot \tau(z_0)$ is conjugate to $\mroot \tau(a)$.
We have from \eqref{eq:Z3:main_prop:1} that $z_0$ occurs in $\eta(y_0)$, so, by \eqref{eq:Z3:defi_eta}, $\mroot \tau(z_0) = \mroot \sigma(y_0)$.
Similarly, $a$ occurs in $\eta(b)$ and $\mroot \tau(a) = \mroot \sigma(b)$ for some $b \in \cC_Y$.
The first condition and \eqref{eq:Z3:defi_eta} imply, as $a \in \cC_Z \setminus \A{C}{ap}$, that $b \in \A{C}{p}$.
Now, the hypothesis ensures that $\mroot \sigma(y_0)$ is conjugate to $\mroot \sigma(b)$.
Therefore, as $(Y,\sigma)$ has dichotomous periods w.r.t.\ $(\A{C}{ap},\A{C}{p})$, we can use Item (3) of Definition \ref{defi:per} to obtain that $\mroot \sigma(y_0) = \mroot \sigma(b)$.
We conclude that $\mroot \tau(z_0) = \mroot \tau(a)$, completing the proof of Item (3).

Finally, for Items (4) and (5), we present the proofs of Propositions \ref{Z3:special_reco} and \ref{Z3:structure_Csp} hereafter.
\end{proof}
\medskip

%%%%                                                                 %%%% LEMMA %%%%
\begin{lem}\label{Z3:precise_structure}
Let $\A{C}{block}$ be the set of words $a^\ell b$, where $a \in \A{C}{sp}$, $b \in \A{C}{wp}$, $\ell \geq 1$ and $\mroot\tau(a) = \mroot\tau(b)$.
Then, any $z \in Z$ can be written as $z = \dots w_{-1} w_0 w_1 \dots$, where $w_j \in \A{C}{block}$ or $w_j \in \A{C}{ap}$ and $w_j w_{j+1} \not\in \A{C}{block}^2$.
\end{lem}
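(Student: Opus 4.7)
The plan is to lift the natural $\eta$-factorization of points in $\eta(Y)$ to all of $Z$. The key preliminary step is to unpack \eqref{eq:Z3:defi_eta} and show that $\eta(a) \in \A{C}{ap}$ (as a single-letter word) whenever $a \in \A{C}{ap}$, and $\eta(a) \in \A{C}{block}$ whenever $a \in \A{C}{p}$. For the second case, writing $c = \phii{sp}^{-1}(\psii{sp}(a)) \in \A{C}{sp}$ and $d = \phii{wp}^{-1}(\psii{wp}(a)) \in \A{C}{wp}$, one has $\eta(a) = c^{p_a - 1} d$ with $\mroot\tau(c) = \mroot\sigma(a) = \mroot\tau(d)$; the only slightly delicate point is to invoke $p_a \geq 2$ (noted right after \eqref{eq:Z3:defi_pa_qa}) to ensure that the $c$-exponent is at least $1$, as the definition of $\A{C}{block}$ demands.

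Next, I would take $z \in Z = \bigcup_{k \in \Z} S^k \eta(Y)$ and write $z = S^m \eta(y)$ for some $y \in Y$ and $m \in \Z$. Letting $(c_i)_{i \in \Z}$ denote the natural $\eta$-cuts of $\eta(y)$ (so $c_0 = 0$ and $c_{i+1} - c_i = |\eta(y_i)|$) and setting $j_i = c_i - m$, the intervals $[j_i, j_{i+1})$ partition $\Z$ and satisfy $z_{[j_i, j_{i+1})} = \eta(y_i)$. Defining $w_i \coloneqq z_{[j_i, j_{i+1})}$, the preliminary step yields $w_i \in \A{C}{block} \cup \A{C}{ap}$ for every $i \in \Z$.

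To finish, I would verify that $w_i w_{i+1} \notin \A{C}{block}^2$. Any word in $\A{C}{block}^2$ begins with a letter in $\A{C}{sp}$ and ends with a letter in $\A{C}{wp}$, while $\A{C}{ap}$ is disjoint from both; hence if either $w_i$ or $w_{i+1}$ lies in $\A{C}{ap}$, the condition is automatic. The remaining case forces $w_i, w_{i+1} \in \A{C}{block}$, equivalently $y_i, y_{i+1} \in \A{C}{p}$, which is ruled out by Item \ref{item:Z2:structure} of Proposition \ref{Z2:main_prop}. I do not anticipate a real obstacle beyond the bookkeeping: the lemma is essentially a direct translation of structural properties already available for $(Y, \sigma)$, bridged by the observation that $\eta$ sends each letter of $\A{C}{p}$ to a single $\A{C}{block}$-element and each letter of $\A{C}{ap}$ to itself.
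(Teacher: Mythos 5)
Your proof is correct and takes essentially the same approach as the paper: decompose $z$ as a shifted $\eta$-image of a point $y \in Y$, note that \eqref{eq:Z3:defi_eta} sends $\A{C}{ap}$-letters to themselves and $\A{C}{p}$-letters to $\A{C}{block}$-words (your use of $p_a \geq 2$ is exactly the needed point), and rule out two consecutive block-words via Item \ref{item:Z2:structure} of Proposition \ref{Z2:main_prop}. The only difference is cosmetic: the paper produces $z = S^\ell \eta(y)$ for the canonical factorization point $y$ via Lemma \ref{lem:factorizations_and_compositions}, whereas you take an arbitrary $y$ directly from the definition $Z = \bigcup_{k\in\Z} S^k \eta(Y)$, which suffices for this statement.
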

\begin{proof}
Let $z \in Z$ and $(c,y) = \Fac_{(Y,\sigma)}(\tau(z))$.
We set $w_j = \eta(y_j)$.
The definition of $\eta$ in \eqref{eq:Z3:defi_eta} ensures that $w_j \in \A{C}{block} \cup \A{C}{ap}$.
Moreover, Item \ref{item:Z2:structure} in Propositions \ref{Z2:main_prop} says, in this context, that $w_j w_{j+1} \not\in \A{C}{block}^2$ for all $j \in \Z$.
Finally, by Lemma \ref{lem:factorizations_and_compositions} we have that $z = S^\ell \eta(y)$ for some $\ell \in \Z$, and thus that $z = \dots w_{-1} w_0 w_1 \dots$
\end{proof}

We can now present the proof of Proposition \ref{Z3:structure_Csp}.
%%%%                                                                 %%%% PROOF %%%%
\begin{proof}[Proof of Proposition \ref{Z3:structure_Csp}]
Item \ref{item:Z3:structure_Csp:power_of_2} directly follows from the definition of $\tau$ in \eqref{eq:Z3:defi_tau}.
Let us prove Item \ref{item:Z3:structure_Csp:window}.
Let $z \in Z$ be such that $z_{[i, j+1)} \subseteq (\cC_Z \setminus \A{C}{ap})^+$.
We write $z = \dots w_{-1} w_0 w_1 \dots$ as in Lemma \ref{Z3:precise_structure} and let $\A{C}{block}$ be the set defined in Lemma \ref{Z3:precise_structure}.
Then, the hypothesis $z_{[i,j+1)} \in (\cC_Z \setminus \A{C}{ap})^+$ and the condition $w_kw_{k+1} \not\in \A{C}{block}$ imply that $z_{[i, j+1)}$ occurs in $w_k$ for certain $k \in \Z$ such that $w_k \in \A{C}{block}$.
Hence, $z_\ell = z_i \in \A{C}{sp}$ for all $\ell \in [i,j)$ and $\mroot \tau(z_k) = \mroot \tau(z_i)$ for all $k \in [i,j+1)$.
\end{proof}

%%%%                                                                 %%%% LEMMA %%%%
\begin{lem} \label{Z3:prefix_and_per=>same_root}
Let $z, \tilde{z} \in Z$ and $\ell \geq 1$ be such that $\tau(\tilde{z}_{[0,\ell)})$ is a prefix of $\tau(z_0)$.
Then, $z_0 \in \cC_Z \setminus \A{C}{ap}$ implies that $\tilde{z}_i \in \cC_Z \setminus \A{C}{ap}$ and $\mroot\tau(\tilde{z}_i) = \mroot\tau(z_0)$ for all $i \in [0,\ell)$.
\end{lem}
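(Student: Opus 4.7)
The plan is to combine the dichotomous periods property of Proposition \ref{Z3:main_prop}(\ref{item:Z3:main_prop:per}) with a Fine--Wilf synchronization argument. Let $\tilde{x} = \tau(\tilde{z}) \in X$, so that $\Fac_{(Z,\tau)}(\tilde{x}) = (\tilde{c}, \tilde{z})$ with $\tilde{c}_i = |\tau(\tilde{z}_{[0,i)})|$ for $i \geq 0$, and set $s := \mroot\tau(z_0)$. Since $z_0 \in \cC_Z \setminus \A{C}{ap}$, the dichotomous periods condition (Definition \ref{defi:per}, with parameter $8\varepsilon$) gives $|s| \leq 8\varepsilon$ and $\tau(z_0) = (s)^\Z_{[0,|\tau(z_0)|)}$. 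The hypothesis then forces $\tilde{x}_{[0,\tilde{c}_\ell)} = \tau(\tilde{z}_{[0,\ell)})$ to be a subword of $s^\Z$, so every subword of it has period at most $|s|$.

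Fix $i \in [0,\ell)$. First I would rule out $\tilde{z}_i \in \A{C}{ap}$ by contradiction. By Item \ref{item:Z3:main_prop:lengths} one has $|\tau(\tilde{z}_i)| \geq 20\varepsilon$, so the subword $\tilde{x}_{[\tilde{c}_i + 8\varepsilon,\, \tilde{c}_{i+1} - 8\varepsilon)}$ is nonempty, lies inside $\tilde{x}_{[0,\tilde{c}_\ell)}$, and therefore has period at most $|s| \leq 8\varepsilon$. On the other hand, if $\tilde{z}_i \in \A{C}{ap}$ then applying Item \ref{item:defi:per:ap} of Definition \ref{defi:per} to $S^{\tilde{c}_i}\tilde{x}$ would force this period to exceed $8\varepsilon$, a contradiction. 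Hence $\tilde{z}_i \in \cC_Z \setminus \A{C}{ap}$; setting $\tilde{s}_i := \mroot\tau(\tilde{z}_i)$, Item \ref{item:defi:per:p} of Definition \ref{defi:per} yields $|\tilde{s}_i| \leq 8\varepsilon$ and $\tau(\tilde{z}_i) = (\tilde{s}_i)^\Z_{[0,|\tau(\tilde{z}_i)|)}$. Since $\tau(\tilde{z}_i)$ also occurs at some position $j$ of $s^\Z$, the equality $(\tilde{s}_i)^\Z_{[0,|\tau(\tilde{z}_i)|)} = s^\Z_{[j,\, j + |\tau(\tilde{z}_i)|)}$, combined with $|\tau(\tilde{z}_i)| \geq 20\varepsilon \geq |\tilde{s}_i| + |s| - 1$ and Proposition \ref{CL:on_sZ&tZ}(\ref{item:CL:on_sZ&tZ:equal}), gives $(\tilde{s}_i)^\Z = S^j s^\Z$, so $\tilde{s}_i$ is conjugate to $s$.

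The main (and only non-routine) step is to upgrade this conjugacy to equality. I would do this by invoking Item \ref{item:defi:per:conj_roots} of Definition \ref{defi:per} via Proposition \ref{Z3:main_prop}(\ref{item:Z3:main_prop:per}), applied to the factorization of $S^{\tilde{c}_i}\tilde{x}$ (whose central letter under $\Fac^0_{(Z,\tau)}$ is precisely $\tilde{z}_i \in \cC_Z \setminus \A{C}{ap}$) with $a := z_0 \in \cC_Z \setminus \A{C}{ap}$. This yields $\mroot\tau(\tilde{z}_i) = \mroot\tau(z_0) = s$, completing the proof.
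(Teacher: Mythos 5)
Your proof is correct, but it takes a genuinely different route from the paper's. The paper argues by induction on $i$: it first notes (as you do) that every $\tilde{z}_i$ lies in $\cC_Z \setminus \A{C}{ap}$ because $\per(\tau(\tilde{z}_i)) \leq \per(\tau(z_0))$ is small, and then proves the auxiliary claim that if $a \in \cC_Z \setminus \A{C}{ap}$ and $\tau(a)$ is a \emph{prefix} of $s^\infty$ then $\mroot\tau(a) = s$ (Fine--Wilf applied to the word $\tau(a)$, which is a prefix of both $(\mroot\tau(a))^\infty$ and $s^\infty$, using $|\tau(a)| \geq 20\varepsilon$). The induction preserves the alignment: once $\tau(\tilde{z}_{[0,i)})$ is known to be a power of $s$, the next block $\tau(\tilde{z}_i)$ is again a prefix of $s^\infty$, so equality of roots comes directly out of Fine--Wilf and conjugacy never enters. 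You instead synchronize $\tau(\tilde{z}_i)$ against $s^\Z$ at an arbitrary offset, obtaining via Proposition \ref{CL:on_sZ&tZ}(\ref{item:CL:on_sZ&tZ:equal}) only that $\mroot\tau(\tilde{z}_i)$ is conjugate to $s$, and then upgrade conjugacy to equality by invoking Item \ref{item:defi:per:conj_roots} of Definition \ref{defi:per}, available through Item \ref{item:Z3:main_prop:per} of Proposition \ref{Z3:main_prop}; since the dichotomous-periods part of that proposition is proved without this lemma, there is no circularity. Your route avoids the induction, at the price of using the conjugacy-rigidity feature that was specifically engineered into the coding (ultimately the canonical representatives $\cW_\varepsilon$), whereas the paper's argument is self-contained modulo Fine--Wilf. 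One small point you leave implicit: from $(\mroot\tau(\tilde{z}_i))^\Z = S^j s^\Z$ you should first record that $|\mroot\tau(\tilde{z}_i)| = |s|$ (both words are primitive, so by Proposition \ref{CL:on_sZ&tZ}(\ref{item:CL:on_sZ&tZ:equal_mod}) each length is the minimal period of the common bi-infinite sequence) before concluding that the roots are conjugate; this is routine and does not affect correctness.
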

\begin{proof}
First, we note that, by Item \ref{item:Z3:main_prop:per} in Proposition \ref{Z3:main_prop}, $\per(\tau(\tilde{z}_i)) \leq \per(\tau(z_0)) \leq \varepsilon$ for all $i \in [0, \ell)$.
Thus, by Item \ref{item:Z3:main_prop:per} in Proposition \ref{Z3:main_prop}, $\tilde{z}_i \in \cC_Z \setminus \A{C}{ap}$ for all $i \in [0, \ell)$.

Let $s = \mroot\tau(z_0)$.
In order to continue, we claim that 
\begin{equation*}
    \text{if $a \in \cC_Z \setminus \A{C}{sp}$ and $\tau(a)$ is a prefix of $s^\infty$, then $\mroot\tau(a) = s$.}
\end{equation*}
First, we note $\tau(a)$ is a prefix of $(\mroot\tau(a))^\infty$.
Also, Item \ref{item:Z3:main_prop:per} in Proposition \ref{Z3:main_prop} guarantees that $|\mroot\tau(a)| \leq \varepsilon$ and that $|s| \leq \varepsilon$.
Hence, as $|\tau(a)| \geq 2\varepsilon$ by Item \ref{item:Z3:main_prop:lengths} in Proposition \ref{Z3:main_prop}, we can use Theorem \ref{theo:fine&wilf} to deduce that $\mroot\tau(a)$ and $s$ are powers of a common word $r$.
This implies that $s = \mroot\tau(a) = \mroot r$.

We now prove that $\mroot\tau(\tilde{z}_i) = s$ for $i \in [0, \ell)$ by induction on $i$.
If $i = 0$, then we have from the hypothesis that $\tau(\tilde{z}_0)$ is a prefix of $\tau(z_0)$ and that $\tilde{z}_0 \in \cC_Z \setminus \A{C}{ap}$.
Thus, $\mroot\tau(\tilde{z}_0) = s$ by the claim.
Let us assume now that $0 < i < \ell$ and that $\mroot\tau(\tilde{z}_j) = s$ for $j \in [0, i)$.
Then, $\tau(\tilde{z}_{[0, i)})$ is a power of $s$.
Being $\tau(\tilde{z}_{[0, i]})$ a prefix of $\tau(z_0)^\infty = s^\infty$, we deduce that $\tau(\tilde{z}_i)$ is a prefix of $s^\infty$.
This allows us to use the claim and obtain that $\mroot\tau(\tilde{z}_i) = s$.
This finishes the inductive step and the proof of the lemma.
\end{proof}

%%%%                                                                 %%%% PROOF %%%%
Finally, we prove Proposition \ref{Z3:special_reco}
\begin{proof}[Proof of Proposition \ref{Z3:special_reco}]
We fix the following notation for this proof.
Let $x, \tilde{x} \in X$, $(f,z) = \Fac_{(Z,\tau)}(x)$, $(\tilde{f},\tilde{z}) = \Fac_{(Z,\tau)}(\tilde{x})$, $(c,y) = \Fac_{(Y,\sigma)}(x)$ and $(\tilde{c},\tilde{y}) = \Fac_{(Y, \sigma)}(\tilde{x})$.

We start by proving Item (1).
Assume that $\per(x_{[-\varepsilon, \varepsilon)}) > \varepsilon$ and that $x_{[-7d^2n, 7d^2n)} = x'_{[-7d^2n, 7d^2n)}$.
We have to show that $f_0 = \tilde{f}_0$ and $z_0 = \tilde{z}_0$.

We claim that
\begin{enumerate}[label=(\roman*)]
    \item $c_0 = \tilde{c}_0$ and $y_0 = \tilde{y}_0$;
    \item $y_0 \in \A{C}{ap}$ and $\tilde{y}_0 \in \A{C}{ap}$;
    \item $z_0 = y_0$, $f_0 = c_0$, $\tilde{z}_0 = \tilde{y}_0$ and $\tilde{f}_0 = \tilde{c}_0$.
\end{enumerate}
Item (i) follows from the fact that the current hypothesis allows us to use Item \ref{item:Z2:reco} of Proposition \ref{Z2:main_prop} to get that $\Fac^0_{(Y,\sigma)}(x) = \Fac^0_{(Y,\sigma)}(\tilde{x})$, which is equivalent to (i).
For Item (ii), we note that if $y_0 \in \cC_Y \setminus \A{C}{ap} = \A{C}{p}$, then Item \ref{item:Z2:per} in Proposition \ref{Z2:main_prop} implies that $\per(x_{[c_0-8\varepsilon, c_1+8\varepsilon)}) \leq \varepsilon$.
Hence, as $[-\varepsilon, \varepsilon)$ is included in $[c_0-8\varepsilon, c_1+8\varepsilon)$, $\per(x_{[-\varepsilon, \varepsilon)}) \leq \varepsilon$, contradicting our hypothesis.
Therefore, $y_0 = \tilde{y}_0 \in \A{C}{ap}$.

To prove Item (iii), we first note that Lemma \ref{lem:factorizations_and_compositions} gives an integer $m \in [0, |\tau(z_0)|)$ such that
\begin{equation} \label{eq:Z3:special_reco:1}
    \text{$y_{[-m, -m + |\eta(z_0)|)} = \eta(z_0)$ and $-c_0 = -f_0 + |\eta(z)_{[0, m)}|$.}
\end{equation}
In particular, $y_0$ occurs in $\eta(z_0)$.
Since Item (ii) ensures that $y_0 \in \A{C}{ap}$, it follows from the definition of $\eta$ in \eqref{eq:Z3:defi_eta} that $\eta(y_0) = y_0 = z_0$.
Putting this in \eqref{eq:Z3:special_reco:1} gives that $m = 0$ and $c_0 = f_0$.
A similar argument shows that $\tilde{z}_0 = \tilde{y}_0$ and $\tilde{f}_0 = \tilde{c}_0$ as well.
This completes the proof of the claim.

Items (i) and (iii) of the claim imply that $(f_0, z_0) = (\tilde{f}_0, \tilde{z}_0)$, proving Item (1) of the proposition.
\medskip

Before proving Item (2), we claim that
\begin{equation} \label{eq:Z3:special_reco:2}
    \text{if $z_0 \in \A{C}{ap}$ and $x_{[-17d^2n, 17d^2n)} = \tilde{x}_{[-17d^2n, 17d^2n)}$, then $\Fac^0_{(Z,\tau)}(x) = \Fac^0_{(Z,\tau)}(\tilde{x})$.}
\end{equation}
To prove \eqref{eq:Z3:special_reco:2}, we start by using Item \ref{item:Z3:main_prop:per} in Proposition \ref{Z3:main_prop} to obtain that $\per(x_{[f_0+8\varepsilon, f_1-8\varepsilon)}) > \varepsilon$.
Thus, by Item \ref{item:CL:localize_per_aper:loc_aper} in Proposition \ref{CL:localize_per_aper}, there exists $j \in [f_0+8\varepsilon, f_1-8\varepsilon)$ satisfying
\begin{equation} \label{eq:Z3:special_reco:3}
    \per(x_{[j-\varepsilon, j+\varepsilon)}) > \varepsilon.
\end{equation}
Now, since $j \in [f_0+8\varepsilon, f_1-8\varepsilon)$ and $|\tau| \leq 10d^2n$, we have that $j \in [-10d^2n, 10d^2n)$.
Therefore, by the hypothesis $x_{[-17d^2n, 17d^2n)} = \tilde{x}_{[-17d^2n, 17d^2n)}$, $\tilde{x}_{[j-7d^2n, j+7d^2n)} = x_{[j-7d^2n, j+7d^2n)}$.
Combining this with \eqref{eq:Z3:special_reco:3} allows us to use Item (1) of this proposition and deduce that
\begin{equation} \label{eq:Z3:special_reco:4}
    \Fac^0_{(Z,\tau)}(S^j x) = \Fac^0_{(Z,\tau)}(S^j \tilde{x}).
\end{equation}
Observe that the condition $j \in [f_0+8\varepsilon, f_1-8\varepsilon)$ implies that $\Fac^0_{(Z,\tau)}(S^j x) = (f_0 - j, z_0)$.
Let $i$ be the integer satisfying $\tilde{f}_i \leq j < \tilde{f}_{i+1}$ and note that $\Fac^0_{(Z,\tau)}(S^j \tilde{x}) = (\tilde{f}_i - j, \tilde{z}_i)$.
Then, by \eqref{eq:Z3:special_reco:4}, $f_0 = \tilde{f}_i$ and $z_0 = \tilde{z}_i$.
In particular, $\tilde{f}_i = f_0 \leq 0 < f_1 = \tilde{f}_{i+1}$, so $i = 0$.
We conclude that $\Fac^0_{(Z,\tau)}(x) = (f_0, z_0) = (\tilde{f}_0, \tilde{z}_0) = \Fac^0_{(Z,\tau)}(\tilde{x})$.
\medskip

We now prove Item (2).
Assume that $\Fac^0_{(Z,\tau)}(x) = \Fac^0_{(Z,\tau)}(\tilde{x})$.
The is equivalent to $z_0 = \tilde{z}_0$ and $f_0 = \tilde{f}_0$, so $f_1 = \tilde{f}_1$ as well.
Hence,
\begin{equation} \label{eq:Z3:special_reco:5}
    \Fac^0_{(Z,\tau)}(S^i x) = \Fac^0_{(Z,\tau)}(S^i \tilde{x})
    \enspace\text{for all $i \in [f_0, f_1)$.}
\end{equation}
We are going to prove that
\begin{equation} \label{eq:Z3:special_reco:6}
    \text{if $x_{[-50d^2n, 50d^2n)} = \tilde{x}_{[-50d^2n, 50d^2n)}$, then 
    $\Fac^0_{(Z,\tau)}(S^{f_1} x) = \Fac^0_{(Z,\tau)}(S^{f_1} \tilde{x})$.}
\end{equation}
The lemma then follows from an inductive argument on $k$ that uses Equations \eqref{eq:Z3:special_reco:5} and \eqref{eq:Z3:special_reco:6}.
\medskip

Let us assume that $x_{[-50d^2n, 50d^2n)} = \tilde{x}_{[-50d^2n, 50d^2n)}$.
We consider two cases.
First, we assume that $z_1$ or $\tilde{z}_1$ belongs to $\A{C}{ap}$.
There is no loss of generality in assuming that $z_1$ is the one belonging to $\A{C}{ap}$.
Observe that the hypothesis and that $|\tau| \leq 10d^2n$ ensure that $x_{[f_1-7d^2n, f_1+7d^2n)} = \tilde{x}_{[f_1-7d^2n,f_1+7d^2n)}$.
This allows us to use \eqref{eq:Z3:special_reco:2} and deduce that $\Fac^0_{(Z,\tau)}(S^{f_1} x) = \Fac^0_{(Z,\tau)}(S^{f_1} \tilde{x})$.
\medskip

We now consider the case in which $z_1$ and $\tilde{z}_1$ belong to $\cC_Z \setminus \A{C}{ap}$.
Observe that, since $f_1 = \tilde{f}_1$, we have that $\Fac^0_{(Z,\tau)}(S^{f_1} x) = (0, z_1)$ and $\Fac^0_{(Z,\tau)}(S^{f_1} \tilde{x}) = (0, \tilde{z}_1)$.
Thus, it is enough to show that $z_1 = \tilde{z}_1$.

We assume without loss of generality that $|\tau(z_1)| \geq |\tau(\tilde{z}_1)|$.
Let $\ell$ be the integer satisfying $\tilde{f}_\ell \leq f_2 < \tilde{f}_{\ell+1}$.
Remark that $\ell \geq 2$ as $\tilde{f}_2 = \tilde{f}_1 + |\tau(\tilde{z}_1)| \leq f_1 + |\tau(z_1)| = f_2$.
Being $\tilde{f}_1 = f_1$ and $\tilde{f}_\ell \leq f_2$, the hypothesis $x_{[-50d^2n, 50d^2n)} = \tilde{x}_{[-50d^2n, 50d^2n)}$ and the bound $|\tau| \leq 7d^2n$ ensure that $\tau(\tilde{z}_{[1,\ell)})$ is a prefix of $\tau(z_1)$.
Hence, since we assumed that $z_1 \in \cC_Z \setminus \A{C}{sp}$, Lemma \ref{Z3:prefix_and_per=>same_root} yields that
\begin{equation} \label{eq:Z3:special_reco:7}
    \tilde{z}_i \in \cC_Z \setminus \A{C}{ap}
    \enspace\text{and}\enspace
    \mroot\tau(\tilde{z}_i) = \mroot\tau(z_1)
    \enspace\text{for all $i \in [1,\ell)$.}
\end{equation}
We set $s = \mroot\tau(z_1)$.
It then follows from \ref{eq:Z3:special_reco:7} and \ref{eq:Z3:defi_tau} that for any $i \in [1,\ell)$
\begin{equation} \label{eq:Z3:special_reco:8}
    z_1 = \begin{cases}
        s^{\zeta(s)} & \text{if $z_1 \in \A{C}{sp}$}\\
        s^{\zeta(s) + q_{z_1}} & \text{if $z_1 \in \A{C}{wp}$}
    \end{cases}
    \qquad
    \tilde{z}_i = \begin{cases}
        s^{\zeta(s)} & \text{if $\tilde{z}_i \in \A{C}{sp}$}\\
        s^{\zeta(s) + q_{\tilde{z}_i}} & \text{if $\tilde{z}_i \in \A{C}{wp}$}
    \end{cases}
\end{equation}
We can use this to prove that $|\tau(z_1)| = |\tau(\tilde{z}_1)|$ implies that $z_1 = \tilde{z}_1$.
Indeed, in the case $z_1 \in \A{C}{sp}$, it follows from \eqref{eq:Z3:special_reco:8} and the fact that $q_a > 0$ for all $a \in \A{C}{wp}$ that $\tau(\tilde{z}_1) = \tau(z_1) = s^{\zeta(s)}$, and thus that $\tilde{z}_1 = z_1 = \psii{sp}^{-1}(s^{\zeta(s)})$.
Similarly, if $z_1 \in \A{C}{wp}$, then \eqref{eq:Z3:defi_pa_qa} and the equation $\mroot \tau(z_1) = \mroot \tau(\tilde{z}_1)$ ensure that $q_{z_1}=q_{\tilde{z}_1}$, and thus from \eqref{eq:Z3:special_reco:8} we get that $\tau(\tilde{z}_1) = \tau(z_1) = s^{\zeta(s) + q_{z_1}}$.
In particular, $\tilde{z}_1 = z_1 = \psii{wp}^{-1}(s^{\zeta(s) + q_{z_1}})$.

It is left to consider the case $|\tau(z_1)| > |\tau(\tilde{z}_1)|$, so let us assume that this condition is satisfied.
Then, by \eqref{eq:Z3:special_reco:8} and the fact that $q_a > 0$ for all $a \in \A{C}{wp}$,
\begin{equation} \label{eq:Z3:special_reco:9}
    z_1 \in \A{C}{wp} \enspace\text{and}\enspace
    \tilde{z}_1 \in \A{C}{sp}.
\end{equation}
In this situation, Item \ref{item:Z3:structure_Csp:window} in Proposition \ref{Z3:structure_Csp} ensures that $z_2 \in \A{C}{ap}$.
Now, observe that $f_3 \leq 3|\tau| \leq 30d^2n$ and $f_2 \geq 0$; so the hypothesis $x_{[-50d^2n, 50d^2n)} = \tilde{x}_{[-50d^2n, 50d^2n)}$ gives that $x_{[f_2 - 7d^2n, f_3 + 7d^2n)} = \tilde{x}_{[f_2 - 7d^2n, f_3 + 7d^2n)}$.
Hence, we can use \eqref{eq:Z3:special_reco:2} to obtain that $\Fac^0_{(Z,\tau)}(S^{f_2} x)$ is equal to $\Fac^0_{(Z,\tau)}(S^{f_2} \tilde{x})$.
More precisely, since $\tilde{f}_\ell \leq f_2 < \tilde{f}_{\ell+1}$, we can write
\begin{equation*}
    (\tilde{f}_\ell - f_2, \tilde{z}_\ell) = \Fac^0_{(Z,\tau)}(S^{f_2} \tilde{x}) 
    = \Fac^0_{(Z,\tau)}(S^{f_2} x) = (0, z_2).
\end{equation*}
Hence, the equation $\Fac^0_{(Z,\tau)}(S^{f_2} x) = \Fac^0_{(Z,\tau)}(S^{f_2} \tilde{x})$ is equivalent to
\begin{equation} \label{eq:Z3:special_reco:10}
    \tilde{f}_\ell = f_2
    \enspace\text{and}\enspace
    \tilde{z}_\ell = z_2.
\end{equation}
In particular, $\tau(\tilde{z}_{[1,\ell)}) = \tau(z_1)$.

Now, since $z_1 \in \A{C}{wp}$, we have by Item \ref{item:Z3:structure_Csp:window} in Proposition \ref{Z3:structure_Csp} that $\tilde{z}_\ell = z_2 \in \A{C}{ap}$.
Therefore, Item \ref{item:Z3:structure_Csp:window} in Proposition \ref{Z3:structure_Csp} guarantees that $\tilde{z}_i \in \A{C}{sp}$ for all $i \in [0, \ell-1)$ and $\tilde{z}_{\ell-1} \in \A{C}{sp}$.
Combining this with \eqref{eq:Z3:special_reco:8} and the equality $\tau(\tilde{z}_{[1,\ell)}) = \tau(z_1)$ produces that 
\begin{equation*}
  (\zeta(s) + q_{z_1}) |s| = |\tau(z_1)| =
  |\tau(\tilde{z}_{[0,\ell)})| = (\ell \zeta(s) + q_{\tilde{z}_{\ell-1}})|s|.
\end{equation*}
Since $q_{z_1} \leq \zeta(s)$ and $q_{\tilde{z}_{\ell-1}} > 0$, we conclude that $\ell \leq 1$.
But then $\tau(z_1) = \tau(\tilde{z}_{[1,\ell)})$ is the empty word, which contradicts the definition of $\tau$.
Therefore, the case $|\tau(z_1)| > |\tau(\tilde{z}_1)|$ does not occur and the proof is complete.
\end{proof}
\section{The fourth coding}
\label{sec:Z4}

In this section, we give the final versions of the codings needed in the proof of the main theorems.
The new element of these codings is that it is possible to connect them using morphisms.

The section has two parts.
In the first one, we construct the new codings, using Proposition \ref{Z3:main_prop} and a modified higher block construction, and present their basic properties.
Then, in the second one, we show how we can connect two of these codings using the morphism described in Subsection \ref{subsubsec:Z4:morphism}.

\subsection{Construction of the fourth coding}
\label{subsec:Z4:construction}

Let $X \subseteq \cA^\Z$ be an infinite minimal subshift, $n \geq 0$ and let $d$ be the maximum of $\lceil p_X(n)/n \rceil$, $p_X(n+1) - p_X(n)$, $\#\cA$ and $10^4$.
We use Proposition \ref{Z3:main_prop} with $X$ and $n$ to obtain a recognizable coding $(Y \subseteq \cB^\Z, \sigma\colon\cB \to \cA^+)$ of $X$, a partition $\cB = \A{B}{ap}\cup\A{B}{sp}\cup\A{B}{wp}$ and an integer $\varepsilon \in [n/d^{2d^3+4}, n/d)$ satisfying Items (1) to (5) of Proposition \ref{Z3:main_prop}.

We start with the following observation.
Since $(Y, \sigma)$ is a recognizable coding of a minimal subshift, $Y$ is minimal; thus, for all $y \in Y$ there exists $k < 0$ such that $y_k \in \A{B}{ap}$.
This observation allows us to define the map $\First(y) = \max\{k < 0 : y_k \in \A{B}{ap}\}$ that returns the index of the first-to-the-left symbol in $\A{B}{ap}$.

Let $\psi_0\colon Y \to \cB^4$ be the map $y \mapsto y_{\First(y)} y_{-1} y_0 y_1$ and $\psi(y) = (\psi_0(S^j y))_{j\in\Z}$.
We treat $\psi(y)$ as a sequence over the alphabet $\cB^4$ and define $Z = \psi(Y) \subseteq (\cB^4)^\Z$.
We set
\begin{equation} \label{eq:Z4:defi_alphabet}
    \cC = \{z_0 : z \in Z\} \subseteq \cB^4.
\end{equation}
Let $\theta(a a_{-1} a_0 a_1) = a_0$ for $a a_{-1} a_0 a_1 \in \cC$ and $\tau = \sigma \theta$.
Remark that
\begin{equation} \label{eq:Z4:theta_psi=id}
    \text{$\theta\psi(y) = y$ for any $y \in Y$.}
\end{equation}
We abuse a bit of the notation and define $\First(z) = \max\{k < 0 : z_k \in \A{C}{ap}\}$ for $z \in Z$.
Note that $\First(\psi(y)) = \First(y)$ for $y \in Y$.

\subsubsection{Basic properties of the fourth coding}
\label{subsubsec:Z4:basic_lemmas}

We present here the basic properties of $(Z, \tau)$.

%%%%                                                                 %%%% LEMMA %%%%
\begin{lem} \label{Z4:lem:Z4_is_reco}
The pair $(Z, \tau)$ is a recognizable coding of $X$. 
\end{lem}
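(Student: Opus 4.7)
The goal is to verify the three defining properties of a recognizable coding: that $Z$ is a subshift of $\cC^\Z$, that $X=\bigcup_{k\in\Z}S^k\tau(Z)$, and that each $x\in X$ admits at most one $\tau$-factorization in $Z$. My approach is to transfer each property from the already recognizable coding $(Y,\sigma)$ through the letter-to-letter map $\theta$. The central fact I would exploit is that by (\ref{eq:Z4:theta_psi=id}), $\theta\circ\psi=\mathrm{id}_Y$, so $\theta|_Z\colon Z\to Y$ is a bijection with inverse $\psi$; in particular $\theta(Z)=Y$.

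First I would verify that $Z$ is a subshift. Shift-invariance is immediate from the definition $\psi(y)_j=\psi_0(S^jy)$, which gives $\psi\circ S=S\circ\psi$ and hence $SZ=Z$. Closedness reduces to checking that $\psi$ is continuous, since then $Z=\psi(Y)$ is the image of a compact set. Continuity of $\psi$ reduces to continuity of $\psi_0$, and the only nonobvious point is the map $y\mapsto y_{\First(y)}$. Here the key observation is that $\First$ is locally constant: if $\First(y)=k_0$ and $y'$ agrees with $y$ on $[k_0,-1]$, then $y'_{k_0}\in\A{B}{ap}$ while $y'_k\notin\A{B}{ap}$ for $k_0<k<0$, forcing $\First(y')=k_0$. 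Well-definedness of $\First$ on all of $Y$ is exactly the minimality-based observation made right before the construction.

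Then $X=\bigcup_{k\in\Z}S^k\tau(Z)$ follows by computing $\tau(Z)=\sigma(\theta(Z))=\sigma(Y)$ using $\tau=\sigma\theta$ and $\theta(Z)=Y$, and invoking the coding identity for $(Y,\sigma)$. Finally, for recognizability, suppose $(k,z)$ is a $\tau$-factorization of $x$ in $Z$. Since $\tau(z)=\sigma(\theta(z))$ and $|\tau(z_0)|=|\sigma(\theta(z_0))|$, the pair $(k,\theta(z))$ is a $\sigma$-factorization of $x$ in $Y$. Recognizability of $(Y,\sigma)$ then pins down $k$ and $\theta(z)$ uniquely, and $z=\psi(\theta(z))$ is determined via the inverse bijection. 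The only mildly delicate point in this whole argument is continuity and well-definedness of $\First$; everything else is a direct transport through $\theta$.
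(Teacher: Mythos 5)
Your proof is correct and follows essentially the same route as the paper: $Z=\psi(Y)$ is a subshift because $\psi$ is continuous and commutes with the shift, and the coding and recognizability properties are transported through the letter-to-letter map $\theta$, whose restriction to $Z$ has inverse $\psi$. The only cosmetic difference is that the paper concludes recognizability by citing Lemma \ref{lem:recognizability_composition} applied to $\sigma$ and $\theta$ (after noting that $(Z,\theta)$ is a recognizable coding of $Y$), whereas you verify uniqueness of $\tau$-factorizations directly by pushing them to $\sigma$-factorizations in $Y$ --- the same argument unrolled by hand.
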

\begin{proof}
It follows from the definitions that $\psi$ commutes with the shift and that it is continuous; hence $Z = \psi(Y)$ is a subshift.
Also, by \eqref{eq:Z4:theta_psi=id}, we have that $\theta\psi(y) = y$ for any $y \in Y$, so $Y = \theta \psi(Y) = \theta(Z)$.
Therefore, $(Z, \theta)$ is a coding of $Y$.
It is easy to see from the definition of $Z$ that $(Z, \theta)$ is recognizable.
Hence, as $(Y, \sigma)$ is recognizable as well, Lemma \ref{lem:recognizability_composition} tells us that $(Z, \sigma \theta)$ is recognizable.
Being $\tau = \sigma \theta$, we conclude that $(Z, \tau)$ is recognizable.
\end{proof}

Thanks to the last lemma, $\Fac_{(Z,\tau)}(x)$ and $\Fac_{(Z,\tau)}^0(x)$ are defined for every $x \in X$.
%%%%                                                                 %%%% LEMMA %%%%
\begin{lem} \label{Z4:relate_Z3_Z4}
Let $x \in X$, $(c, y) = \Fac_{(Y,\sigma)}(x)$ and $(f, z) = \Fac_{(Z,\tau)}(x)$.
Then, $c_k = f_k$ and $\psi_0(S^k y) = z_k$ for all $k \in \Z$.
\end{lem}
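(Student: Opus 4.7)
The plan is to verify directly that $(-c_0,\psi(y))$ is a $\tau$-factorization of $x$ in $Z$, and then invoke the recognizability of $(Z,\tau)$ proved in Lemma \ref{Z4:lem:Z4_is_reco} to conclude uniqueness. The key identity driving everything is $\theta\psi(y)=y$ from Equation \eqref{eq:Z4:theta_psi=id}, which together with $\tau=\sigma\theta$ gives $\tau(\psi(y))=\sigma(y)$. Consequently, for any $j\in\Z$,
\begin{equation*}
    |\tau(\psi(y)_j)|=|\tau(\psi_0(S^jy))|=|\sigma(\theta\psi_0(S^jy))|=|\sigma((S^jy)_0)|=|\sigma(y_j)|.
\end{equation*}

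From the hypothesis $(c,y)=\Fac_{(Y,\sigma)}(x)$, by Definition \ref{defi:factorizations} we have $x=S^{-c_0}\sigma(y)$ and $0\leq -c_0<|\sigma(y_0)|$. Using the two identities above, this rewrites as $x=S^{-c_0}\tau(\psi(y))$ and $0\leq -c_0<|\tau(\psi(y)_0)|$, so $(-c_0,\psi(y))$ is a $\tau$-factorization of $x$ in $Z$. Since Lemma \ref{Z4:lem:Z4_is_reco} ensures that $(Z,\tau)$ is recognizable, this factorization is unique, and therefore it must coincide with $\Fac_{(Z,\tau)}(x)$. In particular $z=\psi(y)$ and $f_0=c_0$.

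To conclude, I will use Equation \eqref{eq:defi_cuts} together with the length identity $|\tau(\psi(y)_j)|=|\sigma(y_j)|$ to propagate the equality $f_0=c_0$ to all indices: for $j\geq 0$,
\begin{equation*}
    f_j=f_0+|\tau(z_{[0,j)})|=c_0+|\tau(\psi(y)_{[0,j)})|=c_0+|\sigma(y_{[0,j)})|=c_j,
\end{equation*}
and symmetrically for $j<0$. The second equality $\psi_0(S^ky)=z_k$ is immediate from $z=\psi(y)$ and the definition $\psi(y)_k=\psi_0(S^ky)$.

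There is no real obstacle here: the proof is essentially a book-keeping check that the $\sigma$-factorization in $Y$ lifts to a $\tau$-factorization in $Z$ via $\psi$, and then uniqueness does the rest. The only subtle point worth writing carefully is the use of $\theta\psi=\mathrm{id}$ to transfer the length constraints between the two factorizations.
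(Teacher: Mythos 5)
Your proof is correct and follows essentially the same route as the paper: check via $\theta\psi(y)=y$ and $\tau=\sigma\theta$ that $(-c_0,\psi(y))$ is a $\tau$-factorization of $x$ in $Z$, invoke the recognizability of $(Z,\tau)$ from Lemma \ref{Z4:lem:Z4_is_reco} to identify it with $(-f_0,z)$, and then propagate $c_0=f_0$ to all cuts using \eqref{eq:defi_cuts} and $\tau(\psi(y))=\sigma(y)$. Your sign bookkeeping in the final display ($f_j=f_0+|\tau(z_{[0,j)})|$) is in fact the cleaner way to write it.
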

\begin{proof}
On one hand, \eqref{eq:Z4:theta_psi=id} implies that $\tau\psi(y) = \sigma\theta\psi(y) = \sigma(y)$, and thus that $S^{-c_0} \tau(\psi(y))$ is equal to $S^{-c_0} \sigma(y) = x$.
On the other hand, $\sigma(y_0)$ is equal to $\tau(\psi_0(y)) = \tau(\psi(y)_0)$; so, as $(-c_0, y)$ is a $\sigma$-factorization, $[0, |\sigma(y_0)|) = [0, |\tau(\psi(y)_0)|)$ contains $-c_0$. 
From these two things, we conclude that $(-c_0, \psi(y))$ is a $\tau$-factorization of $x$.
Then, since $(Z,\tau)$ is recognizable by Lemma \ref{Z4:lem:Z4_is_reco}, $(-c_0, \psi(y))$ and $(-f_0, z)$ are the same $\tau$-factorization, that is, $c_0 = f_0$ and $\psi(y) = z$.
We use this to compute, for $j \geq 0$, 
\begin{equation*}
    f_j = -f_0 + |\tau(z_{[0,j)})| = 
    -c_0 + |\tau (\psi(y)_{[0,j)})| =
    -c_0 + |\sigma(y_{[0,j)})| = c_j,
\end{equation*}
where in the last step we used that $\tau(\psi(y)) = \sigma(y)$.
A similar computation shows that $f_j = c_j$ for $j < 0$ as well.
\end{proof}

The last lemma has the following important consequence.
For any $x \in X$, the cut functions of its $\sigma$-factorization in $Y$ and of its $\tau$-factorization in $Z$ are the same.
Therefore, we can simply write $(c, y) = \Fac_{(Y,\sigma)}(x)$ and $(c, z) = \Fac_{(Z,\tau)}(x)$.
This will be tacitly used in this subsection.

%%%%                                                                 %%%% LEMMA %%%%
\begin{lem} \label{Z4:same_z0_same_ext_window}
Let $x, \tilde{x} \in X$, $(c,z) = \Fac_{(Z,\tau)}(X)$ and $(\tilde{c}, \tilde{z}) = \Fac_{(Z,\tau)}(\tilde{x})$.
If $z_0 = \tilde{z}_0$, then $x_{[c_{\First(z)}, c_{\First(z)+1})} = \tilde{x}_{[\tilde{c}_{\First(\tilde{z})}, \tilde{c}_{\First(\tilde{z})+1})}$ and $x_{[c_{j}, c_{j+1})} = \tilde{x}_{[\tilde{c}_{j}, \tilde{c}_{j+1})}$ for $j \in [-1,1]$.
\end{lem}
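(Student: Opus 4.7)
The plan is to reduce the claim to a direct unpacking of the definition of $\psi$ together with the identification $\Fac_{(Z,\tau)}(x)$ and $\Fac_{(Y,\sigma)}(x)$ share the same cut function (Lemma~\ref{Z4:relate_Z3_Z4}). This lemma essentially says nothing deep: the letters of $Z$ were built to record just enough of $Y$ to read off the three neighbors $y_{-1}, y_0, y_1$ and the last aperiodic letter to the left, so if two such records agree at coordinate $0$, the corresponding $\sigma$-images agree.

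More precisely, first I would apply Lemma~\ref{Z4:relate_Z3_Z4} to both $x$ and $\tilde{x}$ to obtain $(c,y) = \Fac_{(Y,\sigma)}(x)$ and $(\tilde{c}, \tilde{y}) = \Fac_{(Y,\sigma)}(\tilde{x})$ with $z = \psi(y)$ and $\tilde{z} = \psi(\tilde{y})$. Unfolding the definition of $\psi_0$ then yields
\begin{equation*}
z_0 = \psi_0(y) = y_{\First(y)}\, y_{-1}\, y_0\, y_1
\quad\text{and}\quad
\tilde{z}_0 = \psi_0(\tilde{y}) = \tilde{y}_{\First(\tilde{y})}\, \tilde{y}_{-1}\, \tilde{y}_0\, \tilde{y}_1.
\end{equation*}
Hence the hypothesis $z_0 = \tilde{z}_0$ produces the four letter-by-letter equalities
\begin{equation*}
y_{\First(y)} = \tilde{y}_{\First(\tilde{y})}, \qquad y_j = \tilde{y}_j \text{ for } j \in \{-1, 0, 1\}.
\end{equation*}

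Next, I would use the observation recorded right after the definition of $\psi$, namely $\First(\psi(y)) = \First(y)$, to identify $\First(z) = \First(y)$ and $\First(\tilde{z}) = \First(\tilde{y})$. Combined with Lemma~\ref{Z4:relate_Z3_Z4}, the definition of a $\sigma$-factorization gives $x_{[c_k, c_{k+1})} = \sigma(y_k)$ and $\tilde{x}_{[\tilde{c}_k, \tilde{c}_{k+1})} = \sigma(\tilde{y}_k)$ for every $k \in \Z$. Applying $\sigma$ to each of the four letter equalities above immediately produces
\begin{equation*}
x_{[c_{\First(z)}, c_{\First(z)+1})} = \sigma(y_{\First(y)}) = \sigma(\tilde{y}_{\First(\tilde{y})}) = \tilde{x}_{[\tilde{c}_{\First(\tilde{z})}, \tilde{c}_{\First(\tilde{z})+1})}
\end{equation*}
and, for each $j \in \{-1,0,1\}$,
\begin{equation*}
x_{[c_j, c_{j+1})} = \sigma(y_j) = \sigma(\tilde{y}_j) = \tilde{x}_{[\tilde{c}_j, \tilde{c}_{j+1})},
\end{equation*}
which are exactly the two assertions of the lemma.

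There is no real obstacle here; the only point that needs a sentence of justification is the identification $\First(z) = \First(y)$, which is precisely the compatibility of $\psi$ with the definition of $\A{C}{ap}$ noted immediately before the lemma. Everything else is bookkeeping, so the whole proof should be only a few lines.
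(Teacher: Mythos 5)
Your proposal is correct and follows essentially the same route as the paper: apply Lemma \ref{Z4:relate_Z3_Z4} to identify $z=\psi(y)$, $\tilde z=\psi(\tilde y)$ with shared cut functions, read the equality $z_0=\tilde z_0$ letter-by-letter as $y_{\First(y)}=\tilde y_{\First(\tilde y)}$ and $y_j=\tilde y_j$ for $j\in\{-1,0,1\}$, and apply $\sigma$ to the corresponding blocks. The only point you flag, $\First(z)=\First(y)$, is indeed the remark recorded just before the lemma, and the paper uses it implicitly in the same way.
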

\begin{proof}
Let $(c, y) = \Fac_{(Y,\sigma)}(x)$ and $(\tilde{c}, \tilde{y}) = \Fac_{(Y,\sigma)}(\tilde{x})$.
Then, by Lemma \ref{Z4:relate_Z3_Z4} and the hypothesis
\begin{equation*}
    y_{\First(y)} y_{-1} y_0 y_1 = 
    \psi_0(y) = z_0 =
    \tilde{z}_0 = \psi_0(\tilde{y}) =
    \tilde{y}_{\First(\tilde{y})} \tilde{y}_{-1} \tilde{y}_0 \tilde{y}_1.
\end{equation*}
Hence, 
\begin{equation*}
    x_{[c_{\First(z)}, c_{\First(z)+1})} =
    \sigma(y_{\First(y)}) = 
    \sigma(\tilde{y}_{\First(\tilde{y})}) =
    \tilde{x}_{[\tilde{c}_{\First(\tilde{z})}, \tilde{c}_{\First(\tilde{z})+1})}.
\end{equation*}
Similarly, $x_{[c_{-1}, c_2)} = \sigma(y_{-1}y_0y_1)$ is equal to $\tilde{x}_{[\tilde{c}_{-1}, \tilde{c}_2)} = \sigma(\tilde{y}_{-1}\tilde{y}_0\tilde{y}_1)$.
\end{proof}

Let
\begin{equation}\label{eq:Z4:main_prop:1}
    \text{$\A{C}{ap} = \theta^{-1}(\A{B}{ap})$, $\A{C}{wp} = \theta^{-1}(\A{B}{wp})$ and $\A{C}{sp} = \theta^{-1}(\A{B}{sp})$.}
\end{equation}
Note that, since $\A{B}{ap}\cup\A{B}{wp}\cup\A{B}{sp}$ is a partition of $\cB$, the sets $\A{C}{ap}$, $\A{C}{wp}$ and $\A{C}{sp}$ form a partition of $\cC = \theta^{-1}(\cB)$.
%%%%                                                                 %%%% PROPOSITION %%%%
\begin{prop} \label{Z4:main_prop}
The following conditions hold:
\begin{enumerate}
    \item \label{item:Z4:main_prop:lengths}
    $20\varepsilon \leq |\tau(a)| \leq 10d^2n$ for all $a \in \cC$.
    
    \item \label{item:Z4:main_prop:cardinalities}
    $\#\tau(\A{C}{ap}) \leq 2d^{3d+6}$, $\#(\mroot\tau(\cC)) \leq 5d^{3d+6}$ and $\#\cC \leq 7^4d^{12d+36} \pcom(X)^4$.

    \item \label{item:Z4:main_prop:per}
    $(Z, \tau)$ has dichotomous periods w.r.t. $(\A{C}{ap}, \A{C}{sp}\cup\A{C}{wp})$ and $8\varepsilon$.
\end{enumerate}
\end{prop}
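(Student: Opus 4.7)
The proof will consist of transferring the properties of $(Y,\sigma)$ supplied by Proposition \ref{Z3:main_prop} to $(Z,\tau)$ via the factorization identity $\tau = \sigma\theta$ and the cut-function identification from Lemma \ref{Z4:relate_Z3_Z4}. Nothing new about the underlying combinatorics needs to be proved; all three items are essentially bookkeeping once we recognize that $\theta$ maps the partition of $\cC$ in \eqref{eq:Z4:main_prop:1} onto the analogous partition of $\cB$.

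For Item (\ref{item:Z4:main_prop:lengths}), I would simply note that $\tau(a) = \sigma(\theta(a))$ for every $a \in \cC$ and that $\theta(a) \in \cB$, so the bound $20\varepsilon \leq |\tau(a)| \leq 10d^2n$ is a direct quotation of Item (\ref{item:Z3:main_prop:lengths}) of Proposition \ref{Z3:main_prop}.

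For Item (\ref{item:Z4:main_prop:cardinalities}), the inclusions $\theta(\A{C}{ap}) \subseteq \A{B}{ap}$ and $\theta(\cC) \subseteq \cB$ give $\tau(\A{C}{ap}) \subseteq \sigma(\A{B}{ap})$ and $\mroot\tau(\cC) \subseteq \mroot\sigma(\cB)$, so the first two bounds follow immediately from Item (\ref{item:Z3:main_prop:cardinalities}) of Proposition \ref{Z3:main_prop}. For $\#\cC$, I would bound $\#\cB \leq \#\A{B}{ap} + \#\A{B}{sp} + \#\A{B}{wp} \leq 7d^{3d+9}\pcom(X)$ using Item (\ref{item:Z3:main_prop:cardinalities}) of Proposition \ref{Z3:main_prop} (absorbing the constants and using $\pcom(X) \geq 1$). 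Since $\cC \subseteq \cB^4$ by the definition \eqref{eq:Z4:defi_alphabet}, this yields $\#\cC \leq (\#\cB)^4 \leq 7^4 d^{12d+36}\pcom(X)^4$.

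For Item (\ref{item:Z4:main_prop:per}), I would fix $x \in X$ and invoke Lemma \ref{Z4:relate_Z3_Z4} to write $\Fac_{(Y,\sigma)}(x) = (c,y)$ and $\Fac_{(Z,\tau)}(x) = (c,z)$ with the \emph{same} cut function and with $\theta(z_k) = y_k$ for every $k$. In particular, $z_0 \in \A{C}{ap}$ iff $y_0 \in \A{B}{ap}$, and $\mroot\tau(z_0) = \mroot\sigma(y_0)$. Consequently, each of the three clauses in Definition \ref{defi:per} applied to $(Z,\tau)$ with the partition $(\A{C}{ap}, \A{C}{sp}\cup\A{C}{wp})$ translates verbatim into the corresponding clause for $(Y,\sigma)$ with $(\A{B}{ap}, \A{B}{sp}\cup\A{B}{wp})$, and Item (\ref{item:Z3:main_prop:per}) of Proposition \ref{Z3:main_prop} already delivers those. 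The conjugate-roots clause uses only that $\mroot\tau(a) = \mroot\sigma(\theta(a))$ and that $\theta$ preserves the periodic/aperiodic dichotomy of the letters. There is no genuine obstacle in the argument; the only point requiring minor care is the $\#\cC$ estimate, which is a routine multiplicative count.
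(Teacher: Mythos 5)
Your proposal is correct and follows essentially the same route as the paper: transfer everything through $\tau=\sigma\theta$ and the cut-function identification of Lemma \ref{Z4:relate_Z3_Z4}, quoting Items \ref{item:Z3:main_prop:lengths}, \ref{item:Z3:main_prop:cardinalities} and \ref{item:Z3:main_prop:per} of Proposition \ref{Z3:main_prop} (the paper states the equalities $\tau(\A{C}{ap})=\sigma(\A{B}{ap})$ where you use inclusions, which is immaterial for the upper bounds). One small remark: your count $\#\cB\leq 7d^{3d+9}\pcom(X)$ uses the first power of $\pcom(X)$ for $\#\A{B}{wp}$, which is what the proof of Proposition \ref{Z3:main_prop} actually yields (and what the stated bound $\#\cC\leq 7^4d^{12d+36}\pcom(X)^4$ requires), rather than the $\pcom(X)^4$ appearing in the literal statement of that proposition; the paper's own argument makes the same implicit choice.
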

\begin{proof}
We start with the proof of Items \ref{item:Z4:main_prop:cardinalities} and \ref{item:Z4:main_prop:lengths}.
We have, from the equation $\tau = \sigma\theta$ and \eqref{eq:Z4:main_prop:1}, that
\begin{equation*}
    \text{$\tau(\A{C}{ap}) = \sigma(\A{B}{ap})$ and $\tau(\cC) = \cB$.}
\end{equation*}
Thus, by Item \ref{item:Z3:main_prop:cardinalities} in Proposition \ref{Z3:main_prop}, $\#\tau(\A{C}{ap}) = \#\sigma(\A{B}{ap}) \leq 2d^{3d+6}$ and $\#(\mroot\tau(\cC)) = \#(\mroot\sigma(\cB)) \leq 5d^{3d+6}$.
Also, from the definition of $\cC$ we get that $\#\cC \leq \#\cB^4$.
Putting the bounds from Item \ref{item:Z3:main_prop:cardinalities} of Proposition \ref{Z3:main_prop} in this inequality gives that $\#\cC \leq 7^4d^{12d+36} \pcom(X)^4$.
Item \ref{item:Z4:main_prop:lengths} follows from the equation $\tau(\cC) = \cB$ and Item \ref{item:Z3:main_prop:lengths} of Proposition \ref{Z3:main_prop}.

We now prove Item \ref{item:Z4:main_prop:per}.
Let $x \in X$ be arbitrary and define $(c,z) = \Fac_{(Z,\tau)}(x)$ and $(c,y) = \Fac_{(Y,\sigma)}(x)$.
Equation \eqref{eq:Z4:main_prop:1} ensures that $y_0 \in \A{B}{ap}$ if and only if $z_0 \in \A{C}{ap}$.
We also note that, by Lemma \ref{Z4:relate_Z3_Z4}, $\mroot \tau(z_0) = \mroot \sigma(y_0)$.
Therefore, Item \ref{item:Z4:main_prop:per} of this proposition follows Item \ref{item:Z3:main_prop:per} in Proposition \ref{Z3:main_prop}.
\end{proof}

\begin{rema} \label{rema:Z4:special_per&root=per}
As was similarly observed in Remark \ref{rema:Z3:special_per&root=per}, a consequence of Items \ref{item:Z4:main_prop:per} and \ref{item:Z4:main_prop:lengths} in Proposition \ref{Z4:main_prop} is that, for all $a \in \cC \setminus \A{C}{ap}$, $|\mroot \tau(a)| = \per(\tau(a))$.
\end{rema}

%%%%                                                                 %%%% PROPOSITION %%%%
\begin{prop} \label{Z4:structure_Csp}
Let $z \in \Z$.
\begin{enumerate}
    \item \label{item:Z4:structure_Csp:window}
    If  $i < j$ are integers such that $z_k \in \cC \setminus \A{C}{ap}$ for all $k \in [i, j)$, then $\mroot \tau(z_k) = \mroot \tau(z_i)$ if $k \in [i,j)$, $z_k \in \A{C}{sp}$ if $k \in [i,j-1)$, and $z_k = z_{i+1}$ for all $i \in [i+1,j-1)$.
    \item \label{item:Z4:structure_Csp:power_of_2}
    If $z_0 \in \A{C}{sp}$, then $\tau(a) = (\mroot\tau(a))^{2^r}$, where $r$ is the unique integer for which $2^r|\mroot\tau(a)|$ belongs to $[20\varepsilon, 40\varepsilon)$.
\end{enumerate}
\end{prop}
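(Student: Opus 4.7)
The plan is to reduce Proposition~\ref{Z4:structure_Csp} to its third-coding analogue, Proposition~\ref{Z3:structure_Csp}, via the projection $\theta\colon Z\to Y$. Since $Z=\psi(Y)$ and $\theta\psi=\mathrm{id}_Y$, every $z\in Z$ arises as $z=\psi(y)$ with $y=\theta(z)$, and the definition of $\psi$ gives the explicit coordinate formula
\[
z_k = \psi_0(S^ky) = y_{\First(S^ky)+k}\,y_{k-1}\,y_k\,y_{k+1}.
\]
The partitions are compatible under $\theta$ via \eqref{eq:Z4:main_prop:1}: $\A{C}{ap}=\theta^{-1}(\A{B}{ap})$, $\A{C}{sp}=\theta^{-1}(\A{B}{sp})$, $\A{C}{wp}=\theta^{-1}(\A{B}{wp})$, and one has $\tau(z_k)=\sigma(y_k)$ and $\mroot\tau(z_k)=\mroot\sigma(y_k)$ because $\tau=\sigma\theta$. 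These translation identities will be the workhorse of the argument.

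For Item~(1), set $y=\theta(z)$. The hypothesis $z_{[i,j)}\subseteq\cC\setminus\A{C}{ap}$ translates to $y_{[i,j)}\subseteq\cB\setminus\A{B}{ap}$. Invoking Item~(1) of Proposition~\ref{Z3:structure_Csp} on $y$, with the role of its index $j$ played by $j-1$ so as to match its $[i,(j-1)+1)$ hypothesis range, yields $y_k=y_i\in\A{B}{sp}$ for $k\in[i,j-1)$ and $\mroot\sigma(y_k)=\mroot\sigma(y_i)$ for $k\in[i,j)$. Pulling these facts back through $\theta$ immediately gives $\mroot\tau(z_k)=\mroot\tau(z_i)$ on $[i,j)$ and $z_k\in\A{C}{sp}$ on $[i,j-1)$. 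For the equality $z_k=z_{i+1}$, I plug the information into the formula for $\psi_0(S^ky)$: when $k$ lies in the interior of the run, the three coordinates $y_{k-1},y_k,y_{k+1}$ all coincide with $y_i$, while $\First(S^ky)+k$ equals the position of the last $\A{B}{ap}$-letter strictly to the left of $i$, which is independent of $k$ since no $\A{B}{ap}$-letter occurs inside $[i,j)$. Hence $z_k$ is constant in $k$ across the relevant range, equal to $z_{i+1}$.

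For Item~(2), if $z_0\in\A{C}{sp}$ then $y_0=\theta(z_0)\in\A{B}{sp}$, and Item~(2) of Proposition~\ref{Z3:structure_Csp} applied to $y_0$ yields $\sigma(y_0)=(\mroot\sigma(y_0))^{2^r}$ with $2^r|\mroot\sigma(y_0)|\in[20\varepsilon,40\varepsilon)$. The desired conclusion follows because $\tau(z_0)=\sigma(y_0)$ and $\mroot\tau(z_0)=\mroot\sigma(y_0)$, so the very same exponent works.

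The only subtlety I expect to grind on is the last clause of Item~(1): verifying that the $\First$-pointer appearing in the formula for $\psi_0$ is genuinely constant across $k\in[i+1,j-1)$. This is a bookkeeping argument resting on the fact that within a maximal run of non-$\A{C}{ap}$ letters the nearest preceding $\A{C}{ap}$-position does not move with $k$; a careful boundary check at the endpoint $k=j-2$ is needed, since there $y_{k+1}=y_{j-1}$ may lie in $\A{B}{wp}$ rather than $\A{B}{sp}$, but the matching of roots $\mroot\sigma(y_{j-1})=\mroot\sigma(y_i)$ supplied by Proposition~\ref{Z3:structure_Csp} keeps the window well-controlled. After this verification, every remaining step is direct rewriting through the dictionary $z\leftrightarrow y=\theta(z)$.
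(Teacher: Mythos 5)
Your overall route is exactly the paper's: write $z=\psi(y)$ with $y=\theta(z)$, use $\tau=\sigma\theta$ and the compatibility of the partitions to transport the hypothesis to $Y$, apply Proposition \ref{Z3:structure_Csp}, and then read off $z_k=\psi_0(S^ky)=y_{\First(S^ky)}y_{k-1}y_ky_{k+1}$ together with the constancy of the $\First$-pointer inside the run. Item (2) is handled the same way in both arguments and is fine.

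The genuine gap is the boundary issue you flag at $k=j-2$, and the fix you propose does not work. With the hypothesis only on $[i,j)$, your application of Proposition \ref{Z3:structure_Csp} (with its index $j$ replaced by $j-1$) gives $y_m=y_i\in\A{B}{sp}$ only for $m\in[i,j-1)$, so the last letter $y_{j-1}$ of the run may lie in $\A{B}{wp}$ (this is exactly the block structure $a^\ell b$ of Lemma \ref{Z3:precise_structure}). Then $z_{j-2}=y_{\First(S^{j-2}y)}\,y_i\,y_i\,y_{j-1}$ while $z_{i+1}=y_{\First(S^{i+1}y)}\,y_i\,y_i\,y_{i+2}$ with $y_{i+2}=y_i$, and these are different elements of $\cC\subseteq\cB^4$ whenever $y_{j-1}\neq y_i$; knowing $\mroot\sigma(y_{j-1})=\mroot\sigma(y_i)$ only controls the roots of the images, not the letters themselves, so it cannot restore equality of the $4$-tuples. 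The paper's proof avoids this precisely by taking the hypothesis on $[i,j+1)$ (one extra non-$\A{C}{ap}$ letter to the right, matching the formulation of Item \ref{item:Z3:structure_Csp:window} of Proposition \ref{Z3:structure_Csp}); then $y_{k+1}=y_i$ for every $k\in[i+1,j-1)$ and the constancy of $\psi_0(S^ky)$ is immediate. So to close your argument you must either strengthen the hypothesis in the same way or shrink the range of the last conclusion; as written, the endpoint case cannot be repaired by the root-matching observation.
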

\begin{proof}
Suppose that $i < j$ satisfy $z_k \in \cC \setminus \A{C}{ap}$ for all $k \in [i, j+1)$.
Let us denote $\Fac_{(Y,\sigma)}(\tau(z))$ by $(c,y)$.
Then, by Lemma \ref{Z4:relate_Z3_Z4}, $y_k = \theta(z_k) \in \cB \setminus \A{B}{ap}$ for all $k \in [i, j+1)$.
In this context, Item \ref{item:Z3:structure_Csp:window} of Proposition \ref{Z3:structure_Csp} ensures that $\mroot \sigma(y_k) = \mroot \sigma(y_i)$ if $k \in [i,j+1)$ and $y_k = y_i \in \A{B}{sp}$ if $k \in [i, j)$
We deduce, as $\tau(z_k) = \sigma(y_k)$, that $\mroot \tau(z_k) = \mroot \tau(z_i)$ for all $k \in [i,j+1)$.
Also, for any $k \in [i,j)$, we have that $z_k \in \theta^{-1}(y_k) \in \A{B}{sp}$.
Now, since $y_k \in \cC \setminus \A{C}{ap}$ if $[i,j+1)$, we have that $y_{\First(S^k y)} = y_{\First(S^i y)}$ for all $k \in [i,j+1)$.
Combining this with the fact that $y_k = y_i$ if $k \in [i,j)$ yields
\begin{equation*}
    \psi_0(S^k y) = y_{\First(S^k y)} y_{k-1} y_k y_{k+1} =
    y_{\First(S^i y)} y_i y_i y_i
    \enspace\text{for all $k \in [i+1,j-1)$.}
\end{equation*}
We conclude, using Lemma \ref{Z4:relate_Z3_Z4}, that $z_k = z_{i+1}$ for $k \in [i+1,j-1)$.
\end{proof}

Lemmas \ref{Z4:reco_from_aper}, \ref{Z4:reco_from_reco} and \ref{Z4:reco_from_aper_sym} will use the following notation:
\begin{equation*}
    E = 50d^2 n.
\end{equation*}
Observe that $|\sigma| \leq E$, $|\tau| \leq E$ and that $E$ is bigger than or equal to the constants $7d^2$ and $50d^2n$ appearing in Proposition \ref{Z3:special_reco}.

%%%%                                                                 %%%% LEMMA %%%%
\begin{lem} \label{Z4:reco_from_aper}
Let $x, \tilde{x} \in X$ be such that $x_{[-3E, 3E)} = \tilde{x}_{[-3E, 3E)}$ and $\per(x_{[-\varepsilon,\varepsilon)}) > \varepsilon$.
Let $(c,z) = \Fac_{(Z,\tau)}(x)$ and $(\tilde{c},\tilde{z}) = \Fac_{(Z,\tau)}(\tilde{x})$.
Then, $\tau(z_0) = \tau(\tilde{z}_0)$ and $\Fac^0_{(Z, \tau)}(S^{c_1} x) = \Fac^0_{(Z,\tau)}(S^{c_1} \tilde{x})$.
In particular, $c_0 = \tilde{c}_0$ and $c_1 = \tilde{c}_1$.
\end{lem}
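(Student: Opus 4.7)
The plan is to lift everything to the underlying coding $(Y, \sigma)$ from Proposition \ref{Z3:main_prop}, where Proposition \ref{Z3:special_reco} is directly available, and then transport back through Lemma \ref{Z4:relate_Z3_Z4} together with the definitions $\tau = \sigma\theta$ and $z_k = \psi_0(S^k y)$. Write $(c, y) = \Fac_{(Y,\sigma)}(x)$ and $(\tilde c, \tilde y) = \Fac_{(Y,\sigma)}(\tilde x)$; by Lemma \ref{Z4:relate_Z3_Z4} these cut functions coincide with those of the $\tau$-factorizations in $Z$. Since $E \geq 7d^2 n$, the hypotheses of Item 1 of Proposition \ref{Z3:special_reco} are satisfied for $(Y, \sigma)$, yielding $c_0 = \tilde c_0$ and $y_0 = \tilde y_0$. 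Because $\theta(z_0) = y_0$, we obtain $\tau(z_0) = \sigma(y_0) = \sigma(\tilde y_0) = \tau(\tilde z_0)$, and consequently $c_1 = c_0 + |\tau(z_0)| = \tilde c_1$. Next, since $c_i \leq iE \leq 2E$ (using $|\tau| \leq E$), the window of agreement $[-3E, 3E)$ comfortably contains $[-E, c_i + E)$, so iterating Item 2 of Proposition \ref{Z3:special_reco} with $k = c_1$ and then $k = c_2$ is legitimate and produces $y_1 = \tilde y_1$ and $y_2 = \tilde y_2$.

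It remains to show $\Fac^0_{(Z,\tau)}(S^{c_1} x) = \Fac^0_{(Z,\tau)}(S^{c_1} \tilde x)$, which amounts to $(0, z_1) = (0, \tilde z_1)$, i.e.\ $z_1 = \tilde z_1$. Unwinding $z_1 = \psi_0(Sy) = y_{\First(Sy)+1}\, y_0\, y_1\, y_2$ (and analogously for $\tilde z_1$), three of the four coordinates are already matched; the remaining one requires $\First(Sy) = \First(S\tilde y)$. I would deduce this by proving $y_0 \in \A{B}{ap}$, which forces both values to equal $-1$. Indeed, if instead $y_0 \in \A{B}{sp} \cup \A{B}{wp}$, the dichotomous period property of $(Y, \sigma)$ together with the sharper bound $|\mroot \sigma(y_0)| \leq \varepsilon$ that the explicit construction of $(Y,\sigma)$ provides (the very bound invoked inside the proof of Proposition \ref{Z3:special_reco} Item 1) would force $x_{[c_0 - 8\varepsilon, c_1 + 8\varepsilon)}$ to be a power of $\mroot \sigma(y_0)$ of period at most $\varepsilon$, and since $c_0 \leq 0 < c_1$ the window $[-\varepsilon, \varepsilon)$ sits inside $[c_0 - 8\varepsilon, c_1 + 8\varepsilon)$, contradicting the hypothesis $\per(x_{[-\varepsilon, \varepsilon)}) > \varepsilon$. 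With $y_0 \in \A{B}{ap}$ in hand, $\psi_0(Sy) = y_0 y_0 y_1 y_2 = \tilde y_0 \tilde y_0 \tilde y_1 \tilde y_2 = \psi_0(S\tilde y)$, whence $z_1 = \tilde z_1$ by Lemma \ref{Z4:relate_Z3_Z4}.

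The main obstacle is precisely the verification $y_0 \in \A{B}{ap}$: the abstract dichotomous-periods Definition \ref{defi:per} with parameter $8\varepsilon$ only gives $|\mroot \sigma(y_0)| \leq 8\varepsilon$, which is insufficient to contradict $\per(x_{[-\varepsilon, \varepsilon)}) > \varepsilon$. The argument must therefore appeal to the finer construction-level bound $|\mroot \sigma(y_0)| \leq \varepsilon$, exactly as in the proof of Proposition \ref{Z3:special_reco} Item 1. Once $y_0$ is pinned to $\A{B}{ap}$, the bookkeeping for the $\First$-coordinate of $\psi_0$ becomes trivial and the remaining pieces combine as above.
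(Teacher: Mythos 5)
Your proof is correct and essentially identical to the paper's: you obtain $c_0=\tilde c_0$, $y_{[0,3)}=\tilde y_{[0,3)}$ and $c_j=\tilde c_j$ from the recognizability of the third coding (Proposition \ref{Z3:special_reco}, i.e.\ Item \ref{item:Z3:main_prop:reco} of Proposition \ref{Z3:main_prop}), show $y_0\in\A{B}{ap}$ by contradiction with $\per(x_{[-\varepsilon,\varepsilon)})>\varepsilon$, and conclude $z_1=\tilde z_1$ via $\psi_0(Sy)=\psi_0(S\tilde y)$, exactly as in the paper. The subtlety you flag — that the literal parameter $8\varepsilon$ in Definition \ref{defi:per} only bounds the root by $8\varepsilon$, so one must invoke the construction-level bound $|\mroot\sigma(y_0)|\leq\varepsilon$ — is also how the paper's argument is meant to be read: it cites Item \ref{item:Z4:main_prop:per} of Proposition \ref{Z4:main_prop} to get period at most $\varepsilon$ on the extended window, relying on the sharper root bound inherited from Lemma \ref{Z2:tech_ext_per} (cf.\ Remark \ref{rema:Z4:special_per&root=per}).
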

\begin{proof}
We use the notation $(c,y) = \Fac_{(Y,\sigma)}(x)$ and $(\tilde{c}, \tilde{y}) = \Fac_{(Y,\sigma)}(\tilde{x})$.
Observe that the hypothesis $x_{[-3E, 3E)} = \tilde{x}_{[-3E, 3E)}$ allows us to use Item \ref{item:Z3:main_prop:reco} in Proposition \ref{Z3:main_prop} to obtain that
\begin{equation} \label{eq:Z4:reco_from_aper:1}
    \Fac^0_{(Y,\sigma)}(S^i x) = \Fac^0_{(Y,\sigma)}(S^i \tilde{x})
    \enspace\text{for all $i \in [0, 2E]$.}
\end{equation}
In particular, $c_0 = \tilde{c}_0$ and $y_0 = \tilde{y}_0$.
Thus, by Lemma \ref{Z4:relate_Z3_Z4}, $\tau(z_0) = \sigma(y_0) = \sigma(\tilde{y}_0) = \tau(\tilde{z}_0)$.
Also, for $j \in \{1,2\}$ we have that $0 \leq c_j \leq c_2 \leq 2E$, so \eqref{eq:Z4:reco_from_aper:1} can be applied to deduce that $\Fac^0_{(Y,\sigma)}(S^{c_j} x) = \Fac^0_{(Y,\sigma)}(S^{c_j} \tilde{x})$.
Then, 
\begin{equation} \label{eq:Z4:reco_from_aper:2}
    y_{[0,3)} = \tilde{y}_{[0,3)}
    \enspace\text{and}\enspace
    c_j = \tilde{c}_j\enspace\text{for all $j\in[0,3)$}.
\end{equation}

Before continuing, we prove that
\begin{equation} \label{eq:Z4:reco_from_aper:3}
    y_0, \tilde{y}_0 \in \A{C}{ap}.
\end{equation}
We note that if $y_0 \in \cC \setminus \A{C}{ap}$, then Item \ref{item:Z4:main_prop:per} in Proposition \ref{Z4:main_prop} gives that $\per(x_{[c_0-8\varepsilon, c_1+9\varepsilon)}) \leq \varepsilon$, which is impossible since we assumed that $\per(x_{[-\varepsilon, \varepsilon)}) > \varepsilon$.
Thus, $y_0 \in \A{C}{ap}$.
Similarly, $\tilde{y}_0 \in \A{C}{ap}$ as $\tilde{x}_{[-\varepsilon, \varepsilon)} = x_{[-\varepsilon, \varepsilon)}$.

We can now finish the proof.
The condition $c_1 = \tilde{c}_1$ follows from \eqref{eq:Z4:reco_from_aper:2}.
Also, we have from  Lemma \ref{Z4:relate_Z3_Z4}  that $z_1 = \psi_0(Sy) = \First(Sy) y_0 y_1 y_2$ and $\tilde{z}_1 = \psi_0(Sy) = \First(S\tilde{y}) \tilde{y}_0 \tilde{y}_1 \tilde{y}_2$.
Now, Equation \eqref{eq:Z4:reco_from_aper:3} guarantees that $\First(Sy) = y_0$ and $\First(S\tilde{y}) = \tilde{y}_0$.
Therefore, by Equation \eqref{eq:Z4:reco_from_aper:2}, $z_1 = \tilde{z}_1$.
We conclude, using that $c_1 = \tilde{c}_1$, that
\begin{equation} 
    \Fac^0_{(Z,\tau)}(S^{c_1} x) = (0, z_1) =
    (0, \tilde{z}_1) =  \Fac^0_{(Z,\tau)}(S^{c_1} \tilde{x}).
\end{equation}
\end{proof}

%%%%                                                                 %%%% LEMMA %%%%
\begin{lem} \label{Z4:reco_from_aper_sym}
Let $x, \tilde{x} \in X$, $(c,z) = \Fac_{(Z,\tau)}(x)$ and $(\tilde{c},\tilde{z}) = \Fac_{(Z,\tau)}(\tilde{x})$.
Suppose that $x_{[-4E, 4E)} = \tilde{x}_{[-4E, 4E)}$ and $z_0 \in \A{C}{ap}$.
Then, $\tau(z_0) = \tau(\tilde{z}_0)$ and $\Fac^0_{(Z, \tau)}(S^{c_1} x) = \Fac^0_{(Z,\tau)}(S^{c_1} \tilde{x})$.
In particular, $c_0 = \tilde{c}_0$ and $c_1 = \tilde{c}_1$.
\end{lem}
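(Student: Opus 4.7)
The plan is to reduce Lemma~\ref{Z4:reco_from_aper_sym} to Lemma~\ref{Z4:reco_from_aper} by shifting the origin to a position inside the block $[c_0,c_1)$ where the local aperiodicity hypothesis of Lemma~\ref{Z4:reco_from_aper} holds. The point is that $z_0\in\A{C}{ap}$, together with the dichotomous-periods property of $(Z,\tau)$, supplies a sufficiently long aperiodic window inside $[c_0,c_1)$, and a position centred inside this window will serve as the new origin.

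Concretely, I would first invoke Item~\ref{item:Z4:main_prop:per} of Proposition~\ref{Z4:main_prop} to obtain $\per(x_{[c_0+8\varepsilon,\,c_1-8\varepsilon)})>\varepsilon$. Since $|\tau(z_0)|\geq 20\varepsilon$ by Item~\ref{item:Z4:main_prop:lengths} of the same proposition, this interval has length at least $4\varepsilon$, so Item~\ref{item:CL:localize_per_aper:loc_aper} of Proposition~\ref{CL:localize_per_aper} yields an integer $i$ with $[i-\varepsilon,i+\varepsilon)\subseteq[c_0+8\varepsilon,c_1-8\varepsilon)$ and $\per(x_{[i-\varepsilon,i+\varepsilon)})>\varepsilon$. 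Then I would verify that the shifted pair $S^i x,\,S^i\tilde{x}$ satisfies the hypothesis of Lemma~\ref{Z4:reco_from_aper}: the local aperiodicity at the new origin is immediate, and the agreement $(S^i x)_{[-3E,3E)}=(S^i\tilde{x})_{[-3E,3E)}$ comes from the inclusion $[i-3E,i+3E)\subseteq[-4E,4E)$, which in turn rests on $c_1-c_0=|\tau(z_0)|\leq E$ (forcing $c_0,c_1\in(-E,E]$) together with $0\in[c_0,c_1)$ and $i\in[c_0+8\varepsilon,c_1-8\varepsilon)$.

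Finally, I would pull back the conclusion of Lemma~\ref{Z4:reco_from_aper} applied to $S^i x$ and $S^i\tilde{x}$. Writing $(c',z')=\Fac_{(Z,\tau)}(S^i x)$ and $(\tilde{c}',\tilde{z}')=\Fac_{(Z,\tau)}(S^i\tilde{x})$, the fact that $i\in[c_0,c_1)$ forces $z'_0=z_0$, $c'_0=c_0-i$ and $c'_1=c_1-i$; denoting by $j$ the unique integer with $\tilde{c}_j\leq i<\tilde{c}_{j+1}$, one has $\tilde{z}'_0=\tilde{z}_j$, $\tilde{c}'_0=\tilde{c}_j-i$ and $\tilde{c}'_1=\tilde{c}_{j+1}-i$. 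The identities $c'_0=\tilde{c}'_0$ and $c'_1=\tilde{c}'_1$ provided by Lemma~\ref{Z4:reco_from_aper} then read $c_0=\tilde{c}_j$ and $c_1=\tilde{c}_{j+1}$, and $c_0\leq 0<c_1$ forces $j=0$, giving $c_0=\tilde{c}_0$, $c_1=\tilde{c}_1$ and $\tau(z_0)=\tau(\tilde{z}_0)$; the remaining equation $\Fac^0_{(Z,\tau)}(S^{c_1}x)=\Fac^0_{(Z,\tau)}(S^{c_1}\tilde{x})$ follows from $S^{c'_1}(S^i x)=S^{c_1}x$ and the analogous identity for $\tilde{x}$. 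I do not expect a serious obstacle; the only real care needed is the final bookkeeping step, in particular the identification $j=0$, which ensures that the conclusion of Lemma~\ref{Z4:reco_from_aper} indeed transfers to the zero-th letter of $\tilde{z}$.
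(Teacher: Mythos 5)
Your proposal is correct and follows essentially the same route as the paper's proof: use the dichotomous-periods property to locate a position $i$ inside $[c_0,c_1)$ with $\per(x_{[i-\varepsilon,i+\varepsilon)})>\varepsilon$, note that $|i|<E$ so the window hypothesis of Lemma \ref{Z4:reco_from_aper} holds for $S^i x, S^i\tilde{x}$, and then apply that lemma and transfer the conclusion back. Your final bookkeeping (identifying $j=0$ and pulling back the $\Fac^0$ identity through the shift) is exactly the step the paper leaves implicit, and it is carried out correctly.
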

\begin{proof}
The condition $z_0 \in \A{C}{ap}$ implies, by Item \ref{item:Z4:main_prop:per} in Proposition \ref{Z4:main_prop}, that $\per(x_{[c_0, c_1)}) > \varepsilon$.
Thus, by Item \ref{item:CL:localize_per_aper:loc_aper} in Lemma \ref{CL:localize_per_aper}, there is $i \in [c_0, c_1)$ such that $\per(x_{[i-\varepsilon,i+\varepsilon)}) > \varepsilon$.
Now, since $|\tau| \leq E$, the hypothesis ensures that $(S^i x)_{[i-\varepsilon,i+\varepsilon)}$ is equal to $(S^i \tilde{x})_{[i-\varepsilon,i+\varepsilon)}$.
Therefore, we can use Lemma \ref{Z4:reco_from_aper} and conclude that $\tau(z_0) = \tau(\tilde{z}_0)$ and $\Fac^0_{(Z, \tau)}(S^{c_1} x) = \Fac^0_{(Z,\tau)}(S^{c_1} \tilde{x})$.
\end{proof}

%%%%                                                                 %%%% LEMMA %%%%
\begin{lem} \label{Z4:reco_from_reco}
Let $x, \tilde{x} \in X$ and $k\geq0$ be an integer.
Suppose that $\Fac^0_{(Z,\tau)}(x) = \Fac^0_{(Z,\tau)}(\tilde{x})$ and that $x_{[-3E, k+3E)} = \tilde{x}_{[-3E, k+3E)}$.
Then, $\Fac^0_{(Z,\tau)}(S^i x) = \Fac^0_{(Z,\tau)}(S^i\tilde{x})$ for all $i \in [0, k]$.
\end{lem}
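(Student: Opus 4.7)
The plan is to reduce the statement to Proposition \ref{Z3:special_reco}(2) applied to the underlying coding $(Y, \sigma)$, and then lift the resulting equalities across the map $\psi$. Set $(c, z) = \Fac_{(Z,\tau)}(x)$ and $(\tilde c, \tilde z) = \Fac_{(Z,\tau)}(\tilde x)$, and let $y_x, \tilde y_{\tilde x}$ denote the corresponding points of $Y$; Lemma \ref{Z4:relate_Z3_Z4} guarantees that the cut functions coincide with those of the $(Y,\sigma)$-factorizations and that $z_j = \psi_0(S^j y_x)$, $\tilde z_j = \psi_0(S^j \tilde y_{\tilde x})$. Unpacking the base hypothesis $\Fac^0_{(Z,\tau)}(x) = \Fac^0_{(Z,\tau)}(\tilde x)$ coordinatewise through the definition of $\psi_0$ yields $c_0 = \tilde c_0$, $(y_x)_0 = (\tilde y_{\tilde x})_0$ (so in particular $\Fac^0_{(Y,\sigma)}(x) = \Fac^0_{(Y,\sigma)}(\tilde x)$), $(y_x)_{\pm 1} = (\tilde y_{\tilde x})_{\pm 1}$, and $(y_x)_{\First(y_x)} = (\tilde y_{\tilde x})_{\First(\tilde y_{\tilde x})}$.

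First I invoke Proposition \ref{Z3:special_reco}(2) for the coding $(Y,\sigma)$ at every parameter $k' \in [0, k + 2E]$. The required margin $x_{[-E, k' + E)} = \tilde x_{[-E, k' + E)}$ is comfortably contained in the $3E$-margin hypothesis since $k' + E \leq k + 3E$, and the base condition $\Fac^0_{(Y,\sigma)}(x) = \Fac^0_{(Y,\sigma)}(\tilde x)$ was just noted. This produces $\Fac^0_{(Y,\sigma)}(S^{k'} x) = \Fac^0_{(Y,\sigma)}(S^{k'} \tilde x)$ for every such $k'$, which, together with a short induction on $j$ using $c_{j+1} = c_j + |\sigma(y_j)|$, is equivalent to $c_j = \tilde c_j$ and $(y_x)_j = (\tilde y_{\tilde x})_j$ for every $j \geq 0$ with $c_j \in [0, k + 2E]$.

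To conclude, I fix $j \geq 0$ with $c_j \leq k$ and check $\psi_0(S^j y_x) = \psi_0(S^j \tilde y_{\tilde x})$; together with the cut equality $c_j = \tilde c_j$, this gives $\Fac^0_{(Z,\tau)}(S^i x) = \Fac^0_{(Z,\tau)}(S^i \tilde x)$ for every $i \in [c_j, c_{j+1})$, and varying $j$ covers $[0,k]$. The neighbouring $y$-letters $(y_x)_{j-1}, (y_x)_j, (y_x)_{j+1}$ sit at cuts inside $[-E, k+E]$: those with nonnegative cut are matched by the previous step, while the exceptional $(y_x)_{-1}$ (for $j = 0$) and $(y_x)_0$ in the subcase $j = 1$, $c_0 < 0$ are covered by the unpacked base hypothesis. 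The delicate coordinate is the first-aperiodic index $M = \max\{m < j : (y_x)_m \in \A{B}{ap}\}$: if $M \geq 0$ then matching of $(y_x)_m$ on $[0,j)$ forces $M$ to take the same value for $\tilde x$ and the letters agree; if $M < 0$ then no aperiodic letter appears in $[0,j)$ on either side, so $M = \First(y_x)$ and the analogous index for $\tilde x$ is $\First(\tilde y_{\tilde x})$. These integers may differ, but the letters $(y_x)_{\First(y_x)}$ and $(\tilde y_{\tilde x})_{\First(\tilde y_{\tilde x})}$ coincide because they are precisely the first coordinates of $z_0$ and $\tilde z_0$. This last subcase is the main obstacle, and it is where the strictly stronger $(Z,\tau)$-hypothesis is genuinely used; it is also the reason the block encoding $\psi$ was set up to carry four coordinates rather than three.
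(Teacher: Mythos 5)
Your argument is correct and rests on the same ingredients as the paper's own proof: Lemma \ref{Z4:relate_Z3_Z4}, the $(Y,\sigma)$-level propagation of Proposition \ref{Z3:special_reco}(2), and the structure of $\psi_0$ (in particular, that when no aperiodic symbol occurs in $[0,j)$ the first coordinate of $z_0=\tilde z_0$ supplies the matching letter even though the indices $\First(y)$ and $\First(\tilde y)$ may differ). The only difference is organizational: the paper advances one coordinate at a time at the $(Z,\tau)$ level, so that at each cut crossing only the single new letter $y_2$ must be fetched via Proposition \ref{Z3:special_reco}(2), whereas you match all $(Y,\sigma)$-cuts and letters in the window in one pass and then reassemble each block $\psi_0(S^j y)$ directly — the same proof, differently packaged.
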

\begin{proof}
We only prove that $\Fac^0_{(Z,\tau)}(S x) = \Fac^0_{(Z,\tau)}(S \tilde{x})$ if $\Fac^0_{(Z,\tau)}(x) = \Fac^0_{(Z,\tau)}(\tilde{x})$ and that $x_{[-3E, 1+3E)} = \tilde{x}_{[-3E, 1+3E)}$, as then an inductive argument on $i$ gives the lemma.

Let us write $(c,y) = \Fac_{(Y,\sigma)}(x)$ and $(\tilde{c},\tilde{y}) = \Fac_{(Y,\sigma)}(x)$.
Then, by Lemma \ref{Z4:relate_Z3_Z4}, $z = \psi(y)$ and $\tilde{z} = \psi(\tilde{y})$ satisfy $(c,z) = \Fac_{(Z,\tau)}(x)$ and $(\tilde{c},\tilde{z}) = \Fac_{(Z,\tau)}(x)$.
Hence, the hypothesis $\Fac^0_{(Z,\tau)}(x) = \Fac^0_{(Z,\tau)}(\tilde{x})$ is equivalent to $z_0 = \tilde{z}_0$ and $c_0 = \tilde{c}_0$.
This implies two things:
\begin{enumerate} [label=(\roman*)]
    \item $y_{\First(y)} y_{-1} y_0 y_1 = \psi_0(y) = z_0 = \tilde{z}_0 = \psi_0(\tilde{y}) = \tilde{y}_{\First(\tilde{y})} \tilde{y}_0 \tilde{y}_1 \tilde{y}_2$.
    \item $c_1 = c_0 + |\sigma(y_0)| = \tilde{c}_1 + |\sigma(\tilde{y}_0)| = \tilde{c}_1$ and, similarly, $c_2 = \tilde{c}_2$.
\end{enumerate}
We deduce that, for any $i \in [0, c_1)$,
\begin{equation*}
    \Fac^0_{(Z,\tau)}(S^i x) = (c_0 - i, z_0) = 
    (\tilde{c}_0 - i, \tilde{z}_0) = \Fac^0_{(Z,\tau)}(S^i \tilde{x}).
\end{equation*}
In particular, if $c_1 > 0$ then $\Fac^0_{(Z,\tau)}(S x) = (c_0 - 1, z_0)$ is equal to $\Fac^0_{(Z,\tau)}(S \tilde{x}) = (\tilde{c}_0 - 1, \tilde{z}_0)$ and the proof is complete.

We now assume that $c_1 = 1$ (so $\tilde{c}_1 = 1$ as well by (ii)).
In this case, $\Fac^0_{(Z,\tau)}(S x) = (0, z_1)$ and $\Fac^0_{(Z,\tau)}(S \tilde{x}) = (0, \tilde{z}_1)$; thus, it is enough to prove that $z_1 = \tilde{z}_1$.

We observe that, since $z_1 = \psi_0(Sy)$ and $\tilde{z}_1 = \psi_0(S\tilde{y})$,
\begin{enumerate}
    \item $z_1 = y_0 y_0 y_1 y_2$ if $y_0\in\A{B}{ap}$ and $z_1 = y_{\First(y)} y_0 y_1 y_2$ if $y_0\not\in\A{B}{ap}$, and
    \item $\tilde{z}_1 = \tilde{y}_0 \tilde{y}_0 \tilde{y}_1 \tilde{y}_2$ if $\tilde{y}_0\in\A{B}{ap}$ and $\tilde{z}_1 = \tilde{y}_{\First(\tilde{y})} \tilde{y}_0 \tilde{y}_1 \tilde{y}_2$ if $\tilde{y}_0\not\in\A{B}{ap}$.
\end{enumerate}
From these relations and (i) we deduce that
\begin{equation*}
    \text{$z_1 = \tilde{z}_1$ if and only if $y_2 = \tilde{y}_2$.}
\end{equation*}
Now, since $\Fac^0_{(Y,\sigma)}(x) = (c_0, y_0) = (\tilde{c}_0, \tilde{y}_0) = \Fac^0_{(Y,\sigma)}(\tilde{x})$ and since we assumed that $x_{[-3E, 1+3E)} = \tilde{x}_{[-3E, 1+3E)}$, we can use Item \ref{item:Z3:main_prop:reco} in Proposition \ref{Z3:main_prop} to obtain that $\Fac^0_{(Y,\sigma)}(S^i x)$ is equal to $\Fac^0_{(Y,\sigma)}(S^i \tilde{x})$ for any $i \in [0, 2E]$.
In particular, since $c_2$ satisfies $0 \leq c_2 \leq c_0 + 2|\sigma| \leq 2E$, we have that 
\begin{equation*}
    (0, y_2) = \Fac^0_{(Y,\sigma)}(S^{c_2} x) = 
    \Fac^0_{(Y,\sigma)}(S^{c_2} \tilde{x}) = 
    \Fac^0_{(Y,\sigma)}(S^{\tilde{c}_2} \tilde{x}) = (0, \tilde{y}_2),
\end{equation*}
where we used that $c_2 = \tilde{c}_2$ by (ii).
It follows that $y_2 = \tilde{y}_2$ and thus that $\Fac^0_{(Z,\tau)}(S x)$ is equal to $\Fac^0_{(Z,\tau)}(S \tilde{x})$.
\end{proof}

\subsection{Connecting two levels}
\label{subsec:Z4:two_levels}

In this subsection, we consider two of the codings constructed in Subsection \ref{subsec:Z4:construction} and prove several lemmas that relate them.
We start by fixing the necessary notation.

Let $X$ be a minimal infinite subshift, $n, n' \geq 1$ be integers and let $d$ be the maximum of $\lceil p_X(n)/n \rceil$, $\lceil p_X(n')/n' \rceil$, $p_X(n+1) - p_X(n)$, $p_X(n'+1) - p_X(n')$, $\#\cA$ and $10^4$. 
Let $E = 50d^2 n$ and $E' = 50d^2 n'$.
We will assume throughout the subsection that
\begin{equation} \label{eq:Z4:comparing_lengths_Z4_Z4'_prev}
    n' \geq d^{2d^3+4} \cdot 500d^2n.
\end{equation}
We consider the recognizable codings $(Z \subseteq \cC^\Z, \tau\colon\cC\to\cA^+)$ and $(Z' \subseteq {\cC'}^\Z, \tau'\colon\cC'\to\cA^+)$ of $X$ obtained from $n$ and $n'$ as in Subsection \ref{subsec:Z4:construction}, respectively.
Let also $\varepsilon' \in [n'/d^{2d^3+4}, n'/d)$ and $\varepsilon \in [n/d^{2d^3+4}, n/d)$ be the constants defined in Subsection \ref{subsec:Z4:construction}, and let us denote by $\A{C}{ap} \cup \A{C}{sp} \cup \A{C}{wp}$ and $\A{C}{ap}' \cup \A{C}{sp}' \cup \A{C}{wp}'$ the partitions of $\cC$ and $\cC'$ defined there.
Let $\First(z) = \max\{k < 0 : z_k \in \A{C}{ap}\}$ and $\First'(z') = \{k < 0 : z'_k \in \A{C}{ap}'\}$ for $z \in Z$ and $z' \in Z'$.

The crucial relation between $(Z',\tau')$ and $(Z,\tau)$ is the following inequality, which is a consequence of \eqref{eq:Z4:comparing_lengths_Z4_Z4'_prev}:
\begin{equation} \label{eq:Z4:comparing_lengths_Z4_Z4'}
    \varepsilon' \geq 10E.
\end{equation}

\subsubsection{Preliminary lemmas}
\label{subsubsec:Z4:pre_lemmas}

We fix, for the rest of Subsection \ref{subsubsec:Z4:pre_lemmas}, a point $x \in X$ and the notation $(c,z) = \Fac_{(Z,\tau)}(x)$ and $(c',z') = \Fac_{(Z',\tau')}(x)$.

%%%%                                                                 %%%% LEMMA %%%%
\begin{lem} \label{Z4:per<eps=>zjcj_cte}
Suppose that $\per(\tau'(z'_0)) \leq \varepsilon$.
Let $i \in [c'_0-7\varepsilon', c'_1+7\varepsilon')$ and $j$ be the integer satisfying $i \in [c_j, c_{j+1})$.
Then, $z_j = z_0 \in \A{C}{sp}$, $|\mroot\tau(z_j)| = |\mroot\tau'(z'_0)|$ and $c_j = c_0 \pmod{|\mroot\tau'(z'_0)|}$.
\end{lem}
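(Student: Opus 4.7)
The plan is to propagate the strong $s$-periodicity of $\tau'(z'_0)$ into the $(Z,\tau)$-cuts and then invoke the rigidity of Proposition \ref{Z4:structure_Csp}. I would first note that, since $\per(\tau'(z'_0)) \leq \varepsilon < \varepsilon'$, Item \ref{item:Z4:main_prop:per} of Proposition \ref{Z4:main_prop} rules out $z'_0 \in \A{C}{ap}'$ (which would force $\per(\tau'(z'_0)) > \varepsilon'$); hence $z'_0 \in \cC' \setminus \A{C}{ap}'$. Writing $s = \mroot\tau'(z'_0)$, Remark \ref{rema:Z4:special_per&root=per} applied to $z'_0$ then gives $|s| = \per(\tau'(z'_0)) \leq \varepsilon$, and Item \ref{item:defi:per:p} of Definition \ref{defi:per} supplies the crucial $s$-periodic extension
$$ x_{[c'_0 - 8\varepsilon', c'_1 + 8\varepsilon')} = s^\Z_{[-8\varepsilon', |\tau'(z'_0)| + 8\varepsilon')}. $$

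Next, let $[j_-, j_+]$ denote the range of indices $j$ for which $[c_j, c_{j+1})$ meets $[c'_0 - 7\varepsilon', c'_1 + 7\varepsilon')$. Using $|\tau| \leq 10d^2 n \leq E \leq \varepsilon'/10$ (Item \ref{item:Z4:main_prop:lengths} and \eqref{eq:Z4:comparing_lengths_Z4_Z4'}), I would first check that $[c_k, c_{k+1}) \subseteq [c'_0 - 8\varepsilon', c'_1 + 8\varepsilon')$ for every $k \in [j_- - 1, j_+ + 1]$. Each $\tau(z_k)$ is therefore $s$-periodic with $|s| \leq \varepsilon$; since $|\tau(z_k)| \geq 20\varepsilon$ makes the interval $[c_k + 8\varepsilon, c_{k+1} - 8\varepsilon)$ nonempty, Item \ref{item:defi:per:ap} of Definition \ref{defi:per} forbids $z_k \in \A{C}{ap}$. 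Thus $z_k \in \cC \setminus \A{C}{ap}$ for all $k \in [j_- - 1, j_+ + 1]$, and Item \ref{item:defi:per:p} gives $|\mroot\tau(z_k)| \leq \varepsilon$. Since $\tau(z_k)$ is simultaneously a power of $\mroot\tau(z_k)$ and $s$-periodic with length at least $|s| + |\mroot\tau(z_k)|$, Theorem \ref{theo:fine&wilf} together with primitivity of roots will yield $\mroot\tau(z_k) = s$, establishing the second conclusion.

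To obtain the third conclusion I would observe that $\tau(z_j) = s^m$ read from position $c_j$ must agree with the extension $s^\Z_{[c_j - c'_0, c_{j+1} - c'_0)}$ from the first paragraph; Item \ref{item:CL:on_sZ&tZ:equal_mod} of Proposition \ref{CL:on_sZ&tZ} then forces $c_j \equiv c'_0 \pmod{|s|}$, and specializing to $j = 0$ and subtracting gives $c_j \equiv c_0 \pmod{|s|}$.

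The last conclusion $z_j = z_0 \in \A{C}{sp}$ will follow from Item \ref{item:Z4:structure_Csp:window} of Proposition \ref{Z4:structure_Csp} applied to the window $[j_- - 1, j_+ + 2)$, all of whose letters now lie in $\cC \setminus \A{C}{ap}$ by the previous paragraph: the proposition yields $z_k \in \A{C}{sp}$ for $k \in [j_- - 1, j_+ + 1)$ and $z_k = z_{j_-}$ for $k \in [j_-, j_+ + 1)$. Because $c_0 \in [c'_0, c'_1)$ and $c_1 \leq c'_0 + E < c'_1$, a boundary check gives $j_- \leq -1$ and $j_+ \geq 1$, so $0 \in [j_-, j_+]$ and therefore $z_0 = z_{j_-} \in \A{C}{sp}$; consequently $z_j = z_0$ for every $j$ in the range. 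The main technical point will be the one-index extension of the non-aperiodic zone beyond $[j_-, j_+]$: Proposition \ref{Z4:structure_Csp} only pins down the interior of an $\A{C}{ap}$-free window, so without this slack the equality $z_j = z_0$ could fail at the extreme indices $j = j_-$ and $j = j_+$.
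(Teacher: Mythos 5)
Your overall route is the paper's: rule out $z'_0 \in \A{C}{ap}'$, use the dichotomous-periods property (Item \ref{item:Z4:main_prop:per} of Proposition \ref{Z4:main_prop}) to get the $s$-periodic extension over $[c'_0-8\varepsilon', c'_1+8\varepsilon')$, deduce that every $(Z,\tau)$-letter whose cut interval meets the window lies in $\cC\setminus\A{C}{ap}$, and finish with Proposition \ref{Z4:structure_Csp}; the one-index slack you add around $[j_-,j_+]$ is exactly the right way to use Item \ref{item:Z4:structure_Csp:window}. That part is sound.

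The gap is in your second and third paragraphs. Fine--Wilf plus primitivity does not give $\mroot\tau(z_k) = s$: since $\tau(z_k)$ sits inside the $s$-periodic block with phase offset $(c_k - c'_0) \bmod |s|$, it is a prefix of $(s')^\infty$ where $s'$ is that rotation of $s$, and the argument only yields $\mroot\tau(z_k) = s'$, i.e.\ conjugacy — which gives the length equality you need, but not word equality. Nothing aligns the $(Z,\tau)$-cuts with $c'_0$: the conjugacy normalization (Item \ref{item:defi:per:conj_roots} of Definition \ref{defi:per}) is an intra-coding statement, and the paper is careful to assert only $|\mroot\tau(z_j)| = |\mroot\tau'(z'_0)|$ across the two scales (compare Lemma \ref{Z4:gamma:p&small}, where likewise only $|t|=|s|$ is claimed). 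Consequently your congruence $c_j \equiv c'_0 \pmod{|s|}$ is unjustified (and in general false), and ``specializing to $j=0$ and subtracting'' rests on two unproved congruences. The conclusion $c_j \equiv c_0 \pmod{|s|}$ is still recoverable from what you already have, and this is how the paper argues: once all letters in the extended window are in $\cC\setminus\A{C}{ap}$, Item \ref{item:Z4:structure_Csp:window} of Proposition \ref{Z4:structure_Csp} forces all roots $\mroot\tau(z_k)$ in the window to coincide, and by your Fine--Wilf step their common length is $|s|$; since each $\tau(z_k)$ is a power of its root, consecutive cuts differ by multiples of $|s|$, giving $c_j \equiv c_0 \pmod{|s|}$ with no reference to the phase of $c'_0$. (Minor: the boundary claims $c_0 \in [c'_0, c'_1)$, $j_-\leq -1$, $j_+\geq 1$ are neither needed nor always true; $0 \in [j_-, j_+]$ is automatic because $0 \in [c_0,c_1)\cap[c'_0,c'_1)$.)
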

\begin{proof}
The condition $\per(\tau'(z'_0)) \leq \varepsilon \leq \varepsilon'$ implies that $z'_0 \in \cC' \setminus \A{C}{ap}'$.
Hence, by Item \ref{item:Z4:main_prop:per} in Proposition \ref{Z4:main_prop},
\begin{equation} \label{eq:Z4:per<eps=>zjcj_cte:1}
    x_{[c'_0-8\varepsilon', c'_1+8\varepsilon')} = 
    (\mroot\tau'(z'_0))^\Z_{[-8\varepsilon', |\tau'(z'_0)|+8\varepsilon')}
\end{equation}
and
\begin{equation} \label{eq:Z4:per<eps=>zjcj_cte:2}
    |\mroot \tau'(z'_0)| = \per(\tau'(z'_0)) \leq \varepsilon.
\end{equation}
Equation \eqref{eq:Z4:per<eps=>zjcj_cte:1} implies the following:
If $k$ is the integer satisfying $c'_0-7\varepsilon' \in [c_k, c_{k+1})$ and $\ell$ is the integer satisfying $c'_0-7\varepsilon' \in [c_\ell, c_{\ell+1})$, then $\tau(z_j) \in \cC \setminus \A{C}{ap}$ for all $j \in [k,\ell)$.
Indeed, for any such $j$, we have that $\tau(z_j) = x_{[c_j, c_{j+1})}$ occurs in $x_{[c'_0-7\varepsilon'-3E, c'_1+7\varepsilon'+3E)}$, and so $\per(\tau(z_j)) \leq \varepsilon$, which implies, by Item \ref{item:Z4:main_prop:per} in Proposition \ref{Z4:main_prop}, that $z_j \in \cC \setminus \A{C}{ap}$.
We can then use Item \ref{item:Z4:structure_Csp:window} in Proposition \ref{Z4:structure_Csp} to get that
\begin{equation*}
    \text{$z_j = z_{k+1} \in \A{C}{sp}$ and $c_j = c_{k+1} \pmod{|\mroot\tau(z_{k+1})|}$ for all $j \in [k+1,\ell-1)$.}
\end{equation*}
Since $\varepsilon' \geq |\tau|$, $c_{k+1} \leq c_0 \leq c_{\ell-1}$, so we in particular have that
\begin{equation} \label{eq:Z4:per<eps=>zjcj_cte:3}
    \text{$z_j = z_0 \in \A{C}{sp}$ and $c_j = c_0 \pmod{|\mroot\tau(z_0)|}$.}
\end{equation}

We are now going to prove that $|\mroot\tau(z_0)|$ is equal to $|\mroot\tau'(z'_0)|$.
The lemma would follow from this and \eqref{eq:Z4:per<eps=>zjcj_cte:3}.

Note that $\tau'(z'_0)$ and $\tau(z_0)$ occur in $x_{[c'_0-8\varepsilon', c'_1+8\varepsilon')}$.
Also, Item \ref{item:Z4:main_prop:lengths} in Proposition \ref{Z4:main_prop} ensures that $\tau'(z'_0)$ and $\tau(z_0)$ have length at least $2\varepsilon$.
Then, as $\per(x_{[c'_0-8\varepsilon', c'_1+8\varepsilon')}) \leq \varepsilon$ by \eqref{eq:Z4:per<eps=>zjcj_cte:1} and \eqref{eq:Z4:per<eps=>zjcj_cte:2}, we can use Item \ref{item:CL:localize_per_aper:loc_per} of Proposition \ref{CL:localize_per_aper} to deduce that
\begin{equation*} 
    \per(\tau(z_0)) = \per(x_{[c'_0-8\varepsilon', c'_1+8\varepsilon')}) = \per(\tau'(z'_0)) \leq \varepsilon.
\end{equation*}
In this situation, Item \ref{item:Z4:main_prop:per} in Proposition \ref{Z4:main_prop} guarantees that $z_0 \in \cC \setminus \A{C}{ap}$ and $|\mroot \tau(z_0)| = \per(\tau(z_0)) = \per(\tau'(z'_0))$. 
Equation \eqref{eq:Z4:per<eps=>zjcj_cte:2} then yields $|\mroot\tau(z_0)| = |\mroot\tau'(z'_0)|$.
\end{proof}

%%%%                                                                 %%%% LEMMA %%%%
\begin{lem} \label{Z4:per_z'0_in_between=>reco}
Assume that $z'_0 \in \cC' \setminus \A{C}{ap}'$ and that $\per(\tau'(z'_0)) > \varepsilon$.
Let $\tilde{x} \in X$ and suppose that $x_{[-3\varepsilon', 3\varepsilon')} = \tilde{x}_{[-3\varepsilon', 3\varepsilon')}$.
Then, $\Fac^0_{(Z,\tau)}(x) = \Fac^0_{(Z,\tau)}(\tilde{x})$.
\end{lem}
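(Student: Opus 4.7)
The plan is to locate a position $i_0 \in [-3\varepsilon' + 3E, 0]$ at which $x$ has an $\varepsilon$-aperiodic window $[i_0-\varepsilon, i_0+\varepsilon)$, apply Lemma \ref{Z4:reco_from_aper} there, and then propagate the resulting equality of $(Z,\tau)$-factorizations forward to position $0$ using Lemma \ref{Z4:reco_from_reco}.

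First I will extract the periodic structure coming from the coarser coding. Setting $s = \mroot\tau'(z'_0)$, Remark \ref{rema:Z4:special_per&root=per} applied to $(Z',\tau')$ gives $|s| = \per(\tau'(z'_0)) > \varepsilon$, and Item \ref{item:Z4:main_prop:per} of Proposition \ref{Z4:main_prop} yields that $x$ has period $|s|$ on the extended window $W = [c'_0 - 8\varepsilon', c'_1 + 8\varepsilon')$, which contains $[-3\varepsilon', 3\varepsilon')$. Next, Item \ref{item:CL:localize_per_aper:loc_aper} of Proposition \ref{CL:localize_per_aper}, applied to $\tau'(z'_0)$ (of length $\geq 20\varepsilon'$ and period $>\varepsilon$), supplies a length-$2\varepsilon$ subword with period exceeding $\varepsilon$. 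By the $|s|$-periodicity of $x$ on $W$, the set $G = \{i_0 : [i_0-\varepsilon, i_0+\varepsilon) \subseteq W,\ \per(x_{[i_0-\varepsilon, i_0+\varepsilon)}) > \varepsilon\}$ is non-empty and invariant under translation by $|s|$ inside its admissible range, so I will select $i_0 \in G \cap [-3\varepsilon' + 3E, 0]$.

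With such an $i_0$, the window hypothesis $[i_0-3E, i_0+3E) \subseteq [-3\varepsilon', 3\varepsilon')$ of Lemma \ref{Z4:reco_from_aper} is satisfied (using $|i_0| \leq 3\varepsilon' - 3E$ and the fact that $3E \leq 3\varepsilon'/10$, which follows from \eqref{eq:Z4:comparing_lengths_Z4_Z4'}), so applying it to $S^{i_0} x$ and $S^{i_0} \tilde x$ gives $\Fac^0_{(Z,\tau)}(S^{i_0} x) = \Fac^0_{(Z,\tau)}(S^{i_0} \tilde x)$. Lemma \ref{Z4:reco_from_reco} applied to $S^{i_0} x$, $S^{i_0} \tilde x$ with $k = -i_0 \geq 0$ (whose window condition $[i_0-3E, 3E) \subseteq [-3\varepsilon', 3\varepsilon')$ follows from the same bounds) then propagates this equality from position $i_0$ forward to position $0$, yielding $\Fac^0_{(Z,\tau)}(x) = \Fac^0_{(Z,\tau)}(\tilde x)$.

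The main technical obstacle I expect is justifying the existence of $i_0 \in G \cap [-3\varepsilon' + 3E, 0]$. The target interval has length roughly $2.7\varepsilon'$, while the stated dichotomous-periods parameter in Item \ref{item:Z4:main_prop:per} only gives $|s| \leq 8\varepsilon'$, so a naive pigeonhole on residues modulo $|s|$ is inadequate. I expect to resolve this by invoking the tighter bound $|s| \leq \varepsilon'$, which holds at the level of the underlying Z3 construction (via Remark \ref{rema:Z3:special_per&root=per} together with the bound $|\mroot\sigma(y_0)| \leq \varepsilon'$ carried over from Section \ref{sec:Z2}); under this bound the interval $[-3\varepsilon' + 3E, 0]$ has length at least $2|s|$, hence meets every residue class modulo $|s|$ and therefore hits $G$.
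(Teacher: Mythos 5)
Your overall strategy is the same as the paper's (locate an $\varepsilon$-aperiodic patch inside $\tau'(z'_0)$ via Proposition \ref{CL:localize_per_aper}, use the $\leq\varepsilon'$ periodicity of $x$ on $[c'_0-8\varepsilon',c'_1+8\varepsilon')$ to transport that patch to a controlled position inside the matched window, apply Lemma \ref{Z4:reco_from_aper} there, and push forward with Lemma \ref{Z4:reco_from_reco}), and your discussion of why the tighter bound $|\mroot\tau'(z'_0)|\leq\varepsilon'$ (rather than $8\varepsilon'$) is what makes the recurrence argument go through is correct and is exactly what the paper uses implicitly. However, there is a genuine gap in the middle step: Lemma \ref{Z4:reco_from_aper} does \emph{not} conclude $\Fac^0_{(Z,\tau)}(S^{i_0}x)=\Fac^0_{(Z,\tau)}(S^{i_0}\tilde{x})$. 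Its conclusion is $\tau(z_0)=\tau(\tilde{z}_0)$, $c_0=\tilde{c}_0$, $c_1=\tilde{c}_1$ and equality of $\Fac^0$ only at the \emph{next cut}, i.e.\ $\Fac^0_{(Z,\tau)}(S^{c_1}x)=\Fac^0_{(Z,\tau)}(S^{c_1}\tilde{x})$. This is not a cosmetic point: in the fourth coding $\tau=\sigma\theta$ is not injective on letters (a letter of $\cC$ records $y_{\First(y)}y_{-1}y_0y_1$), so $\tau(z_0)=\tau(\tilde z_0)$ together with $c_0=\tilde c_0$ does not identify the letters, and equality of factorizations at the current position simply is not available from that lemma.

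Because of this, your choice of target interval $[-3\varepsilon'+3E,0]$ breaks the propagation step: the equality you actually obtain lives at position $i_0+c_1$ with $0<c_1\leq|\tau|\leq E$, and if $i_0$ is within $E$ of $0$ (your interval allows even $i_0=0$) this position may lie strictly to the right of $0$, while Lemma \ref{Z4:reco_from_reco} only propagates forward, so you can no longer reach position $0$. The repair is the one the paper makes: use the $\leq\varepsilon'$ periodicity to place the aperiodic patch at some $i\in[-2\varepsilon',-\varepsilon')$ (any subinterval of length $\geq\varepsilon'$ that stays at least $E$ to the left of $0$ and at least $3E$ inside $[-3\varepsilon',3\varepsilon')$ works), apply Lemma \ref{Z4:reco_from_aper} there to get $\Fac^0$ equality at a position $\leq i+E<0$, check that the data of the two points agree on the window required by Lemma \ref{Z4:reco_from_reco}, and then propagate forward by $k=-(i+E)\geq 0$ to reach $0$. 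With that adjustment your argument coincides with the paper's proof.
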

\begin{proof}
The hypothesis gives that $\per(x_{[c'_0, c'_1)}) = \per(\tau'(z'_0)) > \varepsilon$, and Item \ref{item:Z4:main_prop:lengths} in Proposition \ref{Z4:main_prop} that $|x_{[c'_0, c'_1)}| \geq 2\varepsilon$.
Hence, we can use Item \ref{item:CL:localize_per_aper:loc_aper} of Proposition \ref{CL:localize_per_aper} to obtain $i_0 \in [c'_0, c'_1)$ such that
\begin{equation*}
    \per(x_{[i_0-\varepsilon,i_0+\varepsilon)}) > \varepsilon.
\end{equation*}
Now, since $z'_0 \in \cC' \setminus \A{C}{ap}'$, Item \ref{item:Z4:main_prop:per} of Proposition \ref{Z4:main_prop} applies, so
\begin{equation*}
    \per(x_{[c'_0-8\varepsilon', c'_1+8\varepsilon')}) \leq \varepsilon'.
\end{equation*}
This implies, as $\varepsilon \leq \varepsilon'$, that there exists $i \in [-2\varepsilon', -\varepsilon')$ such that $x_{[i-\varepsilon,i+\varepsilon)} = x_{[i_0-\varepsilon,i_0+\varepsilon)}$.

Our plan is to derive the lemma using Lemma \ref{Z4:reco_from_aper} with $S^i x$ and $S^i \tilde{x}$.
First, we note that
\begin{equation} \label{eq:Z4:per_z'0_in_between=>reco:1}
    \per((S^i x)_{[-\varepsilon, \varepsilon)}) =
    \per(x_{[i-\varepsilon, i+\varepsilon)}) =
    \per(x_{[i_0-\varepsilon, i_0+\varepsilon)}) >
    \varepsilon.
\end{equation}
Also, since $4E \leq \varepsilon'$ and $i \in [-2\varepsilon', -\varepsilon')$, we have that $[i-4E, i+3\varepsilon'+4E)$ is contained in $[-3\varepsilon', 3\varepsilon')$.
Thus, by the hypothesis $x_{[-3\varepsilon', 3\varepsilon')} = \tilde{x}_{[-3\varepsilon', 3\varepsilon')}$,
\begin{multline} \label{eq:Z4:per_z'0_in_between=>reco:2}
    (S^{i+E} x)_{[-4E, 3\varepsilon'+3E)} =
    x_{[i-3E, i+3\varepsilon'+4E)} \\ =
    \tilde{x}_{[i-3E, i+3\varepsilon'+4E)} =
    (S^{i+E} \tilde{x})_{[-4E,3\varepsilon'+3E)}.
\end{multline}
In particular,
\begin{equation} \label{eq:Z4:per_z'0_in_between=>reco:3}
    (S^i x)_{[-3E, 3E)} = (S^i \tilde{x})_{[-3E, 3E)}.
\end{equation}
Equations \eqref{eq:Z4:per_z'0_in_between=>reco:1} and \eqref{eq:Z4:per_z'0_in_between=>reco:3} allow us to use Lemma \ref{Z4:reco_from_aper} and deduce that
\begin{equation*}
    \Fac^0_{(Z,\tau)}(S^{i + E} x) = 
        \Fac^0_{(Z,\tau)}(S^{i + E} \tilde{x}).
\end{equation*}
Furthermore, the last equation and \eqref{eq:Z4:per_z'0_in_between=>reco:2} are the hypothesis of Lemma \ref{Z4:reco_from_reco}; hence,
\begin{equation} \label{eq:Z4:per_z'0_in_between=>reco:4}
    \Fac^0_{(Z,\tau)}(S^{i + E + k} x) = 
    \Fac^0_{(Z,\tau)}(S^{i + E + k} \tilde{x})
    \enspace\text{for any $k \in [0, 3\varepsilon')$.}
\end{equation}
Since $i \in [-2\varepsilon', -\varepsilon')$ and $E \leq \varepsilon'$, $k \coloneqq -(i+E)$ belongs to $[0, 3\varepsilon')$, so \eqref{eq:Z4:per_z'0_in_between=>reco:4} gives that $\Fac^0_{(Z,\tau)}(x) = \Fac^0_{(Z,\tau)}(\tilde{x})$.
\end{proof}

% aquí uso la power_of_two property.
%%%%                                                                 %%%% LEMMA %%%%
\begin{lem} \label{Z4:z'0_sp=>same_Fac_edges}
Suppose that $z'_0 \in \A{C}{sp}'$.
Then,
\begin{equation*}
    \Fac^0_{(Z,\tau)}(S^{c'_0+i} x) = \Fac^0_{(Z,\tau)}(S^{c'_1+i} x)
    \enspace \text{for any $i \in [-5\varepsilon', 5\varepsilon')$.}
\end{equation*}
\end{lem}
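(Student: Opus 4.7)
The plan is to exploit the large periodic zone around $c'_0$ and $c'_1$ guaranteed by $z'_0 \in \A{C}{sp}'$. Writing $s = \mroot\tau'(z'_0)$, Item \ref{item:Z4:structure_Csp:power_of_2} of Proposition \ref{Z4:structure_Csp} gives $\tau'(z'_0) = s^{2^{r'}}$ with $2^{r'}|s| \in [20\varepsilon', 40\varepsilon')$, and the dichotomous-periods property (Item \ref{item:Z4:main_prop:per} of Proposition \ref{Z4:main_prop}) yields $|s| \leq \varepsilon'$ together with $x_{[c'_0-8\varepsilon', c'_1+8\varepsilon')} = s^\Z_{[-8\varepsilon', |\tau'(z'_0)|+8\varepsilon')}$. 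Since $c'_1-c'_0 = |\tau'(z'_0)|$ is a multiple of $|s|$, this periodic block has a translation symmetry of amplitude $|\tau'(z'_0)|$, which gives $x_{c'_0+k} = x_{c'_1+k}$ for every $k \in [-8\varepsilon', 8\varepsilon')$. Consequently, for every $i \in [-5\varepsilon', 5\varepsilon')$ I obtain $(S^{c'_0+i}x)_{[-3\varepsilon', 3\varepsilon')} = (S^{c'_1+i}x)_{[-3\varepsilon', 3\varepsilon')}$, the local-agreement input that drives both cases below.

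I then split on whether $|s| > \varepsilon$ or $|s| \leq \varepsilon$. In the former case, I apply Lemma \ref{Z4:per_z'0_in_between=>reco}, paying attention to which shift plays the role of ``$x$'' in that lemma so that the $z'$-letter at its origin is $z'_0$ (which satisfies $z'_0 \in \cC'\setminus\A{C}{ap}'$ and $\per(\tau'(z'_0)) = |s| > \varepsilon$). If $i \geq 0$, position $0$ of $S^{c'_0+i}x$ lies in the $z'_0$-block because $0 \leq i < 5\varepsilon' < |\tau'(z'_0)|$, so the lemma applied to $S^{c'_0+i}x$ (as $x$) and $S^{c'_1+i}x$ (as $\tilde{x}$) yields the desired equality. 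If $i < 0$, position $0$ of $S^{c'_1+i}x$ lies in the $z'_0$-block because $c'_1+i \in [c'_1-5\varepsilon', c'_1) \subseteq [c'_0, c'_1)$, and the roles of the two shifts are exchanged.

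In the case $|s| \leq \varepsilon$, Lemma \ref{Z4:per<eps=>zjcj_cte} applies at every point $i' \in [c'_0-7\varepsilon', c'_1+7\varepsilon')$ and shows that the $z$-letter covering $i'$ is a constant letter $z_0 \in \A{C}{sp}$ with $|\mroot\tau(z_0)| = |s|$ and cut positions satisfying $c_j \equiv c_0 \pmod{|s|}$. The $\tau$-cuts in this zone thus form an arithmetic progression with common difference $L := |\tau(z_0)| = 2^r|s|$, where $2^r|s| \in [20\varepsilon, 40\varepsilon)$. Combined with $2^{r'}|s| \geq 20\varepsilon'$ and the standing bound $\varepsilon' \geq 10E \gg \varepsilon$, this forces $r' > r$, so $c'_1-c'_0 = 2^{r'-r}L$ is a positive integer multiple of $L$. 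The $\tau$-indices $j$ covering $c'_0+i$ and $j'$ covering $c'_1+i$ therefore satisfy $c_{j'} = c_j + (c'_1-c'_0)$, whence $\Fac^0_{(Z,\tau)}(S^{c'_0+i}x) = (c_j-c'_0-i, z_0) = (c_{j'}-c'_1-i, z_0) = \Fac^0_{(Z,\tau)}(S^{c'_1+i}x)$.

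The main obstacle I anticipate is the $|s| \leq \varepsilon$ case: one must simultaneously verify the divisibility $L \mid c'_1-c'_0$ and the matching of $\tau$-cut offsets around $c'_0+i$ and $c'_1+i$ modulo $L$. This is precisely where the quantitative hypothesis $\varepsilon' \geq 10E$ (equivalent to $n' \geq d^{2d^3+4} \cdot 500 d^2 n$) becomes essential, since it forces $2^{r'-r}$ to be a positive integer and makes the finer-scale AP of $\tau$-cuts invariant under translation by $c'_1-c'_0$.
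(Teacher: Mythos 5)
Your proposal is correct and follows essentially the same route as the paper: the same case split on whether $\per(\tau'(z'_0)) = |\mroot\tau'(z'_0)|$ exceeds $\varepsilon$, with the small-period case handled via Lemma \ref{Z4:per<eps=>zjcj_cte} plus the power-of-two divisibility of $\A{C}{sp}$/$\A{C}{sp}'$ lengths, and the large-period case via the periodic translation symmetry feeding Lemma \ref{Z4:per_z'0_in_between=>reco}. Your explicit bookkeeping of which shifted point plays the role of ``$x$'' in Lemma \ref{Z4:per_z'0_in_between=>reco} depending on the sign of $i$ is a minor refinement of the paper's terser application, not a different argument.
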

\begin{proof}
We consider two cases.
First, we assume that $\per(\tau'(z'_0)) \leq \varepsilon$.
This allows us to use Lemma \ref{Z4:per<eps=>zjcj_cte} and obtain that, if $i \in [c'_0 - 7\varepsilon, c'_1 + 7\varepsilon)$ and $j$ is the integer satisfying $i \in [c_j, c_{j+1})$, then
\begin{multline} \label{eq:Z4:z'0_sp=>same_Fac_edges:1}
    \text{$c_j = c_0 \pmod{|\mroot\tau'(z'_0)|}$, $z_j = z_0 \in \A{C}{sp}$ and }
    \\ \text{$|\mroot\tau(z_j)| = |\mroot\tau'(z'_0)|$.}
\end{multline}

Let $i \in [-5\varepsilon', 5\varepsilon')$ be arbitrary and denote by $k$ and $\ell$ the integers satisfying 
$c'_0 + i \in [c_k, c_{k+1})$ and $c'_1 + i \in [c_\ell, c_{\ell+1})$.
With this notation, $\Fac^0_{(Z,\tau)}(S^{c'_0 + i} x) = (c_k - c'_0 - i, z_p)$ and $\Fac^0_{(Z,\tau)}(S^{c'_1 + i} x) = (c_\ell - c'_1 - i, z_q)$, so we have to prove that $z_k = z_\ell$ and $c_k - c'_0 - i = c_\ell - c'_1 - i$.

We have, by \eqref{eq:Z4:z'0_sp=>same_Fac_edges:1}, that $z_k = z_\ell$.
Thus, it only rests to prove that $c_k - c'_0 - i = c_\ell - c'_1 - i$.

We note that the definition of $k$ and $\ell$ ensures that
\begin{enumerate}[label=(\roman*)]
    \item $c_k    \leq c'_0 + i < c_{k+1}    = c_k + |\tau(z_k)|$; and
    \item $c_\ell \leq c'_1 + i < c_{\ell+1} = c_\ell + |\tau(z_\ell)|$.
\end{enumerate}
If we use the equality $c_\ell = c_k + (\ell-k)|\tau(z_0)|$, which is a consequence of \eqref{eq:Z4:z'0_sp=>same_Fac_edges:1}, and that $c'_1 = c'_0 + |\tau'(z'_0)|$ to replace $c_\ell$ and $c'_1$ in (ii), we get that  
\begin{equation*}
    c_k \leq c'_0 + i + (|\tau'(z'_0)| - (\ell-k)|\tau(z_0)|) < 
        c_k + |\tau(z_0)|.
\end{equation*} 
This and (i) yield
\begin{equation} \label{eq:Z4:z'0_sp=>same_Fac_edges:3}
    \left| |\tau'(z'_0)| - (\ell-k)|\tau(z_0)| \right| < |\tau(z_0)|.
\end{equation}
Now, since $z_0 \in \A{C}{sp}$ and $|\mroot\tau(z_k)| = |\mroot\tau'(z'_0)|$ by \eqref{eq:Z4:z'0_sp=>same_Fac_edges:1} and since $z'_0 \in \A{C}{sp}'$ by the hypothesis, the definition of $\A{C}{sp}$ and $\A{C}{sp}'$ in Proposition \ref{Z4:main_prop} guarantees that $|\tau(z_0)|$ divides $|\tau'(z'_0)|$.
Therefore, the inequality in \eqref{eq:Z4:z'0_sp=>same_Fac_edges:3} is possible only if $ |\tau'(z'_0)| = (\ell-k) |\tau(z_0)|$.
We conclude, as \eqref{eq:Z4:z'0_sp=>same_Fac_edges:1} implies that $c_\ell = c_k + (\ell-k)|\tau(z_0)|$, that
\begin{equation*}
    c_\ell = c_k + (\ell-k)|\tau(z_0)| =
    c_k + |\tau'(z'_0)| = c_k + c'_1 - c'_0.
\end{equation*}
Hence, $c_k - c'_0 - i = c_\ell - c'_1 - i$ and the proof of the first case is complete.
\medskip

Next, we assume $\per(\tau'(z'_0)) > \varepsilon$.
Observe that the condition $z'_0 \in \A{C}{sp}'$ implies that $z'_0 \in \cC' \setminus \A{C}{ap}'$.
Hence, by Item \ref{item:Z4:main_prop:per} in Proposition \ref{Z4:main_prop}, $x_{[c'_0 - 8\varepsilon', c'_1 + 8\varepsilon')} = (\mroot\tau'(z'_0))^\Z_{[-8\varepsilon', |\tau'(z'_0)| + 8\varepsilon')}$ and $|\mroot\tau'(z'_0)| = \per(\tau'(z'_0)) \leq \varepsilon$.
In particular, if $i \in [-5\varepsilon', 5\varepsilon')$, then $x_{[c'_0 + i - 3\varepsilon', c'_0 + 3\varepsilon')}$ is equal to $x_{[c'_1 + i - 3\varepsilon', c'_1 + 3\varepsilon')}$.
Then, the hypothesis of Lemma \ref{Z4:per_z'0_in_between=>reco} is satisfied for $S^{c'_0+i}x$ and $S^{c'_1+i}x$, and thus we obtain that $\Fac^0_{(Z,\tau)}(S^{c'_0+i} x) = \Fac^0_{(Z,\tau)}(S^{c'_1+i} x)$ for any $i \in [-5\varepsilon', 5\varepsilon')$.
\end{proof}

%%%%                                                                 %%%% LEMMA %%%%
\begin{lem} \label{Z4:same_ap=>reco_edge}
Let $\tilde{x} \in X$ and $(\tilde{c}, \tilde{z}) = \Fac_{(Z,\tau)}(\tilde{x})$.
Suppose that $z'_0 \in \A{C}{ap}$, $k \geq 1$, and that $x_{[c_0, c_1+k+8\varepsilon')} = \tilde{x}_{[\tilde{c}_0, \tilde{c}_1+k+8\varepsilon')}$.
Then,
\begin{equation} \label{eq:Z4:same_ap=>reco_edge:0}
    \Fac_{(Z,\tau)}(S^{c'_1+i} x) = \Fac_{(Z,\tau)}(S^{\tilde{c}'_1+i} \tilde{x})
    \enspace\text{for all $i \in [-7\varepsilon', k+7\varepsilon')$.}
\end{equation}
\end{lem}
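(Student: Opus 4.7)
The plan is to locate one pair of aligned positions at which $x$ and $\tilde x$ admit a common $\Fac^0_{(Z,\tau)}$, using the aperiodicity provided by $z'_0 \in \A{C}{ap}'$, and then to propagate this single match across the entire target range by Lemma~\ref{Z4:reco_from_reco}.

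First, because $z'_0 \in \A{C}{ap}'$, Item~\ref{item:Z4:main_prop:per} of Proposition~\ref{Z4:main_prop} gives $\per(x_{[c'_0+8\varepsilon',\, c'_1-8\varepsilon')}) > \varepsilon' \geq \varepsilon$, and Item~\ref{item:CL:localize_per_aper:loc_aper} of Proposition~\ref{CL:localize_per_aper} then supplies some $i_0 \in (c'_0+8\varepsilon',\, c'_1-8\varepsilon')$ with $\per(x_{[i_0-\varepsilon,\, i_0+\varepsilon)}) > \varepsilon$. The hypothesised word equality forces $|\tau(z_0)|=|\tau(\tilde z_0)|$ by length matching; setting $\delta = \tilde c_0 - c_0$ and $\tilde i_0 = i_0 + \delta$, the bound $\varepsilon' \geq 10E$ from~\eqref{eq:Z4:comparing_lengths_Z4_Z4'} guarantees that the radius-$3E$ windows around $i_0$ in $x$ and around $\tilde i_0$ in $\tilde x$ coincide, and the aperiodicity transports accordingly. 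Lemma~\ref{Z4:reco_from_aper} then produces the anchor $\Fac^0_{(Z,\tau)}(S^{i_0}x) = \Fac^0_{(Z,\tau)}(S^{\tilde i_0}\tilde x)$.

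The rest of the proof propagates this anchor. Forward propagation from $i_0$ to any position up to $c'_1 + k + 7\varepsilon'$ is a direct application of Lemma~\ref{Z4:reco_from_reco}, its hypothesised agreement window being contained in $[c_0,\, c_1+k+8\varepsilon')$ by the buffer $8\varepsilon'$ and the bound $\varepsilon' \geq 10E$. Backward propagation from $i_0$ down to $c'_1 - 7\varepsilon'$ proceeds analogously: I repeat the aperiodicity/localisation recipe to place a second anchor $i_1$ strictly inside $(c'_0 + 8\varepsilon',\, c'_1 - 7\varepsilon')$ --- which exists because $|\tau'(z'_0)|\geq 20\varepsilon'$ by Item~\ref{item:Z4:main_prop:lengths} of Proposition~\ref{Z4:main_prop} --- and apply Lemma~\ref{Z4:reco_from_reco} rightward from $i_1$; the two propagations together cover every position of the form $c'_1 + i$ with $i \in [-7\varepsilon',\, k+7\varepsilon')$.

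The main obstacle is the bookkeeping of windows: I must track the shift $\delta = \tilde c_0 - c_0$ and, for each appeal to Lemmas~\ref{Z4:reco_from_aper} and~\ref{Z4:reco_from_reco}, verify that the required $3E$-window sits inside the agreement zone of $x$ and its $\delta$-shifted twin in $\tilde x$. The inequality $\varepsilon' \geq 10E$ from~\eqref{eq:Z4:comparing_lengths_Z4_Z4'} makes all such verifications routine, and it also yields the sanity check $\tilde c'_1 - c'_1 = \delta$ that is implicit in the interpretation of the conclusion.
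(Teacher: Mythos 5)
Your proposal follows essentially the same route as the paper's proof: use $z'_0 \in \A{C}{ap}'$ together with Proposition \ref{CL:localize_per_aper} to locate an aperiodic window at some $i_0 \in [c'_0+8\varepsilon', c'_1-8\varepsilon')$, transfer it to $\tilde{x}$ through the hypothesised word equality, anchor with Lemma \ref{Z4:reco_from_aper}, and then sweep the whole range with Lemma \ref{Z4:reco_from_reco}, with $\varepsilon' \geq 10E$ handling the window bookkeeping and length-matching giving $\tilde{c}'_1 - c'_1 = \tilde{c}'_0 - c'_0$. Two cosmetic remarks: Lemma \ref{Z4:reco_from_aper} yields the matched factorization at the cut following $i_0$ rather than at $i_0$ itself (since $\tau$ is not injective on letters in this coding), and that cut is still $\leq i_0 + E \leq c'_1 - 7\varepsilon'$, so a single rightward sweep from it already covers all of $[-7\varepsilon', k+7\varepsilon')$ and your ``second anchor/backward propagation'' step is superfluous, exactly as in the paper's argument.
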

\begin{proof}
First, Item \ref{item:Z4:main_prop:per} in Proposition \ref{Z4:main_prop} ensures that $\per(x_{[c'_0+8\varepsilon', c'_1-8\varepsilon')}) > \varepsilon' \geq \varepsilon$.
Thus, Item \ref{item:CL:localize_per_aper:loc_aper} in Proposition \ref{CL:localize_per_aper} ensures that there exists an integer $m$ such that
\begin{equation} \label{eq:Z4:same_ap=>reco_edge:1}
    m \in [c'_0+8\varepsilon', c'_1-8\varepsilon')
    \enspace\text{and}\enspace
    \per(x_{[m-\varepsilon,m+\varepsilon)}) > \varepsilon.
\end{equation}
Using that $4E \leq \varepsilon'$ and $m \in [c'_0+8\varepsilon', c'_1-8\varepsilon')$ it can be checked that $[m-3E, c'_1+7\varepsilon'+3E)$ is contained in $[c'_0, c'_1 + 8\varepsilon')$.
Hence, by the hypothesis $x_{[c'_0, c'_1+8\varepsilon')} = \tilde{x}_{[\tilde{c}'_0, \tilde{c}'_1+8\varepsilon')}$,
\begin{multline} \label{eq:Z4:same_ap=>reco_edge:2}
    (S^{m+E} x)_{[-4E, c'_1+7\varepsilon'-m + 3E)} = 
    x_{[m-3E, c'_1+7\varepsilon'+4E)} \\ = 
    \tilde{x}_{[m-3E+\tilde{c}'_0-c'_0, c'_1+7\varepsilon'+4E+\tilde{c}'_0-c'_0)} \\ =
    (S^{m+\tilde{c}'_0-c'_0+E} \tilde{x})_{[-4E, c'_1+7\varepsilon'-m + 3E)}.
\end{multline}
In particular, as $c'_1-m \geq 0$ by \eqref{eq:Z4:same_ap=>reco_edge:1},
\begin{equation*}
    (S^m x)_{[-3E, 3E)} = 
    (S^{m+\tilde{c}'_0-c'_0} \tilde{x})_{[-3E, 3E)}.
\end{equation*}
This and \eqref{eq:Z4:same_ap=>reco_edge:1} allow us to use Lemma \ref{Z4:reco_from_aper} and deduce that
\begin{equation*}
    \Fac_{(Z,\tau)}^0(S^{m+E} x) = \Fac_{(Z,\tau)}^0(S^{m+\tilde{c}'_0-c'_0+E} \tilde{x}).
\end{equation*}
The last equation and \eqref{eq:Z4:same_ap=>reco_edge:2} imply that the hypothesis of Lemma \ref{Z4:reco_from_reco} holds; 
therefore, for every $j \in [0, c'_1+7\varepsilon'-m)$, $\Fac_{(Z,\tau)}^0(S^{m+E+j} x) = \Fac_{(Z,\tau)}^0(S^{m-c'_0+\tilde{c}'_0+E+j} \tilde{x})$.
Equivalently, 
\begin{equation} \label{eq:Z4:same_ap=>reco_edge:3}
    \Fac_{(Z,\tau)}^0(S^{j} x) = \Fac_{(Z,\tau)}^0(S^{-c'_0+\tilde{c}'_0+j} \tilde{x})
    \enspace\text{for all $j \in [m+E, c'_1+(r-1)\varepsilon')$.}
\end{equation}
We will derive \eqref{eq:Z4:same_ap=>reco_edge:0} from this.

Let $i \in [-7\varepsilon', 7\varepsilon')$ be arbitrary.
Then, \eqref{eq:Z4:same_ap=>reco_edge:1} and the inequality $E \leq \varepsilon'$ imply that $j \coloneqq c'_1 + i \in [m+E, c'_1+7\varepsilon')$.
Hence, Equation \eqref{eq:Z4:same_ap=>reco_edge:3} gives that
\begin{equation} \label{eq:Z4:same_ap=>reco_edge:4}
    \Fac_{(Z,\tau)}^0(S^{c'_1 + i} x) = \Fac_{(Z,\tau)}^0(S^{i+c'_1-c'_0+\tilde{c}'_0} \tilde{x}).
\end{equation}
Now, we observe that
$$ c'_1-c'_0 = |\tau(z'_0)| = |\tau(\tilde{z}'_0)| = \tilde{c}'_1-\tilde{c}'_0, $$
so $i + c'_1 - c'_0 + \tilde{c}'_0 = i + \tilde{c}_1$.
Therefore, the lemma follows from \eqref{eq:Z4:same_ap=>reco_edge:4}.
\end{proof}

%%%%                                                                 %%%% PROPOSITION %%%%
\begin{prop} \label{Z4:reco_FZ_from_FZ'}
Let $x, \tilde{x} \in X$ and suppose that $\Fac^0_{(Z',\tau')}(x) = \Fac^0_{(Z',\tau')}(\tilde{x})$.
Then,
\begin{equation} \label{eq:Z4:reco_FZ_from_FZ':0}
    \Fac^0_{(Z,\tau)}(S^i x) = \Fac^0_{(Z,\tau)}(S^i \tilde{x})
    \enspace\text{for all $i \in [c'_0-4\varepsilon', c'_2-\varepsilon')$.} 
\end{equation}
\end{prop}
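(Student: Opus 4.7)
The hypothesis $\Fac^0_{(Z',\tau')}(x) = \Fac^0_{(Z',\tau')}(\tilde{x})$ amounts to $z'_0 = \tilde{z}'_0$ and $c'_0 = \tilde{c}'_0$, so Lemma \ref{Z4:same_z0_same_ext_window} yields $\tau'(z'_j) = \tau'(\tilde{z}'_j)$ (hence $c'_j = \tilde{c}'_j$) for $j \in [-1,1]$, together with $x_{[c'_{-1}, c'_2)} = \tilde{x}_{[c'_{-1}, c'_2)}$. The lower bound $|\tau'(a)| \geq 20\varepsilon'$ from Item \ref{item:Z4:main_prop:lengths} of Proposition \ref{Z4:main_prop}, combined with $3E \leq \varepsilon'$ (which follows from \eqref{eq:Z4:comparing_lengths_Z4_Z4'}), guarantees that this agreement window contains the target range $[c'_0-4\varepsilon', c'_2-\varepsilon')$ with a buffer of at least $3E$ on each side. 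My plan is (I) to locate a single anchor position $i_0 \in [c'_0-4\varepsilon', c'_2-\varepsilon')$ where $\Fac^0_{(Z,\tau)}(S^{i_0}x) = \Fac^0_{(Z,\tau)}(S^{i_0}\tilde{x})$, and (II) to propagate that equality throughout the target range via Lemma \ref{Z4:reco_from_reco}.

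For step (I), I case-split on $z'_0$. When $z'_0 \in \A{C}{ap}'$, Item \ref{item:Z4:main_prop:per} of Proposition \ref{Z4:main_prop} gives $\per(x_{[c'_0+8\varepsilon', c'_1-8\varepsilon')}) > \varepsilon'$, and Item \ref{item:CL:localize_per_aper:loc_aper} of Proposition \ref{CL:localize_per_aper} locates some $i_0$ in this interval with $\per(x_{[i_0-\varepsilon, i_0+\varepsilon)}) > \varepsilon$; the window $[i_0-3E, i_0+3E)$ sits inside the agreement window, so Lemma \ref{Z4:reco_from_aper} (applied after shifting by $i_0$) supplies the anchor. When $z'_0 \in \cC' \setminus \A{C}{ap}'$ with $\per(\tau'(z'_0)) > \varepsilon$, Lemma \ref{Z4:per_z'0_in_between=>reco} applied to $S^{c'_0}x$ and $S^{c'_0}\tilde{x}$ (whose agreement on $[-3\varepsilon', 3\varepsilon')$ comes from the agreement window) delivers the anchor directly at $i_0 = c'_0$. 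Finally, when $z'_0 \in \cC' \setminus \A{C}{ap}'$ with $\per(\tau'(z'_0)) \leq \varepsilon$, I apply Lemma \ref{Z4:per<eps=>zjcj_cte} to both $x$ and $\tilde{x}$: in the window $[c'_0-7\varepsilon', c'_1+7\varepsilon')$ both $(Z,\tau)$-factorizations consist of copies of the unique letter $a \in \A{C}{sp}$ with $\mroot\tau(a) = \mroot\tau'(z'_0)$ (uniqueness via Item \ref{item:Z4:structure_Csp:power_of_2} of Proposition \ref{Z4:structure_Csp}), and the cut positions form arithmetic progressions congruent to $c'_0$ modulo $|\mroot\tau'(z'_0)|$. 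Since $x = \tilde{x}$ on the agreement window, which strictly contains this maximal periodic block, the block structure of Proposition \ref{Z4:structure_Csp} forces the block to start and end at the same coordinates in $x$ and $\tilde{x}$, pinning the two progressions together and yielding the anchor $i_0 = c'_0$.

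For step (II), Lemma \ref{Z4:reco_from_reco} extends the anchor equality forward to every $i$ with $i + 3E \leq c'_2$, which covers all of $[i_0, c'_2-\varepsilon')$ since $3E \leq \varepsilon'$. Backward extension to $[c'_0-4\varepsilon', i_0]$ follows from the reverse-time analog of Lemma \ref{Z4:reco_from_reco}, obtained by running the same inductive argument on the shifted pairs $S^{-j}x, S^{-j}\tilde{x}$, and using the $20\varepsilon'$-buffer on the left provided by $c'_{-1}$. The main obstacle is the third sub-case in step (I): locking the two periodic arithmetic progressions of $(Z,\tau)$-cuts together rather than allowing them to differ by a shift of $|\mroot\tau'(z'_0)|$; this is precisely where the block-structure rigidity of Proposition \ref{Z4:structure_Csp}, together with the agreement of $x$ and $\tilde{x}$ extending past both boundaries of the maximal periodic block, becomes essential.
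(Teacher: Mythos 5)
Your reduction to finding an anchor and propagating with Lemma \ref{Z4:reco_from_reco} is the right general shape, and your first two sub-cases (an aperiodic $z'_0$, or a non-aperiodic $z'_0$ with $\per(\tau'(z'_0)) > \varepsilon$) are handled essentially as the paper does. But the third sub-case contains the real difficulty, and your argument for it does not work. When $\per(\tau'(z'_0)) \leq \varepsilon$, the maximal periodic block containing $[c'_0, c'_1)$ is \emph{not} contained in the agreement window: the letter $z'_{-1}$ (and many letters before it) may lie in $\A{C}{sp}'$, so the periodic stretch can extend arbitrarily far to the left of $c'_{-1}$, far outside anything controlled by $\Fac^0_{(Z',\tau')}(x) = \Fac^0_{(Z',\tau')}(\tilde{x})$. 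Inside such a stretch the $(Z,\tau)$-cuts form an arithmetic progression (and the $(Z,\tau)$-letters carry a $\First$-component) whose offset is synchronized with the nearest aperiodic block to the \emph{left} of the stretch; the hypothesis only records the letter of that block (through the $\First'$-component of $z'_0$), not its absolute position, and $\First'(z')$ and $\First'(\tilde{z}')$ need not even be the same index. So "the block starts and ends at the same coordinates in $x$ and $\tilde{x}$" is exactly what cannot be read off locally, and Proposition \ref{Z4:structure_Csp} alone does not pin the two progressions. This is why the paper routes the proof through the distant aperiodic block: Lemma \ref{Z4:same_z0_same_ext_window} gives agreement of the words $\tau'(z'_{\First'(z')})$ at possibly different positions, Lemma \ref{Z4:same_ap=>reco_edge} then anchors the $(Z,\tau)$-factorizations at the right edge of that block \emph{relative to each point}, and Lemma \ref{Z4:z'0_sp=>same_Fac_edges} transports the anchor across each intermediate $\A{C}{sp}'$ block one at a time (this is where the divisibility of the $\A{C}{sp}$-lengths, i.e.\ the $2^r$-structure, is used), until it reaches $c'_{-1}$.

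The second gap is your backward extension. There is no "reverse-time analog" of Lemma \ref{Z4:reco_from_reco}, and one cannot obtain it by running the same induction: the forward step determines the $\First$-component of $z_1$ from $z_0$ (it is either $y_0$ or the $\First$-component of $z_0$), whereas going left the $\First$-component of $z_{-1}$ refers to the first aperiodic $\cB$-letter strictly to the left of $y_{-1}$, which can sit beyond any bounded window when one enters a long periodic stretch from the right; two points agreeing on a huge window around $0$ can have different such letters, so the backward statement is false with only window agreement. The paper never propagates backward: having carried the anchor to $c'_{-1}$ (more precisely, obtaining $\Fac^0_{(Z,\tau)}(S^{c'_{-1}+i}x) = \Fac^0_{(Z,\tau)}(S^{c'_{-1}+i}\tilde{x})$ for $i \in [-5\varepsilon',5\varepsilon')$), it applies Lemma \ref{Z4:reco_from_reco} forward from $c'_{-1}-4\varepsilon'$, which already covers the whole target interval $[c'_0-4\varepsilon', c'_2-\varepsilon')$ because $c'_{-1} \leq c'_0 - 20\varepsilon'$. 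To repair your proof you would need to replace both the "pinning" claim and the backward step by this left-anchor-and-transport mechanism (or something equivalent using the $\First'$-data in the hypothesis), which is precisely the content of the paper's argument.
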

\begin{proof}
The hypothesis implies that $c'_0 = \tilde{c}'_0$ and $z'_0 = \tilde{z}'_0$.
Combining this with Lemma \ref{Z4:same_z0_same_ext_window} yields
\begin{multline} \label{eq:Z4:reco_FZ_from_FZ':1}
    x_{[c'_{\First'(z')}, c'_{\First'(z')+1})} = 
    \tilde{x}_{[\tilde{c}'_{\First'(\tilde{z}')}, c_{\First'(\tilde{z}')+1})}, \enspace
    x_{[c'_{j}, c'_{j+1})} = \tilde{x}_{[c'_{j}, c'_{j+1})} \\
    \enspace\text{and}\enspace
    c'_j = \tilde{c}'_j\enspace\text{for $j \in [-1,1]$.}
\end{multline}
This and the lower bound in Item \ref{item:Z4:main_prop:lengths} of Proposition \ref{Z4:main_prop} ensure that
\begin{equation} \label{eq:Z4:reco_FZ_from_FZ':1.5}
    x_{[c'_0 - 8\varepsilon', c'_1 + 8\varepsilon')} =
    \tilde{x}_{[\tilde{c}'_0 - 8\varepsilon', \tilde{c}'_1 + 8\varepsilon')}.
\end{equation}

Next, we show that the following facts hold.
\begin{enumerate}[label=(\roman*)]
    \item $(S^{c_k}x)_{[-8\varepsilon',8\varepsilon')} = (S^{c_l}x)_{[-8\varepsilon',8\varepsilon')}$ for all $\First'(y') < k, l \leq 0$.
    \item $(S^{\tilde{c}_k} \tilde{x})_{[-8\varepsilon',8\varepsilon')} = (S^{\tilde{c}_l} \tilde{x})_{[-8\varepsilon',8\varepsilon')}$ for all $\First'(\tilde{y}') < k, l \leq 0$.
    \item $x_{[ c'_{\First'(y')}, c'_{\First'(y')+1}+8\varepsilon' )} =  \tilde{x}_{[ \tilde{c}'_{\First'(\tilde{y}')},\tilde{c}'_{\First'(\tilde{y}')+1}+8\varepsilon' )}$.
\end{enumerate}
We start with Item (i).
If $\First'(y') = -1$, then (i) is vacuously true.
We assume that $\First'(y') < -1$.
Let $k$ be such that $\First'(y') < k < 0$.
The definition of $\First'$ ensures that $z'_k \in \cC' \setminus \A{C}{ap}'$.
So, by Item \ref{item:Z4:main_prop:per} in Proposition \ref{Z4:main_prop}, $s \coloneqq \mroot\tau'(z'_k)$ satisfies $x_{[c_k-8\varepsilon',c_{k+1}+8\varepsilon')} = s^\Z_{[-8\varepsilon', |\tau'(z'_k)|+8\varepsilon')}$.
In particular, as $|\tau'(z'_0)| = 0 \pmod{|s|}$,
\begin{equation*}
    x_{[c_k-8\varepsilon', c_k+8\varepsilon')} = 
        s^\Z_{[-8\varepsilon', 8\varepsilon')} = 
        (S^{|\tau'(z'_k)|} s^\Z)_{[-8\varepsilon', 8\varepsilon')} = 
        x_{[c_{k+1}-8\varepsilon', c_{k+1}+8\varepsilon')}.
\end{equation*}
Being this valid for all $k \in [\First'(z')+1, 0)$, an inductive argument gives (i).
Fact (ii) follows analogously.
For (iii), we use \eqref{eq:Z4:reco_FZ_from_FZ':1.5}, (i) and (ii) to deduce that
\begin{equation*}
    x_{[ c'_{\First'(z')+1}, c'_{\First'(z')+1}+8\varepsilon' )} = 
    x_{[c_0-8\varepsilon', c_0+8\varepsilon')} = 
    \tilde{x}_{[\tilde{c}_0-8\varepsilon', \tilde{c}_0+8\varepsilon')} =
    \tilde{x}_{[ \tilde{c}'_{\First'(\tilde{z}')+1},\tilde{c}'_{\First'(\tilde{z}')+1}+8\varepsilon' )}.
\end{equation*}
Fact (iii) follows from this and the first equality in \eqref{eq:Z4:reco_FZ_from_FZ':1}.
\medskip

Now, since $z'_{\First'(z')} \in \A{C}{ap}'$ and since (iii) holds, the hypothesis of Lemma \ref{Z4:same_ap=>reco_edge} are satisfied.
Therefore, 
\begin{equation} \label{eq:Z4:reco_FZ_from_FZ':2}
    \Fac^0_{(Z,\tau)}(S^{c'_{\First'(z')+1} + i} x) = \Fac^0_{(Z,\tau)}(S^{\tilde{c}'_{\First'(\tilde{z}')+1} + i} \tilde{x})
    \enspace
    \text{for all $i \in [-7\varepsilon', 7\varepsilon')$.}
\end{equation}
In order to continue, we need to consider two cases.
We first assume that $\First'(z') = -1$.
Then, $z'_{-1} \in \A{C}{ap}$, so \eqref{eq:Z4:reco_FZ_from_FZ':1} and Item \ref{item:Z4:main_prop:per} in Proposition \ref{Z4:main_prop} give
\begin{equation*}
    \per(\tilde{x}_{[\tilde{c}'_{-1}+8\varepsilon', \tilde{c}'_0-8\varepsilon')}) =
    \per(x_{[c'_{-1}+8\varepsilon', c'_0-8\varepsilon')}) > \varepsilon'.
\end{equation*}
This implies, by Item \ref{item:Z4:main_prop:per} in Proposition \ref{Z4:main_prop}, that $\tilde{z}_{-1} \in \A{C}{ap}$.
Hence, $\First'(\tilde{z}) = -1$ and then \eqref{eq:Z4:reco_FZ_from_FZ':0} follows from \eqref{eq:Z4:reco_FZ_from_FZ':2}.

Next, we assume that $\First'(z') \leq -2$.
In this case, we first prove the following.
\begin{enumerate}[label=$(\alph*)$]
    \item $\Fac^0_{(Z,\tau)}(S^{c'_{\First'(y')+1} + i} x) = \Fac^0_{(Z,\tau)}(S^{c'_{-1} + i} x)$ for all $i \in [-5\varepsilon', 5\varepsilon')$. 
    \item $\Fac^0_{(Z,\tau)}(S^{\tilde{c}'_{\First'(\tilde{y}')+1} + i} \tilde{x}) = \Fac^0_{(Z,\tau)}(S^{\tilde{c}'_{-1} + i} \tilde{x})$ for all $i \in [-5\varepsilon', 5\varepsilon')$. 
\end{enumerate}
We only prove $(a)$ as $(b)$ follows from an analogous argument.
If $\First'(z') = -2$, then $(a)$ is trivially true.
Assume then that $\First'(z') \leq -3$.
The definition of $\First'$ ensures that $z'_j \in \cC' \setminus \A{C}{ap}$ for all $\First'(z')+1 \leq j \leq -1$.
Thus, by Item \ref{item:Z4:structure_Csp:window} in Proposition \ref{Z4:structure_Csp}, $z'_j \in \A{C}{sp}$ for all $\First'(z')+1 \leq j \leq -2$.
This allows us to inductively apply Lemma \ref{Z4:z'0_sp=>same_Fac_edges} and deduce that, for any $i \in [-5\varepsilon', 5\varepsilon')$,
\begin{equation*}
    \Fac^0_{(Z,\tau)}(S^{c'_{\First'(z')+1} + i} x) = 
    \Fac^0_{(Z,\tau)}(S^{c'_{\First'(z')+2} + i} x) = \dots =
    \Fac^0_{(Z,\tau)}(S^{c'_{-1} + i} x).
\end{equation*}
This shows (a).
\medskip

Now, combining Equation \eqref{eq:Z4:reco_FZ_from_FZ':2}, $(a)$ and $(b)$ produces
\begin{equation} \label{eq:Z4:reco_FZ_from_FZ':3}
    \Fac^0_{(Z,\tau)}(S^{c'_{-1} + i} x) = \Fac^0_{(Z,\tau)}(S^{\tilde{c}'_{-1} + i} \tilde{x})
    \enspace
    \text{for all $i \in [-5\varepsilon', 5\varepsilon')$.}
\end{equation}
We are going to derive \eqref{eq:Z4:reco_FZ_from_FZ':0} from this and \eqref{eq:Z4:reco_FZ_from_FZ':1}.

Let $i \in [c'_0-4\varepsilon', c'_2-\varepsilon')$ be arbitrary.
We note that \eqref{eq:Z4:reco_FZ_from_FZ':3} gives, in particular, that $x_{[c_{-1} - 5\varepsilon', c_{-1} + 5\varepsilon')} = \tilde{x}_{[\tilde{c}_{-1} - 5\varepsilon', \tilde{c}_{-1} + 5\varepsilon')}$.
From this, \eqref{eq:Z4:reco_FZ_from_FZ':1} we get that
\begin{equation} \label{eq:Z4:reco_FZ_from_FZ':4}
    x_{[c_{-1} - 5\varepsilon', c_2)} = 
        \tilde{x}_{[\tilde{c}_{-1} - 5\varepsilon', \tilde{c}_2)}.
\end{equation}
In view of Equations \eqref{eq:Z4:reco_FZ_from_FZ':3} and \eqref{eq:Z4:reco_FZ_from_FZ':4} and of $3E \leq \varepsilon'$, the hypothesis of Lemma \ref{Z4:reco_from_reco} holds; hence, 
\begin{equation*}
    \text{$\Fac^0_{(Z,\tau)}(S^{c'_{-1} + j} x) = \Fac^0_{(Z,\tau)}(S^{\tilde{c}'_{-1} + j} \tilde{x})$
        for all $j \in [-4\varepsilon', c'_2-c'_{-1}-\varepsilon')$.}
\end{equation*}
We set $j = i - c'_{-1}$ and note that $j \in [-4\varepsilon', c'_2-c'_{-1}-\varepsilon')$.
Therefore, the last equation can be used to obtain that
\begin{equation*}
    \Fac^0_{(Z,\tau)}(S^i x) = 
    \Fac^0_{(Z,\tau)}(S^{c'_{-1} + j} x) =     
    \Fac^0_{(Z,\tau)}(S^{\tilde{c}'_{-1} + j} \tilde{x}) = 
    \Fac^0_{(Z,\tau)}(S^{i + \tilde{c}'_{-1} - c'_{-1} } \tilde{x}).
\end{equation*}
Being $\tilde{c}'_{-1} = c'_{-1}$ by \eqref{eq:Z4:reco_FZ_from_FZ':1}, we deduce that \eqref{eq:Z4:reco_FZ_from_FZ':0} holds.
\end{proof}

\subsubsection{The connecting morphism}
\label{subsubsec:Z4:morphism}

In this subsection, we build a morphism $\gamma$ that connects $(Z', \tau')$ with $(Z, \tau)$.
We start by introducing the auxiliary map $r\colon Z' \to \Z$ and proving some properties for it.
The crucial Proposition \ref{Z4:pre_def_gamma} will allow us to define the connecting morphism $\gamma$.
We finish the section with Propositions \ref{Z4:gamma:ap}, \ref{Z4:gamma:p&big} and \ref{Z4:gamma:p&small}, which will be crucial for proving $(\cP_1)$ in Theorems \ref{theo:main_sublinear} and \ref{theo:main_nonsuperlinear}.

For $z' \in Z'$ and $(c, z) = \Fac_{(Z,\tau)}(\tau'(z'))$, let 
\begin{equation} \label{eq:Z4:defi_r}
    r(z') = \begin{cases}
    0 & \text{if } \per(\tau'(z'_0)) \leq \varepsilon \\
    \min\{i \geq 0 : z_i \in \A{C}{ap}\} & \text{if } \per(\tau'(z'_0)) > \varepsilon
    \end{cases}
\end{equation}

%%%%                                                                 %%%% LEMMA %%%%
\begin{lem} \label{Z4:bound_r}
Let $z' \in Z'$ and $(c,z) = \Fac_{(Z,\tau)}(\tau'(z'))$.
\begin{enumerate}
    \item \label{Z4:bound_r:ap}
    If $z'_0 \in \A{C}{ap}'$, then $c_{r(z')} \in [-\varepsilon', |\tau'(z'_0)| - 8\varepsilon')$.
    \item \label{Z4:bound_r:p&big}
    If $\varepsilon < |\mroot\tau'(z'_0)| \leq \varepsilon'$ and $i$ is the integer satisfying $|\tau'(z'_0)| \in [c_i, c_{i+1})$, then $c_{r(z')} \in [-\varepsilon', \varepsilon')$ and $c_{i + r(Sz')} \in [|\tau'(z'_0)| - \varepsilon', |\tau'(z'_0)| + \varepsilon')$.
    \item \label{Z4:bound_r:p&small}
    If $|\mroot\tau'(z'_0)| \leq \varepsilon$, then $c_{r(z')} \in [-\varepsilon', \varepsilon')$.
\end{enumerate}
\end{lem}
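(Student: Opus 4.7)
\emph{Plan.} In all three items the lower bound $c_{r(z')} \geq -\varepsilon'$ is immediate: $r(z') \geq 0$ by definition, so $c_{r(z')} \geq c_0 > -|\tau(z_0)|$, and the hypothesis $n' \geq d^{2d^3+4}\cdot 500 d^2 n$ combined with Proposition~\ref{Z4:main_prop}(\ref{item:Z4:main_prop:lengths}) yields $\varepsilon' \geq 500 d^2 n \geq |\tau|$. Item~(\ref{Z4:bound_r:p&small}) is then immediate: $|\mroot\tau'(z'_0)| \leq \varepsilon \leq 8\varepsilon'$ forces $z'_0 \in \cC' \setminus \A{C}{ap}'$ (otherwise Proposition~\ref{Z4:main_prop}(\ref{item:Z4:main_prop:per}) would give $|\mroot\tau'(z'_0)| > 8\varepsilon'$); Remark~\ref{rema:Z4:special_per&root=per} then yields $\per(\tau'(z'_0)) = |\mroot\tau'(z'_0)| \leq \varepsilon$, so $r(z') = 0$ by the definition of $r$, and $c_{r(z')} = c_0 \in (-\varepsilon', \varepsilon')$.

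For Item~(\ref{Z4:bound_r:ap}), the dichotomy of $(Z',\tau')$ applied to $z'_0 \in \A{C}{ap}'$ (with parameter $8\varepsilon'$) gives $\per(x_{[8\varepsilon', |\tau'(z'_0)| - 8\varepsilon')}) > 8\varepsilon' > \varepsilon$, so $\per(\tau'(z'_0)) > \varepsilon$ and $r(z') = \min\{i \geq 0 : z_i \in \A{C}{ap}\}$. I would argue the upper bound by contradiction: assuming $c_{r(z')} \geq |\tau'(z'_0)| - 8\varepsilon'$, let $J$ be the largest index $i \geq 0$ with $c_i < |\tau'(z'_0)| - 8\varepsilon'$. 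Then $z_k \in \cC \setminus \A{C}{ap}$ for every $k \in [0, J+1)$, so by Proposition~\ref{Z4:structure_Csp}(\ref{item:Z4:structure_Csp:window}) the letters $z_0, \dots, z_J$ share a common root $s$ with $|s| \leq 8\varepsilon$; chaining Item~(\ref{item:defi:per:p}) of the dichotomy of $(Z,\tau)$ across these letters shows that $x_{[c_0-8\varepsilon,\, c_{J+1}+8\varepsilon)}$ is a factor of $s^\Z$, hence of period $\leq 8\varepsilon$. Since $c_0 \leq 0$ and $c_{J+1} \geq |\tau'(z'_0)| - 8\varepsilon'$, this interval contains $[8\varepsilon', |\tau'(z'_0)| - 8\varepsilon')$, producing $\per(x_{[8\varepsilon', |\tau'(z'_0)|-8\varepsilon')}) \leq 8\varepsilon < 8\varepsilon'$ in contradiction with the dichotomy of $(Z',\tau')$.

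Item~(\ref{Z4:bound_r:p&big}) follows the same contradiction scheme. The hypothesis $|\mroot\tau'(z'_0)| \leq \varepsilon'$ forces $z'_0 \in \cC' \setminus \A{C}{ap}'$ as in Item~(\ref{Z4:bound_r:p&small}), and $\per(\tau'(z'_0)) > \varepsilon$ gives again $r(z') = \min\{i \geq 0 : z_i \in \A{C}{ap}\}$. Assuming $c_{r(z')} \geq \varepsilon'$ and extracting a common level-$n$ root $s$ with $|s| \leq 8\varepsilon$ for $z_{[0,J+1)}$ as before, a long prefix of $\tau'(z'_0)$ inherits two periods, $|s|$ and $|\mroot\tau'(z'_0)|$; Theorem~\ref{theo:fine&wilf} then forces $s$ and $\mroot\tau'(z'_0)$ to be powers of a common word, and their primitivity forces $s = \mroot\tau'(z'_0)$, contradicting $|s| \leq 8\varepsilon < |\mroot\tau'(z'_0)|$ in the main subcase $|\mroot\tau'(z'_0)| > 8\varepsilon$. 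The bound on $c_{i+r(Sz')}$ follows by transporting the same argument to the shifted sequence $\tau'(Sz') = S^{|\tau'(z'_0)|}\tau'(z')$, whose $\tau$-factorization in $Z$ has cut function $c - |\tau'(z'_0)|$ and alphabet data $(z_{j+i})_{j\in\Z}$ by Lemma~\ref{lem:factorizations_and_compositions}, and repeating the Fine-Wilf contradiction near position $|\tau'(z'_0)|$. The main obstacle is the remaining subcase of Item~(\ref{Z4:bound_r:p&big}) with $|\mroot\tau'(z'_0)| \in (\varepsilon, 8\varepsilon]$: both candidate roots have length $\leq 8\varepsilon$ and the primitivity argument alone does not exclude $s = \mroot\tau'(z'_0)$, so the proof must exploit the finer block structure of Proposition~\ref{Z4:structure_Csp}(\ref{item:Z4:structure_Csp:window}) together with the $\First$-encoding of $\cC$ from Subsection~\ref{subsec:Z4:construction}, which bound the length of a purely periodic run at level $n$ and guarantee regular appearances of $\A{C}{ap}$ letters.
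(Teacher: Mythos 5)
Your Items (\ref{Z4:bound_r:ap}) and (\ref{Z4:bound_r:p&small}) are fine: (\ref{Z4:bound_r:p&small}) is essentially the paper's argument (the detour through $\A{C}{ap}'$ is unnecessary, since $\per(\tau'(z'_0))\leq|\mroot\tau'(z'_0)|\leq\varepsilon$ already forces $r(z')=0$), and your contradiction argument for (\ref{Z4:bound_r:ap}) — extracting a common root for the non-$\A{C}{ap}$ run via Proposition~\ref{Z4:structure_Csp}\,(\ref{item:Z4:structure_Csp:window}) and chaining the $8\varepsilon$-extended periodic windows — is a valid alternative to the paper's direct route. The genuine problem is Item (\ref{Z4:bound_r:p&big}), where you yourself flag the subcase $\varepsilon<|\mroot\tau'(z'_0)|\leq8\varepsilon$ as unresolved. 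That subcase is not vacuous, and your Fine--Wilf scheme cannot be pushed through it: under your contradiction hypothesis the common level-$n$ root $s$ of the run may simply coincide with $\mroot\tau'(z'_0)$, so no clash of periods arises; the failure of $c_{r(z')}<\varepsilon'$ is not ruled out by periodicity considerations of this kind at all. The sketch for $c_{i+r(Sz')}$ ("transport the same argument") inherits the same hole, so roughly half of Item (\ref{Z4:bound_r:p&big}) is missing.

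The mechanism you propose for closing it (that Proposition~\ref{Z4:structure_Csp} "bounds the length of a purely periodic run at level $n$ and guarantees regular appearances of $\A{C}{ap}$ letters") is not available: runs of $\A{C}{sp}$ letters are not bounded in terms of $\varepsilon'$, and in the situation of Item (\ref{Z4:bound_r:p&big}) the word $\tau'(z'_0)$ is genuinely $|\mroot\tau'(z'_0)|$-periodic over a stretch of length $\geq 20\varepsilon'$, so nothing forces $\A{C}{ap}$ letters to appear "regularly" inside it. The paper's actual argument is different and not by contradiction: since $\per(\tau'(z'_0))=|\mroot\tau'(z'_0)|>\varepsilon$, Item~\ref{item:CL:localize_per_aper:loc_aper} of Proposition~\ref{CL:localize_per_aper} produces a factor of length $2\varepsilon$ inside $\tau'(z'_0)$ with period $>\varepsilon$; because $\tau'(z')_{[-8\varepsilon',|\tau'(z'_0)|+8\varepsilon')}$ is $|\mroot\tau'(z'_0)|$-periodic with period $\leq\varepsilon'$, identical copies of this factor occur at some $k_0\in[0,\varepsilon')$ and some $k_1\in[|\tau'(z'_0)|,|\tau'(z'_0)|+\varepsilon')$; the level-$n$ letters $z_{j_0},z_{j_1}$ whose ($8\varepsilon$-extended) windows contain these copies must then lie in $\A{C}{ap}$ (a non-aperiodic letter only produces factors of period $\leq\varepsilon$ there, by the root bound for $\cC\setminus\A{C}{ap}$ coming from the construction), and the minimality in the definition of $r$ gives $r(z')\leq j_0$ and $i+r(Sz')\leq j_1$, hence both bounds at once, uniformly for all $|\mroot\tau'(z'_0)|\in(\varepsilon,\varepsilon']$. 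This "translate the locally aperiodic window by the period" step is the missing idea in your proposal.
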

\begin{proof}
We start with Item \eqref{Z4:bound_r:ap}.
Being $r(z')$ nonnegative by the definition of $r$, we have that $c_{r(z')} \geq c_0$.
Hence, $c_{r(z')} \geq -|\tau| \geq -\varepsilon'$.
To prove the other inequality, we note that the condition $z'_0 \in \A{C}{ap}'$ implies, by Item \ref{item:Z4:main_prop:per} in Proposition \ref{Z4:main_prop}, that $\per(\tau'(z')_{[8\varepsilon', |\tau'(z'_0)| - 8\varepsilon')}) > \varepsilon$.
Using Item \ref{item:CL:localize_per_aper:loc_aper} of Lemma \ref{CL:localize_per_aper}, we get $k \in [8\varepsilon', |\tau'(z'_0)| - 8\varepsilon')$ satisfying $\per(\tau'(z')_{[k - \varepsilon', k + \varepsilon')}) > \varepsilon$.
Let $j$ be the integer satisfying $k \in [c_j, c_{j+1})$.
Then, $\per(\tau'(z')_{[c_j - 8\varepsilon, c_{j+1} + 8\varepsilon)}) \geq \per(\tau'(z')_{[k-\varepsilon, k+\varepsilon)}) > \varepsilon$, so $z_j \in \A{C}{ap}$ by Item \ref{item:Z4:main_prop:per} in Proposition \ref{Z4:main_prop}.
Also, since $c_{j+1} \geq k \geq 0$, we have that $j \geq 0$.
We conclude, by the minimality condition in the definition of $r$, that $r(z') \leq j$.
Therefore, $c_{r(z')} \leq c_j \leq k \leq |\tau'(z'_0)| - 8\varepsilon'$.
\medskip

We now consider Item \eqref{Z4:bound_r:p&big}.
Let $s = \mroot \tau'(z'_0)$.
Since $|s| \leq \varepsilon'$, Item \ref{item:Z4:main_prop:per} in Proposition \ref{Z4:main_prop} ensures that $\per(\tau'(z'_0)) = |s| \in (\varepsilon, \varepsilon']$.
Thus, by Item \ref{item:CL:localize_per_aper:loc_aper} of Lemma \ref{CL:localize_per_aper}, there is $k \in [0, |\tau'(z'_0)|)$ such that $\per(\tau'(z')_{[k - \varepsilon', k + \varepsilon')}) > \varepsilon$.
Moreover, the condition $|s| \leq \varepsilon'$ implies, by Item \ref{item:Z4:main_prop:per} in Proposition \ref{Z4:main_prop}, that $\per(\tau'(z')_{[-8\varepsilon', |\tau'(z'_0)| + 8\varepsilon')})$ is at most $\varepsilon'$.
Therefore, we can find $k_0 \in [0, \varepsilon')$ and $k_1 \in [|\tau'(z'_0)|, |\tau'(z'_0)| + \varepsilon')$ satisfying $\tau'(z')_{[k - \varepsilon, k + \varepsilon)} = \tau'(z')_{[k_0 - \varepsilon, k_0 + \varepsilon)} = \tau'(z')_{[k_1 - \varepsilon, k_1 + \varepsilon)}$.
In particular,
\begin{equation} \label{eq:Z4:bound_r:1}
    \per(\tau'(z')_{[k - \varepsilon, k + \varepsilon)}) = 
    \per(\tau'(z')_{[k_0 - \varepsilon, k_0 + \varepsilon)}) =
    \per(\tau'(z')_{[k_1 - \varepsilon, k_1 + \varepsilon)}) > \varepsilon.
\end{equation}
Let $j_0, j_1 \in \Z$ be the integers satisfying $k_0 \in [c_{j_0}, c_{j_0+1})$ and $k_1 \in [c_{j_1}, c_{j_1+1})$.
Observe that, by \eqref{eq:Z4:bound_r:1} and Item \ref{item:Z4:main_prop:per} in Proposition \ref{Z4:main_prop}, $z_{j_0}$ and $z_{j_1}$ belong to $\A{C}{ap}$.
Also, since $c_{j_0+1} \geq k_0 \geq 0$ and $c_{j_1+1} \geq k_1 \geq |\tau'(z'_0)|$ (where $i$ is the element defined in the statement of the lemma), we have that $j_0 \geq 0$ and $j_1 \geq i$.
We conclude, from the definition of $r$, that $r(z') \leq j_0$ and $i + r(Sz') \leq j_1$.
Therefore, $c_{r(z')} \leq c_{j_0} \leq k_0 \leq \varepsilon'$ and $c_{i + r(Sz')} \leq c_{j_1} \leq |\tau'(z'_0)| + \varepsilon'$.
Finally, \eqref{eq:Z4:defi_r} ensures that $r(z') \geq 0$ and $i + r(Sz') \geq i$, so $c_{r(z')} \geq -|\tau| \geq -\varepsilon'$ and $c_{i + r(Sz')} \geq |\tau'(z'_0)| \geq |\tau'(z'_0)| - \varepsilon'$.
This completes the proof of Item \eqref{Z4:bound_r:p&big}.
\medskip 

For  Item \eqref{Z4:bound_r:p&small}, we note that the condition $|s| \leq \varepsilon$ implies that $\per(\tau'(z'_0))$.
Hence, $r(z') = 0$ and $c_{r(z')} \in [-|\tau|, 0] \subseteq [-\varepsilon', \varepsilon')$.
\end{proof}

%%%%                                                                 %%%% PROPOSITION %%%%
\begin{prop} \label{Z4:pre_def_gamma}
Let $z', \tilde{z}' \in Z'$, $(c,z) = \Fac_{(Z,\tau)}(\tau'(z'))$ and $(\tilde{c}, \tilde{z}) = \Fac_{(Z,\tau)}(\tau'(\tilde{z}'))$.
We define $i$ and $j$ as the integers satisfying $|\tau'(z'_0)| \in [c_i, c_{i+1})$ and $|\tau'(\tilde{z}'_0)| \in [\tilde{c}_j, \tilde{c}_{j+1})$.
If $z'_0 = \tilde{z}'_0$, then $c_{r(z')} = \tilde{c}_{r(\tilde{z}')}$, $c_{i + r(Sz')} = \tilde{c}_{j + r(S\tilde{z}')}$ and $z_{[r(z'), i + r(Sz'))}$ is equal to $\tilde{z}_{[r(\tilde{z}'), j + r(S\tilde{z}'))}$.
\end{prop}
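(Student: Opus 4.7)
The plan is to set $x = \tau'(z')$ and $\tilde{x} = \tau'(\tilde{z}')$, so that by recognizability of $(Z',\tau')$, $\Fac^0_{(Z',\tau')}(x) = (0, z'_0) = (0, \tilde{z}'_0) = \Fac^0_{(Z',\tau')}(\tilde{x})$. This puts us in the setting of Proposition \ref{Z4:reco_FZ_from_FZ'}: with the conventions $(c, z) = \Fac_{(Z,\tau)}(x)$, $(\tilde{c}, \tilde{z}) = \Fac_{(Z,\tau)}(\tilde{x})$, $(c', z') = \Fac_{(Z',\tau')}(x)$ and $(\tilde{c}', \tilde{z}') = \Fac_{(Z',\tau')}(\tilde{x})$, we have $c'_0 = \tilde{c}'_0 = 0$, $c'_1 = \tilde{c}'_1 = |\tau'(z'_0)|$, and the proposition yields the identity $\Fac^0_{(Z,\tau)}(S^k x) = \Fac^0_{(Z,\tau)}(S^k \tilde{x})$ for every $k$ in a common interval $I$ containing $[-4\varepsilon', |\tau'(z'_0)| + 19\varepsilon')$, since $|\tau'| \geq 20\varepsilon'$ by Item \ref{item:Z4:main_prop:lengths} of Proposition \ref{Z4:main_prop}.

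Next, I would localise both $c_{r(z')}$ and $c_{i + r(Sz')}$ (and their analogues for $\tilde{z}'$) inside $I$. Splitting according to whether $z'_0 \in \A{C}{ap}'$, or $z'_0 \notin \A{C}{ap}'$ with $\per(\tau'(z'_0)) > \varepsilon$, or $\per(\tau'(z'_0)) \leq \varepsilon$, Lemma \ref{Z4:bound_r} pins $c_{r(z')}$ inside $[-\varepsilon', |\tau'(z'_0)|-8\varepsilon')$ (Case 1) or $[-\varepsilon', \varepsilon')$ (Cases 2 and 3). Applying the same trichotomy to $Sz'$ (after noting that $\tau'(Sz') = S^{|\tau'(z'_0)|} x$, hence its $\tau$-factorization is the shift $(c_{\cdot + i} - |\tau'(z'_0)|, z_{\cdot + i})$) gives $c_{i+r(Sz')}$ inside $[|\tau'(z'_0)|-\varepsilon', |\tau'(z'_0)| + |\tau'(z'_1)| - 8\varepsilon')$ (Case 1), $[|\tau'(z'_0)|-\varepsilon', |\tau'(z'_0)| + \varepsilon')$ (Case 2) or near $|\tau'(z'_0)|$ (Case 3). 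In every case both endpoints land in $I$, and the same bounds apply to the $\tilde{x}$-side since $|\tau'(z'_0)| = |\tau'(\tilde{z}'_0)|$.

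With the localisation in hand, the matching of $\tau$-factorizations translates to: for each cut $c_m$ of $x$ in $I$, there is a unique $m^\#$ with $\tilde{c}_{m^\#} = c_m$ and $\tilde{z}_{m^\#} = z_m$, and vice versa. Applying this at $m = r(z')$ produces an index $m^\#$ with $\tilde{c}_{m^\#} = c_{r(z')}$ and $\tilde{z}_{m^\#} = z_{r(z')}$, and symmetrically. The key step is then to identify $m^\# = r(\tilde{z}')$: in the periodic case ($\per(\tau'(z'_0)) \leq \varepsilon$) this is immediate since $r \equiv 0$ and $c_0 = \tilde{c}_0$ follows from the matching at $k = 0 \in I$; in the remaining cases, since $z_{r(z')} \in \A{C}{ap}$ by definition of $r$, one has $\tilde{z}_{m^\#} \in \A{C}{ap}$, and the minimality defining $r(\tilde{z}')$ holds by contradiction: a smaller $m'' \geq 0$ with $\tilde{z}_{m''} \in \A{C}{ap}$ would, via the reverse matching, yield $R'' < r(z')$ with $z_{R''} \in \A{C}{ap}$, contradicting the minimality of $r(z')$. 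The same argument at $m = i + r(Sz')$ gives $m^\# = j + r(S\tilde{z}')$. Once $c_{r(z')} = \tilde{c}_{r(\tilde{z}')}$ and $c_{i + r(Sz')} = \tilde{c}_{j + r(S\tilde{z}')}$ are secured, the equality $z_{[r(z'), i+r(Sz'))} = \tilde{z}_{[r(\tilde{z}'), j+r(S\tilde{z}'))}$ follows letter-by-letter by applying the matching of $\Fac^0_{(Z,\tau)}$ at each intermediate cut.

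The main obstacle is the case analysis in step two combined with the minimality verification in step three: one must carefully check that the matching preserves not only the relevant $\A{C}{ap}$-letter but also the absence of $\A{C}{ap}$-letters in between on both sides. The cleanest route seems to be the symmetry argument above, exploiting that the matching interval $I$ is symmetric in $x$ and $\tilde{x}$; all other steps are bookkeeping of cut positions.
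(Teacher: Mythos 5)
Your proposal is correct and follows essentially the same route as the paper's proof: both rest on Proposition \ref{Z4:reco_FZ_from_FZ'} to match $\Fac^0_{(Z,\tau)}$ of $\tau'(z')$ and $\tau'(\tilde{z}')$ on a long interval, Lemma \ref{Z4:bound_r} to locate the cuts $c_{r(z')}$ and $c_{i+r(Sz')}$ inside that interval, the minimality in the definition of $r$ (argued symmetrically) to identify the matched cut indices with $r(\tilde{z}')$ and $j+r(S\tilde{z}')$, and the case split on $\per(\tau'(z'_1))$, after which the word equality is read off cut by cut. The only cosmetic difference is that you use the full range $[c'_0-4\varepsilon', c'_2-\varepsilon')$ of Proposition \ref{Z4:reco_FZ_from_FZ'} directly, whereas the paper re-extends a shorter matching window via Lemma \ref{Z4:reco_from_reco}; both are fine.
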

\begin{proof}
We start with some observations that will be used throughout the proof.
Since $z'_0 = \tilde{z}'_0$, Lemma \ref{Z4:same_z0_same_ext_window} gives that
\begin{equation*}
    \tau'(z'_{-1}) = \tau'(\tilde{z}'_{-1}), 
    \tau'(z'_0) = \tau'(\tilde{z}'_0)
    \enskip\text{and}\enskip
    \tau'(z'_1) = \tau'(\tilde{z}'_1).
\end{equation*}
In particular,
\begin{equation} \label{eq:Z4:pre_def_gamma:eq_on_window}
    \tau'(z'_0)_{[-|\tau'(z'_{-1})|, |\tau'(z'_0 z'_1)|)} = 
    \tau'(\tilde{z}'_0)_{[-|\tau'(z'_{-1})|, |\tau'(z'_0 z'_1)|)}.
\end{equation}
Also, since $z'_0 = \tilde{z}'_0$, we have from Proposition \ref{Z4:reco_FZ_from_FZ'} that 
\begin{equation} \label{eq:Z4:pre_def_gamma:same_Fac}
    \Fac^0_{(Z,\tau)}(S^k \tau'(z')) = 
    \Fac^0_{(Z,\tau)}(S^k \tau'(\tilde{z}'))
    \enskip\text{for all $k \in[-4\varepsilon', |\tau'(z'_0)| + 4\varepsilon')$.}
\end{equation}

We now prove that
\begin{equation} \label{eq:Z4:pre_def_gamma:cr&zr_are_eq}
    c_{r(z')} = \tilde{c}_{r(\tilde{z}')}
    \enskip\text{and}\enskip
    z_{r(z')} = \tilde{z}_{r(\tilde{z}')}.
\end{equation}
Note that Lemma \ref{Z4:bound_r} ensures that 
\begin{equation} \label{eq:Z4:pre_def_gamma:cr_in_interval}
    c_{r(z')}, \tilde{c}_{r(\tilde{z}')} \in [-\varepsilon', |\tau'(z'_0)| - 8\varepsilon').
\end{equation}
Hence, from \eqref{eq:Z4:pre_def_gamma:same_Fac} we get that $\Fac^0_{(Z,\tau)}(S^{c_{r(z')}} \tau'(z')) = \Fac^0_{(Z,\tau)}(S^{c_{r(z')}} \tau'(\tilde{z}'))$.
This implies the following: If $\ell$ is the integer satisfying $c_{r(z')} \in [\tilde{c}_\ell, \tilde{c}_{\ell+1})$, then
\begin{equation} \label{eq:Z4:pre_def_gamma:1}
    \text{$c_{r(z')} = \tilde{c}_\ell$ and $z_{r(z')} = \tilde{z}_\ell$.}
\end{equation}
Note that $\ell \geq 0$ (as $\tilde{c}_{\ell+1} = c_{r(z')+1} \geq 0$).
Being $\tau'(z'_0) = \tau'(\tilde{z}'_0)$, we get, from \eqref{eq:Z4:defi_r}, that $r(\tilde{z}') \leq \ell$.
In particular, $\tilde{c}_{r(\tilde{z}')} \leq \tilde{c}_\ell = c_{r(z')}$.
A symmetric argument shows that $c_{r(z')} \leq \tilde{c}_{r(\tilde{z}')}$, which allows us to conclude that $\tilde{c}_{r(\tilde{z})} = c_{r(z')}$.
Then, it follows from \eqref{eq:Z4:pre_def_gamma:1} that $\tilde{c}_{r(\tilde{z})} = \tilde{c}_\ell$.
Therefore, $r(\tilde{z}) = \ell$, and thus $z_{r(z')} = \tilde{z}_\ell = \tilde{z}_{r(\tilde{z})}$ by \eqref{eq:Z4:pre_def_gamma:1}.
This proves \eqref{eq:Z4:pre_def_gamma:cr&zr_are_eq}.
\medskip

Observe that \eqref{eq:Z4:pre_def_gamma:cr&zr_are_eq} implies that $\Fac_{(Z,\tau)}(S^{c_{r(z')}} \tau'(z'))$ is equal to $\Fac_{(Z,\tau)}(S^{c_{r(z')}} \tau'(\tilde{z}'))$.
This, \eqref{eq:Z4:pre_def_gamma:eq_on_window} and \eqref{eq:Z4:pre_def_gamma:cr_in_interval} permit to use Lemma \ref{Z4:reco_from_reco} and obtain that
\begin{equation} \label{eq:Z4:pre_def_gamma:Faci_equal}
    \Fac^0_{(Z,\tau)}( S^k \tau'(z') ) = 
    \Fac^0_{(Z,\tau)}( S^k \tau'(\tilde{z}') )
    \enskip\text{for all $k\in[c_{r(z')},|\tau'(z'_0 z'_1)|-\varepsilon')$.}
\end{equation}
Then, since $c_{r(z')} = \tilde{c}_{r(\tilde{z}')}$, we have, for any $k \in \Z$ such that $c_k \in [c_{r(z')}, |\tau'(z'_0 z'_1)|-\varepsilon')$, that
\begin{equation} \label{eq:Z4:pre_def_gamma:ci_equal}
    c_k = \tilde{c}_{k - r(z') + r(\tilde{z}')}
    \enskip\text{and}\enskip
    z_k = \tilde{z}_{k - r(z') + r(\tilde{z}')}.
\end{equation}
To continue, we consider two cases.
Assume that $\per(\tau'(z'_1)) \leq \varepsilon$.
We note that, since $\tau'(z'_1) = \tau'(\tilde{z}'_1)$, $\per(\tau'(\tilde{z}'_1)) \leq \varepsilon$.
Hence, by \eqref{eq:Z4:defi_r}, $r(Sz') = r(S\tilde{z}') = 0$.
Now, the definition of $i$ and $j$ and \eqref{eq:Z4:pre_def_gamma:ci_equal} imply that $c_i = \tilde{c}_j$ and $z_i = \tilde{z}_j$.
Therefore, $c_{i + r(Sz')} = \tilde{c}_{j + r(S\tilde{z}')}$ and $z_{i + r(Sz')} = \tilde{z}_{j + r(S\tilde{z}')}$.
This completes the proof in this case.

Let us now assume that $\per(\tau'(z'_1)) > \varepsilon$.
We are going to argue as in the proof of \eqref{eq:Z4:pre_def_gamma:cr&zr_are_eq}.
Being $\tau'(z'_1) = \tau'(\tilde{z}'_1)$, we have that $\per(\tau'(\tilde{z}'_1)) > \varepsilon$.
Hence, by the definition of $r$, $z_{i + r(Sz')}$ and $\tilde{z}_{j + r(S\tilde{z}')}$ belong to  $\A{C}{ap}$.
Then, since $c_{i + r(Sz')} \in [|\tau'(z'_0)-\varepsilon', |\tau'(z'_0 z'_1)| - 8\varepsilon')$ by Lemma \ref{Z4:bound_r}, it follows from \eqref{eq:Z4:pre_def_gamma:Faci_equal} that
\begin{equation*}
    \Fac^0_{(Z,\tau)}( S^{c_{i + r(Sz')}} \tau'(z') ) =
    \Fac^0_{(Z,\tau)}( S^{c_{i + r(Sz')}} \tau'(\tilde{z}') ).
\end{equation*}
Therefore, if $k$ is the integer satisfying $c_{i + r(Sz')} \in [\tilde{c}_k, \tilde{c}_{k+1})$, then $c_{i + r(Sz')} = \tilde{c}_k$ and $z_{i + r(Sz')} = \tilde{z}_k$.
As $\per(\tau'(z'_1)) > \varepsilon$, we have that $\tilde{z}_k = z_{i+r(Sz')} \in \A{C}{ap}$.
Also, since $|\tau'(z'_0)| = |\tau'(\tilde{z}'_0)|$, $\tilde{c}_{k+1} = c_{i +r(Sz')+1} \geq |\tau'(\tilde{z}'_0)|$, so $k \geq j$.
The last two things imply, by the definition of $r(S\tilde{z}')$, that $\tilde{c}_{j + r(S\tilde{z}')} \leq \tilde{c}_k = c_{i + r(Sz')}$.
Similarly, $c_{i + r(Sz')} \leq \tilde{c}_{j + r(S\tilde{z}')}$.
We conclude that $k = j + r(S\tilde{z}')$, $\tilde{c}_{j + r(S\tilde{z}')} = c_{i + r(Sz')}$ and that $\tilde{z}_{j + r(S\tilde{z}')} = z_{i + r(Sz')}$.
\end{proof}

%%%%                                                                 %%%% DEFINITION %%%%
\begin{defi} \label{Z4:defi_gamma}
The last proposition allows us to define $\gamma \colon \cC' \to \cC^+$ in such a way that, if $z' \in Z'$, $(c,z) = \Fac_{(Z,\tau)}(\tau'(z'))$ and $i$ is the integer satisfying $|\tau'(z'_0)| \in [c_i, c_{i+1})$, then
\begin{equation} \label{eq:Z4:defi_gamma:defi}
    \gamma(z'_0) = z_{[r(z'), i + r(Sz'))}.
\end{equation}
We call $\gamma$ the {\em connecting morphism} from $(Z',\tau')$ to $(Z, \tau)$.
\end{defi}

\begin{rema}
Let $z' \in Z'$ and $(c, z) = \Fac_{(Z,\tau)}(\tau'(z'))$.
Then, \eqref{eq:Z4:defi_gamma:defi} ensures that $r(z') + |\gamma(z'_0)| = i + r(Sz')$, where $i$ is the integer satisfying $|\tau'(z'_0)| \in [c_i, c_{i+1})$.
This relation will be freely used throughout this subsection.
\end{rema}

The rest of this section is devoted to prove the main properties of $\gamma$.
We first introduce some notation.
Let $\rho(a') = \tau'(a')$ if $a' \in \A{C}{ap}'$ and $\rho(a') = \mroot \tau'(a')$ if $a' \in \cC' \setminus \A{C}{ap}'$.
We define $\psi(z') = (\rho(z'_{-1}), \tau'(z'_0), \rho(z'_1))$ if $z' \in Z'$.
Let $\rho(a)$ and $\psi(z)$ be defined analogously for $a \in \cC$ and $z \in Z$.

We fix, for the rest of the section, points $z', \tilde{z}' \in Z'$ and the notation $(c,z) = \Fac_{(Z,\tau)}(\tau'(z'))$ and $(\tilde{c}, \tilde{z}) = \Fac_{(Z,\tau)}(\tau'(\tilde{z}'))$.

%%%%                                                                 %%%% LEMMA %%%%
\begin{lem} \label{Z4:reco_for_psi}
Let $x, \tilde{x} \in X$, $(c,z) = \Fac_{(Z,\tau)}(x)$ and $(\tilde{c}, \tilde{z}) = \Fac_{(Z,\tau)}(\tilde{x})$.
If $x_{[-\varepsilon', \varepsilon')} = \tilde{x}_{[-\varepsilon', \varepsilon')}$ and $z_0 \in \A{C}{ap}$, then $\psi(z) = \psi(\tilde{z})$.
\end{lem}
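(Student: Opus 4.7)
The plan is as follows. Since $\varepsilon' \geq 10E$, the hypothesis $x_{[-\varepsilon', \varepsilon')} = \tilde x_{[-\varepsilon', \varepsilon')}$ in particular gives $x_{[-4E, 4E)} = \tilde x_{[-4E, 4E)}$, and Lemma~\ref{Z4:reco_from_aper_sym} (whose hypothesis requires exactly $z_0 \in \A{C}{ap}$) applies to $(x, \tilde x)$ and yields $\tau(z_0) = \tau(\tilde z_0)$, $c_0 = \tilde c_0$, $c_1 = \tilde c_1$, and $\Fac^0_{(Z,\tau)}(S^{c_1} x) = \Fac^0_{(Z,\tau)}(S^{c_1} \tilde x)$, the last of which unpacks to $z_1 = \tilde z_1$. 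This already supplies the middle entry $\tau(z_0) = \tau(\tilde z_0)$ of $\psi$ and the right entry $\rho(z_1) = \rho(\tilde z_1)$.

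For the left entry $\rho(z_{-1})$ I would split on the type of $z_{-1}$. Suppose first $z_{-1} \in \A{C}{ap}$. Item~\ref{item:Z4:main_prop:per} of Proposition~\ref{Z4:main_prop} together with Item~\ref{item:CL:localize_per_aper:loc_aper} of Proposition~\ref{CL:localize_per_aper} provide $j \in [c_{-1}+8\varepsilon, c_0-8\varepsilon)$ with $\per(x_{[j-\varepsilon, j+\varepsilon)}) > \varepsilon$; since $|j| \leq E$ and $\varepsilon' \geq 10E$, the inclusion $[j-3E, j+3E) \subseteq [-\varepsilon', \varepsilon')$ holds, so Lemma~\ref{Z4:reco_from_aper} applies at $(S^j x, S^j \tilde x)$. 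It produces $\tau(z_{-1}) = \tau(\tilde z_k)$ for the letter $\tilde z_k$ of $\tilde z$ containing position $j$ and identifies the right endpoint of $\tilde z_k$ with $c_0$; combining with $\tilde c_0 = c_0$ forces $k = -1$, whence $\tilde c_{-1} = c_{-1}$ and $\tau(z_{-1}) = \tau(\tilde z_{-1})$. Were it the case that $\tilde z_{-1} \in \cC \setminus \A{C}{ap}$, Remark~\ref{rema:Z4:special_per&root=per} would give $\per(\tau(\tilde z_{-1})) \leq 8\varepsilon$, contradicting $\per(\tau(z_{-1})) \geq \per(x_{[c_{-1}+8\varepsilon, c_0-8\varepsilon)}) > 8\varepsilon$; hence $\tilde z_{-1} \in \A{C}{ap}$ and $\rho(z_{-1}) = \tau(z_{-1}) = \tau(\tilde z_{-1}) = \rho(\tilde z_{-1})$.

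Suppose instead $z_{-1} \in \cC \setminus \A{C}{ap}$, and set $s = \mroot \tau(z_{-1})$. Item~\ref{item:Z4:main_prop:per} yields $|s| \leq 8\varepsilon$ together with $x_{[c_{-1}-8\varepsilon, c_0+8\varepsilon)} = s^\Z_{[-8\varepsilon, |\tau(z_{-1})|+8\varepsilon)}$; this window is inside $[-\varepsilon', \varepsilon')$, so $\tilde x$ realizes the same periodic pattern. I would first rule out $\tilde z_{-1} \in \A{C}{ap}$ by running the previous paragraph with $(x, \tilde x)$ swapped: an aperiodic $j' \in [\tilde c_{-1}+8\varepsilon, c_0-8\varepsilon)$ exists in $\tilde x$, and Lemma~\ref{Z4:reco_from_aper} then forces $\tau(\tilde z_{-1}) = \tau(z_{-1})$, which is absurd since $\tau(z_{-1})$ is a power of $s$ and hence has period $\leq 8\varepsilon$, while $\tilde z_{-1} \in \A{C}{ap}$ forces period $>8\varepsilon$. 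Thus $\tilde z_{-1} \in \cC \setminus \A{C}{ap}$, with some root $\tilde s$. On the common subword $x_{[c_0-8\varepsilon, c_0+8\varepsilon)}$, of length $16\varepsilon \geq |s| + |\tilde s| - 1$, both $s^\Z$ and $\tilde s^\Z$ appear at suitable shifts, so Item~(1) of Proposition~\ref{CL:on_sZ&tZ} makes $s$ and $\tilde s$ conjugate; applied to the coding $(Z,\tau)$ after shifting $x$ by $c_{-1}$, Item~(3) of Definition~\ref{defi:per} then upgrades conjugacy to equality, giving $s = \tilde s$ and hence $\rho(z_{-1}) = \rho(\tilde z_{-1})$.

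The main obstacle is that the recognizability lemmas available in this section (Lemmas~\ref{Z4:reco_from_aper_sym} and~\ref{Z4:reco_from_aper}) propagate factorization information \emph{forward} from the origin, whereas $z_{-1}$ sits to its left. The case split avoids a bespoke backward recognizability statement: in the aperiodic case, anchoring Lemma~\ref{Z4:reco_from_aper} at an aperiodic position strictly inside $z_{-1}$ pins down the \emph{right} endpoint of the corresponding letter of $\tilde z$, which coincides with $c_0 = \tilde c_0$ and thereby identifies $\tilde z_{-1}$ with $z_{-1}$ at the $\tau$-level; in the periodic case, the symmetric argument rules out the aperiodic alternative for $\tilde z_{-1}$, and Fine--Wilf (through Proposition~\ref{CL:on_sZ&tZ}) together with the ``conjugate implies equal'' clause of dichotomous periods finish the identification of the roots.
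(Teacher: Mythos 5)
Your proof is correct and follows essentially the same route as the paper's: synchronize the two factorizations at $c_0$ via Lemma \ref{Z4:reco_from_aper_sym}, then identify $\rho(z_{-1})$ by a case split on the type of $z_{-1}$, using aperiodic recognizability in the $\A{C}{ap}$ case and a Fine--Wilf synchronization of the roots in the periodic case. The differences are minor: you read off $z_1=\tilde{z}_1$ directly from the first application (where the paper treats the right neighbour ``by a similar argument''), you anchor Lemma \ref{Z4:reco_from_aper} at an interior aperiodic position where the paper simply applies Lemma \ref{Z4:reco_from_aper_sym} to $S^{c_0-1}x$ and $S^{c_0-1}\tilde{x}$, and you pass through conjugacy plus Item \ref{item:defi:per:conj_roots} of Definition \ref{defi:per} where the paper deduces $s=\tilde{s}$ directly from Theorem \ref{theo:fine&wilf} using $c_0=\tilde{c}_0$ (your explicit check that $z_{-1}$ and $\tilde{z}_{-1}$ have the same type is a point the paper glosses over).
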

\begin{proof}
The hypothesis implies that $(S^{c_0} x)_{[-E, E)} = (S^{c_0} \tilde{x})_{[-E, E)}$.
Then, as $z_0 \in \A{C}{ap}$, we can use Lemma \ref{Z4:reco_from_aper_sym} to deduce that $c_0 = \tilde{c}_0$ and $\tau(z_0) = \tau(\tilde{z}_0)$.
It is left to show that $\rho(z_{-1}) = \rho(\tilde{z}_{-1})$ and $\rho(z_1) = \rho(\tilde{z}_1)$.
We will only prove the first equality as the other follows from a similar argument.

There are three cases.
Assume first that $z_{-1} \in \A{C}{ap}$.
Then, the hypothesis ensures that $(S^{c_0-1} x)_{[-E, E)} = (S^{c_0-1} \tilde{x})_{[-E, E)}$.
Since $z_{-1} \in \A{C}{ap}$, this permits using Lemma \ref{Z4:reco_from_aper_sym} with $S^{c_0-1} x$ and $S^{c_0-1} \tilde{x}$ to deduce that $\tau(z_{-1}) = \tau(\tilde{z}_{-1})$.
The case $\tilde{z}_{-1} \in \A{C}{ap}$ is analogous.

Let us now assume that $z_{-1}, \tilde{z}_{-1} \in \cC \setminus \A{C}{ap}$.
We define $s = \mroot \tau(z_{-1})$ and $\tilde{s} = \mroot \tau(\tilde{z}_{-1})$.
We have to prove that $s = \tilde{s}$.
Observe that, by Item \ref{item:Z4:main_prop:per} in Proposition \ref{Z4:main_prop}, $s^\Z_{[-8\varepsilon, 0)} = x_{[c_0-8\varepsilon, c_0)}$ and $\tilde{s}^\Z_{[-8\varepsilon, 0)} = \tilde{x}_{[\tilde{c}_0-8\varepsilon, \tilde{c}_0)}$.
Being $c_0$ equal to $\tilde{c}_0$ and since $x_{[-\varepsilon', \varepsilon')} = \tilde{x}_{[-\varepsilon', \varepsilon')}$, we deduce that $s^\Z_{[-8\varepsilon', 0)} = \tilde{s}^\Z_{[-8\varepsilon', 0)}$.
Then, by Theorem \ref{theo:fine&wilf}, $s$ and $\tilde{s}$ are power of a common word, which implies that $s = \tilde{s}$.
\end{proof}

%%%%                                                                 %%%% LEMMA %%%%
\begin{lem} \label{Z4:reco_from_aper_r}
Suppose that $\rho(z'_0) = \rho(\tilde{z}'_0)$ and that $\per(\tau'(z'_0)) > \varepsilon$.
Assume that $\rho(z'_{-1}) = \rho(\tilde{z}'_{-1})$ or that $\per(\tau'(z'_0)) \leq \varepsilon'$.
Then, $c_{r(z')} = \tilde{c}_{r(\tilde{z}')}$ and $\psi(S^{r(z')} z) = \psi(S^{r(\tilde{z}')} \tilde{z})$.
\end{lem}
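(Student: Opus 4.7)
The plan is to transfer information about the $(Z,\tau)$-factorization at the position $i_0 := c_{r(z')}$, where the first $\A{C}{ap}$-letter of $z$ at index $\geq 0$ begins, by invoking Lemma~\ref{Z4:reco_from_aper_sym}. A preliminary observation, based on the bounds $|\tau'(a)| \geq 20\varepsilon'$ and $|\mroot\tau'(a)| \leq 8\varepsilon'$ for $a \in \cC' \setminus \A{C}{ap}'$ (from Proposition~\ref{Z4:main_prop}), is that $\rho(z'_0) = \rho(\tilde z'_0)$ forces $z'_0$ and $\tilde z'_0$ to lie in the same part of the partition of $\cC'$; hence $\per(\tau'(z'_0)) = \per(\tau'(\tilde z'_0)) > \varepsilon$, and in particular $r(\tilde z')$ is defined by the same ``first $\A{C}{ap}$-index $\geq 0$'' formula as $r(z')$, so that $z_{r(z')}, \tilde z_{r(\tilde z')} \in \A{C}{ap}$. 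By Lemma~\ref{Z4:bound_r}, $i_0$ lies in $[-\varepsilon',\varepsilon')$ (Case~B) or $[-\varepsilon',|\tau'(z'_0)|-8\varepsilon')$ (Case~A).

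I would then show that $\tau'(z')$ and $\tau'(\tilde z')$ agree on a window around $i_0$ of radius at least $\varepsilon'$. In Case~B, since $z'_0,\tilde z'_0 \in \cC'\setminus\A{C}{ap}'$ share the common root $s = \mroot\tau'(z'_0)$, Item~\ref{item:Z4:main_prop:per} of Proposition~\ref{Z4:main_prop} combined with the fact that $|\tau'(a)|$ is always an exact multiple of $|\mroot\tau'(a)|$ for periodic letters (by the definition of $\tau'$ through the $\phii{sp}, \phii{wp}$ maps in Section~\ref{sec:Z3}) forces the $s^\Z$-alignment to be absolute, so both sequences equal $s^\Z$ on $[-8\varepsilon',8\varepsilon')$. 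In Case~A, the same alignment argument applied to $\tau'(z'_{-1})$ (using $\rho(z'_{-1}) = \rho(\tilde z'_{-1})$) combined with the analogous statement at $\tau'(z'_0)$ (exact if $z'_0\in\A{C}{ap}'$, periodic otherwise) yields agreement on an interval reaching well past $|\tau'(z'_0)|-8\varepsilon'$. In either case the window contains $[i_0-\varepsilon',i_0+\varepsilon')$, and in particular $[i_0-4E,i_0+4E)$ since $4E\leq\varepsilon'$ by \eqref{eq:Z4:comparing_lengths_Z4_Z4'}.

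Applying Lemma~\ref{Z4:reco_from_aper_sym} to the shifted sequences $S^{i_0}\tau'(z')$ and $S^{i_0}\tau'(\tilde z')$ (the hypothesis $z_{r(z')}\in\A{C}{ap}$ is by construction) gives $\tilde c_k = i_0$ and $\tau(z_{r(z')}) = \tau(\tilde z_k)$, where $k$ is the unique integer with $i_0 \in [\tilde c_k, \tilde c_{k+1})$. Since $\tau = \sigma\theta$ with $\sigma$ injective on letters and $\A{C}{ap} = \theta^{-1}(\A{B}{ap})$, the $\tau$-equality forces $\theta(z_{r(z')}) = \theta(\tilde z_k)$ and hence $\tilde z_k \in \A{C}{ap}$.

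The main obstacle is identifying $k$ with $r(\tilde z')$, which decomposes into (i)~$k\geq 0$ and (ii)~no $\A{C}{ap}$-letter in $\tilde z_0,\dots,\tilde z_{k-1}$. For (i), I would derive $c_0 = \tilde c_0$ from the window agreement and the local determination of $(Z,\tau)$-cuts: in Case~A, this is essentially built into the joint alignment of $\tau'(z'_{-1})$ and $\tau'(z'_0)$; in Case~B the $s$-periodic structure on $[-8\varepsilon',8\varepsilon')$ combined with Item~\ref{item:Z4:structure_Csp:window} of Proposition~\ref{Z4:structure_Csp} (which governs periodic runs in the factorization) pins down $c_0 = \tilde c_0$. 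For (ii), I would invoke symmetry: the same argument with $z'$ and $\tilde z'$ swapped produces an index $m\geq 0$ with $z_m \in \A{C}{ap}$ and $c_m = \tilde c_{r(\tilde z')}$. Minimality of $r(z')$ gives $r(z') \leq m$, hence $c_{r(z')} \leq c_m = \tilde c_{r(\tilde z')}$; meanwhile $\tilde c_{r(\tilde z')} \leq \tilde c_k = c_{r(z')}$ since $\tilde z_k \in \A{C}{ap}$ and $k\geq 0 = $ candidate for $r(\tilde z')$. Equality throughout delivers $c_{r(z')} = \tilde c_{r(\tilde z')}$ and $k = r(\tilde z')$. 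Finally, applying Lemma~\ref{Z4:reco_for_psi} at this common position, using the $\varepsilon'$-window agreement from the second paragraph and $z_{r(z')}\in\A{C}{ap}$, yields $\psi(S^{r(z')}z) = \psi(S^{r(\tilde z')}\tilde z)$.
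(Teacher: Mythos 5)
Your proposal is correct and follows essentially the same route as the paper: window agreement around the relevant cut via the dichotomous-periods property (split into the aperiodic case using $\rho(z'_{-1})=\rho(\tilde z'_{-1})$ and the periodic case using the shared root), localization of $c_{r(z')}$ by Lemma~\ref{Z4:bound_r}, an application of Lemma~\ref{Z4:reco_from_aper_sym}, a symmetric minimality argument for $r$, and Lemma~\ref{Z4:reco_for_psi} for the $\psi$-equality. Your extra care on identifying $k=r(\tilde z')$ (points (i)--(ii)) is a detail the paper treats tersely via the same double, symmetric application, so no substantive difference.
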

\begin{proof}
We first prove the lemma in the case $\per(\tau'(z'_0)) \leq \varepsilon'$.
Note that the condition $\rho(z'_0) = \rho(\tilde{z}'_0)$ and Items \ref{item:Z4:main_prop:lengths} and \ref{item:Z4:main_prop:per} in Proposition \ref{Z4:main_prop} guarantee that $\per(\tau'(\tilde{z}'_0)) = \per(\tau'(z'_0)) \leq \varepsilon'$ and $z'_0, \tilde{z}'_0 \in \cC' \setminus \A{C}{ap}'$.
The first thing and Lemma \ref{Z4:bound_r} give
\begin{equation} \label{eq:Z4:reco_from_aper_r:1}
    c_{r(z')}, \tilde{c}_{r(\tilde{z})} \in [-\varepsilon', \varepsilon').
\end{equation}
The second thing and the hypothesis $\rho(z'_0) = \rho(\tilde{z}'_0)$ imply that 
\begin{equation} \label{eq:Z4:reco_from_aper_r:2}
    \tau'(z')_{[-8\varepsilon', 8\varepsilon')} = 
    \tau'(\tilde{z}')_{[-8\varepsilon', 8\varepsilon')}.
\end{equation}
Now, we know from the definition of $r$ and the condition $\per(\tau'(z'_0)) = \per(\tau'(\tilde{z}'_0)) > \varepsilon$ that $z_{r(z')}, \tilde{z}_{r(\tilde{z}')} \in \A{C}{ap}$.
Hence, by Equations \eqref{eq:Z4:reco_from_aper_r:1} and \eqref{eq:Z4:reco_from_aper_r:2}, we can use Lemma \ref{Z4:reco_from_aper_sym} and deduce the following:
If $i$ and $j$ are the integers satisfying $c_{r(z')} \in [\tilde{c}_i, \tilde{c}_{i+1})$ and $\tilde{c}_{r(\tilde{z}')} \in [c_j, c_{j+1})$, then $c_{r(z')} = \tilde{c}_i$ and $\tilde{c}_{r(\tilde{z}')} = c_j$.
Therefore, by the definition of $r$, that $\tilde{c}_{r(\tilde{z}')} \leq \tilde{c}_i = c_{r(z')}$ and $c_{r(z')} \leq c_j = \tilde{c}_{r(\tilde{z}')}$.
We conclude that $c_{r(z')} = \tilde{c}_{r(\tilde{z}')}$.
This and Equations \eqref{eq:Z4:reco_from_aper_r:1} and \eqref{eq:Z4:reco_from_aper_r:2} allow us to use Lemma \ref{Z4:reco_for_psi}, yielding $\psi(S^{r(z')} z) = \psi(S^{r(\tilde{z}')} \tilde{z})$.

We now assume that $\rho(z'_{-1}) = \rho(\tilde{z}'_{-1})$ and that $\per(\tau'(z'_0)) > \varepsilon'$.
Then, $z'_0 \in \A{C}{ap}'$, so, since $\rho(z'_0) = \rho(\tilde{z}'_0)$, we have that $\tau'(z'_0) = \tau'(\tilde{z}'_0)$.
Combining this with the equation $\rho(z'_{-1}) = \rho(\tilde{z}'_{-1})$ and Item \ref{item:Z4:main_prop:per} of Proposition \ref{Z4:main_prop} produces
\begin{equation} \label{eq:Z4:reco_from_aper_r:3}
    \tau'(z')_{[-8\varepsilon', |\tau'(z'_0)| + 8\varepsilon')} = 
    \tau'(\tilde{z}')_{[-8\varepsilon', |\tau'(z'_0)| + 8\varepsilon')}.
\end{equation}
Now, by Lemma \ref{Z4:bound_r},
\begin{equation} \label{eq:Z4:reco_from_aper_r:4}
    c_{r(z')}, \tilde{c}_{r(\tilde{z}')} \in 
    [-\varepsilon', |\tau'(z'_0)| - 8\varepsilon').
\end{equation}
Equations \eqref{eq:Z4:reco_from_aper_r:3} and \eqref{eq:Z4:reco_from_aper_r:4} imply that 
\begin{multline}
    (S^{c_{r(z')}} \tau'(z'))_{[-\varepsilon', \varepsilon')} =
    (S^{c_{r(z')}} \tau'(\tilde{z}'))_{[-\varepsilon', \varepsilon')} \\
    \enskip\text{and}\enskip
    (S^{\tilde{c}_{r(\tilde{z}')}} \tau'(z'))_{[-\varepsilon', \varepsilon')} =
    (S^{\tilde{c}_{r(\tilde{z}')}} \tau'(\tilde{z}'))_{[-\varepsilon', \varepsilon')}.
\end{multline}
Since $z_{r(z')}, \tilde{z}_{r(\tilde{z}')} \in \A{C}{ap}$ by \eqref{eq:Z4:defi_r}, we can use Lemma \ref{Z4:reco_from_aper_sym} to deduce the following:
If $i$ and $j$ are the integers satisfying $c_{r(z')} \in [\tilde{c}_i, \tilde{c}_{i+1})$ and $\tilde{c}_{r(\tilde{z})} \in [c_j, c_{j+1})$, then $c_{r(z')} = \tilde{c}_i$ and $\tilde{c}_{r(\tilde{z})} = c_j$.
We can then argue as in the first case to conclude that $c_{r(z')} = \tilde{c}_{r(\tilde{z}')}$ and $\psi(S^{r(z')} z) = \psi(S^{r(\tilde{z}')} \tilde{z})$.
\end{proof}

%%%%                                                                 %%%% PROPOSITION %%%%
\begin{prop} \label{Z4:gamma:ap} % APERIODIC CASE
Suppose that  $z'_0, \tilde{z}'_0 \in \A{C}{ap}'$ and $\psi(z') = \psi(\tilde{z}')$.
Then:
\begin{enumerate}
    \item $c_{r(z')} = \tilde{c}_{r(\tilde{z}')}$ and $c_{r(z') + |\gamma(z'_0)|} = \tilde{c}_{r(\tilde{z}') + |\gamma(\tilde{z}'_0)|} $.
    \item $\psi(S^{r(z')} z) = \psi(S^{r(\tilde{z}')} \tilde{z})$ and $\psi(S^{r(z') + |\gamma(z'_0)|} z) = \psi(S^{r(\tilde{z}') + |\gamma(\tilde{z}'_0)|} \tilde{z})$.
\end{enumerate}
\end{prop}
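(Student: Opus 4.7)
The plan is to obtain Proposition \ref{Z4:gamma:ap} from two applications of Lemma \ref{Z4:reco_from_aper_r}, one to the pair $(z',\tilde{z}')$ and one to the shifted pair $(Sz',S\tilde{z}')$; the shift brings the boundary between $\tau'(z'_0)$ and $\tau'(z'_1)$ to the origin so that the second application can control the right endpoint.

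For the left endpoint, I apply Lemma \ref{Z4:reco_from_aper_r} directly to $z'$ and $\tilde{z}'$. The hypothesis $\psi(z') = \psi(\tilde{z}')$ supplies $\rho(z'_0) = \rho(\tilde{z}'_0)$ and $\rho(z'_{-1}) = \rho(\tilde{z}'_{-1})$, while $z'_0 \in \A{C}{ap}'$ together with the dichotomous period property (Item (\ref{item:Z4:main_prop:per}) of Proposition \ref{Z4:main_prop}) forces $\per(\tau'(z'_0)) > 8\varepsilon' > \varepsilon$. This yields both $c_{r(z')} = \tilde{c}_{r(\tilde{z}')}$ and $\psi(S^{r(z')} z) = \psi(S^{r(\tilde{z}')} \tilde{z})$ in one stroke.

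For the right endpoint, I exploit the identity $\tau'(Sz') = S^{|\tau'(z'_0)|}\,\tau'(z')$. If $i$ is the integer with $|\tau'(z'_0)| \in [c_i, c_{i+1})$, then $\Fac_{(Z,\tau)}(\tau'(Sz'))$ is related to $(c,z)$ by $z^{(1)} = S^i z$ and $c^{(1)}_k = c_{k+i} - |\tau'(z'_0)|$, and analogously for $\tilde{}$. Since $|\tau'(z'_0)| = |\tau'(\tilde{z}'_0)|$ and $r(z') + |\gamma(z'_0)| = i + r(Sz')$ by Definition \ref{Z4:defi_gamma}, an application of Lemma \ref{Z4:reco_from_aper_r} to $Sz', S\tilde{z}'$ translates directly into the desired right-endpoint equalities. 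The hypotheses $\rho((Sz')_0) = \rho((S\tilde{z}')_0)$ and $\rho((Sz')_{-1}) = \rho((S\tilde{z}')_{-1})$ needed for Lemma \ref{Z4:reco_from_aper_r} come from $\rho(z'_1) = \rho(\tilde{z}'_1)$ and $\rho(z'_0) = \rho(\tilde{z}'_0)$, respectively.

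The main obstacle will be the missing hypothesis $\per(\tau'(z'_1)) > \varepsilon$, which is not guaranteed by the assumptions. When it holds, the argument above goes through unchanged. When it fails, I first note that $r(Sz') = 0$ directly from the definition of $r$, and argue that $r(S\tilde{z}') = 0$ as well: the dichotomous period property together with the length lower bound $|\tau'(\tilde{z}'_1)| \geq 20\varepsilon' > \varepsilon$ from Item (\ref{item:Z4:main_prop:lengths}) of Proposition \ref{Z4:main_prop} and the equality $\rho(z'_1) = \rho(\tilde{z}'_1)$ force $\tilde{z}'_1 \in \cC' \setminus \A{C}{ap}'$ and $\mroot \tau'(\tilde{z}'_1) = \mroot \tau'(z'_1)$, whence $\per(\tau'(\tilde{z}'_1)) \le \varepsilon$ by Remark \ref{rema:Z4:special_per&root=per}. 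The goal then reduces to $c_i = \tilde{c}_j$ and $\psi(S^i z) = \psi(S^j \tilde{z})$. I obtain this by combining the left-endpoint conclusion with the common $s$-periodic structure of $x = \tau'(z')$ and $\tilde{x} = \tau'(\tilde{z}')$ around the position $|\tau'(z'_0)|$, where $s = \mroot \tau'(z'_1) = \mroot \tau'(\tilde{z}'_1)$: this structure, supplied by Item (\ref{item:Z4:main_prop:per}), shows that $x$ and $\tilde{x}$ agree on an interval containing $[c_{r(z')}-3E,\ |\tau'(z'_0)|+3E)$, after which Lemma \ref{Z4:reco_from_reco} propagates the factorization equality from position $c_{r(z')}$ forward to position $c_i$, and the $\psi$-equality follows from the analogous propagation at the neighbouring cuts.
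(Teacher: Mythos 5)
Your left-endpoint step and your main case are sound and essentially follow the paper's route: the paper also obtains $c_{r(z')} = \tilde{c}_{r(\tilde{z}')}$ and $\psi(S^{r(z')}z) = \psi(S^{r(\tilde{z}')}\tilde{z})$ from Lemma \ref{Z4:reco_from_aper_r}, and its first case ($z'_1 \in \A{C}{ap}'$) amounts to the same shifted application you describe, with the same bookkeeping $r(z') + |\gamma(z'_0)| = i + r(Sz')$ and $\tau'(z'_0) = \tau'(\tilde{z}'_0)$. Splitting instead on $\per(\tau'(z'_1)) > \varepsilon$ versus $\leq \varepsilon$ is legitimate (in the shifted application the alternative hypothesis $\rho(z'_0) = \rho(\tilde{z}'_0)$ of Lemma \ref{Z4:reco_from_aper_r} is available), and your argument that $r(Sz') = r(S\tilde{z}') = 0$ in the residual case, via the length bound $|\tau'(\tilde{z}'_1)| \geq 20\varepsilon'$ and Remark \ref{rema:Z4:special_per&root=per}, is correct.

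The gap is in the residual case, where you "propagate the factorization equality from position $c_{r(z')}$" with Lemma \ref{Z4:reco_from_reco}. That lemma requires the full equality $\Fac^0_{(Z,\tau)}(S^{c_{r(z')}}\tau'(z')) = \Fac^0_{(Z,\tau)}(S^{c_{r(z')}}\tau'(\tilde{z}'))$, i.e.\ equality of the letters $z_{r(z')}$ and $\tilde{z}_{r(\tilde{z}')}$ of $\cC \subseteq \cB^4$, and this is not available: the left-endpoint step only gives $c_{r(z')} = \tilde{c}_{r(\tilde{z}')}$ together with $\psi$-equality, and a letter of $\cC$ also carries the nonlocal coordinate $y_{\First(\cdot)}$, which looks arbitrarily far to the left and is not determined by the window $[-8\varepsilon', |\tau'(z'_0)|+O(\varepsilon'))$ on which $\tau'(z')$ and $\tau'(\tilde{z}')$ agree; so $z_{r(z')} = \tilde{z}_{r(\tilde{z}')}$ may simply fail. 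This distinction between $\psi$-equality and $\Fac^0$-equality is exactly the delicate point of the fourth coding. The repair is the paper's intermediate step: since $z_{r(z')} \in \A{C}{ap}$ (because $\per(\tau'(z'_0)) > \varepsilon$) and the two points agree on the $\pm 4E$ window around $c_{r(z')}$, Lemma \ref{Z4:reco_from_aper_sym} (whose conclusion is an $\Fac^0$-equality at the \emph{next} cut, where the $\First$-coordinate is pinned down locally by the aperiodic letter) gives $\Fac^0_{(Z,\tau)}(S^{c_{r(z')+1}}\tau'(z')) = \Fac^0_{(Z,\tau)}(S^{c_{r(z')+1}}\tau'(\tilde{z}'))$, and only then does Lemma \ref{Z4:reco_from_reco} propagate the equality up to $|\tau'(z'_0)| + 7\varepsilon'$; with that in place, your endgame (reading off $c_i = \tilde{c}_j$ and the $\psi$-equality from the neighbouring cuts, all of which lie in the propagated range) goes through.
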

\begin{proof}
We have, from the condition $\psi(z') = \psi(\tilde{z}')$, that $\rho(z'_{-1}) = \rho(\tilde{z}'_{-1})$ and $\rho(z'_0) = \rho(\tilde{z}'_0)$.
Also, since $z'_0 \in \A{C}{ap}'$, we have that $\per(\tau'(z'_0)) > \varepsilon$.
Hence, we can use Lemma \ref{Z4:reco_from_aper_r} to deduce that 
\begin{equation} \label{eq:Z4:gamma:ap:1}
    c_{r(z')} = \tilde{c}_{r(\tilde{z}')}
    \enskip\text{and}\enskip
    \psi(S^{r(z')} z) = \psi(S^{r(\tilde{z}')} \tilde{z}).
\end{equation}
Now, from Lemma \ref{Z4:bound_r} we have that $c_{r(z')} \in [-\varepsilon', |\tau'(z'_0)| - 8\varepsilon')$.
Also, the hypothesis and Item \ref{item:Z4:main_prop:lengths} in Proposition \ref{Z4:main_prop} give
\begin{equation} \label{eq:Z4:gamma:ap:1.5}
    \tau'(z')_{[-8\varepsilon', |\tau'(z'_0)| + 8\varepsilon')} =
    \tau'(\tilde{z}')_{[-8\varepsilon', |\tau'(z'_0)| + 8\varepsilon')}
\end{equation}
These two things, together with the fact that $z_{r(z')} \in \A{C}{ap}$, allow us to use Lemma \ref{Z4:reco_from_aper_sym} and deduce that 
\begin{equation} \label{eq:Z4:gamma:ap:2}
    \Fac^0_{(Z,\tau)}(S^{c_{r(z')+1}} \tau'(z')) = 
    \Fac^0_{(Z,\tau)}(S^{c_{r(z')+1}} \tau'(z')).
\end{equation}
In particular, $c_{r(z')+1} = \tilde{c}_{r(\tilde{z}')+1}$.
\medskip

To continue, we have to consider two cases.
We first assume that $z'_1 \in \A{C}{ap}'$.
Then, since $\psi(z') = \psi(\tilde{z}')$, we can use Lemma \ref{Z4:reco_from_aper_r} to obtain that $c_{r(z') + |\gamma(z'_0)|} = \tilde{c}_{r(\tilde{z}') + |\gamma(\tilde{z}'_0)|}$ and $\psi(S^{r(z') + |\gamma(z'_0)|} z) = \psi(S^{r(\tilde{z}') + |\gamma(\tilde{z}'_0)|} \tilde{z})$.

It rests to consider the case $z'_1 \in \cC' \setminus \A{C}{ap}'$.
Equations \eqref{eq:Z4:gamma:ap:1.5} and \eqref{eq:Z4:gamma:ap:2} enable us to use Lemma \ref{Z4:reco_from_reco} and deduce that $\Fac^0_{(Z,\tau)}(S^k \tau'(z'))$ is equal to $\Fac^0_{(Z,\tau)}(S^k \tau'(\tilde{z}'))$ for all $k \in [c_{r(z')+1}, |\tau'(z'_0)| + 7\varepsilon')$.
Since $c_{r(z')+1} \leq |\tau'(z'_0)| - 7\varepsilon'$, we in particular have that
\begin{equation} \label{eq:Z4:gamma:ap:3}
    \Fac^0_{(Z,\tau)}(S^k \tau'(z')) =
    \Fac^0_{(Z,\tau)}(S^k \tau'(\tilde{z}'))
    \enskip\text{for all $k \in [|\tau'(z'_0)| - 7\varepsilon', |\tau'(z'_0)| + 7\varepsilon')$.}
\end{equation}
Now, the condition $\psi(z') = \psi(\tilde{z}')$ implies that $\tilde{z}_1 \in \cC' \setminus \A{C}{ap}'$.
Thus, by Lemma \ref{Z4:bound_r},
\begin{equation*}
    c_{r(z') + |\gamma(z'_0)|}, \tilde{c}_{r(\tilde{z}') + |\gamma(\tilde{z}'_0)|}
    \in [|\tau'(z'_0)| - \varepsilon', |\tau'(z'_0)| + \varepsilon').
\end{equation*}
We conclude, using \eqref{eq:Z4:gamma:ap:3}, that $c_{r(z') + |\gamma(z'_0)|} = \tilde{c}_{r(\tilde{z}') + |\gamma(\tilde{z}'_0)}$ and $z_{r(z') + |\gamma(z'_0)|} = \tilde{z}_{r(\tilde{z}') + |\gamma(\tilde{z}'_0)}$.
The lemma follows.
\end{proof}

%%%%                                                                 %%%% PROPOSITION %%%%
\begin{prop} \label{Z4:gamma:p&big} % PERIODIC WITH BIG PERIOD
Suppose that $\rho(z^n_0) = \rho(\tilde{z}^n_0)$ and $\varepsilon < |\mroot \tau'(z'_0)| \leq \varepsilon'$.
Then:
\begin{enumerate}
    \item $c_{r(z')} = c_{r_1(z') + |\gamma(z'_0)|} - |\tau'(z'_0)| = \tilde{c}_{r(\tilde{z}')} = \tilde{c}_{r(\tilde{z}') + |\gamma(\tilde{z}'_0)|} - |\tau'(\tilde{z}'_0)|$.
    \item $z_{r(z')} = z_{r(z') + |\gamma(z'_0)|} = \tilde{z}_{r(\tilde{z}')} = \tilde{z}_{r(\tilde{z}') + |\gamma(\tilde{z}'_0)|}$.
\end{enumerate}
\end{prop}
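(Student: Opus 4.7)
The proof splits into a cross-point comparison (handled by Lemma \ref{Z4:reco_from_aper_r}) and a purely internal endpoint identity for $z'$ that is applied symmetrically to $\tilde{z}'$. Since $|\mroot \tau'(z'_0)| \leq \varepsilon'$ forces $z'_0 \in \cC' \setminus \A{C}{ap}'$, Remark \ref{rema:Z4:special_per&root=per} gives $\per(\tau'(z'_0)) = |\mroot \tau'(z'_0)| \in (\varepsilon, \varepsilon']$; the same holds for $\tilde{z}'_0$ because $\rho(z'_0) = \rho(\tilde{z}'_0)$. Hence Lemma \ref{Z4:reco_from_aper_r} applies in the case ``$\per(\tau'(z'_0)) \leq \varepsilon'$'' and yields
\[
c_{r(z')} = \tilde{c}_{r(\tilde{z}')}, \qquad z_{r(z')} = \tilde{z}_{r(\tilde{z}')}.
\]
It therefore remains to establish, for $z'$ alone, the identities $c_{r(z') + |\gamma(z'_0)|} = c_{r(z')} + |\tau'(z'_0)|$ and $z_{r(z') + |\gamma(z'_0)|} = z_{r(z')}$; the same argument then applies to $\tilde{z}'$ and combining with the above proves both items.

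Set $x = \tau'(z')$, $p = |\tau'(z'_0)|$, $s = \mroot \tau'(z'_0)$, and $k = r(z') + |\gamma(z'_0)| = i + r(Sz')$, where $i$ is defined by $p \in [c_i, c_{i+1})$. Item \ref{item:Z4:main_prop:per} of Proposition \ref{Z4:main_prop} gives $x_{[-8\varepsilon', p + 8\varepsilon')} = s^\Z_{[-8\varepsilon', p + 8\varepsilon')}$, and $p$ is a multiple of $|s|$. By Lemma \ref{Z4:bound_r}(2), $c_{r(z')} \in [-\varepsilon', \varepsilon')$ and $c_k \in [p - \varepsilon', p + \varepsilon')$, and by the definition of $r$ both $z_{r(z')}$ and $z_k$ lie in $\A{C}{ap}$. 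Because $\varepsilon' \geq 10 E$, the windows $[c_{r(z')} - 4E, c_{r(z')} + 4E)$ and $[c_{r(z')} + p - 4E, c_{r(z')} + p + 4E)$ both sit inside the periodic region, so periodicity yields
\[
(S^{c_{r(z')}} x)_{[-4E, 4E)} = (S^{c_{r(z')} + p} x)_{[-4E, 4E)}.
\]
Using $z_{r(z')} \in \A{C}{ap}$, Lemma \ref{Z4:reco_from_aper_sym} provides an index $\ell$ with $c_\ell = c_{r(z')} + p$ and $z_\ell = z_{r(z')}$; the full letter equality (beyond the $\tau$-equality directly handed by the lemma) comes from the invariance of the underlying $Y$-factorization under the shift by $p$ inside the periodic region, which forces all four $\cB$-coordinates of the higher-block letter to coincide. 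Symmetrically, one obtains an index $\ell'$ with $c_{\ell'} = c_k - p$ and $z_{\ell'} = z_k$.

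The identification $\ell = k$ is a direct bookkeeping argument. For $\ell \geq i$: since $c_0 \in (-|\tau(z_0)|, 0]$ gives $c_1 > 0$, we have $c_{r(z') + 1} \geq c_1 > 0$, so $c_{\ell+1} = c_{r(z') + 1} + p > p \geq c_i$, forcing $\ell \geq i$. Symmetrically, $c_{k+1} > p$ in both cases $r(Sz') = 0$ (where $c_{k+1} = c_{i+1} > p$) and $r(Sz') \geq 1$ (where $c_k > c_i \geq c_{i+1} \cdot$ wait, simply $c_k \geq c_{i+1} > p$), giving $c_{\ell' + 1} = c_{k+1} - p > 0 \geq c_0$, so $\ell' \geq 0$. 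Minimality of $r(Sz')$ now yields $\ell \geq k$ and minimality of $r(z')$ yields $\ell' \geq r(z')$; equivalently $c_{r(z')} + p \geq c_k$ and $c_k - p \geq c_{r(z')}$. These combine to $c_k = c_{r(z')} + p$, whence $\ell = k$ and $z_k = z_{r(z')}$.

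The main obstacle is the upgrade from $\tau$-equality to letter equality at the ``base'' position in the internal step: Lemma \ref{Z4:reco_from_aper_sym} alone gives only $\tau(z_\ell) = \tau(z_{r(z')})$, and one must use the periodic structure of $\tau'(z'_0)$ together with the fact that all four $\cB$-components used to build the letters in $\cC$ (namely $y_{\First_y(\cdot)}$, $y_{\cdot - 1}$, $y_\cdot$, $y_{\cdot + 1}$ of the underlying $Y$-factorization of $x$) are preserved by the $p$-shift inside the periodic region; the rest of the argument is then a bookkeeping exercise with the bounds furnished by Lemma \ref{Z4:bound_r}.
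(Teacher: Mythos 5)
There is a genuine gap, and it occurs twice: you systematically claim equality of letters of $\cC$ where the lemmas you invoke only deliver equality of their $\tau$-images. First, Lemma \ref{Z4:reco_from_aper_r} does \emph{not} yield $z_{r(z')} = \tilde{z}_{r(\tilde{z}')}$; it yields $c_{r(z')} = \tilde{c}_{r(\tilde{z}')}$ and $\psi(S^{r(z')}z) = \psi(S^{r(\tilde{z}')}\tilde{z})$, i.e.\ only $\tau(z_{r(z')}) = \tau(\tilde{z}_{r(\tilde{z}')})$ together with the $\rho$-values of the neighbours. Second, in your internal step Lemma \ref{Z4:reco_from_aper_sym} gives $\tau(z_{r(z')}) = \tau(z_\ell)$ (full letter equality only at the \emph{next} cut), and you acknowledge this but patch it with the assertion that ``the invariance of the underlying $Y$-factorization under the shift by $p$ inside the periodic region'' forces all four $\cB$-coordinates to coincide. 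That assertion is not a consequence of the local window agreement: a letter of $\cC \subseteq \cB^4$ carries the coordinate $y_{\First(\cdot)}$, the nearest $\A{B}{ap}$-letter strictly to the left, and for the position $r(z')$ this referenced letter sits at a negative index which may lie far outside the radius-$4E$ window and outside the $s$-periodic region altogether (all letters in $[0,r(z'))$ and possibly a long stretch of negative indices are non-aperiodic), whereas for the position $r(z')+|\gamma(z'_0)|$ the referenced letter lies at an index $\geq r(z')$. No periodicity or local recognizability statement identifies these two values; proving that they coincide is precisely the content of Lemma \ref{Z4:per_z'0_in_between=>reco}, whose proof anchors at an aperiodic $2\varepsilon$-window located about $2\varepsilon'$ to the left (found via Item \ref{item:CL:localize_per_aper:loc_aper} and matched using the $\varepsilon'$-periodicity) and then propagates full $\Fac^0_{(Z,\tau)}$-data rightward with Lemmas \ref{Z4:reco_from_aper} and \ref{Z4:reco_from_reco}, carrying the $\First$-coordinate along inductively. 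This is exactly how the paper proceeds: after Lemma \ref{Z4:reco_from_aper_r} gives the cut identity, both letter identities (between $z'$ and $\tilde{z}'$, and between the two endpoints of $\gamma(z'_0)$, which differ by the shift $|\tau'(z'_0)| \equiv 0 \pmod{|s|}$) are obtained from Lemma \ref{Z4:per_z'0_in_between=>reco}. Your shortcut cannot be completed without importing that machinery.

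Two smaller points. Your claim that ``by the definition of $r$ both $z_{r(z')}$ and $z_k$ lie in $\A{C}{ap}$'' is not immediate: if $\per(\tau'(z'_1)) \leq \varepsilon$ one has $r(Sz') = 0$ and $z_k = z_i$ need not be aperiodic; you must first rule this case out (if $z'_1 \in \cC'\setminus\A{C}{ap}'$ then by Proposition \ref{Z4:structure_Csp} it shares the root $s$ with $|s| > \varepsilon$, and if $z'_1 \in \A{C}{ap}'$ then dichotomous periods give $\per(\tau'(z'_1)) > 8\varepsilon'$), and only then does the symmetric application of Lemma \ref{Z4:reco_from_aper_sym} at $c_k$ make sense. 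Finally, the bookkeeping paragraph identifying $\ell = k$ contains a garbled case distinction and should be rewritten; the comparison of minimalities is fine in spirit, but it is cleaner (and is what the paper implicitly does) to read off both the cut and the letter at $c_{r(z')}+|\tau'(z'_0)|$ directly from the $\Fac^0$-identity supplied by Lemma \ref{Z4:per_z'0_in_between=>reco} and then use Lemma \ref{Z4:bound_r} plus minimality of $r(Sz')$ once.
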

\begin{proof}
Note that, by Item \ref{item:Z4:main_prop:per} in Proposition \ref{Z4:main_prop}, $\per(\tau'(z'_0)) = |\mroot \tau'(z'_0)| \in (\varepsilon, \varepsilon']$.
This and the condition $\rho(z^n_0) = \rho(\tilde{z}^n_0)$ permit to use Lemma \ref{Z4:reco_from_aper_r} to obtain that $c_{r(z')} = \tilde{c}_{r(\tilde{z}')}$.
Then, since $z'_0 \in \cC' \setminus \A{C}{ap}'$, from Lemma \ref{Z4:bound_r} we have that
\begin{equation*}
    c_{r(z')} = \tilde{c}_{r(\tilde{z}')} \in 
    [-\varepsilon', \varepsilon').
\end{equation*}
Now, the hypothesis allows us to use Lemma \ref{Z4:per_z'0_in_between=>reco} and deduce that $\Fac^0_{(Z,\tau)}(S^{c_{r(z')}} \tau'(z')) = \Fac^0_{(Z,\tau)}(S^{c_{r(z')}} \tau'(\tilde{z}'))$.
Since $c_{r(z')} = \tilde{c}_{r(\tilde{z}')}$, we get that
\begin{equation*}
    z_{r(z')} = \tilde{z}_{r(\tilde{z}')}.
\end{equation*}
We use again Lemma \ref{Z4:per_z'0_in_between=>reco} to obtain that $\Fac^0_{(Z,\tau)}(S^k \tau'(z')) = \Fac^0_{(Z,\tau)}(S^{k + |\tau'(z'_0)|} \tau'(\tilde{z}'))$ for all $k \in [-4\varepsilon', 4\varepsilon')$.
This implies, since $c_{r(z')} \in [-\varepsilon', \varepsilon')$, that
\begin{equation*}
    c_{r(z') + |\gamma(z'_0)|} = c_{r(z')} + |\tau'(z'_0)|.
\end{equation*}
Similarly, $\tilde{c}_{r(\tilde{z}') + |\gamma(\tilde{z}'_0)|} = \tilde{c}_{r(\tilde{z}')} + |\tau'(z'_0)|$.
\end{proof}

%%%%                                                                 %%%% PROPOSITION %%%%
\begin{prop} \label{Z4:gamma:p&small}  % PERIODIC WITH SMALL PERIOD
Let $k, \ell \geq 1$ and $s$ be such that $|s| \leq \varepsilon$ and $s = \mroot \tau'(z'_i) = \mroot \tau'(\tilde{z}'_j)$ for all $i \in [0,k)$ and $j \in [0,\ell)$.
Then:
\begin{enumerate}
    \item There is $t$ such that $|t| = |s|$ and  $t = \mroot \tau(z_i) = \mroot \tau(\tilde{z}_j)$ for all $i \in [r(z'), r(z') + |\gamma(z'_{[0,k)})|)$ and $j \in [r(\tilde{z}'), r(\tilde{z}') + |\gamma(\tilde{z}'_{[0,\ell)})|)$.
    \item $c_{r(z')} = \tilde{c}_{r(\tilde{z}')} = c_{r(z') + |\gamma(z'_{[0,k)})|} = \tilde{c}_{r(\tilde{z}') + |\gamma(\tilde{z}'_{[0,\ell)})} \pmod{|s|}$.
    \item $\psi(S^{r(z')} z) = \psi(S^{r(\tilde{z}')} \tilde{z})$ and, if $\rho(z'_k) = \rho(\tilde{z}'_\ell)$, then $\psi(S^{r(z') + |\gamma(z'_{[0,k)})|} z) = \psi(S^{r(\tilde{z}') + |\gamma(\tilde{z}'_{[0,\ell)})|} \tilde{z})$.
\end{enumerate}
\end{prop}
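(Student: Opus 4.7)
The plan exploits the strong periodicity: since $|s| \leq \varepsilon$ and $\mroot\tau'(z'_i) = s$ for $i \in [0,k)$, the word $\tau'(z'_{[0,k)}) = x_{[0,|\tau'(z'_{[0,k)})|)}$ is a power of $s$, and Item \ref{item:Z4:main_prop:per} of Proposition \ref{Z4:main_prop} applied to each $z'_i$ extends this to an $8\varepsilon'$-neighbourhood: $x_{[-8\varepsilon', |\tau'(z'_{[0,k)})|+8\varepsilon')}$ coincides with the $s^\Z$-factor aligned so that position $0$ is the start of a copy of $s$. The same holds for $\tilde{x}$. By Remark \ref{rema:Z4:special_per&root=per}, $\per(\tau'(z'_0)) = |s| \leq \varepsilon$, so \eqref{eq:Z4:defi_r} gives $r(z') = r(\tilde{z}') = 0$.

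Next, I would apply Lemma \ref{Z4:per<eps=>zjcj_cte} to each shifted point $S^{c'_j} x$ for $j \in [0,k)$; its hypothesis is satisfied because $\per(\tau'(z'_j)) = |s| \leq \varepsilon$. Each application yields that every $z$-index $i$ with $c_i \in [c'_j - 7\varepsilon', c'_{j+1} + 7\varepsilon')$ satisfies $z_i \in \A{C}{sp}$, $|\mroot\tau(z_i)| = |s|$, and $c_i \equiv c_0 \pmod{|s|}$. Taking the union over $j \in [0,k)$ covers the whole range $[-7\varepsilon', |\tau'(z'_{[0,k)})| + 7\varepsilon')$; the analogous statement holds on the $\tilde{z}$-side. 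An iterative use of Lemma \ref{Z4:bound_r}(3) along the recursive definition of $\gamma$ ensures that the full range of indices $[r(z'), r(z') + |\gamma(z'_{[0,k)})|) = [0, |\gamma(z'_{[0,k)})|)$, and its analogue on the $\tilde{z}$-side, lies inside these periodic windows.

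For the common root, set $t := \mroot\tau(z_0)$ and $\tilde{t} := \mroot\tau(\tilde{z}_0)$; both have length $|s|$. Because $\tau(z_0)$ is a factor of $s^\Z$ starting at position $-c_0 \pmod{|s|}$, $t$ is precisely the cyclic rotation of $s$ by $-c_0$, and likewise $\tilde{t}$ is the rotation of $s$ by $-\tilde{c}_0$. These are conjugate, so applying Item \ref{item:defi:per:conj_roots} of Definition \ref{defi:per} (which holds for $(Z,\tau)$ by Item \ref{item:Z4:main_prop:per} of Proposition \ref{Z4:main_prop}) to $\Fac_{(Z,\tau)}(\tilde{x})$ with $a = z_0 \in \A{C}{sp}$ forces $t = \tilde{t}$. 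Propagating the cyclic-rotation argument to all $z$-indices in the window (which share the same residue modulo $|s|$) proves Item (1); combining $t = \tilde{t}$ with the modular identities of the previous step yields Item (2). Using Item \ref{item:Z4:structure_Csp:power_of_2} of Proposition \ref{Z4:structure_Csp}, $\tau(z_0) = t^{2^r}$ for the unique $r$ with $2^r|t| \in [20\varepsilon, 40\varepsilon)$, and since $z_{-1}, z_1 \in \A{C}{sp}$ both have root $t$, we obtain $\psi(S^0 z) = (t, t^{2^r}, t) = \psi(S^0 \tilde{z})$, giving the first equality of Item (3).

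For the second equality of Item (3), I would pass to the right boundary by considering $S^{|\tau'(z'_{[0,k)})|} x = \tau'(S^k z')$ and its counterpart for $\tilde{z}'$. The assumption $\rho(z'_k) = \rho(\tilde{z}'_\ell)$ splits into two symmetric cases: either $z'_k, \tilde{z}'_\ell \in \A{C}{ap}'$ with $\tau'(z'_k) = \tau'(\tilde{z}'_\ell)$, or both belong to $\cC' \setminus \A{C}{ap}'$ with $\mroot\tau'(z'_k) = \mroot\tau'(\tilde{z}'_\ell)$; in each case a symmetric application of Lemma \ref{Z4:reco_from_aper_r} or Lemma \ref{Z4:per<eps=>zjcj_cte} reconciles the $\psi$-values at the right endpoint as we did at the left. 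The main technical obstacle will be Step 2: carefully verifying that the whole range $[0, |\gamma(z'_{[0,k)})|)$ remains inside the periodic window, which requires an inductive control of how the $r$-values at each intermediate step interact with the length increments of $\gamma$, via Lemma \ref{Z4:bound_r}.
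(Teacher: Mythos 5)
Most of your outline tracks the paper's own proof: chaining Lemma \ref{Z4:per<eps=>zjcj_cte} over the consecutive periodic blocks, getting $t$ and $\tilde{t}$ conjugate via the rotation/Fine--Wilf picture and forcing $t=\tilde{t}$ through Item \ref{item:defi:per:conj_roots} of Definition \ref{defi:per} (via Item \ref{item:Z4:main_prop:per} of Proposition \ref{Z4:main_prop}), and treating the right edge of Item (3) by cases with Lemma \ref{Z4:reco_from_aper_r} versus the periodic description. The observation $r(z')=r(\tilde{z}')=0$ is correct and harmless.

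The genuine gap is the step you yourself flag: the claim that the whole index range $[r(z'),\,r(z')+|\gamma(z'_{[0,k)})|)$ stays inside the $s$-periodic windows, justified by ``iterating Lemma \ref{Z4:bound_r}(3)''. Write $\alpha$ for the index with $|\tau'(z'_{[0,k)})|\in[c_\alpha,c_{\alpha+1})$; then $r(z')+|\gamma(z'_{[0,k)})|=\alpha+r(S^kz')$, and the tail $[\alpha,\alpha+r(S^kz'))$ sits inside the block of $z'_k$, a letter about which Items (1)--(2) assume nothing (and Item (3) only assumes $\rho(z'_k)=\rho(\tilde{z}'_\ell)$, which still allows $z'_k\in\A{C}{ap}'$). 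When $z'_k\in\A{C}{ap}'$, Item \ref{Z4:bound_r:p&small} of Lemma \ref{Z4:bound_r} does not apply, and Item \ref{Z4:bound_r:ap} only gives $c_{\alpha+r(S^kz')}\leq|\tau'(z'_{[0,k)})|+|\tau'(z'_k)|-8\varepsilon'$: the aperiodicity guarantee for $\tau'(z'_k)$ concerns only the inner word as a whole, so $\tau'(z'_k)$ may begin with an $s$-periodic prefix far longer than $7\varepsilon'$, and the tail then leaves every window of size $O(\varepsilon')$. Consequently your argument does not establish $\mroot\tau(z_i)=t$ on the tail indices, nor the congruence for the right endpoint $c_{r(z')+|\gamma(z'_{[0,k)})|}$ in Item (2). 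The missing ingredient is structural, not metric: by the minimality in the definition of $r(S^kz')$, every $z_i$ with $i\in[\alpha,\alpha+r(S^kz'))$ lies in $\cC\setminus\A{C}{ap}$, so the entire run $[r(z'),\alpha+r(S^kz'))$ consists of letters outside $\A{C}{ap}$, and Item \ref{item:Z4:structure_Csp:window} of Proposition \ref{Z4:structure_Csp} forces all their roots to equal $t$; since each $|\tau(z_i)|$ is then a multiple of $|t|=|s|$, the congruence at the right endpoint follows. This is exactly how the paper closes the step, and without it (or an equivalent argument) your proof of Items (1) and (2) is incomplete.
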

\begin{proof}
We note that, since $|s| \leq \varepsilon$, Lemma \ref{Z4:bound_r} implies that 
\begin{equation} \label{eq:Z4:gamma:p&small:1}
    c_{r(z')}, \tilde{c}_{r(\tilde{z}')} \in [-\varepsilon', \varepsilon').
\end{equation}
Hence, by Lemma \ref{Z4:per<eps=>zjcj_cte}, every $i \in \Z$ such that $c_i \in [-4\varepsilon', |\tau'(z'_{[0,k)})| + 4\varepsilon')$ satisfies
\begin{equation} \label{eq:Z4:gamma:p&small:2}
    t \coloneqq = \mroot \tau(z_{r(z')}) = \mroot \tau(z_i), 
    |t| = |s|
    \enskip\text{and}\enskip
    c_i = c_{r(z')} \pmod{|s|}
\end{equation}
Similarly, for all $j \in \Z$ such that $\tilde{c}_j \in [-4\varepsilon', |\tau'(\tilde{z}'_{[0,\ell)})| + 4\varepsilon')$,
\begin{equation} \label{eq:Z4:gamma:p&small:3}
    \tilde{t} \coloneqq = \mroot \tau(\tilde{z}_{r(\tilde{z}')}) = \mroot \tau(\tilde{z}_j), 
    |\tilde{t}| = |s|
    \enskip\text{and}\enskip
    \tilde{c}_j = \tilde{c}_{r(\tilde{z}')} \pmod{|s|}
\end{equation}
We will use these relations to prove the following:
\begin{equation} \label{eq:Z4:gamma:p&small:4}
    c_{r(z')} = \tilde{c}_{r(\tilde{z}')} \pmod{|s|}
    \enskip\text{and}\enskip
    t = \tilde{t}.
\end{equation}
Since $|s| \leq \varepsilon \leq \varepsilon'$, we can use Item \ref{item:Z4:main_prop:per} in Proposition \ref{Z4:main_prop} and \eqref{eq:Z4:gamma:p&small:1} to get that $s^\Z_{[c_{r(z')}, c_{r(z')} + 8\varepsilon)} = \tau'(z')_{[c_{r(z')}, c_{r(z')} + 8\varepsilon)} = t^\Z_{[0, 8\varepsilon)}$.
As $|s| = |t| \leq \varepsilon$, Item \ref{item:CL:on_sZ&tZ:equal} of Lemma \ref{CL:on_sZ&tZ} gives that $S^{c_{r(z')}} s^\Z = t^\Z$.
Similarly,  $S^{\tilde{c}_{r(\tilde{z}')}} s^\Z = \tilde{t}^\Z$.
We conclude that 
\begin{equation} \label{eq:Z4:gamma:p&small:5}
    S^{-c_{r(z')}} t^\Z = S^{-\tilde{c}_{r(\tilde{z}')}} \tilde{t}^\Z.
\end{equation}
Since $|\mroot \tau(z_{r(z')})| = |\mroot \tau(\tilde{z}_{r(\tilde{z}')})| = |s|$, we deduce that $t$ and $\tilde{t}$ are conjugate.
Therefore, by Item \ref{item:Z4:main_prop:per} in Proposition \ref{Z4:main_prop}, $t = \tilde{t}$.
Putting this in \eqref{eq:Z4:gamma:p&small:5} and then using Item \ref{item:CL:on_sZ&tZ:equal_mod} of Lemma \ref{CL:on_sZ&tZ} yields $c_{r(z')} = \tilde{c}_{r(\tilde{z}')} \pmod{|s|}$.
This completes the proof of \eqref{eq:Z4:gamma:p&small:4}.
\medskip

Let $\alpha$ be the integer satisfying $|\tau'(z'_{[0,k)})| \in [c_\alpha, c_{\alpha+1})$.
We have, by \eqref{eq:Z4:gamma:p&small:2}, that $z_i \in \cC \setminus \A{C}{ap}$ for all $i \in [r(z'), \alpha)$.
Also, by the definition of $r$, we have that $z_i \in \cC \setminus \A{C}{ap}$ for all $i \in [\alpha, \alpha + r(S^k z'))$.
Hence, by \ref{item:Z4:structure_Csp:window} in Proposition \ref{Z4:structure_Csp}, $\mroot \tau(z_i) = \mroot \tau(z_{r(z')}) = t$ for every $i \in [r(z'), \alpha + r(S^k z'))$.
In particular, $c_{\alpha + r(S^k z')} = c_{r(z')} \pmod{|s|}$.
Since $\alpha + r(S^kz') = r(z') + |\gamma(z'_{[0,k)})|$, we get that $c_{r(z') + |\gamma(z'_{[0,k)})|} = c_{r(z')} \pmod{|s|}$ and that $\mroot \tau(z_i) = t$ for every $i \in [r(z'), r(z') + |\gamma(z'_{[0,k)})|)$.
We can prove in a similar way that $\tilde{c}_{r(\tilde{z}') + |\gamma(\tilde{z}'_{[0,\ell)})|} = \tilde{c}_{r(\tilde{z}')} \pmod{|s|}$ and that $\mroot \tau(\tilde{z}_j) = \tilde{t}$ for every $j \in [r(\tilde{z}'), r(\tilde{z}') + |\gamma(\tilde{z}'_{[0,\ell)})|)$.
Being $t$ equal to $\tilde{t}$, we obtain Item (1).
Moreover, since $c_{r(z')} = \tilde{c}_{r(\tilde{z}')} \pmod{|s}$, we also have Item (2).

It is left to prove Item (3).
We note that, since $t = \tilde{t}$, Equations \eqref{eq:Z4:gamma:p&small:2} and \eqref{eq:Z4:gamma:p&small:3} imply that $\psi(S^{r(z')} z) = \psi(S^{r(\tilde{z}')} \tilde{z}) = (t,t,t)$.
Let us now assume that $\rho(z'_k) = \rho(\tilde{z}'_\ell)$.
There are two cases.
First, we assume that $|\mroot \tau'(z'_k)| \leq \varepsilon$.
Then, by Lemma \ref{Z4:bound_r}, $c_{r(z') + |\gamma(z'_{[0,k)})|} \in [|\tau'(z'_{[0,k)})|-\varepsilon', |\tau'(z'_{[0,k)})|+\varepsilon')$.
We get, using \eqref{eq:Z4:gamma:p&small:2}, that $\psi(S^{r(z') + |\gamma(z'_{[0,k)})|} z) = (t, t, t)$.
Now, since $\rho(z'_k) = \rho(\tilde{z}'_\ell)$, we have that $|\mroot \tau'(\tilde{z}'_\ell)| \leq \varepsilon$.
Hence, a similar argument shows that $\psi(S^{r(\tilde{z}) + |\gamma(\tilde{z}'_{[0,\ell)})|} \tilde{z}) = (t, t, t) = \psi(S^{r(z') + |\gamma(z'_{[0,k)})|} z)$.
Next, we assume that $|\mroot \tau'(z'_k)| > \varepsilon$.
Then, as $\rho(z'_k) = \rho(\tilde{z}'_\ell)$ and $\rho(z'_{k-1}) = \rho(\tilde{z}'_{\ell-1}) = s$, we can use Lemma \ref{Z4:reco_from_aper_r} with $z'_k$ and $\tilde{z}'_\ell$ to deduce that $\psi(S^{r(\tilde{z}) + |\gamma(\tilde{z}'_{[0,\ell)})|} \tilde{z}) = \psi(S^{r(z') + |\gamma(z'_{[0,k)})|} z)$.
\end{proof}

\section{Main Theorems}
\label{sec:MT}

We now complete the proofs of Theorems \ref{theo:main_sublinear} and \ref{theo:main_nonsuperlinear}.
The part of the proof in which we have to obtain a complexity restriction from the $\cS$-adic structures can be done without difficulties with Lemma \ref{MT:pX_bound}.
For the other part, we first present in Theorem \ref{theo:main_abstract} sufficient condition under which an $\cS$-adic structure as the ones in Theorems \ref{theo:main_sublinear} and \ref{theo:main_nonsuperlinear} can be obtained.
Then, we check that linear-growth and nonsuperlinear-growth complexity subshifts satisfy these conditions using Lemmas \ref{preli:find_good_s_sublinear} and \ref{preli:find_good_s_nonsuperlinear}.

\subsection{A set of sufficient conditions}

This subsection is devoted to prove the following theorem.

\begin{theo} \label{theo:main_abstract}
Let $X \subseteq \cA^\Z$ be an infinite minimal subshift.
Let $(\ell_n)_{n\geq0}$ be an increasing sequence of positive integers and $d \geq \max\{10^4, \#\cA\}$.
Suppose that for every $n \geq 0$
\begin{equation} \label{eq:theo:main_abstract:1}
    \text{$p_X(\ell_n) \leq d$, $p_X(\ell_n+1)-p_X(\ell_n) \leq d$, and $\frac{\ell_{n+1}}{\ell_n} \geq 10^4d^{2d^3+6}$.}
\end{equation}
Then, there exists a recognizable $\cS$-adic sequence $\bsigma = (\sigma_n\colon\cA_{n+1}\to\cA_n^+)_{n\geq0}$ generating $X$ such that for all $n \geq 1$:
\begin{enumerate}[label=$(\cP_{\arabic*})$]
    \item $\#(\mroot\sigma_{[0,n)}(\cA_n)) \leq 35 d^{12d+24}$ and $\#\cA_n \leq 7^4d^{12d+36} \cdot \pcom(X)^4$.
    \item $|\sigma_{[0,n)}(a)| \leq 4d^{d^3+6}\cdot |\sigma_{[0,n)}(b)|$ for every $a, b \in \cA_n$. 
    \item $|\sigma_{n-1}(a)| \leq 40d^{2d^3+8}\cdot \frac{\ell_n}{\ell_{n-1}}$ for every $a \in \cA_n$.
\end{enumerate}
\end{theo}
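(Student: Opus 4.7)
The plan is to assemble the codings from Section \ref{sec:Z4} along the scale $(\ell_n)$ and connect them with the morphisms introduced in Subsection \ref{subsubsec:Z4:morphism}. For each $n \geq 0$, apply Proposition \ref{Z4:main_prop} to $X$ with parameter $\ell_n$ to obtain a recognizable coding $(Z_n\subseteq\cC_n^\Z,\,\tau_n\colon\cC_n\to\cA^+)$, a partition $\cC_n = \A{C}{ap}^{(n)}\cup\A{C}{sp}^{(n)}\cup\A{C}{wp}^{(n)}$, and a constant $\varepsilon_n \in [\ell_n/d^{2d^3+4},\, \ell_n/d)$. The growth hypothesis $\ell_{n+1}/\ell_n \geq 10^4 d^{2d^3+6}$ of \eqref{eq:theo:main_abstract:1} implies both \eqref{eq:Z4:comparing_lengths_Z4_Z4'_prev} and its consequence \eqref{eq:Z4:comparing_lengths_Z4_Z4'}, so every result of Subsection \ref{subsec:Z4:two_levels} applies to consecutive pairs $(Z_n,\tau_n)$ and $(Z_{n+1},\tau_{n+1})$; in particular, Definition \ref{Z4:defi_gamma} yields connecting morphisms $\gamma_n\colon\cC_{n+1}\to\cC_n^+$ for every $n \geq 0$.

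Set $\cA_0 = \cA$, $\cA_{n+1} = \cC_n$ for $n \geq 0$, $\sigma_0 = \tau_0$, and $\sigma_n = \gamma_{n-1}$ for $n \geq 1$. The first thing to verify is that $\bsigma = (\sigma_n)_{n\geq 0}$ generates $X$ and is recognizable. Unraveling Definition \ref{Z4:defi_gamma} with the help of Lemma \ref{Z4:bound_r} gives that, for every $a \in \cC_{n+1}$, the word $\tau_n(\gamma_n(a))$ agrees with $\tau_{n+1}(a)$ up to a prefix and suffix carved off at the two endpoints, of total length $O(\varepsilon_{n+1})$. Iterating this identification, $\sigma_{[0,n)}(a)$ differs from $\tau_{n-1}(a)$ only at its boundaries, by $O(\varepsilon_{n-1})$; hence $X_{\bsigma} = X$, the sequence $\bsigma$ is everywhere growing and primitive, and recognizability follows from Lemma \ref{Z4:lem:Z4_is_reco} combined with Lemma \ref{lem:recognizability_composition}.

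Properties $(\cP_2)$ and $(\cP_3)$ then reduce to length estimates. For $(\cP_3)$, Lemma \ref{Z4:bound_r} places the endpoints $c_{r(z')}$ and $c_{i+r(Sz')}$ that define $\gamma_{n-1}(a)$ inside a window of width at most $|\tau_n(a)| + 2\varepsilon_n \leq 10d^2\ell_n + 2\ell_n/d$, while each inner cut contributes a block of length at least $\langle\tau_{n-1}\rangle \geq 20\varepsilon_{n-1}$; dividing these bounds yields the required estimate for $|\sigma_{n-1}(a)|$ in terms of $\ell_n/\ell_{n-1}$. For $(\cP_2)$, the identification $|\sigma_{[0,n)}(a)| = |\tau_{n-1}(a)| + O(\varepsilon_{n-1})$ together with the uniform bounds $20\varepsilon_{n-1} \leq |\tau_{n-1}(a)| \leq 10d^2\ell_{n-1}$ from Item (1) of Proposition \ref{Z4:main_prop} gives the claimed length-ratio bound. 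The alphabet bound $\#\cA_n = \#\cC_{n-1} \leq 7^4 d^{12d+36}\cdot\pcom(X)^4$ is exactly Item (2) of the same proposition.

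The main obstacle is the root bound in $(\cP_1)$: while $\#\mroot\tau_{n-1}(\cC_{n-1}) \leq 5d^{3d+6}$ is immediate from Proposition \ref{Z4:main_prop}, the word $\sigma_{[0,n)}(a)$ equals $\tau_{n-1}(a)$ only up to a shift, and shifting can replace the minimal root by a distinct conjugate. To control this, I will use Propositions \ref{Z4:gamma:ap}, \ref{Z4:gamma:p&big} and \ref{Z4:gamma:p&small}, which pin down the cut positions $c_{r(z')}$ and the extended period data $\psi(S^{r(z')}z)$ at the endpoints of each $\gamma_n(a)$, together with the canonical-representative property for conjugacy classes of periodic roots provided by Item (3) of Proposition \ref{Z4:main_prop}. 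The inductive argument shows that the shift accumulating from level to level either preserves the minimal root or replaces it by one from a controlled finite list of conjugates indexed by elements of $\cC_{n-1}$, so that $\#\mroot\sigma_{[0,n)}(\cA_n)$ stays bounded by a fixed polynomial expression in $d$ and yields the stated bound.
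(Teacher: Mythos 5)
Your construction is the same as the paper's (level-$n$ codings from Proposition \ref{Z4:main_prop}, connecting morphisms from Definition \ref{Z4:defi_gamma}, $\sigma_0=\tau_0$), and your treatment of $(\cP_2)$, $(\cP_3)$ and the alphabet bound is essentially the paper's length bookkeeping. The genuine gap is $(\cP_1)$, which is the heart of the theorem and which you only gesture at. The difficulty is exactly the one you name --- $\sigma_{[0,n)}(a)$ is a shifted copy of $\tau$-level data, and shifting can change the minimal root to a conjugate --- but your proposed resolution does not work quantitatively: if the possible conjugate replacements were ``indexed by elements of $\cC_{n-1}$,'' the resulting bound on $\#\mroot\sigma_{[0,n)}(\cA_n)$ would be polynomial in $\#\cC_{n-1}\leq 7^4d^{12d+36}\pcom(X)^4$, hence would involve $\pcom(X)$; but $(\cP_1)$ demands the root count be at most $35d^{12d+24}$, independent of $\pcom(X)$ (this independence is what makes the main theorems work). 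What is actually needed, and what the paper proves in Lemmas \ref{MT:claim:ap}, \ref{MT:claim:p&big}, \ref{MT:claim:p&small}, \ref{MT:from_q_to_root} and \ref{MT:tech_core_roots}, is a downward induction on the level $j$ showing that the triple $\psi(z)=(\rho(z_{-1}),\rho(z_0),\rho(z_1))$ \emph{determines} $\mroot\sigma_{[0,n)}(z_0)$: one tracks simultaneously the cumulative offsets $q_{j,n}$ (exactly in the aperiodic and intermediate-period regimes, and only modulo the period $|s|$ once $|s|\leq\varepsilon_j$, with the threshold level $j_0$ handled separately) and the $\psi$-data at \emph{both} endpoints of the image, using Propositions \ref{Z4:gamma:ap}, \ref{Z4:gamma:p&big}, \ref{Z4:gamma:p&small} case by case. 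Since $\#\psi(Z_n)$ is controlled by $\#\mroot\tau_n(\cC_n)$ and $\#\tau_n(\cC_{n,\mathrm{ap}})$ (both $O(d^{3d+6})$ by Proposition \ref{Z4:main_prop}), the root count stays bounded in $d$ alone. None of this induction, nor the correct invariant, appears in your sketch, so $(\cP_1)$ is unproven as written.

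A secondary issue: recognizability of $(Z_n,\sigma_{[0,n)})$ does not follow from Lemma \ref{Z4:lem:Z4_is_reco} ``combined with'' Lemma \ref{lem:recognizability_composition}, because $\sigma_{[0,n)}$ is a composition of the connecting morphisms, and you never show that each pair $(Z_{j+1},\gamma_j)$ is a recognizable coding of $Z_j$ (this is not established anywhere in the paper either). The paper instead proves recognizability directly from the identity $\sigma_{[0,n)}(z)=S^{q_{0,n}(z)}\tau_n(z)$, the recognizability of $(Z_n,\tau_n)$, and the aperiodicity of $Z_n$ to rule out the residual shift ambiguity (Lemma \ref{MT:generates&reco}); some argument of this kind is needed in your write-up as well.
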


The proof is presented as a series of lemmas.

We fix an infinite minimal subshift $X \subseteq \cA^\Z$, an increasing sequence $(\ell_n)_{n\geq0}$ of positive integers and $d \geq \max\{10^4, \#\cA\}$ such that \eqref{eq:theo:main_abstract:1} holds for every $n \geq 0$.

We start by defining $\bsigma$.
Let $(Z_n\subseteq\cC_n^\Z, \tau_n\colon\cC_n\to\cA^+)$ be the coding constructed in Subsection \ref{subsec:Z4:construction} using $\ell_n$, and let $\varepsilon_n \in [\ell_n/d^{2d^3+4}, \ell_n/d)$ and $\cC_n = \cC_{n, \mathrm{ap}} \cup \cC_{n, \mathrm{wp}} \cup \cC_{n, \mathrm{sp}}$ be the constant and the partition that appear in this construction.
In this context, Proposition \ref{Z4:main_prop} states the following: 
\begin{enumerate}
    \item[(i)] $(\#\mroot\tau_n(\cC_n)) \leq 5d^{3d+6}$, $\#\cC_n \leq  7^4d^{12d+36} \pcom(X)^4$ and $\#\tau(\cC_{n, \mathrm{ap}}) \leq 2d^{3d+6}$.
    \item[(ii)] $2\varepsilon_n \leq |\tau_n(a)| \leq 10d^2\ell_n$ for all $a \in \cC_n$.
\end{enumerate}
Also, the definition of $\cC_n$ in \eqref{eq:Z4:defi_alphabet} guarantees that 
\begin{enumerate}
    \item[(iii)] For all $a \in \cC_n$ there is $z \in Z_n$ such that $z_0 = a$.
\end{enumerate}
Moreover, \eqref{eq:theo:main_abstract:1} implies that $500d^2 \ell_n \leq \ell_{n+1}/d^{2d^3+4}$, so the results from Subsection \ref{subsec:Z4:two_levels} can be used with $(Z_{n+1}, \ell_{n+1})$ and $(Z_n, \tau_n)$.
In particular, 
\begin{enumerate}
    \item[(iv)] Propositions \ref{Z4:pre_def_gamma}, \ref{Z4:gamma:ap}, \ref{Z4:gamma:p&big} and \ref{Z4:gamma:p&small} can be used with $(Z_{n+1}, \ell_{n+1})$ and $(Z_n, \tau_n)$.
\end{enumerate}
We define the map $r_n$ as follows.
If $z' \in Z_{n+1}$ and $(c,z) = \Fac_{(Z_n,\tau_n)}(\tau_{n+1}(z'))$, then
\begin{equation} \label{eq:MT:defi_rn}
    r_n(z') = \begin{cases}
    0 & \text{if } \per(\tau_{n+1}(z'_0)) \leq \varepsilon \\
    \min\{i \geq 0 : z_i \in \cC_{n,\mathrm{ap}}\} & \text{if } \per(\tau_{n+1}(z'_0)) > \varepsilon
    \end{cases}
\end{equation}
Note that this is analogous to the definition of $r$ in \eqref{eq:Z4:defi_r}.
Therefore, Proposition \ref{Z4:pre_def_gamma} ensures that the connecting morphism $\sigma_n\colon\cC_{n+1}\to\cC_n^+$ described in Definition \ref{Z4:defi_gamma} is well-defined.
The morphism $\sigma_n$ satisfies the following:
If $z' \in Z_{n+1}$, $(c,z) = \Fac_{(Z_n,\tau_n)}(\tau_{n+1}(z'))$ and $i$ is the integer satisfying $|\tau_{n+1}(z'_0)| \in [c_i, c_{i+1})$, then
\begin{equation} \label{eq:MT:defi_sigman}
    \sigma_n(z'_0) = z_{[r_n(z'), i + r_n(Sz'))}.
\end{equation}
We set $\sigma_0 = \tau_0$ and $\bsigma = (\sigma_n)_{n\geq0}$.
\medskip

Next, we describe $\sigma_{[0,n)}(z'_0)$ in terms of $\tau_n(z'_0)$ and the auxiliary functions $q_{j,n}$ that we now define.
For $z' \in Z_n$, we set $q_{n,n}(z') = 0$ and then inductively define, for $0\leq j < n$,
\begin{equation} \label{eq:theo:main_abstract:defi_q}
    q_{j,n}(z') = q_{j+1,n}(z') + c_{r_j(z)},
\end{equation}
where $(c,z) = \Fac_{(Z_{j+1}, \tau_{j+1})} (S^{q_{j+1,n}(z')} \tau_n(z'))$.
An inductive use of \eqref{eq:theo:main_abstract:defi_q} yields the formula
\begin{equation} \label{eq:theo:main_abstract:q_formula:partial}
    \sigma_{[0,n)}(z'_0) = \tau_n(z')_{[q_{0,n}(z'), |\tau_n(z'_0)|+q_{0,n}(S z'))}.
\end{equation}
In particular,
\begin{equation} \label{eq:theo:main_abstract:q_formula:full}
    \sigma_{[0,n)}(z') = S^{q_{0,n}(z')} \tau_n(z')
    \enspace \text{for all $n\geq1$ and $z' \in Z_n$.}
\end{equation}

We now prove that $\bsigma$ satisfies all the conditions in Theorem \ref{theo:main_abstract}.

\begin{lem} \label{MT:tech_Sadic_generator}
Let $\btau = (\tau_n\colon\cA_{n+1}\to\cA_n^+)_{n\geq0}$ be an $\cS$-adic sequence.
Suppose there are subshifts $Z_n \subseteq \cA_n^\Z$ satisfying $\cA_n \subseteq \cL(Z_n)$.
Then, for every $x \in X_{\btau}$ there are sequences $(n_\ell)_{\ell\geq0}$ and $x_\ell \in \cup_{k\in\Z} S^k \tau_{[0,n_\ell)}(Z_{n_\ell})$ such that $x$ is the limit of $(x_\ell)_{\ell\geq0}$.
\end{lem}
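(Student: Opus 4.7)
The plan is to read the conclusion directly out of the definition of $X_{\btau}$, using the hypothesis $\cA_n \subseteq \cL(Z_n)$ only to promote a single letter $a \in \cA_{n_\ell}$ into an honest point of $Z_{n_\ell}$.

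Fix $x \in X_{\btau}$ and $\ell \geq 0$. By the definition of $X_{\btau}$ given at the start of the paper, the word $x_{[-\ell, \ell]}$ occurs in $\tau_{[0, n_\ell)}(a_\ell)$ for some $n_\ell \geq 1$ and $a_\ell \in \cA_{n_\ell}$, say at a position $p_\ell \in [0, |\tau_{[0, n_\ell)}(a_\ell)| - 2\ell - 1]$. Using the assumption $\cA_{n_\ell} \subseteq \cL(Z_{n_\ell})$, I pick $z_\ell \in Z_{n_\ell}$ with $(z_\ell)_0 = a_\ell$, and set
\[
    x_\ell \;=\; S^{\,p_\ell + \ell}\,\tau_{[0, n_\ell)}(z_\ell) \;\in\; \bigcup_{k \in \Z} S^k \tau_{[0, n_\ell)}(Z_{n_\ell}).
\]

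Because $\tau_{[0,n_\ell)}(a_\ell) = \tau_{[0,n_\ell)}((z_\ell)_0)$ sits at coordinates $[0, |\tau_{[0,n_\ell)}(a_\ell)|)$ inside $\tau_{[0,n_\ell)}(z_\ell)$, the defining property of $p_\ell$ gives $(x_\ell)_{[-\ell, \ell]} = x_{[-\ell, \ell]}$. Hence $x_\ell \to x$ in the product topology as $\ell \to \infty$, which is the desired conclusion. No nontrivial obstacle appears; the statement is essentially a translation of the defining property of $X_{\btau}$ into the subshifts $Z_n$.
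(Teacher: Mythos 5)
Your proof is correct and follows essentially the same route as the paper's: extract $n_\ell$, $a_\ell$ from the definition of $X_{\btau}$, use $\cA_{n_\ell}\subseteq\cL(Z_{n_\ell})$ (plus shift-invariance of $Z_{n_\ell}$) to get $z_\ell$ with $(z_\ell)_0=a_\ell$, and shift $\tau_{[0,n_\ell)}(z_\ell)$ to match $x$ on $[-\ell,\ell]$. The only difference is that you make the shift amount $p_\ell+\ell$ explicit where the paper just asserts the existence of some $k_\ell$, which is harmless.
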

\begin{proof}
Let $x \in X_{\btau}$.
Then, for all $\ell \geq 0$ there exist $n_\ell \geq 0$ and $a_\ell \in \cA_{n_\ell}$ for which $x_{[-\ell, \ell)}$ occurs in $\tau_{[0,n_\ell)}(a_\ell)$.
The hypothesis permits to find $z_\ell \in Z_{n_\ell}$ such that $(z_\ell)_0 = a_\ell$.
Being $x_{[-\ell, \ell)}$ a subword of $\tau_{[0,n_\ell)}((z_\ell)_0)$, there is a point of the form $x_\ell = S^{k_\ell} \tau_{[0,n_\ell)}(z_\ell)$ satisfying $(x_\ell)_{[-\ell, \ell)} = x_{[-\ell, \ell)}$.
Then, $x$ is the limit of $(x_\ell)_{\ell\geq0}$.
The lemma follows.
\end{proof}

\begin{lem} \label{MT:generates&reco}
The $\cS$-adic sequence $\bsigma$ is recognizable and generates $X$.
\end{lem}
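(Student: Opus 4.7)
The plan is to establish the lemma through an intermediate key step: for each $n \geq 1$, the pair $(Z_n, \sigma_{[0,n)})$ is a recognizable coding of $X$. Both assertions of the lemma will follow quickly from this.

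The intermediate step rests entirely on formula \eqref{eq:theo:main_abstract:q_formula:full}, which says $\sigma_{[0,n)}(z') = S^{q_{0,n}(z')}\tau_n(z')$ for every $z' \in Z_n$. This immediately yields $\bigcup_{k\in\Z} S^k \sigma_{[0,n)}(Z_n) = \bigcup_{k\in\Z} S^k \tau_n(Z_n) = X$, since $(Z_n,\tau_n)$ is a coding of $X$ by construction; hence $(Z_n, \sigma_{[0,n)})$ is a coding of $X$. For recognizability, if $(k_1,z^1)$ and $(k_2,z^2)$ are two $\sigma_{[0,n)}$-factorizations of some $x\in X$ in $Z_n$, then the formula produces $x = S^{k_i + q_{0,n}(z^i)}\tau_n(z^i)$; uniqueness of the $\tau_n$-factorization of $x$ combined with the range restriction on the shifts forces $z^1 = z^2$ and $k_1 = k_2$.

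For generation, $X_{\bsigma} \subseteq X$ is immediate: any word in $\cL(X_{\bsigma})$ occurs in some $\sigma_{[0,n)}(a)$; by property (iii) we may embed $a$ in $z \in Z_n$ with $z_0 = a$, and the intermediate step shows $\sigma_{[0,n)}(z)$ is a shift of a point of $X$. For $X \subseteq X_{\bsigma}$, given $x \in X$ and $\ell \geq 1$, I will choose $n$ so large that $\langle \sigma_{[0,n)}\rangle \geq 2\ell$; this is possible since $|\sigma_{[0,n)}(a)|$ is within bounded additive error of $|\tau_n(a)| \geq 2\varepsilon_n \to \infty$ by property (ii). Writing $x = S^k \sigma_{[0,n)}(z)$, the window $x_{[-\ell,\ell)}$ lies inside $\sigma_{[0,n)}(w)$ for some factor $w$ of $z$ of length at most $2$. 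Since $Z_n$ is minimal (inherited from $X$ via the recognizable coding $\tau_n$) and the words $\sigma_{[n,m)}(a) \in \cL(Z_n)$ have lengths tending to infinity with $m$, I can find $m > n$ and $a \in \cC_m$ with $w$ occurring in $\sigma_{[n,m)}(a)$; then $\sigma_{[0,m)}(a)$ contains $x_{[-\ell,\ell)}$, placing $x$ in $X_{\bsigma}$.

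Finally, recognizability of $\bsigma$ amounts to showing each $(X_{\bsigma}^{(n+1)},\sigma_n)$ is recognizable. Since $(Z_n, \sigma_{[0,n)})$ and $(Z_{n+1},\sigma_{[0,n+1)})$ are both recognizable by the intermediate step, applying Lemma \ref{lem:recognizability_composition} to the decomposition $\sigma_{[0,n+1)} = \sigma_{[0,n)}\circ \sigma_n$ yields recognizability of $(Z_{n+1},\sigma_n)$, provided we can identify $\bigcup_k S^k \sigma_n(Z_{n+1})$ with $Z_n$ and in turn $X_{\bsigma}^{(n+1)}$ with $Z_n$. I expect the main technical obstacle to be precisely this identification: establishing $Z_n = \bigcup_k S^k \sigma_n(Z_{n+1})$ requires the explicit definition of $\sigma_n$ via the factorization $\Fac_{(Z_n,\tau_n)}(\tau_{n+1}(z'))$ together with the fact that $\tau_{n+1}(Z_{n+1})$ exhausts $X$ modulo shifts, so that every $z \in Z_n$ arises as a shift of some $\sigma_n(z')$.
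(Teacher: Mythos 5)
Your overall architecture is the paper's: formula \eqref{eq:theo:main_abstract:q_formula:full} is the engine, property (iii) gives $X_{\bsigma}\subseteq X$, and recognizability of $(Z_n,\sigma_{[0,n)})$ is pulled back from that of $(Z_n,\tau_n)$. For the reverse inclusion the paper is shorter than you are: $X_{\bsigma}$ is a nonempty subshift contained in the minimal subshift $X$, hence equal to it; your direct argument is workable but quietly uses $\sigma_{[n,m)}(a)\in\cL(Z_n)$, i.e.\ $\sigma_n(Z_{n+1})\subseteq Z_n$, which you never justify (it does follow from the definition of $\sigma_n$ via $\Fac_{(Z_n,\tau_n)}$, cf.\ \eqref{eq:MT:yj_to_zj}, but it is a fact that needs stating).

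There are two genuine gaps. First, in your recognizability step the pair $(k_i+q_{0,n}(z^i),z^i)$ is in general \emph{not} a $\tau_n$-factorization: $q_{0,n}$ can be negative, and $k_i+q_{0,n}(z^i)$ need not lie in $[0,|\tau_n(z^i_0)|)$. Uniqueness of the $\tau_n$-factorization therefore applies only after re-centering, and it then yields $z^2=S^i z^1$ for some $i\geq 0$ together with matching cut data; to conclude $i=0$ (hence $z^1=z^2$ and $k_1=k_2$) one needs, besides the range restriction $0\leq k_j<|\sigma_{[0,n)}(z^j_0)|$, the aperiodicity of $Z_n$ (inherited from the infinite minimal $X$) to rule out a nontrivial self-shift $\sigma_{[0,n)}(z^1)=S^m\sigma_{[0,n)}(z^1)$. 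This is exactly how the paper closes the argument, and your sketch omits that ingredient, so "forces $z^1=z^2$ and $k_1=k_2$" does not yet follow. Second, your last paragraph reinterprets recognizability of $\bsigma$ as recognizability of each $(X_{\bsigma}^{(n+1)},\sigma_n)$ and reduces it, via Lemma \ref{lem:recognizability_composition}, to the identifications $\bigcup_{k\in\Z}S^k\sigma_n(Z_{n+1})=Z_n$ and $X_{\bsigma}^{(n+1)}=Z_{n+1}$, which you flag as the main obstacle and do not establish. The paper avoids this layer entirely: what it proves (and what the statement means there) is precisely that $(Z_n,\sigma_{[0,n)})$ is recognizable for every $n$, which your intermediate step already delivers once the aperiodicity point above is supplied; if you insist on the level-wise formulation, the two identifications must actually be proved (both can be, using $\sigma_n(Z_{n+1})\subseteq Z_n$ and minimality of $Z_n$, $Z_{n+1}$), not just anticipated.
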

\begin{proof}
First, we show that $\bsigma$ generates $X$.
Note that \eqref{eq:theo:main_abstract:q_formula:full} ensures that
\begin{equation*}
    \bigcup_{k\in\Z} S^k \sigma_{[0,n)}(Z_n) \subseteq \bigcup_{k\in\Z} S^k \tau_n(Z_n) = X.
\end{equation*}
Now, thanks to Condition (iii), we can use Lemma \ref{MT:tech_Sadic_generator}, so any $x \in X_{\bsigma}$ is an adherent point of a sequence $x_n \in \cup_{k\in\Z} S^k \sigma_{[0,n)}(Z_n) = X$.
Therefore, $X_{\bsigma} \subseteq X$.
We conclude that $X_{\bsigma} = X$ by the minimality of $X$.

It rests to prove that $(Z_n,\sigma_{[0,n)})$ is recognizable.
Let $(k,z)$ and $(\tilde{k}, \tilde{z})$ be two $\sigma_{[0,n)}$-factorizations in $Z_n$ of $x \in X$.
Then, Equation \eqref{eq:theo:main_abstract:q_formula:full} implies that 
\begin{equation*}
    S^{k+q_{0,n}(z)} \tau_n(z) = S^k \sigma_{[0,n)}(z) = 
    S^{\tilde{k}} \sigma_{[0,n)}(\tilde{z}) =
    S^{\tilde{k} + q_{0,n}(\tilde{z})} \tau_n(\tilde{z}).
\end{equation*}
In particular, $S^\ell \tau_n(z) = \tau_n(\tilde{z})$ where $\ell = k+q_{0,n}(z) - \tilde{k} - q_{0,n}(\tilde{z})$.
Without loss of generality, we assume that $\ell \geq 0$.
We can find $i \geq 0$ such that $(\ell-|\tau_n(z_{[0,i)})|, S^i z)$ is a $\tau_n$-factorization of $S^\ell \tau_n(z)$ in $Z_n$.
Then, as $(0,\tilde{z})$ is a $\tau_n$-factorization of $S^\ell \tau_n(z)$ in $Z_n$, we deduce from the recognizability property of $(Z_n, \tau_n)$ that
\begin{equation} \label{eq:theo:main_abstract:5}
    \ell = k+q_{0,n}(z) - \tilde{k} - q_{0,n}(\tilde{z}) = |\tau_n(z_{[0,i)})|
    \enspace\text{and}\enspace
    S^i z = \tilde{z}
\end{equation}
Using this and the fact that $(k,z)$ and $(\tilde{k},\tilde{z})$ are $\sigma_{[0,n)}$-factorizations of $x$, we can write
\begin{multline*}
    \sigma_{[0,n)}(z) = 
    S^{\tilde{k} - k} \sigma_{[0,n)}(\tilde{z}) =
    S^{\tilde{k} - k} \sigma_{[0,n)}(S^i z) \\ =
    S^{q_{0,n}(z) - q_{0,n}(\tilde{z}) - |\tau_n(z_{[0,i)})| + |\sigma_{[0,n)}(z_{[0,i)})|} \sigma_{[0,n)}(z).
\end{multline*}
Being $Z_n$ aperiodic (as $X$ is aperiodic), we get that $q_{0,n}(z) - q_{0,n}(\tilde{z}) + |\sigma_{[0,n)}(z_{[0,i)})|$ is equal to $|\tau_n(z_{[0,i)})|$.
Putting this in \eqref{eq:theo:main_abstract:5} produces $|\sigma_{[0,n)}(z_{[0,i)})| = k - \tilde{k}$.
Since $k \in [0, |\sigma_{[0,n)}(z_0)|)$ and $\tilde{k} \in [0, |\sigma_{[0,n)}(\tilde{z}_0)|)$, we obtain that 
\begin{equation*}
    |\sigma_{[0,n)}(z_{[0,i)})| \leq k < |\sigma_{[0,n)}(z_0)|.
\end{equation*}
We deduce that $i = 0$, and then, from \eqref{eq:theo:main_abstract:5}, that $z = \tilde{z}$ and $k = \tilde{k}$.
\end{proof}

Before continuing, we give some bounds for $q_{0,n}$.
\begin{lem} \label{MT:bounds_q}
Let $n \geq 1$ and $z' \in Z_n$.
Then,
\begin{equation} \label{eq:theo:main_abstract:bounds_q}
    -2\varepsilon_n \leq q_{0,n}(z') \leq |\tau_n(z'_0)| - 7\varepsilon_n.
\end{equation}
Moreover, if $z'_0 \in \cC_n \setminus \cC_{n,\mathrm{ap}}$, then
\begin{equation} \label{eq:theo:main_abstract:bounds_q_per}
    -2\varepsilon_n \leq q_{0,n}(z') \leq 2\varepsilon_n.
\end{equation}
\end{lem}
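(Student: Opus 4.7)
The plan is to telescope the recursion \eqref{eq:theo:main_abstract:defi_q}: writing $\delta_j \coloneqq q_{j,n}(z') - q_{j+1,n}(z')$ and using $q_{n,n}(z') = 0$, one obtains $q_{0,n}(z') = \sum_{j=0}^{n-1} \delta_j$. My strategy is to identify $\delta_{n-1}$ as the dominant term, which alone will produce the stated bounds, and then absorb all other $\delta_j$ into an error of order $\ll \varepsilon_n$ using the very rapid growth of $(\ell_n)$ imposed by \eqref{eq:theo:main_abstract:1}.

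For the dominant term, note that at the step $j = n-1$ the shift $q_{n,n}(z')$ vanishes, so the recursion reduces to applying Lemma \ref{Z4:bound_r} directly to $z'$ at the pair of levels $(n-1, n)$. When $z'_0 \in \cC_{n,\mathrm{ap}}$, Item \eqref{Z4:bound_r:ap} gives $\delta_{n-1} \in [-\varepsilon_n, |\tau_n(z'_0)| - 8\varepsilon_n)$; when $z'_0 \in \cC_n \setminus \cC_{n,\mathrm{ap}}$, Remark \ref{rema:Z4:special_per&root=per} ensures $|\mroot \tau_n(z'_0)| \leq \varepsilon_n$, so Items \eqref{Z4:bound_r:p&big} and \eqref{Z4:bound_r:p&small} give the much tighter $\delta_{n-1} \in [-\varepsilon_n, \varepsilon_n)$. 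These two cases will produce \eqref{eq:theo:main_abstract:bounds_q} and \eqref{eq:theo:main_abstract:bounds_q_per} respectively, up to the small perturbation coming from the remaining $\delta_j$.

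For $j < n-1$, each case of Lemma \ref{Z4:bound_r} gives the crude bound $|\delta_j| \leq |\tau_{j+1}| \leq 10 d^2 \ell_{j+1}$ (the second inequality by Item \eqref{item:Z4:main_prop:lengths} of Proposition \ref{Z4:main_prop}). Summing via the geometric growth $\ell_{j+1}/\ell_j \geq 10^4 d^{2d^3+6}$ produces
\[
    \sum_{j=0}^{n-2} |\delta_j| \;\leq\; 10 d^2 \sum_{j=1}^{n-1} \ell_j \;\leq\; 20 d^2 \ell_{n-1} \;\leq\; \frac{20 d^2 \ell_n}{10^4 d^{2d^3+6}} \;\leq\; \frac{\varepsilon_n}{500},
\]
where the last step uses $\varepsilon_n \geq \ell_n / d^{2d^3+4}$. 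This slack is more than enough to convert the dominant-term intervals into the $[-2\varepsilon_n,\,\cdot\,)$ intervals required by the statement. Moreover, the tight periodic-case bound \eqref{eq:theo:main_abstract:bounds_q_per} will automatically imply \eqref{eq:theo:main_abstract:bounds_q} in that case, because Item \eqref{item:Z4:main_prop:lengths} of Proposition \ref{Z4:main_prop} gives $|\tau_n(z'_0)| \geq 20\varepsilon_n$, hence $|\tau_n(z'_0)| - 7\varepsilon_n \geq 13\varepsilon_n > 2\varepsilon_n$.

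The main technical subtlety will be tracking, at every intermediate level, that the pair of codings $(Z_{j+1}, \tau_{j+1})$ and $(Z_j, \tau_j)$ fulfils the compatibility assumption \eqref{eq:Z4:comparing_lengths_Z4_Z4'_prev} of Subsection \ref{subsec:Z4:two_levels}, so that Lemma \ref{Z4:bound_r} is indeed applicable; this is a direct consequence of the third inequality in \eqref{eq:theo:main_abstract:1} and will be used implicitly at every invocation.
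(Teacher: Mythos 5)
Your proposal is correct and follows essentially the same route as the paper's proof: telescope the recursion \eqref{eq:theo:main_abstract:defi_q}, bound the last increment $q_{n-1,n}(z')$ via Lemma \ref{Z4:bound_r} (with the tighter $[-\varepsilon_n,\varepsilon_n)$ bound when $z'_0 \in \cC_n\setminus\cC_{n,\mathrm{ap}}$), and absorb the remaining increments, each crudely bounded by $|\tau_{j+1}| \leq 10d^2\ell_{j+1}$, into an error of order $\varepsilon_n$ using the geometric growth of $(\ell_n)$ from \eqref{eq:theo:main_abstract:1}. The only differences are cosmetic (slightly sharper numerical slack and an explicit remark on why the periodic bound implies the general one).
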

\begin{proof}
Lemma \ref{Z4:bound_r} gives the bound $-\varepsilon_j \leq c_{r(z)} \leq |\tau_{j+1}(z_0)| - 8\varepsilon_j$ for all $0 \leq j < n$ and $z \in Z_{j+1}$.
Thus, from \eqref{eq:theo:main_abstract:q_formula:partial}, $-\varepsilon_n \leq q_{n-1,n}(z') \leq |\tau_n(z'_0)| - 8\varepsilon_n$ and $q_{j+1,n}(z') - |\tau_{j+1}| \leq q_{j,n}(z') \leq q_{j+1,n}(z') + |\tau_{j+1}|.$.
We obtain that
\begin{equation} \label{eq:theo:main_abstract:proof_bounds_q}
    -\varepsilon_n - \sum_{j=0}^{n-2}|\tau_{j+1}| \leq 
    q_{0,n}(z') \leq 
    |\tau_n(z'_0)| - 8\varepsilon_n + \sum_{j=0}^{n-2}|\tau_{j+1}|.
\end{equation}
Now, since $|\tau_j| \leq 10d^2\ell_j$ and $500d^{2d^3+6}\cdot\ell_j \leq \ell_{j+1}$, we have the bound $d^{n-2-j} |\tau_j| \leq \ell_{n-1}$ for every $j \in [0,n-1)$.
Therefore,
\begin{equation*}
    \sum_{j=0}^{n-2}|\tau_{j+1}| \leq
    \sum_{j=0}^{n-2} \frac{1}{d^{n-2-j}} \ell_{n-1} \leq
    2d \ell_{n-1} \leq \varepsilon_n.
\end{equation*}
Putting this in \eqref{eq:theo:main_abstract:proof_bounds_q} yields \eqref{eq:theo:main_abstract:bounds_q}.
Moreover, if $z'_0 \in \cC_n \setminus \cC_{n,\mathrm{ap}}$, then Lemma \ref{Z4:bound_r} gives that $q_{n-1,n}(z') \in [-\varepsilon_n, \varepsilon_n)$.
So, the previous argument shows, in this case, that $q_{0,n}(z') \in [-2\varepsilon_n, 2\varepsilon_n)$.
\end{proof}

\begin{lem} \label{MT:cP2&cP3}
For every $n\geq 1$ and $z' \in Z_n$,
\begin{equation} \label{eq:theo:main_abstract:raw_bounds_sigma}
    \frac{5}{d^{d^3+4}} \ell_n \leq |\sigma_{[0,n)}(z'_0)| \leq 20d^2 \ell_n.
\end{equation}
In particular, $\bsigma$ satisfies Items $(\cP_2)$ and $(\cP_3)$ of Theorem \ref{theo:main_abstract}.
\end{lem}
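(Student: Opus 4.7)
The plan is to read off $|\sigma_{[0,n)}(z'_0)|$ directly from the partial formula \eqref{eq:theo:main_abstract:q_formula:partial}, which gives the identity
\begin{equation*}
|\sigma_{[0,n)}(z'_0)| = |\tau_n(z'_0)| + q_{0,n}(Sz') - q_{0,n}(z').
\end{equation*}
Applying Lemma~\ref{MT:bounds_q} to both $z'$ and $Sz'$, together with Condition~(ii) ($2\varepsilon_n \le |\tau_n(a)| \le 10d^2\ell_n$) and the estimate $\varepsilon_n \ge \ell_n/d^{2d^3+4}$, yields the upper bound
\begin{equation*}
|\sigma_{[0,n)}(z'_0)| \le |\tau_n(z'_0)| + |\tau_n(z'_1)| \le 20 d^2 \ell_n
\end{equation*}
and the lower bound
\begin{equation*}
|\sigma_{[0,n)}(z'_0)| \ge |\tau_n(z'_0)| - 2\varepsilon_n - (|\tau_n(z'_0)| - 7\varepsilon_n) = 5\varepsilon_n \ge \frac{5}{d^{2d^3+4}}\ell_n.
\end{equation*}
This establishes \eqref{eq:theo:main_abstract:raw_bounds_sigma}; the only subtlety is being careful about signs in the $q_{0,n}$-bounds, which is routine.

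For $(\cP_2)$, I just take the quotient of the two bounds in \eqref{eq:theo:main_abstract:raw_bounds_sigma}: for every $a, b \in \cA_n = \cC_n$,
\begin{equation*}
\frac{|\sigma_{[0,n)}(a)|}{|\sigma_{[0,n)}(b)|} \le \frac{20 d^2 \ell_n}{(5/d^{d^3+4})\ell_n} = 4 d^{d^3+6},
\end{equation*}
which is exactly $(\cP_2)$. (This is the step that forces the exponents stated in \eqref{eq:theo:main_abstract:raw_bounds_sigma}.)

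For $(\cP_3)$, the strategy is to compare the word $\sigma_{[0,n)}(z'_0) = \sigma_{[0,n-1)}(\sigma_{n-1}(z'_0))$ to the pieces $\sigma_{[0,n-1)}(a)$, $a \in \cC_{n-1}$. Since each letter of $\sigma_{n-1}(z'_0)$ contributes a block of length at least $(5/d^{d^3+4})\ell_{n-1}$ (by the case $n-1$ of \eqref{eq:theo:main_abstract:raw_bounds_sigma}, noting that for $n=1$ we instead use $|\sigma_0(a)| = |\tau_0(a)| \ge 2\varepsilon_0$ directly from Condition~(ii)), we obtain
\begin{equation*}
|\sigma_{n-1}(z'_0)| \le \frac{|\sigma_{[0,n)}(z'_0)|}{\min_{a \in \cC_{n-1}} |\sigma_{[0,n-1)}(a)|} \le \frac{20 d^2 \ell_n}{(5/d^{d^3+4})\ell_{n-1}} \le 40 d^{2d^3+8} \cdot \frac{\ell_n}{\ell_{n-1}},
\end{equation*}
which gives $(\cP_3)$. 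The main obstacle, insofar as there is one, is keeping track of the initial case $n=1$ where $\sigma_0 = \tau_0$ and no previous-level bound applies; this is handled by falling back on Condition~(ii) and the definition $\varepsilon_0 \ge \ell_0/d^{2d^3+4}$.
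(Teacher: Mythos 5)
Your proof is correct and follows essentially the same route as the paper: the identity coming from \eqref{eq:theo:main_abstract:q_formula:partial} combined with Lemma \ref{MT:bounds_q} and Condition (ii) for the raw bounds, the quotient of the two raw bounds for $(\cP_2)$, and the inequality $|\sigma_{n-1}| \leq |\sigma_{[0,n)}| / \langle\sigma_{[0,n-1)}\rangle$ for $(\cP_3)$. The small exponent mismatch in your lower bound ($5\varepsilon_n \geq \tfrac{5}{d^{2d^3+4}}\ell_n$ rather than the stated $\tfrac{5}{d^{d^3+4}}\ell_n$) is already present in the paper's own derivation, so it does not distinguish your argument from theirs.
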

\begin{proof}
We first show that \eqref{eq:theo:main_abstract:raw_bounds_sigma} implies that $\bsigma$ satisfies Items $(\cP_1)$ and $(\cP_2)$ of Theorem \ref{theo:main_abstract}.
Observe that, by Condition (iii), \eqref{eq:theo:main_abstract:raw_bounds_sigma} gives, for every $n \geq 1$ and $a, b \in \cC_n$, that
\begin{equation} \label{eq:MT:bounds_sigma0n:P2}
    |\sigma_{[0,n)}(a)| \leq 20d^2 \ell_n \leq 4d^{d^3+6}\cdot|\sigma_{[0,n)}(b)|.
\end{equation}
Thus, $(\cP_2)$ is satisfied.
For Item $(\cP_3)$, we note that, for any pair of morphisms $\xi$ and $\xi'$ such that $\xi \xi'$ is defined, we have that $|\xi \xi'| \geq \langle\xi\rangle |\xi'|$.
Therefore,
\begin{equation*}
    |\sigma_{[0,n+1)}| \geq \langle\sigma_{[0,n)}\rangle |\sigma_n|.
\end{equation*}
Then, by Item \eqref{eq:MT:bounds_sigma0n:P2}, 
\begin{equation*}
    |\sigma_n| \leq \frac{|\sigma_{[0,n+1)}|}{\langle\sigma_{[0,n)}\rangle} \leq 
        \frac{10d^2\ell_{n+1}}{1/4d^{2d^3+6}\ell_n} = 40d^{2d^3+8} \frac{\ell_{n+1}}{\ell_n}.
\end{equation*}

We now prove \eqref{eq:theo:main_abstract:raw_bounds_sigma}.
Let $n \geq 0$ and $z' \in Z_n$ be arbitrary.
On one hand, from \eqref{eq:theo:main_abstract:q_formula:partial} we have that
\begin{equation*}
    |\sigma_{[0,n)}(z'_0)| = 
    |\tau_n(z'_0)| - q_{0,n}(z') + q_{0,n}(Sz')
    \enspace\text{for any $z' \in Z_n$.}
\end{equation*}
Hence, by \eqref{eq:theo:main_abstract:bounds_q} and Condition (ii),
\begin{equation*}
    |\sigma_{[0,n)}(z'_0)| = 
    |\tau_n(z'_0)| - q_{0,n}(z') + q_{0,n}(Sz') \leq 
    |\tau_n(z'_0)| + |\tau_n(z'_1)| - 5\varepsilon_n
    \leq 20d^2\ell_n.
\end{equation*}
Similarly,
\begin{equation*}
    |\sigma_{[0,n)}(z'_0)| = 
    |\tau_n(z'_0)| - q_{0,n}(z') + q_{0,n}(Sz') \geq
    5\varepsilon_n \geq \frac{5}{d^{d^3+4}} \ell_n.
\end{equation*}
\end{proof}

We now introduce some notation.
Let $\rho(a) = a$ if $a$ belongs to $\cC_{n, \mathrm{ap}}$ for some $n \geq 1$ and let $\rho(a) = \mroot \tau_n(a)$ if $n \geq 1$ and $a \in \cC_n \setminus \cC_{n, \mathrm{ap}}$.
We set $\psi(z) = (\rho(z_{-1}), \rho(z_0), \rho(z_1))$ for $n \geq 1$ and $z \in Z_n$.
Note that these definitions are consistent with the ones in Subsection \ref{subsubsec:Z4:morphism}.

The proof of the following lemma will be postponed until the end of the subsection.
\begin{lem} \label{MT:tech_core_roots}
Let $z, \tilde{z} \in Z_n$ be such that $\psi(z) = \psi(\tilde{z})$.
Then, $\mroot\sigma_{[0,n)}(z_0) = \mroot\sigma_{[0,n)}(\tilde{z}_0)$.
\end{lem}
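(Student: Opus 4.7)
I would proceed by induction on $n$, aiming to establish the stronger statement that $\mroot \sigma_{[0,n)}(z_0)$ depends only on $\psi(z)$. The first observation is that since $\rho$ takes values in $\cC_n$ on $\cC_{n,\mathrm{ap}}$ and in $\cA^+$ on its complement (two disjoint kinds of objects), each coordinate equality $\rho(z_i) = \rho(\tilde z_i)$ coming from $\psi(z) = \psi(\tilde z)$ forces $z_i$ and $\tilde z_i$ to lie on the same side of the partition, with $z_i = \tilde z_i$ in the aperiodic case. In particular, if $z_0 \in \cC_{n,\mathrm{ap}}$ then $\tilde z_0 = z_0$ and the lemma is immediate, so the real work is the case $z_0, \tilde z_0 \in \cC_n \setminus \cC_{n,\mathrm{ap}}$ with $s := \mroot \tau_n(z_0) = \mroot \tau_n(\tilde z_0)$. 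A similar dichotomy applies to the neighbors $z_{\pm 1}, \tilde z_{\pm 1}$, which will be essential below.

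For the inductive step in this nontrivial case, I write $\sigma_{[0,n)} = \sigma_{[0,n-1)} \circ \sigma_{n-1}$, where $\sigma_{n-1}$ plays the role of the connecting morphism $\gamma$ from $(Z_n, \tau_n)$ to $(Z_{n-1}, \tau_{n-1})$ built in Subsection \ref{subsec:Z4:two_levels}, and I analyze $\gamma(z_0)$ versus $\gamma(\tilde z_0)$. Depending on whether $|s| \leq \varepsilon_{n-1}$ or $\varepsilon_{n-1} < |s| \leq \varepsilon_n$, I apply Proposition \ref{Z4:gamma:p&small} with $k = \ell = 1$ (verifying the hypothesis on the right neighbor via $\rho(z_1) = \rho(\tilde z_1)$) or Proposition \ref{Z4:gamma:p&big}, respectively. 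These yield a common root $t$ with $|t| = |s|$ governing $\mroot \tau_{n-1}$ on every letter of $\gamma(z_0)$ and $\gamma(\tilde z_0)$, congruences $c_{r(z)} \equiv \tilde c_{r(\tilde z)} \pmod{|s|}$ (and similarly at the right end) or even direct equalities, and $\psi_{n-1}$-equalities at the corresponding base points in $Z_{n-1}$. Combining the common-root statement with Item \ref{item:Z4:structure_Csp:window} and Item \ref{item:Z4:structure_Csp:power_of_2} of Proposition \ref{Z4:structure_Csp} and the injectivity of $\tau_{n-1}$ on letters forces the interior letters of $\gamma(z_0)$ and $\gamma(\tilde z_0)$ to coincide with a single letter $a^* \in \cC_{n-1,\mathrm{sp}}$; only the two boundary letters of each block can differ.

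To close the induction, I invoke the inductive hypothesis on the boundary letters, using the $\psi_{n-1}$-equalities supplied by Propositions \ref{Z4:gamma:p&small} and \ref{Z4:gamma:p&big}, and combine this with the fact that the bulk of $\sigma_{[0,n)}(z_0)$ and of $\sigma_{[0,n)}(\tilde z_0)$ are, via formula \eqref{eq:theo:main_abstract:q_formula:partial} and Lemma \ref{MT:bounds_q}, subwords of the same $s$-periodic sequence starting at positions congruent modulo $|s|$ (by the congruence statements recalled above). This common periodic structure together with the matching boundary roots forces $\mroot \sigma_{[0,n)}(z_0) = \mroot \sigma_{[0,n)}(\tilde z_0)$. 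The main obstacle is the case where one of $z_{\pm 1}$ is aperiodic and $\sigma_{[0,n)}(z_0)$ extends past the $s$-periodic part of $\tau_n(z_0)$ into aperiodic territory of $\tau_n(z_{\pm 1})$; here the aperiodic letter-equalities $z_{\pm 1} = \tilde z_{\pm 1}$ must be combined with a careful bookkeeping of the values of $q_{0,n}(Sz)$ and $q_{0,n}(S\tilde z)$ (via Lemma \ref{MT:bounds_q} and the recursive definition \eqref{eq:theo:main_abstract:defi_q}) to match the aperiodic boundary extensions on both sides.
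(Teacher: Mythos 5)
Your overall route (induction on the number of levels, a top-level application of Propositions \ref{Z4:gamma:p&big}/\ref{Z4:gamma:p&small} to the connecting blocks, and a finish via the periodic-window formula \eqref{eq:theo:main_abstract:q_formula:partial}) is in the spirit of the paper, but there is a genuine gap: the statement you induct on is too weak to close the inductive step. By \eqref{eq:theo:main_abstract:q_formula:partial}, $\mroot\sigma_{[0,n)}(z_0)$ is governed by the accumulated offsets at \emph{both} ends modulo $|s|$: one needs all four congruences $q_{0,n}(z)\equiv q_{0,n}(\tilde z)\equiv q_{0,n}(Sz)\equiv q_{0,n}(S\tilde z)\pmod{|s|}$ (the first two to match phases, the last two to make the window lengths multiples of $|s|$, exactly as in Lemma \ref{MT:from_q_to_root}). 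Since $q_{0,n}(z)=q_{n-1,n}(z)+q_{0,n-1}(z^{n-1})$, your single application of the gamma-propositions at the top transition only controls $q_{n-1,n}$; the contribution $q_{0,n-1}$ of the boundary points of the blocks, and the fact that the lengths $|\sigma_{[0,n-1)}(\cdot)|$ of the block letters are multiples of $|s|$, are \emph{not} consequences of your inductive hypothesis ``letters with equal $\psi$ have equal $\mroot\sigma_{[0,n-1)}$'' (two letters can have images that are different powers of the same root, and whether an image is a power of a conjugate of $s$ at all is precisely what is at stake). This is why the paper strengthens the induction: Lemmas \ref{MT:claim:ap}, \ref{MT:claim:p&big} and \ref{MT:claim:p&small} carry the $q_{j,n}$-equalities/congruences, the $\psi$-equalities and the aperiodicity of the boundary letters through \emph{every} level $j$, and only then invokes Lemma \ref{MT:from_q_to_root}. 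Without an inductive statement of that strength, your final ``common periodic structure plus matching boundary roots'' step does not go through.

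Two subsidiary points in your middle step are also incorrect as stated. In the case $\varepsilon_{n-1}<|s|\le\varepsilon_n$, the interior letters of $\gamma(z_0)$ and $\gamma(\tilde z_0)$ are of aperiodic type at level $n-1$, so Proposition \ref{Z4:structure_Csp} says nothing about them and Proposition \ref{Z4:gamma:p&big} pins down only the boundary letters; there is no common root $t$ ``governing every letter of the block'' in that case. Moreover, for the fourth coding $\tau_{n-1}=\sigma\theta$ is a composition with the projection $\theta$ of the higher-block construction and is \emph{not} injective on letters (distinct quadruples with the same third coordinate have the same image), so even when all block letters lie in $\cC_{n-1,\mathrm{sp}}$ with the same root, ``same $\tau_{n-1}$-image'' does not force the interior letters of the two blocks to coincide as letters — and their $\sigma_{[0,n-1)}$-images may indeed differ, which again feeds back into the phase bookkeeping that your induction does not track.
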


\begin{lem} \label{MT:cP1}
Item $(\cP_1)$ of Theorem \ref{theo:main_abstract} is satisfied by $\bsigma$.
\end{lem}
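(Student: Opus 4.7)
The lemma has two independent inequalities, and I would treat them separately.

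The bound $\#\cA_n\le 7^4 d^{12d+36}\cdot\pcom(X)^4$ is essentially for free: by construction $\cA_n=\cC_n$, and Condition (i) at the start of this subsection (i.e.\ Item \ref{item:Z4:main_prop:cardinalities} of Proposition \ref{Z4:main_prop}) gives precisely this bound on $\#\cC_n$. So this half reduces to invoking the right earlier statement and pointing out the identification of alphabets.

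For the bound on $\#(\mroot\sigma_{[0,n)}(\cA_n))$, my plan is to reduce the counting to counting $\psi$-triples and then count triples. Concretely, by Condition (iii) (from the definition of $\cC_n$ in \eqref{eq:Z4:defi_alphabet}), for every $a\in\cC_n$ I can pick $z^a\in Z_n$ with $z^a_0=a$. Lemma \ref{MT:tech_core_roots} then says that the map $a\mapsto \mroot\sigma_{[0,n)}(a)$ factors through $a\mapsto \psi(z^a)$, hence
\[
    \#\bigl(\mroot\sigma_{[0,n)}(\cC_n)\bigr) \;\le\; \#\{\psi(z):z\in Z_n\} \;\le\; \#\rho(\cC_n)^{3}.
\]
To estimate $\#\rho(\cC_n)$, I would observe that every value of $\rho$ is either (the letter) $a\in\cC_{n,\mathrm{ap}}$ or a word in $\mroot\tau_n(\cC_n\setminus\cC_{n,\mathrm{ap}})$, and that, as in the proof of Proposition \ref{Z4:gamma:ap}, distinct letters in $\cC_{n,\mathrm{ap}}$ with the same image under $\tau_n$ produce the same $\sigma_{[0,n)}$-root; so for the purpose of bounding $\#(\mroot\sigma_{[0,n)}(\cC_n))$ it suffices to use the coarser equivalence $a\sim b \iff \tau_n(a)=\tau_n(b)$ on $\cC_{n,\mathrm{ap}}$ and $a\sim b \iff \mroot\tau_n(a)=\mroot\tau_n(b)$ otherwise. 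Condition (i) then yields the effective bound $\#\tau_n(\cC_{n,\mathrm{ap}})+\#\mroot\tau_n(\cC_n)\le 2d^{3d+6}+5d^{3d+6}=7d^{3d+6}$, and cubing with a final factor of $5d^{3d+6}$ coming from the choice of root up to conjugacy (which is controlled by $\#\mroot\tau_n(\cC_n)$) will give $35\,d^{12d+24}$.

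The main technical obstacle is the refinement of $\rho$ described above: the definition of $\rho$ given in this section uses $\rho(a)=a$ for $a\in\cC_{n,\mathrm{ap}}$, but the only bound we have from Proposition \ref{Z4:main_prop} is on $\#\tau_n(\cC_{n,\mathrm{ap}})$, not on $\#\cC_{n,\mathrm{ap}}$ itself. I would handle this by revisiting the arguments behind Lemma \ref{MT:tech_core_roots} (which itself rests on Propositions \ref{Z4:gamma:ap}, \ref{Z4:gamma:p&big}, \ref{Z4:gamma:p&small}) and noting that in each of those, the hypotheses that equate $\psi$-coordinates are really statements about $\rho(\cdot)$ in the sense of Subsection \ref{subsubsec:Z4:morphism}, i.e.\ equating $\tau_n(a)$ (not $a$) on aperiodic letters; this is what makes the count $7d^{3d+6}$ legitimate and brings the final product below $35 d^{12d+24}$.
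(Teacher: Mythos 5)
Your proposal takes essentially the same route as the paper's proof: the bound on $\#\cA_n$ is read off from Condition (i) (i.e.\ Proposition \ref{Z4:main_prop}), and the root bound is obtained by using Lemma \ref{MT:tech_core_roots} to reduce the count to that of $\psi$-triples, which is then bounded via Condition (i) by $(\#\tau_n(\cC_{n,\mathrm{ap}})+\#\mroot\tau_n(\cC_n))^3$ together with the same extra factor $\#\mroot\tau_n(\cC_n)$ that appears in the paper's display, yielding the stated constant exactly as in the paper. Your additional observation — that the triple count is only legitimate for the coarse $\rho$ (equating $\tau_n$-images rather than letters on $\cC_{n,\mathrm{ap}}$, since only $\#\tau_n(\cC_{n,\mathrm{ap}})$ is controlled), and that this is harmless because the hypotheses of Propositions \ref{Z4:gamma:ap}, \ref{Z4:gamma:p&big} and \ref{Z4:gamma:p&small} only involve $\tau_n$-images and roots — is precisely the reading under which the paper's own computation is valid, so it clarifies rather than departs from the paper's argument.
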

\begin{proof}
The inequality $\#\cC_n \leq 7^4d^{12d+36} \pcom(X)^4$ in Item $(\cP_1)$ follows from Condition (i).
To prove the other inequality, we note that Lemma \ref{MT:tech_core_roots} implies that 
\begin{equation*}
    \#\mroot\sigma_{[0,n)}(\cC_n) \leq 
    \#\psi(Z_n) \cdot \#\mroot\tau_n(\cC_n).
\end{equation*}
Now, it follows from the definition of $\psi$ that $\#\psi(Z_n)$ is at most $(\#\mroot \tau_n(\cC_n) + \#\tau_n(\cC_{n,\mathrm{ap}}))^3$.
Combining this with the bounds given by Condition (i) yields
\begin{multline*}
    \#\mroot\sigma_{[0,n)}(\cC_n) \leq 
    \#\psi(Z_n) \cdot \#\mroot\tau_n(\cC_n) \\ \leq
    (5d^{3d+6} + 2d^{3d+6})^3 \cdot 5d^{3d+6} \leq 35 d^{12d+24}.
\end{multline*}
\end{proof}

It only rests to prove Lemma \ref{MT:tech_core_roots}.
We start by fixing some notation.
Let $z^n, \tilde{z}^n \in Z_n$ be such that $\psi(z^n) = \psi(\tilde{z}^n)$.
We set $s = \mroot\tau_n(z^n_0) = \mroot\tau_n(\tilde{z}^n_0)$.
For $j \in [0,n)$, we inductively define $z^j = \sigma_j(z^{j+1})$ and $\tilde{z}^j = \sigma_j(\tilde{z}^{j+1})$.
Let $(c^j, y^j) = \Fac_{(Z_j,\tau_j)}(\tau_{j+1}(z^{j+1}))$ and $(\tilde{c}^j, \tilde{y}^j) = \Fac_{(Z_j,\tau_j)}(\tau_{j+1}(\tilde{z}^{j+1}))$

With the notation introduced, we have, for every $j \in [0,n)$, that
\begin{equation} \label{eq:MT:yj_to_zj}
    S^{r_j(z^{j+1})} y^j = z^j
\end{equation}
and that
\begin{equation} \label{eq:MT:zn_to_zj}
    z^j = \sigma_{[j,n)}(z^n).
\end{equation}
We can also write, thanks to \eqref{eq:theo:main_abstract:defi_q},
\begin{multline}
    q_{j,n}(z^n) = q_{j+1,n}(z^n) + c^j_{r_j(z^{j+1})} \\
    \enskip\text{and}\enskip
    q_{j,n}(S z^n) = q_{j+1,n}(S z^n) + 
    c^j_{r_j(z^{j+1}) + |\sigma_{[j,n)}(z^n_0)|}
\end{multline}
for every $j \in [0,n)$.
Similar relations hold for $\tilde{z}^n$.
\medskip

The next three lemmas are the core of the proof of Lemma \ref{MT:tech_core_roots}.

\begin{lem} \label{MT:claim:ap}
Suppose that $\psi(z^n) = \psi(\tilde{z}^n)$ and that $\varepsilon_n < |s|$.
Then, for every $j \in [0, n]$, the following holds:
\begin{enumerate}[label=(\alph*)]
    \item $q_{j,n}(z^n) = q_{j,n}(\tilde{z}^n)$ and $q_{j,n}(Sz^n) = q_{j,n}(S\tilde{z}^n)$.
    \item $\psi(z^j) = \psi(\tilde{z}^j)$ and $\psi(S^{|\sigma_{[j,n)}(z^n)|} z^j) = \psi(S^{|\sigma_{[j,n)}(\tilde{z}^n)|} \tilde{z}^j)$.
    \item $z^j_0, \tilde{z}^j_0, z^j_{|\sigma_{[j,n)}(z^n)|}, \tilde{z}^j_{|\sigma_{[j,n)}(\tilde{z}^n)|} \in \cC_{j,\mathrm{ap}}$.
\end{enumerate}
\end{lem}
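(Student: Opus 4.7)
The plan is to prove the lemma by downward induction on $j$, running from $j=n$ down to $j=0$. The base case $j=n$ is essentially immediate: (a) is trivial since $q_{n,n}\equiv 0$ by definition; (b) in its non-shifted form is the hypothesis, while the shifted form at $j=n$ reduces to the non-shifted one under the convention $\sigma_{[n,n)} = \mathrm{id}$; and for (c), the condition $|s| > \varepsilon_n$ combined with Item \ref{item:Z4:main_prop:per} of Proposition \ref{Z4:main_prop} forces $z^n_0, \tilde{z}^n_0 \in \cC_{n,\mathrm{ap}}$, since otherwise $|s| = |\mroot\tau_n(z^n_0)| \leq \varepsilon_n$, contradicting the hypothesis.

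For the inductive step from $j+1$ to $j$, the key observation is that (c) at level $j+1$ provides exactly the input required by Proposition \ref{Z4:gamma:ap}. I would first apply that proposition to the pair $(z^{j+1}, \tilde{z}^{j+1})$: part (c) at $j+1$ supplies $z^{j+1}_0, \tilde{z}^{j+1}_0 \in \cC_{j+1,\mathrm{ap}}$ and part (b) at $j+1$ supplies $\psi(z^{j+1}) = \psi(\tilde{z}^{j+1})$. The conclusion yields the cut equality $c^j_{r_j(z^{j+1})} = \tilde{c}^j_{r_j(\tilde{z}^{j+1})}$ and $\psi(S^{r_j(z^{j+1})} y^j) = \psi(S^{r_j(\tilde{z}^{j+1})} \tilde{y}^j)$, which, using the identity $z^j = S^{r_j(z^{j+1})} y^j$ from \eqref{eq:MT:yj_to_zj}, translates into $\psi(z^j) = \psi(\tilde{z}^j)$, i.e.\ the first half of (b). Combining the cut equality with the induction hypothesis $q_{j+1,n}(z^n) = q_{j+1,n}(\tilde{z}^n)$ and the defining recursion \eqref{eq:theo:main_abstract:defi_q} gives the first half of (a). Finally, since $z^{j+1}_0 \in \cC_{j+1,\mathrm{ap}}$ implies $\per(\tau_{j+1}(z^{j+1}_0)) > \varepsilon_{j+1} > \varepsilon_j$, the definition \eqref{eq:MT:defi_rn} of $r_j$ guarantees $z^j_0 = y^j_{r_j(z^{j+1})} \in \cC_{j,\mathrm{ap}}$, which is the first half of (c).

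The "right endpoint" halves of (a), (b), (c) at level $j$ follow from a second application of Proposition \ref{Z4:gamma:ap}, this time to the shifted pair $\bigl(S^{m^{j+1}} z^{j+1},\, S^{\tilde{m}^{j+1}} \tilde{z}^{j+1}\bigr)$, where $m^{j+1} := |\sigma_{[j+1,n)}(z^n_0)|$ and likewise for $\tilde{m}^{j+1}$. The second halves of (b) and (c) at level $j+1$ are precisely the hypotheses required for this invocation, and an analogous argument delivers the shifted endpoint halves at level $j$, once one verifies the bookkeeping identity $m^j = m^{j+1} - r_j(S^{m^{j+1}} z^{j+1}) + (\text{size of } \sigma_j \text{'s image of the appropriate block})$ expressing how the endpoint propagates through one more layer of $\sigma_j$.

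The main obstacle is not the overall inductive scheme, which is genuinely dictated by the hypotheses of Proposition \ref{Z4:gamma:ap}, but the careful bookkeeping required to match the $\tau_{j+1}$-cut function governing $q_{j,n}$ in \eqref{eq:theo:main_abstract:defi_q} with the $\tau_j$-cut function $c^j$ on which Proposition \ref{Z4:gamma:ap} directly provides information, and to track how the right endpoint length $m^{j+1}$ transforms into $m^j$ through the connecting morphism $\sigma_j$. Once these identifications are carried out, each inductive step is nothing but two symmetric invocations of Proposition \ref{Z4:gamma:ap}, one at each endpoint of the window $[0, m^{j+1})$.
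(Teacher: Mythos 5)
Your proposal is correct and takes essentially the same route as the paper: downward induction on $j$, with the left-endpoint halves of (a)--(c) obtained from the definition of $r_j$ together with \eqref{eq:MT:yj_to_zj}, the recursion \eqref{eq:theo:main_abstract:defi_q} and one application of Proposition \ref{Z4:gamma:ap} to $(z^{j+1},\tilde{z}^{j+1})$, and the right-endpoint halves from a second application of Proposition \ref{Z4:gamma:ap} to the shifted pair $(S^{|\sigma_{[j+1,n)}(z^n_0)|}z^{j+1}, S^{|\sigma_{[j+1,n)}(\tilde{z}^n_0)|}\tilde{z}^{j+1})$ --- exactly the ``similar arguments'' step the paper spells out in the proof of Lemma \ref{MT:claim:p&big}. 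Your handling of the base case and of the cut/endpoint bookkeeping is no less careful than the paper's own (which dismisses them as ``a direct consequence of the hypothesis'' and ``similar arguments''), so the proposal matches the published argument.
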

\begin{proof}
We prove the claim by induction on $j$.
The case $j = n$ is a direct consequence of the hypothesis.
Assume that $j \in [0, n)$ and that the claim is true for $j+1$.
The inductive hypothesis gives that $z^{j+1}_0$ and $\tilde{z}^{j+1}_0$ belong to $\cC_{j+1,\mathrm{ap}}$.
In particular, $\per(\tau_{j+1}(z^{j+1}_0))$ and $\per(\tau_{j+1}(\tilde{z}^{j+1}_0))$ are greater than $\varepsilon_{j+1}$.
Hence, by the definition of $r_j$ and \eqref{eq:MT:yj_to_zj}, $z^j_0 = y^j_{r_j(z^{j+1})} \in \cC_{j,\mathrm{ap}}$ and $\tilde{z}^j_0 \in \cC_{j,\mathrm{ap}}$.

Next, by Items (b) and (c) of the induction hypothesis, $z^j$ and $\tilde{z}^j$ satisfy the hypothesis of Lemma \ref{Z4:gamma:ap}.
Hence, by \eqref{eq:MT:yj_to_zj},
\begin{enumerate}
    \item $c^j_{r_j(z^{j+1})} = \tilde{c}^j_{r_j(\tilde{z}^{j+1})}$, and
    \item $\psi(z^j) = \psi(S^{r_j(z^{j+1})} y^j) = \psi(S^{r_j(\tilde{y}^{j+1})} \tilde{z}^j) = \psi(\tilde{z}^j)$.
\end{enumerate}
Putting the first equation and Item (a) of the induction hypothesis in the definition of $q_{j,n}$ yields
\begin{equation*}
    q_{j,n}(z^n) = q_{j+1,n}(z^n) + c^j_{r_j(z^{j+1})} =
    q_{j+1,n}(\tilde{z}^n) + \tilde{c}^j_{r_j(\tilde{z}^{j+1})} = q_{j,n}(\tilde{z}^n).
\end{equation*}
The rest of the inductive step follows from similar arguments.
\end{proof}

\begin{lem} \label{MT:claim:p&big}
Suppose that $\rho(z^n_0) = \rho(\tilde{z}^n_0)$ and $\varepsilon_{n-1} < |s| \leq \varepsilon_n$.
Then, for every $j \in [0, n)$, the following holds:
\begin{enumerate}[label=(\alph*)]
    \item $q_{j,n}(z^n) = q_{j,n}(\tilde{z}^n) = q_{j,n}(Sz^n) = q_{j,n}(S\tilde{z}^n)$.
    \item $\psi(z^j) = \psi(\tilde{z}^j) = \psi(S^{|\sigma_{[j,n)}(z^n)|} z^j) = \psi(S^{|\sigma_{[j,n)}(\tilde{z}^n)|} \tilde{z}^j)$.
    \item $z^j_0$, $\tilde{z}^j_0$, $z^j_{|\sigma_{[j,n)}(z^n)|}$ and $\tilde{z}^j_{|\sigma_{[j,n)}(\tilde{z}^n)|}$ belong to $\cC_{j,\mathrm{ap}}$.
\end{enumerate}
\end{lem}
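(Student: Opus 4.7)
The plan is to carry out a downward induction on $j$, with base case $j = n-1$ and inductive step reducing level $j+1$ to level $j$ for $j < n-1$. The hypothesis $\varepsilon_{n-1} < |s| \leq \varepsilon_n$ is used only at the base case; for lower levels the situation reduces to the aperiodic setting already handled in Lemma \ref{MT:claim:ap}.

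For the base case $j = n-1$, I would apply Proposition \ref{Z4:gamma:p&big} to the pair $(z^n, \tilde{z}^n)$. The hypotheses of that proposition match exactly: $\rho(z^n_0) = \rho(\tilde{z}^n_0)$ is given, and the condition on the root length is $\varepsilon_{n-1} < |\mroot \tau_n(z^n_0)| = |s| \leq \varepsilon_n$, with $\varepsilon_{n-1}$ and $\varepsilon_n$ playing the roles of $\varepsilon$ and $\varepsilon'$ respectively. The conclusion gives a four-fold cut equality of the form $c^{n-1}_{r_{n-1}(z^n)} = c^{n-1}_{r_{n-1}(z^n)+|\sigma_{n-1}(z^n_0)|} - |\tau_n(z^n_0)| = \tilde{c}^{n-1}_{r_{n-1}(\tilde{z}^n)} = \tilde{c}^{n-1}_{r_{n-1}(\tilde{z}^n)+|\sigma_{n-1}(\tilde{z}^n_0)|} - |\tau_n(\tilde{z}^n_0)|$, together with equalities of the corresponding four letters in $y^{n-1}$ and $\tilde{y}^{n-1}$. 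Using \eqref{eq:MT:yj_to_zj} to translate positions in $y^{n-1}$ into positions in $z^{n-1}$, and then \eqref{eq:theo:main_abstract:defi_q} to express $q_{n-1,n}$ in terms of these cuts, I obtain (a) at level $n-1$; the stronger left-right equality $q_{n-1,n}(z^n) = q_{n-1,n}(Sz^n)$ is precisely the content of the ``$-|\tau'(z'_0)|$'' term in Proposition \ref{Z4:gamma:p&big}(1). Statement (c) at level $n-1$ follows from the definition of $r_{n-1}$: since $\per(\tau_n(z^n_0)) = |s| > \varepsilon_{n-1}$, the map $r_{n-1}$ selects the first aperiodic index, so the four letters produced by Proposition \ref{Z4:gamma:p&big}(2) lie in $\cC_{n-1,\mathrm{ap}}$.

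The most delicate part of the base case is (b), because Proposition \ref{Z4:gamma:p&big} provides only letter equalities, not $\psi$-equalities. I would upgrade to $\psi$-equality by invoking Lemma \ref{Z4:reco_for_psi}: the hypothesis $\rho(z^n_0) = \rho(\tilde{z}^n_0) = s$ combined with Item \ref{item:Z4:main_prop:per} of Proposition \ref{Z4:main_prop} forces $\tau_n(z^n)$ and $\tau_n(\tilde{z}^n)$ to coincide with the same bi-infinite $s$-power on an $8\varepsilon_n$-window around $c^n_0 = 0$ and around $c^n_1 = |\tau_n(z^n_0)|$. Once the base points are shifted to the now-equal cuts $c^{n-1}_{r_{n-1}(z^n)} = \tilde{c}^{n-1}_{r_{n-1}(\tilde{z}^n)}$ (respectively to the right-boundary cuts), the two underlying $X$-points agree on a window much larger than $\varepsilon_{n-1}$, and the central letter is aperiodic, so Lemma \ref{Z4:reco_for_psi} applies and yields both $\psi$-equalities required by (b).

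For the inductive step at $j < n-1$, the induction hypothesis at level $j+1$ supplies both $\psi$-equalities and the $\cC_{j+1,\mathrm{ap}}$-membership at both the left and right boundaries, which is precisely the setup of Proposition \ref{Z4:gamma:ap}. Applying it separately to the left boundary (the pair $(z^{j+1}, \tilde{z}^{j+1})$) and to the right boundary (the shifts $(S^{|\sigma_{[j+1,n)}(z^n_0)|} z^{j+1}, S^{|\sigma_{[j+1,n)}(\tilde{z}^n_0)|} \tilde{z}^{j+1})$) propagates all of (a), (b), (c) to level $j$; the left-right equality $q_{j,n}(z^n) = q_{j,n}(Sz^n)$ follows because the cut values added to $q_{j+1,n}(z^n)$ and $q_{j+1,n}(Sz^n)$ in the recursion \eqref{eq:theo:main_abstract:defi_q} are equal by Proposition \ref{Z4:gamma:ap} applied at the appropriate boundary, and $q_{j+1,n}(z^n) = q_{j+1,n}(Sz^n)$ is part of the inductive hypothesis. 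The main obstacle is the base case argument for (b); once that is in place, the remainder of the induction is essentially a reprise of the inductive step of Lemma \ref{MT:claim:ap}.
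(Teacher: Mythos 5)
Your base case is fine and in fact more careful than the paper's: where the paper reads item (b) directly off the letter equalities of Proposition \ref{Z4:gamma:p&big} via \eqref{eq:MT:yj_to_zj}, you re-derive the $\psi$-equalities from the dichotomous-periods property and Lemma \ref{Z4:reco_for_psi}. That upgrade works, since all four shifted points see the same $s$-periodic word on a radius-$\varepsilon_n$ window with matching phase (the cuts involved lie within $\varepsilon_n$ of the endpoints of $\tau_n(z^n_0)$, and $|\tau_n(z^n_0)|$, $|\tau_n(\tilde z^n_0)|$ are multiples of $|s|$) and the central letters are aperiodic, so every pairwise $\psi$-equality, including the within-point ones, follows.

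The gap is in the inductive step. Items (a) and (b) are \emph{four-fold} equalities: they include the within-point statements $q_{j,n}(z^n)=q_{j,n}(Sz^n)$ and $\psi(z^j)=\psi(S^{|\sigma_{[j,n)}(z^n)|}z^j)$, and these are exactly what distinguishes this lemma from Lemma \ref{MT:claim:ap}, whose statement contains only the cross-point equalities; so ``a reprise of the inductive step of Lemma \ref{MT:claim:ap}'' cannot suffice. Your two enumerated applications of Proposition \ref{Z4:gamma:ap} --- to $(z^{j+1},\tilde z^{j+1})$ and to $(S^{|\sigma_{[j+1,n)}(z^n)|}z^{j+1},\, S^{|\sigma_{[j+1,n)}(\tilde z^n)|}\tilde z^{j+1})$ --- only compare $z$ with $\tilde z$ at the left edge and at the right edge; they yield $c^j_{r_j(z^{j+1})}=\tilde c^j_{r_j(\tilde z^{j+1})}$ and its right-edge analogue, but no relation between the left and the right cut of the \emph{same} point, which is what $q_{j,n}(z^n)=q_{j,n}(Sz^n)$ and $\psi(z^j)=\psi(S^{|\sigma_{[j,n)}(z^n)|}z^j)$ require (and the latter is indispensable to run the next step of the induction). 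The missing ingredient is a third application of Proposition \ref{Z4:gamma:ap}, to the pair $(z^{j+1}, S^{|\sigma_{[j+1,n)}(z^n)|}z^{j+1})$, which is legitimate precisely because the level-$(j+1)$ induction hypothesis supplies the within-point equality $\psi(z^{j+1})=\psi(S^{|\sigma_{[j+1,n)}(z^n)|}z^{j+1})$ together with aperiodicity of both edge letters; this is how the paper closes the step, obtaining $c^j_{r_j(z^{j+1})}=c^j_{r_j(z^{j+1})+|\sigma_{[j,n)}(z^n_0)|}-|\tau_{j+1}\sigma_{[j+1,n)}(z^n_0)|$ and hence $q_{j,n}(Sz^n)-q_{j+1,n}(Sz^n)=q_{j,n}(z^n)-q_{j+1,n}(z^n)$. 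Your attribution of the left--right $q$-equality to ``Proposition \ref{Z4:gamma:ap} applied at the appropriate boundary'' does not identify this pair, and the within-point $\psi$-equality at level $j$ is not argued at all, so as written the induction does not close; adding the within-point application just described repairs it.
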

\begin{proof}
We first assume that $j = n-1$.
Let us write $r = r_{n-1}$, $c_j = c^{n-1}_j$, $z = z^{n-1}$, $y = y^{n-1}$, etc.
Since $\varepsilon_{n-1} < |s| \leq \varepsilon_n$ and $\rho(z^n_0) = \rho(\tilde{z}^n_0)$, we can use Lemma \ref{Z4:gamma:p&big} with $z^n_0$ and $\tilde{z}^n_0$ to deduce the following:
\begin{enumerate}[label=(\alph*')]
    \item $c_{r(z^n)} = \tilde{c}_{r(\tilde{z}^n)} = c_{r(z^n) + |\sigma_{n-1}(z^n_0)|} - |\tau_n(z^n_0)| = \tilde{c}_{r(\tilde{z}^n) + |\sigma_{n-1}(\tilde{z}^n_0)|} - |\tau_n(\tilde{z}^n_0)|$.
    \item $y_{r(z^n)} = \tilde{y}_{r(\tilde{z}^n)} = y_{{r(z^n)} + |\sigma_{n-1}(z^n_0)|} = \tilde{y}_{r(\tilde{z}^n) + |\sigma_{n-1}(\tilde{z}^n_0)|}$.
\end{enumerate}
Item (b') implies, by \eqref{eq:MT:yj_to_zj}, that Item (b) of the claim holds for $j = n-1$.
Also, since $|s| > \varepsilon_{n-1}$, the definition of $r$ ensures that $z_0 = y_{r(z^n)} \in \cC_{n-1,\mathrm{ap}}$, so Item (c) of the claim holds.
For Item (a), we note that, since $q_{n,n} \equiv 0$, the definition of $q_{n-1,n}$ ensures that $q_{n-1,n}(z^n) = c_{r(z^n)}$, $q_{n-1,n}(\tilde{z}^n) = \tilde{c}_{r(\tilde{c}^n)}$, $q_{n-1,n}(Sz^n) = c_{r(z^n) + |\sigma_{n-1}(z^n_0)|} - |\tau_n(z^n_0)|$ and $q_{n-1,n}(S\tilde{z}^n) = \tilde{c}_{r(\tilde{z}^n) + |\sigma_{n-1}(\tilde{z}^n_0)|} - |\tau_n(\tilde{z}^n_0)|$.
Therefore, Item (a) of the claim follows from Item (a').
\medskip 

We now assume that $j \in [0, n-1)$ and that the claim holds for $j+1$.
Item (c') of the induction hypothesis ensures that
\begin{equation} \label{eq:MT:claim:p&big:1}
    \text{$z^{j+1}_0, \tilde{z}^{j+1}_0 \in \cC_{j+1,\mathrm{ap}}$ and $\psi(z^{j+1}) = \psi(\tilde{z}^{j+1})$.}
\end{equation}
Then, by the definition of $r_j$ and \eqref{eq:MT:yj_to_zj}, $z^j_0 = y^j_{r_j(z^{j+1})} \in \cC_{j,\mathrm{ap}}$ and $\tilde{z}^j_0 \in \cC_{j,\mathrm{ap}}$.
Equation \eqref{eq:MT:claim:p&big:1} also allows us to use Lemma \ref{Z4:gamma:ap} with $z^{j+1}$ and $\tilde{z}^{j+1}$ and deduce the following:
\begin{enumerate}[label=(\alph*')]
    \item $c^j_{r_j(z^{j+1})} = \tilde{c}^j_{r_j(\tilde{z}^{j+1})}$.
    \item $\psi(S^{r_j(z^{j+1})} y^j) = \psi(S^{r_j(\tilde{z}^{j+1})} \tilde{y}^j)$.
\end{enumerate}
Equation \eqref{eq:MT:yj_to_zj} ensures that Item (b') is equivalent to $\psi(z^j) = \psi(\tilde{z}^j)$.
Now, putting Item (a') and Item (a) of the induction hypothesis in the definition of $q_{j,n}$ yields
\begin{equation*}
    q_{j,n}(z^n) = q_{j+1,n}(z^n_0) + c^j_{r_j(z^{j+1})} =
    q_{j+1,n}(\tilde{z}^n_0) + \tilde{c}^j_{r_j(\tilde{z}^{j+1})} = q_{j,n}(\tilde{z}^n).
\end{equation*}
Similar arguments, which rely on using Lemma \ref{Z4:gamma:ap} with $S^{|\sigma_{[j+1,n)}(z^n_0)|} z^{j+1}$ and $S^{|\sigma_{[j+1,n)}(\tilde{z}^n_0)|} \tilde{z}^{j+1}$, show that $z^j_{|\sigma_{[j,n)}(z^n)|}, \tilde{z}^j_{|\sigma_{[j,n)}(\tilde{z}^n)|} \in \cC_{j,\mathrm{ap}}$, $\psi(S^{|\sigma_{[j,n)}(z^n)|} z^j) = \psi(S^{|\sigma_{[j,n)}(\tilde{z}^n)|} \tilde{z}^j)$ and $q_{j,n}(Sz^n) = q_{j,n}(S\tilde{z}^n)$.

To complete the proof, it is enough to show that $\psi(z^j) = \psi(S^{|\sigma_{[j,n)}(z^n)|} z^j)$ and $q_{j,n}(z^n) = q_{j,n}(Sz^n)$.
We observe that Item (b) of the inductive hypothesis guarantees that $\psi(z^{j+1}) = \psi(S^{|\sigma_{[j+1,n)}(z^n_0)|} z^{j+1})$.
Since we know that $z^{j+1}_0$ and $z^{j+1}_{|\sigma_{[j+1,n)}(z^n_0)|}$ belong to $\cC_{j+1,\mathrm{ap}}$, we can use Lemma \ref{Z4:gamma:ap} with $z^{j+1}$ and $S^{|\sigma_{[j+1,n)}(z^n_0)|} z^{j+1}$ to obtain the following:
\begin{enumerate}[label=(\alph*'')]
    \item $c^j_{r_j(z^{j+1})} = c^j_{r_j(z^{j+1}) + |\sigma_{[j,n)}(z^n_0)|} - |\tau_{j+1}\sigma_{[j+1,n)}(z^n_0)|$.
    \item $\psi(S^{r_j(z^{j+1})} y^j) = \psi(S^{r_j(z^{j+1}) + |\sigma_{[j,n)}(z^n_0)|} y^j)$.
\end{enumerate}
Item (b'') implies, by \eqref{eq:MT:yj_to_zj}, that $\psi(z^j) = \psi(S^{|\sigma_{[j,n)}(z^n_0)|} z^j)$.
Also, using the definition of $q_{j+1,n}$ and Item (a'') we can write
\begin{multline*}
    q_{j,n}(S z^n) - q_{j+1,n}(S z^n) =
    c^j_{r_j(z^{j+1}) + |\sigma_{[j,n)}(z^n_0)|} - |\tau_{j+1}\sigma_{[j+1,n)}(z^n_0)| \\ =
    c^j_{r_j(z^{j+1})} = 
    q_{j,n}(z^n) - q_{j+1,n}(z^n).
\end{multline*}
This and Item (a) of the induction hypothesis gives that $q_{j,n}(S z^n) = q_{j,n}(z^n)$.
\end{proof}

\begin{lem} \label{MT:claim:p&small}
Suppose that $\rho(z^n_0) = \rho(\tilde{z}^n_0)$ and that $|s| \leq \varepsilon_{n-1}$.
Let $j_0 \in [0,n)$ be the least element satisfying $|s| \leq \varepsilon_{j_0}$.
Then, for every $j \in [j_0, n]$, the following holds:
\begin{enumerate}[label=(\alph*)]
    \item There is $s_j$ such that $|s_j| = |s|$ and $s_j = \mroot\tau_j(z^j_k) = \mroot\tau_j(\tilde{z}^j_\ell)$ for all $k \in [0, |\sigma_{[j,n)}(z^n_0)|)$ and $\ell \in [0, |\sigma_{[j,n)}(\tilde{z}^n_0)|)$.
    \item $q_{j,n}(z^n) = q_{j,n}(\tilde{z}^n) = q_{j,n}(Sz^n) = q_{j,n}(S\tilde{z}^n) \pmod{|s|}$.
    \item $\psi(z^j) = \psi(\tilde{z}^j)$ and $\psi(S^{|\sigma_{[j,n)}(z^n)|} z^j) = \psi(S^{|\sigma_{[j,n)}(\tilde{z}^n)|} \tilde{z}^j)$.
\end{enumerate}
\end{lem}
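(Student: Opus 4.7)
The proof will proceed by downward induction on $j$, starting at $j = n$ and descending to $j = j_0$, in exact parallel with the structures of Lemmas \ref{MT:claim:ap} and \ref{MT:claim:p&big}: the three items will be bootstrapped from level $j+1$ to level $j$ by a single application of Proposition \ref{Z4:gamma:p&small}.

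For the base case $j = n$, item $(a)$ reduces to $s_n = s$, which is immediate from $\rho(z^n_0) = \rho(\tilde{z}^n_0) = s$ together with $|\sigma_{[n,n)}(z^n_0)| = |\sigma_{[n,n)}(\tilde{z}^n_0)| = 1$; item $(b)$ is vacuous since $q_{n,n} \equiv 0$; and item $(c)$ will be obtained by exploiting the stronger $\psi$-equality supplied by the enclosing context, namely Lemma \ref{MT:tech_core_roots} (which hands us $\psi(z^n) = \psi(\tilde{z}^n)$), combined with the fact that $|s| \leq \varepsilon_{n-1}$ forces $z^n_0, \tilde{z}^n_0 \in \cC_n \setminus \cC_{n,\mathrm{ap}}$, and with Proposition \ref{Z4:structure_Csp} to control $\rho(z^n_1)$ and $\rho(\tilde{z}^n_1)$ and thereby obtain $\psi(S z^n) = \psi(S \tilde{z}^n)$.

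For the inductive step, with the claim assumed at level $j+1$, I will apply Proposition \ref{Z4:gamma:p&small} with $z' = z^{j+1}$, $\tilde{z}' = \tilde{z}^{j+1}$, $k = |\sigma_{[j+1,n)}(z^n_0)|$, and $\ell = |\sigma_{[j+1,n)}(\tilde{z}^n_0)|$; its hypothesis of a common root $s_{j+1}$ on those ranges is precisely item $(a)$ at level $j+1$. Its conclusion $(1)$ gives a common root $s_j$ at level $j$ with $|s_j| = |s_{j+1}| = |s|$ on the positions traversed by $\sigma_j(z^{j+1}_{[0,k)})$ and $\sigma_j(\tilde{z}^{j+1}_{[0,\ell)})$, establishing $(a)$ at level $j$; its conclusion $(2)$ gives the four relevant cut positions equal modulo $|s|$, which together with $(b)$ at level $j+1$ and the recurrence \eqref{eq:theo:main_abstract:defi_q} produces $(b)$ at level $j$; its conclusion $(3)$ translates via \eqref{eq:MT:yj_to_zj} to $\psi(z^j) = \psi(\tilde{z}^j)$ unconditionally, and the right-end equality $\psi(S^{|\sigma_{[j,n)}(z^n)|} z^j) = \psi(S^{|\sigma_{[j,n)}(\tilde{z}^n)|} \tilde{z}^j)$ is obtained from the conditional clause of Proposition \ref{Z4:gamma:p&small}, whose hypothesis $\rho(z^{j+1}_k) = \rho(\tilde{z}^{j+1}_\ell)$ is supplied by the second part of $(c)$ at level $j+1$.

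The main obstacle is the base case: the stated hypothesis $\rho(z^n_0) = \rho(\tilde{z}^n_0)$ alone is not strong enough to yield the two $\psi$-equalities in $(c)$, so the argument must lean on the stronger $\psi$-equality of the calling context in Lemma \ref{MT:tech_core_roots} together with the rigidity of Proposition \ref{Z4:structure_Csp}. Once $(c)$ is in place at $j = n$, the inductive step is mechanical and mirrors the analogous descents in Lemmas \ref{MT:claim:ap} and \ref{MT:claim:p&big}, with Proposition \ref{Z4:gamma:p&small} here playing the role that Propositions \ref{Z4:gamma:ap} and \ref{Z4:gamma:p&big} played there.
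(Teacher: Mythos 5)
Your proposal follows the paper's proof: a downward induction from $j=n$ to $j_0$ in which each step is a single application of Proposition \ref{Z4:gamma:p&small} to $z^{j+1},\tilde{z}^{j+1}$ with $k=|\sigma_{[j+1,n)}(z^n_0)|$ and $\ell=|\sigma_{[j+1,n)}(\tilde{z}^n_0)|$ --- its hypothesis being item (a) at level $j+1$, its conclusions (1)--(3) yielding (a)--(c) at level $j$ via \eqref{eq:MT:yj_to_zj} and the recurrence \eqref{eq:theo:main_abstract:defi_q}, with the conditional clause fed by the second part of (c) at level $j+1$ --- which is exactly the paper's argument. The one off-target detail is your base case: the paper treats $j=n$ as immediate from the standing hypothesis $\psi(z^n)=\psi(\tilde{z}^n)$, and your appeal to Proposition \ref{Z4:structure_Csp} to control $\rho(z^n_1)$ and $\rho(\tilde{z}^n_1)$ is both unnecessary (that equality is simply a coordinate of $\psi(z^n)=\psi(\tilde{z}^n)$) and not what would be needed anyway (under the shift-by-one reading of $|\sigma_{[n,n)}(z^n_0)|$, the coordinate not covered by the hypothesis is $\rho(z^n_2)$ versus $\rho(\tilde{z}^n_2)$, which Proposition \ref{Z4:structure_Csp} cannot supply); this is harmless, since only the middle coordinate of that $\psi$-equality is ever used to feed the conditional clause in the first descent, and it is already contained in $\psi(z^n)=\psi(\tilde{z}^n)$.
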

\begin{proof}
The case $j = n$ follows directly from the hypothesis.
Assume that $j \in [j_0,n)$ and that the claim holds for $j+1$.
We observe that, by Items (a) and (c) of the induction hypothesis, $z^{j+1}_{[0, |\sigma_{[j,n)}(z^n_0)|)}$ and $\tilde{z}^{j+1}_{[0, |\sigma_{[j,n)}(\tilde{z}^n_0)|)}$ comply with the hypothesis of Lemma \ref{Z4:gamma:p&small}.
Therefore, by \eqref{eq:MT:yj_to_zj}, Items (a) and (c) hold for $j$.
Moreover, we have that
\begin{equation} \label{eq:MT:claim:p&small:1}
    c^j_{r_j(y^{j+1})} = \tilde{c}^j_{r_j(\tilde{y}^{j+1})} = 
    c^j_{r_j(y^{j+1}) + |\sigma_{[j,n)}(z^n_0)|} = 
    \tilde{c}^j_{r_j(\tilde{y}^{j+1}) + |\sigma_{[j,n)}(\tilde{z}^n_0)|} \pmod{|s|}.
\end{equation}
Now, Item (b) of the induction hypothesis gives that $q_{j+1,n}(z^n) = q_{j+1,n}(\tilde{z}^n) = q_{j+1,n}(Sz^n) = q_{j+1,n}(S\tilde{z}^n) \pmod{|s|}$.
Hence, by the definition of $q_{j,n}$,
\begin{equation*} 
    q_{j,n}(z^n) =
    q_{j+1,n}(z^n) + c^j_{r_j(y^{j+1})} =
    q_{j+1,n}(\tilde{z}^n) + c^j_{r_j(\tilde{y}^{j+1})} 
    = q_{j,n}(\tilde{z}^n) \pmod{|s|}.
\end{equation*}
We note that, since Item (a) holds for $j$, we have that $|\tau(z^j_{[0, |\sigma_{[j,n)}(z^n_0)|)})| = 0 \pmod{|s|}$.
Hence, by the definition of $q_{j,n}$,
\begin{multline*}
    q_{j,n}(S z^n) - q_{j+1,n}(S z^n) = 
    c^j_{r_j(y^{j+1}) + |\sigma_{[j,n)}(z^n_0)|} - |\sigma_{[j,n)}(z^n_0)| \\ =
    c^j_{r_j(y^{j+1}) + |\sigma_{[j,n)}(z^n_0)|} \pmod{|s|}
\end{multline*}
Thus, by \eqref{eq:MT:claim:p&small:1} and Item (b) of the induction hypothesis, $q_{j,n}(S z^n) = q_{j,n}(z^n)$.
Similarly, $q_{j,n}(S \tilde{z}^n) = q_{j,n}(\tilde{z}^n)$.
We conclude that Item (b) holds for $j$.
\end{proof}

The last ingredient for the proof of Lemma \ref{MT:tech_core_roots} is the following lemma.

\begin{lem} \label{MT:from_q_to_root}
Suppose that $\psi(z^n) = \psi(\tilde{z}^n)$.
\begin{enumerate}
    \item If $z^n_0 \in \cC_{n,\mathrm{ap}}$, $q_{0,n}(z^n) = q_{0,n}(\tilde{z}^n)$ and $q_{0,n}(Sz^n) = q_{0,n}(S\tilde{z}^n)$, then $\sigma_{[0,n)}(z^n_0) = \sigma_{[0,n)}(\tilde{z}^n_0)$.
    \item Let $s = \mroot \tau_n(z^n_0)$ and suppose that $|s| \leq \varepsilon_n$ and $q_{0,n}(z^n) = q_{0,n}(\tilde{z}^n) = q_{0,n}(z^n) = q_{0,n}(\tilde{z}^n) \pmod{|s|}$. Then, $\mroot \sigma_{[0,n)}(z^n_0) = \mroot \sigma_{[0,n)}(\tilde{z}^n_0)$.
\end{enumerate}
\end{lem}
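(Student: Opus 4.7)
The plan is to apply the formula \eqref{eq:theo:main_abstract:q_formula:partial}, namely $\sigma_{[0,n)}(z'_0) = \tau_n(z')_{[q_{0,n}(z'),\, |\tau_n(z'_0)|+q_{0,n}(Sz'))}$, to both $z^n$ and $\tilde z^n$, and read off the comparison from the $\psi$- and $q$-hypotheses. The bounds of Lemma~\ref{MT:bounds_q} confine the part lying outside $\tau_n(z^n_0)$ to at most $2\varepsilon_n$ on each side whenever the neighbouring letter at positions $\pm 1$ is periodic, which leaves only one delicate situation.

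For Part~(1), $z^n_0 \in \cC_{n,\mathrm{ap}}$ combined with $\rho(z^n_0)=\rho(\tilde z^n_0)$ forces $z^n_0=\tilde z^n_0$ (the case $\tilde z^n_0\in\cC_n\setminus\cC_{n,\mathrm{ap}}$ is excluded by comparing $|\tau_n(z^n_0)|\ge 20\varepsilon_n$ with $|\mroot\tau_n(\tilde z^n_0)|\le\varepsilon_n$), so the middle blocks $\tau_n(z^n_0)$ and $\tau_n(\tilde z^n_0)$ coincide. The two $q$-equalities place $A:=\sigma_{[0,n)}(z^n_0)$ and $B:=\sigma_{[0,n)}(\tilde z^n_0)$ at identical relative positions inside $\tau_n(z^n_{-1}z^n_0z^n_1)$ and $\tau_n(\tilde z^n_{-1}\tilde z^n_0\tilde z^n_1)$, so it remains to verify pointwise agreement on the (possibly empty) flanking suffix of $\tau_n(z^n_{-1})$ and prefix of $\tau_n(z^n_1)$. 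When a flanking letter is aperiodic, the $\rho$-equality forces equal letters and hence equal $\tau_n$-images (in particular $\tau_n(z^n_1)=\tau_n(\tilde z^n_1)$ even when $q_{0,n}(Sz^n)$ is large); when it is periodic with common primitive root $u$, both $\tau_n$-images are powers of $u$ of length at least $20\varepsilon_n$, and for any $p,p'$ with $|u^p|,|u^{p'}|\ge k$ the suffix of length $k$ of $u^p$ equals the suffix of length $k$ of $u^{p'}$ (both equal $u^\Z_{[-k,0)}$); the analogous equality for prefixes handles the right flank when it has small length (bound $2\varepsilon_n$ from \eqref{eq:theo:main_abstract:bounds_q_per}). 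Therefore $A=B$.

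For Part~(2), the assumption $|s|\le\varepsilon_n$ places $z^n_0,\tilde z^n_0 \in \cC_n\setminus\cC_{n,\mathrm{ap}}$, so \eqref{eq:theo:main_abstract:bounds_q_per} gives $|q_{0,n}(z^n)|,|q_{0,n}(\tilde z^n)|\le2\varepsilon_n$, and Item~\ref{item:Z4:main_prop:per} of Proposition~\ref{Z4:main_prop} identifies $\tau_n(z^n)_{[-8\varepsilon_n,|\tau_n(z^n_0)|+8\varepsilon_n)}$ with a segment of $s^\Z$ in the alignment where $\tau_n(z^n_0)=s^p$, and analogously for $\tilde z^n$ with the same $s$. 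When $z^n_1\in\cC_n\setminus\cC_{n,\mathrm{ap}}$ the refined bound additionally yields $|q_{0,n}(Sz^n)|\le2\varepsilon_n$, and Item~\ref{item:Z4:structure_Csp:window} of Proposition~\ref{Z4:structure_Csp} extends the $s$-periodicity through $\tau_n(z^n_1)$; thus $A$ is a factor of $s^\Z$ starting at offset $q_{0,n}(z^n)\bmod|s|$ and of length $\ge|s|$, so $\mroot A$ is the cyclic rotation of $s$ determined by this offset. The same conclusion for $B$, combined with the hypothesis $q_{0,n}(z^n)\equiv q_{0,n}(\tilde z^n)\pmod{|s|}$, delivers $\mroot A=\mroot B$.

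The main obstacle is the remaining sub-case of Part~(2) where $z^n_1\in\cC_{n,\mathrm{ap}}$ (so $\tilde z^n_1=z^n_1$ by the length argument of Part~(1)), since then $q_{0,n}(Sz^n)$ and $q_{0,n}(S\tilde z^n)$ are only known to be equal modulo $|s|$, each possibly as large as $|\tau_n(z^n_1)|-7\varepsilon_n$, so $A$ extends far into the aperiodic block $\tau_n(z^n_1)$. My plan here is to upgrade the mod-$|s|$ equality to an actual equality by a downward induction on $j<n$: each summand $c^{(j)}_{r_j(z^{(j)})}$ entering $q_{0,n}(Sz^n)$ is locally determined, via the recognizability statements of Proposition~\ref{Z3:special_reco}, by a window of size $O(\ell_j)$ inside the shared block $\tau_n(z^n_1)=\tau_n(\tilde z^n_1)$; the left context, sitting in $\tau_n(z^n_0)$ respectively $\tau_n(\tilde z^n_0)$, agrees because both are pieces of $s^\Z$ whose lengths are divisible by $|s|$ and hence have identical $s$-periodic suffix, while $\tau_n(z^n_1)$ has period $>\varepsilon_n\gg\varepsilon_j$ and therefore contains aperiodic letters at level $j$ well within the first $O(\ell_j)$ positions, pinning $r_j$ inside the shared window. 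Once $q_{0,n}(Sz^n)=q_{0,n}(S\tilde z^n)$ is established exactly, the Part~(1) decomposition applied to $A$ and $B$—an $s$-periodic core followed by a common aperiodic tail coming from $\tau_n(z^n_1)$—yields $A=B$, and in particular $\mroot A=\mroot B$.
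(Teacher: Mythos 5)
Your Part~(1) and the sub-case of Part~(2) in which $z^n_1 \in \cC_n\setminus\cC_{n,\mathrm{ap}}$ are correct and essentially the paper's argument (agreement of the two sequences on a window around $[q_{0,n}(z^n),\,|\tau_n(z^n_0)|+q_{0,n}(Sz^n))$, respectively an aligned factor of $s^\Z$ with endpoints controlled mod $|s|$). The problem is exactly the sub-case you single out, $z^n_1\in\cC_{n,\mathrm{ap}}$, and your plan for it does not close. First, the upgrade ``congruent mod $|s|$ $\Rightarrow$ equal'' for the right offsets cannot be obtained by citing the paper's comparison machinery: Lemma~\ref{MT:claim:ap} and Propositions~\ref{Z4:gamma:ap}, \ref{Z4:gamma:p&big}, \ref{Z4:gamma:p&small} all need $\psi$-agreement of the points being compared, and $\psi(Sz^n)=\psi(S\tilde z^n)$ involves $\rho(z^n_2)$ versus $\rho(\tilde z^n_2)$, about which the hypotheses say nothing; so your downward induction would have to be rebuilt from scratch out of the recognizability lemmas of Section~\ref{sec:Z4} (anchoring at an aperiodic spot inside the common block, propagating in both directions, checking margins at every level), none of which you carry out. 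Second, and more seriously, even granting $q_{0,n}(Sz^n)=q_{0,n}(S\tilde z^n)$ exactly, the conclusion does not follow as you claim: in this sub-case $A=s''\,s^{m_A}P$ and $B=s''\,s^{m_B}P$, where $s''$ is a suffix of $s$ determined by the common residue, $P$ is the common prefix of $\tau_n(z^n_1)$ of length $q_{0,n}(Sz^n)$, and $m_A$, $m_B$ are governed by $|\tau_n(z^n_0)|-q_{0,n}(z^n)$ and $|\tau_n(\tilde z^n_0)|-q_{0,n}(\tilde z^n)$, which the hypotheses control only modulo $|s|$ (the left offsets are merely congruent, and $|\tau_n(z^n_0)|$, $|\tau_n(\tilde z^n_0)|$ are a priori different multiples of $|s|$). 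If $m_A\neq m_B$ and $P$ is not $s$-periodic, then $A\neq B$ and in general $\mroot A\neq\mroot B$, so ``the Part-(1) decomposition yields $A=B$'' is unjustified; you would additionally need exact equality of the left offsets and of the middle-block lengths (the latter does in fact follow from the higher-block structure of $\cC_n$ once $z^n_1=\tilde z^n_1$, but you neither observe nor use this, and the former requires yet another determinism argument).

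For contrast, the paper's proof of Part~(2) makes no case distinction at all: it uses \eqref{eq:theo:main_abstract:bounds_q_per} to place $q_{0,n}(z^n)$ in $[-2\varepsilon_n,2\varepsilon_n)$, uses Item~\ref{item:Z4:main_prop:per} of Proposition~\ref{Z4:main_prop} to write $\sigma_{[0,n)}(z^n_0)$ as a factor of $s^\Z$, and reads off $\mroot\sigma_{[0,n)}(z^n_0)$ as the rotation $s^\Z_{[k,k+|s|)}$ determined by the offsets mod $|s|$, so that the congruence hypothesis identifies the two roots. The extra machinery you introduce is therefore a genuine departure from the paper, and in its present form it leaves the delicate sub-case open rather than resolving it.
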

\begin{proof}
Assume that the hypothesis of Item (1) holds.
We also assume, without loss of generality, that $|\tau_n(z^n_1)| \leq |\tau_n(\tilde{z}^n_1)|$.
We start by noticing that, since $z^n_0 \in \cC_{n,\mathrm{ap}}$ and $\psi(z^n) = \psi(\tilde{z}^n)$, we have that $\tau_n(z^n_0)$ is equal to $\tau_n(\tilde{z}^n_0)$.
Furthermore, by Condition (ii) we have that 
\begin{equation} \label{eq:MT:from_q_to_root:1}
    \tau_n(z^n)_{[-8\varepsilon_n, |\tau_n(z^n_0 z^n_1)|)} = 
    \tau_n(\tilde{z}^n)_{[-8\varepsilon_n, |\tau_n(z^n_0 z^n_1)|)}.
\end{equation}
Now, from Lemma \ref{MT:bounds_q} and the hypothesis we get that 
\begin{multline*}
    q_{0,n}(\tilde{z}^n) = q_{0,n}(z^n) \in [-2\varepsilon_n, |\tau_n(z^n_0)| - 7\varepsilon_n) \\
    \enskip\text{and}\enskip
    q_{0,n}(S\tilde{z}^n) = q_{0,n}(Sz^n) \in [|\tau_n(z^n_0)|-2\varepsilon_n, |\tau_n(z^n_0 z^n_1)| - 7\varepsilon_n).
\end{multline*}
We conclude, using \eqref{eq:MT:from_q_to_root:1}, that 
\begin{multline*}
    \sigma_{[0,n)}(z^n_0) = \tau_n(z^n)_{[q_{0,n}(z^n), |\tau_n(z^n_0)| + q_{0,n}(Sz^n))} \\
    = \tau_n(\tilde{z}^n)_{[q_{0,n}(\tilde{z}^n), |\tau_n(\tilde{z}^n_0)| + q_{0,n}(S\tilde{z}^n))}
    = \sigma_{[0,n)}(\tilde{z}^n_0).
\end{multline*}

Next, we assume that the hypothesis of Item (2) holds.
% We remark that, since $\psi(z^n) = \psi(\tilde{z}^n)$, we have that $s = \mroot \tau(z^n_0) = \mroot \tau(\tilde{z}^n_0)$.
The condition $|s| \leq \varepsilon_n$ enables us to use \eqref{eq:theo:main_abstract:bounds_q_per} from Lemma \ref{MT:bounds_q}, so
\begin{equation*}
    q_{0,n}(z^n) \in [-2\varepsilon_n, 2\varepsilon_n).
\end{equation*}
Also, since $|s| \leq \varepsilon_n$, Item \ref{item:Z4:main_prop:per} in Proposition \ref{Z4:main_prop} guarantees that 
\begin{equation*}
    \tau_n(z^n)_{[-8\varepsilon_n, |\tau_n(z^n_0)| + 8\varepsilon_n)} =
    s^\Z_{[-8\varepsilon_n, |\tau_n(z^n_0)| + 8\varepsilon_n)}.
\end{equation*}
Therefore, 
\begin{equation} \label{eq:MT:from_q_to_root:2}
    \sigma_{[0,n)}(z^n_0) = s^\Z_{[q_{0,n}(z^n), |\tau_n(z^n_0)| + q_{0,n}(S z^n))}.
\end{equation}
Now, by the hypothesis, there is $k \in \Z$ such that $k = q_{0,n}(z^n) = q_{0,n}(S z^n) \pmod{|s|}$.
We deduce from \eqref{eq:MT:from_q_to_root:2} that
\begin{equation*}
    \mroot \sigma_{[0,n)}(z^n_0) = s_{[k, k+|s|)}.
\end{equation*}
Being $\psi(z^n)$ equal to $\psi(\tilde{z}^n)$, we have that $s = \mroot \tau_n(\tilde{z}^n_0)$.
Hence, we can give similar arguments to prove that $\mroot \sigma_{[0,n)}(\tilde{z}^n_0) = s_{[\tilde{k}, \tilde{k}+|s|)}$, where $\tilde{k} = q_{0,n}(\tilde{z}^n) = q_{0,n}(S \tilde{z}^n) \pmod{|s|}$.
We conclude, as the hypothesis ensures that $k = \tilde{k} \pmod{|s|}$, that $\mroot \sigma_{[0,n)}(z^n_0) = \mroot \sigma_{[0,n)}(\tilde{z}^n_0)$.
\end{proof}

We have all the necessary elements to prove Lemma \ref{MT:tech_core_roots}.
\begin{proof}[Proof of Lemma \ref{MT:tech_core_roots}]
Let  $z', \tilde{z}' \in Z_n$ be such that $\psi(z') = \psi(\tilde{z}')$ and let $s = \mroot \tau(z'_0) = \mroot \tau(\tilde{z}'_0)$.
We split the proof into two cases.
Let us first assume that $|s| > \varepsilon_n$.
Then, we can use Lemma \ref{MT:claim:ap} and deduce that $q_{0,n}(z^n) = q_{0,n}(\tilde{z}^n)$ and $q_{0,n}(Sz^n) = q_{0,n}(S\tilde{z}^n)$.
Thus, by Lemma \ref{MT:from_q_to_root}, $\sigma_{[0,n)}(z^n_0) = \sigma_{[0,n)}(\tilde{z}^n_0)$, which implies that $\mroot \sigma_{[0,n)}(z^n_0) = \mroot \sigma_{[0,n)}(\tilde{z}^n_0)$.

Next, we assume that $|s| \leq \varepsilon_n$.
Let $j \in [0,n]$ be the least element satisfying $|s| \leq \varepsilon_j$.
We claim that the following is true:
\begin{enumerate}[label=(\alph*)]
    \item If $j < n$, then $\mroot \tau_j(z^j)$ is equal to $\mroot \tau_j(S^{|\sigma_{[j,n)}(z^n)|-1} z^j)$ and has length $|s|$.
    \item $q_{j,n}(z^n) = q_{j,n}(\tilde{z}^n) = q_{j,n}(Sz^n) = q_{j,n}(S\tilde{z}^n) \pmod{|s|}$.
    \item $\psi(z^j) = \psi(\tilde{z}^j)$ and $\psi(S^{|\sigma_{[j,n)}(z^n)|} z^j) = \psi(S^{|\sigma_{[j,n)}(\tilde{z}^n)|} \tilde{z}^j)$.
\end{enumerate}
If $j = n$, then the claim is equivalent to the hypothesis $\psi(z') = \psi(\tilde{z}')$.
We assume that $j < n$.
Then, $|s| \leq \varepsilon_{n-1}$, which permits to use Lemma \ref{MT:claim:p&small} and conclude that Items (b) and (c) of the claim hold.
Moreover, Lemma \ref{MT:claim:p&small} also states that there is $t$ such that $|t| = |s|$ and $t = \mroot \tau_j(z^j_k) = \mroot \tau_j(\tilde{z}^j_\ell)$ for all $k \in [0, |\sigma_{[j,n)}(z^n_0)|)$ and $\ell \in [0, |\sigma_{[j,n)}(\tilde{z}^n_0)|)$.
In particular, Item (a) holds.
This completes the proof of the claim.

Next, we now prove that 
\begin{equation} \label{eq:MT:tech_core_roots:1}
    q_{0,n}(z^n) = q_{0,n}(\tilde{z}^n) = q_{0,n}(Sz^n) = q_{0,n}(S\tilde{z}^n) \pmod{|s|}
\end{equation}
If $j = 0$, then \eqref{eq:MT:tech_core_roots:1} follows from the claim.
Let us assume that $j > 0$.
Then, $\varepsilon_{j-1} < |s| \leq \varepsilon_j$.
This and Items (a) and (c) of the claim allow us to use Lemma \ref{MT:claim:p&big} twice, first with $z^j$ and $\tilde{z}^j$, and then with $S^{|\sigma_{[j,n)}(z^n)|-1} z^j$ and $S^{|\sigma_{[j,n)}(\tilde{z}^n)|-1} \tilde{z}^j$.
We get
\begin{equation*}
    q_{0,j}(z^j) = q_{0,j}(\tilde{z}^j).
    \enskip\text{and}\enskip
    q_{0,j}(S^{|\sigma_{[j,n)}(z^n)|} z^j) = q_{0,j}(S^{|\sigma_{[j,n)}(\tilde{z}^n)|} \tilde{z}^j).
\end{equation*}
Moreover, since $\mroot \tau_j(z^j)$ is equal to $\mroot \tau_j(S^{|\sigma_{[j,n)}(z^n)|-1} z^j)$ and has length $|s| \leq \varepsilon_j$, we can use Lemma \ref{MT:claim:p&big} with $z^j$ and $S^{|\sigma_{[j,n)}(z^n)|-1} z^j$ to obtain that $q_{0,j}(z^j) = q_{0,j}(S^{|\sigma_{[j,n)}(z^n)|} z^j)$.
Therefore,
\begin{equation} \label{eq:MT:tech_core_roots:2}
    q_{0,j}(z^j) = q_{0,j}(\tilde{z}^j) = 
    q_{0,j}(S^{|\sigma_{[j,n)}(z^n)|} z^j) = q_{0,j}(S^{|\sigma_{[j,n)}(\tilde{z}^n)|} \tilde{z}^j).
\end{equation}
Now, from the definition of $q_{0,n}$ we have that $q_{0,n}(z^n) = q_{j,n}(z^n) + q_{0,j}(z^j)$ and $q_{0,n}(\tilde{z}^n) = q_{j,n}(\tilde{z}^n) + q_{0,j}(\tilde{z}^j)$.
Putting \eqref{eq:MT:tech_core_roots:2} and Item (b) of the claim in this relation produces $q_{0,n}(z^n) = q_{0,n}(\tilde{z}^n) \pmod{|s|}$.
The rest of the equalities in \eqref{eq:MT:tech_core_roots:1} follow from \eqref{eq:MT:tech_core_roots:2} and Item (b) of the claim in the same way.
The proof of \eqref{eq:MT:tech_core_roots:1} is complete.
\medskip

We recall that we assumed that $\psi(z^n) = \psi(\tilde{z}^n)$ and $|s| \leq \varepsilon_n$.
These two things and \eqref{eq:MT:tech_core_roots:1} permit to use Lemma \ref{MT:from_q_to_root} and conclude that $\mroot \sigma_{[0,n)}(z^n_0) = \mroot \sigma_{[0,n)}(\tilde{z}^n_0)$.
\end{proof}

\subsection{Proof of the main theorems}

\begin{lem} \label{MT:pX_bound}
Let $X$ be a subshift and $\cW$ a set of words such that $X \subseteq \bigcup_{k\in\Z} S^k \cW^\Z$.
Then, $p_X(\langle\cW\rangle) \leq |\cW|\cdot\#(\mroot\cW)^2$.
\end{lem}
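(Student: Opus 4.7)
The plan is to set up an injective encoding of factors $u \in \cL(X) \cap \cA^n$ (with $n = \langle\cW\rangle$) by triples $(m, s, t)$ in the set $[1, |\cW|] \times \mroot\cW \times \mroot\cW$, whose cardinality is exactly $|\cW| \cdot \#(\mroot\cW)^2$.

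First, using the hypothesis $X \subseteq \bigcup_{k\in\Z} S^k \cW^\Z$, I would realize each such $u$ as a factor $u = z_{[i, i+n)}$ of a bi-infinite $\cW$-concatenation $z$, with cuts $\dots < c_{-1} < c_0 < c_1 < \dots$ satisfying $z_{[c_\alpha, c_{\alpha+1})} = w_\alpha \in \cW$ for every $\alpha$. Pick $\alpha$ so that $c_\alpha \leq i < c_{\alpha+1}$ and set $m = c_{\alpha+1} - i \in [1, |w_\alpha|] \subseteq [1, |\cW|]$, $s = \mroot w_\alpha$, and $t = \mroot w_{\alpha+1}$. A preliminary observation is that $u$ spans at most two consecutive blocks of the concatenation, since $n = \langle\cW\rangle \leq |w_{\alpha+1}|$ forces $i + n \leq c_{\alpha+1} + |w_{\alpha+1}| = c_{\alpha+2}$.

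The key step is to prove that the triple $(m, s, t)$ determines $u$, which I would do by case analysis. If $m \geq n$, then $u$ fits entirely inside $w_\alpha = s^k$ (where $k \geq 1$ and $|w_\alpha| = k|s|$). Since $u = (w_\alpha)_{[|w_\alpha| - m, |w_\alpha| - m + n)}$ and $|w_\alpha| \equiv 0 \pmod{|s|}$, periodicity of $s^\Z$ gives $u = (s^\Z)_{[-m, -m+n)}$, so $u$ is determined by $(m, s)$ (the coordinate $t$ is then irrelevant, which is harmless since we need only injectivity of the encoding, not surjectivity). If $m < n$, then $u$ crosses the cut $c_{\alpha+1}$; its prefix of length $m$ is the suffix of $w_\alpha = s^k$ of length $m$, namely $(s^\Z)_{[-m, 0)}$, and its suffix of length $n - m$ is the prefix of $w_{\alpha+1} = t^\ell$ of length $n - m$ (which is well-defined since $n - m < n \leq |w_{\alpha+1}|$), namely $(t^\Z)_{[0, n-m)}$. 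Concatenating gives $u = (s^\Z)_{[-m, 0)} \cdot (t^\Z)_{[0, n-m)}$, determined by $(m, s, t)$.

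In either case the triple $(m, s, t)$ determines $u$, so the map $u \mapsto (m, s, t)$ is injective, yielding $p_X(n) \leq |\cW| \cdot \#(\mroot\cW)^2$. There is no serious obstacle here; the only delicacy is the correct identification of prefixes and suffixes of the periodic blocks with factors of $s^\Z$ and $t^\Z$, which is routine once one uses that $|w_\alpha|$ is a multiple of $|s|$ and $|w_{\alpha+1}|$ is a multiple of $|t|$.
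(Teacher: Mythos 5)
Your proof is correct and follows essentially the same route as the paper: the paper notes that any factor of length $\langle\cW\rangle$ occurs in a concatenation $uv$ with $u,v\in\cW$, hence in $(\mroot u)^{|\cW|}(\mroot v)^{|\cW|}$, and counts such words by the pair of roots together with the position relative to the junction, which is exactly your injection $u\mapsto(m,s,t)$. Your write-up simply makes explicit the counting that the paper leaves implicit in its final sentence.
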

\begin{proof}
The hypothesis implies that any $w$ of length $\langle\cW\rangle$ occurring in some $x \in X$ occurs in a word of the form $uv$, where $u, v \in \cW$.
In particular, $w$ occurs in $(\mroot u)^{|\cW|} (\mroot v)^{|\cW|}$.
There are at most $|\cW|\cdot\#(\mroot\cW)^2$ words satisfying this condition, so $p_X(\langle\cW\rangle) \leq |\cW|\cdot\#(\mroot\cW)^2$.
\end{proof}

\begin{theo} \label{theo:main_sublinear}
A minimal subshift $X$ has linear-growth complexity {\em i.e.}
$$ \limsup_{n\to+\infty} p_X(n) / n < +\infty, $$
if and only if there exist $d \geq 1$ and an $\cS$-adic sequence $\bsigma = (\sigma_n\colon\cA_{n+1}\to\cA_n^+)_{n\geq0}$ such that for every $n \geq 0$:
\begin{enumerate}[label=$(\cP_{\arabic*})$]
    \item $\#(\mroot\sigma_{[0,n)}(\cA_n)) \leq d$.
    \item $|\sigma_{[0,n)}(a)| \leq d\cdot|\sigma_{[0,n)}(b)|$ for every $a, b \in \cA_n$.
    \item $|\sigma_{n-1}(a)| \leq d$ for every $a \in \cA_n$.
\end{enumerate}
If $X$ is infinite and has linear-growth complexity, then $\bsigma$ can be chosen to be recognizable and satisfying $\#\cA_n \leq d\cdot \pcom(X)^4$ for all $n \geq 0$.
\end{theo}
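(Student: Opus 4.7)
The proof splits into two directions, with the nontrivial work being in the ``only if'' part.

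For the ``if'' direction, assume that $\bsigma$ satisfying $(\cP_1)$--$(\cP_3)$ generates $X$. I plan to bound $p_X$ using Lemma~\ref{MT:pX_bound} applied to $\cW_n = \sigma_{[0,n)}(\cA_n)$. Property $(\cP_1)$ gives $\#(\mroot \cW_n) \leq d$ and $(\cP_2)$ yields $|\cW_n| \leq d \langle \cW_n\rangle$; since every factor of $X$ of length at most $\langle \cW_n\rangle$ appears inside some $\sigma_{[0,n)}(ab)$ with $a,b \in \cA_n$, the cover $X \subseteq \bigcup_{k\in\Z} S^k \cW_n^\Z$ holds and Lemma~\ref{MT:pX_bound} yields $p_X(\langle \cW_n\rangle) \leq d^3 \langle \cW_n\rangle$. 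To pass from this sparse estimate to a bound at every scale, I observe that $(\cP_2)$ and $(\cP_3)$ together imply $\langle \cW_{n+1}\rangle \leq d^2 \langle \cW_n\rangle$, so interpolating between $\langle \cW_n\rangle$ and $\langle \cW_{n+1}\rangle$ yields $p_X(n) \leq d^5 n$ for every $n$.

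For the ``only if'' direction, I will apply Theorem~\ref{theo:main_abstract}. Let $c$ be such that $p_X(n) \leq cn$ for all $n \geq 1$, and put $d_0 = \max\{10^4,\#\cA, 2c\}$ and $L = 10^4 d_0^{2 d_0^3 + 6}$. Starting from $\ell_0 = 1$, I construct inductively a sequence $(\ell_n)$ by applying Lemma~\ref{preli:find_good_s_sublinear} at scale $L\ell_n$ to find $\ell_{n+1} \in [L\ell_n, 2L\ell_n)$ satisfying $p_X(\ell_{n+1}+1) - p_X(\ell_{n+1}) \leq 2c \leq d_0$. Since $p_X(\ell_n) \leq c \ell_n \leq d_0 \ell_n$ and $\ell_{n+1}/\ell_n \geq L$, the hypotheses of Theorem~\ref{theo:main_abstract} are met, and I obtain a recognizable $\cS$-adic sequence $\bsigma$ generating $X$ together with quantitative versions of $(\cP_1)$--$(\cP_3)$.

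It remains to convert these quantitative bounds into a single absolute constant. The crucial observation is that my construction also enforces the upper ratio bound $\ell_n/\ell_{n-1} < 2L$, which turns the estimate $|\sigma_{n-1}(a)| \leq 40 d_0^{2 d_0^3 + 8}\cdot \ell_n/\ell_{n-1}$ provided by Theorem~\ref{theo:main_abstract} into the uniform constant $80 L d_0^{2 d_0^3 + 8}$, thereby matching $(\cP_3)$. Taking $d$ large enough to absorb the four explicit bounds $35 d_0^{12 d_0 + 24}$, $4 d_0^{d_0^3 + 6}$, $80 L d_0^{2 d_0^3 + 8}$, and $7^4 d_0^{12 d_0 + 36}$ then establishes $(\cP_1)$--$(\cP_3)$ as well as the final bound $\#\cA_n \leq d \cdot \pcom(X)^4$. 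The main obstacle is packaged inside Theorem~\ref{theo:main_abstract} itself; the remaining delicate point is the simultaneous control of the ratio $\ell_{n+1}/\ell_n$ from below (to apply that theorem) and from above (to obtain $(\cP_3)$), and Lemma~\ref{preli:find_good_s_sublinear} provides exactly the flexibility needed to meet both demands.
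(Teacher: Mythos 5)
Your proposal is correct and follows essentially the same route as the paper: the converse direction via Lemma~\ref{MT:pX_bound} applied to $\cW_n=\sigma_{[0,n)}(\cA_n)$ together with the interpolation $\langle\cW_{n+1}\rangle\leq d^2\langle\cW_n\rangle$ coming from $(\cP_2)$ and $(\cP_3)$, and the direct direction by building $(\ell_n)$ with Lemma~\ref{preli:find_good_s_sublinear} and feeding it into Theorem~\ref{theo:main_abstract}, using the upper bound on $\ell_{n+1}/\ell_n$ to turn the bound $|\sigma_{n-1}(a)|\leq 40d^{2d^3+8}\ell_n/\ell_{n-1}$ into the uniform constant of $(\cP_3)$. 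Your explicit choice $\ell_{n+1}\in[L\ell_n,2L\ell_n)$ is in fact a slightly cleaner bookkeeping of the same two-sided ratio control the paper uses.
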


\begin{theo} \label{theo:main_nonsuperlinear}
A minimal subshift $X$ has nonsuperlinear-growth complexity {\em i.e.}
$$ \liminf_{n\to+\infty} p_X(n) / n < +\infty, $$
if and only if there exists $d \geq 1$ and an $\cS$-adic sequence $\bsigma = (\sigma_n\colon\cA_{n+1}\to\cA_n^+)_{n\geq0}$ such that for every $n \geq 0$
\begin{enumerate}[label=$(\cP_{\arabic*})$]
    \item $\#(\mroot\sigma_{[0,n)}(\cA_n)) \leq d$.
    \item $|\sigma_{[0,n)}(a)| \leq d\cdot |\sigma_{[0,n)}(b)|$ for every $a, b \in \cA_n$.
\end{enumerate}
If $X$ is infinite and has nonsuperlinear-growth complexity, then $\bsigma$ can be chosen to be recognizable and satisfying $\#\cA_n \leq d\cdot \pcom(X)^4$ for all $n \geq 0$.
\end{theo}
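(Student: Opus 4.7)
My plan is to reduce both directions of Theorem \ref{theo:main_nonsuperlinear} to machinery already established in this paper. The forward implication will follow from Lemma \ref{preli:find_good_s_nonsuperlinear} combined with Theorem \ref{theo:main_abstract}, while the converse will follow from Lemma \ref{MT:pX_bound} applied to $\cW_n = \sigma_{[0,n)}(\cA_n)$. The case of finite $X$ (where the complexity is bounded, hence trivially nonsuperlinear, and $\bsigma$ can be taken as the identity-type sequence on a single orbit) is handled separately, so I assume throughout that $X$ is infinite.

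For the converse direction, suppose $\bsigma$ generates $X$ and satisfies $(\cP_1)$ and $(\cP_2)$ with constant $d$. Lemma \ref{lem:desubstitution} yields $X \subseteq \bigcup_{k\in\Z} S^k \cW_n^\Z$ for every $n$. Property $(\cP_1)$ gives $\#(\mroot\cW_n) \leq d$ and property $(\cP_2)$ gives $|\cW_n| \leq d\langle\cW_n\rangle$, so Lemma \ref{MT:pX_bound} yields $p_X(\langle\cW_n\rangle) \leq d^3 \langle\cW_n\rangle$ for all $n$. Because $X$ is infinite and $X_{\bsigma} = X$, the sequence $\langle\cW_n\rangle$, which is non-decreasing by concatenation of the morphisms, must tend to infinity: otherwise the defining condition of $X_{\bsigma}$ could capture only words of bounded length. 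Thus $p_X(\ell)/\ell \leq d^3$ along the infinite sequence $\ell = \langle\cW_n\rangle \to \infty$, giving $\liminf p_X(n)/n \leq d^3$.

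For the forward direction, assume $p_X(n) \leq d_0 n$ for infinitely many $n$. Take $d_1 = \max\{10^4, \#\cA, d_0\}$ and apply Lemma \ref{preli:find_good_s_nonsuperlinear} to obtain infinitely many $m$ satisfying both $p_X(m) \leq 3d_1 m$ and $p_X(m+1) - p_X(m) \leq 2d_1$. Setting $d = 3d_1$, I inductively extract from this infinite set a sparse subsequence $(\ell_n)_{n\geq 0}$ verifying the gap condition $\ell_{n+1}/\ell_n \geq 10^4 d^{2d^3+6}$ required by Theorem \ref{theo:main_abstract}; this is possible precisely because the set of good scales is infinite. The triple $(X,(\ell_n),d)$ now verifies the hypotheses of Theorem \ref{theo:main_abstract}, which outputs a recognizable $\cS$-adic sequence $\bsigma$ generating $X$ with $\#(\mroot\sigma_{[0,n)}(\cA_n)) \leq 35 d^{12d+24}$, $|\sigma_{[0,n)}(a)| \leq 4d^{d^3+6} |\sigma_{[0,n)}(b)|$, and $\#\cA_n \leq 7^4 d^{12d+36} \pcom(X)^4$. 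Defining the final constant $d'$ to be the maximum of these three polynomial-exponential expressions in $d$ yields $(\cP_1)$ and $(\cP_2)$ with constant $d'$, together with the alphabet refinement $\#\cA_n \leq d' \pcom(X)^4$ for the ``moreover'' clause.

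All the genuine difficulty has already been absorbed into Theorem \ref{theo:main_abstract}, whose proof spans the bulk of Sections \ref{sec:Z1}--\ref{sec:Z4} and the preceding subsection of Section \ref{sec:MT}; the present argument is essentially bookkeeping. The one point worth highlighting is that the constant $d'$ appearing in the final statement is not the original complexity constant $d_0$ but a polynomial-exponential function of it, reflecting the accumulated losses from the periodic/aperiodic dichotomy, the higher-block construction, and the synchronization of cut functions between consecutive levels. Theorem \ref{theo:main_sublinear} is proved in parallel, with Lemma \ref{preli:find_good_s_sublinear} replacing Lemma \ref{preli:find_good_s_nonsuperlinear}: the former yields a good scale in every dyadic window $[n,2n)$, allowing the inductive choice of $\ell_{n+1}$ inside $[C\ell_n, 2C\ell_n)$ with $C = 10^4 d^{2d^3+6}$. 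The resulting bounded ratio $\ell_{n+1}/\ell_n$ converts the bound $|\sigma_n(a)| \leq 40 d^{2d^3+8} \cdot \ell_{n+1}/\ell_n$ of Theorem \ref{theo:main_abstract} into property $(\cP_3)$, and in the converse direction $(\cP_3)$ together with $(\cP_2)$ gives $\ell_{n+1} \leq d^2 \ell_n$, upgrading the bound from the sparse sequence $(\ell_n)$ to every integer $m$ via the comparison $p_X(m) \leq p_X(\ell_{n+1}) \leq d^5 \ell_n \leq d^5 m$ for $\ell_n \leq m < \ell_{n+1}$.
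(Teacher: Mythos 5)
Your proposal is correct and follows essentially the same route as the paper: Lemma \ref{preli:find_good_s_nonsuperlinear} to select scales, a sparse subsequence satisfying the gap hypothesis of Theorem \ref{theo:main_abstract} for the forward direction, and Lemma \ref{MT:pX_bound} applied to $\sigma_{[0,n)}(\cA_n)$ for the converse. You are in fact slightly more explicit than the paper on two minor points it glosses over (the extraction of the sparse subsequence and the fact that $\langle\sigma_{[0,n)}\rangle\to\infty$ in the converse), but the argument is the same.
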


We prove Theorems \ref{theo:main_sublinear} and \ref{theo:main_nonsuperlinear} simultaneously.

\begin{proof}[Proof of Theorems \ref{theo:main_sublinear} and \ref{theo:main_nonsuperlinear}]
Let $d = \liminf_{k\to+\infty} p_X(k)/k$ and $d' = \sup_{k\geq0} p_X(k)/k$.

We first assume that $p_X$ has nonsuperlinear- or linear-growth and show that there exists an $\cS$-adic sequence as the ones in Theorems \ref{theo:main_sublinear} and \ref{theo:main_nonsuperlinear}, respectively.

If $p_X$ has nonsuperlinear-growth, then $d$ is finite and so, using Lemma \ref{preli:find_good_s_nonsuperlinear}, we obtain a sequence $(\ell_n)_{n\geq0}$ such that for all $n \geq 0$
\begin{equation} \label{eq:theo:two_mains:1}
    \text{$\ell_{n+1} \geq d \ell_n$, $p_X(\ell_n) \leq d \ell_n$ and $p_X(\ell_n+1)-p_X(\ell_n) \leq d$.}
\end{equation}
If $p_X$ has linear growth, then $d'$ is finite and using Lemma \ref{preli:find_good_s_sublinear} we get a sequence $(\ell_n)_{n\geq0}$ that satisfies \eqref{eq:theo:two_mains:1} and
\begin{equation} \label{eq:theo:two_mains:2}
    \ell_{n+1} \leq d' \ell_n
    \enspace\text{for every $n\geq0$.}
\end{equation}
We use Theorem \ref{theo:main_abstract} with the sequence $(\ell_n)_{n\geq0}$.
This produces a recognizable $\cS$-adic sequence $\bsigma = (\sigma_n\colon\cA_{n+1}\to\cA_n^+)_{n\geq0}$ generating $X$ such that for every $n \geq 1$:
\begin{enumerate}[label=$(\cP_{\arabic*}')$]
    \item $\#(\mroot\sigma_{[0,n)}(\cA_n)) \leq d$ and $\#\cA_n \leq \pcom(X)$.
    \item $|\sigma_{[0,n)}(a)| \leq d\cdot |\sigma_{[0,n)}(b)|$ for every $a, b \in \cA_n$.
    \item $|\sigma_{n-1}(a)| \leq d \ell_n/\ell_{n-1}$ for every $a \in \cA_n$.
\end{enumerate}
In particular, the conclusion of Theorem \ref{theo:main_nonsuperlinear} holds.
Moreover, if $p_X$ has linear growth, then Equation \eqref{eq:theo:two_mains:2} holds, so we also have the bound $|\sigma_{n-1}(a)| \leq d d'$ for every $n \geq 1$ and $a \in \cA_n$.
Therefore, in this case, $\sigma$ satisfies the conclusion of Theorem \ref{theo:main_sublinear}.
\medskip

We now assume that there exists an $\cS$-adic sequence $\bsigma = (\sigma_n\colon\cA_{n+1}\to\cA_n^+)_{n\geq0}$ satisfying the conclusion of Theorem \ref{theo:main_nonsuperlinear} or the one of Theorem \ref{theo:main_sublinear}.

Note that since $\bsigma$ generates $X$, we have that 
\begin{equation*}
    X \subseteq 
    \bigcup_{k\in\Z} S^k \sigma_{[0,n)}(\cA_n^\Z)
\end{equation*}
for any $n \geq 1$.
Thus, by Lemma \ref{MT:pX_bound}, $p_X(\langle\sigma_{[0,n)}\rangle)$ is at most $|\sigma_{[0,n)}|\cdot \#(\mroot\sigma_{[0,n)}(\cA_n))^2$.
Items $(\cP_1)$ and $(\cP_2)$ of Theorems \ref{theo:main_nonsuperlinear} and \ref{theo:main_sublinear} then imply that
\begin{equation} \label{eq:theo:two_mains:3}
    p_X(\langle\sigma_{[0,n)}\rangle) \leq d^3 \langle\sigma_{[0,n)}\rangle
    \enspace\text{for all $n \geq 1$.}
\end{equation}
This proves that $X$ has nonsuperlinear-growth complexity.

It rests to prove that $X$ has linear-growth complexity when the conclusion of Theorem \ref{theo:main_sublinear} holds.
We assume that $\bsigma$ satisfies Items $(\cP_1)$, $(\cP_2)$ and $(\cP_3)$ of Theorem \ref{theo:main_sublinear}.
Let $k \geq 1$ be arbitrary and let $n \geq 1$ be the biggest integer such that $\langle\sigma_{[0,n)}\rangle \leq k$.
Then, by the maximality of $n$ and Items $(\cP_2)$ and $(\cP_3)$ of Theorem \ref{theo:main_sublinear}, we can compute
\begin{equation*}
    k < \langle\sigma_{[0,n+1)}\rangle \leq
    |\sigma_{[0,n)}|\cdot |\sigma_n| \leq 
    d^2 \langle\sigma_{[0,n)}\rangle \leq d^2 k.
\end{equation*}
Combining this with \eqref{eq:theo:two_mains:3} yields $p_X(k) \leq d^5 k$.
This proves that $X$ has linear-growth complexity.
\end{proof}

\section{Bounded alphabet structures}
\label{sec:alph_rank}

Theorems \ref{theo:main_sublinear} and \ref{theo:main_nonsuperlinear} provide $\cS$-adic structures for linear-growth complexity subshifts and nonsuperlinear-growth complexity subshifts.
These are not the only representations known for these classes: for instance, in \cite{DDPM20_interplay} it is proved that if $X$ has nonsuperlinear-growth complexity, then $X$ is generated by a recognizable, proper and primitive $\cS$-adic sequence $\bsigma = (\sigma_n\colon\cA_{n+1}\to\cA_n^+)_{n\geq0}$ such that $\#\cA_n$ is uniformly bounded.
The last condition is known as the {\em bounded alphabet property} and it is considered natural in the low complexity setting; see \cite{ferenczi96, DLR13_towards, E22}.
Note that the representations given by Theorems \ref{theo:main_sublinear} and \ref{theo:main_nonsuperlinear} do not necessarily satisfy this property.
In fact, our construction gives a bonded alphabet $\cS$-adic sequence if and only if the subshift has finite power complexity.
Thus, it is natural to ask whether it is possible to modify Theorems \ref{theo:main_sublinear} and \ref{theo:main_nonsuperlinear} so that they give bounded alphabet $\cS$-adic sequences.
In this section, we show that such a strengthening is not possible for Theorem \ref{theo:main_sublinear}.
More precisely, we prove the following:

\begin{theo} \label{theo:negative_result}
There exists a minimal subshift $X$ such that:
\begin{enumerate}
    \item $X$ has linear-growth complexity.
    \item If $\bsigma = (\sigma_n\colon\cA_{n+1}\to\cA_n^+)_{n\geq0}$ is an $\cS$-adic sequence generating $X$ and satisfying Items (1), (2) and (3) of Theorem \ref{theo:main_sublinear}, then $\sup_{n\geq1} \#\cA_n = +\infty$.
\end{enumerate}
\end{theo}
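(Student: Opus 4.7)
The plan is to exhibit a minimal subshift $X$ of linear-growth complexity whose power complexity $\pcom(X)$ is infinite, and then to argue that any bounded-alphabet $\bsigma$ satisfying $(\cP_1)$--$(\cP_3)$ would force $\pcom(X) < +\infty$, giving a contradiction.

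For the construction of $X$, I would first choose a sequence $(n_k)_{k\in\Z}$ over $\{1,2,3,\ldots\}$ whose orbit closure (as a sequence over the infinite alphabet) is minimal and whose range of values is infinite. Such an $(n_k)$ can be produced by a Toeplitz-style inductive procedure in which, at step $j$, the value $j$ is placed along a sub-arithmetic progression of the positions still unassigned. Setting
\begin{equation*}
    x = \cdots 1\,0^{n_{-1}}\,1\,0^{n_0}\,.\,1\,0^{n_1}\,1\,0^{n_2}\,1\cdots \in \{0,1\}^\Z
\end{equation*}
and letting $X$ be its orbit closure, uniform recurrence of $(n_k)$ yields minimality of $X$; a factor-counting argument (controlling in the construction the sub-word complexity of $(n_k)$) gives $p_X(m) = O(m)$; and the fact that $1\,0^{n_k}\,1 \in \cL(X)$ for every $k$ gives $\mathrm{Pow}_X(0) \supseteq \{0^{n_k} : k\in\Z\}$, so $\pcom(X) = +\infty$.

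Now suppose, for contradiction, that some $\bsigma=(\sigma_n\colon\cA_{n+1}\to\cA_n^+)_{n\geq 0}$ generates $X$, satisfies $(\cP_1)$, $(\cP_2)$, $(\cP_3)$ for some $d$, and has $\sup_n \#\cA_n < +\infty$. The bound $|\sigma_{n-1}(a)| \leq d$ from $(\cP_3)$ together with the cardinality bound on $\cA_n$ leaves only finitely many possible morphisms $\sigma_n$ up to relabelling, so $\bsigma$ is finitary. The contradiction will come from the following claim, which I would prove as a separate lemma: \emph{any finitary $\cS$-adic sequence satisfying $(\cP_1)$ and $(\cP_2)$ generates a subshift of finite power complexity}. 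Combined with $\pcom(X) = +\infty$, this is the desired contradiction.

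To prove the claim, given an occurrence $uv^kw \in \cL(X)$ with $u,w\in\cA^+\setminus\{v\}$ of length $|v|$, I would select the unique $n$ with $\langle\sigma_{[0,n)}\rangle \leq |v| < \langle\sigma_{[0,n+1)}\rangle$; by $(\cP_2)$ and $(\cP_3)$ all ratios $|\sigma_{[0,n)}(a)|/|v|$ then lie in a fixed bounded interval. Pulling $uv^kw$ down to level $n$ via recognizability, $v^k$ is covered by $O(k)$ consecutive blocks $\sigma_{[0,n)}(z_i)$, each with root in $\mroot\sigma_{[0,n)}(\cA_n)$, a set of cardinality at most $d$ by $(\cP_1)$. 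The global periodicity of $v^k$ combined with Theorem~\ref{theo:fine&wilf} and Proposition~\ref{CL:on_sZ&tZ} forces all these roots to coincide with a single word $s$ and pins down the letters $z_i$ together with their alignments modulo $|s|$. Since $\bsigma$ draws from finitely many morphisms, only finitely many such periodic runs with root $s$ are realizable in the subshift $X^{(n)}_{\bsigma}$, so once $k$ exceeds a threshold depending only on $d$, $\sup_n \#\cA_n$, and the finite family $\{\sigma_n\}$, either the block immediately to the left or the block immediately to the right of the covered region would continue the period and force $u=v$ or $w=v$; this gives a uniform bound on $\#\mathrm{Pow}_X(v)$. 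The principal obstacle is precisely this final boundary synchronization step: transferring the internal periodicity out to the neighbouring blocks while respecting the maximality constraints $u,w \neq v$ requires a careful simultaneous use of Propositions~\ref{CL:crit_fact_like} and \ref{CL:on_sZ&tZ} together with the finitary structure of $\bsigma$.
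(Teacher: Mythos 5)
The fatal problem is the unproven lemma on which everything rests: \emph{any finitary $\cS$-adic sequence satisfying $(\cP_1)$ and $(\cP_2)$ generates a subshift of finite power complexity}. Your sketch of it does not go through, and the way the paper's proof is structured strongly suggests it cannot be made to. After pulling $uv^kw$ down to level $n$, all you know is that the central part of $v^k$ is tiled by blocks $\sigma_{[0,n)}(z_i)$ whose lengths lie in an interval $[\frac{1}{d'}|v|, d'|v|]$ and which are powers of at most $d$ roots. This does \emph{not} bound the number of distinct maximal exponents $k$: if, say, the admissible run lengths of the relevant periodic pattern are all the multiples $j\cdot L$ of a single block length $L\approx|v|$, then for every $j$ there is a perfectly legal tiling (repeat one periodic block $j$ times and close off with the two boundary blocks), so your ``boundary synchronization'' step never produces the forced extension $u=v$ or $w=v$, and no threshold on $k$ depending only on $d$, $\sup_n\#\cA_n$ and the finite family of morphisms exists. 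In other words, nothing in $(\cP_1)$--$(\cP_3)$ plus boundedness of the alphabets prevents the level-$n$ subshift from containing arbitrarily long runs $z^j$ of a fixed letter block, which translates at level $0$ into $\#\mathrm{Pow}_X(v)$ being unbounded. So the implication ``$\pcom(X)=+\infty$ $\Rightarrow$ no bounded-alphabet $(\cP_1)$--$(\cP_3)$ representation'' is precisely what is at stake, and your argument assumes it rather than proves it; note also that the paper only claims its \emph{own} construction is bounded iff $\pcom(X)<\infty$, not this converse.

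The paper's proof of Theorem \ref{theo:negative_result} uses a strictly finer, arithmetic obstruction, and this is not an accident. The exponents $p^n_j$ are chosen (Lemma \ref{alph_rank:tech_counting}) so that the run lengths $p^n_jm_n$ cannot \emph{all} be written as nonnegative integer combinations $\sum_{e\in E}\alpha_e e$ of a set $E$ of at most $d_n$ integers contained in $[M_n/d_n, d_nM_n]$; then, assuming a bounded-alphabet sequence with $(\cP_1)$--$(\cP_3)$ exists, one extracts at a suitable level $n'$ a set $S$ of at most $2d^6+d$ block lengths in such an interval and shows every $p^n_jm_n$ is such a combination, a contradiction. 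A subshift whose long runs have arithmetically simple lengths (e.g.\ all of the form $8^jk_nm_n$, which are representable using a single element of $E$) still has infinite power complexity but provides no contradiction of this kind, which is why mere infinitude of $\pcom$ is not the right invariant here. A secondary, repairable issue: your Toeplitz-style construction of $X$ is not shown to have linear complexity; inserting infinitely many distinct run lengths $n_k$ needs quantitative control (the paper's Lemma \ref{alph_rank:construction} relies on the geometric spacing $p^n_j\in[8^jk_n,2\cdot8^jk_n)$ and a case analysis to get $p_X(k)\leq 1024k$), but even granting that, the missing lemma above is where the proof breaks.
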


We were not able to obtain an analogous result for Theorem \ref{theo:main_nonsuperlinear}, so we leave this as an open question.

It is interesting to compare Theorem \ref{theo:negative_result} with the main result of \cite{Ler_1°diff}, which describes bounded alphabet $\cS$-adic representations of minimal subshifts whose complexity function satisfies $p_X(n+1)-p_X(n) \leq 2$ for all $n \geq 1$.
We are led to ask the following.
\begin{ques}
How small can $\sup_{n\geq1} p_X(n+1) - p_X(n)$ be made in Theorem \ref{theo:negative_result}?
\end{ques}

We now turn into proving Theorem \ref{theo:negative_result}.
We start with some technical lemmas.

Let $n, n_0, d, \ell \geq 1$.
We define $P(n,n_0,\ell)$ as the set of integer sequences $(p_1,\dots,p_\ell)$ such that $p_j n_0 \in [8^jn, 2\cdot8^jn)$.
Let $K(n,d,\ell)$ be the set of integer sequences $(k_1,\dots,k_\ell) \in P(n,n_0,\ell)$ for which there exists $E \subseteq [d^{-1}n, dn)$, with at most $d$ elements, such that every $k_j$ can be written as $\sum_{e \in E} \alpha_e e$, where $\alpha_e \in \Z_{\geq0}$.
%%%%                                                                 %%%% LEMMA %%%%
\begin{lem} \label{alph_rank:tech_counting}
Suppose that $n > n_0^3$, $n > \ell^{18\ell^2}$ and $\ell > 3d > 8$.
Then, $P(n,n_0,\ell) \setminus K(n,d,\ell)$ is nonempty.
\end{lem}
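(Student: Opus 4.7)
The plan is to show $|K(n,d,\ell)| < |P(n,n_0,\ell)|$ by directly enumerating both sets. For the lower bound on $|P(n,n_0,\ell)|$, for each $j \in [1,\ell]$ the number of integers $p_j$ with $p_j n_0 \in [8^j n, 2\cdot 8^j n)$ is at least $8^j n/(2 n_0)$ (using $n > n_0$, a consequence of $n > n_0^3$). Multiplying over $j$ gives
$$|P(n,n_0,\ell)| \;\geq\; \Big(\frac{n}{2 n_0}\Big)^{\ell} \, 8^{\ell(\ell+1)/2}.$$

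For the upper bound on $|K(n,d,\ell)|$, I first count the candidate sets $E$: since $E \subseteq [d^{-1}n, dn)$ with $|E| \leq d$, there are at most $(d+1)(dn)^{d}$ choices. Given such an $E$ of size $s \leq d$ and $j \in [1,\ell]$, I bound the number of nonnegative integer tuples $(\alpha_e)_{e\in E}$ whose associated sum $\sum_{e\in E} \alpha_e e$ lies in $[8^j n, 2\cdot 8^j n)$. Because every $e \in E$ satisfies $e \geq n/d$, any such tuple must satisfy $\sum_{e\in E} \alpha_e \leq 2d \cdot 8^j$, yielding at most $(2d\cdot 8^j + 1)^d \leq (3d\cdot 8^j)^d$ tuples, and hence at most $(3d\cdot 8^j)^d$ admissible values of $p_j$. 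Multiplying over $j$ and summing over $E$ gives
$$|K(n,d,\ell)| \;\leq\; (d+1)(dn)^d \, (3d)^{\ell d} \, 8^{d\cdot \ell(\ell+1)/2}.$$

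The final step is to combine the two estimates. Dividing, it suffices to verify
$$n^{\ell - d} \;>\; (2n_0)^{\ell} \, (d+1) \, d^d \, (3d)^{\ell d} \, 8^{(d-1)\ell(\ell+1)/2}.$$
Using $\ell > 3d$ gives $\ell - d > 2\ell/3$; using $n > n_0^3$ gives $(2n_0)^{\ell} < 2^{\ell} n^{\ell/3}$; and also $(3d)^{\ell d} \leq \ell^{\ell^2/3}$ together with $8^{(d-1)\ell(\ell+1)/2} \leq 2^{\ell^3}$. The desired inequality reduces to $n^{\ell/3} > 2^{\ell+\ell^3} \ell^{\ell^2/3 + \ell + 1}$, which after taking $\log_2$ becomes $\log_2 n > 3\ell^2 + O(\ell \log_2 \ell)$. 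This is ensured with much slack by the hypothesis $n > \ell^{18\ell^2}$, which gives $\log_2 n > 18 \ell^2 \log_2 \ell$.

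Conceptually there is no real obstacle: $|P|$ grows like $n^\ell$ while $|K|$ has only an $n^d$ dependence on $n$, so once $n$ is sufficiently large relative to $\ell$ and $d$, strict inequality must hold. The only point requiring care is the arithmetic bookkeeping to ensure that the $8^{d\ell^2/2}$ factor appearing in $|K|$ is compensated by the corresponding $8^{\ell^2/2}$ in $|P|$ together with the $n^{\ell-d}$ gain; the hypotheses $\ell > 3d$ and $n > \ell^{18\ell^2}$ are tailored precisely to make this work.
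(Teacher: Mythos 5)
Your proposal is correct and takes essentially the same route as the paper: both prove the claim by bounding $\#P(n,n_0,\ell)$ from below by roughly $(n/n_0)^{\ell}$, bounding $\#K(n,d,\ell)$ from above by first choosing $E$ (at most about $(dn)^d$ ways) and then counting, for each $j$, the at most $(O(d)\cdot 8^j)^d$ representable values in the $j$-th window, and finally comparing the two estimates using $n>n_0^3$, $\ell>3d$ and $n>\ell^{18\ell^2}$. Your version just tracks the constants (the factors $2^{-\ell}$, $(d+1)$, and the $8^{\ell(\ell+1)/2}$ terms) slightly more explicitly than the paper does, and the arithmetic at the end checks out.
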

\begin{proof}
We will show that $\#P(n,n_0,\ell) > K(n,d,\ell)$, which implies that $P(n,n_0,\ell) \setminus K(n,d,\ell)$ is nonempty.
We first estimate $\#P(n,n_0,\ell)$.
Note that there are at least $(2\cdot8^jn - 8^jn)/n_0$ ways of choosing $p_j$ in a sequence $(p_1,\dots,p_\ell) \in P(n,n_0,\ell)$.
Thus,
\begin{equation} \label{eq:alph_rank:tech_counting:1}
    \# P(n,n_0,\ell) \geq \prod_{j=1}^\ell 8^j n/n_0 \geq (n/n_0)^\ell.
\end{equation}
Next, we estimate $\# K(n,d,\ell)$.
Choosing a set $E \subseteq [d^{-1}n, dn)$ with at most $d$ elements can be done in no more than $(dn-d^{-1}n)^d \leq (dn)^d$ different ways.
For each set $E$ and $j \in [1,d]$, there are at most $(2\cdot8^jn / d^{-1}n)^d$ numbers $\sum_{e \in E} \alpha_e e$ in $[8^jn, 2\cdot 8^jn)$.
Thus, each $E$ generates at most $(2d\cdot8^1)^d\cdot (2d\cdot8^2)^d \dots (2d\cdot8^\ell)^d$ sequences $(k_1,\dots,k_\ell) \in K(n,d,\ell)$.
Therefore,
\begin{multline} \label{eq:alph_rank:tech_counting:2}
    \# K(n,d,\ell) \leq 
    (dn)^d\cdot (2d\cdot8^1)\cdot (2d\cdot8^2) \dots (2d\cdot8^\ell) 
    \\ \leq
    d^{2d} n^d 2^\ell 8^{(\ell+2)^2} \leq
    n^d \ell^{6\ell^2},
\end{multline}
where we used that $\ell > 3d > 8$.

Now, since we assumed that $n > n_0^3$, we have that $(n/n_0)^\ell > n^{2\ell/3}$.
Hence, as the hypothesis ensures that $\ell > 3d$ and $n > \ell^{18\ell^2}$,
\begin{equation*}
    (n/n_0)^\ell > n^d n^{\ell/3} >  n^d \ell^{6\ell^2}.
\end{equation*}
This and Equations \eqref{eq:alph_rank:tech_counting:1} and \eqref{eq:alph_rank:tech_counting:2} imply that $P(n,n_0,\ell) \setminus K(n,d,\ell)$ is nonempty.
\end{proof}

%%%%                                                                 %%%% LEMMA %%%%
\begin{lem} \label{alph_rank:construction}
Let $(\ell_n)_{n\geq0}$ be a sequence of positive integers.
We consider, for each $n \geq 0$, $k_n \geq 1$ and a sequence $(p^n_1,\dots,p^n_{\ell_n})$ such that $p^n_j \in [8^j k_n, 2\cdot8^j k_n)$.
For $a \in \{0,1\}$ and $\bar{a} = 1 - a$, we define
\begin{equation} \label{alph_rank:defi:construction}
    \tau_n(a) = a^{p^n_1} \bar{a}^{p^n_1}
    a^{p^n_2} \bar{a}^{p^n_2} \dots 
    a^{p^n_{\ell_n}} \bar{a}^{p^n_{\ell_n}}.
\end{equation}
and $\btau = (\tau_n)_{n\geq0}$.
Then:
\begin{enumerate}
    \item $X_{\btau}$ is infinite, minimal and with linear-growth complexity.
    \item $|\tau_{[0,n)}(0)| = |\tau_{[0,n)}(1)|$.
    \item $(X_{\btau}^{(n)}, \tau_{[0,n)})$ is $|\tau_{[0,n)}|$-recognizable.
    \item $1 0^{p^n_j} 1 \in \cL(X_{\btau}^{(n)})$ for all $j \in [1, \ell_n]$.
\end{enumerate}
\end{lem}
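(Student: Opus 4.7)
For~(2), a direct read of \eqref{alph_rank:defi:construction} gives $|\tau_n(0)| = |\tau_n(1)| = 2\sum_{j=1}^{\ell_n} p^n_j$, and an induction on $n$ using $|\tau_{[0,n+1)}(a)| = |\tau_n(a)| \cdot |\tau_{[0,n)}(0)|$ (valid once one has $|\tau_{[0,n)}(0)| = |\tau_{[0,n)}(1)|$) propagates the equality. For~(4) with $j \geq 2$, the word $1^{p^n_{j-1}} 0^{p^n_j} 1^{p^n_j}$ is an explicit subword of $\tau_n(0)$, so $1 0^{p^n_j} 1 \in \cL(X^{(n)}_{\btau})$; for $j = 1$ the factor $1 0^{p^n_1} 1$ arises as $1^{p^n_{\ell_n}} 0^{p^n_1} 1^{p^n_1}$ at the junction in $\tau_n(0)\tau_n(0)$, and $00 \in \cL(X^{(n+1)}_{\btau})$ because $\tau_{n+1}(0)$ begins with $0^{p^{n+1}_1}$ and $p^{n+1}_1 \geq 8$.

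\textbf{Item~(3), recognizability.} I would induct on $n$ via Lemma~\ref{lem:recognizability_composition}, reducing the claim to the recognizability of each $(X^{(n+1)}_{\btau}, \tau_n)$. The structural input is that the run lengths of $\tau_n(a)$ are exactly $p^n_1, p^n_1, p^n_2, p^n_2, \ldots, p^n_{\ell_n}, p^n_{\ell_n}$ and sit in the pairwise disjoint intervals $[8^j k_n, 2 \cdot 8^j k_n)$. At a block boundary the trailing $p^n_{\ell_n}$-run may merge with the next $p^n_1$-run into a single run of length in $[p^n_{\ell_n}, p^n_{\ell_n} + p^n_1)$, but this still exceeds every $p^n_j$ with $j < \ell_n$, so the end-of-block runs remain unambiguously identifiable. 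Reading off the run pattern inside a window of length $|\tau_n|$ therefore determines every $\tau_n$-cut; composing through the levels gives $|\tau_{[0,n)}|$-recognizability.

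\textbf{Item~(1), the main obstacle.} Minimality is immediate since each $\tau_n$ is positive, making $\btau$ primitive and every $X^{(n)}_{\btau}$ minimal; infiniteness follows from~(4) as the factors $10^{p^n_j}1$ have unbounded length. The substantive task is linear-growth complexity, and my first step is Lemma~\ref{MT:pX_bound} with $\cW = \tau_{[0,n)}(\cA_n)$: by~(2) we have $\langle\cW\rangle = |\cW| = |\tau_{[0,n)}|$ and $\#\mroot\cW \leq \#\cW \leq 2$, yielding $p_X(|\tau_{[0,n)}|) \leq 4|\tau_{[0,n)}|$. The hard part is interpolation to intermediate $N$, since the ratios $|\tau_{[0,n+1)}|/|\tau_{[0,n)}| = |\tau_n|$ may be arbitrarily large. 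For $N$ with $|\tau_{[0,n)}| \leq N < |\tau_{[0,n+1)}|$ and $m = \lceil N/|\tau_{[0,n)}|\rceil$, I would parameterize a length-$N$ factor of $X$ as a subword of $\tau_{[0,n)}(w)$ for some word $w$ of length $m+1$ in $\cL(X^{(n)}_{\btau})$, obtaining $p_X(N) \leq |\tau_{[0,n)}| \cdot \#(\cL(X^{(n)}_{\btau}) \cap \cA_n^{m+1})$. Because $(\tau_k)_{k \geq n}$ has the same structural form as $\btau$, the same argument recursively bounds $p_{X^{(n)}_{\btau}}(k) \leq c\, k$ at every level; making the constant $c$ uniform across $n$ is the central technical challenge.
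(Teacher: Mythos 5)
Items (2) and (4) are fine, and your run-length route to recognizability in (3) can be made to work, though it differs from the paper: the paper instead proves a synchronization property (any word of length at least $|\tau_n|/2$ that is simultaneously a prefix of $\tau_n(a)$ and a suffix of $\tau_n(b)$ must equal $\tau_n(a)=\tau_n(b)$), which handles in one stroke the cuts that fall inside a merged run at a block junction; note also that item (3) asks for the quantitative $|\tau_{[0,n)}|$-recognizability, so your "composing through the levels" needs the equal-length fact $|\tau_{[0,n)}|=|\tau_0|\cdots|\tau_{n-1}|$ from (2), not just Lemma~\ref{lem:recognizability_composition}. (A small repair in (1): the words $10^{p^n_j}1$ lie in $\cL(X^{(n)}_{\btau})$, not in $\cL(X_{\btau})$; to get infiniteness of $X_{\btau}$ push them through $\tau_{[0,n)}$, e.g.\ observe that $\tau_{[0,n)}(0)^{p^n_1}$ is right-special in $X_{\btau}$, so the complexity is unbounded.)

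The genuine gap is the linear-complexity part of (1): your interpolation scheme is circular at exactly the scales that matter. For $|\tau_{[0,n)}|\leq N<|\tau_{[0,n+1)}|$ you reduce $p_X(N)$ to the complexity of $X^{(n)}_{\btau}$ at a scale $m+1\leq|\tau_n|+1$; but applying the same reduction to $X^{(n)}_{\btau}$ at a scale $m'\leq|\tau_n|=|\tau_{[n,n+1)}|$ forces the choice $n'=n$, where $|\tau_{[n,n)}|=1$, and the bound degenerates to $p_{X^{(n)}_{\btau}}(m')\leq p_{X^{(n)}_{\btau}}(m'+1)$ — no scale reduction occurs, so the recursion never closes. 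Thus the issue is not merely making the constant uniform in $n$: you have not bounded the level-$n$ complexity below scale $|\tau_n|$ at all, and that is the whole content of the statement, since the ratio $|\tau_n|$ is huge. This is a within-level, not cross-level, argument, and it is the one place where the hypothesis $p^n_j\in[8^jk_n,2\cdot8^jk_n)$ is used. Concretely, the paper sets $w_{n,j}(a)=\tau_{[0,n)}(a)^{p^n_j}$ and $W_{n,j}(a)=w_{n,1}(a)w_{n,1}(\bar a)\cdots w_{n,j}(a)w_{n,j}(\bar a)$, so that $|w_{n,j}|$ and $|W_{n,j}|$ grow geometrically (comparable to $8^jk_n|\tau_{[0,n)}|$ within fixed factors); given $k$ in the gap, taking the least $j$ with $k<|W_{n,j}(0)|$, every factor of $X$ of length $k$ occurs in one of the at most four words $W_{n,j}(a)\,\tau_{[0,n)}(b)^{p^n_{j+1}}$ or $\tau_{[0,n)}(a)^{p^n_{j+1}}\tau_{[0,n)}(b)^{p^n_{j+1}}$, each of length $O(k)$ by the factor-$8$ growth and the minimality of $j$, which yields $p_X(k)\leq 1024\,k$ uniformly (the extreme ranges $k<|w_{n,1}(0)|$ and $k\geq|W_{n,\ell_n-1}(0)|$ are handled separately). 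Some argument of this kind, exploiting the geometric run structure inside a single $\tau_n$, is indispensable; as written, your proof of linear growth is incomplete.
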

\begin{proof}
Let $\cA = \{0,1\}$.
For $a \in \cA$, $n \geq 0$ and $j \in [1,\ell_n]$, we use the notation $w_{n,j}(a) = \tau_{[0,n)}(a)^{p^n_j}$ and
\begin{equation} \label{alph_rank:construction:1}
    W_{n,j}(a) = 
    w_{n,1}(a) w_{n,1}(\bar{a})\dots 
    w_{n,2}(a) w_{n,2}(\bar{a})\dots 
    w_{n,j}(a) w_{n,j}(\bar{a}). 
\end{equation}
Remark that $W_{n,\ell_n}(a) = \tau_{[0,n+1)}(a)$.

We start by proving the following properties of the morphisms $\tau_n$.
\begin{enumerate}[label=(\roman*)]
    \item $|w_{n,j}(0)| = |w_{n,j}(1)|$ and $|W_{n,j}(0)| = |W_{n,j}(1)|$ for all $n \geq 0$ and $j \in [1, \ell_n]$.
    \item $8^jk_n |\tau_{[0,n)}| \leq |w_{n,j}(a)| \leq 2\cdot8^jk_n |\tau_{[0,n)}|$ and $2\cdot8^jk_n |\tau_{[0,n)}| \leq |W_{n,j}(a)| < 8^{j+1}k_n |\tau_{[0,n)}|$.
    \item If $n\geq0$, $a,b \in \cA$ and $t$ is a word such that $|t| \geq |\tau_n|/2$, $t$ is a prefix of $\tau_n(a)$ and $t$ is a suffix of $\tau_n(b)$, then $\tau_n(a) = \tau_n(b) = t$.
\end{enumerate}
Item (i) directly follows from \eqref{alph_rank:defi:construction}.
In Item (ii), the first inequality is a consequence of the equality $|w_{n,j}(a)| = p^n_j|\tau_{[0,n)}|$ and that, by the hypothesis, $p^n_j \in [8^jk_n, 2\cdot8^jk_n)$.
We can use this to compute $|W_{n,j}(a)| \geq 2|w_{n,j}(a)| \geq 2\cdot 8^j k_n |\tau_{[0,n)}|$ and
\begin{equation*}
    |W_{n,j}(a)| = 
    \sum_{i=1}^j |w_{n,j}(a) w_{n,j}(\bar{a})| \leq
    2 \sum_{i=1}^j 2\cdot 8^i k_n |\tau_{[0,n)}| < 
    8^{j+1} k_n |\tau_{[0,n)}|,
\end{equation*}
which shows the second inequality in (ii).
Finally, we prove Item (iii).
We note that \eqref{alph_rank:construction:1} implies that $|\tau_{[0,n+1)}| = |W_{n,\ell_n}(0)| \geq 2 |w_{n,\ell_n}(0)|$.
Hence, as (i) ensures that $|\tau_{[0,n)}(0)| = |\tau_{[0,n)}(1)|$, 
\begin{equation*}
    |\tau_n| = 
    |\tau_{[0,n+1)}| / |\tau_{[0,n)}| \geq 
    2 |w_{n,\ell_n}(a)| / |\tau_{[0,n)}| = 
    2|a^{p^n_{\ell_n}}|,
\end{equation*}
which allows us to bound $|t| \geq |\tau_n|/2 \geq |a^{p^n_{\ell_n}}|$.
Being $t$ a suffix of $\tau_n(a)$, this implies that $a^{p^n_{\ell_n}}$ is a suffix of $t$.
Moreover, since $t$ is a prefix of $\tau_n(b)$, $a^{p^n_{\ell_n}}$ occurs in $\tau_n(b)$.
But \eqref{alph_rank:defi:construction} guarantees that $a^{p^n_{\ell_n}}$ occurs in $\tau_n(b)$ only as a suffix, so we must have that $t = \tau_n(b)$.
Therefore, $\tau_n(a) = \tau_n(b) = t$.
\medskip

We now prove that $\btau$ satisfies the properties of the lemma.
The morphisms $\tau_n$ are positive, so $X$ is minimal.
It follows from \eqref{alph_rank:defi:construction} that $0^{p^n_1}1$ and $0^{p^n_0}0$ belong to $\cL(X_{\btau}^{(n)})$ for all $n \geq 0$, so $\tau_{[0,n)}(0)1$ and $\tau_{[0,n)}(0)0$ are elements of $\cL(X)$.
This shows that $X$ has infinitely many right-special words, and thus that $X$ is infinite.
To prove that $X$ has linear-growth complexity, we will show that $p_X(k) \leq 1024 k$ for all $k \geq 1$.
Let $k \geq 1$ be arbitrary.
We take $n \geq 0$ such that $|\tau_{[0,n)}| \leq k < |\tau_{[0,n+1)}|$.
We consider three cases.
Assume first that $k < |w_{n,1}(0)|$.
Then, from \eqref{alph_rank:defi:construction} we have that any $w \in \cL(X) \cap \cA^k$ occurs in a word of the form $w_{n,1}(a) w_{n,1}(b)$ for some $a, b \in \cA$.
This implies, since $|w| \geq |\tau_{[0,n)}(a)|$ and $w_{n,1}(a) = \tau_{[0,n)}(a)^{p^n_1}$ for any $a \in \cA$, that $p_X(k) \leq \#\cA^2\cdot k = 4k$.

Let us now assume that $|w_{n,1}(0)| \leq k < |W_{n,\ell_n-1}(0)|$.
Let $j \in [1,\ell_n-1]$ be the least integer satisfying $k < |W_{n,j}(0)|$.
Then, by (ii), 
\begin{equation*}
    k \leq |w_{i,n}(a)| = |\tau_{[0,n)}(0)^{p^n_i}|
    \enspace \text{for all $i \in [j+1,\ell_n]$ and $a \in \cA$.}
\end{equation*}
Using this, \eqref{alph_rank:defi:construction} and the definition of $w_{n,i}(a)$ we deduce that any $w \in \cL(X) \cap \cA^k$ occurs in a word having either the form $W_{n,j}(a) \tau_{[0,n)}(b)^{p^n_{j+1}}$ or the form $\tau_{[0,n)}(a)^{p^n_{j+1}} \tau_{[0,n)}(b)^{p^n_{j+1}}$, where $a, b \in \cA$.
Therefore,
\begin{equation*}
    p_X(k) \leq 
    \#\cA^2\cdot(|W_{n,j}(0) \tau_{[0,n)}(0)^{p^n_{j+1}}| + |\tau_{[0,n)}(0)^{p^n_{j+1}} \tau_{[0,n)}(0)^{p^n_{j+1}}|).
\end{equation*}
Putting that $|W_{n,j}(0)| \leq |\tau_{[0,n)}(0)^{p^n_{j+1}}|$ in the last inequality yields
\begin{equation*}
    p_X(k) \leq 16p^n_{j+1}|\tau_{[0,n)}(0)| = 16|w_{n,j+1}(0)|.
\end{equation*}
Now, if $j = 1$, then $16|w_{n,2}(0)| \leq 16\cdot16|w_{n,1}(0)| \leq 256k$ by (ii), and so $p_X(k) \leq 256k$.
If $j > 1$, then the minimality of $j$ implies that $k \geq |W_{n,j-1}(0)|$.
Hence, by (ii), $16|w_{n,j+1}(0)| \leq 16\cdot8^2|W_{n,j}(0)| \leq 1024k$, and therefore $p_X(k) \leq 1024k$.

Finally, we assume that $k \geq |W_{n,\ell_n-1}(0)|$.
Since $k < |\tau_{[0,n+1)}|$, we have that any $w \in \cL(X) \cap \cA^k$ occurs in a word of the form $\tau_{[0,n+1)}(a) \tau_{[0,n+1)}(b)$, where $a, b \in \cA$.
Thus, $p_X(k) \leq 4|\tau_{[0,n+1)}|$.
Moreover, being $|W_{n,\ell_n-1}(0)|$ at most $k$, we have from (ii) that $4|\tau_{[0,n+1)}| = 4 |W_{n,\ell_n}(0)| \leq 8^2 |W_{n,\ell_n-1}(0)| \leq 8^2k$; therefore, $p_X(k) \leq 64k$.

We conclude that $p_X(k) \leq 1024 k$ for every $k \geq 1$ and that $X$ has linear-growth complexity.
\medskip

Items (2) and (4) of the lemma follow from \eqref{alph_rank:defi:construction}.
Thus, it only rests to prove Item (3).
We note that, since $|\tau_{[0,n)}| = |\tau_0| \cdots |\tau_{n-1}|$, it is enough, by Lemma \ref{lem:recognizability_composition}, to prove that $(\cA^\Z, \tau_n)$ is $|\tau_n|$-recognizable for all $n \geq 0$.
Let $x, \tilde{x} \in \cA_n$ and $(k,y), (\tilde{k},\tilde{y})$ be $\tau_n$-factorizations of $x,\tilde{x}$ in $\cA^\Z$, respectively, and assume that $x_{[-|\tau_n|, |\tau_n|)}$ is equal to $\tilde{x}_{[-|\tau_n|, |\tau_n|)}$.
We assume with no loss of generality that $k \leq \tilde{k}$.
There are two cases. 
If $\tilde{k} - k \leq |\tau_n|/2$, then $x_{[-k, -\tilde{k} + |\tau_n|)}$ has length at least $|\tau_n|/2$ and is a suffix of $\tau_n(y_0)$.
As $x_{[-|\tau_n|, |\tau_n|)} = \tilde{x}_{[-|\tau_n|, |\tau_n|)}$, we also have that $x_{[-k, -\tilde{k} + |\tau_n|)}$ is a prefix of $\tau_n(\tilde{y}_1)$.
We deduce, using (iii), that $\tau_n(y_0) = \tau_n(\tilde{y}_0) = x_{[-k, -\tilde{k} + |\tau_n|)}$, and thus that $y_0 = \tilde{y}_0$.
Moreover, since $k,\tilde{k} \in [0, |\tau_n|)$, $k = \tilde{k}$.
Let us now suppose that $\tilde{k} - k \geq |\tau_n|/2$.
Then, $|x_{[-\tilde{k}, -k)}| \geq |\tau_n|/2$ and $x_{[-\tilde{k}, -k)}$ is both a suffix of $\tau_n(\tilde{y}_{-1})$ and a prefix of $\tau_n(y_0)$.
Hence, by (iii), $\tau_n(\tilde{y}_{-1}) = \tau_n(y_0) = x_{[-k, -\tilde{k})}$, which is impossible as $k, \tilde{k} \in [0, |\tau_n|)$.
We conclude that $(X_{\btau}^{(n)}, \tau_n)$ is $|\tau_n|$-recognizable.
\end{proof}

%%%%                                                                 %%%% MAIN PROOF %%%%
We can now prove Theorem \ref{theo:negative_result}.

\begin{proof}[Proof of Theorem \ref{theo:negative_result}]
Let $(\ell_n)_{n\geq0}$ and $(d_n)_{n\geq0}$ be nondecreasing diverging sequences of integers with $\ell_n > 3d_n > 8$.
We inductively define $M_n$, $m_n$, $(p^n_1,\dots,p^n_{\ell_n})$ and $\tau_n$ as follows.
Let $m_0 = 1$ and $M_0$ be such that $M_0 > m_0^3$ and $M_0 > \ell_0^{18\ell_0^2}$.
Then, we can use Lemma \ref{alph_rank:tech_counting} to find
\begin{equation*}
    (p^0_1, p^0_2, \dots, p^0_{\ell_0}) \in 
    P(M_0, m_0, \ell_0) \setminus K(M_0, d_0, \ell_0).
\end{equation*}
We define $\tau_0$ using $(p^0_1,\dots, p^0_{\ell_0})$ as in \eqref{alph_rank:defi:construction}.
Suppose now that $M_n$, $m_n$, $(p^n_1,\dots,p^n_{\ell_n})$ and $\tau_n$ are defined.
We set $m_{n+1} = |\tau_{[0,n+1)}|$ and take $M_{n+1}$ so that $M_{n+1} > m_{n+1}^3$ and $M_{n+1} > \ell_{n+1}^{18\ell_{n+1}^2}$.
Then, we can use Lemma \ref{alph_rank:defi:construction} to find
\begin{equation} \label{eq:theo:negative_result:1}
    (p^{n+1}_0, p^{n+1}_1, \dots, p^{n+1}_{\ell_n})_{j=1}^{\ell_n} \in 
    P(M_{n+1}, m_{n+1}, \ell_{n+1}) \setminus K(M_{n+1}, d_{n+1}, \ell_{n+1})
\end{equation}
and define $\tau_{n+1}$ using $(p^{n+1}_1,\dots,p^{n+1}_{\ell_{n+1}})$ as in \eqref{alph_rank:defi:construction}.
\medskip

We set $\btau = (\tau_n)_{n\geq0}$.
Then, Items (1) to (4) in Lemma \ref{alph_rank:construction} hold.
In particular, $X_{\btau}$ is minimal and has linear-growth complexity.
We prove that $X_{\btau}$ satisfies the conclusion of the theorem by contradiction.
Suppose that there exist $d$ and $\bsigma = (\sigma_n\colon\cA_{n+1} \to \cA_n^+)_{n\geq0}$ satisfying Items (1), (2) and (3) of Theorem \ref{theo:main_sublinear} and $\#\cA_n \leq d$ for all $n \geq 1$.

We claim that there exists $n, n' \geq 0$ such that 
\begin{equation} \label{eq:theo:negative_result:2}
    \text{$d_n \geq 2d^6+d$, $2m_n \leq \frac{1}{6d^2}M_n \leq \langle\sigma_{[0,n')}\rangle$ and $|\sigma_{[0,n')}| \leq \frac{1}{6}M_n$.}
\end{equation}
We take $n \geq 0$ big enough so that $d_n \geq 12d^4+d$ and $m_n \geq 12d^2$.
Let $n' \geq 0$ be the integer satisfying $|\sigma_{[0,n')}| \leq \frac{1}{6} M_n < |\sigma_{[0,n'+1)}|$.
Then, by Items (2) and (3) in Theorem \ref{theo:main_sublinear},
\begin{equation*}
    \frac{1}{6} M_n <
    |\sigma_{[0,n'+1)}| \leq 
    d^2\langle\sigma_{[0,n')}\rangle. 
\end{equation*}
Also, since we chose $M_n$ and $n$ so that $M_n > m_n^3$ and $m_n \geq 12d^2$, we have that $m_n \leq \frac{1}{12d^2}M_n$.
This completes the proof of the claim.
\medskip

Let $w_a = \tau_{[0,n)}(a)$ for $a \in \{0,1\}$.
Then, by Item (4) in Lemma \ref{alph_rank:construction}, $1 0^{p^n_j} 1\in \cL(X_{\btau}^{(n)})$ for every $j \in [1, p^n_{\ell_n}]$.
Being $X_{\btau}$ generated by $\bsigma$, there exist $u_j \in \cA^+$ such that $w_1 w_0^{p^n_j} w_1$ occurs in $\sigma_{[0,n')}(u_j)$.
Moreover, we can take $u_j$ so that the following condition holds: If $a_j$ is the first letter of $u_j$ and $b_j$ is the last letter of $u_j$, then there exists a prefix $s_j$ of $\sigma_{[0,n')}(a_j)$ and a suffix $t_j$ of $\sigma_{[0,n')}(b_j)$ such that $s_j w_1 w_0^{p^n_j} w_1 t_j = \sigma_{[0,n')}(u_j)$.
Observe that $|\sigma_{[0,n')}(u_j)| \geq |w_0^{p^n_1}| = M_n$, so
\begin{equation} \label{eq:theo:negative_result:3}
    |u_j| \geq |\sigma_{[0,n')}(u_j)| / |\sigma_{[0,n')}| \geq 
    M_n / |\sigma_{[0,n')}| \geq 6.
\end{equation}
We define $a_j a'_j a''_j$ as the first three letters of $u_j$ and $b''_j b'_j b_j$ as the last three letters of $u_j$.

We claim that
\begin{equation}
    \text{if $i, j \in [1, \ell_n]$ and $a_i a'_i a''_i b''_i b'_i b_i = a_j a'_j a''_j b''_j b'_j b_j$, then $s_i = s_j$ and $t_i = t_j$.}
\end{equation}
The inequality $|w_1| = m_n \leq |\sigma_{[0,n')}|$ and \eqref{eq:theo:negative_result:3} ensure that $\sigma_{[0,n)}(a_i)$ is a prefix of $s_i w_1 w_0^\infty$.
Hence, as $s_i$ is a prefix of $\sigma_{[0,n)}(a_i)$, we can write $\sigma_{[0,n)}(a_i a'_i) = s_i w_1 w_0^{q_i} r_i$, where $q_i \geq 1$ and $r_i$ is a prefix of $w_0$ different from $w_0$.
Similarly, $\sigma_{[0,n)}(a_j a'_j) = s_j w_1 w_0^{q_j} r_j$ for some $q_j \geq 2$ and prefix $r_j$ of $w_0$ different from $w_0$.
Then, as the hypothesis implies that $\sigma_{[0,n)}(a_i a'_i a''_i) = \sigma_{[0,n)}(a_j a'_j a''_j)$, Item (4) in Lemma \ref{alph_rank:construction} can be used to obtain $r_i = r_j$.
We obtain that
\begin{equation*}
    s_i w_1 w_0^{q_i} r_i = 
    \sigma_{[0,n)}(a_i a'_i) = \sigma_{[0,n)}(a_j a'_j) =
    s_j w_1 w_0^{q_j} r_j =
    s_j w_1 w_0^{q_j} r_i.
\end{equation*}
Therefore, if $q_i \not= q_j$ then $w_0 = w_1$, which contradicts the fact that $X$ is infinite.
We conclude that $q_i = q_j$, and thus that $s_i = s_j$.
A similar argument shows that $t_i = t_j$, and the claim follows.
\medskip

Thanks to the claim, we have that the set 
\begin{multline*} 
    S = \{|\sigma_{[0,n')}(a_k a'_k)|-|s_k w_1|, |\sigma_{[0,n')}(b'_kb_k)|-|w_1t_k| : k \in [1,\ell_n]\} 
    \\ \cup \{|\sigma_{[0,n')}(a)| : a \in \cA_{n'}\}
\end{multline*}
has no more than $2\#\cA_{n'}^6 + \#\cA_{n'}$ elements.
Thus, by the choice of $d_n$,
\begin{equation}  \label{eq:theo:negative_result:4}
    \# S \leq 2d^6 + d \leq d_n.
\end{equation}
Also, by \eqref{eq:theo:negative_result:2}, $\max S \leq 2|\sigma_{[0,n')}| \leq  M_n$.
Moreover, as $|s_jw_1| \leq |\sigma_{[0,n')}(a_j)|+m_n$ and $|w_1t_j| \leq |\sigma_{[0,n')}(b_j)|+m_n$, we have that $\min S \geq \langle\sigma_{[0,n')}\rangle - m_n \geq \frac{1}{12d^2} M_n$, where in the last step we used \eqref{eq:theo:negative_result:2}.
Therefore, as $d_n \geq 2d^6+d$,
\begin{equation} \label{eq:theo:negative_result:5}
    S \subseteq [\frac{1}{d_n} M_n, d_n M_n].
\end{equation}
Let us write $u_j = a_ja'_j v_j b'_j b_j$ for certain $v_j$.
Then, the equation $\sigma_{[0,n)}(u_j) = s_i w_1 w_0^{q_i} r_i$ implies that
\begin{equation*}
    p^n_j m_n = |w_0^{p^n_j}| = 
    (\sigma_{[0,n')}(a_ja'_j) - |s_jw_1|) + |\sigma_{[0,n')}(v_j)| + (\sigma_{[0,n')}(b'_jb_j) - |w_1t_j|).
\end{equation*}
This shows that 
\begin{equation}  \label{eq:theo:negative_result:6}
    \text{every $p^n_j m_n$ can be written as $\sum_{e\in S} \gamma_e e$ for certain $\gamma_e \in \Z_{\geq0}$.}
\end{equation}
We conclude, from Equations \eqref{eq:theo:negative_result:4}, \eqref{eq:theo:negative_result:5} and \eqref{eq:theo:negative_result:6} that $(p^n_1,\dots,p^n_{\ell_n})$ belongs to $K(M_n, d_n, \ell_n)$.
This contradicts \eqref{eq:theo:negative_result:1}.
\end{proof}
\section{Applications}
\label{sec:apps}

We present in this section new and simpler proofs, based on Theorems \ref{theo:main_sublinear} and \ref{theo:main_nonsuperlinear}, of known results about linear-growth and nonsuperlinear-growth complexity subshifts.

\subsection{Cassaigne's Theorem }

A classic result on linear-growth complexity subshift is Cassaigne's Theorem \cite{Cassaigne95}, which states that, for any transitive subshift $X$ in this complexity class, $p_X(n+1)-p_X(n)$ is uniformly bounded.
We show in this subsection how to use Theorem \ref{theo:main_sublinear} to give a different proof of this result, in the case in which $X$ is minimal.

We start with a lemma containing the technical core of our approach.
\begin{lem} \label{apps:tech_pre_cassaigne}
Let $x, y \in \cA^\Z$, $p_1, \dots, p_n \in \Z$ be a collection of different integers and $\ell_1, \dots, \ell_n \geq 1$.
Suppose that:
\begin{enumerate}
    \item $x_{[p_j, p_j + \ell_j)} = y_{[0, \ell_j)}$ for all $j \in [1,n]$.
    \item $|p_j - p_i| \leq \frac{1}{2}\ell_k$ for all $i,j,k \in [1,n]$.
\end{enumerate}
Then, there exists $w \in \cA^+$ such that, for all $i, j \in [1,n]$, $x_{[p_i, p_j)}$ is a power of $w$ and $x_{[p_i, p_j + \min(\ell_i, \ell_j))}$ is a prefix of $w^\infty$.
\end{lem}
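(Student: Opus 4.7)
The plan is to reorder the indices so that $p_1 < p_2 < \cdots < p_n$, set $d_i := p_i - p_1$, $D := d_n$, and $\ell_{\min} := \min_k \ell_k$. Hypothesis (2) yields $D \leq \ell_k/2$ for every $k$, hence $D \leq \ell_{\min}/2$. For any pair $1 \leq i < j \leq n$, reading hypothesis (1) on the $x$-overlap $[p_j, p_j + t)$, with $t = \min(\ell_i - (p_j - p_i), \ell_j)$, in two ways gives $y_s = y_{s + (p_j - p_i)}$ for $s \in [0,t)$; equivalently, $y_{[0, L_{ij})}$ has period $d_j - d_i$, where $L_{ij} := \min(\ell_i, \ell_j + d_j - d_i)$. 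Since $L_{ij} \geq \min(\ell_i, \ell_j) \geq \ell_{\min}$, the prefix $y_{[0, \ell_{\min})}$ in particular carries each of the periods $d_2, \ldots, d_n$ (specializing to $i = 1$).

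Since any two of $d_2, \ldots, d_n$ sum to at most $2D \leq \ell_{\min}$, iteratively applying Fine--Wilf (Theorem~\ref{theo:fine&wilf}) to $y_{[0, \ell_{\min})}$ replaces any two periods by their gcd, so $d := \gcd(d_2, \ldots, d_n)$ is itself a period of $y_{[0, \ell_{\min})}$. Setting $w := y_{[0, d)} \in \cA^+$, the word $y_{[0, \ell_{\min})}$ is a prefix of $w^\infty$. I would then promote this to the larger window $y_{[0, L_{ij})}$: since $d \mid (d_j - d_i)$ and $d_j - d_i \leq \ell_{\min}/2$, a short induction that carries the relation $y_k = y_{k \bmod d}$ outward from $[0, \ell_{\min})$ using the pairwise period $d_j - d_i$ shows that $y_{[0, L_{ij})}$ also has period $d$. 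In particular, $y_{[0, \min(\ell_i, \ell_j))}$ is a prefix of $w^\infty$ for every pair $i < j$.

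For the lemma's conclusions one may assume WLOG that $p_i \leq p_j$ (with $i = j$ read as the empty word in (a)). For (a), hypothesis (1) at index $i$ gives $x_{[p_i, p_j)} = y_{[0, p_j - p_i)}$, whose length $p_j - p_i = d_j - d_i$ is a nonnegative multiple of $d$ and at most $\ell_{\min}$, hence it equals $w^{(d_j - d_i)/d}$. For (b), split
\[
    x_{[p_i, p_j + \min(\ell_i, \ell_j))} = x_{[p_i, p_j)} \cdot x_{[p_j, p_j + \min(\ell_i, \ell_j))} = y_{[0, p_j - p_i)} \cdot y_{[0, \min(\ell_i, \ell_j))},
\]
and combine (a) with the extension step to see the concatenation is a prefix of $w^\infty$. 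The main obstacle I anticipate is precisely the period-extension argument in paragraph two: without it, the second conclusion is only transparent for pairs with $\min(\ell_i, \ell_j) = \ell_{\min}$, and propagating the period $d$ out to the longer window $L_{ij}$ is what makes (b) go through uniformly; the rest is routine bookkeeping in the overlap.
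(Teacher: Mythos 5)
Your proposal is correct, but it is organized rather differently from the paper's proof, so it is worth comparing the two. The paper stays entirely in $x$ and works with roots: for $i<j$ it notes that $x_{[p_i,\,p_j+\min(\ell_i,\ell_j))}$ is a prefix of $w_{i,j}^\infty$ where $w_{i,j}=\mroot x_{[p_i,p_j)}$, and then, for $i,j<k$, applies Fine--Wilf on the common window starting at $p_k$ (whose length $\min(\ell_i,\ell_j,\ell_k)$ dominates $|w_{i,k}|+|w_{j,k}|$ thanks to hypothesis (2)) to conclude $w_{i,k}=w_{j,k}$; chaining these identities shows all roots equal $w_{1,2}$, which is taken as $w$, and the prefix property for every window is then automatic from the first observation --- no gcd and no period-propagation step are needed. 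You instead translate everything into periods of prefixes of $y$: each offset $d_j-d_i$ is a period of $y_{[0,L_{ij})}$, you merge the periods $d_2,\dots,d_n$ on the shortest common prefix $y_{[0,\ell_{\min})}$ into their gcd $d$ by iterated Fine--Wilf (legitimate, since any two of the periods involved sum to at most $2D\le\ell_{\min}$), and you then have to propagate the period $d$ outward to $y_{[0,L_{ij})}$. That propagation, which you correctly single out as the delicate point, does go through --- most cleanly by noting that $q:=d_j-d_i\le\ell_{\min}$ and $d\mid q$, so $y_{[0,q)}=w^{q/d}$, and $y_{[0,L_{ij})}$, having period $q$, is a prefix of $(y_{[0,q)})^\infty=w^\infty$ --- so your route is complete, and the final assembly of conclusions (a) and (b) is fine. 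The trade-offs: your $w=y_{[0,d)}$ need not be primitive (harmless, since the lemma only asserts existence of some $w$), and you pay for the gcd step with the extra propagation argument, whereas the paper's pairwise identification of roots at a common anchor avoids both; both proofs ultimately rest only on Fine--Wilf and the half-length hypothesis (2).
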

\begin{proof}
Being the $p_j$ different, there is no loss of generality in assuming that $p_1 < p_2 < \dots < p_n$.
We define, for $i,j \in [1,n]$ with $i < j$, $w_{i,j} = \mroot x_{[p_i, p_j)}$ and $\ell_{i,j} = \min(\ell_i, \ell_j)$.
Then, Item (1) in the statement of the lemma ensures that $x_{[p_i, p_i + \ell_{i,j})} = x_{[p_j, p_j + \ell_{i,j})}$, and thus that $x_{[p_i, p_j + \ell_{i,j})}$ is a prefix of $w_{i,j}^\infty$.
In particular, as $p_i < p_j$ and $w_{i,j} = \mroot x_{[p_i, j_j)}$, we have that for all $i,j \in [1,n]$ with $i < j$,
\begin{equation} \label{eq:apps:tech_pre_cassaigne:1}
    \text{$x_{[p_i, p_i + \ell_{i,j}]}$ and $x_{[p_j, p_j + \ell_{i,j})}$ are prefixes of $w_{i,j}^\infty$.}
\end{equation}
Therefore, it is enough to find $w$ such that $w = w_{i,j}$ for all $i < j$.

First, we show that
\begin{equation}  \label{eq:apps:tech_pre_cassaigne:2}
    \text{$w_{i,k} = w_{j,k}$ for all $i,j,k \in [1,n]$ with $i,j < k$.}
\end{equation}
Observe that, in this situation, we have from \eqref{eq:apps:tech_pre_cassaigne:1} that if $m = \min(\ell_{i,k}, \ell_{j,k})$, then $x_{[p_k, p_k + m)}$ is a prefix of $w_{i,k}^\infty$ and $x_{[p_k, p_k + m)}$ is a prefix of $w_{j,k}^\infty$.
Thus,
\begin{equation} \label{eq:apps:tech_pre_cassaigne:3}
    (w_{i,k}^\Z)_{[0, m)} = 
    (w_{j,k}^\Z)_{[0, m)}.
\end{equation}
Now, by Item (2) in the statement of the lemma and the definition of $w_{i,k}$ and $w_{j,k}$, $w_{i,k}$ and $w_{j,k}$ have length at most $m/2$.
This and \eqref{eq:apps:tech_pre_cassaigne:3} permits to use Theorem \ref{theo:fine&wilf} and obtain that $w_{i,k}$ and $w_{j,k}$ are powers of a common word.
This implies, since $w_{i,k}$ and $w_{j,k}$ are defined as roots, that $w_{i,k} = w_{j,k}$.

We now note that if $i,j \in [1,n]$ and $i < j$, then \eqref{eq:apps:tech_pre_cassaigne:2} ensures that $w_{1,j} = w_{i,j}$.
Hence, as $x_{[p_1, p_j)} = x_{[p_1, p_i)} x_{[p_i, p_j)}$, $w_{1,i} = w_{1,j} = w_{i,j}$.
Being $i,j$ arbitrary, this implies that $w_{1,2} = w_{1,j} = w_{i,j}$.
Therefore, the lemma follows from defining $w \coloneqq w_{1,2}$.
\end{proof}

The proposition below uses Lemma \ref{apps:tech_pre_cassaigne} to give a bound for $p_X(n+1)-p_X(n)$ in a very general context.

\begin{prop} \label{apps:pX_bound_from_coding}
Let $\cW \subseteq \cA^+$ and $X \subseteq \bigcup_{k\in\Z} S^k\cW^\Z$.
Then, for any $\ell < \langle\cW\rangle$,
\begin{equation*}
    p_X(\ell+1) - p_X(\ell) \leq 
    256\#\cA \cdot\#(\mroot\cW)^2 |\cW|^2 / \ell^2.
\end{equation*}
\end{prop}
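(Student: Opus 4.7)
My plan is to reduce the bound on $p_X(\ell+1)-p_X(\ell)$ to a bound on the number of right-special words of length $\ell$, and then to estimate that number using the $\cW$-coding of $X$ together with Lemma \ref{apps:tech_pre_cassaigne}.

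First, I will invoke Item (1) of Proposition \ref{prop:props_special_words} to write $p_X(\ell+1)-p_X(\ell) \le \#\cA\cdot\#\RS_\ell(X)$, so it suffices to show
\[
\#\RS_\ell(X) \;\le\; 256\,\#(\mroot\cW)^2\,|\cW|^2/\ell^2.
\]
To unpack the coding, for every $w\in\RS_\ell(X)$ I pick two points $x^w,y^w\in X$ with $(x^w)_{[0,\ell+1)}=w a^w$ and $(y^w)_{[0,\ell+1)}=w b^w$, where $a^w\neq b^w$. Because $\ell+1\le\langle\cW\rangle+1\le|\cW|+1$, each of these length-$(\ell+1)$ windows lives inside a pair of consecutive $\cW$-blocks of the underlying $\cW^\Z$-point, and the letter at position~$\ell$ is exactly the corresponding residue of the root of the block hosting it. In this way I attach to each $w$ a pair of roots $(s^w,t^w)\in(\mroot\cW)^2$ (the roots of the blocks containing position $\ell$ in $x^w$ and $y^w$, respectively); the condition $a^w\neq b^w$ translates into an incompatibility of the two root-residues.

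The core of the argument is then to bound the number of $w$'s in each root-pair class by $O(|\cW|^2/\ell^2)$. Fixing $(s,t)$, a priori the structural position of position $\ell$ in each of $x^w,y^w$ is parameterised by an offset in $[0,|\cW|)$, giving at most $|\cW|^2$ configurations. The plan is to apply Lemma \ref{apps:tech_pre_cassaigne} as follows: by the minimality of $X$, select a single reference point $z\in X$ in which, for every $w$ in the class, the two distinguishing occurrences $(x^w,0)$ and $(y^w,0)$ appear at positions $p_1^w$ and $p_2^w$ with $z_{[p_i^w,p_i^w+\ell)}=w$. Aggregating over a subclass whose positions cluster inside a window of length $\le\ell/2$, the lemma yields a common word $r$ such that all the aligned stretches of $z$ between these positions are powers of $r$; combined with the form of $\cW$-blocks (powers of $s$ and $t$), one forces $|r|$ to divide both $|s|$ and $|t|$, whence the $w$'s in the subclass share a global period compatible with $(s,t)$. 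A direct count of the length-$\ell$ words supported on two such blocks, subject to this common period and to block lengths at most $|\cW|$, reduces the effective number of offsets by a factor $\ell$ on each side, giving $O(|\cW|^2/\ell^2)$ words per class. Summing over the at most $\#(\mroot\cW)^2$ classes produces the desired bound, and the explicit constant $256$ will emerge from the bookkeeping in the individual estimates.

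The main obstacle will be step three, namely setting up the application of Lemma \ref{apps:tech_pre_cassaigne} so that its $|p_j-p_i|\le\ell_k/2$ clustering hypothesis can be guaranteed, and then showing that the period it produces propagates to \emph{all} $w$'s in the class (not just a pair), as well as keeping track of how the two distinct "sides" of the distinguishing pair $(x^w,y^w)$ interact. Once the lemma is deployed cleanly, the quadratic-in-$|\cW|/\ell$ counting and the constant $256$ should follow from straightforward bookkeeping on offsets modulo $|s|$ and $|t|$.
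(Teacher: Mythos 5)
Your opening reduction (via Item \eqref{item:prop:props_special_words:RS&pX} of Proposition \ref{prop:props_special_words}) and your general ingredients -- classifying special words by a pair of roots, working with offsets of size at most $|\cW|$, and invoking Lemma \ref{apps:tech_pre_cassaigne} -- are the right ones, but the core of your plan has a genuine gap, and it is exactly at the step you flagged. You propose to use ``the minimality of $X$'' to pick a single reference point $z\in X$ in which the distinguishing occurrences of all the special words in a class appear, and then to extract a subclass whose positions cluster in a window of length $\le\ell/2$ so that Lemma \ref{apps:tech_pre_cassaigne} applies. First, minimality is not a hypothesis of the proposition (it is stated for any subshift contained in $\bigcup_k S^k\cW^\Z$), so you would at best prove a weaker statement. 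More importantly, even assuming minimality, occurrences of the various words in a genuine point $z\in X$ sit at integer positions with no a priori bound, so no pigeonhole can force many of them into an $\ell/2$-window at a controlled cost; a ``clustered subclass'' could well consist of a single word, and then the lemma gives nothing and no per-class count of size $O(|\cW|^2/\ell^2)$ follows. The paper's resolution is to replace $z$ by the \emph{finite} family of bi-periodic model points $X'=\{\dots vvv.v'v'v'\dots : v,v'\in\mroot\cW\}$: since $\ell+1\le\langle\cW\rangle$, every $u_ia_{i,j}$ occurs in some $x\in X'$ at an offset in $[-|\cW|,|\cW|)$, and it is this boundedness that lets one pigeonhole the offsets into $\ell/4$-chunks at cost $(8|\cW|/\ell)^2$ and so satisfy the hypothesis $|p_j-p_i|\le\frac12\ell_k$ of Lemma \ref{apps:tech_pre_cassaigne}.

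The second problem is your concluding count. You claim that the common period produced by the lemma ``reduces the effective number of offsets by a factor $\ell$ on each side,'' but a period of the blocks does not by itself cut the number of admissible offsets from $|\cW|$ down to $|\cW|/\ell$ (a word of length $\ell$ inside $s^\infty$ can occur at up to $|s|$ distinct phases). In the paper the factor $\ell^2$ does not come from such a period count at all: after the clustering pigeonhole, one shows that any \emph{four} special words in the same cluster are incompatible -- Lemma \ref{apps:tech_pre_cassaigne}, applied to the aligned occurrences inside the fixed model points $x_0,x_1$, forces some $u'_ra_{r,0}=u'_ra_{r,1}$, contradicting right-specialness -- so each of the at most $\#(\mroot\cW)^2\cdot(8|\cW|/\ell)^2$ classes contains at most three special words, which is where the bound $256\,\#\cA\,\#(\mroot\cW)^2|\cW|^2/\ell^2$ comes from. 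To repair your argument you would need both to move the alignment into the model points of $X'$ (or some equivalent device giving bounded offsets without minimality) and to replace the period-based offset count by this ``at most a bounded number of special words per cluster'' contradiction.
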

\begin{proof}
We prove the proposition by contradiction.
Suppose that $\ell < \langle\cW\rangle$ and that $p_X(\ell+1) - p_X(\ell) \geq 256\#\cA \cdot\#(\mroot\cW)^2 |\cW|^2 / \ell^2$.
Then, by Proposition \ref{prop:props_special_words}, we can find at least $256\#(\mroot\cW)^2 |\cW|^2 / \ell^2$ right-special words $\{u_i : i \in I\}$ of length $\ell$ in $X$.
Let $u_i a_{i,0}$ and $u_i a_{i,1}$ be two different right extensions for $u_i$ in $X$.
We are going to prove that $a_{i,0} = a_{i,1}$ for some $i \in I$, contradicting the fact that $u_i a_{i,0}$ and $u_i a_{i,1}$ are different.
\medskip

Let $X' = \{ \dots vvv.v'v'v'\dots : v,v' \in \mroot\cW\} \subseteq \cA^\Z$.
Then, it is not difficult to check that:
\begin{enumerate}[label=(\alph*)]
    \item every $w \in \cL(X)$ of length at most $\langle\cW\rangle$ occurs in some $x \in X'$.
    \item $\#X' \leq \#(\mroot \cW)^2$.
\end{enumerate}
In particular, each $u_i a_{i,j}$ occurs in some $x_{i,j} = \dots v_{i,j}.v'_{i,j}\dots \in X'$, so

\begin{equation} \label{eq:apps:pX_bound_from_coding:1}
    \text{$u_i a_{i,j} = (x_{i,j})_{[\beta_{i,j}, \beta_{i,j} + \ell]}$ for some $\beta_{i,j} \in [-|\cW|, |\cW|)$.}
\end{equation}
We use (b) and the Pigeonhole principle to obtain a set $I' \subseteq I$ and $x_0, x_1 \in X'$ such that $\#I' \geq 256 |\cW|^2 / \ell^2$ and $x_j = x_{i,j}$ for all $i \in I'$ and $j \in \{0,1\}$.
We use again the Pigeonhole principle to find $I'' \subseteq I'$ satisfying $\#I'' \geq \#I'/(8|\cW|/\ell)^2 \geq 4$ and
\begin{equation} \label{eq:apps:pX_bound_from_coding:2}
    \text{$|\beta_{i,j} - \beta_{i',j}| \leq 2|\cW| / (8 |\cW| / \ell) = \ell / 4$ for all $i,i' \in I''$ and $j \in \{0,1\}$.}
\end{equation}

Let $\beta = \max\{\beta_{i,1} : i \in I''\}$ and, for $i \in I''$, $\gamma_i = \beta_{i,0} -\beta_{i,1} + \beta$.
We claim that for all $i \in I''$,
\begin{enumerate}[label=(\roman*)]
    \item $\beta_{i,1} \leq \beta \leq \beta_{i,1} + \ell/4$ and $\beta_{i,0} \leq \gamma_i \leq \beta_{i,0} + \ell/4$;
    \item $u_i$ has a suffix $u'_i$ of length at least $\frac{3}{4}\ell$ such that $u'_i a_{i,0} = (x_0)_{[\gamma_i, \gamma_i + |u'_i|]}$ and $u'_i a_{i,1} = (x_1)_{[\beta, \beta + |u'_i|]}$;
    \item if $p, q \in I''$ are different, then $\gamma_p + |u'_p| \not= \gamma_q + |u'_q|$ and $\gamma_p \not= \gamma_q$.
\end{enumerate}
Item (i) follows from \eqref{eq:apps:pX_bound_from_coding:2}.
For Item (ii), we first note that the definition of $\gamma_i$ ensures that $m_i \coloneqq \ell+\beta_{i,1}-\beta = \ell+\beta_{i,0}-\gamma_i$, and that (i) gives $\frac{3}{4}\ell \leq m_i \leq \ell$.
Thus, $u_i$ has a suffix $u'_i$ of length $m_i \geq \frac{3}{4}\ell$ such that, by \eqref{eq:apps:pX_bound_from_coding:1}, satisfies Item (ii).

It is left to prove (iii).
Assume that $p, q \in I''$ and $\gamma_p + |u'_p| = \gamma_q + |u'_q|$.
Then, $\beta_{p,0} = \gamma_p + |u'_p| - \ell = \gamma_q + |u'_q| - \ell = \beta_{q,0}$ and hence $u_p = u_q$, which implies that $p = q$.
Let us now suppose that $\gamma_p = \gamma_q$.
Note that if $|u'_p| = |u'_q|$ then $\gamma_p + |u'_p| = \gamma_q + |u'_q|$, and so $p = q$ by what we just proved.
Thus, there is no loss of generality in assuming that $|u'_p| < |u'_q|$.
Then, (ii) allows us to write $u'_p a_{p,0} = (x_0)_{[\gamma_p, \gamma_p + |u'_p|]}$ and $u'_q = (x_0)_{[\gamma_p, \gamma_p + |u'_q|]}$.
In particular, $u'_p a_{p,0}$ is a prefix of $u'_q$.
Similarly, (ii) implies that $u'_p a_{p,1} = (x_1)_{[\beta, \beta + |u'_p|]}$ is a prefix of $u'_q = (x_1)_{[\beta, \beta + |u'_q|)}$.
Therefore, $u'_p a_{p,0} = u'_p a_{p,1}$, which contradicts the definition of $a_{p,0}$ and $a_{p,1}$.
This shows that the case $|u'_p| < |u'_q|$ does not occur, so $|u'_p| = |u'_q|$ and $p = q$.
This completes the proof of the claim.
\medskip

Thanks to the claim, we have that $(x_0)_{[\gamma_i, \gamma_i + |u'_i|)} = (S^\beta x_1)_{[0, |u'_i|)}$ and $|\gamma_i - \gamma_{i'}| \leq \frac{1}{2}|u'_i|$ for all $i, i' \in I''$. 
Moreover, all the $\gamma_i$ are different by (iii).
Therefore, we can use Lemma \ref{apps:tech_pre_cassaigne} and deduce that there exists $w \in \cA^+$ such that for any $p, q \in I''$, 
\begin{multline} \label{eq:apps:pX_bound_from_coding:3}
    \text{$(x_0)_{[\gamma_p, \gamma_q)}$ is a power of $w$ 
    }\\\text{
    and $(x_0)_{[ \gamma_p, \gamma_p + \min(|u'_p|, |u'_q|) )}$ is a prefix of $w^\infty$.}
\end{multline}
We use Item (iii) of the claim and that $\#I'' \geq 4$ to find $p, q \in I''$ such that $|u'_t| < |u'_p| < |u'_q|$ for all $t \in I''\setminus\{p,q\}$.
Furthermore, (iii) allows us to find $r, s \in I''\setminus \{p,q\}$ such that $\gamma_r + |u'_r| < \gamma_s + |u'_s|$.

We observe that, since $|u'_s| < |u'_p|$, the second part of \eqref{eq:apps:pX_bound_from_coding:3} ensures that that $u'_s = (x_0)_{[ \gamma_s, \gamma_s + |u'_s| )} $ is a prefix of $w^\infty$.
Then, by the first part of \eqref{eq:apps:pX_bound_from_coding:3}, $(x_0)_{[ \gamma_r, \gamma_s + |u'_s| )}$ is a prefix of $w^\infty$.
Since $\gamma_r + |u'_r| < \gamma_s + |u'_s|$, we get that
\begin{equation} \label{eq:apps:pX_bound_from_coding:4}
    \text{$u'_r a_{r,0} = (x_0)_{[\gamma_r, \gamma_r + |u'_r|]}$ is a prefix of $w^\infty$.}
\end{equation}
Now, the definition of $r$ and $p$ guarantees that $|u'_r| < |u'_p|$, so, by \eqref{eq:apps:pX_bound_from_coding:1}, $u'_r a_{r,1} = (x_1)_{[\beta, \beta + |u'_r|])}$ is a prefix of $(x_0)_{[\gamma_p, \gamma_p + |u'_p|)} = (x_1)_{[\beta, \beta + |u'_p|)}$.
Moreover, as $|u'_p| < |u'_q|$, the second part of \eqref{eq:apps:pX_bound_from_coding:3} gives that $(x_0)_{[\gamma_p, \gamma_p + |u'_p|)}$ is a prefix of $w^\infty$.
We conclude that $u'_r a_{r,1}$ is a prefix of $w^\infty$.
But then \eqref{eq:apps:pX_bound_from_coding:4} implies that $u'_r a_{r,1} = u'_r a_{r,0}$, contradicting our assumptions.
\end{proof}

\begin{theo}[\cite{Cassaigne95}]
\label{apps:theo:cassaigne}
Let $X$ be a minimal linear-growth complexity subshift. 
Then, $p_X(\ell+1) - p_X(\ell)$ is uniformly bounded.
\end{theo}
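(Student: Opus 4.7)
The plan is to derive the bound by combining the $\cS$-adic structure given by Theorem \ref{theo:main_sublinear} with the general bound of Proposition \ref{apps:pX_bound_from_coding}. We may assume $X$ is infinite, since otherwise $p_X(\ell+1)-p_X(\ell)=0$ for all large $\ell$. Applying Theorem \ref{theo:main_sublinear}, we obtain a constant $d\geq1$ and a recognizable $\cS$-adic sequence $\bsigma=(\sigma_n\colon \cA_{n+1}\to\cA_n^+)_{n\geq0}$ generating $X$ and satisfying $(\cP_1)$, $(\cP_2)$, $(\cP_3)$. Set $\cW_n = \sigma_{[0,n)}(\cA_n)$ for $n\geq 1$. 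Since $\bsigma$ generates $X$ we have, for every $n$, the inclusion $X \subseteq \bigcup_{k\in\Z}S^k\cW_n^{\Z}$, so Proposition \ref{apps:pX_bound_from_coding} is applicable whenever $\ell<\langle \cW_n\rangle$.

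For a given $\ell\geq 1$, the idea is to pick $n=n(\ell)\geq 1$ as the smallest integer with $\langle \cW_n\rangle>\ell$; such an $n$ exists because the recognizable $\cS$-adic representation of the infinite minimal subshift $X$ forces $\langle \cW_n\rangle\to+\infty$ (this may be checked directly or extracted from the construction). Then by minimality of $n$, $\langle \cW_{n-1}\rangle\leq \ell$ (with the convention $\cW_0=\cA_0$, so $\langle\cW_0\rangle=1$). Combining $(\cP_2)$ and $(\cP_3)$, we get $|\cW_{n-1}|\leq d\,\langle\cW_{n-1}\rangle\leq d\ell$, and then
\[
\langle\cW_n\rangle\leq |\cW_n| \leq |\sigma_{[0,n-1)}|\cdot|\sigma_{n-1}| \leq d\ell\cdot d = d^2\ell.
\]
A second application of $(\cP_2)$ then yields $|\cW_n|\leq d\,\langle\cW_n\rangle\leq d^3\ell$, and $(\cP_1)$ gives $\#(\mroot\cW_n)\leq d$.

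Plugging these into Proposition \ref{apps:pX_bound_from_coding} applied with $\cW=\cW_n$ (and with the same $\ell$, which satisfies $\ell<\langle\cW_n\rangle$ by choice of $n$), we obtain
\[
p_X(\ell+1)-p_X(\ell)\;\leq\; 256\,\#\cA_0\cdot \#(\mroot\cW_n)^2\cdot\frac{|\cW_n|^2}{\ell^2}\;\leq\; 256\,\#\cA_0\cdot d^2\cdot d^6 \;=\; 256\,\#\cA_0\cdot d^8,
\]
which is a uniform bound depending only on $X$. The main point of the argument is the interplay between $(\cP_2)$ and $(\cP_3)$: the latter controls the ratio $\langle\cW_n\rangle/\langle\cW_{n-1}\rangle$, ensuring that we can always find a level $n$ at which $\langle\cW_n\rangle$ and $\ell$ are of the same order, while the former turns the length control into a bound on the maximum length $|\cW_n|$. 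The only subtle point is verifying that $\langle\cW_n\rangle\to\infty$, which is where we use that $X$ is infinite and $\bsigma$ is recognizable.
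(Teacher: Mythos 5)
Your proposal is correct and follows essentially the same route as the paper's proof: choose the least level $n$ with $\langle\sigma_{[0,n)}\rangle>\ell$, use $(\cP_2)$ and $(\cP_3)$ to bound $|\sigma_{[0,n)}|$ by a constant multiple of $\ell$, and apply Proposition \ref{apps:pX_bound_from_coding} together with $(\cP_1)$. Your second application of $(\cP_2)$ is redundant (you already had $|\cW_n|\leq d^2\ell$), which only worsens the constant from $d^6$ to $d^8$ but does not affect correctness.
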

\begin{proof}
Let $\ell \geq 1$ be arbitrary.
The theorem is trivial if $X$ is finite, so we assume that $X$ is infinite.
Then, we can use Theorem \ref{theo:main_sublinear} to obtain an $\cS$-adic sequence $\bsigma = (\sigma_n\colon\cA_{n+1}^+\to\cA_n^+)_{n\geq1}$ generating $X$ and $d \geq 1$ such that the conditions (1), (2) and (3) of Theorem \ref{theo:main_sublinear} hold.

Let $n \geq 1$ be the least integer such that $\langle \sigma_{[0,n)} \rangle > \ell$.
Then, $X$ is a subset of $\bigcup_{k \in \Z} S^k \sigma_{[0,n)}(\cA_n^\Z)$, so Proposition \ref{apps:pX_bound_from_coding} and the conditions in Theorem \ref{theo:main_sublinear} give the bounds:
\begin{align*}
    p_X(\ell+1) - p_X(\ell) &\leq 
    256 \#\cA_0 \#(\mroot\sigma_{[0,n)}(\cA_n))^2 |\sigma_{[0,n)}|^2 / \ell^2 \\ & \leq 
    256 \#\cA_0\cdot d^2\cdot |\sigma_{[0,n)}|^2 / \ell^2.
\end{align*}
Now, the minimality of $n$ ensures that $\langle\sigma_{[0,n-1)}\rangle \leq \ell$, so by Items (2) and (3) in Theorem \ref{theo:main_sublinear},
\begin{equation*}
|\sigma_{[0,n)}| \leq 
|\sigma_{n-1}|\cdot|\sigma_{[0,n-1)}| \leq
d^2\langle\sigma_{[0,n-1)}\rangle \leq d^2 \ell.
\end{equation*}
Therefore, $p_X(\ell+1) - p_X(\ell) \leq  256 \#\cA_0\cdot d^6$ and $p_X(\ell+1) - p_X(\ell)$ is uniformly bounded.
\end{proof}

\subsection{A theorem of Cassaigne, Frid, Puzynina and Zamboni}

The following result was proven in \cite{CFPZ18}.
\begin{theo} \label{apps:theo:CFPZ19_original}
Let $x \in \cA^\N$ be an infinite sequence.
The following conditions are equivalent:
\begin{enumerate}
    \item $x$ has linear-word complexity.
    \item There exists $S \subseteq \cA^*$ such that $S^2 \supseteq \cL(x)$ and $\sup_{n\geq1} p_S(n) < +\infty$.
\end{enumerate}
\end{theo}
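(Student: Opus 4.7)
The plan is to handle the two implications separately. For $(2) \Rightarrow (1)$, a direct counting argument suffices: assuming $S^2 \supseteq \cL(x)$ and $C \coloneqq \sup_n p_S(n) < \infty$, every factor of $x$ of length $n$ decomposes as $s_1 s_2 \in S \cdot S$, giving
\[
p_x(n) \leq \sum_{k=0}^n p_S(k)\,p_S(n-k) \leq (n+1)\,C^2,
\]
which proves linear growth.

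For $(1) \Rightarrow (2)$, I would let $X \subseteq \cA^{\Z}$ be the orbit closure of $x$, so $p_X = p_x$ has linear growth. The eventually periodic case is handled directly, so assume otherwise. Reducing to the minimal case (using the author's remark that non-minimal analogues of Theorem \ref{theo:main_sublinear} follow by similar methods), I take an $\cS$-adic representation $\bsigma = (\sigma_n\colon \cA_{n+1} \to \cA_n^+)_{n \geq 0}$ of $X$ satisfying $(\cP_1)$, $(\cP_2)$ and $(\cP_3)$; by passing to a suitable subsequence of levels I further arrange that $L_n \coloneqq \langle\sigma_{[0,n)}\rangle$ grows geometrically, say $L_{n+1} \geq 2L_n$ (this preserves $(\cP_1)$ and $(\cP_2)$, although $(\cP_3)$ is sacrificed).

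I would then define
\[
S \coloneqq \bigcup_{n \geq 1}\, \bigcup_{r \in \mroot \sigma_{[0,n)}(\cA_n)} \big\{r^{\Z}_{[0,k)}, \, r^{\Z}_{[-k, 0)} : L_{n-1} < k \leq 2d\,L_n\big\},
\]
augmented with the finitely many words of $\cL(x)$ of length at most $L_0$. For the inclusion $\cL(x) \subseteq S \cdot S$: given $w \in \cL(x)$ with $|w| = m$, pick the smallest $n$ with $L_n \geq m$; then $w$ lies within a product of two consecutive level-$n$ blocks, yielding a decomposition $w = w_1 w_2$ with $w_1$ a suffix of $\sigma_{[0,n)}(a) = r^{k_a}$ and $w_2$ a prefix of $\sigma_{[0,n)}(b) = s^{k_b}$, where $r, s \in \mroot \sigma_{[0,n)}(\cA_n)$. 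For each $w_i$, taking the smallest $n'_i$ with $L_{n'_i} \geq |w_i|$ and decomposing $\sigma_{[0,n)}(a)$ as a concatenation of level-$n'_1$ blocks shows that $w_1$ is a suffix of the terminal such block $\sigma_{[0,n'_1)}(b') = (r')^{k'}$ with $r' \in \mroot\sigma_{[0,n'_1)}(\cA_{n'_1})$, placing $w_1 \in S$; the argument for $w_2$ is symmetric.

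For the bound on $p_S$: for fixed $k \geq 1$, the levels contributing words of length $k$ are those $n$ with $L_{n-1} < k \leq 2d\,L_n$, and the geometric growth of $L_n$ confines this to $O(\log d)$ consecutive indices; each such level contributes at most $2|\mroot\sigma_{[0,n)}(\cA_n)| \leq 2d$ words, giving $p_S(k) = O(d \log d)$, uniform in $k$. The main obstacle is precisely this bound: naively the set of roots $\bigcup_n \mroot\sigma_{[0,n)}(\cA_n)$ is infinite (as already occurs for Sturmian subshifts, where $\mroot\sigma_{[0,n)}(\cA_n)$ varies with $n$), and only the combined effect of the length restriction to $(L_{n-1}, 2d\,L_n]$ and the geometric growth $L_{n+1} \geq 2L_n$ ensures that each length is served by boundedly many levels.
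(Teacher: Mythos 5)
Your direction $(2)\Rightarrow(1)$ is the same counting argument as the paper's, and the broad outline of $(1)\Rightarrow(2)$ (contract the $\cS$-adic sequence of Theorem \ref{theo:main_sublinear} to get geometric growth of $L_n=\langle\sigma_{[0,n)}\rangle$, build $S$ from the structure, and use a length window per level plus the geometric growth to bound $p_S$) is also in the paper's spirit. But the combinatorial core of your inclusion $\cL(x)\subseteq S\cdot S$ has a genuine gap. You claim that for the smallest $n$ with $L_n\geq |w|$ the factor $w$ "yields a decomposition $w=w_1w_2$ with $w_1$ a suffix of $\sigma_{[0,n)}(a)$ and $w_2$ a prefix of $\sigma_{[0,n)}(b)$". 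This is only true when the occurrence of $w$ straddles a level-$n$ cut point; it fails whenever $w$ sits strictly inside a single level-$n$ block, and in particular strictly inside a single copy of that block's root (which happens as soon as the block is aperiodic, so the root is the whole block). In that situation $w$ is a middle factor of the root, and your set $S$ — prefixes and suffixes of powers of roots, with length constraints — gives no evident two-term factorization. Trying to repair this by cutting at a lower-level cut point does not work in general either: choosing the largest level whose cut points enter $w$, the number of interior cut points at that level can jump from $\geq 2$ straight to $0$, and cutting at the leftmost (resp.\ rightmost) of them leaves the right (resp.\ left) piece spanning a full block plus a fragment, which is not a prefix (resp.\ suffix) of a power of any root of the structure. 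So for your $S$ the inclusion $\cL(x)\subseteq S^2$ is not established, and in a generic aperiodic example it actually fails.

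This is precisely the difficulty the paper's construction is built to overcome: instead of relying on the $\cS$-adic cut points themselves, it forms the two-block words $\cW_n$ (powers of roots of length about $|\tau_{[0,n)}|$ concatenated in pairs) and applies Lemma \ref{apps:tech_w_CFPZ19}, which decomposes each $w\in\cW_n$ by dyadic partitions into $2^i$ pieces at every scale and with eight shifted offsets $j\in[0,7]$. The offsets guarantee that for \emph{every} factor $u$ of $w$ of length at least the threshold $2^6\ell$ some grid point of the right scale falls suitably inside $u$, splitting it into two long prefixes/suffixes of grid pieces, i.e.\ into two elements of $\cV_{n,w}$ — middle factors included — while keeping the number of words of each length in $\cV_{n,w}$ bounded. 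That mechanism (or something playing its role) is what is missing from your proposal. Two smaller remarks: your per-length count of $p_S$ and the placement of genuine suffixes/prefixes of blocks into $S$ via the smallest $n'_i$ with $L_{n'_i}\geq |w_i|$ are fine; but the "reduction to the minimal case" is not a reduction — the orbit closure of a linear-complexity sequence need not be minimal, and the paper itself only proves the minimal case of Theorem \ref{apps:theo:CFPZ19_original}, so appealing to unstated non-minimal analogues does not cover the general statement.
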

In this subsection, we give a different proof of Theorem \ref{apps:theo:CFPZ19_original} for the case of minimal subshifts.
We start by proving the following corollary of Theorem \ref{theo:main_sublinear}.

\begin{prop} \label{apps:tech_contract_CFPZ19}
Let $X$ be an infinite minimal subshift of linear-growth complexity.
There exists $d \geq 1$ such that for any $d' \geq 2$ we can find $\btau = (\tau_n\colon\cA_{n+1}\to\cA_n^+)_{n\geq0}$ generating $X$ such that: 
\begin{enumerate}
    \item $\#(\mroot\tau_{[0,n)}(\cA_n)) \leq d$.
    \item $|\tau_{[0,n)}(a)| \leq d\cdot |\tau_{[0,n)}(b)|$ for all $a, b \in \cA_n$.
    \item $d' \leq |\tau_{n-1}(a)| \leq d^{6\log_2 d'}$ for all $a \in \cA_n$.
\end{enumerate}
\end{prop}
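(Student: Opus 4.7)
The plan is to obtain $\btau$ by telescoping (i.e., grouping consecutive morphisms of) the $\cS$-adic sequence produced by Theorem~\ref{theo:main_sublinear}. To begin, apply Theorem~\ref{theo:main_sublinear} to $X$ to obtain a constant $d_0 \geq 2$ (enlarging if necessary) and a recognizable $\cS$-adic sequence $\bsigma = (\sigma_n \colon \cA_{n+1} \to \cA_n^+)_{n \geq 0}$ generating $X$ and satisfying $(\cP_1)$, $(\cP_2)$, $(\cP_3)$ with constant $d_0$. I will set $d := d_0^6$; this constant depends only on $X$.

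A short preparatory step is to extend $(\cP_2)$ from initial products to arbitrary sub-products: for any $0 \leq k < \ell$ and any $a, b \in \cA_\ell$, decomposing $\sigma_{[0,\ell)} = \sigma_{[0,k)}\, \sigma_{[k,\ell)}$ and using $(\cP_2)$ at both levels $k$ and $\ell$ gives
\begin{equation*}
    \langle\sigma_{[0,k)}\rangle\, |\sigma_{[k,\ell)}(a)| \leq |\sigma_{[0,\ell)}(a)| \leq d_0 |\sigma_{[0,\ell)}(b)| \leq d_0^2 \langle\sigma_{[0,k)}\rangle\, |\sigma_{[k,\ell)}(b)|,
\end{equation*}
so that $|\sigma_{[k,\ell)}| \leq d_0^2 \langle\sigma_{[k,\ell)}\rangle$.

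Now fix $d' \geq 2$ and inductively define $n_0 = 0$ and $n_{k+1} = \min\{m > n_k : \langle\sigma_{[n_k, m)}\rangle \geq d'\}$. Each $n_{k+1}$ is finite: since $X$ is infinite, $\langle\sigma_{[0,m)}\rangle \to \infty$ as $m \to \infty$, and the bound $\langle\sigma_{[n_k, m)}\rangle \geq \langle\sigma_{[0,m)}\rangle / |\sigma_{[0,n_k)}|$ forces the minimum to exist. Set $\tau_k := \sigma_{[n_k, n_{k+1})} \colon \cA_{n_{k+1}} \to \cA_{n_k}^+$ and let $\btau = (\tau_k)_{k \geq 0}$, with alphabet sequence $(\cA_{n_k})_{k \geq 0}$ after relabelling.

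It remains to verify the three properties. Since $\tau_{[0,n)} = \sigma_{[0,n_n)}$, properties (1) and (2) are exactly $(\cP_1)$ and $(\cP_2)$ for $\bsigma$ at level $n_n$ and thus hold with constant $d_0 \leq d$. The sequence $\btau$ generates $X$: the inclusion $X_{\btau} \subseteq X_{\bsigma} = X$ is immediate from the definitions, and $X_{\btau}$ is non-empty since $\langle\tau_{[0,n)}\rangle = \langle\sigma_{[0,n_n)}\rangle \to \infty$, so by minimality of $X$ one has $X_{\btau} = X$. For (3), the lower bound $d' \leq |\tau_{n-1}(a)|$ follows directly from the definition of $n_n$. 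For the upper bound, minimality of $n_n$ gives $\langle\sigma_{[n_{n-1}, n_n - 1)}\rangle < d'$; combining the promoted $(\cP_2)$ with $(\cP_3)$ then yields
\begin{equation*}
    |\tau_{n-1}(a)| \leq |\sigma_{[n_{n-1}, n_n - 1)}| \cdot |\sigma_{n_n - 1}| \leq d_0^2 \cdot d' \cdot d_0 = d_0^3 d'.
\end{equation*}
A routine computation (using $d_0 \geq 2$ and $d' \geq 2$) gives $d_0^3 d' \leq d^{6\log_2 d'}$, so (3) holds. The construction has no real obstacle; the only technical detour is the one-line extension of $(\cP_2)$ from $\sigma_{[0,n)}$ to general sub-products $\sigma_{[k,\ell)}$, which is precisely what keeps all three constants in the statement of the proposition polynomial in $d_0$.
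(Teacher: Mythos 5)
Your proof is correct and follows essentially the same route as the paper: telescope the sequence from Theorem \ref{theo:main_sublinear} at cut points defined by a minimum-length threshold, and bound each block's maximum length by combining $(\cP_2)$ and $(\cP_3)$ with the minimality of the cut point. The only (cosmetic) difference is that you cut directly at threshold $d'$ and use your promoted $(\cP_2)$ for sub-products, whereas the paper cuts at threshold $2$ and then groups $\lceil\log_2 d'\rceil$ blocks; your variant even gives the slightly sharper bound $d_0^3 d'$ before relaxing it to $d^{6\log_2 d'}$.
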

\begin{proof}
Let $\btau' = (\tau_n\colon\cA_{n+1}\to\cA_n^+)_{n\geq0}$ and $d$ be the elements given by Theorem \ref{theo:main_sublinear} when it is applied with $X$, and let $d' \geq 1$ be arbitrary.
We will construct $\btau$ by carefully contracting $\btau'$.

Let $n_0 = 0$ and inductively define $n_{k+1}$ as the smallest integer such that $n_{k+1} > n_k$ and $\langle\tau_{[n_k, n_{k+1})}\rangle \geq 2$.
We observe that, since $\langle\tau_{[n_k, n_{k+1}-1)}\rangle = 1$ by the minimality of $n_{k+1}$, we have that $\langle\tau_{[0,n_{k+1}-1)}\rangle \leq |\tau_{[0,n_k)}|$.
Then, by Items (2) and (3) in Theorem \ref{theo:main_sublinear}, we can bound
\begin{equation*}
    |\tau_{[0, n_{k+1})}| \leq
    |\tau_{[0, n_{k+1}-1)}| |\tau_{n_{k+1}-1}| \leq
    d \langle\tau_{[0, n_{k+1}-1)}\rangle \cdot d \leq 
    d^2 |\tau_{[0,n_k)}| \leq d^3 \langle\tau_{[0,n_k)}\rangle.
\end{equation*}
We note now that for any pair of morphisms $\sigma$ and $\sigma'$ for which $\sigma\sigma'$ is defined we have that $|\sigma \sigma'| \geq \langle\sigma\rangle |\sigma'|$.
Therefore,
\begin{equation} \label{eq:apps:tech_contract_CFPZ19:1}
    |\tau_{[n_k, n_{k+1})}| \leq
    \frac{ |\tau_{[0, n_{k+1})}| }{ \langle\tau_{[0, n_k)}\rangle } 
    \leq d^3.
\end{equation}
We set $\ell = \lceil \log_2 d' \rceil$ and consider the contraction $\btau = (\tau_{[n_{\ell k}, n_{\ell(k+1)})})_{k\geq0}$.
It follows from the definition of $n_k$ that
$ \langle \tau_{[n_{\ell k}, n_{\ell(k+1)})} \rangle \geq 2^{\ell} \geq d', $
and, from \eqref{eq:apps:tech_contract_CFPZ19:1}, that
$|\tau_{[n_{\ell k}, n_{\ell(k+1)})}| \leq d^{3\ell} \leq d^{6\log_2 d'}$.
Thus, $\btau$ satisfies Item (3) of this proposition. 
Moreover, since $\btau'$ satisfies Item (1) and (2) in Theorem \ref{theo:main_sublinear} and since $\btau$ is a contraction of $\btau'$, Items (1) and (2) of this proposition hold.
\end{proof}

\begin{lem} \label{apps:tech_w_CFPZ19}
Let $w \in \cA^+$ and $\ell \leq |w|$.
There exists a set of words $\cV$ such that:
\begin{enumerate}
    \item $\#\cV \cap \cA^n \leq 2^5|w|/\ell$ for all $n \geq 1$.
    \item $\langle\cV\rangle \geq \ell$, $|\cV| \leq |w|$.
    \item For any $u$ occurring in $w$ of length $|u| \geq 2^6 \ell$ we have that $u \in \cV^2$.
\end{enumerate}
\end{lem}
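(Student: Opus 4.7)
The plan is to build $\cV$ by fixing a sparse set of ``cut positions'' in $w$ at spacing roughly $\ell$, and then including in $\cV$ every prefix and suffix of $w$ obtained by cutting at such a position, provided its length is at least $\ell$. Concretely, I would define
\[ P = \{ k\ell : 0 \leq k \leq \lfloor |w|/\ell \rfloor \} \subseteq [0, |w|], \]
which satisfies $\#P \leq |w|/\ell + 1$, and then set
\[ \cV = \{ w_{[p-n, p)} : p \in P,\ n \geq \ell,\ p - n \geq 0 \} \cup \{ w_{[p, p+n)} : p \in P,\ n \geq \ell,\ p + n \leq |w| \}. \]

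With this definition, Item (2) is immediate: every element of $\cV$ is a factor of $w$ of length at least $\ell$, so $\langle\cV\rangle \geq \ell$ and $|\cV| \leq |w|$. For Item (1), I would observe that for a fixed length $n$ each $p \in P$ contributes at most two words of that length to $\cV$ (namely $w_{[p-n, p)}$ and $w_{[p, p+n)}$ when they exist), so
\[ \#(\cV \cap \cA^n) \leq 2\#P \leq 2(|w|/\ell + 1) \leq 2^5|w|/\ell, \]
using $\ell \leq |w|$ in the last step.

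The only step that requires an actual argument is Item (3). Given $u$ occurring in $w$ at some position $i$ with $|u| \geq 2^6\ell$, I would seek $p \in P$ lying in the window $[i + \ell,\ i + |u| - \ell]$; this window is an interval of length $|u| - 2\ell \geq 62\ell$, which comfortably exceeds the gap $\ell$ between consecutive elements of $P$, so such a $p$ exists. Then $v_1 := w_{[i, p)}$ has length $p - i \geq \ell$ and belongs to $\cV$ as a suffix of $w_{[0, p)}$ ending at a cut position; similarly $v_2 := w_{[p, i+|u|)}$ has length at least $\ell$ and lies in $\cV$ as a prefix starting at $p \in P$. By construction $u = v_1 v_2 \in \cV^2$.

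I do not expect a real obstacle: the argument is a plain pigeonhole/decomposition, and the constants $2^5$ and $2^6$ leave ample slack over the tighter bounds $4|w|/\ell$ and $4\ell$ that the construction actually gives.
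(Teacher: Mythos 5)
Your proof is correct, but it takes a genuinely different route from the paper's. The paper builds $\cV$ hierarchically: for each scale $i$ with $0 \leq i < \log_2(|w|/\ell)$ and each of $8$ phase shifts $j$, it cuts $w$ into $2^i$ pieces of length about $|w|/2^i$ at positions $\lfloor (8k+j)|w|/2^{i+3}\rfloor$, and lets $\cV_i$ consist of the prefixes and suffixes of length at least $\ell$ of these pieces; the count in Item (1) then comes from summing the geometric series $\sum_i 2\cdot 8\cdot 2^i \leq 2^5|w|/\ell$, and Item (3) is obtained by choosing the scale $i$ matched to $|u|$ (so that $|w|/2^{i+1} < |u| \leq |w|/2^i$) and a phase $j$ placing a cut near the middle of the occurrence of $u$. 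You instead use a single arithmetic grid of cut points with spacing $\ell$ and admit \emph{all} factors of $w$ of length at least $\ell$ that end or begin at a grid point; since for each fixed length $n$ a grid point contributes at most two such words, the per-length count is at most $2(\lfloor |w|/\ell\rfloor+1) \leq 4|w|/\ell$, and the covering step only needs a grid point in $[i+\ell, i+|u|-\ell]$, which exists already for $|u| \geq 3\ell$. So your construction proves the stated lemma with substantial slack in both constants ($4$ versus $2^5$, and $3\ell$ versus $2^6\ell$), and avoids the multi-scale bookkeeping; the paper's version keeps the elements of $\cV$ confined to pieces of length comparable to the scale, but that extra structure is not used anywhere in the application (Theorem \ref{apps:theo:CFPZ19}), which only invokes the three listed properties.
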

\begin{proof}
For $i \geq 0$ and $j \in [0,7]$, let $w = u_{i,j}(1) u_{i,j}(2) \dots u_{i,j}(2^i)$ be the (unique) decomposition of $w$ into $2^i$ words $u_{i,j}(k) \in \cA^*$ such that $|u_{i,j}(1) \dots u_{i,j}(k)| = \lfloor (8k+j)|w|/2^{i+3}\rfloor$ for all $k \in [1, 2^i]$.
We define $\cV_i$ as the set of words that are a prefix or a suffix of length at least $\ell$ of some $u_{i,j}(k)$.
Set $\cV \coloneqq \cup_{0 \leq i < \log_2 (|w|/\ell)} \cV_i$.

It follows from the definition of $\cV$ that $\langle\cV\rangle \geq \ell$ and that $|\cV| \leq |w|$, so Item (2) holds.
For Item (1), we note that if $n \geq 1$, then each $u_{i,j}(k)$ has at most one prefix of length $n$ and at most one suffix of length $n$.
Hence, $\#\cV_i \cap \cA^n$ is bounded by above by $2\cdot 8\cdot 2^i = 2^{i+4}$.
Therefore, $\#\cV \cap \cA^n \leq \sum_{0 \leq i < \log_2|w|/\ell} 2^{i+4} \leq 2^5 |w|/\ell$.

We now prove Item (3).
Let $u$ be a word of length $|u| \geq 2^6 \ell$ that occurs in $w$.
Let us write $w = t u s$, where $t, s \in \cA^*$, and take $i \geq 0$ such that $|w|/2^{i+1} < |u| \leq |w|/2^i$.
Remark that $i < \log_2 (|w|/\ell)$ as $|u| \geq 2^6 \ell$.
We also consider the unique pair $(k,j) \in [1,2^i] \times [0,7]$ such that
\begin{equation*}
    (8k + j)|w| / 2^{i+3} \leq |t| + |w|/2 < (8k + j + 1)|w| / 2^{i+3}.
\end{equation*}
Then, we can write $u = u' u''$ in such a way that $|tu'| = \lfloor (8k + j)|w| / 2^{i+3} \rfloor$.
It is not difficult to check that $u'$ is a suffix of $u_{i,j}(k)$ and that $u''$ is a prefix of $u_{i,j}(k+1)$.
Moreover, $|u'|, |u''| \geq |w|/2^i \geq \ell$, so $u', u'' \in \cV_i$ and $u \in \cV^2$.
\end{proof}

\begin{theo} \label{apps:theo:CFPZ19}
Let $X \subseteq \cA^\Z$ be an minimal subshift.
The following conditions are equivalent:
\begin{enumerate}
    \item $X$ has linear-word complexity.
    \item There exists $S \subseteq \cA^*$ such that $S^2 \supseteq \cL(X)$ and $\sup_{n\geq1} p_S(n) < +\infty$.
\end{enumerate}
\end{theo}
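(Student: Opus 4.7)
For the direction $(2) \Rightarrow (1)$, the plan is a direct counting argument: if every factor of $X$ of length $n$ decomposes as $u = s_1 s_2$ with $s_i \in S$ and $C := \sup_k p_S(k) < \infty$, then
\[
p_X(n) \;\leq\; \sum_{k=0}^{n} p_S(k)\,p_S(n-k) \;\leq\; (n+1)\,C^2,
\]
so $X$ has linear-growth complexity.

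For $(1) \Rightarrow (2)$, assuming $X$ infinite (the finite case is trivial), I would first apply Proposition \ref{apps:tech_contract_CFPZ19} with $d' = 2$ to obtain an $\cS$-adic sequence $\btau = (\tau_n\colon\cA_{n+1}\to\cA_n^+)_{n\geq 0}$ generating $X$ and satisfying items (1)--(3). Writing $m_n = \langle\tau_{[0,n)}\rangle$, $M_n = |\tau_{[0,n)}|$ and $R_n = \mroot\tau_{[0,n)}(\cA_n)$, I have $|R_n|\leq d$, $m_n \leq M_n \leq d\,m_n$, $m_{n+1}\geq 2 m_n$, and $m_{n+1} \leq d^{6}\,m_n$. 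The construction of $S$ proceeds as follows: fix an integer $K \geq 2^6 d^5$ (chosen so the covered ranges at consecutive levels overlap), and for each $n \geq 0$ and each family of root/exponent pairs $((r_i, k_i))_{i=1}^{K}$ with $r_i \in R_n$ and $k_i \in \Z$ satisfying $k_i |r_i| \in [m_n, M_n]$, form the test word
\[
\tilde w_{n, (r_i, k_i)} \;=\; r_1^{k_1}\,r_2^{k_2}\cdots r_K^{k_K}.
\]
The number of such families per level is at most $(d \cdot d)^{K} = d^{2K}$. I then apply Lemma \ref{apps:tech_w_CFPZ19} to each $\tilde w$ with parameter $\lambda_n = m_n$, producing a finite set $\cV_{n, \tilde w}$ with $\#\cV_{n,\tilde w}\cap\cA^k \leq 2^5 |\tilde w|/m_n = O(Kd)$, and set
\[
S \;=\; S_0 \;\cup\; \bigcup_{n\geq0}\bigcup_{\tilde w} \cV_{n,\tilde w},
\]
where $S_0$ is a finite set covering the (finitely many) factors of $X$ of length $< 2^6 m_0$.

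Two verifications would complete the proof. First, $\sup_k p_S(k) < \infty$: the set $\cV_{n,\tilde w}$ only contributes to lengths in $[m_n, |\tilde w|] \subseteq [m_n, Kd\,m_n]$, and since $m_n$ grows geometrically, only $O(\log(Kd))$ levels are relevant for each $k$; each level contributes at most $d^{2K}$ test words, each yielding $O(Kd)$ words per length, so $p_S(k)$ is uniformly bounded. Second, $\cL(X) \subseteq S^2$: any factor $u$ of length $\ell \geq 2^6 m_0$ lies, by the choice of $K$, in some interval $[2^6 m_n, Kd\,m_n]$ and it occurs as a subword of some $\tau_{[0,n)}(a_1 \cdots a_{|v|})$ with $|v|\leq K$. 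The crucial point is that this word equals the test word $\tilde w_{n,(r_{a_i}, k_{a_i})}$ corresponding to the actual root/exponent sequence of $(a_1,\ldots,a_{|v|})$; hence $u$ is a subword of this $\tilde w$ of length $\ell \geq 2^6 \lambda_n$, and Lemma \ref{apps:tech_w_CFPZ19} delivers $u \in \cV_{n,\tilde w}^2 \subseteq S^2$.

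The hard part I anticipate is calibrating $K$ large enough (of order $d^5$) to ensure that the intervals $[2^6 m_n, Kd\,m_n]$ for consecutive $n$ genuinely cover every length above the threshold, despite the loose upper bound $m_{n+1}/m_n \leq d^{6}$ permitted by Proposition \ref{apps:tech_contract_CFPZ19}. Including all exponent sequences $(k_i)$ rather than just the roots is what allows the test word $\tilde w$ to coincide with the actual image $\tau_{[0,n)}(a_1\cdots a_{|v|})$, resolving the otherwise-fatal mismatch between canonical and real exponents in the interior sections; the cost is a constant factor $d^{2K}$ in the cardinal bound, which is enormous but independent of $k$, leaving the final conclusion $\sup_k p_S(k) < \infty$ intact.
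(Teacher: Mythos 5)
Your $(2)\Rightarrow(1)$ direction is exactly the paper's counting argument and is fine, and your overall skeleton for $(1)\Rightarrow(2)$ (contract via Proposition~\ref{apps:tech_contract_CFPZ19} with $d'=2$, cut the resulting long words with Lemma~\ref{apps:tech_w_CFPZ19}, use geometric growth of $\langle\tau_{[0,n)}\rangle$ so that each length receives contributions from only $O(\log d)$ levels) matches the paper. But there is a genuine gap in your construction of $S$: the cardinality estimate ``at most $(d\cdot d)^K=d^{2K}$ families per level'' is false. For a root $r_i\in R_n$, the admissible exponents $k_i$ with $k_i|r_i|\in[m_n,M_n]$ number roughly $(M_n-m_n)/|r_i|+1$, which is unbounded in $n$ whenever $|r_i|\ll m_n$ (e.g.\ a single-letter root $r_i$ with $\tau_{[0,n)}(a)=r_i^{m}$, which is exactly the high-power/short-root regime the whole paper is built to handle). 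So the number of test words per level grows with $n$, and $\sup_k p_S(k)<\infty$ fails for your $S$. Restricting instead to the exponent sequences actually realized by letters $a_1\cdots a_K\in\cA_n^K$ does not rescue the count: Theorem~\ref{theo:main_sublinear} bounds only $\#(\mroot\tau_{[0,n)}(\cA_n))$, not $\#\cA_n$ or the number of distinct exponents, and Theorem~\ref{theo:negative_result} shows $\#\cA_n$ can be forced to be unbounded, so the ``real exponent'' patterns of length $K$ need not form a set of bounded size either.

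The paper sidesteps the exponent-enumeration problem entirely, and this is the idea your proposal is missing. Given $u\in\cL(X)$, choose the largest $n$ with $|u|\geq\langle\tau_{[0,n)}\rangle$; then $|u|<\langle\tau_{[0,n+1)}\rangle$, so $u$ occurs in $\tau_{[0,n+1)}(ab)$ for just \emph{two} letters $a,b$ — no long block of length $K$ is ever needed, hence no interval-covering calibration. For two blocks the ``canonical versus real exponent'' mismatch evaporates: writing $u_a=\mroot\tau_{[0,n+1)}(a)$, the image $\tau_{[0,n+1)}(a)$ is a power of $u_a$ and is therefore a \emph{suffix} of the canonically pumped word $u_a^{p_{n+1,u_a}}$ (and $\tau_{[0,n+1)}(b)$ a prefix of $u_b^{p_{n+1,u_b}}$), so every factor of $\tau_{[0,n+1)}(ab)$ already occurs in one of the $\leq d^2$ words $u_a^{p_{n+1,u_a}}u_b^{p_{n+1,u_b}}\in\cW_{n+1}$, with canonical exponents only. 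Applying Lemma~\ref{apps:tech_w_CFPZ19} to these $d^2$ words per level with $\ell=\langle\tau_{[0,n)}\rangle/2^6$ then gives both $\cL(X)\subseteq S^2$ and a uniform bound $p_S(k)\leq(\log_2 d+7)\cdot 2^{12}d^4$. If you repair your argument, the fix is essentially to replace your length-$K$ test words by these two-block canonically pumped words; as written, the proof of $\sup_k p_S(k)<\infty$ does not go through.
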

\begin{proof}
We first suppose that $X$ satisfies Condition (2) and define $d = \sup_{n\geq1} p_S(n)$.
Then $\cL(X) \cap \cA^n \subseteq \{s s' : s,s' \in S, |s| = n - |s'|\}$.
Hence, 
$$ p_X(n) \leq \sum_{k=0}^n p_S(k) p_S(n-k) \leq (n+1)d^2 $$
and $p_X$ has linear growth.

Let us now suppose that $X$ has linear-growth complexity.
The case in which $X$ is finite is trivial; hence, we assume that $X$ is infinite.
Using Proposition \ref{apps:tech_contract_CFPZ19} with $d' = 2$, we can find a constant $d$ and an $\cS$-adic sequence $\btau = (\tau_n\colon\cA_{n+1}\to\cA_n^+)_{n\geq0}$ generating $X$ such that Items (1), (2) and (3) in Proposition \ref{apps:tech_contract_CFPZ19} hold.

We define $S$ as follows.
Let $n \geq 1$.
For $u \in \mroot \tau_{[0,n)}(\cA_{n})$, we take $p_{n,u} \geq 1$ such that $|\tau_{[0,n)}| \leq |u^{p_{n,u}}| < 2 |\tau_{[0,n)}|$.
We define $\cW_n = \{ u^{p_{n,u}} v^{p_{n,v}} : u, v \in \mroot \tau_{[0,n)}(\cA_n) \}$.
For each $w \in \cW_n$, we use Lemma \ref{apps:tech_w_CFPZ19} with $\ell = \langle\tau_{[0,n-1)}\rangle/2^6$ to obtain a set $\cV_{n,w}$ satisfying the following:
\begin{enumerate} [label=(\alph*)]
    \item $\# \cV_{n,w} \cap \cA^k \leq 2^{11} |w| / \langle\tau_{[0,n-1)}\rangle$ for all $k \geq 1$.
    \item $\langle\cV_{n,w}\rangle \geq \langle\tau_{[0,n-1)}\rangle/2^6$, $|\cV_{n,w}| \leq |w|$.
    \item if $u$ occurs in $w$ and $|u| \geq \langle\tau_{[0,n-1)}\rangle$, then $w \in \cV_{n,w}^2$.
\end{enumerate}
We set $S = \cup_{n \geq 1} \cup_{w \in \cW_n} \cV_{n,w}$.

Before continuing, we make some observations about the definitions.
It follows from the definition of $\cW_n$ and Item (1) in Proposition \ref{apps:tech_contract_CFPZ19} that 
\begin{equation} \label{eq:apps:theo:CFPZ19:1}
    \#\cW_n \leq \#(\mroot \tau_{[0,n)}(\cA_n))^2 \leq d^2.
\end{equation}
We also have that
\begin{equation} \label{eq:apps:theo:CFPZ19:2}
    \text{if $a, b \in \cA_n$ and $v$ occurs in $\tau_{[0,n)}(ab)$, then $v$ occurs in some $w \in \cW_n$.}
\end{equation}
Note that since $|w| \leq 2|\tau_{[0,n)}|$ for all $w \in \cW_n$, (a) and (b) imply that
\begin{equation} \label{eq:apps:theo:CFPZ19:3}
    |\cV_{n,w}| \leq 2|\tau_{[0,n)}|
    \enspace\text{and}\enspace
    \#\cV_{n,w} \leq 2^{12} |\tau_{[0,n)}| / \langle\tau_{[0,n-1)}\rangle \leq 2^{12} d^2,
\end{equation}
where in the last step we used Items (2) and (3) of Proposition \ref{apps:tech_contract_CFPZ19}.

We now prove that $S$ satisfies the desired properties.
Let us start by showing that $\cL(X) \subseteq S^2$.
Let $u \in \cL(X)$ and let $n \geq 1$ be the biggest integer such that $|u| \geq \langle\tau_{[0,n)}\rangle$.
Then, $|u| < \langle\tau_{[0,n+1)}\rangle$ and, thus, as $\btau$ generates $X$, there exists $a,b \in \cA_{n+1}$ such that $u$ occurs in $\tau_{[0,n+1)}(a b)$.
Hence, by \eqref{eq:apps:theo:CFPZ19:2}, $u$ occurs in some $w \in \cW_{n+1}$, which implies, by (c), that $u \in \cV_{n+1,w}^2 \subseteq S^2$.

It remains to prove that $p_S$ is uniformly bounded.
Let $S_n = \cup_{w \in \cW_n} \cV_{n,w}$.
We claim the following:
\begin{enumerate}[label=(\roman*)]
    \item $\# p_{S_n}(k) \leq 2^{12} d^4$ for all $n \geq 0$ and $k \geq 0$.
    \item for any $k \geq 0$, there are at most $\log_2(d)+7$ integers $n$ such that $S_n \cap \cA^k$ is not empty.
\end{enumerate}
Observe that (i) and (ii) allow us to write
\begin{equation*}
    p_S(k) \leq 
    \sum_{n : S_n \cap \cA^k \not= \emptyset} p_{S_n}(k) \leq
    (\log_2 d + 7)\cdot 2^{12} d^4,
\end{equation*}
which would show that $p_S$ is uniformly bounded and would complete the proof.

Let us first prove (i).
The definition of $S_n$ ensures that $p_{S_n}(k) \leq \#\cW_n\cdot\max\{\#\cV_{n,w} \cap \cA^k : w\in \cW_n\}$.
Hence, by \eqref{eq:apps:theo:CFPZ19:1} and \eqref{eq:apps:theo:CFPZ19:3}, $p_{S_n}(k) \leq d^2\cdot 2^{12} d^2$. 

Next, we prove (ii) by contradiction.
Assume that there are more than $\log_2 d + 7$ integers $n$ such that $S_n \cap \cA^k \not= \emptyset$.
Then, we can find $n$ and $m$ such that $S_n \cap \cA^k \not= \emptyset$, $S_m \cap \cA^k \not= \emptyset$ and $m > n + \log_2 d + 7$.
We have, on one hand, that the definition of $S_m$ ensures that $k \geq \min_{w \in \cW_n} \langle\cV_{n,w}\rangle$.
Hence, by (b) and Item (3) in Proposition \ref{apps:tech_contract_CFPZ19},
\begin{equation} \label{eq:apps:theo:CFPZ19:4}
    k \geq \langle\tau_{[0,m-1)}\rangle/2^6 \geq 
    2^{m-n-7} \langle\tau_{[0,n)}\rangle.
\end{equation}
On the other hand, the definition of $S_n$ guarantees that $k \leq \max_{w \in \cW_n} |\cV_{n,w}|$.
Combining this with \eqref{eq:apps:theo:CFPZ19:3} and Item (2) in Proposition \ref{apps:tech_contract_CFPZ19} produces
\begin{equation} \label{eq:apps:theo:CFPZ19:5}
    k \leq 2|\tau_{[0,n)}| \leq 
    2d \langle\tau_{[0,n)}\rangle.
\end{equation}
Equations \eqref{eq:apps:theo:CFPZ19:4} and \eqref{eq:apps:theo:CFPZ19:5} are incompatible as $m - n - 7 > \log_2 d$.
This contradiction proves (ii) and completes the proof of the theorem.
\end{proof}

\subsection{Topological rank}

The topological rank of a minimal subshift $X$ is the least element $k \in [1,+\infty]$ such that there exists a recognizable $\cS$-adic sequence $\btau = (\tau_n\colon\cA_{n+1}\to\cA_n^+)_{n\geq0}$ satisfying, for every $n \geq 1$, that $\#\cA_n \leq k$ and that $\tau_n$ is positive and proper.
The class of finite topological rank subshifts satisfies several rigidity properties, and many tools have been developed to handle it; a non-exhaustive list includes \cite{BKMS13,eigenvalues_jems,E22,EM21,DM08,BSTY19,HPS92}.

It was proved in \cite{DDPM20_interplay} that a minimal subshift of nonsuperlinear-growth complexity has finite topological rank, and thus that the aforementioned rigidity properties hold for this class. 
We present in this subsection a new proof of this fact based in Theorem \ref{theo:main_nonsuperlinear}.

\begin{theo}[\cite{DDPM20_interplay}, Theorem 5.5] \label{apps:theo:nonsuperlinear_have_FTR}
Let $X$ be a minimal subshift of nonsuperlinear-growth complexity.
Then, $X$ has finite topological rank.
\end{theo}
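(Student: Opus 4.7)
The plan is to apply Theorem \ref{theo:main_nonsuperlinear} to obtain a recognizable $\cS$-adic sequence $\bsigma = (\sigma_n\colon \cA_{n+1} \to \cA_n^+)_{n \geq 0}$ generating $X$ and a constant $d \geq 1$ such that $(\cP_1)$ and $(\cP_2)$ hold, that is, $\#(\mroot\sigma_{[0,n)}(\cA_n)) \leq d$ and $|\sigma_{[0,n)}(a)| \leq d|\sigma_{[0,n)}(b)|$ for all $a, b \in \cA_n$. The objective is to extract from this a positive, proper, bounded-alphabet $\cS$-adic representation, which by definition witnesses finite topological rank.

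First I would perform a contraction of $\bsigma$ to secure primitivity and properness. Since $X$ is minimal, some contraction $\btau_k = \sigma_{[n_k, n_{k+1})}$ with the $n_k$ growing sufficiently quickly yields positive morphisms, and a standard trick -- factoring out a common prefix and suffix letter after one further contraction -- upgrades positivity to properness while preserving recognizability.

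The main difficulty is achieving a uniformly bounded alphabet. Theorem \ref{theo:main_nonsuperlinear} only bounds $\#\cA_n$ by $d\cdot\pcom(X)^4$, which may be infinite in general. To eliminate the dependence on $\pcom(X)$, I would not take the alphabet at each level to be $\cA_n$ itself, but rather a refined alphabet built from the bounded set $\mroot\sigma_{[0,n)}(\cA_n)$ of roots (at most $d$ of them) together with an aperiodic skeleton. Concretely, the construction behind Proposition \ref{Z4:main_prop} partitioned the coding alphabet as $\cC_{ap} \cup \cC_{sp} \cup \cC_{wp}$, and the first two parts have cardinality bounded purely in terms of $d$; only the weakly-periodic part $\cC_{wp}$ carries the $\pcom(X)^4$ factor. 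For finite topological rank we do not need the connecting morphism to preserve the exponent of each maximal periodic run, so I would merge all weakly-periodic letters that share the same root into a single representative. This yields a coarser $\cS$-adic sequence whose alphabet at each level is bounded by a polynomial function of $d$ alone, and which still generates $X$ by a routine check.

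The main obstacle is verifying that this coarsened sequence remains a legitimate, recognizable $\cS$-adic representation after the merging step. The original construction of Section \ref{sec:Z4} was calibrated precisely so that the connecting morphism $\gamma$ of Definition \ref{Z4:defi_gamma} is well-defined, which required distinguishing periodic letters by their exponent. After merging, the image of a letter at level $n+1$ is no longer canonically a word in the merged alphabet at level $n$; one must exhibit a consistent choice of connecting morphism by fixing, for each equivalence class of periodic letters, a canonical decomposition into root-blocks. Minimality of $X$ together with recognizability of $\bsigma$ (via Propositions \ref{Z4:gamma:ap}, \ref{Z4:gamma:p&big} and \ref{Z4:gamma:p&small}) provide the needed consistency, and after one final contraction the resulting sequence is positive, proper, recognizable, and has alphabets of size bounded uniformly in $d$, establishing that $X$ has finite topological rank.
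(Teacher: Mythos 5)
Your overall strategy departs from the paper's, and the step on which everything hinges is not sound as described. The paper's proof is short: from $(\cP_1)$ and $(\cP_2)$ it deduces $X \subseteq \bigcup_{k\in\Z} S^k(\mroot\sigma_{[0,n)}(\cA_n))^\Z$ with at most $d$ roots whose minimal length tends to infinity (using aperiodicity and $\langle\sigma_{[0,n)}\rangle\to\infty$), and then invokes \cite[Corollary 1.4]{E22} or \cite[Theorem 4.3]{DDPM20_interplay} to conclude finite topological rank. You instead try to manufacture a bounded-alphabet, positive, proper representation directly by merging all weakly-periodic letters with the same root. This is where the gap lies: two letters $a,b \in \A{C}{wp}$ with $\mroot\tau(a)=\mroot\tau(b)=s$ but different exponents have $\tau(a)=s^{\zeta(s)+q_a}\neq s^{\zeta(s)+q_b}=\tau(b)$, so after merging there is no well-defined image word, and your claim that ``we do not need the connecting morphism to preserve the exponent of each maximal periodic run'' is exactly what fails. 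When $\pcom(X)=\infty$ the language of $X$ contains words $u s^k w$ for infinitely many distinct exponents $k$, and these exact words must occur in the images $\sigma_{[0,n)}(a)$ for the sequence to generate $X$; a representative-per-root substitution spells a fixed power of $s$ at each level, and there is no canonical way to recover the lost exponents by a ``canonical decomposition into root-blocks'' (note also that $s^{\zeta(s)+q_a}$ is in general not a concatenation of copies of $s^{\zeta(s)}$). Propositions \ref{Z4:gamma:ap}, \ref{Z4:gamma:p&big} and \ref{Z4:gamma:p&small} were calibrated for the unmerged alphabets precisely so that $\gamma$ in Definition \ref{Z4:defi_gamma} is well defined; they do not provide the consistency you need after merging. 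Theorem \ref{theo:negative_result} should also make you cautious: collapsing the alphabet while keeping the structure of Theorems \ref{theo:main_sublinear}--\ref{theo:main_nonsuperlinear} is genuinely obstructed, and what saves the topological-rank statement is that positivity/properness/bounded alphabet is a weaker demand handled by a different construction.

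In effect, the merging-and-reconnecting argument you defer to a ``routine check'' is the entire content of the cited results (\cite[Theorem 4.3]{DDPM20_interplay}, \cite[Corollary 1.4]{E22}), whose proofs do not merge letters but build new codings (e.g.\ via return words) from the covering of $X$ by a bounded set of roots of growing length. If you want to keep your proof self-contained relative to this paper, the cleanest repair is to abandon the merging step altogether: extract from $(\cP_1)$ and $(\cP_2)$ the inclusion $X \subseteq \bigcup_{k\in\Z} S^k(\mroot\sigma_{[0,n)}(\cA_n))^\Z$ with $\#\mroot\sigma_{[0,n)}(\cA_n)\leq d$ and $\langle\mroot\sigma_{[0,n)}(\cA_n)\rangle\to\infty$, and then cite one of those two external theorems, which already deliver the positive, proper, recognizable, bounded-alphabet representation; your preliminary contraction arguments for positivity and properness then become unnecessary.
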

\begin{proof}
The case in which $X$ is finite is trivial, and so we may assume that $X$ is infinite.
Then, Theorem \ref{theo:main_nonsuperlinear} gives $d$ and a recognizable $\cS$-adic sequence $\bsigma = (\sigma_n\colon\cA_{n+1} \to \cA_n^+)_{n\geq0}$ generating $X$ such that Items (1) and (2) of Theorem \ref{theo:main_nonsuperlinear} hold.
In particular,
\begin{equation} \label{eq:apps:theo:nonsuperlinear_have_FTR:1}
    X \subseteq 
    \bigcup_{k \in \Z} S^k (\mroot\sigma_{[0,n)}(\cA_n))^\Z
    \enspace\text{for all $n \geq 1$.}
\end{equation}
Now, since $|\sigma_{[0,n)}|$ goes to $+\infty$ as $n \to +\infty$ and $d|\sigma_{[0,n)}| \leq \langle \sigma_{[0,n)} \rangle$ by Item (2) in Theorem \ref{theo:main_nonsuperlinear}, we have that $\langle \sigma_{[0,n)} \rangle$ diverges to $+\infty$ as $n \to +\infty$.
Hence, since $X$ is aperiodic,
\begin{equation*}
    \lim_{n\to+\infty} \langle \mroot\sigma_{[0,n)}(\cA_n) \rangle = +\infty.
\end{equation*}
This and \eqref{eq:apps:theo:nonsuperlinear_have_FTR:1} allow us to use \cite[Corollary 1.4]{E22} or \cite[Theorem 4.3]{DDPM20_interplay} to conclude that $X$ has finite topological rank.
\end{proof}

% \subsection{Strong mixing}

% The following result was proved in CITE.
% \begin{theo} \label{apps:theo:strong_mixing}
% Let $X$ be a subshift of nonsuperlinear-growth complexity and $\mu$ an ergodic measure.
% Then, $(X, S, \mu)$ is not strongly mixing.
% \end{theo}
% Understanding mixing properties from an $\cS$-adic structure is not easy in general, but there are some techniques in the literature that work if the $\cS$-adic structure is good enough; see \cite{mixing_two_letters05, creutz22, danilenko16}.
% In particular, in \cite{BKMS13} it was proved that if an $\cS$-adic sequence satisfies the so-called {\em exact rank property}, then the system that it generates cannot be strongly mixing.
% In this subsection, we consider minimal subshifts and adapt the technique of \cite{BKMS13} to give a proof of Theorem \ref{apps:theo:strong_mixing} based on Theorem \ref{theo:main_nonsuperlinear}.
% In fact, this approach permits to prove that the system is {\em partially rigid}, but due to space limitations we decided not to include this proof.

\section*{Acknowledgement}
This work was supported by Doctoral Felowship CONICYT-PFCHA/Doctorado Nacional/2020-21202229.

\sloppy\printbibliography

\end{document}